\documentclass{amsart}
\usepackage{amsmath,amssymb,amsthm,amsfonts,amscd,mathrsfs,mathtools}
\usepackage{comment}
\usepackage{tikz-cd}
\usepackage[hyphens]{url}
\usepackage{enumitem}
\usepackage{caption}
\usepackage{hyperref}
\usepackage{overpic}
\usepackage[utf8]{inputenc}

\theoremstyle{plain}
\newtheorem{thm}{Theorem}[section]
\newtheorem{lem}[thm]   {Lemma}
\newtheorem{cor}[thm]   {Corollary}
\newtheorem{prop}[thm]  {Proposition}

\newtheorem{athm}{Theorem}

\newtheorem{acor}{Corollary}[athm]

\theoremstyle{definition}
\newtheorem{defn}[thm]  {Definition}
\newtheorem{conj}[thm]{Conjecture}
\newtheorem{ex}[thm]{Example}
\newtheorem{rem}[thm]{Remark}
\newtheorem{nota}[thm]{Notation}

\newcommand{\fa}{\mathfrak{a}} 
\newcommand{\fA}{\mathfrak{A}} 
\newcommand{\fB}{\mathfrak{B}} 
\newcommand{\bB}{\mathbf{B}} 
\newcommand{\bS}{\mathbb{S}} 
\newcommand{\cC}{\mathcal{C}} 
\newcommand{\cD}{\mathcal{D}} 
\newcommand{\fd}{\mathfrak{d}} 
\newcommand{\fe}{\mathfrak{e}} 
\newcommand{\cE}{\mathcal{E}} 
\newcommand{\bE}{\mathbf{E}} 
\newcommand{\fF}{\mathfrak{F}} 
\newcommand{\fG}{\mathfrak{G}} 
\newcommand{\fI}{\mathfrak{I}} 
\newcommand{\fj}{\mathfrak{j}} 
\newcommand{\fJ}{\mathfrak{J}} 
\newcommand{\cK}{\mathcal{K}} 
\newcommand{\cL}{\mathcal{L}} 
\newcommand{\bk}{\mathbf{k}} 
\newcommand{\N}{\mathbb{N}} 
\newcommand{\mO}{\mathrm{O}} 
\newcommand{\cO}{\mathcal{O}} 
\newcommand{\fo}{\mathfrak{o}} 
\newcommand{\cP}{\mathcal{P}} 
\newcommand{\fr}{\mathfrak{r}} 
\newcommand{\fS}{\mathfrak{S}} 
\newcommand{\fs}{\mathfrak{s}} 
\newcommand{\cS}{\mathcal{S}} 
\newcommand{\cT}{\mathcal{T}} 
\newcommand{\ft}{\mathfrak{t}} 
\newcommand{\fU}{\mathfrak{U}} 
\newcommand{\cU}{\mathcal{U}} 
\newcommand{\cV}{\mathcal{V}} 
\newcommand{\bx}{\mathbf{x}} 
\newcommand{\by}{\mathbf{y}}
\newcommand{\bt}{\mathbf{t}}
\newcommand{\bz}{\mathbf{z}}
\newcommand{\Z}{\mathbb{Z}} 

\newcommand{\bDelta}{\mathbf{\Delta}} 
\newcommand{\inert}{\mathrm{int}}
\newcommand{\rinert}{\mathrm{r}\inert}
\newcommand{\linert}{\mathrm{l}\inert}
\newcommand{\vinert}{\mathrm{v}\inert}
\newcommand{\hinert}{\mathrm{h}\inert}
\renewcommand{\special}{\mathrm{special}}
\newcommand{\Cat}{\mathrm{Cat}} 
\newcommand{\infone}{{(\infty,1)}}
\newcommand{\inftwo}{{(\infty,2)}}
\newcommand{\infnm}{{(\infty,n-1)}}
\newcommand{\infn}{{(\infty,n)}}
\newcommand{\circhor}{\circ_{\mathrm{h}}} 
\newcommand{\Catinfone}{\Cat_\infone} 
\newcommand{\RelCatinfone}{\mathrm{Rel}\Catinfone}
\newcommand{\Catinftwo}{\Cat_\inftwo}
\newcommand{\Catinfnm}{\Cat_\infnm} 
\newcommand{\Catinfn}{\Cat_\infn} 
\newcommand{\twosimeq}{{2\simeq}} 
\newcommand{\nsimeq}{{n\simeq}} 
\newcommand{\CAlg}{\mathrm{CAlg}} 
\newcommand{\DDat}{\mathrm{DDat}} 
\newcommand{\colim}{\mathrm{colim}}
\newcommand{\ev}{\mathrm{ev}} 
\newcommand{\Fin}{\mathrm{Fin}} 
\newcommand{\Fininj}{\Fin^{\mathrm{inj}}} 
\newcommand{\Fun}{\mathrm{Fun}} 
\newcommand{\llax}{{\text{l-lax}}} 
\newcommand{\rlax}{{\text{r-lax}}} 
\newcommand{\Mod}{\mathrm{Mod}} 
\newcommand{\one}{\mathbf{1}} 
\newcommand{\op}{\mathrm{op}}
\newcommand{\PSh}{\mathscr{P}} 
\DeclareFontFamily{U}{min}{}
\DeclareFontShape{U}{min}{m}{n}{<-> udmj30}{}
\newcommand\yo{\!\text{\usefont{U}{min}{m}{n}\symbol{'207}}\!} 
\newcommand{\Day}{\mathrm{Day}} 
\newcommand{\Cospan}{\mathrm{Cospan}} 
\newcommand{\bCospan}{\mathbb{C}\mathrm{ospan}} 
\newcommand{\Tw}{\mathrm{Tw}} 
\newcommand{\Top}{\mathrm{Top}} 

\newcommand{\CFrob}{\mathrm{CFrob}} 
\newcommand{\GL}{\mathbf{GL}} 
\newcommand{\SGL}{\mathbf{SGL}} 
\newcommand{\tbeta}{\tilde\beta} 
\newcommand{\Gaf}{\mathrm{Gaf}} 
\newcommand{\bGr}{{\mathbb{G}\mathrm{r}}} 
\newcommand{\Gr}{\mathrm{Gr}} 
\newcommand{\VE}{\mathrm{VE}}
\newcommand{\CE}{\mathrm{CE}}
\newcommand{\conn}{\mathrm{conn}} 

\newcommand{\hAut}{\mathrm{hAut}} 
\newcommand{\map}{\mathrm{map}} 
\newcommand{\Out}{\mathrm{Out}} 

\newcommand{\pa}[1]{\left(#1\right)}
\newcommand{\set}[1]{\left\{#1\right\}}
\newcommand{\hto}{\hookrightarrow}
\newcommand{\ot}{\leftarrow}
\newcommand{\del}{\partial}
\newcommand{\Id}{\mathrm{Id}} 
\newcommand{\ul}[1]{\underline{#1}} 
\newcommand{\sm}{\mathrm{sm}}

\hyphenation{mo-noi-dal mo-noids ca-te-go-ries ca-te-go-ry sub-ca-te-go-ry mor-phisms e-qui-va-len-ce e-qui-va-len-ces}    

\keywords{Graph cobordism, symmetric monoidal $(\infty,n)$-category, universal property, free extension}
\subjclass[2020]{
18A40, 
18B10, 
18N50,  
18N55, 
18N65, 
19D23 
}

\begin{document}

\title[Symmetric monoidal extensions and graph cobordisms]{Symmetric monoidal extensions and graph cobordisms between finite sets}

\author{Andrea Bianchi}
\address{Max Planck Institute for Mathematics,
Vivatsgasse 7, Bonn,
Germany
}
\email{bianchi@mpim-bonn.mpg.de}

\date{\today}
\begin{abstract}
Given a symmetric monoidal $(\infty,n)$-category $\mathcal{C}$ and a space $X$, we address the problem of explicitly describing the symmetric monoidal $(\infty,n)$-category freely obtained from $\mathcal{C}$ by adjoining $X$ new $n$-morphisms with prescribed sources and targets. We develop an apparatus of tools that allow one to detect in concrete situations such a free symmetric monoidal extension. As motivating application, we introduce a symmetric monoidal $(\infty,2)$-category ${\mathbb{G}\mathrm{r}}$ of graph cobordisms between finite sets, following classical constructions of Gersten, Culler--Vogtmann and Hatcher--Vogtmann, and we exhibit it as an extension of the symmetric monoidal $(\infty,1)$-category $\mathrm{Fin}$ of finite sets, obtained by freely adjoining a specific list of new 1-morphisms and 2-morphisms. We recover results of Barkan--Steinebrunner and of Galatius.

\end{abstract}
\maketitle
\tableofcontents
\section{Introduction}
\subsection{Statement of results}
The main goal of the article is to introduce a symmetric monoidal 2-category $\bGr$ of graph cobordisms between finite sets and to study its properties.
Roughly speaking, objects in $\bGr$ are finite sets, 1-morphisms are graph cobordisms between finite sets, and 2-morphisms are maps between graphs cobordisms obtained by collapsing trees to points. Taking disjoint unions of finite sets and graphs makes $\bGr$ into a symmetric monoidal $\inftwo$-category.
The precise construction, given in  Section \ref{sec:graphcobsets}, expands on classical constructions of Gersten \cite{Gersten}, Culler--Vogtmann \cite{CullerVogtmann}, and Hatcher--Vogtmann\cite{HatcherVogtmann}. Most notably, the classifying space of endomorphism category $\bGr(\emptyset,\emptyset)$ of the object $\emptyset\in\bGr$ contains for all $n\ge2$ a copy of $\bB\Out(F_n)$, the classifying space of the group of outer automorphisms of a free group of rank $n$.

The main result of the article is the following ``universal property'', or ``presentation'' for $\bGr$, when considered as a symmetric monoidal $\inftwo$-category. For this, we introduce in Definition \ref{defn:graphlike} the notion of \emph{graph-like structure} on a symmetric monoidal $\inftwo$-category $\cD$: roughly speaking, a graph-like structure is the datum of a symmetric monoidal $\inftwo$-functor $\Fin\to\cD$ together with the specification of a certain number of 1-morphisms and 2-morphisms in $\cD$. Here $\Fin$ denotes the symmetric monoidal category of finite sets and maps of finite sets, with the disjoint union monoidal structure. We shall in fact introduce in Definition \ref{defn:GLfunctor} a space $\GL(\cD)$ of graph-like structures on $\cD$, functorially in $\cD\in\Catinftwo$.

\begin{athm}
 \label{thm:A}
 The functor $\GL\colon\CAlg(\Catinftwo)\to\cS$, associating with a symmetric monoidal $\inftwo$-category $\cD$ its space of graph-like structures, is represented by $\bGr$, i.e. we have a natural isomorphism $\Fun^\otimes(\bGr,-)^\simeq\simeq\GL$.
\end{athm}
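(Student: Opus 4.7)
The plan is to exhibit a canonical graph-like structure $u\in\GL(\bGr)$ whose associated evaluation map
$$\Fun^\otimes(\bGr,-)^\simeq\longrightarrow\GL,\qquad F\mapsto F_*(u),$$
is an equivalence of functors $\CAlg(\Catinftwo)\to\cS$. The structure $u$ packages the symmetric monoidal functor $\Fin\to\bGr$ sending a finite set $S$ to itself, viewed as the trivial graph cobordism, together with the small list of distinguished 1-morphisms and 2-morphisms named in Definition \ref{defn:graphlike}: the elementary edge and loop 1-cells, and the elementary tree-collapse 2-cells that relate their composites. Naturality is automatic, so the content lies in showing that the component map is an equivalence for every $\cD$.

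To do this I would present $\bGr$ as an iterated free symmetric monoidal $(\infty,2)$-extension of $\Fin$. Explicitly, I would build a tower
$$\Fin=\cD_0\to\cD_1\to\cD_2$$
in $\CAlg(\Catinftwo)$, where $\cD_1$ freely adjoins the 1-cells of $u$ with prescribed sources and targets in $\Fin$, and $\cD_2$ freely adjoins the 2-cells of $u$ with sources and targets given by specified composites in $\cD_1$. The defining universal property of these free extensions identifies $\Fun^\otimes(\cD_2,\cD)^\simeq$ with $\GL(\cD)$ tautologically, so the theorem reduces to producing an equivalence $\cD_2\simeq\bGr$ under $\Fin$ intertwining the two distinguished graph-like structures. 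For this comparison I would apply the free-extension detection criteria developed in the main body of the paper to the canonical map $\cD_2\to\bGr$ induced by $u$: at the level of 1-morphisms the criterion reduces to the fact that every graph cobordism is, up to tree-collapse, an iterated composite of elementary edges and loops; at the level of 2-morphisms it reduces to a contractibility statement for the poset of such presentations of a given graph cobordism.

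The main obstacle will be this final contractibility. One must show that the poset of presentations of a fixed graph cobordism as an iterated composite of the generating 1-cells, with transitions given by the generating 2-cells, has contractible nerve, so that no spurious mapping-space information is introduced by forming the free extension $\cD_2$. This is precisely the point at which the classical geometry of graphs enters: the relevant posets are close relatives of the spine of Outer Space (and of variants with boundary tailored to cobordisms between finite sets), and the required contractibility should be extractable from the known contractibility of these spaces, via a reduction to collapses of maximal forests in the spirit of Hatcher--Vogtmann. Matching the combinatorics of the free-extension presentation precisely to the geometric model of $\bGr$ from Section \ref{sec:graphcobsets} is what I expect to be the technical heart of the argument.
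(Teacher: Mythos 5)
Your outline correctly identifies the tautological anchor of the proof: by construction the iterated free extension $\cV$ (your $\cD_2$) represents $\GL$, so the theorem is equivalent to proving that the comparison $\fG\colon\cV\to\bGr$ classified by the graph-like structure on $\bGr$ is an equivalence; and you rightly split this into a 1-morphism step (which is Theorem~\ref{thm:Abis}, the equivalence $\cU\simeq\bGr^\twosimeq$) and a 2-morphism step. That much coincides with the paper's architecture. However, your identification of the technical heart is not what the paper does, and would in fact be problematic.

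You propose to obtain the final contractibility from the contractibility of (spines of) Outer Space, \`a la Culler--Vogtmann and Hatcher--Vogtmann. This reverses the logical order of the paper: the comparison of $\Gr$ with those geometric models is Theorem~\ref{thm:Reinclusion}, which is proved \emph{downstream} as an application of Corollary~\ref{cor:A1}, itself a consequence of Theorem~\ref{thm:A}. Using spine contractibility to prove Theorem~\ref{thm:A} would therefore require independently establishing the equivalence between the combinatorial 2-category $\bGr$ and the geometric spine model, essentially re-deriving Theorem~\ref{thm:Reinclusion} without the help of the universal property you are trying to prove. Moreover the contractibility that actually arises is much more modest than that of Outer Space: after the $\N^{S_+}$-grading and $\fS_S$-symmetrization reductions supplied by Theorem~\ref{thm:C} (Proposition~\ref{prop:retraction}), and after passing through the presheaf/Day-convolution description of Theorem~\ref{thm:D} (Corollary~\ref{cor:CAlgCatinfn01pullback}), the 2-morphism comparison is expressed as a pushout of Day convolutions (Lemma~\ref{lem:dotkappafmpushout}), and the surviving contractibility claim is for specific $\infone$-categories $Q_i(f,m)$ of tree-collapse data. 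These are reduced, by a chain of adjunctions (leaf-like reduction, one-vertex reduction, spine reduction, the $\dagger$- and $\flat$-reductions), to an explicit finite poset analyzed by hand. No appeal to Outer Space is needed or used; the posets that ultimately appear are tiny and the contractibility is established combinatorially.

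Two further points: (i) the data adjoined in your tower $\Fin\to\cD_1\to\cD_2$ include $C_2$-equivariant structures on $\fe$ and $\tbeta$, so the extensions must be taken relative (nonempty $\del X$) in the sense of Definition~\ref{defn:extension} — your description treats them as free over nothing; (ii) your treatment of the 1-morphism step as ``the fact that every graph cobordism is, up to tree-collapse, an iterated composite'' understates the work: in the paper the 1-morphism comparison already requires the full $\N$-grading, symmetrization, and Day-convolution machinery of Sections~\ref{sec:symmonextensions}--\ref{sec:truncatedviapsh} applied with $n=1$, and is proved separately (Subsection~\ref{subsec:proofAbis}) before the 2-morphism comparison.
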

The universal property of the symmetric monoidal $\inftwo$-category $\bGr$ given in Theorem \ref{thm:A} allows us to deduce a universal property for the symmetric monoidal $\infone$-category $\Gr:=|\Gr|_2$ obtained from $\bGr$ by inverting all 2-morphisms. This result, stated as Corollary \ref{cor:A1}, is originally due to Barkan--Steinebrunner \cite{BarkanSteinebrunner}. Compare also with \cite[§4.4]{SteinebrunnerNotes}, in particular with Corollary 4.22 in loc.cit.
\begin{acor}[\cite{BarkanSteinebrunner}]
\label{cor:A1}
The functor $\CFrob\colon\CAlg(\Catinfone)\to\cS$, associating with a symmetric monoidal $\infone$-category its moduli space of commutative Frobenius algebras (see Definition \ref{defn:Frobeniusalgebra}), is represented by $\Gr$, i.e. we have a natural equivalence $\Fun^\otimes(\Gr,-)^\simeq\simeq\CFrob$ of functors $\CAlg(\Catinfone)\to\cS$.
\end{acor}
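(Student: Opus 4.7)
The plan is to reduce Corollary \ref{cor:A1} to Theorem \ref{thm:A} by means of the localization $|-|_2\colon\Catinftwo\to\Catinfone$ that inverts all 2-morphisms. For any $\cE\in\CAlg(\Catinfone)$, viewed as a symmetric monoidal $(\infty,2)$-category in which all 2-morphisms are invertible, the universal property of $|\bGr|_2=\Gr$ gives a natural equivalence
\[
\Fun^\otimes(\Gr,\cE)^\simeq\simeq\Fun^\otimes(\bGr,\cE)^\simeq.
\]
By Theorem \ref{thm:A} the right-hand side is $\GL(\cE)$, so it suffices to exhibit a natural equivalence $\GL(\cE)\simeq\CFrob(\cE)$ on $\CAlg(\Catinfone)$.

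To produce this equivalence I would unpack Definition \ref{defn:graphlike}. A graph-like structure on $\cE$ consists of a symmetric monoidal functor $\Phi\colon\Fin\to\cE$ together with a prescribed list of extra 1-morphisms and 2-morphisms with specified sources and targets. The datum of $\Phi$ is, by the universal property of $\Fin$, precisely the datum of a commutative algebra object $A:=\Phi(\{\ast\})$ in $\cE$, supplying a unit $\one\to A$ and a multiplication $A\otimes A\to A$. Since all 2-morphisms in $\cE$ are invertible, the 2-morphism data in the graph-like structure reduce to equivalences between the relevant 1-morphisms. The extra 1-morphisms then contribute a comultiplication $A\to A\otimes A$ and a counit $A\to\one$, arising as the images of the ``pair-of-pants'' and the ``cap'' graph cobordisms in $\bGr$, while the 2-morphisms encode the Frobenius compatibility, coassociativity, and cocommutativity relations, all up to coherent invertible 2-cells. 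This reproduces exactly the datum of a commutative Frobenius algebra structure on $A$ in the sense of Definition \ref{defn:Frobeniusalgebra}, and one verifies that the resulting map of spaces $\GL(\cE)\to\CFrob(\cE)$ is natural in $\cE$ by construction.

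Finally, one has to check that this map is an equivalence, not merely a $\pi_0$-bijection. This is the combinatorial heart of the argument and, I expect, the main obstacle: one must verify that the precise list of distinguished 1- and 2-morphisms selected in Definition \ref{defn:graphlike} corresponds, after inversion of 2-morphisms, to exactly the structure and axioms of a commutative Frobenius algebra, with no redundant data and no missing relation. Concretely, this amounts to comparing the explicit tree-collapse generators of 2-morphisms in $\bGr$ with the classical genus-$0$ generators and relations in the $2$-dimensional cobordism category, and upgrading the resulting bijection on $\pi_0$ to a coherent equivalence of moduli spaces by an inductive skeletal/dimension argument on the Culler--Vogtmann model underlying the construction of $\bGr$.
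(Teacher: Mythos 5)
Your reduction to showing $\GL|_{\CAlg(\Catinfone)}\simeq\CFrob$ via the localization $|-|_2$ is exactly the paper's starting point, and your recognition that the final step ``is the combinatorial heart of the argument and, I expect, the main obstacle'' is apt --- but you reach for the wrong tool there, and this is a genuine gap.

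The paper does \emph{not} prove the equivalence $\GL(\cC)\simeq\CFrob(\cC)$ by enumerating generators and relations of the $2$-dimensional cobordism category and upgrading a $\pi_0$-bijection via a skeletal argument on the Culler--Vogtmann model (that model only enters much later, in the proof of Theorem \ref{thm:Reinclusion}). Instead the argument is essentially representation-theoretic and soft: one first constructs a natural map $\GL(\cC)\to\CFrob(\cC)$ by $(\fI,\ft,\fe,\beta)\mapsto(\fI,\ft)$, and checks that $(\fI(\ul 1),\ft\circ\fI(\mu))$ is a duality datum by taking $u=\fe$, using the invertible $2$-morphism $\beta$ for one zig-zag identity, and using the $C_2$-equivariance of $\ft\circ\fI(\mu)$ and $\fe$ to conjugate this into the other zig-zag identity. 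Then one shows the map is an equivalence by proving that for fixed $(\fI,\ft)\in\CFrob(\cC)$ the fibre --- i.e. the moduli space of extensions to a graph-like structure --- is contractible. This contractibility is obtained in two steps, each by an application of Lemma \ref{lem:dualityequivalence} (HA Lemma 4.6.1.6): first, the joint choice of a non-$C_2$-equivariant $\fe$ together with a homotopy $\beta$ is contractible because $\cC(\one,a^{\otimes2})\xrightarrow{\simeq}\cC(a,a)$ identifies $\fe$ with its contraction against the counit; second, the remaining $C_2$-equivariance data is contractible by a $C_2$-equivariant version of the same argument at $x=a^{\otimes2}$, $y=z=\one$. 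Your proposal, as written, does not contain this duality-lemma mechanism, and instead postulates an inductive/skeletal contractibility argument for which there is no obvious implementation and which the paper does not carry out.

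A smaller imprecision: you describe the extra $1$-morphisms as contributing ``a comultiplication $A\to A\otimes A$ and a counit $A\to\one$.'' In fact $\ft\colon\fI(\ul1)\to\fI(\ul0)$ is the counit (or rather part of the counit $c=\ft\circ\fI(\mu)$ of the duality), and $\fe\colon\fI(\ul0)\to\fI(\ul2)$ is the \emph{unit} $\one\to A\otimes A$ of the duality; the comultiplication is a derived construct and is not part of a graph-like structure. The graph-like structure is deliberately minimal --- one $1$-morphism $\ft$, one $C_2$-equivariant $1$-morphism $\fe$, one $2$-morphism $\beta$, and an equivariance constraint on $\tbeta$ --- and it is precisely the economy of this list that makes the contractibility argument via the duality lemma work cleanly, with no need to match the classical genus-$0$ generators and relations.
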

In turn, Corollary \ref{cor:A1} allows us to compute explicitly the classifying space $|\Gr|$ of $\Gr$ as a symmetric monoidal space. This result, stated as Corollary \ref{cor:A2}, is also due to Barkan--Steinebrunner, and it is preceeded by an extremely similar result by Galatius \cite{Galatius} (see Remark \ref{rem:Galatius} for more details); the way we derive Corollary \ref{cor:A2} from Corollary \ref{cor:A1} is also parallel to the argument of Barkan--Steinebrunner, and is only included in the present article for completeness.

\begin{acor}
[\cite{BarkanSteinebrunner}]
\label{cor:A2}
The functor $\Omega\colon\CAlg(\cS)\to\cS$, associating with a symmetric monoidal space its loop space at the identity element, is represented by $|\Gr|$, i.e. we have a natural equivalence $\Fun^\otimes (|\Gr|,-)^\simeq\simeq\Omega$ of functors $\CAlg(\cS)\to\cS$. Equivalently, there is an equivalence of symmetric monoidal spaces $|\Gr|\simeq\Omega^{\infty-1}\bS$.
\end{acor}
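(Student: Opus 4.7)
My plan is to deduce Corollary \ref{cor:A2} from Corollary \ref{cor:A1} in three steps.

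First, I use that the classifying space functor $|-|\colon\Catinfone\to\cS$ is symmetric monoidal and left adjoint to the fully faithful inclusion $\cS\hookrightarrow\Catinfone$; this adjunction lifts to commutative algebra objects. For any $Y\in\CAlg(\cS)$, viewed via the inclusion as an object of $\CAlg(\Catinftwo)$'s 1-categorical analogue $\CAlg(\Catinfone)$, we therefore obtain a chain of natural equivalences
\[
 \Fun^\otimes(|\Gr|,Y)^\simeq \simeq \Fun^\otimes(\Gr,Y)^\simeq \simeq \CFrob(Y),
\]
the last equivalence being Corollary \ref{cor:A1}. It remains to identify $\CFrob(Y)$ with $\Omega(Y,1)$ naturally in $Y$, and then to recognize the corepresenting $E_\infty$-space as $\Omega^{\infty-1}\bS$.

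The heart of the argument, and what I expect to be the only real technical point, is the equivalence $\CFrob(Y)\simeq\Omega(Y,1)$ for $Y\in\CAlg(\cS)$. In a symmetric monoidal $\infty$-groupoid every morphism is invertible, so the unit $u\colon 1\to A$ and counit $\epsilon\colon A\to 1$ of a commutative Frobenius algebra $A$ exhibit an equivalence $A\simeq 1$. After fixing such an identification, the remaining structure maps become self-homotopies of tensor powers of the unit object, and the Frobenius and commutativity axioms hold automatically in $Y$. The essential piece of data surviving is the ``circle trace'' $\mathrm{tr}\colon 1\xrightarrow{u}A\xrightarrow{\Delta}A\otimes A\xrightarrow{m}A\xrightarrow{\epsilon}1$, an element of $\Omega(Y,1)$. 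I would construct mutually inverse natural maps $\CFrob(Y)\leftrightarrows\Omega(Y,1)$, the backward one assembling the canonical Frobenius structure on the unit from a loop, and verify they are equivalences. The bookkeeping of higher coherences is the main source of care, but the argument of Barkan--Steinebrunner may be followed to keep it efficient.

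Finally, the functor $\Omega\colon\CAlg(\cS)\to\cS$ is corepresented by $\Omega^{\infty-1}\bS$. Indeed, since $\Omega^{\infty-1}\bS$ is connected, any symmetric monoidal map out of it factors through the identity component $Y_0\subseteq Y$, which is itself grouplike; using the equivalence between grouplike $E_\infty$-spaces and connective spectra one computes
\[
 \Fun^\otimes(\Omega^{\infty-1}\bS,Y)^\simeq \simeq \map_{\mathrm{Sp}_{\ge 0}}(\Sigma\bS,Y_0)\simeq\Omega(Y_0,1)=\Omega(Y,1).
\]
Combining the three steps, both $|\Gr|$ and $\Omega^{\infty-1}\bS$ corepresent the same functor $\Omega$ on $\CAlg(\cS)$, and the Yoneda lemma delivers the claimed equivalence $|\Gr|\simeq\Omega^{\infty-1}\bS$ of symmetric monoidal spaces.
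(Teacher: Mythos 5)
Your overall strategy is the right one and matches the paper's: pass through Corollary~\ref{cor:A1}, identify $\CFrob|_{\CAlg(\cS)}$ with $\Omega$, and then appeal to the standard corepresentability of $\Omega$ by $\Omega^{\infty-1}\bS$ (the paper leaves this last step implicit; your spelling-out of it is fine). However, the loop you extract from a commutative Frobenius algebra is the wrong one. The composite $\epsilon\circ m\circ\Delta\circ u$ you call the ``circle trace'' is the torus partition function, and it is \emph{always} null-homotopic when the ambient category is a space: after trivialising the underlying object $A\simeq\one$ via $u$, the counit axiom $(\Id\otimes\epsilon)\circ\Delta\simeq\Id_A$ forces $\Delta\simeq\epsilon^{-1}$ as a loop, so $\epsilon\circ m\circ\Delta\circ u\simeq\epsilon\circ\epsilon^{-1}\simeq\Id$. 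In particular this map $\CFrob(Y)\to\Omega(Y,1)$ is null-homotopic and cannot be an equivalence; your proposed ``mutually inverse maps'' would fail already at the level of $\pi_0$. The correct loop is the \emph{sphere} partition function $\epsilon\circ u$, which in the paper's notation (Definition~\ref{defn:Frobeniusalgebra}) is $\ft\circ\fI(\ul0\to\ul1)$ and is precisely the natural transformation the paper writes down.

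Separately, the paper's verification that this map is an equivalence is cleaner than your plan of ``constructing mutually inverse natural maps and verifying higher coherences.'' It observes that $|\Fin|\simeq *$, hence $\Fun^\otimes(\Fin,M)\simeq\Fun^\otimes(|\Fin|,M)$ is contractible for $M\in\CAlg(\cS)$; so the $\fI$-part of a commutative Frobenius algebra carries no data, and $\CFrob(M)$ collapses to the space of morphisms $\ft\colon\one\to\one$ subject to a duality condition that holds automatically since $M$ is grouplike. No choice of trivialisation $A\simeq\one$ nor any coherence bookkeeping is needed. You should replace the torus trace with the sphere trace and use the contractibility of $\Fun^\otimes(\Fin,M)$ to avoid the explicit two-way construction.
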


As an application of Corollary \ref{cor:A1}, we construct a symmetric monoidal functor $\Re\colon\Gr\to\Cospan(\cS)$, with target the $\infone$-category of cospans between spaces; building on work of Culler--Vogtmann and Hatcher--Vogtmann, we prove in Theorem \ref{thm:Reinclusion} that $\Re$ is an $\infone$-subcategory inclusion and identify its image.

The universal property of $\bGr$ stated in Theorem \ref{thm:A} exhibits it as a free symmetric monoidal extension of $\Fin$, in which spaces of new 1-morphisms and 2-morphisms are adjoined. In order to prove Theorem \ref{thm:A} we develop an apparatus of tools to study more generally free extensions of symmetric monoidal $\infn$-categories, for $n\ge1$, in which we adjoin spaces of new $n$-morphisms. 

The main tools are given by the following results, contained in Sections \ref{sec:symmonextensions} and \ref{sec:truncatedviapsh}.
The setting is the following: we freely extend a symmetric monoidal $\infn$-category $\cC$ to a symmetric monoidal $\infn$-category $\cD$ by adjoining a space $X$ of new $n$-morphisms of prescribed sources and targets; we further assume that there is an $\N$-valued grading on $n$-morphisms of $\cD$ which is additive under composition and monoidal product, such that the $\N$-grading vanishes on $\N$-morphisms coming from $\cC$ and takes values 1 on $n$-morphisms coming from $X$.

The first result tells us that, under suitable grading conditions, adjoining new $n$-morphisms to a symmetric monoidal $\infn$-category will not affect its objects nor its $i$-morphisms for any $i<n$.
\begin{athm}[Corollary \ref{cor:nsimeqinvariance}]
\label{thm:B}
The canonical symmetric monoidal $\infn$-functor $\cC\to\cD$ restricts to an equivalence $\cC^\nsimeq\to\cD^\nsimeq$ on $n$-groupoid cores.
\end{athm}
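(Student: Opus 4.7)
The plan is to split the argument along the $\N$-grading and the dimensional structure of the free extension.

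First, since $\cC\to\cD$ only adjoins new $n$-morphisms (whose sources and targets already live in $\cC$), I expect objects and $i$-morphisms for $i<n$ to be preserved: the induced map on underlying $(\infty,n-1)$-categories obtained by discarding $n$-morphisms should be an equivalence, via whichever explicit construction of the free extension is developed in Sections~\ref{sec:symmonextensions}--\ref{sec:truncatedviapsh}. This reduces the theorem to showing that, for any parallel $(n-1)$-morphisms $f,g$ of $\cC$, the map on spaces of invertible $n$-morphisms
\[
\map_\cC(f,g)^\simeq\to\map_\cD(f,g)^\simeq
\]
is an equivalence.

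Second, the $\N$-grading controls invertibility. If $\alpha\colon f\Rightarrow g$ is invertible in $\cD$ with two-sided inverse $\beta$, then additivity of the grading under vertical composition gives $\mathrm{grade}(\alpha)+\mathrm{grade}(\beta)=\mathrm{grade}(\mathrm{id}_f)=0$, and non-negativity forces both to vanish. Hence $\map_\cD(f,g)^\simeq$ is contained in the grade-$0$ locus of $\map_\cD(f,g)$. The remaining content is to identify this grade-$0$ locus with $\map_\cC(f,g)$: I would do this by filtering $\cD$ as $\cD_0\to\cD_1\to\cdots$ with $\cD_k$ generated by $n$-morphisms of grade at most $k$, identifying $\cD_0$ with $\cC$ at least at the level of $(n-1)$-cores, and appealing to the first step for the lower-dimensional comparison. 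Combined with the grade argument, any invertible $n$-morphism of $\cD$ must lie in $\cD_0^\nsimeq\simeq\cC^\nsimeq$, giving the claim.

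The main obstacle is the middle step: in the $(\infty,n)$-categorical setting one cannot literally ``inspect words'', since $\cD$ is defined by a universal property rather than a combinatorial model. One must construct, using the machinery of Sections~\ref{sec:symmonextensions}--\ref{sec:truncatedviapsh}, an honest grade filtration whose zeroth stage matches $\cC$ on the $(n-1)$-core and whose positive stages attach only $n$-cells of strictly positive grade. I expect this to be precisely the purpose of the ``apparatus of tools'' announced in the introduction, so that once the detection apparatus is in place the corollary should follow by a short formal argument combining grade additivity, non-negativity, and the cellular filtration.
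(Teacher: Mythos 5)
Your grading observation — that an ungroup-like grading forces invertible $n$-morphisms into grade $0$ — is correct, and the paper records exactly this fact right after $M$-graded $\infn$-categories are introduced. But the filtration you propose does not quite exist as stated: the locus of $n$-morphisms of grade at most $k$ is not closed under composition or monoidal product, since grades add. Making this rigorous requires the $I$-truncation functor $T_I$ of Notation \ref{nota:TI}, a reflective localisation of $\Catinfn^M$ onto $I$-truncated graded $\infn$-categories, and this is the paper's actual substitute for your filtration.

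More importantly, the paper's proof of Corollary \ref{cor:nsimeqinvariance} is a representability argument, not a constructive one. For an elementary step $\cC_{i-1}\to\cC_i=\cC_{i-1}[X_i,\del X_i]^\sm$ and any $I$-truncated $\cE$, the defining pullback for $\Fun^\otimes_M(\cC_i,\cE)^\simeq$ collapses: extending an $X_i\times\mO_{n-1}$-diagram in $\cE$ to an $X_i\times\Theta_n$-diagram is a contractible choice when the required grading lies outside $I$, because $\cE$ is $I$-truncated. Hence $T_I(\cC_{i-1})\simeq T_I(\cC_i)$ by Yoneda, and chaining these gives $T_I(\cC)\simeq T_I(\cD)$. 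This entirely sidesteps the ``one cannot inspect words'' obstacle you correctly identify; the argument never needs an explicit cellular description of $\cD$. The passage from $T_I$-equivalence to $n$-groupoid cores is Corollary \ref{cor:Itruncateddescription}, which computes $T_I$ by right Kan extension from the inert, level-$\le1$, grading-in-$I$ part of $\bDelta^{\op,n}_M$; the degenerate simplices (some $k_i=0$, forcing $\nu=0$) assemble precisely to $\cC^\nsimeq$.

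Lastly, beware that your opening reduction is circular as stated: in the complete Segal model, the ``underlying $(\infty,n-1)$-category obtained by discarding $n$-morphisms'' \emph{is} $\cC^\nsimeq$ (the restriction along $-\times[0]$), so asking that it be preserved already asserts the theorem. The paper does not factor the statement that way; $T_I$-equivalence is the primary invariant and the $n$-groupoid core statement is a corollary of it, not a preliminary to be proved separately from the top-dimensional comparison.
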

The second result compares morphism spaces in $\cD$ and in an iterated extension of $\cC$ obtained by adjoining several copies of the new $n$-morphisms.
\begin{athm}[Proposition \ref{prop:retraction}]
\label{thm:C}
Let $S\in\Fin$ and let $\cD'$ be the symmetric monoidal $\infn$-category obtained from $\cC$ by adjoining $S\times X$ new $n$-morphisms with sources and targets prescribed as for the extension $\cC\to\cD$. Observe that morphisms in $\cD'$ come with an $\N^S$-valued grading. Let $y_1$ and $y_2$ be $(n-1)$-morphisms in $\cC$ sharing all levels of sources and targets, let $Y$ denote the space of $n$-morphisms from $y_1$ to $y_2$ in $\cD$ of $\N$-grading $\#S\in\N$, and let $Y'$ denote the space of $n$-morphisms from $y_1$ to $y_2$ in $\cD'$ of $\N^S$-grading $1^S$. Then the map $Y'/\fS_S\to Y$ induced by the fold symmetric monoidal $\infn$-functor $\cD'\to\cD$ is a homotopy retraction, i.e. it admits a section up to homotopy.
\end{athm}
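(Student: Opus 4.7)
The plan is to apply the explicit presheaf-based model for free symmetric monoidal extensions developed in Section \ref{sec:truncatedviapsh}. This model presents the $n$-morphism spaces of $\cD = \cC[X]$ and $\cD' = \cC[S\times X]$ as (homotopy) colimits indexed by combinatorial categories that parametrize ``insertion schemes'' for the new generators into $\cC$-compositions.

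First, I would identify the space $Y$ with a homotopy colimit of the form $\colim_{\fJ_k} F_k$, where $\fJ_k$ is a category whose objects are ``$k$-ordered insertion schemes'' for slot-structures into $\cC$-morphism data from $y_1$ to $y_2$, equipped with a natural $\fS_k$-action permuting slot labels; the functor $F_k$ assigns to a scheme the product of an $X^k$-factor (filling the slots) with the space of compatible $\cC$-morphism data. Analogously, $Y'$ is identified with $\colim_{\fJ_S} F_S$ for a category $\fJ_S$ of $S$-labeled insertion schemes, carrying an $\fS_S$-action by label permutation.

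Second, any bijection $\phi\colon S \xrightarrow{\sim} \{1,\ldots,k\}$ induces an equivalence $\fJ_S \simeq \fJ_k$ compatible with the group isomorphism $\fS_S \simeq \fS_k$ and with the diagrams $F_S$ and $F_k$. Under this equivalence the fold functor $\cD'\to\cD$ restricts to the natural transformation that forgets the $S$-labeling, so on $\fS_S$-homotopy-orbits it becomes an equivalence $(Y')_{h\fS_S} \xrightarrow{\simeq} Y$; in particular it is a homotopy retraction.

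The main obstacle lies in establishing the colimit descriptions of $Y$ and $Y'$ with sufficient precision to trace the $\fS_S$-action and the fold map through. This requires the full presheaf-based apparatus of Section \ref{sec:truncatedviapsh}, which must simultaneously resolve the $\infty$-categorical subtleties of coherent composition, symmetric monoidal coherence, and compatibility with the $\N$- and $\N^S$-gradings on $n$-morphisms. Once those presentations are in place, the section is constructed formally by comparing the $\fS_S$-quotient of the $1^S$-graded piece of $\cD'$ with the $\fS_k$-quotient of the $k$-graded piece of $\cD$ via any choice of bijection $\phi$, and independence from the choice of $\phi$ is precisely what the $\fS_S$-quotient absorbs.
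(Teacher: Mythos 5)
Your approach is genuinely different from the paper's, and it aims at a strictly stronger conclusion, but it contains a critical unresolved gap.

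You propose to write $Y$ as a homotopy colimit over a category of ``$k$-ordered insertion schemes'' and $Y'$ as an analogous colimit over $S$-labeled schemes, and then observe that the fold induces an equivalence $(Y')_{h\fS_S}\to Y$. If this worked, it would prove not merely that the map admits a section but that it is an \emph{equivalence} --- which is exactly Conjecture~\ref{conj:retractionisequivalence}, stated immediately after Proposition~\ref{prop:retraction} and explicitly left open. You acknowledge ``the main obstacle lies in establishing the colimit descriptions of $Y$ and $Y'$'' but then proceed as though this were a routine bookkeeping matter. It is not: obtaining a convergent cellular/operadic description of the space of $n$-morphisms of a given grading in a free symmetric monoidal extension of $\infn$-categories, valid for arbitrary grading $k$, is precisely the missing ingredient. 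Section~\ref{sec:truncatedviapsh}, which you cite, only gives a presheaf model for the $01$-truncated part (i.e.\ gradings $0$ and $1$), where the Day-convolution formulas in Remark~\ref{rem:dotkfreemodule} apply; it says nothing about grading $k>1$, which is where the combinatorics of interchange and symmetrization becomes delicate.

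The paper's actual proof sidesteps this entirely. It never attempts to describe $Y$ explicitly. Instead it forms the auxiliary object $\cC[\ul X,\ul{\del X}]^\sm_{\Fininj}:=\colim_{\Fininj}\cC[-|\ul X,\ul{\del X}]^\sm_{01}$, proves in Lemma~\ref{lem:ccuXsmFininjcatinfn} that this is an $\N$-graded $\infn$-category with grading-$r$ piece on $n$-morphisms equal to $Y'/\fS_{\ul r}$ (the Segal condition check for this colimit is the real technical content), and then uses the \emph{universal property} of $\cD=\cC[\ul X,\ul{\del X}]^\sm$ to build a symmetric monoidal $\infn$-functor $\fs\colon\cD\to\cC[\ul X,\ul{\del X}]^\sm_{\Fininj}$ with $\fr\circ\fs\simeq\Id_{\cD}$. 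That is, the section is constructed at the level of $\CAlg(\Catinfn)$, and only \emph{then} specialized to morphism spaces. This gives a section ``for free'' from representability, with no need for an explicit presentation of $Y$ --- exactly the information you would need to complete your argument but which remains conjectural. To salvage your sketch you would have to supply the colimit formula for grading-$k$ $n$-morphism spaces, which would amount to a proof of Conjecture~\ref{conj:retractionisequivalence} and is a substantially harder result than the retraction you were asked to establish.
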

We actually conjecture that the map $Y'/\fS_S\to Y$ in Theorem \ref{thm:C} is an equivalence, see Conjecture \ref{conj:retractionisequivalence} for more details. Theorems \ref{thm:B} and \ref{thm:C} allow one in concrete situations to restrict the study of the extension $\cD$ to understanding its spaces of $n$-morphisms of $\N$-grading 1. We are thus led to introduce the $\infty$-category $\CAlg(\Catinfn^{01})$ of \emph{$01$-truncated symmetric monoidal $n$-categories}: roughly these are symmetric monoidal $n$-categories in which $n$-morphisms carry grading 0 or 1, and in which composition and monoidal product of $n$-morphisms is only defined if the sum of the gradings does not exceed 1. 
\begin{athm}[Corollary \ref{cor:CAlgCatinfn01pullback}]
\label{thm:D}
There is a pullback square of large $\infty$-categories
\[
\begin{tikzcd}[row sep=10pt]
\CAlg(\Catinfn^{01})\ar[r]\ar[d]\ar[dr,phantom,"\lrcorner"very near start]&\CAlg(\Catinfn)\ar[d,"{(\dot\cT^n,\dot\lambda^n)}"]\\
\CAlg(\PSh^{01})\ar[r,"\hat\iota^*"]&\CAlg(\PSh).
\end{tikzcd}
\]
Here $\PSh\to\Catinfone$ denotes the bicartesian fibration corresponding to the functor associating with a small category its category of space-valued presheaves, and $\PSh^{01}$ is similarly defined, using presheaves with values in the category of ``$01$-graded spaces'', i.e. spaces with a map to the set $\{0,1\}$. The functor $\hat\iota^*$ forgets the part of grading 1 of a $01$-graded presheaf. The symmetric monoidal functor $(\dot\cT^n,\dot\lambda^n)$
associates with $\cE\in\Catinfn$ the ``natural'' $\infone$-category $\dot\cT^n(\cE)$ on which the presheaf $\dot\lambda^n(\cE)$, packaging the information about $n$-morphisms in $\cE$, is defined.
\end{athm}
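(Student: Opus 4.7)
The plan is to deduce the $\CAlg$-level pullback square from a pullback square of underlying large $\infty$-categories, by applying the limit-preserving functor $\CAlg$. This reduces the problem to a two-part task: first, show that the ``non-commutative'' version of the square is a pullback in $\Catinfone$; second, upgrade the square to one of commutative algebras in the appropriate ambient symmetric monoidal $\infty$-category.

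The first step is to establish (or import from the preceding material in Section \ref{sec:truncatedviapsh}) the pullback square of large $\infty$-categories
\[
\begin{tikzcd}[row sep=10pt]
\Catinfn^{01}\ar[r]\ar[d]\ar[dr,phantom,"\lrcorner"very near start]&\Catinfn\ar[d,"{(\dot\cT^n,\dot\lambda^n)}"]\\
\PSh^{01}\ar[r,"\hat\iota^*"]&\PSh,
\end{tikzcd}
\]
where the terms and functors are the non-symmetric-monoidal analogs of those in the statement. This amounts to asserting that a $01$-truncated $n$-category is the same datum as an ordinary $n$-category together with a coherent $01$-grading on its $n$-morphisms, and in view of the definition of $\Catinfn^{01}$ should reduce to a brief unwinding.

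The second step is to equip each corner with a symmetric monoidal structure and to verify that each edge is a symmetric monoidal functor, so that the square lives in $\CAlg(\Catinfone)$. On $\Catinfn$ and $\Catinfn^{01}$ one takes the canonical cartesian-monoidal structure. On $\PSh$, viewed as the total space of a bicartesian fibration over the cartesian-monoidal $\Catinfone$, one uses the external tensor structure sending $(\cC,F),(\cC',F')$ to $(\cC\times\cC',F\boxtimes F')$; similarly on $\PSh^{01}$. Then $(\dot\cT^n,\dot\lambda^n)$ is symmetric monoidal because passing to underlying $\infone$-categories and $n$-morphism presheaves is compatible with cartesian products, and $\hat\iota^*$ is symmetric monoidal because forgetting the grading-$1$ component is fiberwise symmetric monoidal over $\Catinfone$.

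The final step is to apply $\CAlg$: since $\CAlg\colon\CAlg(\Catinfone)\to\Catinfone$ is essentially $\Fun^\otimes(\one,-)$ for the unit $\one\in\CAlg(\Catinfone)$, it preserves small limits. Applying it termwise to the symmetric monoidal pullback square from the previous step yields the desired pullback in large $\infty$-categories. The main obstacle lies in the second step: identifying the symmetric monoidal structures on $\PSh$ and $\PSh^{01}$ explicitly enough to verify the three symmetric monoidal functor structures, and checking that the pullback in $\CAlg(\Catinfone)$ recovers precisely the cartesian structure on $\Catinfn^{01}$. This is a coherence check rather than a pointwise one, but once it is in place the rest of the argument is formal.
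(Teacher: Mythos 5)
Your three-step strategy matches the paper's overall logic: a pullback square of large $\infty$-categories, a lift to $\CAlg(\Catinfone^{\fU_2})$, and an application of the limit-preserving functor $\CAlg$. Step 3 is clean — $\CAlg(-)$ preserves limits — and step 1 is indeed available from Proposition \ref{prop:Catinfn01pullback} together with the observation that $\PSh^{01}\simeq\PSh\times_{\Catinfone}\PSh$, though you understate the work hiding behind the phrase ``brief unwinding'': it relies on Lemma \ref{lem:kappaXproperties} (the translation of the $01$-Segal condition into inert-locality of $\dot\kappa^n$) and on the localisation lemmas for (co)cartesian fibrations.

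Where the proposal has a real gap is step 2. You verify symmetric monoidality only for the right vertical (which is Proposition \ref{prop:dotcTcLsm}, a non-trivial inductive argument over $n$, not merely a compatibility observation) and for $\hat\iota^*$, and you leave the left vertical unlabeled. But identifying the left vertical is precisely the subtle point: the obvious candidate built from $\dot\kappa^n$ does \emph{not} carry a symmetric monoidal structure — the paper says as much when motivating the subsection. One has to route the comparison through $\CAlg(\Catinfn^\N)$ and define the left vertical as the composite $\varphi^*\dot\cL\dot\cT_\N$, where $\varphi$ is the morphism $(\N\to *)\to(\dot\cL^n\to\dot\cT^n)(\bB^n\N)$ in $\CAlg(\PSh)$. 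Moreover, both $\varphi^*$ and $\hat\iota^*$ are right adjoints to symmetric monoidal functors ($\varphi_!$ and $\hat\iota_!$), so they are automatically only \emph{lax} symmetric monoidal; upgrading them to strong symmetric monoidal is a genuine lemma, not a ``coherence check''. For $\hat\iota^*$ your heuristic is essentially correct (the grading-$0$ part of a product is the product of the grading-$0$ parts, because $\N$ is ungroup-like so $*\simeq *\times_\N(\N\times\N)$), but for $\varphi^*$ one needs the computation of Lemma \ref{lem:iota*dotTnBnN} showing $*\xrightarrow{\simeq}\iota^*(\dot\cT^n(\bB^n\N))$ — this is where the $\N$-graded localisation machinery is actually used, and your proposal has no analogue of it.

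So: the architecture of your argument is right, but the middle step cannot be dispatched as a formality. You need to (i) name the correct left vertical functor, which requires the detour through $\Catinfn^\N$ and $\PSh_{/(\N\to *)}$, and (ii) supply the computation that upgrades the a priori lax structures on $\varphi^*$ and $\hat\iota^*$ to strong ones. Without these, the pullback square you write down in step 1 has no preferred lift to $\CAlg$, and there is nothing for step 3 to be applied to.
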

The precise definition of the functor $\dot\cT^n$ is given in Section \ref{sec:truncatedviapsh}; for $n=1$ we have $\dot\cT^1(\cE)=\cE\times\cE^\op$, over which the presheaf $\dot\lambda(\cE)=\cE(-,-)$ is defined. Theorem \ref{thm:D} allows one to compute spaces of $n$-morphisms of $\N$-grading 1 in $\cD$ via suitable Day convolutions of presheaves over $\dot\cT^n(\cC)$.

We hope that this apparatus of tools can be useful in other contexts than the one presented in this article.

\subsection{Organisation of the article}
In Section \ref{sec:preliminaries} we introduce some preliminary material, which serves as a reference. Section \ref{sec:graphcobsets} gives a careful and elementary construction of $\bGr$ as a plain 2-category (i.e. an $\inftwo$-category with discrete spaces of 2-morphisms), as well as a description of the functor of graph-like structures on symmetric monoidal $\inftwo$-categories. Sections \ref{sec:symmonextensions} and \ref{sec:truncatedviapsh} give the apparatus of tools to study free extensions of symmetric monoidal $\infn$-categories for any $n\ge1$. Section \ref{sec:proofthmA} is devoted to the proof of Theorem \ref{thm:A}. Finally, in Section \ref{sec:proofA1A2} we prove Corollaries \ref{cor:A1} and \ref{cor:A2} as well as Theorem \ref{thm:Reinclusion}.

\subsection{Motivation}
In the forthcoming work on string topology
\cite{Bianchi:stringtopology}, we are going to apply Corollaries \ref{cor:A1} and \ref{cor:A2} to construct string operations connecting the homology groups of the mapping spaces $\map(X,M)$, for varying space $X\in\cS$ and for a fixed oriented Poincar\'e duality space $M$. We originally intended to apply directly Theorem \ref{thm:A} to that purpose, and this led us to the quest for a universal property for $\bGr$ and to the study of symmetric monoidal extensions of $\infn$-categories. We later understood, through conversations with Jan Steinebrunner, that the work of Barkan--Steinebrunner would actually suffice for the string topology applications. Since we do not have any direct application of Theorem \ref{thm:A} in mind, that does not pass through Corollaries \ref{cor:A1} and \ref{cor:A2}, we declare the following twofold motivation for this article:
\begin{itemize}
\item initiate a systematic study of symmetric monoidal extensions of $\infn$-categories;
\item give an alternative proof of the results of Barkan--Steinebrunner.
\end{itemize}

\subsection{Acknowledgments}
I would like to thank 
Shaul Barkan,
Thomas Blom,
Bastiaan Cnossen,
Rune Haugseng, 
Kaif Hilman,
Geoffroy Horel,
F\'elix Loubaton, 
Lennart Meier, 
Viktoriya Ozornova, 
Maxime Ramzi, 
Jan Steinebrunner, 
Raffael Stenzel,
Vignesh Subramanian
and Nathalie Wahl
for helpful conversations.

I was supported by the Max Planck Institute for Mathematics in Bonn throughout the preparation of this article.

\section{Preliminaries}
\label{sec:preliminaries}
In this section we recollect most of the background material that we will use throughout the article. We encourage the reader to proceed directly to the next section on a first reading of the article, and to use this section as a reference.

We fix two nested Grothendieck universes $\fU_1\subsetneq\fU_2$, and we refer to objects whose size is bounded above by $\fU_1$ and $\fU_2$ as ``small'' and ``large'', respectively.
An $\infty$-category is \emph{presentable} if it is $\fU_2$-small, admits $\fU_1$-small colimits, and is $\kappa$-accessible for some regular cardinal $\kappa<\fU_1$.

\subsection{Finite sets and simplices}
We denote by $\cS$ the presentable $\infty$-category of small $\infty$-groupoids, which we refer to as ``spaces''. We say that a map of spaces $X\to Y$ is an \emph{inclusion} if it exhibits $X$ as equivalent to a union of connected components of $Y$; we also say that $X$ is a \emph{subspace} of $Y$.

We denote by $\Fin$ the category of finite sets, and by $\Fininj\subset\Fin$ the subcategory of finite sets and injections. We regard $\Fin$ as a full $\infty$-subcategory of $\cS$.
For $k\ge0$ we denote by $\ul k\in \Fin$ the finite set $\set{1,\dots,k}$. We have $\emptyset=\ul 0$.

We denote by $\bDelta$ the category of linearly ordered non-empty finite sets and weakly increasing functions; for $k\ge0$ we let $[k]\in\bDelta$ be the object $\set{0<\dots<k}$; for $k\ge1$ and $0\le i\le k$ we denote by $d^i\colon [k-1]\to[k]$ the unique injective map whose image does not contain $i$; for $k\ge0$ and $0\le i\le k$ we let $s^i\colon[k+1]\to[k]$ be the unique surjective map whose fibre over $i$ has two points. For an $\infty$-category $\cC$ we denote by $d_i,s_i\colon\Fun([k],\cC)\to\Fun([k\pm1],\cC)$ the functors induced by $d^i,s^i$.

A morphism $[k]\to[k']$ in $\bDelta$ is \emph{inert} if it is injective and its image is a subsegment $\set{i<\dots<i+k}\subseteq[k']$, and it is \emph{active} if its image contains $0$ and $k'$. Every morphism in $\bDelta$ factors uniquely, up to unique isomorphism, as the composite of an active and an inert morphism. For $n\ge1$, a morphism in $\bDelta^n$ or in $(\bDelta^\op)^n$ is inert/active if each of its coordinates is (the opposite of) an inert/active morphism.
We let $\bDelta_\inert$ denote the wide subcategory of $\bDelta$ spanned by inert morphisms.

For $n\ge0$ and a sequence $\bk=(k_1,\dots,k_n)\in\N^n$ we let $[\bk]$ denote the object $([k_1],\dots,[k_n])\in\bDelta^n$, and $\ul\bk=\ul k_1\times\dots\times \ul k_n\in\Fin$.

For an $\infty$-category $\cC$ and for $n\ge0$ we denote by $s^n\cC:=\Fun((\bDelta^\op)^n,\cC)$ the category of $n$-fold simplicial objects in $\cC$. We write $s\cC=s^1\cC$ and identify $\cC=s^0\cC$.

For $\cC=\cS$ and $\bk=(k_1,\dots,k_n)$, we let $\Delta^\bk:=\bDelta^n(-,[\bk])\in s^n\cS$ denote the functor represented by $[\bk]$. For a map $f\colon[\bk]\to[\bk']$ in $\bDelta^n$ we also write $f\colon\Delta^{\bk}\to\Delta^{\bk'}$ for the induced map of $n$-fold simplicial spaces.
\begin{nota}
\label{nota:E1}
We denote by $E^1\in s\cS$ the quotient of $\Delta^3\in s\cS$ in which we collapse the two copies of $\Delta^1\subset\Delta^3$ corresponding to the two inclusions $[1]\hto[3]$ with images $\set{0,2}$ and $\set{2,4}$, respectively, to two distinct points. In symbols
\[
E^1=\Delta^3\sqcup_{(\Delta^1\sqcup\Delta^1)}(\Delta^0\sqcup\Delta^0).
\]
\end{nota}
\begin{defn}
For an $\infty$-category $\cC$, an object $x\in\cC$ and a morphism $f\colon y\to z$ in $\cC$, we say that $x$ is \emph{local} with respect to $f$ if the map of spaces $\cC(y,x)\to\cC(z,x)$ induced by precomposition with $f$ is an equivalence.
\end{defn}

\subsection{Categories of categories}
\label{subsec:catofcat}
For $n\ge0$ we let $\Catinfn$ denote the $\infty$-category of small $\infn$-categories. 
We refer to the explicit model of $\Catinfn$ as the full and reflective sub-$\infty$-category of $s^n\cS$ spanned by complete Segal objects, i.e. objects $X\in s^n\cS$ satisfying the following conditions introduced by Rezk \cite{Rezk}.
\begin{itemize}
\item (Segal condition) The composite functor $X_\inert\colon(\bDelta_\inert^\op)^n\hto(\bDelta^\op)^n\overset{X}{\to}\cS$ is right Kan extended from the full subcategory $(\bDelta_{\inert,\le1}^\op)^n\subset(\bDelta_\inert^\op)^n$ spanned by objects $[\bk]$ with $k_i\le 1$ for all $1\le i\le n$. This can be equivalently expressed by asking $X$ to be local with respect to all of the following ``spine inclusions'', for varying $\bk\in\N^n$:
\[
\colim_{([\bk']\to[\bk])\in(\bDelta_\inert^n)_{/[\bk]}}\Delta^{\bk'}\to\Delta^{\bk}.
\]
\item(Constancy condition) For all $1\le i\le n-1$, for all $\bk\in\N^{i-1}$ and for all maps $[\bk']\to[\bk'']$ in $(\bDelta^\op)^{n-i}$, the induced map $X([\bk,0,\bk'])\to X([\bk,0,\bk''])$ is an equivalence of spaces. Equivalently, we can ask $X$ to be local with respect to the maps $\Delta^{\bk}\boxtimes\Delta^0\boxtimes\Delta^{\bk''}\to\Delta^{\bk}\boxtimes\Delta^0\boxtimes\Delta^{\bk'}$, where $\boxtimes$ denotes the exterior product of multisimplicial spaces (see Subsection \ref{subsec:presheaves}).
\item(Completeness condition)  For all $1\le i\le n$ and for all $\bk\in\N^{i-1}$, $X$ is \emph{local} with respect to the map $\Delta^\bk\boxtimes E^1\boxtimes\Delta^{0^{n-i}}\to \Delta^\bk\boxtimes \Delta^0\boxtimes\Delta^{0^{n-i}}$ induced by the unique map $E^1\to\Delta^0$ in $s\cS$.
\end{itemize}
\begin{rem}
\label{rem:Catinfnpresentable}
The description of $\Catinfn\subset s^n\cS$ as the full subcategory on objects that are local with respect to a \emph{set} of morphisms in $s^n\cS$ ensures that $\Catinfn$ is an accessible localisation of $s^n\cS$, and in particular $\Catinfn$ is presentable.
\end{rem}

\begin{nota}
\label{nota:universes}
For $n\ge0$ and $1\le i\le 2$ we denote by $\cS^{\fU_i}$ and $\Catinfn^{\fU_i}$ the $\infty$-category of $\fU_i$-small $\infty$-groupoids and $\infn$-categories, respectively; for $i=1$ we omit the exponent ``$\fU_1$'' from the notation.
\end{nota}

\begin{ex}
Both $\cS$ and $\Catinfn$ belong to $\Catinfone^{\fU_2}$ and are presentable. 
\end{ex}

Spaces are the same as $(\infty,0)$-categories; for all $n\ge1$, $(\infty,n-1)$-categories are also considered as $\infn$-categories with only invertible $n$-morphisms. From the point of view of $(n-1)$-fold and $n$-fold simplicial spaces, the fully faithful inclusion $\Cat_{(\infty,n-1)}\hto\Catinfn$ corresponds to the restriction on $\Cat_{(\infty,n-1)}$ of the functor $s^{n-1}\cS\to s^n\cS$ given by left Kan extension along the inclusion $-\times[0]\colon(\bDelta^\op)^{n-1}\to(\bDelta^\op)^n$; since $[0]$ is initial in $\bDelta^\op$, we can identify the same functor $s^{n-1}\cS\to s^n\cS$ as the one given by restriction along the projection $(\bDelta^\op)^n\to(\bDelta^\op)^{n-1}$ away from the last factor.
The above shows that for $n\ge1$, the fully faithful inclusion $\Cat_{(\infty,n-1)}\hto\Catinfn$ has a left adjoint $|-|_n\colon\Catinfn\to\Cat_{(\infty,n-1)}$, given by left Kan extension along the projection $(\bDelta^\op)^n\to(\bDelta^\op)^{n-1}$, and a right adjoint $(-)^\nsimeq\colon\Catinfn\to\Cat_{(\infty,n-1)}$, given by restriction along $-\times[0]$; we say that $\cC^\nsimeq$ is the \emph{$n$-groupoid core} of $\cC$, and we regard $\cC^\nsimeq$ as a (non-full) sub-$\infn$-category of $\cC$ via the counit of the adjunction. For each $n\ge1$ we also denote by $|-|,(-)^\simeq\colon\Catinfn\to\cS$ the left and right adjoints of the fully faithful inclusion $\cS\to\Catinfn$ given by taking constant functors out of $(\bDelta^\op)^n$.

For $\cC,\cD\in\Catinfn$ we denote by $\Fun(\cC,\cD)^\simeq\in\cS$ the morphism space from $\cC$ to $\cD$ in $\Catinfn$; we put the symbol ``$\simeq$'' in order to avoid confusion with other notions of $\infn$-categories of $\infn$-functors from $\cC$ to $\cD$ appearing in the literature.

\begin{defn}
\label{defn:ThetanmOn}
We define $\Theta_0=\set{s_0}$ and $\mO_0=\set{s_0,t_0}$ as the discrete groupoids with one, respectively two non-equivalent objects. For $n\ge1$, we recursively define $\Theta_n$ as the $n$-category with two non-equivalent objects $s_0,t_0$ having $\Theta_{n-1}$ as $(n-1)$-category of morphisms from $s_0$ to $t_0$, and otherwise having no other non-identity $i$-morphisms for any $1\le i\le n$. We similarly define $\mO_n$, replacing ``$\Theta_{n-1}$'' by ``$\mO_{n-1}$'' in the recursive definition of $\Theta_n$. Equivalently, for $n\ge1$, we may define $\mO_n$ as the $\infn$-category $\Theta_{n+1}^{(n+1)\!\simeq}$, in particular we have an inclusion $\mO_n\hto\Theta_{n+1}$.
\end{defn}
\begin{ex}
The category $\mO_2$ has two distinct objects $s,t$ and two distinct morphisms $f,g\colon s\to t$; the 2-category $\Theta_2$ is obtained from $\mO_2$ by adjoining a 2-morphism $f\Rightarrow g$.
\end{ex}
For $0\le m\le n$ and an $\infn$-category $\cC$, we identify $\Fun(\Theta_m,\cC)^\simeq$ with the space of all $m$-morphisms in $\cC$, that is the value at $[1]^m\times[0]^{n-m}\in(\bDelta^\op)^n$ of a complete Segal $n$-fold simplicial space $(\bDelta^\op)^n\to\cS$ representing $\cC$.
Similarly, $\mO_{m-1}\in\Catinfn$ corepresents the space of all choices of two $m-1$-morphisms in $\cC$ sharing all levels of sources and targets.

The word ``$n$-category'' is used for $\infn$-categories $\cC$ such that the restriction map $\Fun(\Theta_n,\cC)^\simeq\to\Fun(\mO_{n-1},\cC)^\simeq$ has discrete fibres; informally, we are requiring that all spaces of $n$-morphisms between any two $(n-1)$-morphisms sharing all levels of sources and targets is discrete. We abbreviate ``category'' for ``1-category'' and ``set'' for ``0-category''.

For $n\ge1$, an $\infn$-category $\cC$ and objects $x,y,z\in\cC$ we usually denote by $\cC(x,y)$ the $(\infty,n-1)$-category of morphisms between $x$ and $y$. We denote by $-\circhor-\colon\cC(y,z)\times\cC(x,y)\to\cC(y,z)$ the horizontal composition functor (for $n=1$ we just write $-\circ-$).

\subsection{Symmetric monoidal categories and deloopings}
\label{subsec:symmoncat}
For $n\ge0$ we denote by $\CAlg(\Catinfn)$ the presentable $\infty$-category of small symmetric monoidal $\infn$-categories, with morphisms given by symmetric monoidal $\infn$-functors. This can be equivalently defined as the category of commutative monoid objects \cite[Definition 2.4.2.1]{HA}, or of algebras over the $E_\infty$-operad, in $\Catinfn$, see \cite[Proposition 2.4.2.5]{HA}.
We usually denote by $\one\in\cC$ the monoidal unit and we denote by $-\otimes-\colon\cC\times\cC\to\cC$ the monoidal product.
When the monoidal structure on $\cC$ comes from categorical coproduct, either on $\cC$ or on a closely related $\infn$-category, we will instead use the notations $\emptyset$ and $-\sqcup-$ for the monoidal unit and the monoidal product of $\cC$.
For small symmetric monoidal $\infn$-categories $\cC$ and $\cC'$, we let $\Fun^\otimes(\cC,\cC')^\simeq\in\cS$ denote the corresponding morphism space in $\CAlg(\Catinfn)$.

We usually endow $\Fin$ and $\cS$ with the coproduct symmetric monoidal structure. When $\cS$ is endowed with the product symmetric monoidal structure, we will stress this by writing $(\cS,\times)$.

For $\cC\in\CAlg(\Catinfn)$ we let $\bB\cC\in\CAlg(\Cat_{(\infty,n+1)})$ denote the \emph{first delooping} of $\cC$, i.e. the essentially unique symmetric monoidal $(\infty,n+1)$-category with 
$\bB\cC(\one_{\bB\cC},\one_{\bB\cC})\simeq\cC$ and such that $(\bB\cC)^\simeq$ is a connected space. Taking the first delooping gives a fully faithful inclusion $\bB\colon\CAlg(\Catinfn)\hto\CAlg(\Cat_{(\infty,n+1)})$ with essential image given by the symmetric monoidal $(\infty,n+1)$-categories whose space of objects is connected.
We usually denote by $*=\one_{\bB\cC}$ the monoidal unit of a first delooping, which we regard as a ``base object''.

\begin{ex},
\label{ex:BnMgrouplike}
Let $M\in\CAlg(\cS)$ be a \emph{group-like} symmetric monoidal space, i.e. the discrete monoid $\pi_0(M)$ is an abelian group. Then for $n\ge1$ we have that $\bB^n M$ is the essentially constant $n$-fold simplicial space with value the essentially unique pointed $(n-1)$-connected space $X$ with $\Omega^nX\simeq M$. By abuse of notation, we refer to this space $X$ also as $\bB^nM$ in this case.
\end{ex}

\begin{defn}
\label{defn:ungrouplike}
A discrete abelian monoid $M$ is \emph{ungroup-like} if the neutral element $0\in M$ is the only invertible element.
\end{defn}
We remark that all finitely generated, free abelian monoids are ungroup-like.
\begin{ex}
\label{ex:BnMungrouplike}
Let $M$ be an ungroup-like abelian monoid, considered as an object in $\CAlg(\cS)$. Then for $n\ge1$ the underlying $n$-fold simplicial space of $\bB^nM$ is the levelwise discrete $n$-fold simplicial abelian monoid $[\bk]\mapsto M^{\ul\bk}$, with face maps given by addition and degeneracy maps given by inserting $0\in M$. Note that, for a generic discrete monoid $M$, the $n$-fold simplicial set $[\bk]\mapsto M^{\ul\bk}$ satisfies the Segal and constancy conditions; the ungroup-like assumption also ensures completeness.
\end{ex}
\begin{nota}
\label{nota:totalgrading}
In the setting of Example \ref{ex:BnMungrouplike}, for $\nu\in\bB^nM([\bk])\simeq M^{\ul\bk}$, we define the \emph{total grading} of $x$ as the sum of the coordinates of $\nu$, taken in $M$.
\end{nota}

\subsection{Presheaves and Day convolution}
\label{subsec:presheaves}
For $\cC\in\Catinfone$ and for any $\infty$-catego\-ry $\cD$ we denote by $\PSh(\cC;\cD):=\Fun(\cC^\op;\cD)$ the $\infty$-category of $\cD$-valued presheaves over $\cC$. If $\cD$ is endowed with a symmetric monoidal structure, we define for $\cC,\cC'\in\Catinfone$ an exterior product $-\boxtimes-\PSh(\cC;\cD)\times\PSh(\cC;\cD')\to\PSh(\cC\times\cC';\cD)$, sending $(A,B)\mapsto A\boxtimes B$ with $(A\boxtimes B)(c,c')= A(c)\otimes B(c')$.

If $\cD=\cS$ we just write $\PSh(\cC)$ for $\PSh(\cC,\cS)$, which is a presentable $\infty$-category.
We denote by $\yo\colon\cC\to\PSh(\cC)$ the Yoneda embedding. For $\cC\in\CAlg(\Catinfone)$, the category $\PSh(\cC)$ is endowed with a symmetric monoidal structure $-\otimes_\Day-$ given by \emph{Day convolution}, which can be characterised by the following properties \cite[Corollary 4.8.1.12]{HA}:
\begin{itemize}
\item the Yoneda embedding can be promoted to a symmetric monoidal functor;
\item the monoidal product $\PSh(\cC)^2\to\PSh(\cC)$ preserves colimits separately in each variable.
\end{itemize}
The two properties give rise to the following formula: if $A\simeq\colim_{i\in I}\yo(x_i)$ and $B\simeq\colim_{j\in J}\yo(x'_j)$ are objects in $\PSh(\cC)$, written as colimits of representable presheaves, then $A\otimes_{\Day}B\simeq\colim_{(i,j)\in I\times J}\yo(x_i\otimes x'_j)$.

A functor $F\colon\cC\to\cD$ induces a functor $F^*\colon\PSh(\cD)\to\PSh(\cC)$ by restriction, with left adjoint $F_!\colon\PSh(\cC)\to\PSh(\cD)$ given by left Kan extension. The functor $F_!$ is symmetric monoidal, whereas $F^*$ is in general only lax symmetric monoidal.

We denote by $\PSh\to\Catinfone$ a cartesian fibration corresponding to the functor $\PSh(-)\colon(\Catinfone)^\op\to\Catinfone^{\fU_2}$ sending $\cC\mapsto\PSh(\cC)$ and sending $F\colon\cC\to\cD$ to $F^*$. Roughly speaking, an object in $\PSh$ is a pair $(\cC,A)$ with $\cC\in\Catinfone$ and $A\in\PSh(\cC)$. Exterior product of presheaves, sending $(A,B)\in\PSh(\cC)\times\PSh(\cD)$ to the presheaf $A\boxtimes B\in\PSh(\cC\times\cD)$, makes $\PSh(-)$ into a lax symmetric monoidal functor. 
By \cite[Theorem B, Remark 0.1]{Ramzi} we have a symmetric monoidal structure on $\PSh$ as well as on the projection functor $\PSh\to\Catinfone$.

We can also regard $\PSh$ as the full subcategory of $\Fun([1],\Catinfone)$ spanned by right fibrations, recalling that the datum of $A\in\PSh(\cC)$ is equivalent to that of a right fibration over $\cC$; in this light the functor $\PSh\to\Catinfone$ is the restriction of the target functor $d_0\colon\Fun([1],\Catinfone)\to\Catinfone$, and the symmetric monoidal structure on $\PSh$ is restricted from the pointwise one on $\Fun([1],\Catinfone)$; in particular $\PSh$ has the \emph{cartesian} symmetric monoidal structure.

\subsection{Twisted arrow categories}
\label{subsec:twistedarrow}
We denote by $i_\bDelta\colon\bDelta\to\bDelta$ the non-trivial involution of $\bDelta$, by $\Delta_\bDelta\colon\bDelta\to\bDelta\times\bDelta$ the diagonal functor, by $p_{\bDelta,1},p_{\bDelta,2}\colon\bDelta\times\bDelta\to\bDelta$ the two projections, and by $c_\bDelta\colon \bDelta\times\bDelta\to\bDelta$ the concatenation functor, sending $([k],[k'])$ to the lexicographic concatenation of finite ordered sets $[k]\sqcup_{<}[k']\simeq[k+k'+1]$. For $j=1,2$ there is a natural transformation $\eta_{\bDelta,j}\colon p_{\bDelta_j}\Rightarrow c_\bDelta$, sending $([k_1],[k_2])$ to the minimal ($j=1$) or maximal ($j=2$) inclusion $[k_j]\hto[k_1+k_2+1]$.

For $X\in s\cS=\Fun(\bDelta^\op,\cS)$ we denote by $X^\op,\Tw(X)\in s\cS$ the functors $X\circ i^\op_\bDelta$ and $X\circ c_\bDelta^\op\circ(\Id_\bDelta\times i_\bDelta)^\op\circ\Delta_{\bDelta}^\op$. The natural transformations $\eta_{\bDelta,1},\eta_{\bDelta,2}$ give rise to a map of simplicial spaces $\Tw(X)\to X\times X^\op$.

If $\cC\in\Catinfone$, then also $\cC^\op,\Tw(\cC)\in\Catinfone$, and the projection $\Tw(\cC)\to\cC\times\cC^\op$ is a right fibration, corresponding to a presheaf which we denote by $\cC(-,-)\in\PSh(\cC\times\cC^\op)$, as its value on $(x,y)$ is the morphism space $\cC(x,y)$. We refer to $\Tw(\cC)$ as the \emph{twisted arrow category} of $\cC$, and we denote by $d_1\colon\Tw(\cC)\to\cC$ and $d_0\colon\Tw(\cC)\to\cC^\op$ the ``source'' and ``target'' functors, even though they are not directly related to the morphisms $d^0,d^1\colon[0]\to[1]$ in $\bDelta$.

Under the equivalence $\PSh(\cC\times\cC^\op)\simeq\Fun(\cC;\PSh(\cC))$, the presheaf $\cC(-,-)$ corresponds in turn to $\yo\colon\cC\to\PSh(\cC)$.

\subsection{Commutative Frobenius algebras}
The following material can be found in \cite[Subsection 4.6.5]{HA}.

\begin{defn}
\label{defn:duality}
Let $\cC$ be a symmetric monoidal $\infone$-category, and let $x\in\cC$ be an object. A \emph{duality datum} for $x$ is the pair $(x^\vee,c)$ of an object $x^\vee\in\cC$ and a morphism $c\colon x\otimes x^\vee\to \one$ satisfying the following property: there exists a morphism $u\colon 1\to x^\vee\otimes x$ for which there exist homotopies $\Id_x\simeq (c\otimes\Id_x)(\Id_x\otimes u)$ and $\Id_{x^\vee}\simeq(\Id_{x^\vee}\otimes c)(u\otimes\Id_{x^\vee})$. In this case we say that $x^\vee$ is a dual of $x$, and that $x$ is dualisable; the maps $c$ and $u$ are called counit and unit of the duality.
\end{defn}
\begin{nota}
Following \cite[Notation 4.6.1.8]{HA}, for $\cC\in\CAlg(\Catinfone)$ we let $\DDat(\cC)$ denote the moduli space of duality data in $\cC$: it is introduced as a certain full subcategory of the limit in $\Catinfone$ of the following diagram
\[
\begin{tikzcd}[row sep=3pt]
(\cC\times\cC)\ar[dr,"-\otimes-"']& &\Fun([1],\cC)\ar[dl,"d_1"]\ar[dr,"d_0"']& &*\ar[dl,"\one"]\\
&\cC& &\cC;
\end{tikzcd}
\]
more precisely, we take the full subcategory spanned by objects of the form $(x,x^\vee,c)$ such that $(x^\vee,c)$ is a duality datum for $x$. We have that $\DDat(\cC)$ is an $\infty$-groupoid, and the forgetful functor $\DDat(\cC)\to\cC^\simeq$ sending $(x,x^\vee,c)\mapsto x$ is fully faithful with image the subspace of dualisable objects in $\cC^\simeq$, see \cite[Lemma 4.6.1.10]{HA}.
\end{nota}
\begin{lem}[{\cite[Lemma 4.6.1.6]{HA}}]
\label{lem:dualityequivalence}
Let $\cC\in\CAlg(\Catinfone)$ and let $(x^\vee,c)$ be a duality datum for $x\in\cC$; then for all objects $y,z\in\cC$ the composite map of spaces
\[
\begin{tikzcd}[column sep=45pt]
\cC(y,z\otimes x)\ar[r,"-\otimes x^\vee"]&\cC(y\otimes x^\vee,z\otimes x\otimes x^\vee)\ar[r,"{\cC(y\otimes x^\vee,z\otimes c)}"]&\cC(y\otimes x^\vee,z)
\end{tikzcd}
\]
is an equivalence.
\end{lem}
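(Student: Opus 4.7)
The plan is to construct an explicit homotopy inverse to the displayed composite by using the unit $u\colon\one\to x^\vee\otimes x$ provided by Definition \ref{defn:duality}. Denoting by $\Psi$ the map in the statement, I would define
\[
\Phi\colon\cC(y\otimes x^\vee,z)\xrightarrow{-\otimes x}\cC(y\otimes x^\vee\otimes x,z\otimes x)\xrightarrow{(\Id_y\otimes u)^*}\cC(y,z\otimes x),
\]
so that $\Phi(g)=(g\otimes\Id_x)\circ(\Id_y\otimes u)$ for $g\colon y\otimes x^\vee\to z$.

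To verify $\Phi\circ\Psi\simeq\Id$, I would pick $f\colon y\to z\otimes x$ and unravel:
\[
\Phi(\Psi(f))\simeq(\Id_z\otimes c\otimes\Id_x)\circ(f\otimes\Id_{x^\vee\otimes x})\circ(\Id_y\otimes u).
\]
Bifunctoriality of $\otimes$ lets me interchange $f$ and the $u$-insertion, since they act on disjoint tensorands, rewriting the expression as $(\Id_z\otimes((c\otimes\Id_x)\circ(\Id_x\otimes u)))\circ f$. The first triangle identity of Definition \ref{defn:duality} identifies the inner composite with $\Id_x$, so the whole composite is homotopic to $f$. The verification of $\Psi\circ\Phi\simeq\Id$ is symmetric: for $g\colon y\otimes x^\vee\to z$, one rearranges $\Psi(\Phi(g))$ by bifunctoriality into $g\circ(\Id_y\otimes((\Id_{x^\vee}\otimes c)\circ(u\otimes\Id_{x^\vee})))$, which the second triangle identity identifies with $g$.

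The principal subtlety is $\infty$-categorical: each interchange move and each invocation of a triangle identity must be interpreted as a chosen homotopy in the appropriate mapping space, and these homotopies must cohere. A conceptually clean framing is to promote $(u,c)$ to an adjunction $-\otimes x^\vee\dashv-\otimes x$ in the $\infty$-category $\Fun(\cC,\cC)$: the unit $\eta_y=\Id_y\otimes u$ and counit $\epsilon_z=\Id_z\otimes c$ assemble into the required natural transformations, and the triangle identities for this functorial adjunction follow by tensoring the ones from Definition \ref{defn:duality} with $\Id_y$ and $\Id_z$, respectively. With this adjunction in hand, the equivalence of the lemma is precisely the resulting natural equivalence of mapping spaces.
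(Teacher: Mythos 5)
Your proof is correct. The paper does not actually prove this statement --- it is quoted verbatim from Lurie's \emph{Higher Algebra} (Lemma 4.6.1.6) as a black box --- so there is no internal argument to compare against, but your construction of the homotopy inverse $\Phi$ via the unit $u$, followed by the interchange/triangle-identity manipulations, is the standard proof and is the one Lurie himself gives. The computations of $\Phi\circ\Psi$ and $\Psi\circ\Phi$ are carried out correctly.

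One small remark on the ``principal subtlety'' you raise: for the stated conclusion --- that a single map of \emph{spaces} $\Psi\colon\cC(y,z\otimes x)\to\cC(y\otimes x^\vee,z)$ is an equivalence --- no coherence between the various interchange homotopies and triangle-identity homotopies is actually required. It suffices to produce a map $\Phi$ in the other direction together with \emph{some} homotopies $\Phi\Psi\simeq\Id$ and $\Psi\Phi\simeq\Id$, each obtained by concatenating the paths you describe; any map of spaces admitting a two-sided homotopy inverse is automatically an equivalence, and these two homotopies are not required to interact with each other in any way. So the first, more elementary reading of your argument already closes the proof. Your adjunction framing, promoting $(u,c)$ to an adjunction $-\otimes x^\vee\dashv-\otimes x$ in $\Fun(\cC,\cC)$, is a cleaner packaging and has the independent virtue of making the equivalence manifestly natural in $y$ and $z$; it is a perfectly valid (and arguably more conceptual) alternative route, but it is strictly more than what the lemma demands. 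Either version is a complete proof.
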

The following is a symmetric monoidal adaptation of \cite[Definition 4.6.5.1]{HA}.
\begin{defn}
\label{defn:Frobeniusalgebra}
Let $\cC$ be a symmetric monoidal $\infone$-category.
A \emph{commutative Frobenius algebra} in $\cC$ is the datum $(\fI,\ft)$ of a symmetric monoidal functor $\fI\colon\Fin\to\cC$ and a morphism $\ft\colon\fI(\ul1)\to\fI(\ul0)$ such that $(\fI(\ul1),\ft\circ\fI(\mu))$ is a duality datum for $\fI(\ul1)$, see Notation \ref{nota:mu}.
We denote by $\CFrob\colon\CAlg(\Catinfone)\to\cS$ the functor associating with a symmetric monoidal $\infone$-category $\cC$ its moduli space of commutative Frobenius algebras. It is the pullback in $\Fun(\CAlg(\Catinfone),\cS)$ of the following diagram:
\[
\begin{tikzcd}[row sep=4pt,column sep=14pt]
\Fun^\otimes(\Fin,-)^\simeq\ar[dr,"\ev_{\mu}"']& &\Fun([2],-)^\simeq\ar[dl,"d_2"]\ar[dr,"d_1"']& &\DDat(\cC)\ar[dl,"\text{take counit}"]\\
&\Fun([1],\cC) & &\Fun([1],\cC)
\end{tikzcd}
\]
\end{defn}

\subsection{Localisation of categories}
\label{subsec:localisation}
\begin{defn}
\label{defn:RelCatinfone}
We denote by $\RelCatinfone\subset\Fun([1],\Catinfone)$ the full sub-$\infty$-category spanned by functors $W\to\cC$ such that $W([0])\overset{\simeq}{\to}\cC([0])$, and such that the map $W([1])\to \cC([1])$ exhibits $W([1])$ as a union of components of $\cC([1])$; in few words, $W$ is a wide subcategory of $\cC$. An object in $\RelCatinfone$ is usually denoted as a pair $(\cC,W)$, and is called a \emph{relative $\infone$-category}. We endow $\RelCatinfone$ with the (pointwise) product symmetric monoidal structure.

The fully faithful inclusion $\Catinfone\hto\RelCatinfone$ sending $\cC\mapsto(\cC,\cC^\simeq)$ is symmetric monoidal, and it admits a symmetric monoidal left adjoint $L\colon\RelCatinfone\to\Catinfone$, sending $(\cC,W)$ to the localisation of $\cC$ at all morphisms in $W$.
\end{defn}
\begin{defn}
\label{defn:laxarrowcategories}
We denote by $\Fun([1],\Catinfone)^\llax$ the full $\infty$-subcategory of $(\Catinfone)_{/[1]}$ spanned by cocartesian fibrations, and by $\Fun([1],\Catinfone)^\llax$ the full $\infty$-subcategory of $(\Catinfone)_{/[1]^\op}$ spanned by cartesian fibrations. In both cases an object is given by a functor of $\infone$-categories $\cC(0)\to\cC(1)$; a morphism from $(\cC(0)\to\cC(1))$ to $(\cD(0)\to\cD(1))$ is also called a left/right lax natural transformation between functors $[1]\to\Catinfone$; it looks like a square of $\infone$-category commuting up to a natural transformation, whose orientation dictates the difference between the two cases:
\[
 \begin{tikzcd}[row sep=30pt, column sep =70 pt]
\cC(0)\ar[r]\ar[d,""{name=tL,inner sep=2pt,right},""{name=sR,inner sep=2pt,right,very near end}]& \cD(0)\ar[d,""{name=sL,inner sep=2pt,left,very near start},""{name=tR,inner sep=2pt,left}] \\
\cC(1)\ar[r]&\cD(1).
\ar[Rightarrow,from=sL, to=tL,"\llax"']\ar[Rightarrow, from=sR,to=tR,"\rlax"']
 \end{tikzcd}
\]
We denote by $d_0$ and $d_1$ the functors $\Fun([1],\Catinfone)^\llax,\Fun([1],\Catinfone)^\rlax\to\Catinfone$ taking fibres of cocartesian fibrations over $[1]$ (respectively, cartesian fibrations over $[1]^\op$) at 1 (for $d_0$) and at 0 (for $d_1$).
\end{defn}

\begin{lem}
\label{lem:vertlocfibration}
Let $q\colon\cD^\op\to\RelCatinfone$ be a functor, and denote by $p_j\colon\cC_j\to\cD$ the cartesian fibration corresponding to $d_jq$ for $j=0,1$. Let $W=\cC_1\times_{\cD}\cD^\simeq$, and consider $W\subset\cC_1\subset\cC_0$ as a wide subcategory. Then the localised functor $L(\cC_0,W)\to\cD$ is a cartesian fibration, and its straightening is the composite $Lq$.
\end{lem}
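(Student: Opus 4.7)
The plan is to realise $L(\cC_0, W)$ as a fibrewise localisation of $p_0\colon\cC_0\to\cD$. Let $\cV\subset W$ denote the wide sub-$\infty$-category of $\cC_0$ whose morphisms lie inside a single fibre; equivalently, $\cV$ is the disjoint union of the $W_d\subset\cC_d=(\cC_0)_d$, regarded as a subcategory of $\cC_0$ via the inclusions of fibres. Denote by $\cE\to\cD$ the cartesian fibration straightening to $Lq\colon\cD^\op\to\Catinfone$; the goal is to identify $L(\cC_0, W)\to\cD$ with $\cE\to\cD$.

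The first step is a fibrewise localisation statement: $L(\cC_0,\cV)\to\cD$ is a cartesian fibration whose straightening is $Lq$. Its hypothesis is the stability of $\cV$ under cartesian transport along $p_0$, which follows from $q$ landing in $\RelCatinfone$: for each $f\colon d\to d'$ in $\cD$, the transition functor $(d_0q)(f)\colon\cC_{d'}\to\cC_d$ restricts to $(d_1q)(f)\colon W_{d'}\to W_d$, because $q(f)$ is a commuting square of wide subcategory inclusions. Granting this, a fibrewise localisation theorem in the spirit of Hinich (dualised from cocartesian to cartesian fibrations) produces a cartesian fibration $L(\cC_0,\cV)\to\cD$ with fibre $L(\cC_d, W_d)=Lq(d)$ over $d$, and with transition functors obtained from those of $p_0$ by applying $L$ fibrewise.

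The second step is to identify $L(\cC_0,\cV)$ with $L(\cC_0, W)$. The inclusion $\cV\subset W$ induces a canonical comparison $L(\cC_0,\cV)\to L(\cC_0, W)$; to invert it, by the universal property of localisation it suffices to show that every $w\in W$ is already invertible in $L(\cC_0,\cV)$. Given $w\colon x\to y$ in $\cC_1$ over an equivalence $g\colon d\to d'$ in $\cD$, choose a $p_1$-cartesian lift $\bar g\colon g^*y\to y$ of $g$; since cartesian lifts of equivalences are equivalences, $\bar g$ is an equivalence in $\cC_1$ and a fortiori in $\cC_0$, while the remaining vertical factor $w'\colon x\to g^*y$ lies in $W_d\subset\cV$, so $w=\bar g\circ w'$ is indeed invertible in $L(\cC_0,\cV)$.

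The main obstacle is a self-contained derivation of the fibrewise localisation theorem invoked in the first step. The standard argument unstraightens the unit $q\to Lq$ of the adjunction between $L$ and the inclusion $\Catinfone\hto\RelCatinfone$ to a functor $\Phi\colon\cC_0\to\cE$ over $\cD$ carrying $\cV$ to equivalences; $\Phi$ then factors as $\cC_0\to L(\cC_0,\cV)\to\cE$, and one must verify that the second arrow is an equivalence of cartesian fibrations over $\cD$. Both the cartesian lifting property of $L(\cC_0,\cV)\to\cD$ and the identification of its fibres with $L(\cC_d, W_d)$ reduce to a mapping-space computation exploiting the universal property of $L$ together with the cartesian structure of $p_0$.
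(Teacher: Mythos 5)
Your proposal is correct, but it takes a genuinely different route from the paper's proof. The paper argues directly via lax natural transformations: it takes $\cC'$ to be the unstraightening of $Lq$ and, for arbitrary $\cE\in\Catinfone$, identifies $\Fun(\cC',\cE)^\simeq$ with the space of left-lax natural transformations $Lq\Rightarrow\mathrm{const}_\cE$, then with the subspace of lax natural transformations $d_0q\Rightarrow\mathrm{const}_\cE$ that are pointwise $d_1q(x)$-local, and finally with the subspace of $W$-local functors in $\Fun(\cC_0,\cE)^\simeq$; this establishes the universal property of $L(\cC_0,W)$ for $\cC'$ in one computation. Your decomposition is more modular: step (1) applies an off-the-shelf fibrewise localisation theorem to the family $\{W_d\}$ to obtain a cartesian fibration $L(\cC_0,\cV)\to\cD$ with straightening $Lq$, and step (2) shows $L(\cC_0,\cV)\simeq L(\cC_0,W)$ by factoring each $w\in W$ as a vertical morphism in $\cV$ followed by a cartesian lift of an equivalence (itself an equivalence). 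Your step (2) is in fact the place where the paper's argument is least explicit — the transition from "pointwise $W_d$-local" to "$W$-local" is asserted without exhibiting the factorisation — so your treatment makes that comparison transparent. The trade-off is that your step (1) is invoked as a black box: you sketch the standard unstraightening argument but leave the verification of the cartesian lifting property and the fibrewise identification to a promised mapping-space computation, which the paper's lax-transformation calculation effectively carries out in one go. Both routes are sound; the paper's is more self-contained, yours cleaner in isolating the $\cV$-to-$W$ reduction.
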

\begin{proof}
Let $\cC'\to\cD$ denote a right fibration corresponding to $L\circ q\colon\cD^\op\to\Catinfone$, and let $\cE\in\Catinfone$. We then have equivalences of spaces
\[
\Fun(\cC',\cE)^\simeq\simeq\Fun_{\cD}(\cC',\cE\times\cD)^\simeq\simeq\Fun(\cD^\op,\Fun([1],\Catinfone)^\llax)^\simeq_{Lq,\mathrm{const}_\cE},
\]
where $\Fun_\cD$ denotes functors that are compatible with the projection to $\cD$, but need not preserve cartesian morphisms; and $\Fun(\cD^\op,\Fun([1],\Catinfone)^\llax)^\simeq_{Lq,\mathrm{const}_\cE}$ denotes the fibre over $(Lq,\mathrm{const}_\cE)$ of the map of spaces 
\[
(d_1,d_0)\colon\Fun(\cD^\op,\Fun([1],\Catinfone)^\llax)^\simeq\to(\Fun(\cD^\op,\Catinfone)^\simeq)^2.
\]
In other words, the last space is the space of left lax natural transformations from $Lq$ to $\mathrm{const}_\cE$.
Composition with the (strict) natural transformation $d_0q\to Lq$ allows us to identify $\Fun(\cD^\op,\Fun([1],\Catinfone)^\llax)^\simeq_{Lq,\mathrm{const}_\cE}$ with the subspace of $\Fun(\cD^\op,\Fun([1],\Catinfone)^\llax)^\simeq_{d_0q,\mathrm{const}_\cE}$ comprising those left lax natural transformations that, for each $x\in\cD^\op$, restrict to a functor $d_0q(x)\to \cE$ which is $d_1q(x)$-local. And this subspace, under the composite equivalence of spaces
\[
\Fun(\cD^\op,\Fun([1],\Catinfone)^\llax)^\simeq_{d_0q,\mathrm{const}_\cE}\simeq\Fun_\cD(\cC_0,\cE\times\cD)^\simeq\simeq\Fun(\cC_0,\cE)^\simeq,
\]
corresponds to the subspace of $\Fun(\cC_0,\cE)^\simeq$ comprising $W$-local functors.
\end{proof}
\begin{lem}
\label{lem:horlocfibration}
Let $(\cC,W)\in\RelCatinfone$, let $p\colon\cD\to\cC$ be a cartesian fibration, and assume that the functor $q\colon\cC^\op\to\Catinfone$ corresponding to $p$ is $W^\op$-local. Let $\mathrm{cart}_W\subset\cD$ denote the wide subcategory comprising all cartesian lifts of morphisms in $W$. Then the localised functor $L(\cD,\mathrm{cart}_W)\to L(\cC,W)$ is again a cartesian fibration, and it corresponds to the functor $L(\cC^\op,W^\op)\to\Catinfone$ induced by $q$.
\end{lem}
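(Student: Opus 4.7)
The plan is to mirror the proof of Lemma \ref{lem:vertlocfibration}, replacing ``$\llax$'' by ``$\rlax$'' throughout and treating horizontal rather than vertical localisation. First, the $W^\op$-locality of $q$ supplies, via the universal property of localisation, a factorisation $\bar q\colon L(\cC,W)^\op\simeq L(\cC^\op,W^\op)\to\Catinfone$; let $\bar p\colon\bar\cD\to L(\cC,W)$ be the cartesian fibration whose straightening is $\bar q$. Since cartesian fibrations are determined by their straightenings, the pullback $\cC\times_{L(\cC,W)}\bar\cD\to\cC$ is canonically equivalent to $p\colon\cD\to\cC$, producing a functor $\iota\colon\cD\to\bar\cD$ over $r\colon\cC\to L(\cC,W)$. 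Any morphism in $\mathrm{cart}_W$ is the cartesian lift in $\cD$ of some $w\in W$; its image under $\iota$ is a cartesian morphism of $\bar\cD$ lying over $r(w)$, which is an equivalence in $L(\cC,W)$. Since cartesian morphisms over equivalences are themselves equivalences, $\iota$ inverts $\mathrm{cart}_W$ and factors as $\cD\to L(\cD,\mathrm{cart}_W)\xrightarrow{\tilde\iota}\bar\cD$ over $L(\cC,W)$.

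To finish I show that $\tilde\iota$ is an equivalence of $\infone$-categories; by Yoneda this reduces to showing that $\tilde\iota^*\colon\Fun(\bar\cD,\cE)^\simeq\to\Fun(L(\cD,\mathrm{cart}_W),\cE)^\simeq$ is an equivalence for every $\cE\in\Catinfone$. Identifying functors between total spaces of cartesian fibrations over a common base, not required to preserve cartesian morphisms, with right-lax natural transformations between the corresponding straightenings yields
\[
\Fun(\bar\cD,\cE)^\simeq\simeq\Fun(L(\cC,W)^\op,\Fun([1],\Catinfone)^\rlax)^\simeq_{\bar q,\mathrm{const}_\cE}.
\]
The universal property of $L(\cC,W)^\op$ then exhibits this as the subspace of $\Fun(\cC^\op,\Fun([1],\Catinfone)^\rlax)^\simeq_{q,\mathrm{const}_\cE}\simeq\Fun(\cD,\cE)^\simeq$ consisting of the $W^\op$-local right-lax natural transformations.

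The main obstacle is to identify this subspace with $\Fun(\cD,\cE)^{\mathrm{cart}_W\text{-local},\simeq}$. Unpacking, $W^\op$-locality of a functor $\cC^\op\to\Fun([1],\Catinfone)^\rlax$ requires that for each $w\colon x\to y$ in $W$ the induced morphism of cartesian fibrations over $[1]^\op$ is an equivalence; both horizontal endpoints of the associated square, namely $q(w^\op)$ (an equivalence by $W^\op$-locality of $q$) and $\Id_\cE$, are already equivalences, so the condition collapses to invertibility of the right-lax $2$-cell attached to $w^\op$. Translating through the Grothendieck construction, this invertibility is exactly the requirement that the corresponding functor $\cD\to\cE$ send every cartesian lift of $w$ to an equivalence in $\cE$; hence the subspace is precisely $\Fun(\cD,\cE)^{\mathrm{cart}_W\text{-local},\simeq}\simeq\Fun(L(\cD,\mathrm{cart}_W),\cE)^\simeq$ by the universal property of localisation. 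Since $\tilde\iota$ is an equivalence over $L(\cC,W)$, the cartesian fibration structure and straightening of $\bar p$ transfer to $L(\cD,\mathrm{cart}_W)\to L(\cC,W)$, yielding the claim.
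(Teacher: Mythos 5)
Your proof is substantially correct and follows the same strategy as the paper's: reduce the universal property of $L(\cD,\mathrm{cart}_W)$ to a computation of $\Fun(-,\cE)^\simeq$, pass through the identification of functors between total spaces of cartesian fibrations with lax natural transformations of straightenings, and then unwind the locality condition. Your extra preliminary step of explicitly constructing $\iota\colon\cD\to\bar\cD$ and its factorisation $\tilde\iota$ through $L(\cD,\mathrm{cart}_W)$ is a clean way to organise the argument; the paper omits it and compares functor spaces directly, but the content is the same.

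The one genuine slip is the lax direction. You announce the plan of ``replacing $\llax$ by $\rlax$,'' but the paper's own proof of this lemma keeps $\llax$, and correctly so. In the paper's conventions, $\Fun([1],\Catinfone)^\llax$ consists of cocartesian fibrations over $[1]$ and $\Fun([1],\Catinfone)^\rlax$ of cartesian fibrations over $[1]^\op$ (the second occurrence of ``$\llax$'' in Definition \ref{defn:laxarrowcategories} is a typo for ``$\rlax$''). For a cartesian fibration $\bar\cD\to\cC'$ with straightening $q\colon(\cC')^\op\to\Catinfone$, a functor $\bar\cD\to\cE$ assigns to $\beta\colon c\to c'$ and $y\in\bar\cD_{c'}$ the value of $F$ on the cartesian lift $\beta^*y\to y$, giving a $2$-cell $F_c\circ\beta^*\Rightarrow F_{c'}$; in the orientation fixed by the diagram of Definition \ref{defn:laxarrowcategories} this is the $\llax$ direction, so the relevant identification is $\Fun_{\cC'}(\bar\cD,\cE\times\cC')^\simeq\simeq\Fun((\cC')^\op,\Fun([1],\Catinfone)^\llax)^\simeq_{q,\mathrm{const}_\cE}$, not the $\rlax$ version you wrote. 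Because the distinction between horizontal and vertical localisation in Lemmas \ref{lem:vertlocfibration} and \ref{lem:horlocfibration} concerns what is being localised, not the variance of the fibration, the lax direction does \emph{not} change between the two lemmas. The rest of your unwinding (that locality collapses to invertibility of the $2$-cells, which translates to $\mathrm{cart}_W$-locality of the corresponding functor $\cD\to\cE$) is symmetric in the orientation and therefore unaffected, so the argument survives with $\rlax$ replaced by $\llax$ throughout.
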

\begin{proof}
Let $\cC':=L(\cC,W)$, let
$\cD'\to \cC'$ be a cartesian fibration corresponding to $q\in \Fun((\cC')^\op,\Catinfone)^\simeq\subset\Fun(\cC^\op,\Catinfone)^\simeq$, and let $\cE\in\Catinfone$. Then we have equivalences of spaces
\[
\Fun(\cD',\cE)^\simeq\simeq\Fun_{\cC'}(\cD',\cE\times \cC')^\simeq\simeq\Fun((\cC')^\op,\Fun([1],\Catinfone)^\llax)^\simeq_{q,\mathrm{const}_\cE}.
\]
The localisation functor $\cC^\op\to(\cC')^\op$ 
allows us to identify the last space with the subspace of $\Fun(\cC^\op,\Fun([1],\Catinfone)^\llax)^\simeq_{q,\mathrm{const}_\cE}$ comprising $W$-local functors, and under the composite equivalence
\[
\Fun(\cC^\op,\Fun([1],\Catinfone)^\llax)^\simeq_{q,\mathrm{const}_\cE}\simeq\Fun_{\cC}(\cD,\cC\times\cE)\simeq\Fun(\cD,\cE),
\]
this corresponds to the subspace of $\Fun(\cD,\cE)$ comprising $\mathrm{cart}_W$-local functors.
\end{proof}

The following is an adaptation of \cite[Chapter 7.2]{Cisinski} for \emph{left} calculi of fractions.
\begin{defn}
Let $(\cC,W)\in\RelCatinfone$ and let $x\in\cC$. A \emph{left calculus of fractions at $x$} is a functor $\pi(x)\colon W(x)\to\cC_{x/}$ satisfying the following properties:
\begin{itemize}
\item the $\infone$-category $W(x)$ admits an initial object $x_0$, and $\pi(x)\colon x_0\mapsto\Id_x$;
\item $\pi(x)$ takes value in the full subcategory of $\cC_{x/}$ spanned by arrows that are contained in $W$;
\item the left Kan extension functor $(d_0\pi(x))_!\colon\PSh(W(x))\to\PSh(\cC)$ sends the terminal presheaf $*\in\PSh(W(x))$ to a presheaf $(d_0\pi(x))_!(*)\in\PSh(\cC)$ that factors through $L(\cC,W)$, i.e. morphisms in $W^\op$ are sent to invertible morphisms in $\cS$ along $(d_0\pi(x))_!(*)$.
\end{itemize}
\end{defn}
We remark that, for any functor $\pi(x)\colon W(x)\to\cC_{x/}$ and for $y\in\cC$, we have $(d_0\pi(x))_!(*)(y)\simeq \colim_{z\in W(x)}\cC(y,d_0\pi(z))$; we can also consider the category $\cC_{y/}\times_{\cC}W(x)$, where the fibre product is taken along the functors $d_0\colon\cC_{y/}\to\cC$ and $d_0\pi\colon W(x)\to\cC$; then we have $(d_0\pi(x))_!(*)(y)\simeq|\cC_{y/}\times_{\cC}W(x)|$.
\begin{thm}[{\cite[Theorem 7.2.8]{Cisinski}}]
\label{thm:Cisinski}
Let $(\cC,W)$ be a relative $\infty$-category, let $x\in\cC$ and let $\pi(x)\colon W(x)\to\cC_{x/}$ be a left calculus of fractions at $x$. Then we have an equivalence in $\PSh(\cC)$
\[
L(\cC,W)(-,x)\simeq (d_0\pi(x))_!(*).
\]
\end{thm}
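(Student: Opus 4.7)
The plan is to show that the natural map $\alpha\colon\yo_\cC(x)\to(d_0\pi(x))_!(*)$ induced by the initial object $x_0\in W(x)$ exhibits $(d_0\pi(x))_!(*)$ as the reflection of $\yo_\cC(x)$ into the full subcategory of $W$-local presheaves on $\cC$; combined with the standard identification $L(\cC,W)(-,x)\simeq L_W\yo_\cC(x)$, this yields the theorem. For the latter identification, denote by $\gamma\colon\cC\to L(\cC,W)$ the localisation; then $L(\cC,W)(-,x)$ equals the restriction $\gamma^*\yo_{L(\cC,W)}(x)$ of a representable, hence is $W$-local, while left Kan extension of representables along $\gamma$ yields $\gamma_!\yo_\cC(x)\simeq\yo_{L(\cC,W)}(x)$, so that the reflection $L_W=\gamma^*\gamma_!$ sends $\yo_\cC(x)$ to $L(\cC,W)(-,x)$.

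Since $(d_0\pi(x))_!(*)$ is $W$-local by hypothesis, it suffices to check that $\alpha$ — whose pointwise value at $y\in\cC$ is the inclusion of the $w=x_0$ component in $\colim_{w\in W(x)}\cC(y,d_0\pi(w))$ — exhibits the reflection. Concretely, for every $W$-local presheaf $A\in\PSh(\cC)$, the precomposition map
\[
\alpha^*\colon\PSh(\cC)((d_0\pi(x))_!(*),A)\to\PSh(\cC)(\yo_\cC(x),A)\simeq A(x)
\]
must be an equivalence. By the $(d_0\pi)_!\dashv(d_0\pi)^*$ adjunction the source identifies with $\lim_{w\in W(x)^\op}A(d_0\pi(w))$, and under these identifications $\alpha^*$ becomes the limit projection at the terminal object $x_0\in W(x)^\op$.

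The main technical point, and what I expect to be the crux, is the identification of this limit with $A(x)$. Being $W$-local, $A$ inverts every morphism that becomes an equivalence in $L(\cC,W)$. For a morphism $\phi\colon w\to w'$ in $W(x)$, the induced map $d_0\pi(\phi)\colon d_0\pi(w)\to d_0\pi(w')$ sits in a commutative triangle with the two $W$-morphisms $\pi(w),\pi(w')$ emanating from $x$, and therefore becomes an equivalence in $L(\cC,W)$ by two-out-of-three. Hence the functor $W(x)^\op\to\cS$ given by $w\mapsto A(d_0\pi(w))$ inverts every morphism, and factors through the classifying space $|W(x)^\op|\simeq|W(x)|$, which is contractible since $W(x)$ has an initial object. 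The functor is therefore essentially constant with value $A(x)$, so its limit is $A(x)$ and the projection at $x_0$ is an equivalence. Combining, $(d_0\pi(x))_!(*)\simeq L_W\yo_\cC(x)\simeq L(\cC,W)(-,x)$.
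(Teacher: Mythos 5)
The paper does not reproduce a proof of this theorem; it is cited verbatim to \cite[Theorem 7.2.8]{Cisinski}, so there is nothing in the text to compare your argument against. Your argument is, however, a correct and self-contained $\infty$-categorical proof.

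The two pillars — identifying $L(\cC,W)(-,x)$ with $\gamma^*\gamma_!\yo_\cC(x)$ via $\gamma_!\yo_\cC\simeq\yo_{L(\cC,W)}\circ\gamma$ and full faithfulness of $\gamma^*$, and reducing the remaining check to $\lim_{w\in W(x)^\op}A(d_0\pi(w))\xrightarrow{\mathrm{pr}_{x_0}}A(x)$ being an equivalence — are both sound. I want to flag the one place where the argument is genuinely delicate, because at first sight it looks like you are overcomplicating: $x_0$ is terminal in $W(x)^\op$, which would immediately compute a \emph{colimit}, not a limit, so you cannot simply invoke the universal cocone. You correctly compensate by showing the diagram $w\mapsto A(d_0\pi(w))$ inverts every arrow of $W(x)^\op$ (this uses that a $W$-local presheaf inverts the whole saturation of $W$, not merely $W$ itself, plus two-out-of-three applied to the triangle $\pi(w')\simeq d_0\pi(\phi)\circ\pi(w)$ with $\pi(w),\pi(w')\in W$ by the second axiom). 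Combined with $|W(x)|\simeq *$ from the initial object, this makes the diagram essentially constant, so every limit projection is an equivalence. That chain of observations is exactly what is needed. As a minor stylistic remark, the first two axioms of a left calculus of fractions are used to make the diagram essentially constant, while the third axiom is used solely to know that $(d_0\pi(x))_!(*)$ lands in the $W$-local presheaves to begin with; it is worth keeping those two roles separate, since without the third axiom the reflection argument cannot get started.
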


\section{The \texorpdfstring{$\inftwo$}{inftwo}-category \texorpdfstring{$\bGr$}{Gr} of graph cobordisms between finite sets}
\label{sec:graphcobsets}
In this section we introduce an (ordinary) symmetric monoidal 2-category of graph cobordisms between finite sets, denoted $\bGr$; the main result of the next section will be a universal property of $\bGr$ as a symmetric monoidal $\inftwo$-category.

Roughly speaking, objects of $\bGr$ are finite sets; a 1-morphism between the finite sets $B$ and $A$ is the datum of a graph $G$ attached to $A$ and with a marking from $B$; and a 2-morphism between the 1-morphisms $B\overset{G}{\to}A$ and $B\overset{G'}{\to}A$ is a map $G\to G'$ collapsing trees and compatible with the inclusions of $A$ and the maps from $B$. Disjoint union of finite sets and graphs gives the symmetric monoidal structure.

\begin{rem}
\label{rem:classicalgraphs}
 Categories of graphs and graph collapses are classical objects. See for example \cite[§1.1]{Gersten} and \cite[§2.1]{CullerVogtmann}, where all maps between graphs are allowed (and not only those collapsing trees), and \cite[§1.1]{CullerVogtmann}, where graphs with a marking by the standard rose, together with tree collapse maps, are considered.
\end{rem}

\subsection{Marked graphs attached to a finite set}
\label{subsec:gafs}
The following definition is a variation of the notion of combinatorial graph introduced by Gersten \cite{Gersten}.
\begin{defn}
\label{defn:gaf}
A \emph{marked graph attached to a finite set} (shortly ``gaf''), is a sequence $G=(A,B,V,H,\rho,\sigma,\upsilon)$ consisting of:
 \begin{itemize}
  \item finite sets $A,B,V,H$ of ``attaching vertices'', ``markings'', ``inner vertices'' and ``half-edges'';
  \item maps of finite sets $\rho,\sigma,\upsilon\colon  A\sqcup B\sqcup V\sqcup H\to A\sqcup B\sqcup V\sqcup H$,
\end{itemize}
 satisfying the following requirements:
 \begin{itemize}
  \item $\rho$ restricts to the identity on $A\sqcup V\sqcup H$ and to a map $B\to A\sqcup V$;
  \item $\sigma$ and $\upsilon$ restrict to the identity on $A\sqcup B\sqcup V$;
  \item $\sigma$ restricts to a map $H\to  A\sqcup V$.
  \item $\upsilon$ is an involution acting freely on $H$.
 \end{itemize}
We say that $G$ is a graph \emph{attached to $A$} and \emph{marked by $B$}.
A sub-gaf of a gaf $G$ as above is obtained by selecting subsets $A'\subseteq A$, $B'\subseteq B$, $V'\subseteq  V$ and $H'\subseteq H$ such that the maps $\rho,\sigma,\upsilon$ restrict to self-maps of $A'\sqcup B'\sqcup V'\sqcup H'$.
\end{defn}
\begin{nota}
\label{nota:gaf}
We usually expand a gaf $G$ as $(A,B,V,H,\rho,\sigma,\upsilon)$, we expand a gaf $G'$ as $(A',B',V',H',\rho',\sigma',\upsilon')$, and so on.
We usually define sub-gaf's of a gaf $G$ by declaring a (suitable) subset of $A\sqcup B\sqcup V\sqcup H$.

For a gaf $G$, we usually denote by $E=H/\upsilon$ the set of $\upsilon$-orbits contained in $H$, and refer to elements in $E$ as ``edges'' of $G$. Similarly, we write $E'=H'/\upsilon'$ for the set of edges of $G'$, and so on.
\end{nota}

\begin{defn}
\label{defn:ReG}
Given a gaf $G$, we let $\Re(G)$ denote the 1-dimensional cell complex having a 0-cell for each element in $A\sqcup V$ and a 1-cell for each element in $E$. The two attaching points of the 1-cell corresponding to $e=\set{h_1,h_2}\subseteq H$ are the (possibly equal) 0-cells $\sigma(h_1)$ and $\sigma(h_2)$. 
\end{defn}
Definition \ref{defn:ReG} explains the use of the word ``graph'' in Definition \ref{defn:gaf}.
A morphism of gaf's shall be required to be compatible with attaching vertices and markings, and to induce at the level of cell complexes a ``nice'' homotopy equivalence. This is achieved by the following combinatorial implementation.
\begin{defn}
 \label{defn:tree}
A gaf $G$ is a \emph{based tree} (respectively, 
a \emph{non-based tree}) if $\Re(G)$ is contractible, and moreover $|A|=1$ (respectively, $A=\emptyset$).
\end{defn}

\begin{defn}
\label{defn:morgaf}
A \emph{morphism of gaf's} $f\colon G\to G'$ is a map of sets $f\colon A\sqcup B\sqcup V\sqcup H\to A'\sqcup B'\sqcup V'\sqcup H'$ satisfying the following conditions:
\begin{enumerate}
 \item $f$ restricts to maps $A\to A'$ and $B\to B'$;
 \item $f\circ\rho=\rho'\circ f$, $f\circ\sigma=\sigma'\circ f$ and $f\circ\upsilon=\upsilon'\circ f$; in particular $f$ restricts to maps $V\to A\sqcup V'$ and $H\to A'\sqcup V'\sqcup H'$;
 \item for all $v'\in  V'$, $f^{-1}(v')$ is a non-based subtree of $G$;
 \item for all $a'\in  A'$, $f^{-1}(a')$ is a disjoint union of based subtrees of $G$, one for each element in $f^{-1}(a')\cap A$.
 \item for all $h'\in H'$, $f^{-1}(h')$ is a singleton.
\end{enumerate}
We let $\Gaf$ denote the 1-category of gaf's and morphisms of gaf's. We consider $\Gaf$ as a symmetric monoidal category by disjoint union of finite sets and graphs.

We denote by $\fA,\fB\colon\Gaf\to\Fin$ the symmetric monoidal functors given by $\fA(G)=A$ and $\fB(G)=B$, respectively.
\end{defn}

\subsection{Graph cobordisms between finite sets}
\begin{defn}
\label{defn:GrAB}
Let $A$ and $B$ be finite sets. We denote by $\bGr(B,A)$ the category obtained as fibre over the object $(B,A)$ of the functor $\fB\times\fA\colon\Gaf\to\Fin\times\Fin$. This is the subcategory of $\Gaf$ whose objects are those gaf's attached to $A$ and marked by $B$, and whose morphisms restrict to the identities of $A$ and $B$.
\end{defn}

\begin{ex}
\label{ex:GrconnBoutFn}
 For $A=B=\emptyset$, the category $\bGr(\emptyset,\emptyset)$ admits a symmetric monoidal structure by disjoint union, and it is symmetric monoidally freely generated by its full subcategory $\bGr(\emptyset,\emptyset)^{\conn}$ spanned by gaf's $G$ with $\Re(G)$ connected and non-empty. In turn, $\bGr(\emptyset,\emptyset)^{\conn}$ is the disjoint union of its full subcategories $\bGr(\emptyset,\emptyset)^{\conn}_n$, for $n\ge0$, spanned by non-empty connected graphs of rank $n$. 
 
 As we will see in Theorem \ref{thm:Reinclusion}, the classifying space of the category $\bGr(\emptyset,\emptyset)^{\conn}_n$

is an Eilenberg-MacLane space of type $K( \Out(F_n),1)$, where
$\Out(F_n)$ denote the group of outer automorphisms of a free group of rank $n$.
\end{ex}

Given finite sets $A,A',A''$, we construct a functor $\circhor\colon\bGr(A',A)\times\bGr(A'',A')\to\bGr(A'',A)$ by \emph{glueing graphs along $A'$} as follows. Let $G'\in\bGr(A'',A')$ and $G\in\bGr(A',A)$; by Notation \ref{nota:gaf}, we have in particular $B'=A''$ and $B=A'$. We let $G\circhor G'$ be the gaf $\hat G$ with $\hat A=A$, $\hat B=B'=A''$, $\hat V=V\sqcup V'$, $\hat H=H\sqcup H'$ and whose structure maps $\hat\rho$, $\hat\sigma$ and $\hat\upsilon$ characterised by the following:
\begin{itemize}
 \item $\hat\rho|_{A''}$ is the composite of $\rho'\colon B'\to A'\sqcup V'=B\sqcup V'$ and $(\rho\sqcup \Id_V)\colon B\sqcup V'\to A\sqcup V\sqcup V'= A\sqcup \hat V$;
 \item $\hat\sigma|_{H}=\sigma$, and $\hat\sigma|_{H'}=(\rho\sqcup\Id_{V'})\circ\sigma'$.
 \item $\hat\upsilon|_{\hat H}=\upsilon\sqcup\upsilon'$;
\end{itemize}

The behaviour of $\circhor$ on morphisms is similar: given morphisms $f'\colon G'\to G'_1$ in $\bGr(A'',A')$ and $f\colon G\to G_1$ in $\bGr(A',A)$, we let $f\circhor f'$ be the map of gaf's $\hat f\colon \hat G\to \hat G_1$ agreeing with $f$ and $f'$ on the different finite sets defining $\hat G$, where we postcompose with $\rho_1$ whenever $f'$ lands in $A_1'$. Among the conditions of Definition \ref{defn:morgaf}, the less obvious to check for $\hat f$ are (3) and (4). For this, we observe the following:
\begin{itemize}
 \item for $v'_1\in V'_1\subset\hat V_1$, $\hat f^{-1}(v'_1)$ is isomorphic to the non-based tree $(f')^{-1}(v'_1)$;
 \item for $v_1\in V_1\subset \hat V_1$, $\hat f^{-1}(v_1)$ is isomorphic to the gaf obtained by glueing the non-based tree $f^{-1}(v_1)$ with the based trees $(f')^{-1}(a'_1)$, for $a'_1$ ranging in $\rho_1^{-1}(v_1)\subset A'_1$: we glue the unique vertex $a'\in (f')^{-1}(a'_1)\cap A'$ to the vertex $\rho(a')\in f^{-1}(v)$; the result is a non-based tree;
 \item for $a\in A$, the situation is analogous to the previous one, except that the result is a based tree, as $f^{-1}(a)$ is now a based tree.
\end{itemize}

The functors $\circhor$ satisfy associativity up to natural equivalence, coherently for iterated compositions. We obtain therefore a 2-category $\bGr$, whose objects are finite sets, and whose morphism 1-category from $B$ to $A$ is $\bGr(B,A)$. Horizontal composition of morphisms is given by the functors $\circhor$. For each finite set $A$, the gaf $(A,A,\emptyset,\emptyset,\Id_A,\emptyset,\emptyset)\in\bGr(A,A)$ is an identity object in $\bGr(A,A)$, where ``$\emptyset$'' denotes the empty set or the empty map.

We also want to enhance $\bGr$ to a \emph{symmetric monoidal} 2-category, with monoidal product given by taking disjoint unions of finite sets, gaf's, and maps of gaf's. To do this formally, we first define for a finite pointed set $S_*=S\sqcup\set{*}$ a variation $\Gaf_S$ of $\Gaf$: an object in $\Gaf_S$ is a tuple $(A,B,V,H,\rho,\sigma,\upsilon)$ of objects and morphisms in $\Fin_{/S}$, whose image along $d_1\colon\Fin_{/S}\to\Fin$ satisfies the requirements of Definition \ref{defn:gaf}; a morphism in $\Gaf_S$ is similarly a morphism in $\Fin_{/S}$ whose image along $d_1$ satisfies the requirements from Definition \ref{defn:morgaf}. The assignment $S_*\mapsto\bGr_S$ extends to a functor $\Fin_*\to\mathrm{Cat}_2$: given a map of finite pointed sets $g\colon S_*\to S_*'$, we consider the functor $\bGr_S\to\bGr_{S'}$ that forgets all data lying over $g^{-1}(*)$, and composes the remaining data with the restricted map $g\colon g^{-1}(S')\to S'$.

The constructed functor $\bGr_{(-)}\Fin_*\to\mathrm{Cat}_2$ satisfies the Segal condition: for $S_*\in\Fin_*$ we have an equivalence $\bGr_S\xrightarrow{\simeq}\bGr^S$ induced by the $S$ maps $S_*\to\ul1_*$ in $\Fin_*$ that send a single element of $S$ to $\ul1$, and the rest to $*$. A functor $\Fin_*\to\mathrm{Cat}_2$ satisfying the Segal condition shall be our notion of symmetric monoidal 2-category.

\begin{rem}
The lack of strictness for $\bGr$ is due to the fact that, in defining $\circhor$, we have chosen a disjoint union functor $-\sqcup-\colon\Fin\times\Fin\to \Fin$. One can in fact give a \emph{strict} 2-category equivalent to $\bGr$ by requiring all finite sets $V,H,E$ appearing in the description of an object in $\bGr(B,A)$ to be initial segments $\ul k$ of $\Z_{\ge1}$, and by using concatenation of initial segments of $\Z_{\ge1}$ to implement disjoint union. However, even with this model, $\bGr$ will not be strict as a (symmetric) \emph{monoidal} 2-category: given pairs of composable 1-morphisms $(G_1,G_2)$ and $(G'_1,G'_2)$, the 1-morphisms $(G_1\sqcup G'_1)\circhor(G_2\sqcup G'_2)$ and $(G_1\circhor G_2)\sqcup(G'_1\circhor G'_2)$ will be connected by a canonical 2-isomorphism which is in general not an identity. This problem is unavoidable: the braided monoidal category $\bGr(\emptyset,\emptyset)$, of endomorphisms of the monoidal unit, has a non-trivial braiding $G\sqcup G\simeq G\sqcup G$ for all non-empty gafs $G\in\bGr(\emptyset,\emptyset)$, contradicting the Eckmann--Hilton argument that would hold in a strict monoidal 2-category.
\end{rem}
We will mainly consider $\bGr$ as a symmetric monoidal $\inftwo$-category, and we will call it the $\inftwo$-category of \emph{graph cobordisms between finite sets}.
\begin{nota}
We let $\Gr\CAlg(\Catinfone)$ denote the symmetric monoidal $\infone$-category $|\bGr|_2$ obtained from $\Gr$ by inverting all 2-morphisms.
\end{nota}

\begin{rem}
\label{rem:Galatius}
Galatius \cite[Section 4]{Galatius} introduces a category of graph cobordisms between finite sets, denoted $\cC_\infty$: it is a category internal to $\Top$. Jan Steinebrunner pointed out to us that the underlying $\infone$-category of $\cC_\infty$ is \emph{not} equivalent to $\Gr$ (though it is likely equivalent to a wide $\infone$-subcategory of $\Gr$): indeed, Galatius only considers graphs whose vertices have valence at least 3, hence $\cC_\infty$ cannot account for graph cobordisms having components homotopy equivalent to a point or a circle that are not attached to the target finite set. 
Jan Steinebrunner has also pointed out to us that the combination of Propositions 4.8 and 4.9, and Theorem 5.1 in \cite{Galatius}, together with Corollary \ref{cor:A2}, prove at least the equivalence $|\cC_\infty|\simeq\Omega^{\infty-1}\bS\simeq|\bGr|$; in this sense, the first statement akin to Corollary \ref{cor:A2} appearing in the literature is due to Galatius. Compare also with \cite[Remark 4.18]{SteinebrunnerNotes}.
\end{rem}

\subsection{Some properties of \texorpdfstring{$\bGr$}{bGr}}
\label{subsec:propertiesGr}
We next highlight some features of $\bGr$.

\begin{nota}
\label{nota:mu}
We denote by $\mu\colon \ul 2\to \ul 1$ be the unique map of finite sets.
\end{nota}
\begin{description}[style=unboxed,leftmargin=0cm]
 \item[The functor $\fI_\bGr$] Consider $\Fin$ as a symmetric monoidal 2-category having only identity 2-morphisms; then we have a symmetric monoidal 2-functor $\fI_\bGr\colon\Fin\to\bGr$, sending $A\mapsto A$ and sending a map of finite sets $f\colon B\to A$ to the unique gaf $G\in\bGr(B,A)$ with $V=H=\emptyset$ and $\rho=f$.
  \item[The 1-morphism $\ft_\bGr$] We have a special 1-morphism $\ft_\bGr\colon\fI_\bGr(\ul 1)\to\fI_\bGr(\ul 0)$, given by the unique gaf $G_\ft\in\bGr(\ul 1,\ul 0)$ with $V_\ft=\ul 1$. We represent $\ft_\bGr$ pictorially as follows:
  \begin{center}
  \begin{tikzpicture}
   \node[anchor=east] at(0,0){\tiny 1 $\bullet$};
   \node at(1,0){\tiny $\bullet$};\node[anchor=south] at(1,0){$\ft_\bGr$};
   \node at(2,0){$\emptyset$};
   \draw (0,0) to ++(1,0);
  \end{tikzpicture}
 \end{center}
 \item[The $C_2$-equivariant 1-morphism $\fe_\bGr$] We have in $\bGr$ a special 1-morphism $\fe_\bGr\colon \fI_\bGr(\ul 0)\to\fI_\bGr(\ul 2)$, given by the unique gaf $G_\fe\in\bGr(\ul 0,\ul 2)$ with $V_\fe=\emptyset$, $H_\fe=\ul 2$ and $\sigma_\fe=\Id_{\ul 2}$. We represent $\fe_\bGr$ pictorially as follows:
 \begin{center}
  \begin{tikzpicture}
   \node at(0,.5){$\emptyset$};
   \node[anchor=west] at(2,1){\tiny$\bullet$ 1};
   \node[anchor=west] at(2,0){\tiny$\bullet$ 2};
   \draw[thick] (2,0) [out=180,in=-90] to ++(-.5,.5) node[right]{$\fe_\bGr$} [out=90, in=180] to ++(.5,.5);
  \end{tikzpicture}
 \end{center}
We further consider the trivial and free action of the group $C_2$ on the objects $\ul 0$ and $\ul 2$ of $\Fin$, respectively, and obtain an action of $C_2$ on the objects $\fI_\bGr(\ul 0)$ and $\fI_\bGr(\ul 2)$ of $\bGr$ and, by conjugation, on the category $\bGr(\fI_\bGr(\ul 0),\fI_\bGr(\ul 2))$. Then $G_\fe\in\bGr(\fI_\bGr(\ul 0),\fI_\bGr(\ul 2))$ admits a natural $C_2$-equivariant structure, induced by the unique non-trivial involution $f\colon G_\fe\xrightarrow{\simeq} G_\fe$ in $\Gaf$ acting freely on both $A=H_\fe=\ul 2$.

 \item[The 2-morphism $\beta_\bGr$] Using the above, we can construct the following 1-mor\-phisms $G_{\beta,1}, G_{\beta,2}\colon \fI_\bGr(\ul 1)\to\fI_\bGr(\ul 1)$ in $\bGr$. We let
 \[
 G_{\beta,1} =((\ft_\bGr\circhor\fI_\bGr(\mu))\sqcup\Id_{\fI_\bGr(\ul 1)})\circhor(\Id_{\fI_\bGr(\ul 1)}\sqcup\fe_\bGr),
 \]
 i.e. the following horizontal composite, where we remove all instances of ``$\fI_\bGr$'':
 \[
 \begin{tikzcd}[column sep=55pt]
 \ul 1\cong\ul 1\sqcup\ul 0\ar[r,"{\Id_{\ul 1}\sqcup\fe_\bGr}"] &\ul 1\sqcup\ul 2\cong\ul 3 \cong\ul 2\sqcup\ul 1\ar[r,"(\ft_\bGr\circhor\mu)\sqcup\Id_{\ul 1}"] & \ul 0\sqcup \ul 1\cong\ul 1.
 \end{tikzcd}
 \]
 We represent $G_{\beta,1}$ pictorially as follows:
  \begin{center}
  \begin{tikzpicture}
   \node[anchor=east] at(0,1){\tiny 1 $\bullet$};
   \draw(0,1) to ++(2,1);
   \node at(2,2){\tiny$\bullet$};
   \node at(2,1){\tiny$\bullet$};
   \node at(2,0){\tiny$\bullet$};
   \draw[thick] (2,0) [out=180,in=-90] to ++(-.5,.5) node[right]{$\fe_\bGr$} [out=90, in=180] to ++(.5,.5);
   \node at(4,1.5){\tiny$\bullet$};
   \draw (2,2) to ++(2,-.5);
   \draw (2,1) to ++(2,.5);
   \node at(5,1.5){\tiny $\bullet$};\node[anchor=south] at(5,1.5){$\ft_\bGr$};
   \draw (4,1.5) to ++(1,0);
   \node[anchor=west] at(6,1){\tiny $\bullet$ 1};
   \draw (2,0) to ++(4,1);
  \end{tikzpicture}
 \end{center}
We let $G_{\beta,2}$ be the identity 1-morphism $\Id_{\fI_\bGr(\ul 1)}$. Then we have a special 2-morphism $\beta_\bGr\colon G_{\beta,1} \Rightarrow G_{\beta,2}$, which we describe in the following. As gaf's, $G_{\beta,1}$ is characterised up to unique isomorphism by the following: $A_{\beta,1}=B_{\beta,1}=\ul 1$ and $V_{\beta,1}=\set{v}\simeq\ul1$ are singletons; $H_{\beta,1}=\set{h_1,h_v}\simeq\ul2$ has two elements; $\sigma_{\beta,1}$ sends $h_1\mapsto 1\in A_{\beta,1}$ and $h_v\mapsto v\in V_{\beta,1}$; and $\rho_{\beta,1}$ sends $1\mapsto v$. We similarly characterise $G_{\beta,2}$ by $A_{\beta,2}=B_{\beta,2}=\ul 1$ and $V_{\beta,2}=H_{\beta,2}=\emptyset$. We then define $\beta_\bGr\colon G_{\beta,1}\to G_{\beta,2}$ as the unique morphism of gaf's, sending 
$V_{\beta,1}\sqcup H_{\beta,1}$ constantly to $1\in A_{\beta,2}$.  At the level of graphs, $G_{\beta,1}$ is a segment with one attaching vertex and marked once at the other vertex, whereas $G_{\beta,2}$ consists of a single once-marked attaching vertex; then $\beta_\bGr$ is the contraction of $G_{\beta,1}$ onto $G_{\beta,2}$.
Pictorially, $\beta_\bGr$ straightens the above S-shaped composition giving $G_{\beta,1}$ to a straight segment.
\item[The $C_2$-equivariant 2-morphism $\tbeta_\bGr$] Using the above, there is a canonical identification of the following two 1-morphisms in $\bGr(\fI_\bGr(\ul 2),\fI_\bGr(\ul 0))$:
\[
\begin{array}{c}
\ft_\bGr\circhor\fI_\bGr(\mu)\circhor(G_{\beta,1}\sqcup\Id_{\fI_\bGr(\ul 1)});\\[.1cm]
(\ft_\bGr\sqcup\ft_\bGr)\circhor(\fI_\bGr(\mu)\sqcup\fI_\bGr(\mu))\circhor(\Id_{\fI_\bGr(\ul 1)}\sqcup\fe_\bGr\sqcup\Id_{\fI_\bGr(\ul 1)}),
\end{array}
\]
given by replacing ``$G_{\beta,1}$'' by its formula and using the axioms of symmetric monoidal 2-category. Both 1-morphisms are represented by the unique gaf $G_{\tbeta,1}\in\bGr(\ul 2,\ul 0)$ with $V_{\tbeta,1}=H_{\tbeta,1}=\ul 2$ and $\rho_{\tbeta,1}=\sigma_{\tbeta,1}=\Id_{\ul 2}$. As a graph, $G_{\tbeta,1}$ is a segment attached to the empty set and marked once at each endpoint; the first formula highlights how $G_{\tbeta,1}$ can be obtained from $G_{\beta,1}$ by converting the attaching vertex $A_{\beta,1}$ into a once-marked inner vertex, namely the element $2\in V_{\tbeta,1}\simeq\ul2$; instead $V_{\beta,1}$ gives rise to $1\in V_{\tbeta,1}$. Both of the following represent pictorially $G_{\tbeta,1}$, according to the first and to the second formula, respectively:
  \begin{center}
  \begin{tikzpicture}
   \node[anchor=east] at(0,1){\tiny 1 $\bullet$};
   \node[anchor=east] at(0,0){\tiny 2 $\bullet$};
   \draw(0,1) to ++(2,1);
   \draw(0,0) to [out=-20,in=200] (6,0);
   \node at(2,2){\tiny$\bullet$};
   \node at(2,1){\tiny$\bullet$};
   \node at(2,0){\tiny$\bullet$};
   \draw[thick] (2,0) [out=180,in=-90] to ++(-.5,.5) node[right]{$\fe_\bGr$} [out=90, in=180] to ++(.5,.5);
   \node at(4,1.5){\tiny$\bullet$};
   \draw (2,2) to ++(2,-.5);
   \draw (2,1) to ++(2,.5);
   \draw (6,1) to ++(2,-.5);
   \draw (6,0) to ++(2,.5);
   \node at(6,1){\tiny $\bullet$};
   \node at(6,0){\tiny $\bullet$};
   \draw (2,0) to ++(4,1);
   \node at(5,1.5){\tiny $\bullet$};\node[anchor=south] at(5,1.5){$\ft_\bGr$};\
   \node at(8,.5){\tiny$\bullet$};
   \node at(9,.5){\tiny $\bullet$};\node[anchor=south] at(9,.5){$\ft_\bGr$};
   \draw (4,1.5) to ++(1,0);
   \draw (8,.5) to ++(1,0);
   \node[anchor=west] at(10,.5){$\emptyset$};
  \end{tikzpicture}
 \end{center}
   \begin{center}
  \begin{tikzpicture}
   \node[anchor=east] at(0,1){\tiny 1 $\bullet$};
   \node[anchor=east] at(0,0){\tiny 2 $\bullet$};
   \draw(0,1) to ++(2,1);
   \draw(0,0) to ++(2,-1);
   \node at(2,2){\tiny$\bullet$};
   \node at(2,1){\tiny$\bullet$};
   \node at(2,0){\tiny$\bullet$};
   \node at(2,-1){\tiny$\bullet$};
   \draw[thick] (2,0) [out=180,in=-90] to ++(-.5,.5) node[right]{$\fe_\bGr$} [out=90, in=180] to ++(.5,.5);
   \node at(4,1.5){\tiny$\bullet$};
   \node at(4,-.5){\tiny$\bullet$};
   \draw (2,2) to ++(2,-.5);
   \draw (2,1) to ++(2,.5);
   \draw (2,0) to ++(2,-.5);
   \draw (2,-1) to ++(2,.5);
   \node at(5,1.5){\tiny $\bullet$};\node[anchor=south] at(5,1.5){$\ft_\bGr$};
   \node at(5,-.5){\tiny $\bullet$};\node[anchor=south] at(5,-.5){$\ft_\bGr$};
   \draw (4,1.5) to ++(1,0);
   \draw (4,-.5) to ++(1,0);
   \node[anchor=west] at(6,.5){$\emptyset$};
  \end{tikzpicture}
 \end{center}
 We can also consider the 1-morphism $\ft_\bGr\circhor\fI_\bGr(\mu)$, given by the unique gaf $G_{\tbeta,2}\in\bGr(\ul 2,\ul 0)$ with $V_{\tbeta,2}=\ul1$ and $H_{\tbeta,2}=\emptyset$: as a graph, this is a twice-marked vertex attached to the empty set. We represent $G_{\tbeta,2}$ pictorially as follows:
   \begin{center}
  \begin{tikzpicture}
   \node[anchor=east] at(0,1){\tiny 1 $\bullet$};
   \node[anchor=east] at(0,0){\tiny 2 $\bullet$};
   \draw (0,1) to ++(2,-.5);
   \draw (0,0) to ++(2,.5);
   \node at(2,.5){\tiny$\bullet$};
   \node at(3,.5){\tiny $\bullet$};\node[anchor=south] at(3,.5){$\ft_\bGr$};
   \draw (2,.5) to ++(1,0);
   \node[anchor=west] at(4,.5){$\emptyset$};
  \end{tikzpicture}
 \end{center}
 Using the first formula for $G_{\tbeta,1}$, we obtain a 2-morphism
 \[
 \tbeta_\bGr:=\ft_\bGr\circhor\fI_\bGr(\mu)\circhor(\beta_\bGr\sqcup\Id_{\fI_\bGr(\ul 1)})\colon G_{\tbeta,1}\Rightarrow G_{\tbeta,2},
 \]
 where we write identities of 1-morphisms as the 1-morphisms themselves.
 
 We have actions of $C_2$ on $\ul 2$ (by swap), on $\ul 1$ (trivial action) and on $\ul 4$ (by the map exchanging 1-4 and 2-3). The horizontal factors of the second formula for $G_{\tbeta,1}$, namely $(\ft_\bGr\sqcup\ft_\bGr)\in\bGr(\fI_\bGr(\ul 2),\fI_\bGr(\ul 0))$, $(\fI_\bGr(\mu)\sqcup\fI_\bGr(\mu))\in\bGr(\fI_\bGr(\ul 4),\fI_\bGr(\ul 2))$ and $(\Id_{\fI_\bGr(\ul 1)}\sqcup\fe_\bGr\sqcup\Id_{\fI_\bGr(\ul 1)})\in\bGr(\fI_\bGr(\ul 2),\fI_\bGr(\ul 4))$ have a natural structure of $C_2$-equivariant objects in the corresponding $C_2$-equivariant categories: the last one, for instance, is obtained combining the $C_2$-equivariant structure on $\fe_\bGr$ and symmetric monoidality of $\sqcup$. Similarly, $G_{\tbeta,2}$ has a $C_2$-equivariant structure in $\bGr(\fI_\bGr(\ul 2),\fI_\bGr(\ul 0))$. It makes thus sense to ask whether or not $\tbeta_\bGr$ is a $C_2$-equivariant morphism \footnote{Note that $\bGr(\ul 2,\ul 0)$ is a \emph{plain 1-category} with $C_2$-action, so even if ``being $C_2$-equivariant'' is a structure on an object, it is a property for a morphism between $C_2$-equivariant objects.}. At the level of graphs, $\tbeta_\bGr$ is the contraction of the segment $G_{\tbeta,1}$ onto the point $G_{\tbeta,2}$, and this is clearly invariant under swapping the two endpoints of $G_{\tbeta,1}$, together with their markings, and also swapping the two half-edges of $G_{\tbeta,1}$. So $\tbeta_\bGr$ is indeed $C_2$-equivariant.
\end{description}

Passing to $\inftwo$-categories, we have the following structure on $\bGr$:
\begin{itemize}
 \item $\bGr$ is a symmetric monoidal $\inftwo$-category;
 \item we are given a symmetric monoidal $\inftwo$-functor $\fI_\bGr\colon\Fin\to\bGr$;
 \item we are given a 1-morphism $\ft_\bGr\colon\fI_\bGr(\ul 1)\to \fI_\bGr(\ul 0)$;
 \item we are given a 1-morphism $\fe_\bGr\colon\fI_\bGr(\ul 0)\to\fI_\bGr(\ul 2)$, and a $C_2$-equivariant structure on $\fe_\bGr$ as an object of the $C_2$-equivariant $\infone$-category
 \[
 \bGr(\fI_\bGr(\ul 0),\fI_\bGr(\ul 2));
 \]
 \item we are given a 2-morphism
 \[
 \beta_\bGr\colon ((\ft_\bGr\circhor\fI_\bGr(\mu))\sqcup\Id_{\fI_\bGr(\ul 1)})\circhor(\Id_{\fI_\bGr(\ul 1)}\sqcup\fe_\bGr)\Rightarrow\Id_{\fI_\bGr(\ul 1)};
 \]
 \item we are given a $C_2$-equivariant structure on the resulting 2-morphism
 \[
 \tbeta_\bGr:=\ft_\bGr\circhor\fI_\bGr(\mu)\circhor(\beta_\bGr\sqcup\Id_{\fI_\bGr(\ul 1)}),
 \]
 considered as a morphism between $C_2$-equivariant objects in the $C_2$-equiva\-riant $\infone$-category $\bGr(\fI_\bGr(\ul 2),\fI_\bGr(\ul 0))$.
\end{itemize}

\subsection{Graph-like structures}
We axiomatise the previous discussion about $\bGr$.
\begin{defn}
 \label{defn:graphlike}
 A \emph{semi-graph-like structure} on a symmetric monoidal $\inftwo$-category $\cD$ is the datum of:
 \begin{itemize}
  \item a symmetric monoidal $\inftwo$-functor $\fI\colon\Fin\to\cD$;
  \item an object $\ft$ in the $\infone$-category $\cD(\fI(\ul 1),\fI(\ul 0))$;
  \item a $C_2$-equivariant object $\fe$ in $\cD(\fI(\ul 0), \fI(\ul 2))$, which is a $C_2$-equivariant $\infone$-category.
\end{itemize}
A \emph{graph-like structure} on $\cD$ is the datum of a semi-graph-like structure as above, together with:
\begin{itemize}
\item a 2-morphism $\beta\colon ((\ft\circhor \fI(\mu))\otimes\Id_{\fI(\ul 1)})\circhor(\Id_{\fI(\ul 1)}\otimes \fe)\Rightarrow\Id_{\fI(\ul 1)}$, which can be regarded as a morphism in the $\infone$-category $\cD(\fI(\ul 1),\fI(\ul 1))$;
  \item a $C_2$-equivariant structure on the 2-morphism $\tbeta:=\ft\circhor \fI(\mu)\circhor(\beta\otimes\Id_{\fI(\ul 1)})$, which can be regarded as a morphism in the $C_2$-equivariant $\infone$-category $\cD(\fI(\ul 2),\fI(\ul 0))$ between $C_2$-equivariant objects.
\end{itemize}
\end{defn}
\begin{nota}
We usually denote a graph-like structure as in Definition \ref{defn:graphlike} by $(\fI,\ft,\fe,\beta)$, leaving the other data, especially the various $C_2$-equivariant structures, understood.
For instance, the graph-like structure on $\bGr$ from Subsection \ref{subsec:propertiesGr} will be denoted $(\fI_\bGr,\ft_\bGr,\fe_\bGr,\beta_\bGr)$.
\end{nota}
In the following we construct functors $\SGL,\GL\colon\CAlg(\Catinftwo)\to\cS$ associa\-ting with a symmetric monoidal $\inftwo$-category $\cD$ its \emph{spaces} of semi-graph-like and graph-like structures, respectively.

\begin{defn}
\label{defn:SGL}
We denote by $\SGL$ the limit of the following W-shaped diagram of limit-preserving and accessible functors $\CAlg(\Catinftwo)\to\cS$:
 \[
 \begin{tikzcd}
  \Fun([1],-)^\simeq\ar[d,"{(d_1,d_0)}"']
  & \Fun^\otimes(\Fin,-)^\simeq \ar[dr,"\ev_{\ul 0}\times\ev_{\ul 2}"']\ar[dl,"\ev_{\ul 1}\times\ev_{\ul 0}"]&
  \Fun(\bB C_2\times[1],-)^\simeq\ar[d,"{(d_1,d_0)}"]
\\
  \Fun(\set{0,1},-)^\simeq& &\Fun(\bB C_2\times\set{0,1},-)^\simeq.
 \end{tikzcd}
 \]
\end{defn}
Informally, for $\cD\in\CAlg(\Catinftwo)$, $\SGL(\cD)$ is the space of choices $(\fI,\ft,\fe)$ of a symmetric monoidal functor $\fI\colon \Fin\to\cD$, an object $\ft\in\cD(\fI(\ul 1),\fI(\ul 0))$, and a $C_2$-equivariant object $\fe\in\cD(\fI(\ul 0),\fI(\ul 2))$.
We have a natural transformation $\SGL\Rightarrow\Fun(\mO_1,-)^\simeq$ of functors $\CAlg(\Catinftwo)\to\cS$, sending $(\fI,\ft,\fe)\in \SGL(\cD)$ to the following $\mO_1$-shaped diagram in $\cD$

\[
 \begin{tikzcd}[column sep=70pt]
  \fI(\ul 1)\ar[r,bend left=10,"((\ft\circhor \fI(\mu))\otimes\Id_{\fI(\ul 1)})\circhor(\Id_{\fI(\ul 1)}\otimes \fe)"]\ar[r,bend right=10,"\Id_{\fI(\ul 1)}"'] &\fI(\ul 1).
 \end{tikzcd}
\]
We then let $\SGL'\colon\CAlg(\Catinftwo)\to\cS$ be pullback of the cospan diagram of functors $\SGL\to \Fun(\mO_1,-)^\simeq\ot\Fun(\Theta_2,-)^\simeq$: again $\SGL'$ is limit-preserving and accessible; the space $\SGL'(\cD)$ parametrises choices of a sequence $(\fI,\ft,\fe,\beta)$ as in Definition \ref{defn:graphlike}, with a $C_2$-equivariant structure on $\fe$. Finally, we construct a natural transformation $\SGL'\Rightarrow \Fun((\bB C_2\times \mO_1)\sqcup_{(*\times \mO_1)}(*\times\Theta_2),-)^\simeq$, sending $(\fI,\ft,\fe,\beta)\in \SGL'(\cD)$ to the following $\Theta_2$-diagram in $\cD$, whose sub-$\mO_1$-diagram is endowed with a $C_2$-equivariant structure

\[
 \begin{tikzcd}[column sep=230pt]
  \fI(\ul 2)\ar[r,bend left=10,"\ft\circhor \fI(\mu)\circhor((((\ft\circhor \fI(\mu))\otimes\Id_{\fI(\ul 1)})\circhor(\Id_{\fI(\ul 1)}\otimes \fe))\otimes\Id_{\fI(\ul 1)})",""{name=U,inner sep=1pt,below}]
  \ar[r,bend right=10,"\ft\circhor \fI(\mu)"{below},""{name=D,inner sep=1pt}]
  \ar[Rightarrow,from=U, to=D, "\tbeta:=\ft\circhor \fI(\mu)\circhor(\beta\otimes\Id_{\fI(\ul 1)})"]
  &\fI(\ul 0)
 \end{tikzcd}
\]
\begin{defn}
\label{defn:GLfunctor}
 We define $\GL\colon\CAlg(\Catinftwo)\to\cS$ as the pullback of the cospan diagram of functors
 \[
\SGL'\to \Fun\big((\bB C_2\times \mO_1)\sqcup_{(*\times \mO_1)}(*\times\Theta_2),-\big)^\simeq \ot \Fun(\bB C_2\times\Theta_2,-)^\simeq.
\]
\end{defn}
Again, $\GL$ is limit-preserving and accessible;
the space $\GL(\cD)$ is by definition the space of graph-like structure on $\cD$.

Since the functors $\SGL$ and $\GL$ are limit-preserving and accessible functors between the presentable $\infone$-categories $\CAlg(\Catinftwo)$ and $\cS$, we can invoke \cite[Proposition 5.5.2.7]{LurieHTT} and conclude that $\SGL$ and $\GL$ are representable.

\begin{nota}
\label{nota:cUcV}
We denote by $\cU$ and $\cV$ the symmetric monoidal $\inftwo$-categories representing $\SGL$ and $\GL$, respectively, i.e. we have equivalences $\Fun^\otimes(\cU,-)^\simeq\simeq\SGL$ and $\Fun^\otimes(\cV,-)^\simeq\simeq\GL$.
We denote by $(\fI_\cU,\ft_\cU,\fe_\cU)$ the universal semi-graph-like structure on $\cU$, corresponding to $\Id_\cU\in\Fun^\otimes(\cU,\cU)$; similarly, $(\fI_\cV,\ft_\cV,\fe_\cV,\beta_\cV)$ denotes the universal graph-like structure on $\cV$. 
Finally, we denote by $\fJ\colon\cU\to\cV$ the symmetric monoidal $\inftwo$-functor corresponding to $(\Id_\cV,\ft_\cV,\fe_\cV)$, and by $\fG\colon \cV\to\bGr$ the symmetric monoidal $\inftwo$-functor corresponding to $(\fI_\bGr,\ft_\bGr,\fe_\bGr,\beta_\bGr)$.
\end{nota}
\begin{rem}
\label{rem:cUinfone}
We observe that $\SGL$ factors through $(-)^\twosimeq\colon\Catinftwo\to\Catinfone$, or in other words, that for any $\cD\in\CAlg(\Catinftwo)$ the inclusion $\cD^\twosimeq\to\cD$ induces an equivalence $\SGL(\cD^\twosimeq)\simeq\SGL(\cD)$: indeed the sources $\star$ of the functors $\Fun(\star,-)^\simeq$ and $\Fun^\otimes(\star,-)$ in the diagram of Definition \ref{defn:SGL} are all $\infone$-categories. As a consequence we have that $\cU$ is an $\infone$-category, i.e. all of its 2-morphisms are invertible, and $\fJ$ restricts to a symmetric monoidal $\infone$-functor $\cU\to\cV^\twosimeq$. We will see after Corollary \ref{cor:nsimeqinvariance} that the latter is in fact an equivalence.
\end{rem}

We can now rephrase Theorem \ref{thm:A} as the statement that $\fG\colon\cV\to\bGr$ is an equivalence of $\inftwo$-categories; we will prove this statement in
Section \ref{sec:proofthmA}.

A key step in proving Theorem \ref{thm:A} will be the following theorem, implying that $\bGr^\twosimeq$ represents $\SGL$.
\begin{thm}
\label{thm:Abis}
The symmetric monoidal $\infone$-functor $\fF\colon\cU\to\bGr^\twosimeq$ corresponding to $(\fI_\bGr,\ft_\bGr,\fe_\bGr)\in\SGL(\bGr^\twosimeq)$ is an equivalence.
\end{thm}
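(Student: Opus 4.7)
The strategy is to verify directly that $\fF$ is essentially surjective and fully faithful, exploiting that $\bGr$ is an ordinary $2$-category. Essential surjectivity is immediate, since both $\fI_\cU\colon\Fin\to\cU$ and $\fI_\bGr\colon\Fin\to\bGr$ are essentially surjective on objects and $\fF\fI_\cU\simeq\fI_\bGr$.

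For full faithfulness, observe that $\bGr^\twosimeq(B,A)$ is the (1-truncated) classifying space of the groupoid of gaf's from $B$ to $A$ with isomorphisms of gaf's as morphisms. Since $\cU$ is an $\infone$-category by Remark~\ref{rem:cUinfone}, and its morphism spaces arise from a combinatorial free construction on the generators $\fI_\cU$, $\ft_\cU$, and the $C_2$-equivariant $\fe_\cU$, they too are $1$-truncated. Hence it suffices to prove that $\fF$ induces a bijection on $\pi_0$ and an isomorphism on $\pi_1$ of each morphism space.

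For $\pi_0$: every gaf $G=(A,B,V,H,\rho,\sigma,\upsilon)$ admits an explicit factorization of the form
\[
G\simeq \bigl(\ft_\bGr^{\otimes |V|}\otimes \Id_{\fI_\bGr(A)}\bigr)\circhor \fI_\bGr(g)\circhor \bigl(\Id_{\fI_\bGr(B)}\otimes \fe_\bGr^{\otimes |E|}\bigr),
\]
where, after choosing orderings of $V$, $E$, and of the two half-edges inside each edge, $g\colon B\sqcup\ul{2|E|}\to V\sqcup A$ is the set-theoretic map packaging the marking $\rho$ on $B$ and the attachment $\sigma$ on $H\simeq\ul{2|E|}$. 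Lifting this formula to $\cU$ via $(\fI_\cU,\ft_\cU,\fe_\cU)$ produces a candidate preimage for $[G]$, and hence surjectivity of $\pi_0\fF$. Independence of the chosen orderings is ensured by the symmetric group actions on $\ft_\cU^{\otimes|V|}$ and $\fe_\cU^{\otimes|E|}$ together with the $C_2$-equivariance of $\fe_\cU$ used to swap the half-edges within each edge; this gives bijectivity.

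For $\pi_1$: the automorphism group of $G$ in the groupoid $\bGr(B,A)$ is generated by (i) permutations of $V$, (ii) permutations of $E$, and (iii) swaps of the two half-edges in each non-loop edge. Each of these is already present in $\cU$: (i) and (ii) as the symmetric actions on tensor powers of $\ft_\cU$ and $\fe_\cU$, and (iii) as the $C_2$-equivariance datum of $\fe_\cU$. A direct check identifies the stabilizers on both sides, giving the required isomorphism on $\pi_1$. The main obstacle is the $\pi_0$ step: one must carefully verify that the factorization above descends to a well-defined map $\pi_0\bGr^\twosimeq(B,A)\to\pi_0\cU(B,A)$, reconciling the many choices of enumerations by the forced coherence isomorphisms in $\cU$; this is the combinatorial core of the argument.
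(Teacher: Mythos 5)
The central step of your argument is circular. You assert that the morphism spaces of $\cU$ ``arise from a combinatorial free construction on the generators and are therefore $1$-truncated,'' and then read off their $\pi_0$ and $\pi_1$ by enumerating words in $\fI_\cU$, $\ft_\cU$, $\fe_\cU$ and their permutations. But $\cU$ is defined as the \emph{representing object} of the functor $\SGL$ via the adjoint functor theorem, not by a generators-and-relations presentation, and in the $\infty$-categorical setting a ``free symmetric monoidal extension by adjoining $1$-morphisms'' does not automatically yield $1$-truncated morphism spaces or an explicit word description. Establishing such a description of the morphism spaces of a free extension is exactly the technical content that the paper develops in Sections \ref{sec:symmonextensions} and \ref{sec:truncatedviapsh} (Theorems \ref{thm:B}, \ref{thm:C}, \ref{thm:D}), and the fact that the morphism spaces of $\cU$ do agree with the explicit graph description is precisely the statement of Theorem \ref{thm:Abis} itself. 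Flagging this at the end as ``the combinatorial core of the argument'' without providing it is not a proof; it is the theorem.

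Concretely, the gaps manifest as follows. For $\pi_0$: your factorization formula produces, after choices, a candidate element of $\cU(B,A)$ and shows that the map $\pi_0\cU(B,A)\to\pi_0\bGr^\twosimeq(B,A)$ is \emph{surjective}, but injectivity requires knowing which words in $\cU$ are identified, i.e.\ the relations, which is unavailable without the machinery you omit. (Conjecture \ref{conj:retractionisequivalence}, which would give such a description, is left open in the paper; only the weaker retraction result of Proposition \ref{prop:retraction} is proved and used.) For $\pi_1$: your description of $\mathrm{Aut}_{\bGr(B,A)}(G)$ as \emph{generated by} vertex permutations, edge permutations, and half-edge swaps is inaccurate---the automorphism group is the \emph{stabilizer} of the gaf structure inside the full permutation group---and in any case identifying $\pi_1$ of $\cU(B,A)$ at the given word with this stabilizer again requires the unproven description of $\cU$. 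The paper avoids all of these issues by a different route: it uses the $\N$-grading to reduce to grading-$1$ pieces (via the retraction from Proposition \ref{prop:retraction} and an inductive colouring trick), then computes the relevant grading-$1$ morphism spaces on both sides as explicit Day convolutions over $\dot\cT^1(\bGr^\twosimeq(S))\simeq\bGr^\twosimeq(S)\times\bGr^\twosimeq(S)^\op$ using Corollary \ref{cor:CAlgCatinfn01pullback}, and compares them directly. Your proposal is instead attempting a hands-on comparison which would be fine as a \emph{plausibility check}, but the missing input---a usable model for the morphism spaces of the free extension $\cU$---is nontrivial and is not supplied.
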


\section{Free extensions of symmetric monoidal \texorpdfstring{$\infn$}{infn}-categories}
\label{sec:symmonextensions}
Recall from Notation \ref{nota:cUcV} that $\cV$ denotes the symmetric monoidal $\inftwo$-catego\-ry freely obtained from $\Fin$ by adjoining the 1-morphism $\ft_\cV$, the $C_2$-equivariant 1-morphisms $\fe_\cV$, the 2-morphism $\beta_\cV$, and the $C_2$-equivariant structure on the resulting 2-morphism $\tbeta_\cV$.
In order to identify $\cV$ with $\bGr$, we need to understand how a free symmetric monoidal extension of an $\inftwo$-category looks like. In this section we fix $n\ge1$ and consider the problem of describing extensions of symmetric monoidal $\infn$-categories obtained by freely adjoining spaces of new $n$-morphisms.
\subsection{Definition of free extensions}
\begin{nota}
Recall Definition \ref{defn:ThetanmOn}. For a pair of spaces $(X,\del X)$, i.e. a map of spaces $i\colon\del X\to X$ we denote by $\del(X\times \Theta_n)$, leaving $\del X$ and $i$ implicit in the notation, the $\infn$-category $X\times\mO_{n-1}\sqcup_{\del X\times\mO_{n-1}}\del X\times\Theta_n$; we have a canonical $\infn$-functor $\del(X\times\Theta_n)\to X\times\Theta_n$.
\end{nota}

\begin{defn}
\label{defn:extension}
Let $n\ge1$, let $(X,\del X)$ be a pair of spaces, let $\cC\in\CAlg(\Catinfn)$, and let $F\colon\del(X\times\Theta_n)\to\cC$ be an $\infn$-functor, whose datum is equivalent to a commutative square of spaces as follows:
\[
\begin{tikzcd}[row sep=10pt, column sep=70pt]
\del X\ar[d]\ar[r,"F|_{\del X\times\Theta_n}"]&\Fun(\Theta_n,\cC)^\simeq\ar[d,"\text{restriction}"]\\
X\ar[r,"F_{X\times\mO_{n-1}}"]&\Fun(\mO_{n-1},\cC^\nsimeq)^\simeq.
\end{tikzcd}
\]
We denote by $\cC[X,\del X]^\sm$, leaving $F$ implicit in the notation, the symmetric monoidal $\infn$-category representing the limit-preserving and accessible functor
\[
\Fun^\otimes(\cC,-)^\simeq\times_{\Fun(\del(X\times \Theta_n),-)^\simeq}\Fun(X\times\Theta_n,-)^\simeq\colon\CAlg(\Catinfn)\to\cS.
\]
If $\del X=\emptyset$, we just write $\cC[X]^\sm$.
\end{defn}
Informally, $\cC[X,\del X]^\sm$ is the free symmetric monoidal $\infn$-category obtained from $\cC$ by adjoining ``$X$ many'' new $n$-morphisms, with sources and targets of all depths specified by the map $X\to\Fun(\mO_{n-1},\cC^\nsimeq)^\simeq$, and by identifying ``$\del X$'' many of them with the $n$-morphisms of $\cC$ specified by the map $\del X\to\Fun(\Theta_n,\cC)^\simeq$.
\begin{nota}
\label{nota:multiextension}
Let $k\ge0$, let $\cC=\cC_0\to\dots\to\cC_k=\cD$ be a sequence of morphisms in $\CAlg(\Catinfn)$, and for $1\le i\le k$ let $(X_i,\del X_i)$ be a pair of spaces and $F_i\colon\del(X_i\times\Theta_n)\to\cC_{i-1}$ be an $\infn$-functor.
If the canonical map $\cC_{i-1}[X_i,\del X_i]^\sm\to\cC_i$ is an equivalence for all $1\le i\le k$, we say that $\cD$ is an \emph{iterated extension} and write $\cD\simeq\cC[X_1,\del X_1;\dots;X_k,\del X_k]^\sm$, or shortly $\cD\simeq\cC[\ul X,\ul{\del X}]^\sm$.
\end{nota}
\begin{ex}
\label{ex:cUcVasextensions}
For $n=1$ we have $\cU\simeq\Fin[*\sqcup\bB C_2]^\sm$ for the functor
\[
F\colon\del((*\sqcup\bB C_2)\times\Theta_1)\simeq (*\sqcup\bB C_2)\times \mO_0\simeq (*\sqcup\bB C_2)\times\set{s_0,t_0}\to\Fin
\]
restricting to the constant functor at $\ul0$ on $\bB C_2\times s_0\sqcup *\times t_0$, the constant functor at $\ul1$ on $*\times s_0$, and the inclusion $\bB C_2\hto\Fin^\simeq\hto\Fin$ on $\bB C_2\times t_0\simeq \bB C_2$.

For $n=2$ we have $\cV\simeq\cU[*\ ;\ \bB C_2,*]^\sm$: the first extension is given by
\[
F_1\colon\del(*\times\Theta_2)\simeq\mO_1\to\cU
\]
sending $s_0,t_0\mapsto \ul1$, and sending $s_1\mapsto((\ft_\cU\mu)\sqcup \Id_{\ul1})(\Id_{\ul1}\sqcup\fe_\cU)$ and $t_1\mapsto \Id_{\ul1}$; the result of this extension is adjoining the 2-morphism $\beta_\cV$. The second extension, using the pair of spaces$(\bB C_2,*)$, is given by
\[
F_2\colon\del(\bB C_2\times \Theta_2)\simeq\bB C_2\times\mO_1\sqcup_{*\times\mO_1}*\times\Theta_2\to\cU[*],
\]
whose description is precisely given by the diagram before Definition \ref{defn:GLfunctor}.
\end{ex}

\subsection{Multigraded categories}
In the study of extensions of symmetric monoidal $\infn$-catego\-ries as in Definition \ref{defn:extension} and Notation \ref{nota:multiextension}, it is convenient to consider the ``old $n$-morphisms'' as living in grading 0, and the ``new $n$-morphisms'' as living in positive grading. We introduce some definitions to make this precise.
In the rest of the section we denote by $M$ an \emph{ungroup-like} abelian monoid as in Definition \ref{defn:ungrouplike}.
\begin{defn}
An \emph{$M$-graded $\infn$-category} is an $\infn$-category $\cC$ together with an $\infn$-functor $\fd\colon\cC\to\bB^nM$. We denote by $\Catinfn^M:=(\Catinfn)_{/\bB^nM}$ the presentable $\infone$-category of small $M$-graded $\infn$-categories; we denote by $\Fun_M(\cC,\cD)^\simeq$ the morphism space between $\cC,\cD\in\Catinfn^M$ . 
For $\cC\in\Catinfn^{M}$ we usually denote by $\delta$ the composite map of spaces
\[
\Fun(\Theta_n,\cC)^\simeq\overset{\fd}{\to}\Fun(\Theta_n,\bB^nM)^\simeq\simeq M,
\]
and refer to it as an \emph{$M$-grading} on $\cC$. We present an object in $\Catinfn^{M}$ as a pair $(\cC,\fd)$ or as a pair $(\cC,\delta)$. For $a\in M$ we denote by $\Fun(\Theta_n,\cC)^\simeq_a$ the fibre at $a$ of $\delta$, which we think of as the space of all $n$-morphisms in $\cC$ of grading $a$.
\end{defn}
We note that $\delta$ sends invertible $n$-morphisms in $\cC$ to invertible $n$-morphisms in $\bB^nM$; since $M$ is ungroup-like abelian, the subspace of invertible $n$-morphisms of $\cC$ is contained in $\Fun(\Theta_n,\cC)^\simeq_0$.
We also note that, since $\bB^nM$ is a symmetric monoidal $\infn$-category, the overcategory $\Catinfn^M=(\Catinfn)_{/\bB^nM}$ inherits a symmetric monoidal structure; moreover we have an equivalence of presentable $\infone$-categories $\CAlg(\Catinfn^M)\simeq\CAlg(\Catinfn)_{/\bB^nM}$.

We further observe that a map of spaces $\delta\colon\Fun(\Theta_n,\cC)^\simeq\to M$ is a valid $M$-grading, making $(\cC,\delta)$ into an object in $\Catinfn^M$, precisely if $\delta$ is additive along compositions in all of the $n$ directions; and for $\cC\in\CAlg(\Catinfn)$, we have can upgrade $(\cC,\delta)$ to an object in $\CAlg(\Catinfn)$ (in an essentially unique way) if moreover $\delta$ is additive along symmetric monoidal products of $n$-morphisms.

\begin{defn}
\label{defn:DeltaopnM}
We denote by $\Delta^{\op,n}_M$ the category of multisimplices of the $n$-fold simplicial set $\bB^nM$ from Example \ref{ex:BnMungrouplike}, i.e. $\bDelta^{\op,n}_M\to(\bDelta^\op)^n$ is a left fibration corresponding to the functor $\bB^nM\colon(\bDelta^\op)^n\to\cS$. We denote by $\bDelta^n_M\to\bDelta^n$ the opposite, right fibration. An object in $\bDelta^{\op,n}_M$ or $\bDelta^n_M$ is usually denoted $([\bk],\nu)$ with $\bk\in\N^n$ and $\nu\in M^{\ul\bk}\simeq \bB^nM([\bk])$.

A morphism in $\bDelta^{\op,n}_M$ or $\bDelta^n_M$ is inert/active if the underlying morphism in $(\bDelta^\op)^n$ or $\bDelta^n$ is inert/active. We denote by $\bDelta^{\op,n}_{M,\inert}\subset \bDelta^{\op,n}_M$ and $\bDelta^n_{M,\inert}\subset \bDelta^n_M$ the subcategories spanned by all objects and inert morphisms.
\end{defn}
We remark that, for $M\neq0$, $\bDelta^n_M$ is \emph{not} equivalent to $(\bDelta_M)^n$.

\begin{defn}
We let $s^n_M\cS:=\Fun(\bDelta^{\op,n}_M,\cS)$ and refer to it as the $\infty$-category of $M$-graded $n$-fold simplicial spaces. For $X\in s^n_M\cS$ and $a\in X([\bk],\nu)$, the total grading of $a$ is defined as the total grading of $\nu$ (see Notation \ref{nota:totalgrading}).
\end{defn}
\begin{rem}
\label{rem:snMasovercategory}
We can equivalently define $s^n_M\cS$ as the overcategory $s^n\cS_{/\bB^nM}$;
with this second description, $s^n_M\cS$ is automatically endowed with a symmetric monoidal structure.
Explicitly, the tensor product of $X,Y\in s^n_M\cS$ is the functor $\bDelta^{\op,n}_M\to\cS$ sending the object $([\bk],\nu)$ to $\coprod_{y+z=x} X([\bk],y)\times Y([\bk],z)$.
\end{rem}

Just as $\Catinfn\subseteq s^n\cS$ is a full and reflective subcategory of $s^n\cS$ (see Subsection \ref{subsec:catofcat}), we can regard $\Catinfn^M$ as the full and reflective subcategory of $s^n_M\cS$ spanned by objects $X$ satisfying the following conditions:
\begin{itemize}
\item ($M$-Segal condition) The composite functor $X_\inert\colon\bDelta_{M,\inert}^{\op,n}\hto\bDelta^{\op,n}_M\overset{X}{\to}\cS$ is right Kan extended from the full subcategory $\bDelta_{M,\inert,\le1}^{\op,n}\subset\bDelta_{M,\inert}^{\op,n}$ spanned by objects $([\bk],\nu)$ with $\bk\in\set{0,1}^n$.
\item (Constancy and completeness condition) The restriction of $X$ to the full subcategory $(\bDelta^\op)^n\simeq\bDelta^{\op,n}_0\subseteq\bDelta^{\op,n}_M$ satisfies the constancy and completeness conditions from Subsection \ref{subsec:catofcat}. Here $\bDelta^{\op,n}_0$ denotes the full subcategory of $\bDelta^{\op,n}_M$ corresponding to the submonoid $0\subset M$.
\end{itemize}
Moreover $\Catinfn^M$ is a symmetric monoidal $\infone$-subcategory of $s^n_M\cS$; in fact, an object $X\in s^n_M\cS$ lies in $\Catinfn^M$ if and only if its image under the symmetric monoidal forgetful functor $s^n_M\cS\to s^n\cS$ lies in $\Catinfn$.
\begin{rem}
\label{rem:CatinfnMcharacterisation}
One can characterise $M$-graded $\infn$-categories as those $M$-graded $n$-fold simplicial spaces that are local with respect to a suitable set of maps of $M$-graded simplicial spaces, in a way which is analogous to Remark \ref{rem:Catinfnpresentable}. In particular $\Catinfn^M$ is an accessible localisation of $s^n_M\cS$.
\end{rem}

\begin{rem}
The abelian monoid structure on $M$ yields a symmetric monoidal structure on $\cS_{/M}$, and $\Catinfone^M$ can be identified with $\Cat_\infty^{\cS_{/M}}$ in the sense of \cite{GepnerHaugseng}.
\end{rem}

\begin{nota}
\label{nota:fshrik}
For a monoid homomorphism between ungroup-like abelian monoids $f\colon M\to M'$ we denote by abuse of notation also by $f$ the induced functor $\bDelta^{\op,n}_M\to\bDelta^{\op,n}_{M'}$, inducing by restriction a functor
$f^*\colon s^n_{M'}\cS\to s^n_M\cS$; the left adjoint to $f^*$, given by left Kan extension along $f$, is denoted $f_!$.
\end{nota}
In the setting of Notation \ref{nota:fshrik}, and in the light of Remark \ref{rem:snMasovercategory}, the functor $f_!\colon s^n\cS_{/\bB^n M}\to s^n\cS_{/\bB^n M'}$ sends $(X\overset{\fd}{\to}\bB^nM)\in\Catinfn^M$ to the composite
\[
(X\overset{\fd}{\to}\bB^nM\overset{\bB^nf}{\to}\bB^nM')\in\Catinfn^{M'};
\]
from the point of view of the grading, we replace $\delta\colon\Fun(\Theta_n,\cC)^\simeq\to M$ with $f\circ\delta$.
Similarly, $f^*\colon Y\mapsto Y\times_{\bB^nM'}\bB^nM$.
From this we see that $f_!$ is symmetric monoidal and $f^*$ is lax symmetric monoidal, and $f_!$ and $f^*$ restrict to an adjunction $\Catinfn^M\rightleftarrows\Catinfn^{M'}$. In particular we obtain adjunctions
\[
f_!\colon\CAlg(s^n_M\cS)\rightleftarrows\CAlg(s^n_{M'}\cS)\colon f^*;\quad f_!\colon\CAlg(\Catinfn^M)\rightleftarrows\CAlg(\Catinfn^{M'})\colon f^*.
\]
\begin{ex}
\label{ex:iotatau}
The initial map of monoids $\iota\colon 0\to M$ gives rise to a fully faithful functor $\iota_!\colon\CAlg(\Catinfn)\to\CAlg(\Catinfn^M)$, with essential image given by the $M$-graded symmetric monoidal $\infn$-categories all of whose $n$-morphisms have vanishing $M$-grading. The right adjoint $\iota^*$ sends $(\cC,\delta)$ to the sub-$\infn$-category of $\cC$ comprising all $i$-morphisms for $i\le n-1$, and all $n$-morphisms of vanishing $M$-grading.
Conversely, the terminal map $\tau\colon M\to 0$ gives rise to the functor $\tau_!\colon\CAlg(\Catinfn^M)\to\CAlg(\Catinfn)$ forgetting the $M$-grading, with right adjoint given by $\tau^*\colon\cC\mapsto\cC\times\bB^nM$.
\end{ex}
\begin{rem}
Let $\cD=\cC[X,\del X]^\sm$ be an extension as in Definition \ref{defn:extension}. Then an $M$-grading $\delta\colon\Fun(\Theta_n,\cD)^\simeq\to M$ is the same as a functor $\fd\colon\cD\to\bB^nM$, so it is uniquely determined by an $M$-grading $\delta|_{\cC}\colon\Fun(\Theta_n,\cC)^\simeq\to M$, a map $\delta_X\colon X\to M$, and homotopy filling the diagram of spaces
\[
\begin{tikzcd}[column sep=70pt, row sep=10pt]
\del X\ar[d]\ar[r,"F|_{\del X\times\Theta_n}"]&\Fun(\Theta_n,\cC)^\simeq\ar[d,"\delta|_\cC"]\\
X\ar[r,"\delta_X"]&M.
\end{tikzcd}
\]
Note that, since $M$ is discrete, such a homotopy, if it exists, is essentially unique.
\end{rem}

\begin{defn}
\label{defn:Nadmissible}
Recall Notation \ref{nota:multiextension}. We say that an iterated extension of symmetric monoidal $\infn$-categories $\cD=\cC[\ul X,\ul{\del X}]^\sm$ is \emph{$\N$-admissible} if there exists a $\N$-grading $\delta\colon\Fun(\Theta_n,\cD)^\simeq\to\N$ such that the composite
\[
\Fun(\Theta_n,\cC)^\simeq\to\Fun(\Theta_n,\cD)^\simeq\overset{\delta}{\to}\N
\]
is constantly 0, and each composite $X_i\to\Fun(\Theta_n,\cD)^\simeq\overset{\delta}{\to}\N$ is constantly 1. By the previous remarks, such an $\N$-grading, if it exists, is essentially unique.
\end{defn}
Both extensions $\Fin\to\cU$ and $\cU\to\cV$ from Example \ref{ex:cUcVasextensions} are $\N$-admissible. 

\subsection{Truncation and \texorpdfstring{$n$}{n}-groupoid cores of extensions.}
\begin{defn}
\label{defn:CatinfnI}
We say that a subset $I\subseteq M$ is a \emph{coideal} if $0\in I$ and if for all $a,b\in M$ with $a+b\in I$ we have $a,b\in I$.
We denote by $\bDelta^{\op,n}_I$ the full subcategory of $\bDelta^{\op,n}_M$ spanned by objects $([\bk],\nu)$ with $\nu$ having total grading in $I$ (see Notation \ref{nota:totalgrading}): note that this is stronger than just requiring all coordinates of $\nu$ to be in $I$. We define similarly $\bDelta^n_I\subset\bDelta^n_M$. We denote by $\iota^{I,M}\colon\bDelta^{\op,n}_I\hto\bDelta^{\op,n}_M$ the inclusion functor.
We denote by $\bDelta^{\op,n}_{I,\inert}$ the subcategory of $\bDelta^{\op,n}_I$ spanned by inert morphisms, and by $\iota^{I,M}_{\inert}\colon\bDelta^{\op,n}_{I,\inert}\hto\bDelta^{\op,n}_{M,\inert}$ the inclusion functor.

We denote by $s^n_I\cS:=\Fun(\bDelta^{\op,n}_I,\cS)$, and by $\Catinfn^I\subseteq s^n_I\cS$ the full and reflective subcategory spanned by objects $X$ satisfying the following conditions:
\begin{itemize}
\item($I$-Segal condition) the restriction of $X$ along the inclusion $\bDelta^{\op,n}_{I,\inert}\hto\bDelta^{\op,n}_I$ is right Kan extended from the full subcategory $\bDelta^{\op,n}_{I,\inert,\le1}\subset\bDelta^{\op,n}_{I,\inert}$ spanned by objects $([\bk],\nu)$ with $\bk\in\set{0,1}^n$;
\item(Constancy and completeness condition) the restriction of $X$ on the full subcategory $(\bDelta^\op)^n\simeq\bDelta^{\op,n}_0\subseteq\bDelta^{\op,n}_I$ satisfies the constancy and completeness conditions from Subsection \ref{subsec:catofcat}.
\end{itemize}
\end{defn}
An object in $\Catinfn^I$ can be thought of as an $I$-graded partial $\infn$-category, in the sense that $n$-morphisms are endowed with an $I$-grading, and compositions of $n$-morphisms (in any direction) is only defined if the sum of the gradings still lies in $I$.
It is possible to exhibit $\Catinfn^I$ as an accessible localisation of $s^n_I\cS$, similarly to Remark \ref{rem:CatinfnMcharacterisation}; in particular $\Catinfn^I$ is presentable.
\begin{defn}
\label{defn:Itruncated}
Let $I\subseteq M$ be a coideal.
An $M$-graded $\infn$-category $\cC$ is \emph{$I$-truncated} if the canonical map $\Fun(\Theta_n,\cC)^\simeq\times_M(M\setminus I)\to\Fun(\mO_{n-1},\cC)^\simeq\times (M\setminus I)$ is an equivalence.
\end{defn}
We note that $\cC\in\Catinfn^M$ is $I$-truncated if and only if for all $a\in M\setminus I$ the map $\Fun(\Theta_n,\cC)^\simeq_a\to\Fun(\mO_{n-1},\cC)^\simeq$ given by restriction is an equivalence. Roughly speaking, the last condition means that for all pairs of $(n-1)$-morphisms $s,t$ sharing all levels of sources and targets, and for all $a\in M\setminus I$, the space of $n$-morphisms from $s$ to $t$ of grading $a$ is contractible.

\begin{lem}
\label{lem:inertinitial}
Let $I\subseteq M$ be a coideal.
Let $([\bk],\nu)\in\bDelta^{\op,n}_M$, let $(\bDelta^{\op,n}_I)_{([\bk],\nu)/}$ denote the full subcategory of $(\bDelta^{\op,n}_M)_{([\bk],\nu)/}$ spanned by arrows with target in $\bDelta^{\op,n}_I$, and let 
$(\bDelta^{\op,n}_{I,\inert})_{([\bk],\nu)/}$ denote the full subcategory of $(\bDelta^{\op,n}_{M,\inert})_{([\bk],\nu)/}$ spanned by arrows with target in $\bDelta^{\op,n}_{I,\inert}$. Then the inclusion $(\bDelta^{\op,n}_{I,\inert})_{([\bk],\nu)/}\hto(\bDelta^{\op,n}_I)_{([\bk],\nu)/}$ is an initial functor, i.e. limits of diagrams parametrised by $(\bDelta^{\op,n}_{I})_{([\bk],\nu)/}$ can be computed upon restriction to $(\bDelta^{\op,n}_{I,\inert})_{([\bk],\nu)/}$. 
\end{lem}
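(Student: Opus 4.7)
The plan is to apply the standard criterion that a functor $F\colon\mathcal{X}\to\mathcal{Y}$ of $\infty$-categories is initial precisely when, for every object $d\in\mathcal{Y}$, the fibre product $\mathcal{X}\times_\mathcal{Y}\mathcal{Y}_{/d}$ has contractible classifying space (the dual of \cite[Theorem 4.1.3.1]{LurieHTT}). Since $\bB^nM$ is set-valued, $\bDelta^{\op,n}_M$ and the subcategories in the statement are all ordinary 1-categories, so it suffices to exhibit a terminal object in each such fibre product. Fixing a morphism $\alpha\colon([\bk],\nu)\to([\bk''],\nu'')$ in $\bDelta^{\op,n}_M$ with target in $\bDelta^{\op,n}_I$, let $\cK$ denote the corresponding slice. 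Unpacking, an object of $\cK$ is a factorisation $\alpha=\gamma\circ\beta$ with $\beta$ inert and whose intermediate object lies in $\bDelta^{\op,n}_I$; a morphism from $(\beta_1,\gamma_1)$ to $(\beta_2,\gamma_2)$ is an inert $\delta$ satisfying $\delta\circ\beta_1=\beta_2$ and $\gamma_2\circ\delta=\gamma_1$.

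The terminal candidate comes from the essentially unique inert-active factorisation of $\alpha$ in $\bDelta^{\op,n}_M$, obtained by lifting the corresponding factorisation of the underlying morphism in $(\bDelta^\op)^n$ along the left fibration $\bDelta^{\op,n}_M\to(\bDelta^\op)^n$ via the cocartesian lift of its inert part. This produces $\alpha=\gamma_0\circ\beta_0$ with $\beta_0\colon([\bk],\nu)\to([\bk'_0],\nu'_0)$ inert and $\gamma_0\colon([\bk'_0],\nu'_0)\to([\bk''],\nu'')$ active. The crucial step to check, which I see as the main point requiring care, is that $([\bk'_0],\nu'_0)\in\bDelta^{\op,n}_I$. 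For this I would observe that active morphisms in $\bDelta^{\op,n}_M$ preserve total grading, because componentwise they are generated by inner face maps (which sum two adjacent coordinates) and degeneracies (which insert $0\in M$); hence $\nu'_0$ has the same total grading as $\nu''$, which lies in $I$ by hypothesis.

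To finish, I would verify terminality of $(\beta_0,\gamma_0)$ as follows. Given any object $(\beta,\gamma)$ of $\cK$, apply the inert-active factorisation once more to $\gamma$ itself, writing $\gamma=\gamma_2\circ\gamma_1$ with $\gamma_1$ inert and $\gamma_2$ active. Since composites of inert morphisms are inert, $\gamma_1\circ\beta$ is inert, so $\alpha=\gamma_2\circ(\gamma_1\circ\beta)$ is another active-after-inert factorisation of $\alpha$; by uniqueness, $\gamma_1\circ\beta=\beta_0$ and $\gamma_2=\gamma_0$. Thus $\gamma_1$ is the unique arrow of $\cK$ from $(\beta,\gamma)$ to $(\beta_0,\gamma_0)$, establishing terminality. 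Beyond the bookkeeping of directions between $\bDelta^n$ and $(\bDelta^\op)^n$ and the grading-preservation observation used in the previous paragraph, the rest of the argument is formal from the inert-active orthogonal factorisation system on $\bDelta^n$; note that the coideal condition on $I$ plays no explicit role here, only the assumption $\nu''\in\bDelta^{\op,n}_I$ does.
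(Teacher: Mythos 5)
Your proof is correct and takes essentially the same route as the paper: apply the initiality criterion via Quillen's Theorem A, observe that the slice is a $1$-category, and exhibit a terminal object coming from the inert-active factorisation of $\alpha$ in $\bDelta^{\op,n}_M$.

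One point is worth spelling out, since it is actually a small improvement over the paper's exposition. The paper's proof asserts that it ``crucially'' uses the coideal hypothesis on $I$ at the step of checking that the intermediate object $([\bk''],\nu'')$ of the inert-active factorisation lies in $\bDelta^{\op,n}_I$ (the sentence there also contains a slip, calling $\beta$ active when it is the second factor $\gamma$ that is active). Your argument shows this invocation is superfluous: as you correctly note, an active morphism in $\bDelta^{\op,n}_M$ preserves total grading, because the underlying active map in $\bDelta^n$ is generated coordinatewise by inner faces (which add adjacent entries of $\nu$) and degeneracies (which insert $0\in M$), neither of which changes the sum. Hence the total grading of $\nu''$ equals that of $\nu'$, which lies in $I$ because the target of $\alpha$ does, and the coideal condition plays no role in this lemma. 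So your proof is both correct and sharper on this point; the coideal hypothesis is genuinely needed in surrounding results (e.g. Lemma \ref{lem:Itruncateddescription} and the symmetric monoidal structure on $\Catinfn^I$), but not here.

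Your verification of terminality via uniqueness of the inert-active factorisation is also clean and matches the paper's intent, just more explicitly argued.
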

\begin{proof}
We compute, for any $(\alpha\colon([\bk],\nu)\to([\bk'],\nu'))\in(\bDelta^{\op,n}_{I})_{([\bk],\nu)/}$, the overcategory $((\bDelta^{\op,n}_{I,\inert})_{([\bk],\nu)/})_{/\alpha}$: this is the same as the category whose objects and morphisms are described as follows:
\begin{itemize}
\item an object is a factorisation $([\bk],\nu)\overset{\beta}{\to}([\bk''],\nu'')\overset{\gamma}{\to}([\bk'],\nu')$ of $\alpha$ in $\bDelta^{\op,n}_M$, with $\beta$ inert and with $([\bk''],\nu'')\in\bDelta^{\op,n}_I$;
\item a morphism from $([\bk],\nu)\overset{\beta}{\to}([\bk''],\nu'')\overset{\gamma}{\to}([\bk'],\nu')$ to $([\bk],\nu)\overset{\beta'}{\to}([\bk'''],\nu''')\overset{\gamma'}{\to}([\bk'],\nu')$ is an inert morphism $\epsilon\colon([\bk''],\nu'')\to([\bk'''],\nu''')$ with $\epsilon\beta=\beta'$ and $\gamma'\epsilon=\gamma$.
\end{itemize}
The category $((\bDelta^{\op,n}_{I,\inert})_{([\bk],\nu)/})_{/\alpha}$ has a terminal object, given by the unique factorisation $([\bk],\nu)\overset{\beta}{\to}([\bk''],\nu'')\overset{\gamma}{\to}([\bk'],\nu')$ of $\alpha$ with $\beta$ inert and with $\gamma$ active: here we crucially use that $I$ is a coideal to conclude that, since $\beta$ is an active morphism in $\bDelta^{\op,n}_M$ with target in $\bDelta^{\op,n}_I$, also the source $([\bk''],\nu'')$ of $\beta$ lies in $\bDelta^{\op,n}_I$.

This shows that $((\bDelta^{\op,n}_{I,\inert})_{([\bk],\nu)/})_{/\alpha}$ is weakly contractible, and the claim follows from Quillen's Theorem A.
\end{proof}
\begin{cor}
\label{cor:inertinitial}
Let $I\subseteq M$ be a coideal.
Then the Beck--Chevalley transformation in the following diagram is an equivalence:
\[
\begin{tikzcd}[column sep=90pt]
s^n_I\cS\ar[r,"\text{restrict}"]\ar[d,hook,"(\iota^{I,M})_*"'] & \Fun(\bDelta^{n,\op}_{I,\inert},\cS)\ar[d,hook,"(\iota^{I,M}_{\inert})_*"]\\
s^n_M\cS\ar[r,"\text{restrict}"]\ar[ur,Rightarrow,"\simeq"] &\Fun(\bDelta^{n,\op}_{M,\inert},\cS).
\end{tikzcd}
\]
\end{cor}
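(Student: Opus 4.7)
The plan is to verify the Beck--Chevalley transformation pointwise. I would fix $X \in s^n_I\cS$ and an object $([\bk],\nu) \in \bDelta^{n,\op}_{M,\inert}$, and then compute the values of the two composites appearing in the square at $([\bk],\nu)$ using the pointwise formula for right Kan extension along a fully faithful inclusion. The ``down-then-restrict'' composite evaluates to
\[
((\iota^{I,M})_* X)([\bk],\nu) \simeq \lim_{(\bDelta^{n,\op}_I)_{([\bk],\nu)/}} X,
\]
while the ``restrict-then-down'' composite evaluates to
\[
((\iota^{I,M}_{\inert})_*(X|_{\bDelta^{n,\op}_{I,\inert}}))([\bk],\nu) \simeq \lim_{(\bDelta^{n,\op}_{I,\inert})_{([\bk],\nu)/}} X|_{\bDelta^{n,\op}_{I,\inert}}.
\]

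Next, I would trace through the construction of the Beck--Chevalley transformation: since the outer square of functors on indexing categories strictly commutes ($\iota^{I,M} \circ j_I = j_M \circ \iota^{I,M}_{\inert}$, where $j_I, j_M$ denote the inclusions of the inert subcategories), the BC map arises from the counit of the adjunction defining $(\iota^{I,M})_*$, and at our chosen object it identifies tautologically with the natural restriction map between the two limits above, induced by the inclusion of indexing categories $(\bDelta^{n,\op}_{I,\inert})_{([\bk],\nu)/} \hookrightarrow (\bDelta^{n,\op}_I)_{([\bk],\nu)/}$.

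Finally, I invoke Lemma \ref{lem:inertinitial}, which asserts precisely that this inclusion of overcategories is initial, hence the restriction between the two limits is an equivalence. Since this holds for every $([\bk],\nu) \in \bDelta^{n,\op}_{M,\inert}$ and every $X$, the Beck--Chevalley 2-cell in the diagram is a natural equivalence, which is the desired conclusion. The only mildly delicate point is ensuring the BC map, as constructed abstractly, really does become the restriction-of-limits map under the pointwise formula; this is a standard manipulation with unit/counit, so I do not expect genuine difficulty there. The substantive input is entirely absorbed into Lemma \ref{lem:inertinitial}, whose use of the coideal hypothesis (needed to keep the inert-active factorisation inside $\bDelta^{n,\op}_I$) is what makes the corollary work.
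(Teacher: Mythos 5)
Your proof is correct and takes essentially the same approach as the paper: evaluate both composites at an object $([\bk],\nu)$ via the pointwise right Kan extension formula, identify the Beck--Chevalley map with the comparison map between the limit over $(\bDelta^{\op,n}_I)_{([\bk],\nu)/}$ and the limit over $(\bDelta^{\op,n}_{I,\inert})_{([\bk],\nu)/}$, and apply Lemma \ref{lem:inertinitial}. The paper is terser (it states this identification in one sentence without spelling out the unit/counit bookkeeping), but the content is the same.
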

\begin{proof}
The value at $([\bk],\nu)\in \bDelta^{\op,n}_{M,\inert}$ of the given natural transformation applied to $X\in s^n_I\cS$ is the comparison map of spaces $\lim_{(\bDelta^{\op,n}_I)_{([\bk],\nu)/}}X\to \lim_{(\bDelta^{\op,n}_{I,\inert})_{([\bk],\nu)/}}X$, which is an equivalence by Lemma \ref{lem:inertinitial}.
\end{proof}
We stress that the vertical functors in the diagram of Corollary \ref{cor:inertinitial} are fully faithful inclusions, since they are given by right Kan extension from a full subcategory; moreover the horizontal functors detect equivalences, since they are given by restriction to a wide subcategory.
\begin{lem}
\label{lem:Itruncateddescription}
Let $I\subseteq M$ be a coideal
and let $X\in s^n_M\cS$. Then $X$ is an $I$-truncated $M$-graded $\infn$-category if and only if both of the following hold:
\begin{enumerate}
\item $X$ is right Kan extended from its restriction on $\bDelta^{\op,n}_I$;
\item $X|_{\bDelta^{\op,n}_I}\in\Catinfn^I$.
\end{enumerate}
In other words, the right Kan extension functor $(\iota^{I,M})_*\colon s^n_I\cS\to s^n_M\cS$ restricts to a fully faithful inclusion $\Catinfn^I\hto\Catinfn^M$ with essential image given by the full subcategory of $I$-truncated $M$-graded $\infn$-categories.
\end{lem}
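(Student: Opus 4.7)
The proof rests on the observation that $I\subseteq M$ being a coideal makes the inclusion $\iota^{I,M}\colon\bDelta^{\op,n}_I\hookrightarrow\bDelta^{\op,n}_M$ a cosieve. Indeed, a morphism $([\bk_1],\nu_1)\to([\bk_2],\nu_2)$ in $\bDelta^{\op,n}_M$ is induced by a morphism $f\colon[\bk_2]\to[\bk_1]$ in $\bDelta^n$, under which the total grading of $\nu_2$ equals the total grading of $\nu_1$ minus the sum of the entries of $\nu_1$ at positions outside the image of $f$. If the total grading of $\nu_1$ lies in $I$, the coideal axiom applied to this decomposition forces the total grading of $\nu_2$ to lie in $I$ as well. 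As a consequence, for every $d\in\bDelta^{\op,n}_I$ the inclusions
\[
(\bDelta^{\op,n}_{I,\inert,\le 1})_{d/}\hookrightarrow(\bDelta^{\op,n}_{M,\inert,\le 1})_{d/},\qquad(\bDelta^{\op,n}_{I,\inert})_{d/}\hookrightarrow(\bDelta^{\op,n}_{M,\inert})_{d/}
\]
are equalities of categories.

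I would prove the $(\Leftarrow)$ direction first. Assume $X\in s^n_M\cS$ satisfies (1) and (2). At any $d\in\bDelta^{\op,n}_I$, the $M$-Segal condition coincides with the $I$-Segal condition for $X|_{\bDelta^{\op,n}_I}$ by the equality of comma categories above, hence follows from (2). At $d\in\bDelta^{\op,n}_M\setminus\bDelta^{\op,n}_I$, one uses (1) together with Corollary \ref{cor:inertinitial} to rewrite $X(d)\simeq\lim_{(\bDelta^{\op,n}_{I,\inert})_{d/}}X$, and then applies the $I$-Segal condition levelwise to recover the required $M$-Segal limit. Constancy and completeness for $X$ are inherited from those for $X|_{\bDelta^{\op,n}_I}$, since they involve only $\bDelta^{\op,n}_0\subseteq\bDelta^{\op,n}_I$. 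Finally, $I$-truncation of $X$ follows by evaluating (1) at $([1]^n,\nu)$ for $\nu$ of total grading in $M\setminus I$: the resulting limit $\lim_{(\bDelta^{\op,n}_{I,\inert})_{([1]^n,\nu)/}}X$ unravels, via iterated use of the $I$-Segal condition inside $\bDelta^{\op,n}_I$, to the matching object computing $\Fun(\mO_{n-1},-)^\simeq$ applied to the underlying $\infn$-category of $X$.

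For the $(\Rightarrow)$ direction, assume $X$ is an $I$-truncated object of $\Catinfn^M$. Part (2) is immediate: constancy and completeness are inherited, and the $I$-Segal condition for $X|_{\bDelta^{\op,n}_I}$ follows from the $M$-Segal condition for $X$ via the equality of comma categories at objects of $\bDelta^{\op,n}_I$. The crux is (1): one must show
\[
X(d)\simeq\lim_{(\bDelta^{\op,n}_{I,\inert})_{d/}}X
\]
for every $d=([\bk],\nu)\in\bDelta^{\op,n}_M\setminus\bDelta^{\op,n}_I$. Starting from $X(d)\simeq\lim_{(\bDelta^{\op,n}_{M,\inert,\le 1})_{d/}}X$ provided by the $M$-Segal condition, one partitions this comma category according to whether the target of each inert morphism lies in $\bDelta^{\op,n}_I$. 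For targets with grading still in $M\setminus I$, the $I$-truncation hypothesis lets us replace the corresponding $X$-value by a further limit over its own inert reductions into $\bDelta^{\op,n}_I$; a cofinality argument then contracts the resulting double limit down to $\lim_{(\bDelta^{\op,n}_{I,\inert})_{d/}}X$.

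The main obstacle is this last cofinality argument: one must exhibit a suitable final functor between the two diagrams that handles simultaneously the depth restriction imposed by the $M$-Segal condition and the coideal stratification imposed by the $I$-truncation hypothesis. Once (1) and (2) are established as the characterization of $I$-truncated objects, the final assertion of the lemma — that $(\iota^{I,M})_*$ restricts to a fully faithful embedding $\Catinfn^I\hto\Catinfn^M$ with the stated essential image — follows from the formal fact that right Kan extension along a fully faithful inclusion is itself fully faithful.
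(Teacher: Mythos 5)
Your observation that the coideal axiom makes $\bDelta^{\op,n}_I$ a cosieve in $\bDelta^{\op,n}_M$, so that the comma categories you write down coincide for $d\in\bDelta^{\op,n}_I$, is correct, and it is the reason why the $I$-Segal and $M$-Segal conditions agree at objects of $\bDelta^{\op,n}_I$. But both directions of your argument then stall at the same place: in $(\Leftarrow)$ the phrase ``applies the $I$-Segal condition levelwise to recover the required $M$-Segal limit'' is asserted rather than proved, and in $(\Rightarrow)$ you explicitly identify but do not supply the needed cofinality argument. In fact a direct cofinality comparison between the two comma diagrams is the wrong tool here, and the obstacle you describe is a sign you are reaching for it unnecessarily.

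The missing ingredient is the composition law for right Kan extensions along a chain of fully faithful inclusions $A\subset B\subset C$: a functor $X\colon C\to\cS$ is right Kan extended from $A$ if and only if $X$ is right Kan extended from $B$ and $X|_B$ is right Kan extended from $A$ (this is just $(\jmath\iota)_*\simeq\jmath_*\iota_*$ together with full faithfulness of $\jmath_*$ and $\iota_*$). After restricting to inert morphisms -- which Corollary \ref{cor:inertinitial} lets you do, converting condition (1) into ``$X_\inert$ is right Kan extended from $\bDelta^{\op,n}_{I,\inert}$'' -- both packages of hypotheses become characterizations of a single statement, namely that $X_\inert$ is right Kan extended from $\bDelta^{\op,n}_{I,\inert,\le1}$: once via the chain $\bDelta^{\op,n}_{I,\inert,\le1}\subset\bDelta^{\op,n}_{I,\inert}\subset\bDelta^{\op,n}_{M,\inert}$, where the two inclusions encode (2) and (1) respectively, and once via $\bDelta^{\op,n}_{I,\inert,\le1}\subset\bDelta^{\op,n}_{M,\inert,\le1}\subset\bDelta^{\op,n}_{M,\inert}$, where they encode $I$-truncation and the $M$-Segal condition (with constancy and completeness handled separately, as you note, since they only see $\bDelta^{\op,n}_0$). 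Equating the two reformulations gives both directions at once and replaces your cofinality argument with a formal property of Kan extensions.
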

\begin{proof}
Let $X\in s^n_M\cS$, and let $X_\inert\colon\bDelta^{\op,n}_{M,\inert}$ denote the restriction of $X$ on $\bDelta^{\op,n}_{M,\inert}$. Then both the hypothesis (2) or the hypothesis ``$X\in\Catinfn^M$'' imply the (same) constancy and completeness conditions for $X$, as these are conditions on $X|_{\bDelta^{\op,n}_0}$.

Assume henceforth that $X$ does satisfy these constancy and completeness conditions. Then (2) is equivalent to requiring that $X_\inert|_{\bDelta^{\op,n}_{I,\inert}}$ be right Kan extended from the full subcategory $\bDelta^{\op,n}_{I,\inert,\le1}\subset\bDelta^{\op,n}_{I,\inert}$. Instead ``$X\in\Catinfn^M$'' is equivalent to requiring that $X_\inert$ be right Kan extended from the full subcategory $\bDelta^{\op,n}_{M,\inert,\le1}\subset\bDelta^{\op,n}_{M,\inert}$; the further hypothesis ``$X$ is $I$-truncated'' is then equivalent to requiring that $X_\inert|_{\bDelta^{\op,n}_{M,\inert,\le1}}$ is in turn right Kan extended from the full subcategory $\bDelta^{\op,n}_{I,\inert,\le1}\subset\bDelta^{\op,n}_{M,\inert,\le1}$. Moreover, by Corollary \ref{cor:inertinitial}, (1) is equivalent to requiring that $X_\inert$ be right Kan extended from the full subcategory $\bDelta^{\op,n}_{I,\inert}\subset\bDelta^{\op,n}_{M,\inert}$. Hence both (1)-(2) and ``$X$ is an $I$-truncated $M$-graded $\infn$-category'' are equivalent to the requirement that $X_\inert$ be right Kan extended from $\bDelta^{\op,n}_{I,\inert,\le1}$.
\end{proof}

\begin{nota}
\label{nota:TI}
We denote by $T_I\colon s^n_M\cS\to s^n_M\cS$ the \emph{$I$-truncation functor}, i.e. the composite $(\iota^{I,M})_*(\iota^{I,M})^*$. It restricts to an endofunctor of $\Catinfn^M$.
\end{nota}
\begin{cor}
\label{cor:Itruncateddescription}
Let $I\subseteq M$ be a coideal
and let $\cC\in\Catinfn^M$. Then the unit of the adjunction $\cC\to T_I(\cC)$ induces equivalences $\cC^\nsimeq\simeq T_I(\cC)^\nsimeq$ and $\Fun(\Theta_n,\cC)^\simeq_a\simeq\Fun(\Theta_n,T_I(\cC))^\simeq_a$ for all $a\in I$. Equivalently, a functor $\cC\to\cD$ in $\Catinfn^M$ is a $T_I$-equivalence if and only if it induces equivalences $\cC^\nsimeq\simeq\cD^\nsimeq$, and equivalences $\Fun(\Theta_n,\cC)^\simeq_a\simeq\Fun(\Theta_n,\cD)^\simeq_a$ for all $a\in I$.
\end{cor}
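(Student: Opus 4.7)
The plan is to exploit that $(\iota^{I,M})_*\colon s^n_I\cS\to s^n_M\cS$, being right Kan extension along the fully faithful inclusion $\iota^{I,M}$, is itself fully faithful. Two immediate consequences are that $(\iota^{I,M})_*$ is conservative, and that the counit $(\iota^{I,M})^*(\iota^{I,M})_*\simeq\Id$ is an equivalence. By the triangle identities, $(\iota^{I,M})^*$ sends the unit $\eta_\cC\colon\cC\to T_I(\cC)$ to an equivalence, i.e., evaluation at any $([\bk],\nu)\in\bDelta^{\op,n}_I$ takes $\eta_\cC$ to an equivalence in $\cS$.

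To prove the first assertion, I would identify the relevant spaces as such values. The $n$-groupoid core $\cC^\nsimeq$ is obtained by restriction along the projection $(\bDelta^\op)^n\to(\bDelta^\op)^{n-1}$ forgetting the last factor, hence is captured by the spaces $\cC([\bk',0])$ for $\bk'\in\N^{n-1}$; since $M^\emptyset$ is terminal, the unique lift of $[\bk',0]$ to $\bDelta^{\op,n}_M$ is $([\bk',0],0)$, which has total grading $0\in I$ and thus lies in $\bDelta^{\op,n}_I$. For $a\in I$, the space $\Fun(\Theta_n,\cC)^\simeq_a$ is by definition the fibre at $a$ of $\delta\colon\cC([1]^n)\to M$, realised in the graded picture as the value of $\cC$ at $([1]^n,a)\in\bDelta^{\op,n}_I$. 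Applying the unit equivalence above yields the two claimed equivalences.

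For the ``equivalent'' reformulation, the forward direction follows from naturality of $\eta$, the first assertion, and two-out-of-three. For the reverse direction, conservativity of $(\iota^{I,M})_*$ reduces the task to showing that $f|_{\bDelta^{\op,n}_I}$ is an equivalence. The coideal property of $I$ ensures that every inert morphism in $\bDelta^{\op,n}_M$ with source in $\bDelta^{\op,n}_I$ has target in $\bDelta^{\op,n}_I$, because such a morphism replaces $\nu$ by a summand of it, and summands of elements of $I$ lie in $I$ by the coideal condition. Combined with the $M$-Segal condition for $\cC,\cD$ and Corollary \ref{cor:inertinitial}, this shows that $\cC|_{\bDelta^{\op,n}_I}$ and $\cD|_{\bDelta^{\op,n}_I}$ satisfy the $I$-Segal condition, so equivalences in $\Catinfn^I$ can be checked on the generators in $\bDelta^{\op,n}_{I,\inert,\le 1}$. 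These generators split into two families: objects $([\bk],0)$ with $\bk\in\{0,1\}^n$ having at least one zero coordinate, whose values are reconstructed from $\cC^\nsimeq$ via the constancy and completeness conditions; and objects $([1]^n,\nu)$ with $\nu\in I$, identified with $\Fun(\Theta_n,\cC)^\simeq_\nu$. The hypotheses on $f$ control both families.

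The principal subtlety lies in the reverse direction: correctly deploying the coideal property so that the $M$-Segal decomposition of $\cC,\cD$ descends to $\bDelta^{\op,n}_I$, and matching the generators in $\bDelta^{\op,n}_{I,\inert,\le 1}$ with the data of the $n$-groupoid core and the graded spaces of $n$-morphisms. Once this combinatorial compatibility is verified, the argument collapses to standard manipulations with right Kan extension along fully faithful functors.
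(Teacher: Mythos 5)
Your proof is correct and takes essentially the same approach as the paper's: both reduce the claim to conservativity of the fully faithful functor $(\iota^{I,M})_*$, then use the coideal property to see that inert morphisms with source in $\bDelta^{\op,n}_I$ stay in $\bDelta^{\op,n}_I$, so the $M$-Segal condition identifies the relevant limit with one over $\bDelta^{\op,n}_{I,\inert,\le1}$, whose objects are exactly the data of the $n$-groupoid core and the graded spaces of $n$-morphisms. One small slip: you describe $(-)^\nsimeq$ as restriction along the projection $(\bDelta^\op)^n\to(\bDelta^\op)^{n-1}$, but that projection describes the \emph{inclusion} $\Cat_\infnm\hto\Catinfn$; the $n$-groupoid core is restriction along $-\times[0]\colon(\bDelta^\op)^{n-1}\to(\bDelta^\op)^n$, which is what your very next sentence (the identification with $\cC([\bk',0])$) actually uses, so the mathematics is unaffected.
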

\begin{proof}
Since $(\iota^{I,M})_*\colon\Catinfn^I\hto\Catinfn^M$ is fully faithful, a $T_I$-equivalence is the same as a morphism $\cC\to\cD$ in $\Catinfn^M$ which is sent to an equivalence along restriction to $\bDelta^{\op,n}_I$. We can further restrict $\cC$ and $\cD$ to the wide subcategory $\bDelta^{\op,n}_{I,\inert}$, and using that both $\cC|_{\bDelta^{\op,n}_{I,\inert}}$ and $\cD|_{\bDelta^{\op,n}_{I,\inert}}$ are right Kan extended from $\bDelta^{\op,n}_{I,\inert,\le1}$, we obtain that a $T_I$-equivalence is the same as a morphism $\cC\to\cD$ in $\Catinfn^M$ evaluating to an equivalence (of spaces) on objects in $\bDelta^{\op,n}_{I,\le1}$: objects in this subcategory have the following forms:
\begin{itemize}
\item $([\bk],\nu)$ for $\bk=(k_1,\dots,k_n)$ with at least one $k_i=0$ (and necessarily $\nu=0\in M^0$); the equivalences $\cC([\bk],\nu)\overset{\simeq}{\to}\cD([\bk],\nu)$ for such objects combine to the equivalence $\cC^\nsimeq\simeq\cD^\nsimeq$;
\item $([\bk],\nu)$ with $\bk=1^n$ and $\nu=a\in I$; the equivalence $\cC([\bk],\nu)\overset{\simeq}{\to}\cD([\bk],\nu)$ is then the same requirement as the equivalence $\Fun(\Theta_n,\cC)^\simeq_a\simeq\Fun(\Theta_n,\cD)^\simeq_a$.
\end{itemize}
\end{proof}

\begin{lem}
Let $I\subseteq M$ be a coideal. 
Then the symmetric monoidal structure on $\Catinfn^M$ induces under localisation a symmetric monoidal structure on $\Catinfn^I$, such that $(\iota^{I,M})^*\colon\Catinfn^M\to\Catinfn^I$ is symmetric monoidal, and its right adjoint $(\iota^{I,M})_*$ is lax symmetric monoidal. In particular we have an induced adjunction
\[
(\iota^{I,M})^*\colon\CAlg(\Catinfn^M)\rightleftarrows\CAlg(\Catinfn^I)\colon(\iota^{I,M})_*,
\]
whose right adjoint is still fully faithful.
\end{lem}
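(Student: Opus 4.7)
The plan is to invoke the standard criterion (Lurie, \emph{Higher Algebra}, Proposition 4.1.7.4, or its Bousfield-style variant Proposition 2.2.1.9) which says: if a reflective localisation $L\colon\cA\to\cB$ of a symmetric monoidal $\infty$-category has the property that tensoring with any object preserves $L$-equivalences, then $\cB$ inherits a unique symmetric monoidal structure for which $L$ is symmetric monoidal; in this case the fully faithful right adjoint is automatically lax symmetric monoidal. By Lemma~\ref{lem:Itruncateddescription}, $(\iota^{I,M})_*\colon\Catinfn^I\hookrightarrow\Catinfn^M$ is the fully faithful inclusion of the $I$-truncated objects, so $(\iota^{I,M})^*$ is already known to be such a reflective localisation, and the whole task reduces to verifying closure under tensor product of the class $W$ of $T_I$-equivalences.

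To carry this out, I would first unwind the tensor product in $\Catinfn^M=(\Catinfn)_{/\bB^nM}$: since $\bB^nM\in\CAlg(\Catinfn)$, the monoidal product of $\cC,\cE\in\Catinfn^M$ is their underlying fibre product formed in $s^n\cS$ over the addition map $\bB^nM\times\bB^nM\to\bB^nM$, and using Remark~\ref{rem:snMasovercategory} this is computed pointwise by
\[
(\cC\otimes\cE)([\bk],\nu)\;\simeq\;\coprod_{y+z=\nu}\cC([\bk],y)\times\cE([\bk],z),
\]
where $y,z\in M^{\ul\bk}$ and the sum is taken coordinatewise. (Completeness and the Segal condition of the product of two complete Segal objects over $\bB^nM$ are preserved, so this tensor product does land in $\Catinfn^M$.)

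Next, I would characterise $W$ via restriction rather than via $n$-groupoid cores: combining the adjunction $(\iota^{I,M})^*\dashv(\iota^{I,M})_*$ with fully faithfulness of $(\iota^{I,M})_*$, a map $f\colon\cC\to\cD$ lies in $W$ iff $(\iota^{I,M})^*f$ is an equivalence, iff $f([\bk],\nu)$ is an equivalence of spaces for every $([\bk],\nu)\in\bDelta^{\op,n}_I$, i.e.\ whenever $\nu$ has total grading in $I$. Given such $f\in W$ and an arbitrary $\cE\in\Catinfn^M$, for any $([\bk],\nu)\in\bDelta^{\op,n}_I$ each summand in the coproduct above involves $y,z\in M^{\ul\bk}$ with total gradings summing to that of $\nu$; because $I$ is a coideal, both total gradings lie in $I$, so $f([\bk],y)$ is an equivalence and hence $(f\otimes\Id_{\cE})([\bk],\nu)$ is an equivalence. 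Thus $f\otimes\Id_\cE\in W$, and $W$ is stable under tensor product. The coideal hypothesis is exactly what makes this argument go through, and this is the main (and only delicate) point of the proof.

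The remaining assertions are formal. Applying the localisation criterion yields the desired symmetric monoidal structure on $\Catinfn^I$ and the lax symmetric monoidal enhancement of $(\iota^{I,M})_*$. Passing to $\CAlg(-)$ turns a symmetric monoidal left adjoint / lax symmetric monoidal right adjoint pair into an adjunction between the associated $\infty$-categories of commutative algebras; and since $\CAlg$ preserves fully faithful lax symmetric monoidal inclusions (morphism spaces in $\CAlg$ are computed as limits of morphism spaces in the underlying $\infty$-category, with the lax structure intact), the fully faithfulness of $(\iota^{I,M})_*\colon\Catinfn^I\hookrightarrow\Catinfn^M$ passes to $\CAlg(\Catinfn^I)\hookrightarrow\CAlg(\Catinfn^M)$, concluding the proof.
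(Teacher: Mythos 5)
Your proof is correct and follows essentially the same route as the paper: both invoke the localisation criterion from \cite[Proposition 2.2.1.9]{HA}, characterise $T_I$-equivalences as those inducing equivalences on spaces of $n$-morphisms with grading constrained to lie in $I$, and close the class under $\otimes$ using the coideal property ($y+z\in I$ forces $y,z\in I$). You spell out the explicit coproduct formula for the Day-type tensor product in $\Catinfn^M$ and the formal passage to $\CAlg$, which the paper leaves implicit; one small phrasing slip is calling the underlying object of $\cC\otimes\cE$ a ``fibre product'' — it is the product $\cC\times\cE$ in $s^n\cS$ equipped with the composite structure map $\cC\times\cE\to\bB^nM\times\bB^nM\xrightarrow{+}\bB^nM$ — but your displayed formula is the correct one, so nothing downstream is affected.
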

\begin{proof}
By \cite[Proposition 2.2.1.9, Example 2.2.1.7]{HA}, it suffices to check that for every $T_I$-equivalence $\cC\to\cC'$ in $\Catinfn^M$ and any $\cD\in\Catinfn^M$, the $\infn$-functor $\cC\times\cD\to\cC'\times\cD$ is again a $T_I$-equivalence. By Lemma \ref{lem:Itruncateddescription}, it suffices to check equivalence on $n$-groupoid cores and on spaces of $n$-morphisms of gradings in $a\in I$; using that $I$ is a coideal, both statements are evident.
\end{proof}

\begin{cor}
\label{cor:nsimeqinvariance}
Let $I\subseteq M$ be a coideal 
and let $\cD=\cC[\ul{X},\ul{\del X}]^\sm\in\CAlg(\Catinfn)$ be an extension of symmetric monoidal $\infn$-categories. Assume that there is an $M$-grading $\delta\colon\Fun(\Theta_n,\cD)^\simeq\to M$ such that the composites $X_i\to\Fun(\Theta_n,\cD)^\simeq\to M$ have values in $M\setminus I$. Then the canonical symmetric monoidal $\infn$-functor $\cC\to\cD$ induces an equivalence $T_I(\cC)\simeq T_I(\cD)$ and an equivalence $\cC^\nsimeq\simeq\cD^\nsimeq$.
\end{cor}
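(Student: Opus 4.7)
The plan is to reduce the problem to a single extension step, and then exploit the universal property of $T_I$ as a reflection onto $I$-truncated objects together with the hypothesis that the new $n$-morphisms have grading outside $I$. First I would view the chain $\cC=\cC_0\to\cdots\to\cC_k=\cD$ as a chain in $\CAlg(\Catinfn^M)$ by equipping each $\cC_i$ with the $M$-grading induced by the composite $\cC_i\to\cD\to\bB^nM$. Since $T_I$ is the (symmetric monoidal) reflection onto the subcategory of $I$-truncated $M$-graded objects, it suffices to prove $T_I(\cC_{i-1})\simeq T_I(\cC_i)$ for each extension step and then iterate; once this is done, the equivalence $\cC^\nsimeq\simeq\cD^\nsimeq$ is a formal consequence of Corollary \ref{cor:Itruncateddescription}, which identifies the $n$-groupoid core of any $\cC_i$ with that of its $T_I$-localisation.

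For a single step $\cC_{i-1}\to\cC_i=\cC_{i-1}[X_i,\del X_i]^\sm$, I would invoke the universal property of the reflection: it is enough to show that, for every $I$-truncated $\cE\in\CAlg(\Catinfn^M)$, the restriction map
\[
\Fun^\otimes_M(\cC_i,\cE)^\simeq\to\Fun^\otimes_M(\cC_{i-1},\cE)^\simeq
\]
is an equivalence. Combining the pullback description of $\Fun^\otimes(\cC_i,-)^\simeq$ from Definition \ref{defn:extension} with the commutation of fibers and pullbacks, taking fibers over the grading maps to $\Fun^\otimes(-,\bB^nM)^\simeq$ yields an analogous $M$-graded pullback square. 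The assertion therefore reduces to showing that the restriction
\[
\Fun_M(X_i\times\Theta_n,\cE)^\simeq\to\Fun_M(\del(X_i\times\Theta_n),\cE)^\simeq
\]
is an equivalence.

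To verify this last equivalence, I would unpack both sides as mapping spaces: the source classifies maps $X_i\to\Fun(\Theta_n,\cE)^\simeq$ whose composite with the grading $\Fun(\Theta_n,\cE)^\simeq\to M$ equals the prescribed $\delta_{X_i}\colon X_i\to M\setminus I$, whereas the target classifies a map $X_i\to\Fun(\mO_{n-1},\cE)^\simeq$ together with a compatible boundary lift $\del X_i\to\Fun(\Theta_n,\cE)^\simeq$. By the $I$-truncation hypothesis on $\cE$, the restriction $\Fun(\Theta_n,\cE)^\simeq_a\to\Fun(\mO_{n-1},\cE)^\simeq$ is an equivalence for every $a\in M\setminus I$; hence any $(n-1)$-morphism datum admits an essentially unique lift to an $n$-morphism datum of grading $\delta_{X_i}(x)\in M\setminus I$, necessarily compatible with any prescribed boundary, yielding the desired equivalence.

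The main obstacle will be the careful passage from the ungraded universal property of the extension to its $M$-graded analogue: one must verify that the $M$-grading on $\cC_i$ is essentially uniquely determined by the $M$-grading on $\cC_{i-1}$ together with the map $X_i\to M$ extending the given grading on $\del X_i$ (a point already implicit in the remark preceding Definition \ref{defn:Nadmissible}), and that taking fibers over $\Fun^\otimes(-,\bB^nM)^\simeq$ genuinely distributes over the pullback describing the extension.
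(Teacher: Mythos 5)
Your proposal follows essentially the same route as the paper's proof: it views the chain as living in $\CAlg(\Catinfn^M)$, reduces to a single extension step, tests against $I$-truncated objects $\cE$ via the $M$-graded pullback formula for the extension, and uses the $I$-truncation hypothesis together with the fact that the gradings of the adjoined morphisms lie in $M\setminus I$ to collapse the pullback. The subtlety you flag at the end — that taking fibers over $\Fun^\otimes(-,\bB^nM)^\simeq$ distributes over the pullback describing the extension, so that the ungraded universal property descends to an $M$-graded one — is indeed the point the paper glosses over in its second displayed equivalence, so it is good that you noticed it, but the argument is the same.
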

\begin{proof}
We may consider $\cC\to\cD$ as a morphism in $\CAlg(\Catinfn^M)$.
By Lemma \ref{lem:Itruncateddescription}, once $T_I(\cC)\to T_I(\cD)$ is an equivalence, also $\cC^\nsimeq\to\cD^\nsimeq$ is an equivalence. For the first, we let $\cE\in\CAlg(\Catinfn^M)$ be $I$-truncated and using Notation \ref{nota:multiextension} we compute the following chain of equivalences of spaces, for $1\le i\le k$:
\[
\begin{split}
\Fun^\otimes_M(T_I(\cC_i),\cE)^\simeq&\simeq\Fun^\otimes_M(\cC_i,\cE)^\simeq\\
&\simeq\Fun^\otimes_M(\cC_{i-1},\cE)^\simeq\times_{\Fun_M(\del(X_i\times\Theta_n),\cE)^\simeq}\Fun_M(X_i\times\Theta_n,\cE)^\simeq\\
&\simeq\Fun^\otimes_M(\cC_{i-1},\cE)^\simeq\times_{\Fun(X_i\times\mO_{n-1},\cE)^\simeq}\Fun(X_i\times\mO_{n-1},\cE)^\simeq\\
&\simeq \Fun^\otimes_M(\cC_{i-1},\cE)^\simeq\simeq\Fun^\otimes_M(T_I(\cC_{i-1}),\cE)^\simeq.
\end{split}
\]
This implies the equivalence $T_I(\cC_{i-1})\simeq T_I(\cC_i)$; the result follows by combining all these equivalences and using $\cC=\cC_0$ and $\cD=\cC_k$.
\end{proof}
\begin{ex}
\label{ex:cUcVcores}
As promised in Remark \ref{rem:cUinfone}, since the extension of symmetric monoidal $\inftwo$-categories $\cV\simeq \cU[*;\bB C_2,*]^\sm$ is $\N$-admissible, Corollary \ref{cor:nsimeqinvariance} implies that the $\infone$-functor $\fJ\colon\cU\to\cV^\twosimeq$ is an equivalence. Similarly, the extension of symmetric monoidal $\infone$-categories $\cU\simeq\Fin[\bB C_2\sqcup*]^\sm$ is $\N$-admissible, so that $\Fin^\simeq\simeq\cU^\simeq\simeq\cV^\simeq$. We will therefore pretend that $\Fin$, $\cU$ and $\cV$ have the same space of objects, namely $\Fin^\simeq$; and that $\cU,\cV$ have the same space of 1-morphisms.
\end{ex}
We conclude the subsection with a result on $M$-graded localisations.
\begin{defn}
\label{defn:RelCatinfoneM}
We consider the object $(*\hto\bB M)\in\RelCatinfone$ and denote by $\RelCatinfone^M$ the overcategory $(\RelCatinfone)_{/(*\hto\bB M)}$: its objects are relative categories $(\cC,W)$ with an $M$-grading on $\cC$ that vanishes on $W$.

For a coideal $I\subseteq M$ we further denote by $\RelCatinfone^I$ the full and reflective subcategory of $\RelCatinfone^M$ spanned by objects $(\cC,W)$ with $\cC$ being $I$-truncated.
\end{defn}
We observe that for $(\cC,W)\in\RelCatinfone^M$, the $M$-grading $\fd\colon\cC\to\bB M$ induces an $M$-grading on the localisation $L(\cC,W)\to\bB M$.
\begin{lem}
\label{lem:Mgradedlocalisation}
Let $I\subseteq M$ be a coideal.
Let $(\cC,W)\to(\cC',W')$ be a morphism in $\RelCatinfone^M$ restricting to an equivalence $W\xrightarrow{\simeq} W'$ and to a $T_I$-equivalence $\cC\to\cC'$. Then the induced functor $L(\cC,W)\to L(\cC',W')$ is a $T_I$-equivalence.
\end{lem}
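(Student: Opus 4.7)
My strategy is to verify that $L(\cC,W)\to L(\cC',W')$ is a $T_I$-equivalence by testing against $I$-truncated targets. Corollary \ref{cor:Itruncateddescription} (applied at $n=1$, together with Lemma \ref{lem:Itruncateddescription}) tells us that a map in $\Catinfone^M$ is a $T_I$-equivalence if and only if it induces equivalences on mapping spaces into every $I$-truncated $\cE\in\Catinfone^M$. So I would fix such an $\cE$ and use the universal property of the localisation to rewrite
\[
\Fun_M(L(\cC,W),\cE)^\simeq\simeq\Fun^W_M(\cC,\cE)^\simeq,
\]
where the superscript $W$ denotes the subspace of functors inverting all morphisms in $W$, and similarly on the primed side.

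Next I would contemplate the commutative square
\[
\begin{tikzcd}
\Fun^{W'}_M(\cC',\cE)^\simeq\ar[r,"g^*"]\ar[d,hook]&\Fun^{W}_M(\cC,\cE)^\simeq\ar[d,hook]\\
\Fun_M(\cC',\cE)^\simeq\ar[r,"g^*"]&\Fun_M(\cC,\cE)^\simeq,
\end{tikzcd}
\]
whose vertical maps are inclusions of unions of components and whose horizontal maps are precomposition with the given $g\colon\cC\to\cC'$. The bottom horizontal map is an equivalence by the $T_I$-equivalence hypothesis combined with the $I$-truncatedness of $\cE$: factoring through the $T_I$-reflections yields
\[
\Fun_M(\cC',\cE)^\simeq\simeq\Fun_M(T_I\cC',\cE)^\simeq\xrightarrow{\simeq}\Fun_M(T_I\cC,\cE)^\simeq\simeq\Fun_M(\cC,\cE)^\simeq.
\]
Once this is in place, showing that the top map is an equivalence reduces to matching the two collections of components cut out by the $W$-inverting and $W'$-inverting conditions.

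This matching step is the key one, and the step I expect to be most delicate. Given $F'\colon\cC'\to\cE$ and its pullback $F:=F'\circ g\colon\cC\to\cE$, I need to show that $F'$ inverts $W'$ if and only if $F$ inverts $W$. The ``only if'' direction is immediate, since by assumption $g$ restricts along the wide-subcategory inclusions to a functor $W\to W'$. For the ``if'' direction I would exploit that $W\to W'$ is an equivalence of $\infty$-categories, hence in particular essentially surjective on $\Fun([1],-)^\simeq$: given any morphism $w'\in W'$, I can pick $w\in W$ with $(W\to W')(w)\simeq w'$ in $W'$, and then $F'(w')\simeq F'(g(w))=F(w)$ is an equivalence in $\cE$ by hypothesis. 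Extracting essential surjectivity \emph{on morphisms} (not merely on objects) from the equivalence $W\to W'$ is the only subtle point; once it is in place, the Yoneda argument forces $L(\cC,W)\to L(\cC',W')$ to become an equivalence after applying $T_I$, as desired.
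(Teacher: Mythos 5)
Your proof is correct, and it takes a genuinely different route from the paper's.

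The paper argues formally: it exhibits a commuting square of right adjoints (forgetful functors $(\cC,W)\mapsto\cC$, together with the inclusions $\RelCatinfone^I\hto\RelCatinfone^M$ and $\Catinfone^I\hto\Catinfone^M$), passes to the corresponding commuting square of left adjoints ($L$ and $T_I$), and observes that the vertical $T_I$ on $\RelCatinfone^M$ already sends the given morphism to an equivalence — because $T_I(\cC,W)\simeq(T_I\cC,W)$ (the wide subcategory $W$ has trivial grading, so it survives $T_I$ unchanged) and by hypothesis both coordinates become equivalences. Commutativity then forces $T_I L(\cC,W)\to T_I L(\cC',W')$ to be an equivalence. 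Your proof instead runs the adjunction by hand: it tests against $I$-truncated targets $\cE$, uses the universal property of the localisation to convert the goal into a statement about subspaces $\Fun^W_M(\cC,\cE)^\simeq\subset\Fun_M(\cC,\cE)^\simeq$, identifies the ambient equivalence from the $T_I$-equivalence hypothesis, and matches the components cut out by the $W$- and $W'$-inverting conditions. The net effect is the same, but your version makes the role of the two hypotheses (equivalence on $W$'s, $T_I$-equivalence on $\cC$'s) explicit rather than absorbing them into a commuting square. One thing each approach buys: the paper's argument is independent of the explicit form of the localisation and generalises immediately to other Bousfield situations, while yours makes visible exactly where the hypothesis $W\xrightarrow{\simeq}W'$ enters (in the component-matching step).

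Two small remarks. First, your reference to Corollary \ref{cor:Itruncateddescription} is a bit off-target: what you actually need is the standard fact that for a reflective localisation $T_I$ with local objects given by Lemma \ref{lem:Itruncateddescription}, a morphism is a $T_I$-equivalence iff it induces equivalences of mapping spaces into all local objects; Corollary \ref{cor:Itruncateddescription} instead characterises $T_I$-equivalences intrinsically (on $n$-groupoid cores and spaces of $n$-morphisms of grading in $I$). Second, the step you flag as delicate — essential surjectivity of $W\to W'$ on morphisms — is not actually subtle: an equivalence of $\infone$-categories induces an equivalence on $\Fun([1],-)^\simeq$, so every $w'\in W'$ is equivalent to some $g(w)$ with $w\in W$, and since $W'\subseteq\cC'$ is a wide subcategory inclusion this equivalence persists in $\cC'$, giving $F'(w')\simeq F(w)$ as claimed. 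No special care is needed there.
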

\begin{proof}
For simplicity, we consider $\Catinfone^I$ as a full subcategory of $\Catinfone^M$ via $(\iota^{I,M})_*$.
The following commutative square of right adjoint functors, on left, admits a corresponding commutative square of left adjoint functors, on right:
\[
\begin{tikzcd}[row sep=15pt,column sep=35pt]
\RelCatinfone^M&\Catinfone^M\ar[l,"{((-),(-)^\simeq)}"']\\
\RelCatinfone^I\ar[u,hook]&\Catinfone^I;\ar[u,hook]\ar[l,hook,"{((-),(-)^\simeq)}"']
\end{tikzcd}
\hspace{1cm}
\begin{tikzcd}[row sep=15pt,column sep=35pt]
\RelCatinfone^M\ar[d,"T_I"]\ar[r,"L"]&\Catinfone^M\ar[d,"T_I"]\\
\RelCatinfone^I\ar[r,"L"]&\Catinfone^I.
\end{tikzcd}
\]
The left-bottom composition sends the morphism $(\cC,W)\to(\cC',W')$ to an equivalence because already the left vertical functor $T_I$ does; hence the top-right composition sends the same morphism to an equivalence; in other words, the image $L(\cC,W)\to L(\cC',W')$ along the top horizontal functor is a $T_I$-equivalence.
\end{proof}

\subsection{Extensions iterated over a finite set}
\begin{defn}
\label{defn:Sextension}
For an extension $\cD=\cC[\ul X,\ul{\del X}]^\sm$ as in Notation \ref{nota:multiextension}, we regard $\cD$ as an object in $(\CAlg(\Catinfn))_{\cC/}$;  for a finite set $S$ we let $\cC[S|(\ul X,\ul{\del X})]^\sm$ denote the image along the target functor $(\CAlg(\Catinfn))_{\cC/}\to \CAlg(\Catinfn)$ of the $S$-fold coproduct of $\cD$ with itself inside $(\CAlg(\Catinfn))_{\cC/}$.
\end{defn}
The notation in Definition \ref{defn:Sextension} is due to the fact that $\cC[S|\ul X,\ul{\del X}]^\sm$ agrees with an extension of the form
$\cC[S\times X_1,S\times\del X_1;\dots;S\times X_k,S\times\del X_k]^\sm$; roughly speaking, the difference between $\cC[\ul X,\ul{\del X}]^\sm$ and $\cC[S|\ul X,\ul{\del X}]^\sm$ is that in the second case we extend $\cC$ by adjoining $S$ copies of the new generating $n$-morphisms.

\begin{defn}
\label{defn:fdS}
Let $\cD=\cC[\ul X,\ul{\del X}]^\sm$ be an extension endowed with an $M$-grading $\fd\colon\cD\to\bB^nM$ that restricts to the zero grading on $\cC$. We $\fd^S\colon \cC[S|\ul X,\ul{\del X}]^\sm\to\bB^n M^S$ as the $M^S$-grading characterised by the requirement that for all $s\in S$ we have a commutative diagram in $\CAlg(\Catinfn)_{\cC/}$
\[
\begin{tikzcd}[row sep=10pt]
\cC[\ul X,\ul{\del X}]\ar[d,"s\text{\textsuperscript{th} inclusion}"']\ar[r,"\fd"]&\bB^nM\ar[d,"s\text{\textsuperscript{th} inclusion}"]\\
\cC[S|\ul X,\ul{\del X}]\ar[r,"\fd^S"]&\bB^nM^S.
\end{tikzcd}
\]
\end{defn}
Roughly speaking, to distinguish the $S$ copies of new $n$-morphisms we put them in ``linearly independent'' gradings, and to achieve this we use $M^S$ as the monoid for gradings instead of $M$.
In fact we may regard $\bB^nM^S$ as the coproduct in $\CAlg(\Catinfn)$ of $S$ copies of $\bB^nM$; the $M$-grading $\fd^S$ is then the following composite, whose second arrow is the assembly map along the functor $\CAlg(\Catinfn)\to\CAlg(\Catinfn)_{\cC/}$ endowing a symmetric monoidal $\infn$-category with the constant functor from $\cC$ at the monoidal unit:
\[
\coprod_{S}^{\CAlg(\Catinfn)_{\cC/}}\cC[\ul X,\ul{\del X}]^\sm\xrightarrow{\coprod_S\fd} \coprod_{S}^{\CAlg(\Catinfn)_{\cC/}}\bB^nM\to \coprod_{S}^{\CAlg(\Catinfn)}\bB^nM.
\]
\begin{nota}
\label{nota:1S}
We denote by $\fS_S$ the group of automorphisms of a finite set $S$; and we let $1^S$ denote the $\fS_S$-invariant element $(1,\dots,1)\in\N^S$.
\end{nota}
We observe that the grading $\fd^S$ admits (together with its source and target) a natural $\fS_S$-equivariant structure. In particular, if $M=\N$, we have an action of $\fS_S$ on the space $\Fun(\Theta_n,\cC[S|\ul X,\ul{\del X}]^\sm)^\simeq_{1^S}$.
The main goal of this subsection is to prove the following proposition.
\begin{prop}
\label{prop:retraction}
Let $\cC[\ul X,\ul{\del X}]^\sm$ be an $\N$-admissible symmetric monoidal extension, let $S$ be a finite set, and consider the following commutative diagram in $(\CAlg(\Catinfn))^{\bB\fS_S}$, whose bottom row has trivial action
\[
\begin{tikzcd}
\cC[S|\ul X,\ul{\del X}]^\sm\ar[d,"\text{fold}"]\ar[r,"\fd^S"]&\bB^n\N^S\ar[d,"\text{fold (sum)}"]\\
\cC[\ul X,\ul{\del X}]^\sm\ar[r,"\fd"]&\bB^n\N.
\end{tikzcd}
\]

Then the following canonical map of spaces admits a (homotopy) section
\[
\Fun(\Theta_n,\cC[S|\ul X,\ul{\del X}]^\sm)^\simeq_{1^S}\ /\fS_S\to\Fun(\Theta_n,\cC[\ul X,\ul{\del X}]_\N)^\simeq_{|S|}.
\]
\end{prop}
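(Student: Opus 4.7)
The plan is to exploit the description of $\cD_S$ as the $|S|$-fold coproduct of $\cD$ over $\cC$ in $\CAlg(\Catinfn)$, with $\fS_S$ permuting the factors and the fold functor $\cD_S \to \cD$ being the codiagonal. Since the relevant extensions are $\N$-admissible, Corollary~\ref{cor:nsimeqinvariance} guarantees that all lower-dimensional data of $\cD$ and $\cD_S$ agree with that of $\cC$; the map in question is therefore controlled by how the extension interacts with the top-level $n$-morphism spaces in the prescribed gradings.

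The main work is to describe both target spaces via presheaves. Applying Theorem~\ref{thm:D} to the $\N$-graded category $\cD$, the grading-$1$ $n$-morphisms are encoded by a presheaf $P \in \PSh(\dot\cT^n(\cC))$ via the natural presheaf $\dot\lambda^n$. Iterating the Segal condition, the space $\Fun(\Theta_n,\cD)^\simeq_{|S|}$ should be identified with an appropriate $|S|$-fold symmetric Day-convolution power $\mathrm{Sym}^{|S|}_\Day P$ inside $\PSh(\dot\cT^n(\cC))$. Applying the $\N^S$-graded analogue of Theorem~\ref{thm:D} to $\cD_S$, the space $\Fun(\Theta_n,\cD_S)^\simeq_{1^S}$ is similarly identified with the ordered $|S|$-fold Day-convolution power $P^{\otimes_\Day S}$, equipped with the $\fS_S$-action by label permutation; the fold map corresponds to the canonical symmetrisation.

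Under these identifications, the proposition becomes the statement that the canonical quotient $P^{\otimes_\Day S}/\fS_S \to \mathrm{Sym}^{|S|}_\Day P$ admits a homotopy section. In the $\infty$-category of space-valued presheaves, the symmetric power $\mathrm{Sym}^{|S|}_\Day P$ is by construction the $\fS_S$-homotopy coinvariants of $P^{\otimes_\Day S}$, so the quotient map is in fact an equivalence. This would yield even the sharper Conjecture~\ref{conj:retractionisequivalence}, from which the present proposition follows by extracting only the section.

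The main obstacle is the identification in the second paragraph: rigorously relating the grading-$|S|$ (respectively grading-$1^S$) space of $n$-morphisms to symmetric (respectively ordered) Day-convolution powers of $P$ requires carefully interweaving the $(01)$-truncation framework of Theorem~\ref{thm:D} with the Segal and completeness conditions of $\Catinfn$. Should the Day-convolution identification prove too strong for the modest retract statement, a more direct combinatorial route is available: pick any bijection $\{1,\dots,|S|\} \simeq S$, use it to label the $|S|$ grading-$1$ pieces of a representative Segal decomposition of a grading-$|S|$ morphism in $\cD$ (thereby lifting to $\cD_S$), and verify using $\fS_S$-equivariance together with the Segal condition that the resulting class in $Y'/\fS_S$ is independent of both the bijection and the choice of decomposition. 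This would furnish a section by direct inspection, without committing to the stronger conjectured equivalence.
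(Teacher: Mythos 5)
Your main route, if it worked, would prove strictly more than the proposition: identifying $\Fun(\Theta_n,\cD)^\simeq_{|S|}$ with the $\fS_S$-homotopy coinvariants of the ordered Day-convolution power is precisely (an instance of) Conjecture~\ref{conj:retractionisequivalence}, which the paper explicitly leaves open. Theorem~\ref{thm:D} gives you control over the $(01)$-truncation of $\cD$ — that is, only the grading-$0$ and grading-$1$ pieces — via presheaves over $\dot\cT^n(\cC)$. There is no analogue developed for grading $r>1$, and the identification you want is exactly the step that would require one. Writing ``$\mathrm{Sym}^{|S|}_\Day P$ is by construction the $\fS_S$-homotopy coinvariants of $P^{\otimes_\Day S}$'' reduces the claim to a tautology only if you can independently justify that $\Fun(\Theta_n,\cD)^\simeq_{|S|}$ is this symmetric power; that is the open part, not a formality.

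Your fallback route also has a gap where the work should be. A grading-$|S|$ $n$-morphism in $\cD$ does not come with a preferred decomposition into grading-$1$ constituents: the Segal condition provides a space of decompositions of a grading-$|S|$ cell, and the claim that the lifted class in $Y'/\fS_S$ is independent of the chosen decomposition and the chosen bijection is precisely the nontrivial coherence you would have to verify. Just asserting that one ``verifies using $\fS_S$-equivariance together with the Segal condition'' does not constitute a proof; if it were routine, one would also get the stronger equivalence statement. Moreover, a section built objectwise on morphism spaces would still need to be compatible with composition and monoidal products to be of use; the paper sidesteps this by constructing the section at the level of symmetric monoidal $\infn$-categories.

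The paper's actual strategy is genuinely different from both of your routes. It builds an auxiliary object $\cC[\ul X,\ul{\del X}]^\sm_{\Fininj}$ as a colimit over $\Fininj$ of $(01)$-truncated $S$-coloured extensions, proves in Lemma~\ref{lem:ccuXsmFininjcatinfn} that this colimit is again a symmetric monoidal $\infn$-category (checking the Segal, constancy, and completeness conditions is the technical heart of the argument, and the place where your proposal has nothing), and then observes that its $n$-morphism spaces at grading $r$ are exactly $\Fun(\Theta_n,\cC[\ul r|\ul X,\ul{\del X}]^\sm)^\simeq_{1^{\ul r}}/\fS_{\ul r}$. The section is then obtained not by picking decompositions but by invoking the universal property of $\cC[\ul X,\ul{\del X}]^\sm$ as a free symmetric monoidal extension: one provides a map $\cC\to\cC[\ul X,\ul{\del X}]^\sm_{\Fininj}$ and compatible data on the generating $n$-morphisms $\ul X$, and the universal property produces a \emph{symmetric monoidal $\infn$-functor} $\fs\colon\cC[\ul X,\ul{\del X}]^\sm\to\cC[\ul X,\ul{\del X}]^\sm_{\Fininj}$ splitting the comparison $\fr$. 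Passing to $n$-morphism spaces then gives the proposition. In short, you need the extra categorical structure to make the section checkable; an objectwise construction does not suffice, and the Day-convolution approach outruns what is actually known.
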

The advantage of the description of $\cC[S|\ul X,\ul{\del X}]^\sm$ in Definition \ref{defn:Sextension} in terms of an $S$-fold coproduct inside $(\CAlg(\Catinfn))_{\cC/}$ comes from the fact that the assignment $S\mapsto \cC[S|\ul X,\ul{\del X}]^\sm$ gives rise to a symmetric monoidal functor
$\Fin\to((\CAlg(\Catinfn))_{\cC/},\sqcup)$. Precomposing with the symmetric monoidal inclusion $\Fininj\hto\Fin$, and postcomposing with the lax symmetric monoidal target functor $((\CAlg(\Catinfn)_{\cC/},\sqcup)\to (\CAlg(\Catinfn),\sqcup)$, we obtain a lax symmetric monoidal functor $\Fininj\to(\CAlg(\Catinfn),\sqcup)$.

The forgetful functor $(\CAlg(\Catinfn),\sqcup)\to(\Catinfn,\times)$ is symmetric monoidal by \cite[Proposition 3.2.4.7]{HA}. Combining this with the previous, we obtain a lax symmetric monoidal functor $\cC[-|\ul X,\ul{\del X}]^\sm\colon\Fininj\to\Catinfn$;  the latter functor still sends a finite set $S$ to (the underlying $\infn$-category of) $\cC[S|\ul X, \ul{\del X}]^\sm$. We will also consider $\cC[-|\ul X,\ul{\del X}]^\sm$ as a lax symmetric monoidal functor to $s^n\cS$.

A similar construction makes $S\mapsto \bB^n\N^S$ into a symmetric monoidal functor $\Fininj\to \Catinfn\subset s^n\cS$, by observing that $\bB^n\N^S$ is the $S$-fold coproduct of $\bB^n\N$ with itself in $\CAlg(\Catinfn)$. In fact $\bB^n\N^{(-)}$ takes values in $n$-fold simplicial sets.
\begin{defn}
\label{defn:01S}
For a finite set $S$ we consider the coideal $\set{0,1}^S\subset\N^S$, and we denote by $\bB^n01^S$ the subsimplicial set of $\bB^n\N^S$ comprising all multisimplices of total grading in $\set{0,1}^S$ (see Notation \ref{nota:totalgrading}).
\end{defn}
The assignment $S\mapsto \set{0,1}^S$ gives a symmetric monoidal functor $\Fininj\to(\Fin,\times)$, and the assignment $S\mapsto \bB^n01^S$ gives a symmetric monoidal subfunctor $\bB^n01^{(-)}\colon\Fininj\to s^n\cS$ of $\bB^n\N^{(-)}$.

\begin{defn}
For an $\N$-admissible symmetric monoidal extension $\cC[\ul X,\ul{\del X}]^\sm$,
we denote by $\cC[-|\ul X,\ul{\del X}]_{01}^\sm\colon\Fininj\to s^n\cS$ the lax symmetric monoidal functor obtained as pullback of the cospan
$\cC[-|\ul X,\ul{\del X}]^\sm\to\bB\N^{(-)}\ot\bB 01^{(-)}$ of lax symmetric monoidal functors $\Fininj\to s^n\cS$.
We further define
\[
\cC[\ul X,\ul{\del X}]^\sm_{\Fininj}:=\underset{\Fininj}{\colim}\ \cC[-|\ul X,\ul{\del X}]^\sm_{01}\in s^n\cS.
\]
\end{defn}
The fact that the cartesian symmetric monoidal structure on $s^n\cS$ preserves colimits in each variable implies that $\Fun(\Fininj,s^n\cS)$ has a Day convolution symmetric monoidal structure \cite[Subsection 2.2.6]{HA}, and the lax symmetric monoidal functor $\cC[-|\ul X,\ul{\del X}]^\sm_{01}$ is precisely a commutative algebra object in $\Fun(\Fininj,s^n\cS)$; since $\colim_{\Fininj}\colon \Fun(\Fininj,s^n\cS)\to s^n\cS$ is symmetric monoidal, we obtain a commutative algebra structure on $\cC[\ul X,\ul{\del X}]^\sm_{\Fininj}$.

Moreover we may consider $\cC[-|\ul X,\ul{\del X}]^\sm$, and hence also $\cC[-|\ul X,\ul{\del X}]^\sm_{01}$, as a lax symmetric monoidal functor $\Fininj\to s^n\cS_{/\cC[\ul X,\ul{\del X}]^\sm}$, by first mapping $\Fininj\to\Fin_{/*}\subset\Fun([1],\Fin)$, which is a lax symmetric monoidal functor, and the repeating the rest of the construction at the level of arrow categories.
\begin{nota}
\label{nota:fr}
We denote by $\fr\colon\cC[\ul X,\ul{\del X}]^\sm_{\Fininj}\to\cC[\ul X,\ul{\del X}]^\sm$ 
the resulting morphism in 
$\CAlg(s^n\cS)$. Composing with $\fd\colon\cC[\ul X,\ul{\del X}]^\sm\to\bB^n\N$ makes $\cC[\ul X,\ul{\del X}]^\sm_{\Fininj}$ into a commutative algebra in $\N$-graded $n$-fold simplicial spaces.
\end{nota}

\begin{lem}
\label{lem:ccuXsmFininjcatinfn}
$\cC[\ul X,\ul{\del X}]^\sm_{\Fininj}$ is an $\N$-graded $\infn$-category.
\end{lem}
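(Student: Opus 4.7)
\emph{Plan.}
I verify the three conditions characterising objects of $\Catinfn^\N$ as a full subcategory of $s^n_\N\cS$ (Definition \ref{defn:CatinfnI} with $I=\N$, together with Lemma \ref{lem:Itruncateddescription}): the $\N$-Segal condition, constancy, and completeness. Throughout I abbreviate $Y:=\cC[\ul X,\ul{\del X}]^\sm_{\Fininj}$ and $Y_S:=\cC[S|\ul X,\ul{\del X}]^\sm_{01}$; the $\N$-grading on $Y$ is the one induced by $\fr$ composed with the canonical grading on $\cC[\ul X,\ul{\del X}]^\sm$ (Notation \ref{nota:fr}).

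\emph{Constancy and completeness.}
Both conditions constrain only the restriction of $Y$ to $\bDelta^{\op,n}_0\subseteq\bDelta^{\op,n}_\N$. For each $S\in\Fininj$, the restriction of $Y_S$ to $\bDelta^{\op,n}_0$ equals $\cC$: by $\N$-admissibility of the extension every new $n$-morphism carries strictly positive $\N^S$-grading, so all grading-$0$ multisimplices come from $\cC$. Furthermore the transition maps for injections $S\hookrightarrow S'$ are identities on this grading-$0$ part. Thus the diagram $\Fininj\to s^n\cS$ of grading-$0$ restrictions is essentially constant at $\cC$. Since $\Fininj$ has initial object $\emptyset$ and hence weakly contractible classifying space, the colimit identifies with $\cC$, which is an $\infn$-category and hence satisfies constancy and completeness.

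\emph{Segal condition.}
For each $S$, the object $Y_S$ is the restriction to the coideal $\{0,1\}^S\subset\N^S$ of the $\N^S$-graded $\infn$-category $\cC[S|\ul X,\ul{\del X}]^\sm$ (endowed with the grading $\fd^S$), so by Lemma \ref{lem:Itruncateddescription} it lies in $\Catinfn^{\{0,1\}^S}$ and hence satisfies the $\{0,1\}^S$-Segal condition. Given a multisimplex $([\bk],\nu)\in\bDelta^{\op,n}_{\N,\inert}$ of total grading $k=|\nu|$, observe first that $Y_S([\bk],\nu)$ is empty whenever $|S|<k$, since any $\nu_S\in(\N^S)^{\ul\bk}$ whose sum along $\N^S\to\N$ equals $\nu$ and whose total grading lies in $\{0,1\}^S$ forces $|S|\ge\sum_i\nu(i)=k$. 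Because $\Fininj_{\ge k}\subseteq\Fininj$ is a full subcategory and the diagram vanishes outside of it, the colimit computing $Y([\bk],\nu)$ agrees with the restricted colimit over $\Fininj_{\ge k}$. On $\Fininj_{\ge k}$, every spine configuration of $\nu$ admits $\{0,1\}^S$-compatible lifts, and the category of such lifts identifies with an undercategory $\Fininj^{T/}$ under the slot set $T$ of $\nu$, which has $(T,\Id)$ as initial object and is weakly contractible. I would invoke the $\{0,1\}^S$-Segal condition of $Y_S$ at each $S\in\Fininj_{\ge k}$ to identify $Y_S([\bk],\nu)$ with the corresponding spine limit of $Y_S$-values, and then combine these termwise equivalences via a colimit--limit interchange to produce the $\N$-Segal equivalence for $Y$.

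\emph{Main obstacle.}
The principal technical step is the interchange of the colimit over $\Fininj_{\ge k}$ with the finite spine limit; since $\Fininj$ is not filtered, this interchange is not automatic. I expect it to follow from a cofinality argument comparing the indexing category of the spine limit with that of the colimit, leveraging the contractibility of the undercategories $\Fininj^{T/}$ for the various slot sets $T$ arising from $\nu$ and its spine decomposition.
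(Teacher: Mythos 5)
Your treatment of constancy and completeness is essentially sound and agrees in substance with the paper's, though the route differs slightly: the paper reads these conditions off the $r=0$ summand of the pointwise decomposition $\cC[\ul X,\ul{\del X}]^\sm_{\Fininj}([\bk])\simeq\coprod_{r\ge0}\cC[\ul r|\ul X,\ul{\del X}]^\sm([\bk])_{1^{\ul r}}/\fS_{\ul r}$, whereas you appeal to weak contractibility of $\Fininj$ to identify the grading-$0$ restriction of the colimit with $\cC$. Your approach works, but you should make explicit why the colimit over $\Fininj$ commutes with passing to the grading-$0$ fibre over $\bB^n\N$: this holds because $\bB^n\N$ is a (levelwise) discrete multisimplicial set and coproduct decompositions in $\cS$ are universal.

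The Segal-condition step, however, contains a genuine gap that you flag yourself but do not close, and I do not believe the cofinality idea you sketch can close it as stated. The spine limit for $([\bk],\nu)$ has terms at objects $([\bk'],\nu')\in\bDelta^{\op,n}_{\N,\inert,\le 1}$ whose total gradings range over $\{0,\dots,k\}$, so your reduction to $\Fininj_{\ge k}$ is only available for the top vertex, not uniformly across the spine diagram; and $\Fininj_{\ge k}$ is not filtered, nor is the forgetful functor $\Fininj^{T/}\to\Fininj_{\ge k}$ cofinal in a way that would force the interchange. The paper sidesteps $\Fininj$ entirely: Corollaries \ref{cor:Itruncateddescription} and \ref{cor:nsimeqinvariance} show the transition maps $\cC[S|\ul X,\ul{\del X}]^\sm_{01}([\bk])\hto\cC[T|\ul X,\ul{\del X}]^\sm_{01}([\bk])$ are inclusions of components, whence $\cC[-|\ul X,\ul{\del X}]^\sm_{01}([\bk])$ is left Kan extended from $\Fin^\simeq\hto\Fininj$, so the colimit over $\Fininj$ equals a colimit over $\Fin^\simeq\simeq\coprod_r\bB\fS_{\ul r}$. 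After further decomposing by $\nu\in\bB^n\N^{\ul r}([\bk])_{1^{\ul r}}$, with stabiliser $\fS_\nu=\prod_{i\in\ul\bk}\fS_{\nu^{-1}(i)}$, the only commutation required is that a colimit over $\bB\fS_\nu$ commutes with a limit over the weakly contractible poset $(\bDelta^{\op,n}_{\inert,\le1})$, which is a special fact about group quotients in $\cS$ (equivalently, about sections of fibrations over $\bB\fS_\nu$). That fact is the crux; your proposal neither invokes it nor supplies a substitute, and without the reformulation through $\Fin^\simeq$ there is no obvious way to reach it.
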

\begin{proof}
Since colimits in $s^n\cS$ are computed pointwise, for all $[\bk]\in(\bDelta^\op)^n$ we have
\[
\cC[\ul X,\ul{\del X}]^\sm_{\Fininj}([\bk])\simeq\colim_{\Fininj}\cC[-|\ul X,\ul{\del X}]^\sm_{01}([\bk]).
\]
For an inclusion of finite sets $S\hto T$, the induced functor $\cC[S|\ul X,\ul{\del X}]^\sm\to\cC[T|\ul X,\ul{\del X}]^\sm$ is compatible with the $\N^S$-grading and $\N^T$-grading on the source and target, along the induced inclusion of monoids $\N^S\hto\N^T$; moreover Corollaries \ref{cor:Itruncateddescription} and \ref{cor:nsimeqinvariance} imply that the map $\cC[S|\ul X,\ul{\del X}]^\sm_{01}([\bk])\hto\cC[T|\ul X,\ul{\del X}]^\sm_{01}([\bk])$ is an inclusion of components, namely all components of the space $\cC[T|\ul X,\ul{\del X}]^\sm_{01}([\bk])$ whose total grading, which is an element in $\set{0,1}^T$, lies inside the image of $\set{0,1}^S\hto\set{0,1}^T$. This shows that the functor
\[
\cC[-|\ul X,\ul{\del X}]^\sm_{01}([\bk])\colon\Fininj\to\cS
\]
is the left Kan extension along the inclusion $\Fin^\simeq\hto\Fininj$ of the functor 
\[
\Fin^\simeq\to\cS,\quad\quad
S\mapsto\cC[S|\ul X,\ul{\del X}]^\sm([\bk])_{1^S},
\]
where the index ``$1^S$'' selects $\bk$-multisimplices of total grading $1^S\in\N^S$. It follows that $\cC[\ul X,\ul{\del X}]^\sm_{\Fininj}([\bk])$ is equivalent to
\[
\colim_{\Fin^\simeq}\ \cC[-|\ul X,\ul{\del X}]^\sm([\bk])_{1^{(-)}}\simeq\coprod_{r\ge0}\cC[\ul r|\ul X,\ul{\del X}]^\sm([\bk])_{1^{\ul r}}\ /\fS_{\ul r},
\]
where the last formula gives a decomposition of $\bk$-multisimplices according to their $\N$-valued total grading.
In particular, if $\bk=(k_1,\dots,k_n)$ has at least one entry $k_i=0$, then only the total grading $r=0$ is allowed, leading to the equivalence $\cC[\ul X,\ul{\del X}]^\sm_{\Fininj}([\bk])\simeq\cC([\bk])$; this implies in particular that $\cC[\ul X,\ul{\del X}]^\sm_{\Fininj}$ satisfies the constancy and completeness conditions.

For the $\N$-Segal condition we first 
write $\cC[\ul{r}|\ul X,\ul{\del X}]^\sm([\bk])_{1^{\ul r}}$ as a disjoint union of spaces $\cC[\ul r|\ul X,\ul{\del X}]^\sm([\bk])_\nu$ for varying $\nu\in(\N^{\ul r})^{\ul\bk}$ of total grading $1^{\ul r}$; this decomposition is given by the map $\cC[\ul r|\ul X,\ul{\del X}]^\sm([\bk])_{1^{\ul r}}\to\bB^n\N^{\ul r}([\bk])_{1^{\ul r}}$. Note that the datum of $\nu\in\bB^n\N^{\ul r}([\bk])_{1^{\ul r}}$ is the same as a partition of $\ul r$ into $\ul\bk$ subsets. The action of $\fS_{\ul r}$ permutes among each other the subspaces of $\cC[\ul{r}|\ul X,\ul{\del X}]^\sm([\bk])_{1^{\ul r}}$ corresponding to partitions $\nu$ that yield the same splitting of $r$ as a sum of $\ul\bk$ natural numbers, so that $\cC[\ul X,\ul{\del X}]^\sm_{\Fininj}([\bk])_r \simeq\cC[\ul{r}|\ul X,\ul{\del X}]^\sm([\bk])_{1^{\ul r}}\ /\fS_{\ul r}$ decomposes as a disjoint union of subspaces corresponding to elements $\check\nu\in\bB^n\N([\bk])_{r}$. For each $\nu\in\bB^n\N^{\ul r}([\ul\bk])_{1^{\ul r}}$, the stabiliser in $\fS_{\ul r}$ of the subspace $\cC[\ul r|\ul X,\ul{\del X}]^\sm([\bk])_\nu$ is the subgroup $\fS_\nu\subset\fS_{\ul r}$ stabilising the partition $\nu$; regarding $\nu$ as a map of finite sets $\nu\colon\ul r\to\ul\bk$, this is the product $\prod_{i\in\ul\bk}\fS_{\nu^{-1}(i)}$. The $\N^{\ul r}$-Segal condition for $\cC[\ul r|\ul X,\ul{\del X}]^\sm$ allows us to identify $\cC[\ul r|\ul X,\ul{\del X}]^\sm([\bk])_\nu$ as the limit of the spaces $\cC[\ul r|\ul X,\ul{\del X}]^\sm([\bk'])_{\nu'}$ indexed by the category $(\bDelta^{\op,n}_{\N^{\ul r},\inert,\le1})_{([\bk],\nu)/}\simeq(\bDelta^{\op,n}_{\inert,\le1})$, which is a weakly contractible poset. Quotienting by the action of $\fS_\nu$, and denoting by $\check\nu\in\N^{\ul\bk}$ the image of $\nu$ along the fold map $(\N^{\ul r}\to\N)^{\ul\bk}$, we obtain the analogous identification of $\cC[\ul X,\ul{\del X}]^\sm_{\Fininj}([\bk])_{\check\nu}$ as the limit over the category $(\bDelta^{\op,n}_{\N,\inert,\le1})_{([\bk],\check\nu)/}\simeq(\bDelta^{\op,n}_{\inert,\le1})$ of the spaces $\cC[\ul X,\ul{\del X}]^\sm_{\Fininj}([\bk'])_{\check\nu'}$. In this last step we use that, for diagrams in $\cS$, colimits over $\bB\fS_\nu$ commute with limits over a weakly contractible category.
\end{proof}

\begin{proof}[Proof of Proposition \ref{prop:retraction}]
By Lemma \ref{lem:ccuXsmFininjcatinfn} we have that $\cC[\ul X,\ul{\del X}]^\sm_{\Fininj}$ is a symmetric monoidal $\infn$-category; moreover the functor
\[
\cC\simeq\cC[\ul0|\ul X,\ul{\del X}]^\sm_{01}\to\cC[\ul X,\ul{\del X}]^\sm_{\Fininj},
\]
induced by the symmetric monoidal inclusion $\set{0}\hto\Fininj$ upon taking colimits of restrictions of $\cC[-|\ul X,\ul{\del X}]^\sm_{01}$, is symmetric monoidal. The same functor is also homotopic, via the map $\ul0\hto\ul1$ in $\Fininj$, to the following composite map of $n$-fold simplicial spaces, whose source and targets are $\infn$-categories, and whose middle term does \emph{not} have a symmetric monoidal structure:
\[
\cC\to\cC[\ul1|\ul X,\ul{\del X}]^\sm_{01}\to\cC[\ul X,\ul{\del X}]^\sm_{\Fininj}.
\]
The second arrow of the last composition can be restricted to $\ul X$ at the level of $n$-morphism spaces, compatibly over $\ul{\del X}$.
The universal property of $\cC[\ul X,\ul{\del X}]^\sm$ then gives rise to a symmetric monoidal functor
\[
\fs\colon\cC[\ul X,\ul{\del X}]^\sm\to \cC[\ul X,\ul{\del X}]^\sm_{\Fininj},
\]
and the composite of $\fs$ with the symmetric monoidal functor $\fr$ from Notation \ref{nota:fr} is homotopic to the identity of $\cC[\ul X,\ul{\del X}]^\sm$, as can be checked by restricting to $\cC$ and to the spaces $\ul X$.

Proposition \ref{prop:retraction} now follows from considering spaces of $n$-morphisms, using also the computation from the proof of Lemma \ref{lem:ccuXsmFininjcatinfn} specialised to $\bk=1^n$.
\end{proof}

Proving that the map from Proposition \ref{prop:retraction} is a retraction is sufficient for our purposes, but we conjecture that the given map is in fact an equivalence. Equivalently, we conjecture that the functors $\fr$ and $\fs$ from the proof of Proposition \ref{prop:retraction} are inverse equivalences between $\cC[\ul X,\del{\ul X}]^\sm$ and $\cC[\ul X,\del{\ul X}]^\sm_{\Fininj}$. These statements should follow from the following stronger conjecture, by considering spaces of $n$-morphisms of grading $1^S$ in $\cC[S|\ul X,\ul{\del X}]^\sm$ and grading $|S|$ in $\cC[\ul X,\ul{\del X}]^\sm$, and by observing that the left vertical functor in the diagram gives an $\fS_S$-fold covering between these two spaces since the right vertical functor obviously gives gives an $\fS_S$-fold covering between the corresponding spaces.
\begin{conj}
\label{conj:retractionisequivalence}
The following commutative square in $\CAlg(\Catinfn)$, coming from the symmetric monoidal $\infn$-functor $\cC\to*$ and the maps of spaces $\ul X\to*$, is a pullback square:
\[
\begin{tikzcd}
\cC[S|\ul X,\ul{\del X}]^\sm\ar[d,"\text{fold}"]\ar[r]\ar[dr,phantom,"\lrcorner"very near start]&*[S|*]^\sm\ar[d,"\text{fold}"]\ar[r,phantom,"\simeq"]&\bB^n(\Fin^\simeq)^S\\
\cC[\ul X,\ul{\del X}]^\sm\ar[r]& *[*]^\sm\ar[r,phantom,"\simeq"]&\bB^n\Fin^\simeq.
\end{tikzcd}
\]

\end{conj}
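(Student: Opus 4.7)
The plan is to strengthen Proposition \ref{prop:retraction} to an equivalence: I would aim to show that the section $\fs\colon \cC[\ul X,\ul{\del X}]^\sm \to \cC[\ul X,\ul{\del X}]^\sm_{\Fininj}$ constructed in the proof of that proposition is an equivalence of symmetric monoidal $\infn$-categories. Equivalently, as can be checked at the level of $n$-morphism spaces of each fixed $\N$-grading (with lower morphism spaces already controlled by Corollary \ref{cor:nsimeqinvariance}), one needs to prove that for every $\bk\in\N^n$ and $r\ge 0$ the fold map
\[
\cC[\ul r|\ul X,\ul{\del X}]^\sm([\bk])_{1^{\ul r}}/\fS_{\ul r} \to \cC[\ul X,\ul{\del X}]^\sm([\bk])_r
\]
is an equivalence of spaces. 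Granting this, the conjectured pullback square follows by reading off $n$-morphism spaces of prescribed $\N^S$-grading on both sides: the fibre of the left vertical over a grading-$r$ $n$-cell records its refinement to an $\N^S$-grading, which is exactly the fibre of the right vertical over $\bB^n\Fin^\simeq$.

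My attack would proceed in two stages. First, reduce to the case of a single extension $\cC[X,\del X]^\sm$ by induction on the iteration length $k$, carefully tracking how the $S$-fold coproduct in $\CAlg(\Catinfn)_{\cC/}$ interacts with each successive extension step and how the $\N$-gradings decompose across iterations. Second, for a single extension I would apply Theorem \ref{thm:D}: the $\{0,1\}$-truncation of $\cC[X,\del X]^\sm$ is encoded by an explicit presheaf datum on $\dot\cT^n(\cC)$ built by Day-convolving $\cC(-,-)$ with $\yo_!(X)$, and the analogous datum for $\cC[\ul r|X,\del X]^\sm$ should be an iterated $r$-fold Day convolution of $\yo_!(X)$ with itself sandwiched between copies of $\cC(-,-)$. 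The $\fS_{\ul r}$-coinvariants of this iterated convolution carry a canonical map to the grading-$r$ piece of $\cC[X,\del X]^\sm$, which one hopes to show is an equivalence; combined with the retraction from Proposition \ref{prop:retraction} this would settle the claim.

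The main obstacle I anticipate is that Theorem \ref{thm:D} only captures the $\{0,1\}$-truncated part of a symmetric monoidal $\infn$-category, and hence, read directly, only describes $n$-morphism spaces of grading $\le 1$ in $\cC[X,\del X]^\sm$ and grading $\le 1^{\ul r}$ (coordinatewise) in $\cC[\ul r|X,\del X]^\sm$. Accessing the grading-$r$ part of $\cC[X,\del X]^\sm$ for $r\ge 2$ seems to require either a $\{0,1,\dots,m\}$-truncated enhancement of Theorem \ref{thm:D} together with a colimit argument as $m\to\infty$, or a direct cellular construction of an explicit model of $\cC[X,\del X]^\sm([\bk])_r$ and a hands-on identification with the $\fS_{\ul r}$-quotient of $\cC[\ul r|X,\del X]^\sm([\bk])_{1^{\ul r}}$. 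Both routes demand machinery beyond what is developed in the excerpted sections, which presumably explains why the author leaves the statement as a conjecture.
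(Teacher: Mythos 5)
The statement you are addressing is labelled a Conjecture in the paper and carries no proof, so there is no official argument for you to reconstruct; your text is a research sketch, and your concluding paragraph correctly locates the real obstruction. The truncation machinery of Section \ref{sec:truncatedviapsh} --- Theorem \ref{thm:D}, the functors $\dot\cT^n$ and $\dot\kappa^n$, and the $01$-graded presheaf picture --- only resolves $n$-morphism spaces of $\N$-grading $\le 1$. That is enough for the paper's applications, where one colours by $S_+$ and then passes to the coideal $\set{0,1}^{S_+}$, but it gives no direct access to the grading-$r$ part of $\cC[X,\del X]^\sm$ for $r\ge 2$. Your two proposed remedies --- an $I$-truncated analogue of Theorem \ref{thm:D} for $I=\set{0,\dots,m}$ together with a colimit over $m$, or a direct cellular model of the grading-$r$ piece --- are exactly the natural candidates, and their absence from the paper is indeed why the statement is left open.

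One logical claim in your setup needs more care. You assert that the pullback square ``follows'' from the equivalences of the fold maps $\cC[\ul r|\ul X,\ul{\del X}]^\sm([\bk])_{1^{\ul r}}/\fS_{\ul r}\to\cC[\ul X,\ul{\del X}]^\sm([\bk])_r$. The paper states only the reverse implication: it calls the pullback ``stronger'' and derives the fold-map statement from it by restricting to multidegree $1^S$. Your converse is plausible but not immediate. The left-vertical fibre over a grading-$r$ cell decomposes over all $\nu\in\N^S$ with $\sum_s\nu_s=r$, of which only the multidegrees $1^T$ (for $T\subseteq S$ with $\#T=r$) are directly governed by the fold maps; the remaining strata would have to be dispatched via Corollaries \ref{cor:Itruncateddescription} and \ref{cor:nsimeqinvariance}, plausibly by an induction on the support of $\nu$ that builds $\cC[S|\ul X,\ul{\del X}]^\sm$ one element of $S$ at a time. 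Moreover, the comparison target is $\bB^n\Fin^\simeq$, not $\bB^n\N$: the fibre of the fold $\bB^n(\Fin^\simeq)^S\to\bB^n\Fin^\simeq$ over a cardinality-$r$ cell is the set $S^{\ul r}$ of $S$-colourings of an $r$-element set, not the smaller set of $\N^S$-refinements of the integer $r$, because the automorphism group $\fS_r$ acts nontrivially. Neither point is fatal, but both must be closed before your first stage can legitimately be taken as the whole of the problem.
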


\section{Truncated categories via presheaves}
\label{sec:truncatedviapsh}
In this section, for $n\ge1$, we study more closely the $\infty$-category $\Catinfn^{01}\subset s^n_{01}\cS$, where we abbreviate by ``$01$'' the coideal $\set{0,1}\subset\N$. We start with a quick analysis of the case $n=1$. An object $\cC\in\Catinfone^{01}$ has an underlying $\infone$-category $\cC|_{\bDelta^\op_0}$, and the space $\Fun(\Theta_1,\cC)^\simeq_1$ of all morphisms of grading 1 is moreover endowed with composition laws on left and on right with morphisms in $\cC$ of grading 0, so that assigning to each pair of objects $(x,y)\in\Fun(\mO_0,\cC)^\simeq$ the fibre at $(x,y)$ of the map $\Fun(\Theta_1,\cC)^\simeq_1\to\Fun(\mO_0,\cC)^\simeq$ gives rise to a presheaf over $(\cC|_{\bDelta^\op_0})\times(\cC|_{\bDelta^\op_0})^\op$. We will indeed construct a pullback square in $\Catinfone^{\fU_2}$ of the form
\[
\begin{tikzcd}[row sep=10pt, column sep=60pt]
\Catinfone^{01}\ar[r,"(-)|_{\bDelta^\op_0}"]\ar[d]\ar[dr,phantom,"\lrcorner" very near start] &\Catinfone\ar[d,"(-)\times(-)^\op"]\\
\PSh\ar[r,"d_0"]&\Catinfone,
\end{tikzcd}
\]
where $d_0\colon\PSh\to\Catinfone$ is the bicartesian fibration from Subsection \ref{subsec:presheaves}. For $n\ge2$ we will have a similar formula, where $(-)\times(-)^\op$ will be suitably replaced by another symmetric monoidal functor $\dot\cT^n\colon\Catinfn\to\Catinfone$.

Throughout the section we denote by $\iota\colon0\to\N$ and $\tau\colon\N\to0$ the initial and terminal maps of monoids, see Example \ref{ex:iotatau}; and we denote by $T_0=\iota_*\iota^*\colon s^n_\N\cS\to s^n_\N\cS$ the truncation functor. We consider $s^n_{01}\cS$ as a full subcategory of $s^n_\N\cS$ via the fully faithful functor $(\iota^{01,\N})_*$ associated with the inclusion $\iota^{01,\N}\colon\bDelta^{\op,n}_{01}\hto\bDelta^{\op,n}_{\N}$.

\subsection{Expressing \texorpdfstring{$\Catinfn^{01}$}{Catinfn01} as a pullback}
\begin{defn}
We denote by $\bDelta^{\op,n}_1$ the full subcategory of $\bDelta^{\op,n}_{01}\subset\bDelta^{\op,n}_{\N}$ spanned by objects of total grading $1\in\N$. Concretely, $([\bk],\nu)\in\bDelta^{\op,n}_1$ if and only if $\nu\in\N^{\ul\bk}$ has exactly one coordinate equal to 1 and all other coordinates equal to 0. We similarly define $\bDelta^n_1:=(\bDelta^{\op,n}_1)^\op\subset\bDelta^n_{01}$. We let $\bDelta^{\op,n}_{1,\inert}\subset\bDelta^{\op,n}_1$ denote the wide subcategory spanned by inert morphisms.
\end{defn}
\begin{defn}
\label{defn:cKcTcL}
Let $X\in s^n_\N\cS$ and $Y\in s^n\cS\simeq s^n_0\cS$.
We let $\cK^n(X)\to \bDelta^n_{01}$ be a right fibration corresponding to $X|_{\bDelta^{\op,n}_{01}}\colon\bDelta^{\op,n}_{01}\to\cS$.
We let $\cT^n(Y):=\cK^n(\iota_*(Y))$, and by abuse of notation, we define $\cT^n(X)$ as $\cT^n(\iota^*(X))\simeq \cK^n(T_0(X))$.
The unit of the adjunction $X\to \iota_*\iota^*(X)\simeq T_0(X)$ gives rise to a right fibration $\cK^n(X)\to\cT^n(X)$, and we denote by $\kappa^n(X)\in\PSh(\cT^n(X))$ the corresponding presheaf.
We further set $\cL^n(Y):=\cK^n(\tau^*(Y))$, and $\lambda^n(Y):=\kappa^n(\tau^*(Y))\in\PSh(\cT^n(\tau^*(Y)))\simeq\PSh(\cT^n(Y))$ for the corresponding presheaf.

For $X\in s^n_\N\cS$, the total grading of an object in  $\cK^n(X)$ is defined as that of its projection in $\bDelta^n_{01}$, and it is therefore equal to $0$ or $1$. For $j=0,1$ we denote by $\cK^n_j(X)$ the full subcategory of $\cK^n(X)$ spanned by objects of total grading $j$. We introduce similar notation $\cT^n_j(X)$, $\cT^n_j(Y)$ and $\cL^n_j(Y)$. We denote by $\kappa^n_j(X)\in\PSh(\cT^n_j(X))$ the restriction of $\kappa^n(X)$ to $\cT^n_j(X)$; and we set $\lambda^n_j(Y):=\kappa^n_j(\tau^*(Y))\in\PSh(\cT^n_j(\tau^*(Y)))\simeq\PSh(\cT^n_j(Y))$.

We say that a morphism in $\cK^n(X)$ is \emph{inert} if its projection to $\bDelta^n_{01}$ is inert. We use the index ``$(-)_{\inert}$'' to denote wide subcategories spanned by inert morphisms.
\end{defn}

For $Y\in s^n\cS$ we observe that $\cT^n_0(Y)$ is the opposite of the category of simplices of $Y$, i.e. $\cT^n_0(Y)\to\bDelta^n$ is a right fibration corresponding to $Y$.
We also observe that $\cT^n_0(Y)$ is a \emph{left closed} full subcategory of $\cT^n(Y)$, i.e. a morphism in $\cT^n(Y)$ with target in $\cT^n_0(Y)$ also has source in $\cT^n_0(Y)$; this is essentially because the total grading is weakly increasing along morphisms in $\bDelta^n_\N$. In particular, right Kan extension from $\cT^n_1(Y)^\op$ exhibits $\PSh(\cT^n_1(Y))$ as the full subcategory of $\PSh(\cT^n(Y))$ spanned by presheaves whose restriction on $\cT^n_0(Y)$ is equivalent to the terminal presheaf, i.e. the constant presheaf with value $*\in\cS$.

\begin{lem}
\label{lem:kappaXproperties}
Let $X\in s^n_{01}\cS\subset s^n_\N\cS$; then the following holds:
\begin{enumerate}
\item $\kappa^n_0(X)\in\PSh(\cT^n_0(X))$ is the terminal presheaf.
\end{enumerate}
If moreover $\iota^*(X)\in s^n\cS$ satisfies the Segal condition, then the following hold:
\begin{enumerate}
\setcounter{enumi}{1}
\item $X$ satisfies the $01$-Segal condition from Definition \ref{defn:CatinfnI} if an only if the presheaf $\kappa^n_1(X)\in\PSh(\cT^n_1(X))$ is local with respect to $\cT^n_1(X)_{\inert}$.
\end{enumerate}
\end{lem}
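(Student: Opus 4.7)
The plan is to treat the two parts separately. Part (1) is essentially a matter of unpacking definitions, while part (2) requires reducing both conditions to a fiberwise comparison over points of $T_0(X)$ at grading-$1$ objects and recognizing a pullback structure.

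For part (1), I would observe that $\cK^n_0(X)$ and $\cT^n_0(X)$ are by Definition \ref{defn:cKcTcL} right fibrations over $\bDelta^n_0$ classified respectively by $X|_{\bDelta^{\op,n}_0}$ and $T_0(X)|_{\bDelta^{\op,n}_0}$. Since $T_0 = \iota_*\iota^*$ with $\iota$ the inclusion restricting to grading zero, the unit $X \to T_0(X)$ is an equivalence after restriction to $\bDelta^{\op,n}_0$. The induced map $\cK^n_0(X) \to \cT^n_0(X)$ of right fibrations is therefore an equivalence, so its classifying presheaf $\kappa^n_0(X)$ is the terminal presheaf.

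For part (2), I would first unpack Definition \ref{defn:CatinfnI} using Lemma \ref{lem:inertinitial}: the $01$-Segal condition on $X$ amounts to requiring, for every $([\bk], \nu) \in \bDelta^{\op,n}_{01}$, that the canonical map $X([\bk], \nu) \to \lim_{\cB} X$ be an equivalence, where $\cB := (\bDelta^{\op,n}_{01,\inert,\le 1})_{([\bk],\nu)/}$. Given the Segal condition on $\iota^*(X)$ this is automatic when $\nu$ has total grading $0$, so the real content concerns $\nu$ of total grading $1$. I would split $\cB = \cB_0 \sqcup \cB_1$ according to the total grading of the target. Since inert morphisms weakly decrease total grading, $\cB_1$ is left closed in $\cB$; moreover $\bDelta^{\op,n}_{01,\inert,\le 1}$ contains a unique object of total grading $1$, namely $([1]^n, 1^*)$ with $1^* \in \N^{\ul{[1]^n}} \simeq \N$ the tautological grading-$1$ element. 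Hence $\cB_1 = \set{\alpha^*}$ consists of the single canonical inert morphism $([\bk], \nu) \to ([1]^n, 1^*)$ selecting the distinguished grading-$1$ edge, and its bridging arrows into $\cB_0$ are the inert morphisms of $\bDelta^{\op,n}_{01,\inert,\le 1}$ out of $([1]^n, 1^*)$.

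Next I would compute $\lim_\cB X$ by splitting off the contribution of $\alpha^*$: using that $T_0(X)$ is right Kan extended from $\bDelta^{\op,n}_0$ together with the Segal condition on $\iota^*(X)$, one identifies both $\lim_{\cB_0} X \simeq T_0(X)([\bk], \nu)$ and the assembled bridging data as the canonical evaluation $X([1]^n, 1^*) \to T_0(X)([1]^n, 1^*)$. The $01$-Segal condition at $([\bk], \nu)$ then becomes the assertion that the square
\[
\begin{tikzcd}[row sep=10pt]
X([\bk], \nu) \ar[r] \ar[d] \ar[dr,phantom,"\lrcorner"very near start] & X([1]^n, 1^*) \ar[d] \\
T_0(X)([\bk], \nu) \ar[r] & T_0(X)([1]^n, 1^*)
\end{tikzcd}
\]
is a pullback, with horizontal maps induced by the canonical inert morphism and vertical maps the units $X \to T_0(X)$. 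Reading fiberwise: for each $p \in T_0(X)([\bk], \nu)$ with image $p' \in T_0(X)([1]^n, 1^*)$, the induced map of fibers $\kappa^n_1(X)(p) \to \kappa^n_1(X)(p')$ must be an equivalence, i.e.\ $\kappa^n_1(X)$ is local with respect to the inert arrow $p' \to p$ in $\cT^n_1(X)$ sitting over the canonical inert map. Since $([1]^n, 1^*)$ is the terminal object of $\bDelta^{\op,n}_{1,\inert}$, every inert morphism in $\cT^n_1(X)$ factors through such a canonical arrow, so this is equivalent to locality on all of $\cT^n_1(X)_{\inert}$. The main technical obstacle will be rigorously justifying the identification of $\lim_\cB X$ with the claimed fiber product, which requires an explicit combinatorial description of the arrows in $\cB$ issuing from $\alpha^*$ and a careful repackaging, via the Segal condition on $\iota^*(X)$, of the various grading-$0$ data into the terms $T_0(X)([\bk], \nu)$ and $T_0(X)([1]^n, 1^*)$.
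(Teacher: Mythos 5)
Your proposal is correct and takes essentially the same route as the paper's proof. For (1), you invoke the equivalence $X|_{\bDelta^{\op,n}_0} \simeq T_0(X)|_{\bDelta^{\op,n}_0}$ (which is the content of Corollary~\ref{cor:Itruncateddescription} applied to $I=\{0\}$) to conclude the fibers of $\kappa^n$ over $\cT^n_0(X)$ are contractible, which is exactly what the paper does in its first paragraph. For (2), both you and the paper reduce to the single cartesian square with $([1^n],1)$ in the top right corner, using (a) the Segal hypothesis on $\iota^*(X)$ to identify the bottom row with limits over $\cB_0$ and over the analogous undercategory of $([1^n],1)$, and (b) the essentially unique inert morphism $([1^n],1) \to ([\bk],\nu)$ for each object $([\bk],\nu)$ of total grading $1$ (your terminality of $([1^n],1)$ in $\bDelta^{\op,n}_{1,\inert}$). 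The step you flag as "the main technical obstacle" — identifying $\lim_\cB X$ with the fibre product — is also the step the paper's proof compresses into the final sentence without spelling out the comparison of limits; neither account gives the full combinatorial justification, but the identification is the same in both, so there is no gap in your proposal relative to the published argument.
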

\begin{proof}
Let $([\bk],\nu,x)$ be an object in $\cT^n(X)$, with $\nu\in\N^{\ul\bk}$ and $x\in T_0(X)([\bk],\nu)$. Then the value at $([\bk],\nu,x)$ of $\kappa^n(X)$ is the fibre at $x$ of the map of spaces $X([\bk],\nu)\to T_0(X)([\bk],\nu)$. By Corollary \ref{cor:Itruncateddescription} the previous map of spaces is an equivalence if $([\bk],\nu)\in\bDelta^n_0$, i.e. if $([\bk],\nu,x)\in\cT^n_0(X)$, thus proving (1).

For (2), the condition that $\kappa^n(X)$ sends inert morphisms in $\cT^n_1(X)^\op$ to equivalences can be rephrased as follows: for any inert morphism $f\colon([\bk'],\nu')\to([\bk],\nu)$ in $\bDelta^n_1$ the following square of spaces is cartesian (all maps between vertical fibres are equivalences):
\[
\begin{tikzcd}[row sep=10pt]
X([\bk],\nu)\ar[d]\ar[r]& X([\bk'],\nu')\ar[d]\\
T_0(X)([\bk],\nu)\ar[r]&T_0(X)([\bk'],\nu').
\end{tikzcd}
\]
We may equivalently only require the squares corresponding to inert morphisms in $\bDelta^n_1$ of the form $([1^n],1)\to([\bk],\nu)$ to be cartesian: indeed each object $([\bk],\nu)\in\bDelta^n_1$ admits an essentially unique morphism of this special form of which it is the target.

The hypothesis that $\iota^*(X)$ satisfies the Segal condition can be rephrased as the requirement that $T_0(X)\in s^n_{01}\cS\subset s^n_\N\cS$ satisfies the $01$-Segal condition, whence we may in particular expand
\[
T_0(X)([\bk],\nu)\simeq\lim_{(\bDelta^{\op,n}_{0,\inert,\le1})_{([\bk],\nu)/}}X, \qquad T_0(X)([1^n],1)=\lim_{(\bDelta^{\op,n}_{0,\inert,\le1})_{([1^n],1)/}}X,
\]
and the above square, for $([\bk'],\nu')=([1^n],1)$, is cartesian if and only if $X([\bk],\nu)\simeq\lim_{(\bDelta^{\op,n}_{01,\inert,\le1})_{([\bk],\nu)/}}X$, i.e. if the $01$-Segal condition for $X$ holds at $([\bk],\nu)$. 
\end{proof}
Motivated by Lemma \ref{lem:kappaXproperties}, we give the following definition.
\begin{defn}
\label{defn:dotKappa}
Let $X\in s^n_\N\cS$ and $Y\in s^n\cS$.
We define $\dot\cK^n(X)$ as the localisation $L(\cK^n_1(X),\cK^n_1(X)_\inert)\in\Catinfone$ of $\cK^n(X)$ at inert morphisms (see Subsection \ref{subsec:localisation}). Similarly, we define $\dot\cT^n(Y):=\dot\cK^n(\iota_*(Y))$, and by abuse of notation we define $\dot\cT^n(X)$ as $\dot\cT^n(\iota^*(X))$. Finally, we set $\dot\cL^n(Y)=\dot\cK(\tau^*(Y))$.

For $\cC\in\Catinfn^\N$ we denote by $\dot\kappa^n(\cC)\in\PSh(\dot\cT(\cC))$ the presheaf corresponding to $\kappa^n_1(\cC)$, which is local with respect to inert morphisms by Lemma \ref{lem:kappaXproperties}.
Equivalently, $\dot\kappa^n(\cC)$ corresponds to the functor $\dot\cK^n(\cC)\to\dot\cT^n(\cC)$ obtained as localisation of $\cK^n_1(\cC)\to\cT^n_1(\cC)$, which is again a right fibration by Lemma \ref{lem:horlocfibration}.

Similarly, for $\cC\in\Catinfn$, we denote by $\dot\lambda^n(\cC)\in\PSh(\dot\cT^n(\cC))$ the presheaf corresponding to $\lambda^n_1(\cC)$, and by abuse of notation, for $\cC\in\Catinfn^\N$, we denote by $\dot\lambda^n(\cC)$ the presheaf $\dot\lambda^n(\iota^*(\cC))\in\PSh(\dot\cT^n(\cC))$.
\end{defn}
\begin{prop}
\label{prop:Catinfn01pullback}
We have a pullback square in $\Catinfone^{\fU_2}$
\[
\begin{tikzcd}[row sep=10pt]
\Catinfn^{01}\ar[r,"\iota^*"]\ar[d,"{(\dot\cK^n\to\dot\cT^n)\simeq(\dot\cT^n,\dot\kappa^n)}"']\ar[dr,phantom,"\lrcorner"very near start] &\Catinfn\ar[d,"\dot\cT^n"]\\
\PSh\ar[r,"d_0"]&\Catinfone.
\end{tikzcd}
\]
\end{prop}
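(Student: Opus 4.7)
The plan is to build an inverse $G$ to the comparison functor $F\colon\Catinfn^{01}\to\Catinfn\times_{\Catinfone}\PSh$ sending $X\mapsto(\iota^*(X),\dot\kappa^n(X))$. An object of the right-hand pullback is a pair $(\cC,\alpha)$ with $\cC\in\Catinfn$ and $\alpha\in\PSh(\dot\cT^n(\cC))$. The functor $F$ is well-defined by Definition \ref{defn:dotKappa} and Lemma \ref{lem:kappaXproperties}: part (1) says $\kappa^n_0(X)$ is terminal, so the non-trivial content beyond $\iota^*X$ is concentrated at total grading $1$; part (2) says $\kappa^n_1(X)$ is inert-local and hence descends from $\cT^n_1(X)=\cT^n_1(\iota^*X)$ to $\dot\cT^n(\iota^*X)$.

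To construct $G$, start with $(\cC,\alpha)$ and pull $\alpha$ back along the canonical localisation $\cT^n_1(\cC)\to\dot\cT^n(\cC)$ to an inert-local presheaf $\tilde\alpha\in\PSh(\cT^n_1(\cC))$, represented by a right fibration $\tilde\cK\to\cT^n_1(\cC)$. Since $\cT^n_1(\cC)\to\bDelta^n_1$ is itself a right fibration by Definition \ref{defn:cKcTcL}, the composite $\tilde\cK\to\bDelta^n_1$ is again a right fibration and corresponds to some $X_1\in\Fun(\bDelta^{\op,n}_1,\cS)$. Assemble $X_1$ with $\cC$ in grading $0$, with structure maps inherited from the composite $\tilde\cK\to\cT^n_1(\cC)\to\cT^n(\iota_*\cC)$, to produce $X\in s^n_{01}\cS$. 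That $X$ lies in $\Catinfn^{01}$ follows because constancy and completeness are inherited from $\cC$, and the $01$-Segal condition for $X$ reduces via Lemma \ref{lem:kappaXproperties}(2) to the inert-locality of $\tilde\alpha$.

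The composite $F\circ G(\cC,\alpha)$ equals $(\cC,\alpha)$ by construction. Conversely, for $X\in\Catinfn^{01}$, the $01$-Segal condition together with $01$-truncatedness imply that $X$ is fully determined by $\iota^*X$ and $\dot\kappa^n(X)$, and the recipe defining $G$ precisely rebuilds it, giving $G\circ F\simeq\Id$. Fully faithfulness of $F$ then follows by a parallel argument on morphism spaces: a morphism $X\to X'$ in $\Catinfn^{01}$ decomposes uniquely as a morphism on grading-$0$ parts together with a compatible morphism on grading-$1$ fiber data, exactly a morphism in the pullback $\Catinfn\times_{\Catinfone}\PSh$.

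The main obstacle I anticipate is verifying the $01$-Segal condition for the reconstructed $X$ at arbitrary objects of $\bDelta^{\op,n}_{01}$, not merely at the Segal-generating objects of $\bDelta^{\op,n}_{01,\inert,\le 1}$. Lemma \ref{lem:kappaXproperties}(2) directly handles the latter, and Corollary \ref{cor:inertinitial} should propagate the equivalence to the full $\bDelta^{\op,n}_{01,\inert}$. Care is also needed to match the right fibration $\tilde\cK\to\cT^n_1(\cC)$ with the face and degeneracy maps of $X$ in a manner compatible with all morphisms in the pullback, but this becomes essentially automatic once the right-fibration formalism is in place.
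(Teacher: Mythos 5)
Your approach differs genuinely from the paper's. The paper does not construct an inverse: it promotes $\cT^n_1$ to a functor $s^n_\N\cS\to\RelCatinfone$, uses Lemma \ref{lem:kappaXproperties}(1) to realise $s^n_{01}\cS$ as a pullback of $s^n\cS$, $\RelCatinfone$ and $\PSh_{\RelCatinfone}$, and then, via the identification $\PSh^{\mathrm{loc}}_{\RelCatinfone}\simeq\RelCatinfone\times_{\Catinfone}\PSh$ along the localisation $L$ and Lemma \ref{lem:kappaXproperties}(2), carves out the stated square by gluing iterated pullbacks of already-constructed functors. The universal property is thereby established once, for objects and morphisms simultaneously, without ever producing an inverse by a recipe.

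Your object-level construction of $G$ is essentially correct: pulling $\alpha$ back to an inert-local $\tilde\alpha$ over $\cT^n_1(\cC)$, extending across the left-closed inclusion $\cT^n_0(\cC)\hto\cT^n(\cC)$ (this is precisely right Kan extension, producing the terminal presheaf on $\cT^n_0$), and composing with the right fibration $\cT^n(\cC)\to\bDelta^n_{01}$ yields the desired $X$, with Lemma \ref{lem:kappaXproperties} verifying $X\in\Catinfn^{01}$. However, the obstacle you dismiss at the end is the real one, while the one you flag in the middle is not. There is no residual issue with the $01$-Segal condition at arbitrary objects of $\bDelta^{\op,n}_{01}$: Lemma \ref{lem:kappaXproperties}(2) is stated as an iff precisely to close that gap, and Corollary \ref{cor:inertinitial} is not the tool you want here. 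What is \emph{not} automatic is promoting the object-level recipe for $G$ to an $\infty$-functor, and making the "parallel argument on morphism spaces" precise: a morphism $X\to X'$ in $\Catinfn^{01}$ does not "decompose uniquely" as set-level data, the required statement is an equivalence of mapping \emph{spaces}, and establishing it amounts to showing your reconstruction is natural in $(\cC,\alpha)$ -- a non-trivial coherence problem. With more effort your strategy could likely be pushed through, but as written the functoriality and fully-faithfulness steps constitute a genuine gap that the paper's structural gluing of pullback squares sidesteps entirely.
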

\begin{proof}
The functor $\cT^n_1\colon s^n_\N\cS\to\Catinfone$ can be promoted to a functor that we still denote $\cT^n_1\colon s^n_\N\cS\to\RelCatinfone$, sending $X\mapsto(\cT^n_1(X),\cT^n_1(X)_{\inert})$, and point (1) of Lemma \ref{lem:kappaXproperties} gives rise to a pullback square in $\Catinfone^{\fU_2}$
\[
\begin{tikzcd}[row sep=10pt]
s^n_{01}\cS\ar[r,"\iota^*"]\ar[d,"{(\cK^n_1\to\cT^n_1)\simeq (\cT^n_1,\kappa^n_1)}"']\ar[dr,phantom,"\lrcorner"very near start]&s^n\cS\ar[d,"\cT^n_1"]\\
\PSh_{\RelCatinfone}\ar[r]&\RelCatinfone,
\end{tikzcd}
\]
where the bottom arrow is the pullback of the bicartesian fibration $d_0\colon\PSh\to\Catinfone$ along the functor 
$d_0\colon\RelCatinfone\subset\Fun([1],\Catinfone)\to\Catinfone$.

Let moreover $\PSh^{\mathrm{loc}}_{\RelCatinfone}$ denote the full subcategory of $\PSh_{\RelCatinfone}$ spanned by objects $((\cC,W),A)$ such that $A\in\PSh(\cC)$ is $W$-local. Then on the one hand $\PSh^{\mathrm{loc}}_{\RelCatinfone}$ can be identified with the following pullback on left in $\Catinfone^{\fU_2}$:
\[
\begin{tikzcd}[row sep=10pt]
\PSh^{\mathrm{loc}}_{\RelCatinfone}\ar[r]\ar[d,"L"']\ar[dr,phantom,"\lrcorner"very near start]&\RelCatinfone\ar[d,"L"]\\
\PSh\ar[r,"d_0"]&\Catinfone;
\end{tikzcd}
\hspace{.5cm}
\begin{tikzcd}[row sep=10pt]
\Catinfn^{01}\ar[r,"\iota^*"]\ar[d,"{(\cK^n_1\to\cT^n_1)\simeq (\cT^n_1,\kappa^n_1)}"']\ar[dr,phantom,"\lrcorner"very near start]&\Catinfn\ar[d,"\cT^n_1"]\\
\PSh^{\mathrm{loc}}_{\RelCatinfone}\ar[r]&\RelCatinfone.
\end{tikzcd}
\]
On the other hand point (2) of Lemma \ref{lem:kappaXproperties}, together with the observation that the completeness and constancy condition on $X\in s^n_{01}\cS$ only depend on $\iota^*(X)$, allows us to identify $\Catinfn^{01}$ as the preimage of $\PSh^{\mathrm{loc}}_{\RelCatinfone}$ along $(\cT^n_1,\kappa^n_1)$, obtaining the pullback on right. We conclude by gluing the last two pullback squares.
\end{proof}
We conclude this subsection with two basic lemmas. The first regards isomorphism classes of objects in $\dot\cT^n(\cC)$ for $\cC\in\Catinfn$, and will allow us to limit our study of mapping spaces in $\dot\cT^n(\cC)$ in the next two subsections. Note that for $\cC\in\Catinfn$ we have $\Fun(\mO_{n-1},\cC)^\simeq\simeq \iota_*(\cC)([1^n],1)$ and $\Fun(\Theta_n,\cC)^\simeq\simeq\tau^*(\cC)([1^n],1)$.
\begin{lem}
\label{lem:essentiallysurjective}
For $X\in s^n_\N\cS$, the natural map of spaces $X([1^n],1)\to\dot\cK^n(X)^\simeq$, sending $x\mapsto ([1^n],1,x)$, is surjective on $\pi_0$.
\end{lem}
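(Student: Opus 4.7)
The plan is to show that every object of $\cK^n_1(X)$ becomes equivalent, in the localization $\dot\cK^n(X) = L(\cK^n_1(X),\cK^n_1(X)_{\inert})$, to an object of the form $([1^n],1,x')$ with $x' \in X([1^n],1)$. Since inert morphisms become equivalences after localizing, it suffices to connect each object of $\cK^n_1(X)$ to such a preferred form by an inert morphism.

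Let $([\bk],\nu,x) \in \cK^n_1(X)$. The datum of $\nu \in \N^{\ul\bk}$ of total grading $1$ amounts to a choice of a single position $(i_1,\ldots,i_n) \in \ul k_1 \times \cdots \times \ul k_n$ at which $\nu$ equals $1$, all other coordinates being $0$. For each $j$, let $f_j \colon [1] \hto [k_j]$ be the inert morphism in $\bDelta$ with image $\set{i_j-1,i_j}$; these combine into an inert morphism $f \colon [1^n] \hto [\bk]$ in $\bDelta^n$. The induced map $\bB^n\N([\bk]) \to \bB^n\N([1^n])$ is the projection $\N^{\ul\bk}\to\N$ onto the coordinate at $(i_1,\ldots,i_n)$, and it sends $\nu$ to $1$. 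This produces an inert morphism $([\bk],\nu) \to ([1^n],1)$ in $\bDelta^{\op,n}_1$, equivalently an inert morphism $\alpha \colon ([1^n],1) \to ([\bk],\nu)$ in $\bDelta^n_1$.

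Now I would invoke that $\cK^n(X) \to \bDelta^n_{01}$ is the right fibration corresponding to the presheaf $X|_{\bDelta^{\op,n}_{01}}$. Setting $x' \in X([1^n],1)$ to be the image of $x$ under the map $X([\bk],\nu) \to X([1^n],1)$ induced by the inert morphism $([\bk],\nu)\to([1^n],1)$ in $\bDelta^{\op,n}_{01}$, the unstraightening of $X$ yields a cartesian lift of $\alpha$, namely an inert morphism $([1^n],1,x') \to ([\bk],\nu,x)$ in $\cK^n_1(X)$. This morphism becomes an equivalence in $\dot\cK^n(X)$, hence $[([\bk],\nu,x)] = [([1^n],1,x')]$ in $\pi_0(\dot\cK^n(X)^\simeq)$, and this class lies in the image of the natural map $X([1^n],1) \to \dot\cK^n(X)^\simeq$.

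There is no real obstacle; the only care needed is in keeping the variance conventions aligned ($\bDelta^{\op,n}_\N \to (\bDelta^\op)^n$ is a left fibration, $\bDelta^n_\N$ is its opposite, and $\cK^n(X) \to \bDelta^n_{01}$ is a right fibration), together with the observation, already implicit in the proof of Lemma \ref{lem:kappaXproperties}, that any object of $\bDelta^n_1$ admits an essentially unique inert morphism from $([1^n],1)$.
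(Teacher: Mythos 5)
Your proof is correct and takes the same route as the paper's: pick the unique inert morphism $([1^n],1)\to([\bk],\nu)$ in $\bDelta^n_1$, take its cartesian lift with prescribed target in the right fibration $\cK^n_1(X)\to\bDelta^n_1$, and observe that the lift is inert and hence inverted in the localization. The only difference is that you spell out the construction of the inert morphism from the position of the unique nonzero coordinate of $\nu$, which the paper's one-line proof leaves implicit.
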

\begin{proof}
Let $([\bk],\nu,x)\in\cK^n_1(X)$; then the inert morphism $([1^n],1)\to([\bk],\nu)$ in $\bDelta_1$ admits a cartesian lift $([1^n],1,x')\to([\bk],\nu,x)$ in $\cT^n_1(\cC)$; this morphism is inert and hence it is inverted in $\dot\cT^n(\cC)$.
\end{proof}
\begin{conj}
For $\cC\in\Catinfn$ the maps $\Fun(\mO_{n-1},\cC)^\simeq\to\dot\cT^n(\cC)^\simeq$ and $\Fun(\Theta_n,\cC)^\simeq\to\dot\cL^n(\cC)^\simeq$ from Lemma \ref{lem:essentiallysurjective} are in fact equivalences of spaces.
\end{conj}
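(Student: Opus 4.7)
The plan is to establish the first equivalence $\Fun(\mO_{n-1},\cC)^\simeq\to\dot\cT^n(\cC)^\simeq$; the second one, $\Fun(\Theta_n,\cC)^\simeq\to\dot\cL^n(\cC)^\simeq$, follows by the same argument with $\tau^*(\cC)$ in place of $\iota_*(\cC)$. The starting point is the identification $\iota_*(\cC)([1^n],1)\simeq\Fun(\mO_{n-1},\cC)^\simeq$, which follows from Example \ref{ex:iotatau} and Lemma \ref{lem:Itruncateddescription} applied with $I=\set{0}$: since $\iota_*(\cC)$ is $\set{0}$-truncated, its space of $n$-morphisms of any positive grading depends only on the boundary pair of $(n-1)$-morphisms, with the $n$-morphism itself being contractibly unique. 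Under this identification, the map of the conjecture is precisely the one furnished by Lemma \ref{lem:essentiallysurjective} applied to $X=\iota_*\cC$. For $n=1$ the conjecture is trivial: since $\dot\cT^1(\cC)=\cC\times\cC^\op$, the map becomes the obvious equivalence $\cC^\simeq\times\cC^\simeq\to(\cC\times\cC^\op)^\simeq$.

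For $n\ge 2$, Lemma \ref{lem:essentiallysurjective} already provides $\pi_0$-surjectivity. To promote this to a full equivalence, I would construct a natural retraction. For every object $c=([\bk],\nu,x)\in\cT^n_1(\iota_*\cC)$, there is a unique inert morphism $([1^n],1)\to([\bk],\nu)$ in $\bDelta^n_1$ picking out the grade-$1$ position of $\nu$, and an essentially unique cartesian lift $r(c):=([1^n],1,x_0)\to c$ in the right fibration $\cT^n_1(\iota_*\cC)\to\bDelta^n_1$. The assignment $c\mapsto r(c)$ defines a map of spaces $\cT^n_1(\iota_*\cC)^\simeq\to\iota_*(\cC)([1^n],1)$ that is a section on $\pi_0$. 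The central task is to verify that $r$ descends to the inert localisation, i.e.\ that for any $c_1,c_2\in\cT^n_1(\iota_*\cC)$ whose images in $\dot\cT^n(\cC)^\simeq$ are equivalent, we have $r(c_1)\simeq r(c_2)$ in $\iota_*(\cC)([1^n],1)$.

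For genuine inert morphisms $c_1\to c_2$ this descent is automatic: the composite $r(c_1)\to c_1\to c_2$ covers a composition of inert morphisms in $\bDelta^n$, hence is itself inert, so by uniqueness of the inert morphism $([1^n],1)\to([\bk_2],\nu_2)$ in $\bDelta^n_1$ it must coincide with the reduction $r(c_2)\to c_2$, forcing $r(c_1)=r(c_2)$. For general zigzags whose backward arrows are inert, the natural tool is Cisinski's left calculus of fractions (Theorem \ref{thm:Cisinski}), which one would use to compute the mapping spaces in $\dot\cT^n(\cC)$ between fiber objects as colimits over slice categories of inert morphisms; the inert-locality of $\dot\kappa^n(\iota_*\cC)$ provided by Lemma \ref{lem:kappaXproperties}(2) (which in this case is the statement that the corresponding presheaf is terminal) should then make these colimits collapse onto paths inside $\iota_*(\cC)([1^n],1)$.

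The main obstacle lies in handling non-inert morphisms of $\cT^n_1(\iota_*\cC)$ covering non-injective multisimplicial maps $[\bk_1]\to[\bk_2]$ in $\bDelta^n$: these produce genuine non-invertible $1$-morphisms in $\dot\cT^n(\cC)$ between fiber objects (corresponding, in the $n=2$ intuition, to whiskerings of a $2$-morphism by 1-morphisms), and the conjecture amounts to the assertion that the \emph{invertible} such morphisms already correspond to paths in $\iota_*(\cC)([1^n],1)$. I expect that a complete proof will require an explicit inductive model of $\dot\cT^n(\cC)$ as an $n$-fold iterated twisted-arrow construction generalising $\dot\cT^1(\cC)=\cC\times\cC^\op$, from which $\Fun(\mO_{n-1},\cC)^\simeq$ emerges as the space of objects by construction, together with a compatibility check against the pullback characterisation of Proposition \ref{prop:Catinfn01pullback}.
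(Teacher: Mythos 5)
The statement you were asked to prove is explicitly labelled a \emph{conjecture} in the paper; there is no proof of it in the text, so there is nothing to compare your attempt against, and you should not be expected to close the gap you honestly flag. With that caveat, here is an assessment of your sketch on its own terms.

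The parts you carry out are correct. The identification $\iota_*(\cC)([1^n],1)\simeq\Fun(\mO_{n-1},\cC)^\simeq$ (and $\tau^*(\cC)([1^n],1)\simeq\Fun(\Theta_n,\cC)^\simeq$) is exactly the one stated in the paper just before Lemma \ref{lem:essentiallysurjective}, and it reduces the conjecture to showing that the localisation functor induces an equivalence on core groupoids restricted to the $([1^n],1)$-fibre. Your $n=1$ case via Corollary \ref{cor:dotcTCCop} is complete. Your observation that the assignment $c=([\bk],\nu,x)\mapsto r(c)$ is a section on $\pi_0$, and that honest inert morphisms $c_1\to c_2$ force $r(c_1)\simeq r(c_2)$ by uniqueness of cartesian lifts over the essentially unique inert map $([1^n],1)\to([\bk_2],\nu_2)$, is also sound. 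And you correctly identify where the argument stops: the localisation $L(\cT^n_1(\iota_*\cC),\cT^n_1(\iota_*\cC)_{\inert})$ inverts only inert morphisms, but $\dot\cT^n(\cC)$ still contains genuine non-invertible morphisms coming from non-inert maps in $\bDelta^n_1$, and the conjecture is precisely the claim that \emph{no new equivalences are created} by the localisation. Your retraction $r$ is defined at the level of objects, but there is no reason a priori why a general equivalence in the localised category should lie in the image of the inert subcategory on morphism spaces; this is the genuine content of the conjecture.

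One thing worth knowing: the paper does prove this conjecture in the specific case $\cC\simeq\bGr(S)$ (Lemma \ref{lem:dotcTGrcore}, the ``core groupoid computation'' in Section \ref{sec:proofthmA}). That proof does not proceed via a retraction or an inductive model; instead it equips $\dot\cT$ with a succession of \emph{auxiliary $\N$-gradings} (built from edge counts, vertex counts, and the functor $\omega$ to $\Fin^\op\times\Fin$), observes that equivalences in an $\N$-graded $\infone$-category are forced to sit in grading zero, and peels away layers until what remains is visibly $\Fun(\mO_1,\cC)^\simeq$. This suggests that your expectation of needing ``an explicit inductive model of $\dot\cT^n(\cC)$'' may not be the only route: the general conjecture might instead be accessible by constructing functorial gradings on $\dot\cT^n(\cC)$ (analogous to $\fd$, $\fd'$, $\omega$ in that proof) that pin down the equivalences without fully resolving the morphism spaces. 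But as stated, your proposal is an honest strategy sketch, not a proof, and its main gap is the one you yourself locate.
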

The second lemma is a computation of $\dot\cT^n(Y)$ and $\dot\cL^n(Y)$ for $Y\in\cS\subset\Catinfn$.
\begin{lem}
\label{lem:dotcTcLspace}
Let $Y\in\cS$; then $\dot\cT^n(Y),\dot\cL^n(Y)\in\cS$, and the map of spaces $\dot\cL^n(Y)\to\dot\cT^n(Y)$ can be identified with the restriction map $\Fun(\Theta_n,Y)^\simeq\to\Fun(\mO_{n-1},Y)^\simeq$, or equivalently, with the diagonal map $Y\to Y^{S^{n-1}}$.
\end{lem}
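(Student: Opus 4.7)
The plan is to unwind $\iota_*(Y)$ and $\tau^*(Y)$ explicitly for $Y\in\cS$, use these to describe the right fibrations $\cT^n(Y)$ and $\cL^n(Y)$, and then take the localizations at inert morphisms. Since $\tau^*$ is restriction along the functor that collapses the grading, $\tau^*(Y)\in s^n_\N\cS$ is the constant $n$-fold simplicial space with value $Y$; consequently $\cL^n_1(Y)\to\bDelta^n_1$ is equivalent to the trivial right fibration $\bDelta^n_1\times Y\to\bDelta^n_1$.

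For $\iota_*(Y)$, fully faithfulness of $\iota$ gives $\iota_*(Y)|_{\bDelta^{\op,n}_0}\simeq Y$. For $([\bk],\nu)\in\bDelta^{\op,n}_1$, the right Kan extension formula for a constant space-valued functor yields $\iota_*(Y)([\bk],\nu)\simeq Y^{|(\bDelta^{\op,n}_0)_{([\bk],\nu)/}|}$. The key computation is that this classifying space is $S^{n-1}$, naturally in $([\bk],\nu)$. An object of the comma category is a morphism $\alpha=(\alpha_1,\dots,\alpha_n)\colon[\bk']\to[\bk]$ in $\bDelta^n$ whose induced face map kills the unique nonzero entry of $\nu$, which is to say: with the $1$ of $\nu$ located at position $\bj=(j_1,\dots,j_n)$, the constraint reads that for some index $m$, the map $\alpha_m$ factors through either the lower face $[j_m-1]\hookrightarrow[k_m]$ or the upper shift $[k_m-j_m]\hookrightarrow[k_m]$. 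Introducing the natural retraction $r_m\colon\Delta^{k_m}\to[0,1]$, $\sum_it_iv_i\mapsto\sum_{i\ge j_m}t_i$, which collapses the lower face to $\{0\}$ and the upper face to $\{1\}$ and is a trivial fibration (all fibres are convex, hence contractible), the allowed region is identified with the preimage $r^{-1}(\partial I^n)\subseteq\prod_m\Delta^{k_m}$ under $r=\prod_m r_m$, hence is homotopy equivalent to $\partial I^n\simeq S^{n-1}$. Naturality follows because morphisms in $\bDelta^{\op,n}_1$ induce functors between comma categories preserving the lower/upper decomposition per coordinate and the retractions $r_m$ commute with them up to homotopy; thus $\iota_*(Y)|_{\bDelta^{\op,n}_1}$ is naturally equivalent to the constant presheaf at $Y^{S^{n-1}}$, and $\cT^n_1(Y)\to\bDelta^n_1$ is equivalent to the trivial right fibration $\bDelta^n_1\times Y^{S^{n-1}}\to\bDelta^n_1$.

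The unit $\tau^*(Y)\to\iota_*(Y)$ at $([\bk],\nu)\in\bDelta^{\op,n}_1$ is then the map $Y\to Y^{S^{n-1}}$ induced by the collapse $S^{n-1}\to*$, i.e., the diagonal. Localizing both right fibrations at their inert subcategories and using that both presheaves are trivialized over $\bDelta^n_1$ gives $\dot\cL^n(Y)\simeq L(\bDelta^n_1,\bDelta^n_{1,\inert})\times Y$ and $\dot\cT^n(Y)\simeq L(\bDelta^n_1,\bDelta^n_{1,\inert})\times Y^{S^{n-1}}$, with the comparison map being the identity on the first factor and the diagonal on the second.

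The main obstacle is to show $L(\bDelta^n_1,\bDelta^n_{1,\inert})\simeq*$. The object $([1^n],1)$ is initial in $\bDelta^n_{1,\inert}$, as each object receives a unique inert morphism from it, and its endomorphism set in $\bDelta^n_1$ is the singleton $\{\Id\}$; the content lies in showing that any non-inert morphism out of $([1^n],1)$ becomes identified with the unique inert morphism after localization. I would attempt this via the left calculus of fractions at $([1^n],1)$ from Theorem \ref{thm:Cisinski}, computing $L(\bDelta^n_1,\bDelta^n_{1,\inert})(-,([1^n],1))$ as a colimit $\colim_{Z\in\bDelta^n_{1,\inert}}\bDelta^n_1(-,Z)$, and using the inert–active factorization in $\bDelta^n$ to extract the inert part of any morphism via a zig-zag, showing that the colimit collapses to a point.
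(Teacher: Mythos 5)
Your strategy is sound through the identification of $\cL^n_1(Y)\simeq\bDelta^n_1\times Y$ and the computation of $\iota_*(Y)$ on $\bDelta^{\op,n}_1$. The paper simply asserts that $(\tau^*(Y)\to\iota_*(Y))$ restricts to the constant arrow $(Y\to Y^{S^{n-1}})$ on $\bDelta^{\op,n}_1$ and does not unwind the right Kan extension; your argument via $|(\bDelta^{\op,n}_0)_{([\bk],\nu)/}|$ and the retraction to $\partial I^n$ is a reasonable way to justify that assertion, although it leaves unspoken both the identification of the comma category's classifying space with the appropriate subcomplex of $\prod_m\Delta^{k_m}$ and the naturality in $([\bk],\nu)$, which you should either spell out or replace with a more functorial argument.

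The genuine gap is the step you explicitly flag: showing $L(\bDelta^n_1,\bDelta^n_{1,\inert})\simeq *$. The route you propose does not close it. You note that $([1^n],1)$ is \emph{initial} in $\bDelta^n_{1,\inert}$, and then suggest a left calculus of fractions at $([1^n],1)$; but even if that calculus applied, Theorem \ref{thm:Cisinski} only computes the single presheaf $L(\bDelta^n_1,\bDelta^n_{1,\inert})(-,([1^n],1))$, which does not by itself imply that the whole localized category is a point (for that you would additionally need to know the localization is a groupoid, which you have not established). The observation you are missing is that $([1^n],1)$ is also \emph{terminal} in the full category $\bDelta^n_1$: for any $([\bk],\nu)$ with total grading $1$, there is a unique morphism $([\bk],\nu)\to([1^n],1)$, namely the one covering the active map to $[1^n]$. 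Writing $g_z\colon z\to([1^n],1)$ for the terminal map and $h_z\colon([1^n],1)\to z$ for the inert map, the composite $g_z h_z$ is an endomorphism of the terminal object, hence the identity; localizing at inert morphisms inverts $h_z$, hence also $g_z$, and then for any $f\colon x\to y$ one has $g_y f=g_x$ by terminality, so $f\simeq g_y^{-1}g_x$ becomes invertible. Thus every morphism of $\bDelta^n_1$ is inverted, giving $L(\bDelta^n_1,\bDelta^n_{1,\inert})\simeq|\bDelta^n_1|$, and the latter is contractible because $\bDelta^n_1$ has a terminal object. This is the paper's argument, and it replaces your unfinished calculus-of-fractions plan with a two-line check.
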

\begin{proof}
The functor $(\tau^*(Y)\to\iota_*\iota^*\tau^*(Y)\simeq\iota_*(Y))\colon\bDelta^{\op,n}_\N\to\Fun([1],\cS)$ restricts to the constant functor at $(Y\to Y^{S^{n-1}})$ on $\bDelta^{\op,n}_1$. It follows that the functor $\dot\cL^n(Y)\to\dot\cT^n(Y)$ is equivalent to the product functor $Y\times\dot\cL^n(*)\to Y^{S^{n-1}}\times\dot\cT^n(*)$. Since $\cL^n_1(*)\simeq\cT^n_1(*)\simeq\bDelta^n_1$ has a terminal object $([1^n],1)$, which admits an inert map to any other object in $\bDelta^n_1$, we have that every morphism of $\bDelta^n_1$ is inverted in $L(\bDelta^n_1,\bDelta^n_{1,\inert})$, and in particular $\dot\cL^n(*)\simeq\dot\cT^n(*)\simeq|\bDelta^n_1|\simeq *$.
\end{proof}

\subsection{Mapping spaces in \texorpdfstring{$\dot{\mathcal{T}}^1(\cC)$}{dotcT1cC}}
In this subsection we fix $\cC\in\Catinfone$ and aim at computing mapping spaces in $\dot\cT^1(\cC)$.
\begin{defn}
An inert morphism $[k]\to[k']$ in $\bDelta$ is \emph{left inert}, respectively \emph{right inert}, if its image is $\set{0,\dots,k}$, respectively $\set{k'-k+1,\dots,k'}$. For $X\in s\cS$ or $X\in s_\N\cS$, a morphism in $\cT^1_0(X)$ is left/right inert if its projection to $\bDelta$ is left/right inert. We use the indices ``$(-)_{\linert}$'' and ``$(-)_{\rinert}$'' to denote wide subcategories spanned by left/right inert morphisms.
\end{defn}
Recall Subsection \ref{subsec:twistedarrow}.
For $Y\in s\cS$ we have an isomorphism $\cT_0(Y)\simeq\cT_0(Y^\op)$ covering $i_\bDelta$; in particular we have an isomorphism $\cT_0(\cC)_{\linert}\simeq\cT_0(\cC^\op)_{\rinert}$.

Let $([k],\nu)\in\bDelta_1$, and expand $\nu=(0,\dots,1,\dots,0)$, where the ``1'' is in position $1\le i\le k$. Consider the overcategory $(\bDelta_0)_{/([k],\nu)}$, i.e. the full subcategory of $(\bDelta_\N)_{/([k],\nu)}$ spanned by arrows with source in $\bDelta_0$; then $(\bDelta_0)_{/([k],\nu)}$ naturally splits as the disjoint union of two subcategories, accounting for maps $([k'],0)\to([k],\nu)$ whose underlying map $[k']\to[k]$ has image contained in $\set{0,\dots,i-1}$ or in $\set{i,\dots,k}$. Each of these subcategories has a terminal object, given by the left inert morphism $[i-1]\to[k]$ and the right inert morphism $[k-i+1]\to[k]$, respectively.
\begin{defn}
\label{defn:fafo}
We denote by $\fa,\fo\colon\bDelta_1\to\bDelta\simeq\bDelta_0$ the two functors selecting the sources of the terminal objects of the two subcategories in the splitting of $(\bDelta_0)_{/(-)}$. Using the notation above, we have $\fa([k],\nu)=[i-1]$ and $\fo([k],\nu)=[k-i+1]$.

We denote by $\epsilon_\fa\colon\fa\Rightarrow\Id_{\bDelta_1}$ and $\epsilon_\fo\colon\fo\Rightarrow\Id_{\bDelta_1}$ the canonical natural transformations, attaining left, respectively right inert morphisms as values: here we consider all functors $\fa,\fo,\Id_{\bDelta_1}$ as functors $\bDelta_1\to\bDelta_{01}$. By abuse of notation, we also denote by $\epsilon_\fa$ the natural transformation $\tau\circhor\epsilon_\fa\colon\tau\fa\simeq\fa\Rightarrow\tau$ of functors $\bDelta_1\to\bDelta_0$, and similarly we denote $\tau\circhor\epsilon_\fo$ just by $\epsilon_\fo$.
\end{defn}
We observe that $\fa\times\fo\colon\bDelta_1\to\bDelta\times\bDelta$ is an equivalence of categories. Moreover, for $Y\in s\cS$, the restriction $\iota_*(Y)|_{\bDelta_1^\op}\in\PSh(\bDelta_1)$ agrees with the pullback of $Y\boxtimes Y\in\PSh(\bDelta\times\bDelta)$ along the functor $\fa\times\fo$. Equivalently, the right fibration $\cT^1_1(Y)\to\bDelta_1$ is the pullback of the right fibration $\cT^1_0(Y)\times\cT^1_0(Y)\to\bDelta\times\bDelta$, so that in particular we have an equivalence of $\infone$-categories $\cT^1_1(Y)\simeq\cT^1_0(Y)\times\cT^1_0(Y)$. Along this equivalence, the wide subcategory $\cT^1_1(Y)_{\inert}$ corresponds to the wide subcategory $\cT^1_0(Y)_{\rinert}\times\cT^1_0(Y)_{\linert}$. It follows that $\dot\cT^1(Y)$ is equivalent to the product of localisations $L(\cT^1_0(Y),\cT^1_0(Y)_{\rinert})\times L(\cT^1_0(Y),\cT^1_0(Y)_{\linert})$; and 
we may rewrite the second factor as $L(\cT^1_0(Y^\op),\cT^1_0(Y^\op)_{\rinert})$.

Moreover, the functor $c_\bDelta\colon\bDelta\times\bDelta\to\bDelta$ from Subsection \ref{subsec:twistedarrow} can be identified with the composite $\bDelta\times\bDelta\xrightarrow{(\fa\times\fo)^{-1}}\bDelta_1\xrightarrow{\tau}\bDelta$. Hence for $Y\in s\cS$ the right fibration $\cL^1_1(Y)\to\bDelta_1$, which is the pullback of the right fibration $\cT^1_0(Y)\to\bDelta$ along $\tau\colon\bDelta_1\to\bDelta$, corresponds to the right fibration $(c_\bDelta)^*(\cT^1_0(Y))\to\bDelta\times\bDelta$. Along the equivalence $\cL^1_1(Y)\simeq(c_\bDelta)^*(\cT^1_0(Y))$, the wide subcategory $\cL^1_1(Y)_{\inert}$ corresponds to the wide subcategory of $(c_\bDelta)^*(\cT^1_0(Y))$ spanned by morphisms whose projection to $\bDelta\times\bDelta$ is a pair of a right inert and a left inert morphism.

\begin{prop}
\label{prop:rinertloc}
Let $\cC\in\Catinfone$; then $L(\cT^1_0(\cC),\cT^1_0(\cC)_{\rinert})\simeq\cC$.
\end{prop}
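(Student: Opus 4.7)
The strategy is to exhibit the ``last vertex'' functor $\ev_{\max}\colon\cT^1_0(\cC)\to\cC$ as the localization, applying Cisinski's Theorem~\ref{thm:Cisinski} to compute mapping spaces into objects of the form $([0],y)$. First I would define $\ev_{\max}$: using the Segal/completeness identification $\cC([k])\simeq\Fun([k],\cC)^\simeq$, send $([k],\sigma)$ to the last vertex $\sigma(k)$, and a morphism lying over $f\colon[k_1]\to[k_2]$ (so $\sigma_1\simeq f^*\sigma_2$) to the edge $\sigma_2(f(k_1)\to k_2)$. Right inert $f$ satisfy $f(k_1)=k_2$, so $\ev_{\max}$ sends such morphisms to identities and thus factors through $\bar\ev_{\max}\colon L(\cT^1_0(\cC),\cT^1_0(\cC)_{\rinert})\to\cC$. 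Essential surjectivity is clear, and every $([k],\sigma)$ admits a canonical right inert arrow $([0],\sigma(k))\to([k],\sigma)$ (covering $0\mapsto k$), which becomes invertible in the localization; hence every object of the localization is equivalent to some $([0],y)$, and it suffices to check full faithfulness into such targets.

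For the full faithfulness, I would apply Theorem~\ref{thm:Cisinski} via a left calculus of fractions at $([0],y)$. Take $W_y$ to be the full subcategory of $\cT^1_0(\cC)_{([0],y)/}$ spanned by right inert arrows; its objects are triples $([k],\sigma,\epsilon\colon\sigma(k)\simeq y)$, and it has $([0],y,\Id_y)$ as an initial object, since the unique right inert $[0]\to[k]$ sends $0\mapsto k$ and $\epsilon$ provides the required compatibility. To verify the Kan extension condition, I would construct a natural comparison $\phi\colon(d_0\pi)_!(*)\to\ev_{\max}^*\cC(-,y)$ in $\PSh(\cT^1_0(\cC))$, adjoint to the natural transformation $*\Rightarrow(\ev_{\max}^*\cC(-,y))\circ d_0\pi$ sending each $w=([k],\sigma,\epsilon)\in W_y$ to $\Id_y\in\cC(\sigma(k),y)\simeq\cC(y,y)$. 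Since $\ev_{\max}^*\cC(-,y)$ manifestly inverts right inert morphisms, once $\phi$ is shown to be an equivalence the Kan extension factors through the localization, and Theorem~\ref{thm:Cisinski} yields the identification $L(\cT^1_0(\cC),\cT^1_0(\cC)_{\rinert})(-,([0],y))\simeq\ev_{\max}^*\cC(-,y)$, completing the proof.

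The main obstacle is showing that $\phi$ is pointwise an equivalence: evaluated at $([j],\rho)\in\cT^1_0(\cC)$, this amounts to identifying the realization of the comma category with $\cC(\rho(j),y)$, where (unwinding for left Kan extension of presheaves) the comma category has objects $(([k],\sigma,\epsilon), g\colon[j]\to[k])$ with $\rho\simeq g^*\sigma$ and $\sigma(k)\simeq y$---extensions of $\rho$ to a $k$-simplex $\sigma$ of $\cC$ along $g$ with last vertex $y$---and morphisms weakly increasing $\beta\colon[k_1]\to[k_2]$ with $\beta(k_1)=k_2$, $\beta g_1=g_2$, and $\sigma_1\simeq\beta^*\sigma_2$. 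I plan to apply Quillen's Theorem~A to show the inclusion of the subcategory of \emph{minimal} extensions---those with $k=j+1$ and $g\colon[j]\hookrightarrow[j+1]$ the initial-segment inclusion---induces an equivalence on realizations. Indeed, for each object $(([k],\sigma,\epsilon),g)$ of the full comma, there is a unique weakly increasing $\beta\colon[j+1]\to[k]$ with $\beta|_{[j]}=g$ and $\beta(j+1)=k$, and then $\sigma':=\beta^*\sigma$ is determined up to contractible choice, so the relevant slice is contractible. The Segal decomposition $\cC([j+1])\simeq\cC([j])\times_{\cC([0])}\cC([1])$ then identifies the subcategory of minimal extensions as an $\infty$-groupoid whose space of objects is $\cC(\rho(j),y)$---namely, via the ``last edge'' $\rho(j)\to y$ of a minimal extension---yielding the desired equivalence.
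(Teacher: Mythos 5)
Your proposal follows essentially the same route as the paper's proof: you construct the ``last vertex'' functor (which the paper calls $\mathbf{F}$, built via the lax arrow category machinery), invoke Theorem~\ref{thm:Cisinski} with the left calculus of fractions given by right inert arrows out of $([0],y)$, and then compute the relevant colimit by showing a subcategory of ``minimal'' objects — corresponding to the paper's ``special'' cospans in $\cT^1_0(\cC)_{\by/,\bx/\rinert}$ — is initial via Quillen's Theorem~A and is identified with $\cC(\rho(j),y)$ using the Segal condition. The only point where the paper is more explicit is in constructing $\ev_{\max}$ as an honest $\infone$-functor (via left lax natural transformations), whereas you describe its action on objects and morphisms informally; otherwise the arguments coincide.
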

\begin{proof}
We start by constructing a functor $\mathbf{F}\colon\cT^1_0(\cC)\to\cC$; equivalently, we want a functor $\cT^1_0(\cC)\to\cC\times\bDelta$ over $\bDelta$. After straightening the two cartesian fibrations $\cT^1_0(\cC)\to\bDelta$ and $\cC\times\bDelta\to\bDelta$, we have two functors $\cC,\mathrm{const}_\cC\colon\bDelta^\op\to\Catinfone$, the first taking values in $\cS\subset\Catinfone$, and we have to provide a left lax natural transformation between the two, by which we mean a functor $\bDelta^\op\to\Fun([1],\Catinfone)^\llax$ whose images along $d_1$ and $d_0$ are $\cC$ and $\mathrm{const}_\cC$, respectively (see Definition \ref{defn:laxarrowcategories}).

We start by considering the functor $\mathbf{G}\colon \bDelta\to\Fun([1],\Catinfone)^\rlax$ sending the object $[k]$ to the functor $*\to[k]$, $*\mapsto k$ exhibiting $*$ as the maximum of $[k]$. The image along $\mathbf{G}$ of a morphism $f\colon[k]\to[k']$ in $\bDelta$ is forced to be the following square:
\[
\begin{tikzcd}[column sep=70pt, row sep=10pt]
*\ar[r,equal]\ar[d,"k"',""{name=L,inner sep=2pt,right, very near end}]&*\ar[d,"k'",""{name=R,inner sep=2pt,left,very near start}]\\
{[k]}\ar[r,"f"]&{[k']}.
\ar[Rightarrow,from=L,to=R, "f(k)\to k'"]
\end{tikzcd}
\]
We may compose $\mathbf{G}^\op$ with the functor $\Fun(-,\cC)\colon ((\Fun([1],\Catinfone)^\rlax)^\op\to\Fun([1],\Catinfone)^\llax$, and then further compose with the endofunctor of the $\infone$-category $\Fun([1],\Catinfone)^\llax$ sending $(\cD\to\cD')$ to $(\cD^\simeq\to\cD')$. This gives the left lax natural transformation $\cC\Rightarrow\mathrm{const}_\cC$, giving in turn the functor $\mathbf{F}\colon\cT^1_0(\cC)\to\cC$.

By construction $\mathbf{F}$ sends $([k],x)\in\cT^1_0(\cC)$ to $x'\in\cC$, where $([0],x')\cT^1_0(\cC)$ is a cartesian lift of the right inert morphism $[0]\xrightarrow{k}[k]$ in $\bDelta$. In particular $\mathbf{F}$ is essentially surjective, since any $x'\in\cC^\simeq$ is equivalent to $\mathbf{F}([0],x')$; and $\mathbf{F}$ sends right inert morphisms in $\cT_0(\cC)$ to equivalences, inducing a functor $\hat{\mathbf{F}}\colon L(\cT^1_0(\cC),\cT^1_0(\cC)_{\rinert})\to\cC$.

It is left to prove that $\hat{\mathbf{F}}$ is fully faithful. We fix an object $\bx=([0],x)\in\cT^1_0(\cC)$ and we denote by $W(\bx)$ the full subcategory of $\cT^1_0(\cC)_{\bx/}$ spanned by right inert morphisms, and by $\pi(\bx)\colon W(\bx)\to\cT^1_0(\cC)_{\bx/}$ the natural inclusion. 
Let $\by=([k],y)$ be some object in $\cT^1_0(\cC)$, and let $\by'=([0],y')$ be the source of a cartesian lift of the right inert morphism $[0]\to[k]$ in $\bDelta$. Recall from Subsection \ref{subsec:localisation} that $d_0\pi(\bx)_!(*)(\by)$ can be computed as $|\cT^1_0(\cC)_{\by/,\bx/\rinert}|$, where $\cT^1_0(\cC)_{\by/,\bx/\rinert}$ denotes the category of cospans $(\by\to \bz\ot \bx)$ in $\cT^1_0(\cC)$ with $\bx\to \bz$ right inert.

Let us say that an object $(\by\to \bz\ot \bx)$ in $\cT^1_0(\cC)_{\by/,\bx/\rinert}$ is \emph{special} if $\bz$ has the form $([k+1],z)$ and if the cospan $[k]\to[k+1]\ot[0]$ obtained by projecting to $\bDelta$ is the unique cospan exhibiting $[k+1]$ as the disjoint union of $[k]\simeq\set{0,\dots,k}\subset[k+1]$ and $[0]\simeq\set{k+1}\subset[k+1]$.
Let $\cT^1_0(\cC)_{\by/,\bx/\rinert}^{\special}$ denote the full subcategory of $\cT^1_0(\cC)_{\by/,\bx/\rinert}$ spanned by special objects. 
Then any morphism
in $\cT^1_0(\cC)_{\by/,\bx/\rinert}^{\special}$ is an equivalence, since its projection to $\bDelta$ is an equivalence: indeed the only map $[k+1]\to[k+1]$ in $\bDelta$ which is compatible with the two given inclusions of $[k]\to[k+1]\ot[0]$ is $\Id_{[k+1]}$.
In other words, $\cT^1_0(\cC)_{\by/,\bx/\rinert}^{\special}$ is an $\infty$-groupoid, i.e. $\cT_0(\cC)_{\by/,\bx/\rinert}^{\special}\simeq(\cT_0(\cC)_{\by/,\bx/\rinert}^{\special})^\simeq$.
The $\infty$-groupoid core $(\cT_0(\cC)_{\by/,\bx/\rinert}^{\special})^\simeq$ is a priori equivalent to the fibre at $(y,x)$ of the map $\cC([k+1])\to\cC([k])\times\cC([0])$ induced by the two maps $[k]\to[k+1]\ot[0]$, and the Segal condition for $\cC$ both implies that $(\cT_0(\cC)_{\by/,\bx/\rinert}^{\special})^\simeq\simeq\cC(y',x)$, and that any right inert morphism $\bt\to\by$ in $\cT_0(\cC)$ induces an equivalence $(\cT_0(\cC)_{\by/,\bx/\rinert}^{\special})^\simeq\overset{\simeq }{\to}(\cT_0(\cC)_{\bt/,\bx/\rinert}^{\special})^\simeq$. 

We next argue that $\cT^1_0(\cC)_{\by/,\bx/\rinert}^{\special}$ is initial in $\cT^1_0(\cC)_{\by/,\bx/\rinert}$. Given an object $(\by\to\bz'\ot\bx)$ in $\cT^1_0(\cC)_{\by/,\bx/\rinert}$ lying over some cospan $[k]\to[k']\ot[0]$, we argue that
the overcategory $(\cT^1_0(\cC)_{\by/,\bx/\rinert}^{\special})_{/(\by\to\bz'\ot\bx)}$ is weakly contractible because it has a terminal object. To see this, we observe that the two morphisms $[k]\to[k']\ot[0]$ combine to a unique
morphism $[k+1]\simeq[k]\sqcup[0]\to[k']$ in $\bDelta$, and taking a cartesian lift of $[k+1]\to[k']$ with target $\bz'$ gives rise to a morphism of cospans $(\by\to\bz\ot\bx)\to(\by\to\bz'\ot\bx)$ restricting to the identity on $\by$ and $\bx$: this is a terminal object in $(\cT^1_0(\cC)_{\by/,\bx/\rinert}^{\special})_{/(\by\to\bz'\ot\bx)}$.

The previous considerations allow us to apply Theorem \ref{thm:Cisinski}, yielding an equivalence between $\cC(y',x)$ and the mapping space from $\by$ to $\bx$ in $L(\cT^1_0(\cC),\cT^1_0(\cC)_{\rinert})$; and the same equivalence of mapping spaces is also induced by the functor $\hat{\mathbf{F}}$.
\end{proof}

\begin{cor}
\label{cor:dotcTCCop}
For $\cC\in\Catinfone$ we have $\dot\cT^1(\cC)\simeq\cC\times\cC^\op$, and the presheaf $\dot\lambda^1(\cC)\in\PSh(\dot\cT^1(\cC))$ from Definition \ref{defn:dotKappa} corresponds to $\cC(-,-)\in\PSh(\cC\times\cC^\op)$. In other words, the right fibration $\dot\cL^1(\cC)\to\dot\cT^1(\cC)$ is equivalent to the right fibration $(d_1,d_0)\colon\Tw(\cC)\to\cC\times\cC^\op$ from Subsection \ref{subsec:twistedarrow}
\end{cor}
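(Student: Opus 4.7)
The plan is to prove the corollary in two stages: (a) identify $\dot\cT^1(\cC) \simeq \cC \times \cC^\op$, and (b) upgrade this to an equivalence of right fibrations $\dot\cL^1(\cC) \simeq \Tw(\cC)$ over $\cC \times \cC^\op$, equivalently identifying $\dot\lambda^1(\cC)$ with $\cC(-,-)$.

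For stage (a), I will invoke the decomposition $\cT^1_1(\cC) \simeq \cT^1_0(\cC) \times \cT^1_0(\cC)$ established in the text preceding Proposition \ref{prop:rinertloc}, induced by $\fa \times \fo \colon \bDelta_1 \to \bDelta \times \bDelta$, under which $\cT^1_1(\cC)_{\inert}$ corresponds to $\cT^1_0(\cC)_{\rinert} \times \cT^1_0(\cC)_{\linert}$. Since localization of relative categories commutes with finite products, $\dot\cT^1(\cC)$ splits as the product of two localizations. Proposition \ref{prop:rinertloc} gives $\cC$ for the first factor, and for the second I combine the isomorphism $\cT^1_0(\cC) \simeq \cT^1_0(\cC^\op)$ covering $i_\bDelta$ (which exchanges left and right inert morphisms) with Proposition \ref{prop:rinertloc} applied to $\cC^\op$, yielding $\cC^\op$.

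For stage (b), the first task is to compute the fibers of $\cL^1_1(\cC) \to \cT^1_1(\cC)$ pointwise. Under the decomposition above, the fiber at $(([k_1], x_1), ([k_2], x_2))$ is the fiber of the restriction map $\cC([k_1+k_2+1]) \to \cC([k_1]) \times \cC([k_2])$ at $(x_1, x_2)$, which by the Segal condition on $\cC$ equals $\cC(x_1(k_1), x_2(0))$. Since the equivalence of stage (a) is induced by the functor $\hat{\mathbf{F}}$ from the proof of Proposition \ref{prop:rinertloc} sending $([k], x) \mapsto x(k)$ (and its analogue for $\cC^\op$ sending $([k], x) \mapsto x(0)$), the image of the object above in $\cC \times \cC^\op$ is $(x_1(k_1), x_2(0))$, matching the pointwise value of $\cC(-,-)$. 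To promote this pointwise identification to an equivalence of right fibrations, I will construct a direct comparison map: the source $\cL^1_1(\cC) \simeq (c_\bDelta)^* \cT^1_0(\cC)$ arises by restricting $\cC$ along $c_\bDelta$, while $\Tw(\cC)$ arises by restricting $\cC$ along $c_\bDelta \circ (\Id_\bDelta \times i_\bDelta) \circ \Delta_\bDelta$; combining the natural transformations $\eta_{\bDelta, j}$ with $\hat{\mathbf{F}}$ produces a canonical map from $\cL^1_1(\cC)$ to the pullback of $\Tw(\cC)$ along $\cT^1_1(\cC) \to \cC \times \cC^\op$, which descends to the desired equivalence after localization at inert morphisms.

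The main obstacle is this final functoriality verification: showing that the pointwise equivalence of fibers assembles into a genuine equivalence of presheaves over $\cC \times \cC^\op$. A possible alternative is to exploit the equivalence $\PSh(\cC \times \cC^\op) \simeq \Fun(\cC, \PSh(\cC))$ recalled in Subsection \ref{subsec:twistedarrow} and characterize $\dot\lambda^1(\cC)$ via the universal property of the Yoneda embedding, but this ultimately reduces to the same kind of careful functorial comparison.
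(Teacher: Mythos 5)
Your stage (a) matches the paper's argument: it is exactly the splitting $\cT^1_1(\cC)\simeq\cT^1_0(\cC)\times\cT^1_0(\cC)$ via $\fa\times\fo$, the identification of inert markings with $\cT^1_0(\cC)_{\rinert}\times\cT^1_0(\cC)_{\linert}$, and Proposition \ref{prop:rinertloc} applied to $\cC$ and $\cC^\op$. Your pointwise fiber computation in stage (b) is also correct: the fiber of $\cL^1_1(\cC)\to\cT^1_1(\cC)$ at $(([k_1],x_1),([k_2],x_2))$ is indeed $\cC(x_1(k_1),x_2(0))$, and this matches the corresponding fiber of $\Tw(\cC)\to\cC\times\cC^\op$.

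However, the step you explicitly flag as "the main obstacle" — assembling the pointwise equivalences of fibers into an equivalence of right fibrations compatible with the localization — is not carried out, and that step is precisely where the paper's proof lives. The paper resolves it in three moves. First, it adapts the construction of the functor $\mathbf{F}$ from the proof of Proposition \ref{prop:rinertloc} to produce a commutative square in $\RelCatinfone$ with vertical arrows $\mathbf{F}'\colon(\cL^1_1(\cC),\cL^1_1(\cC)_{\inert})\to(\Tw(\cC),\Tw(\cC)^\simeq)$ and $\mathbf{F}''\colon(\cT^1_1(\cC),\cT^1_1(\cC)_{\inert})\to(\cC\times\cC^\op,\cC^\simeq\times\cC^\simeq)$; this is the "direct comparison map" you want, but it is built from the $\mathbf{F}$-machinery rather than from the $c_\bDelta$/$\eta_{\bDelta,j}$ identifications you gesture at. Second, it shows the square is a pullback by comparing fibers only at the canonical object $\bx=([1],1,(x,x'))$ and then reducing the general case via Lemma \ref{lem:kappaXproperties}(2), which asserts that the presheaf $\lambda^1_1$ is local with respect to inert morphisms when $\cC$ is an $\infone$-category — this is the naturality mechanism your plan to "compute all fibers" would need but does not supply. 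Third, it applies Lemma \ref{lem:horlocfibration} to conclude that localizing at inert morphisms preserves the pullback square and produces a right fibration, so that the localization of $\mathbf{F}'$ is an equivalence because that of $\mathbf{F}''$ is (Proposition \ref{prop:rinertloc}). Your proposal, as written, names the destination but omits the route: without Lemma \ref{lem:kappaXproperties} (or an equivalent coherence argument over all objects, not just the canonical ones) and Lemma \ref{lem:horlocfibration}, the phrase "which descends to the desired equivalence after localization at inert morphisms" does not actually follow.
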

\begin{proof}
The construction of the functor $\mathbf{F}$ from the proof of Proposition \ref{prop:rinertloc} can be adapted to obtain the following
commutative square in $\RelCatinfone$, whose horizontal arrows are sent to right fibrations along the functor $d_0\colon\RelCatinfone\to\Catinfone$:
\[
\begin{tikzcd}[row sep=10pt]
(\cL^1_1(\cC),\cL^1_1(\cC)_{\inert})\ar[r]\ar[d,"\mathbf{F}'"]\ar[dr,phantom,"\lrcorner"very near start] &(\cT^1_1(\cC),\cT^1_1(\cC)_{\inert})\ar[d,"\mathbf{F}''"]\\
(\Tw(\cC),\Tw(\cC)^\simeq)\ar[r,"{(d_1,d_0)}"]&(\cC\times\cC^\op,\cC^\simeq\times\cC^\simeq).
\end{tikzcd}
\]
We argue that the previous square is a pullback square in $\RelCatinfone$: to see that the image along $d_0\colon\RelCatinfone\to\Catinfone$ is a pullback square, we fix $(x,x')\in\cC\times\cC^\op$ and consider the object $\bx=([1],1,(x,x'))\in\cT^1_1(\cC)$; we readily compute $\lambda^1_1(\bx)$, i.e. the fibre at $\bx$ of $\cL^1_1(\cC)]\to\cT^1_1(\cC)$, as the fibre at $(x,x')$ of the map $\tau^*(\cC)([1],1)\to\iota_*(\cC)([1],1)$: this is the map $(d_1,d_0)\colon\cC([1])\to\cC([0])^2$, so the required fibre is precisely $\cC(x,x')$, i.e. it agrees with the fibre at $(x,x')$ of $\Tw(\cC)\to\cC\times\cC^\op$. For generic $\bx'\in\cT^1_1(\cC)$ we may find an inert morphism $\bx\to\bx'$ with $\bx$ as above; by Lemma \ref{lem:kappaXproperties} we have $\lambda^1_1(\bx')\xrightarrow{\simeq}\lambda^1_1(\bx)$, and the functor $\mathbf{F}''$ sends $\bx\to\bx'$ to an equivalence: this allows us to reduce the generic case of comparing the fibres of the horizontal maps to the specific one discussed above.  We finally observe that the image of the above square along $d_1$ is also a pullback square, obtained by selecting compatible wide subcategories from the image along $d_0$.

The horizontal arrows of the above pullback square in $\RelCatinfone$ satisfy the assumptions of Lemma \ref{lem:horlocfibration}, so that the localised square is again a pullback square in $\Catinfone$. By Proposition \ref{prop:rinertloc}, the localisation of the right vertical functor $\mathbf{F}''$ is an equivalence; it follows that the localisation of $\mathbf{F}'$ is an equivalence as well.
\end{proof}

\subsection{The partial localisation \texorpdfstring{$\ddot{\mathcal{T}}^n(\cC)$}{ddotcTn(cC)}}
In this subsection and the next we give a recursive formula for the mapping spaces of $\dot\cT^n(\cC)$ for $\cC\in\Catinfn$ and $n\ge2$. A key result that we will prove, inductively on $n$, is the following, which for $n=1$ is a direct consequence of Corollary \ref{cor:dotcTCCop}.
\begin{prop}
\label{prop:dotcTcLsm}
The functor
$(\dot\cL^n\to\dot\cT^n)\simeq(\dot\cT^n,\dot\lambda^n)\colon\Catinfn\to\PSh$
preserves finite products, i.e. it is a symmetric monoidal functor. More generally, $(\dot\cT^n,\dot\lambda^n)$ preserves pullbacks of cospan diagrams $\cC\to Y\ot \cC'$ in $\Catinfn$ with $Y\in\cS$.
\end{prop}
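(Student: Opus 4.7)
The plan is to proceed by induction on $n$, with the base case given by Corollary \ref{cor:dotcTCCop} and the inductive step mediated by the partial localization $\ddot\cT^n(\cC)$ introduced later in this subsection.

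For $n = 1$, Corollary \ref{cor:dotcTCCop} identifies the right fibration $\dot\cL^1(\cC) \to \dot\cT^1(\cC)$ with $\Tw(\cC) \to \cC \times \cC^\op$. Given a cospan $\cC \to Y \ot \cC'$ in $\Catinfone$ with $Y \in \cS$, the canonical equivalence $Y^\op \simeq Y$ combined with the distributivity of products over pullbacks gives
\[
\dot\cT^1(\cC \times_Y \cC') \simeq (\cC \times \cC^\op) \times_{Y \times Y} (\cC' \times (\cC')^\op) \simeq \dot\cT^1(\cC) \times_{\dot\cT^1(Y)} \dot\cT^1(\cC').
\]
The statement for $\dot\lambda^1$ follows from the fact that the twisted arrow construction $\Tw$ preserves pullbacks, being built from finite limits and opposites of simplicial data; so $\Tw(\cC \times_Y \cC') \simeq \Tw(\cC) \times_{\Tw(Y)} \Tw(\cC')$, which is just the standard formula expressing mapping spaces in a pullback of $\infty$-categories as pullbacks of mapping spaces.

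For $n \geq 2$, the partial localization $\ddot\cT^n(\cC)$, obtained from $\cT^n_1(\cC)$ by inverting only a suitable subset of inert morphisms, should admit a recursive description tying it to $\dot\cT^{n-1}$ applied to an appropriate piece of $\cC$, for instance as a relative construction over the $1$-categorical data of $\cC$ with fibers given by $\dot\cT^{n-1}$ of mapping $(\infty, n-1)$-categories of $\cC$. Combining such a recursive description with the inductive hypothesis, together with the fact that mapping $(\infty, n-1)$-categories in $\cC \times_Y \cC'$ with $Y \in \cS$ decompose as $\cC(a, a') \times_{Y(a, a')} \cC'(b, b')$, will give pullback preservation for $\ddot\cT^n$, with the analogous statement for the corresponding partial $\ddot\cL^n$ following in parallel.

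The main obstacle will be the final step of upgrading from $\ddot\cT^n$ to $\dot\cT^n$ by inverting the residual inert morphisms, since this further localization is a left adjoint and does not preserve limits in general. The key feature to exploit is the hypothesis $Y \in \cS$: Lemma \ref{lem:dotcTcLspace} gives that $\dot\cT^n(Y) \simeq Y^{S^{n-1}}$ is itself a space, so the residual inert morphisms over $Y$ already project to equivalences in the base. Viewing $\ddot\cT^n(\cC \times_Y \cC') \to \ddot\cT^n(Y)$ and its analogues as cartesian fibrations, and applying Lemma \ref{lem:Mgradedlocalisation} in a form adapted to this relative setting, should promote the pullback-preservation from $\ddot\cT^n$ to $\dot\cT^n$. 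The presheaf $\dot\lambda^n$ is handled in parallel by tracking $\dot\cL^n$ as a right fibration over $\dot\cT^n$ throughout the construction.
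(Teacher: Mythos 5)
Your base case and the overall inductive architecture match the paper's proof, and you have correctly identified both the key role of $\ddot\cT^n$ and the fact that the final localisation step is where the work lies, with the hypothesis $Y\in\cS$ being essential. But the tool you reach for at the end — Lemma \ref{lem:Mgradedlocalisation} ``in a form adapted to this relative setting'' — is not what makes the argument go through, and it is hard to see how it could. That lemma concerns $T_I$-equivalences of $M$-graded relative $\infone$-categories; it gives no handle on whether a localisation functor preserves a given pullback square. You would need to set up a grading and a coideal for which the comparison map is a $T_I$-equivalence, and there is no such structure in sight here.

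The paper's actual mechanism for the last step is different and more direct: it never compares $\dot\cT^n$ and $\dot\cL^n$ to $\ddot\cT^n$ and $\ddot\cL^n$ via a localisation lemma. Instead it first handles essential surjectivity of the comparison functor $\dot\cL^n(\cC\times_Y\cC')\to\dot\cL^n(\cC)\times_{\dot\cL^n(Y)}\dot\cL^n(\cC')$ via Lemma \ref{lem:essentiallysurjective}, and then proves full faithfulness by computing both mapping spaces explicitly. The inductive hypothesis gives that the $\ddot\cL^n$ comparison is an equivalence of cartesian fibrations over $\bDelta_1$, so one may apply Corollary \ref{cor:LDDhintmappingspaces} to identify the relevant mapping spaces in $\dot\cL^n$ as classifying spaces $|\Xi(\hat\Gamma(\cC)_0,\by,\bx)|$. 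Since $\Xi$ is defined as a W-shaped limit it commutes with the pullback $\cC\times_Y\cC'$, and the crucial observation is that $\Xi(\hat\Gamma(Y)_0,\by_Y,\bx_Y)$ is itself a \emph{space}, so taking classifying spaces commutes with the fibre product over it. That chain of equivalences is what replaces your appeal to Lemma \ref{lem:Mgradedlocalisation}; without it, or some equally concrete substitute, the argument does not close. You should replace the final paragraph of your sketch with the mapping-space computation via Corollary \ref{cor:LDDhintmappingspaces} and the ``colimits over a groupoid commute with pullbacks over spaces'' principle.
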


We fix $n\ge2$ throughout this and the next subsection, and keep considering $s^n_{01}\cS$ as a full subcategory of $s^n_\N\cS$ via the inclusion $(\iota^{01,\N})_*$.

For every $k\in\N$ and $\bk\in\N^{n-1}$ we have a map $-\otimes-\colon\N^{\ul k}\times\N^{\ul\bk}\to\N^{\ul k\times\ul \bk}$ induced by multiplication of natural numbers: explicitly, we send a pair $(\nu,\nu')$ of functions $\nu\colon\ul k\to\N$ and $\nu'\colon\ul\bk\to\N$ to the function $(i,j)\in\ul k\times\ul\bk\mapsto \nu(i)\cdot\nu'(j)\in\N$.
The maps $-\otimes-$ assemble into a map of $n$-fold simplicial spaces
$-\otimes-\colon\bB\N\boxtimes\bB^{n-1}\N\to\bB^n\N$, and taking categories of $n$-simplices we obtain a functor $-\otimes-\colon\bDelta_\N\times\bDelta^{n-1}_\N\to\bDelta^n_\N$. The latter restricts to a functor $-\otimes-\colon\bDelta_{01}\times\bDelta^{n-1}_{01}\to\bDelta^n_{01}$, and further to an equivalence $-\otimes-\colon\bDelta_1\times\bDelta^{n-1}_1\overset{\simeq}{\to}\bDelta^n_1$.

\begin{nota}
For $[k]\in\bDelta$ and $Y\in s^n\cS$ we denote by $Y_{[k]}=Y([k]\times-)\in s^{n-1}\cS$ the pullback of the presheaf $Y$ along the functor $[k]\times-\colon\bDelta^{n-1}\to\bDelta^n$.
Similarly, for $([k],\nu)\in\bDelta_{01}$ and $X\in s^n_{01}\cS$ we denote by $X_{([k],\nu)}:=X(([k],\nu)\otimes-)\in s^{n-1}_{01}\cS$ the pullback of $X$ along $([k],\nu)\otimes-\colon\bDelta^{n-1}_{01}\to\bDelta^n_{01}$.
\end{nota}
We obtain a functor $\bDelta_{01}^\op\times s^n_{01}\cS\to s^{n-1}_{01}\cS$ sending $(([k],\nu),X)\mapsto X_{([k],\nu)}$, which restricts to $\bDelta_{01}^\op\times\Catinfn^{01}\to\Catinfnm^{01}$. For $X\in s^n_{01}\cS$ the following hold:
\begin{itemize}
\item the restricted functor $X_{(-)}|_{\bDelta_0^\op}\colon\bDelta_0^\op\to s^{n-1}_{01}\cS$ agrees with $\tau^*(\iota^*(X)_{(-)})$, i.e. for $[k]\in\bDelta$ we have $X_{([k],(0,\dots,0))}\simeq \tau^*(\iota^*(X)_{[k]})$;
\item the cartesian fibration $\cK^n_1(X)\to\bDelta_1$ unstraightening the composite functor 
\[
\begin{tikzcd}
\bDelta_1^\op\subset\bDelta_{01}^\op\ar[r,"X_{(-)}"] &s^{n-1}_{01}\cS\ar[r, "\cK^{n-1}_1"]& \Catinfone
\end{tikzcd}
\]
is the composite of the right fibration $\cK^n_1(X)\to\bDelta^n_1$ and the projection on the first factor $\bDelta^n_1\simeq(\bDelta_1)^n\xrightarrow{\pi_1}\bDelta_1$.
\end{itemize}
\begin{lem}
\label{lem:Kpullback}
Recall Definitions \ref{defn:cKcTcL} and \ref{defn:fafo}. Let $X\in s^n_{01}\cS$ be $0$-truncated, i.e. $X\simeq T_0(X)$. Then the following square in $\Fun(\bDelta^\op_1,\Catinfone)$,
whose arrows are induced by $\epsilon_\fa,\epsilon_\fo$ and by the unit of the adjunction $\iota^*\dashv\iota_*$, is cartesian:
\[
\begin{tikzcd}[column sep=10pt,row sep=10pt]
\cK^{n-1}_1(X_{(-)})\ar[r]\ar[d]&\cK^{n-1}_1(X_{\fa(-)})\times \cK^{n-1}_1(X_{\fo(-)})\ar[d]\\
\cK^{n-1}_1(\iota_*\iota^*(X_{(-)}))\ar[r]&\cK^{n-1}_1(\iota_*\iota^*(X_{\fa(-)}))\times \cK^{n-1}_1(\iota_*\iota^*(X_{\fo(-)})).
\end{tikzcd}
\]
\end{lem}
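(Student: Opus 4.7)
The strategy is to reduce cartesianness to a pointwise and fiberwise check, and then identify the resulting fiber square as a Mayer--Vietoris decomposition of the right Kan extension limit expressing the top-left entry. Since limits in $\Fun(\bDelta^\op_1,\Catinfone)$ are pointwise, I fix $([k],\nu)\in\bDelta_1$ with the unique $1$ of $\nu$ at position $i$ and aim to show the induced square in $\Catinfone$ is cartesian. All four corners are right fibrations over $\bDelta^{n-1}_1$ by construction of $\cK^{n-1}_1$, so cartesianness reduces to checking that for each $([\bk'],\nu')\in\bDelta^{n-1}_1$ the square of fibers (of spaces) is cartesian. Writing $\fa:=\fa([k],\nu)$ and $\fo:=\fo([k],\nu)$, and using that these lie in $\bDelta_0$ so tensoring with them always produces grading-zero multisimplices, this fiber square takes the form
\[
\begin{tikzcd}[column sep=small, row sep=small]
X([k]\times[\bk'],\nu\otimes\nu')\ar[r]\ar[d] & X(\fa\times[\bk'],0)\times X(\fo\times[\bk'],0)\ar[d]\\
\iota_*\iota^*(X_{([k],\nu)})([\bk'],\nu')\ar[r] & \iota_*\iota^*(X_\fa)([\bk'],\nu')\times\iota_*\iota^*(X_\fo)([\bk'],\nu').
\end{tikzcd}
\]

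Next I invoke the hypothesis $X\simeq T_0(X)$ to expand the top-left as a right Kan extension limit
\[
X([k]\times[\bk'],\nu\otimes\nu')\simeq\lim_{f\in\cC}X(\mathrm{source}(f),0),
\]
where $\cC$ is the full subcategory of $(\bDelta^n_\N)_{/([k]\times[\bk'],\nu\otimes\nu')}$ spanned by arrows with source in $\bDelta^n_0$, i.e., multi-maps $f=(f_1,\ldots,f_n)\colon[\bk''']\to[k]\times[\bk']$ with $f^*(\nu\otimes\nu')=0$. Since $\nu\otimes\nu'$ has a single $1$-coordinate at $(i,\bj)$, where $\bj$ is the $1$-position of $\nu'$, a direct computation shows $f^*(\nu\otimes\nu')=0$ if and only if $f_1^*(\nu)=0$ or $(f_2,\ldots,f_n)^*(\nu')=0$. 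This yields a cover $\cC=\cA\cup\cB$ with $\cA=\{f:f_1^*(\nu)=0\}$, which further splits disjointly as $\cA_\fa\sqcup\cA_\fo$ according to whether $f_1$ factors through $\fa\hookrightarrow[k]$ or $\fo\hookrightarrow[k]$, and $\cB=\{f:(f_2,\ldots,f_n)^*(\nu')=0\}$.

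The key combinatorial input is that $\cC\simeq\cA\sqcup_{\cA\cap\cB}\cB$ in $\Catinfone$: any morphism $f\to g$ in $\cC$ satisfies $f_i=g_ih_i$, so grading-zero conditions transport backwards --- if $g\in\cA$ (resp.\ $\cB$) then $f\in\cA$ (resp.\ $\cB$), and in particular there are no morphisms between $\cA\setminus\cB$ and $\cB\setminus\cA$. From this pushout presentation, limits decompose as pullbacks
\[
\lim_\cC F\simeq\lim_\cA F\times_{\lim_{\cA\cap\cB}F}\lim_\cB F.
\]
Each piece is then easy: each $\cA_\star\cong\bDelta_{/\star}\times\bDelta^{n-1}_{/[\bk']}$ has a terminal object, yielding $\lim_\cA F=X(\fa\times[\bk'],0)\times X(\fo\times[\bk'],0)$, the top-right; integrating out the $\bDelta_{/[k]}$-factor of $\cB$ (which has a terminal) reduces $\lim_\cB F$ to the defining right Kan extension $\iota_*\iota^*(X_{([k],\nu)})([\bk'],\nu')$, matching the bottom-left; and the analogous computation on each piece of $\cA\cap\cB$ yields the bottom-right.

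The main obstacle is the pushout identification $\cC\simeq\cA\sqcup_{\cA\cap\cB}\cB$: while elementary, it genuinely requires the grading-zero condition to be preserved under precomposition, so as to rule out cross-morphisms between the two halves of the cover. The rest of the argument is essentially pointwise bookkeeping, driven entirely by the fact that $\nu\otimes\nu'$ has a single grading-$1$ coordinate and hence splits independently along the ``$\nu$-direction'' and the ``$\nu'$-direction''.
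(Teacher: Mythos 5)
Your proof is correct and reaches the same conclusion, but by a genuinely different combinatorial route than the paper. Both arguments reduce to the pointwise, fiberwise check and then expand the top-left entry as the right-Kan-extension limit over the index category $\cC$ of grading-zero arrows into $\bx\otimes\by$. From there, however, the paper isolates a \emph{full subcategory} $(\bDelta^{\op,n}_0)_{\bx\otimes\by/}^{\special}$ (arrows whose first-coordinate projection is the identity, the left-inert map to $\fa(\bx)$, or the right-inert map to $\fo(\bx)$), identifies the pullback of the bottom cospan with the limit over that subcategory, and then proves initiality of the special subcategory via a "very special" factorization analysis. You instead \emph{decompose} $\cC$ itself as a pushout of sieves $\cA\sqcup_{\cA\cap\cB}\cB$, observing that the grading-one coordinate of $\nu\otimes\nu'$ splits independently along the $\nu$-direction and the $\nu'$-direction, and then compute each of the three pieces by terminal-object arguments. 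The Mayer--Vietoris route is arguably more transparent: it avoids the two-step verification (limit over special $=$ pullback of cospan, special is initial) and makes the pullback structure on the answer visible from the start. It is also worth noting that your three terminal strata (first coordinate $\fa$, $\fo$, or $\Id_{[k]}$) correspond exactly to the paper's three generating classes of special objects, so the two arguments are after all drawing on the same combinatorics.

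The one place where your write-up is a little thin is the pushout identification itself, which you flag as the main obstacle. The observation "grading-zero conditions transport backwards" establishes that $\cA$ and $\cB$ are \emph{sieves} in $\cC$ (closed under precursors), and you then extract the weaker conclusion "no morphisms between $\cA\setminus\cB$ and $\cB\setminus\cA$." But ruling out cross-morphisms is not, on its own, the obviously sufficient condition for a pushout in $\Catinfone$; what one really wants is that every $m$-simplex of $\cC$ lies entirely in $\cA$ or entirely in $\cB$. That follows cleanly from the sieve property (the last object of any chain lies in $\cA$ or $\cB$, and the sieve then sweeps in the rest), which shows $\cC=\cA\cup\cB$ as a union of simplicial spaces; since the resulting pushout in $s\cS$ is already complete Segal, it is also the pushout in $\Catinfone$. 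You already have the sieve statement; you should lead with it rather than with its consequence about cross-morphisms, as that makes the "elementary" step actually self-contained. With that clarification the argument closes.
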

\begin{proof}
It suffices to fix $\bx=([k],\nu)\in\bDelta^\op_1$ and prove that the square in $\Catinfone$ obtained by evaluation at $\bx$ is cartesian.
This square admits a canonical map to the following other square in $\Catinfone$, which is obviously cartesian; the mentioned canonical map is pointwise a right fibration:
\[
\begin{tikzcd}[row sep=5pt,column sep=30pt]
\bDelta^{n-1}_1\ar[d,equal]\ar[r,"\text{diagonal}"] & \bDelta^{n-1}_1\times \bDelta^{n-1}_1\ar[d,equal]\\
\bDelta^{n-1}_1\ar[r,"\text{diagonal}"] & \bDelta^{n-1}_1\times \bDelta^{n-1}_1.
\end{tikzcd}
\]
It is therefore sufficient to fix $\by=([\bk],\nu')\in\bDelta^{\op,n-1}_1$ and prove that the square of spaces formed by the fibres at $\by$, respectively at $(\by,\by)$, of the above four right fibrations is cartesian. The resulting square of spaces is the following:
\[
\begin{tikzcd}[row sep=13pt]
X(\bx\otimes\by)\ar[r]\ar[d] &X(\fa(\bx)\otimes\by)\times X(\fo(\bx)\otimes\by)\ar[d]\\
\lim_{(\by\to\by')\in(\bDelta^{\op,n-1}_0)_{\by/}}X(\bx\otimes\by')\ar[r] &
\lim_{(\by\to\by')\in(\bDelta^{\op,n-1}_0)_{\by/}}\begin{array}{c}X(\fa(\bx)\otimes\by')\times\\ X(\fo(\bx)\otimes\by').\end{array}
\end{tikzcd}
\]
The pullback of the right-bottom cospan in the previous square admits the following interpretation. Let $(\bDelta^{\op,n}_0)_{\bx\otimes\by/}^{\special}$ denote the full subcategory of $(\bDelta^{\op,n}_0)_{\bx\otimes\by/}$ spanned by those arrows $\bx\otimes\by\to\bz$ in $\bDelta^{\op,n}_{01}$ such that, along the composite $\bDelta^{\op,n}_{01}\xrightarrow{\tau}\bDelta^{\op,n}_0\simeq(\bDelta^\op)^n\xrightarrow{\pi_1}\bDelta^\op$, the image of $\bx\otimes\by\to\bz$ is either $\Id_{[k]}$ or the left inert morphism $[k]\to\fa(\bx)$ or the right inert morphism $[k]\to\fo(\bx)$. Then the pullback of the right-bottom cospan in the above square agrees with the limit of $X$ over $(\bDelta^{\op,n}_0)_{\bx\otimes\by/}^{\special}$. More precisely, we have an immediate identification after rewriting the top right corner as the limit over the category $(\bDelta_{01}^{\op,n-1})_{\by/}$ (which has $\Id_{\by}$ as initial object) of the functor $X(\fa(\bx)\otimes-)\times X(\fo(\bx)\otimes-)$.

We next argue that $(\bDelta^{\op,n}_0)_{\bx\otimes\by/}^{\special}$ is initial in $(\bDelta^{\op,n}_0)_{\bx\otimes\by/}$. We fix  $(\bx\otimes\by\to\bz)\in (\bDelta^{\op,n}_0)_{\bx\otimes\by/}$ and claim that the overcategory $((\bDelta^{\op,n}_0)_{\bx\otimes\by/}^{\special})_{/(\bx\otimes\by\to\bz)}$ is weakly contractible; we expand $\bz=[k_\bz]\otimes[\bk_\bz]$ with $[k_\bz]\in\bDelta^\op\simeq\bDelta^\op_0$ and $[\bk_\bz]\in(\bDelta^{\op})^n\simeq\bDelta^{\op,n}_0$. We say that a factorisation $(\bx\otimes\by\overset{\text{special}}{\longrightarrow}\bz'\to\bz)\in((\bDelta^{\op,n}_0)_{\bx\otimes\by/}^{\special})_{/(\bx\otimes\by\to\bz)}$ is \emph{very special} if the morphism $\bz'\to\bz$ is sent to the identity of $[\bk_\bz]$ along the composite $\bDelta^{\op,n}_{01}\xrightarrow{\tau}\bDelta^{\op,n}_0\simeq(\bDelta^{\op})^n\to(\bDelta^\op)^{n-1}$ ending with the projection on the last $n-1$ factors.
Then on the one hand the full subcategory $((\bDelta^{\op,n}_0)_{\bx\otimes\by/}^{\special})_{/(\bx\otimes\by\to\bz)}^{\mathrm{very special}}$ spanned by very special factorisations is final in $((\bDelta^{\op,n}_0)_{\bx\otimes\by/}^{\special})_{/(\bx\otimes\by\to\bz)}$; and on the other hand $((\bDelta^{\op,n}_0)_{\bx\otimes\by/}^{\special})_{/(\bx\otimes\by\to\bz)}^{\mathrm{very special}}$ is isomorphic to either of the following weakly contractible categories:
\begin{itemize}
\item it is isomorphic to $[1]$ if the morphism $[k]\to[k_\bz]$ in $\bDelta^\op$ lifts to a morphism $\bx\to([k_\bz],0)$ in $\bDelta^\op_{01}$, and the morphism $[\bk]\to[\bk_\bz]$ in $(\bDelta^\op)^{n-1}$ lifts to a morphism $\by\to[\bk_\bz]$ in $\bDelta^{\op,n-1}_{01}$;
\item it is isomorphic to $[0]$ otherwise.
\end{itemize}
It follows that the last square of spaces is indeed cartesian, since the assumption that $X$ be $0$-truncated implies that $X(\bx\otimes\by)$ is equivalent to $\lim_{(\bDelta^{\op,n}_0)_{\bx\otimes\by/}}X$.
\end{proof}
\begin{cor}
\label{cor:straighteningLambda1}
Let $Y\in s^n\cS$; then the straightening of the cartesian fibration $\cT^n_1(Y)\to\bDelta_1$ is the functor
$\bDelta^\op_1\to\Catinfone$ sending $\bx\in\bDelta^\op_1$ to the pullback of the following cospan
\[
\begin{tikzcd}[row sep=10pt, column sep=140pt]
&\cL^{n-1}_1(Y_{\fa(\bx)})\times\cL^{n-1}_1(Y_{\fo(\bx)})\ar[d]\\
\cT^{n-1}_1(Y_{\tau(\bx)})\ar[r,"\cT^{n-1}_1(Y_{\epsilon_\fa(\bx)})\times\cT^{n-1}_1(Y_{\epsilon_\fo(\bx)})"] &\cT^{n-1}_1(Y_{\fa(\bx)})\times\cT^{n-1}_1(Y_{\fo(\bx)}),
\end{tikzcd}
\]
i.e. sending $\bx$ to the source of the cartesian fibration over $\cT^{n-1}_1(Y_{\tau(\bx)})$ corresponding to the pullback of the presheaf $\lambda^{n-1}_1(Y_{\fa(\bx)})\boxtimes\lambda^{n-1}_1(Y_{\fo(\bx)})$ along the functor
$\cT^{n-1}_1(Y_{\epsilon_\fa(\bx)})\times\cT^{n-1}_1(Y_{\epsilon_\fo(\bx)})$.
\end{cor}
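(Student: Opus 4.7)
The plan is to deduce this from Lemma \ref{lem:Kpullback} applied to the $0$-truncated object $X := \iota_*(Y) \in s^n_{01}\cS$ (which is indeed $0$-truncated because $\iota^*\iota_* \simeq \Id$). As a first step I would identify the cartesian fibration $\cT^n_1(Y) \to \bDelta_1$, which is the composite of the right fibration $\cK^n_1(X) \to \bDelta^n_1$ with the projection $\bDelta^n_1 \simeq \bDelta_1 \times \bDelta^{n-1}_1 \to \bDelta_1$ onto the first factor, with the cartesian fibration whose straightening is the functor $\bx \mapsto \cK^{n-1}_1(X_\bx)$: its fiber at $\bx$ has objects $(\by, z)$ with $\by \in \bDelta^{n-1}_1$ and $z \in X(\bx \otimes \by) = X_\bx(\by)$, which is exactly $\cK^{n-1}_1(X_\bx)$.

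Next I would unpack Lemma \ref{lem:Kpullback} applied to $X$ and identify its four corners with those appearing in the corollary. The crucial observation is that for any $\bx' \in \bDelta_0$, tensoring with $\bx'$ kills the second factor's grading, giving $X_{\bx'}(\by) = \iota_*(Y)(\bx' \otimes \by) = Y(\bx' \otimes \tau(\by)) = \tau^*(Y_{\bx'})(\by)$, so $X_{\bx'} \simeq \tau^*(Y_{\bx'})$ and hence $\cK^{n-1}_1(X_{\bx'}) = \cL^{n-1}_1(Y_{\bx'})$; applying $\iota_*\iota^*$ and using $\iota^*\tau^* \simeq \Id$ further yields $\iota_*\iota^*(X_{\bx'}) \simeq \iota_*(Y_{\bx'})$, and so $\cK^{n-1}_1(\iota_*\iota^*(X_{\bx'})) = \cT^{n-1}_1(Y_{\bx'})$. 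A completely analogous calculation for $\bx \in \bDelta_1$ shows $\iota_*\iota^*(X_\bx) \simeq \iota_*(Y_{\tau(\bx)})$, giving $\cK^{n-1}_1(\iota_*\iota^*(X_\bx)) = \cT^{n-1}_1(Y_{\tau(\bx)})$. Specialising to $\bx' = \fa(\bx)$ and $\bx' = \fo(\bx)$ identifies the three non-top-left corners of the lemma's square with $\cL^{n-1}_1(Y_{\fa(\bx)}) \times \cL^{n-1}_1(Y_{\fo(\bx)})$, $\cT^{n-1}_1(Y_{\tau(\bx)})$, and $\cT^{n-1}_1(Y_{\fa(\bx)}) \times \cT^{n-1}_1(Y_{\fo(\bx)})$ respectively.

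Finally I would verify that the horizontal arrows of the lemma's square match the cospan in the corollary. The bottom arrow, induced by $\epsilon_\fa$ and $\epsilon_\fo$, becomes $\bigl(\cT^{n-1}_1(Y_{\epsilon_\fa(\bx)}), \cT^{n-1}_1(Y_{\epsilon_\fo(\bx)})\bigr)$ under the notational abuse from Definition \ref{defn:fafo} by which $\tau \circhor \epsilon_\fa$ is also denoted $\epsilon_\fa$, and the right vertical arrow is the unit-induced map from $\cL$ to $\cT$; hence the top-left corner $\cK^{n-1}_1(X_\bx)$ is the desired pullback, and naturality in $\bx \in \bDelta_1^\op$ follows automatically because every step of the argument is a functorial construction. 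The principal obstacle is notational rather than conceptual: one must carefully track which version of the operation $\otimes$ is being used (and in which of $\bDelta_0, \bDelta_1, \bDelta_{01}, \bDelta_\N$ each object lives) in order to apply the identifications above consistently.
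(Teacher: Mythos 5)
Your proposal is correct and follows essentially the same route as the paper's own proof: apply Lemma \ref{lem:Kpullback} to $X=\iota_*(Y)$ (noting $T_0(\iota_*Y)\simeq\iota_*Y$ because $\iota^*\iota_*\simeq\Id$), then identify the four corners via the identities $X_{\bx'}\simeq\tau^*(Y_{\bx'})$ for $\bx'\in\bDelta_0$ and $\iota^*(X_\by)\simeq\iota^*(X)_{\tau(\by)}$ for $\by\in\bDelta_{01}^\op$, and finally recall that the top-left corner $\cK^{n-1}_1(X_\bx)$ is the value of the straightening of $\cK^n_1(X)\simeq\cT^n_1(Y)\to\bDelta_1$. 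You just spell out the grading computations (e.g.\ why $\bx'\otimes\by$ has total grading $0$ for $\bx'\in\bDelta_0$) that the paper leaves implicit, which is fine and accurate.
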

\begin{proof}
We apply Lemma \ref{lem:Kpullback} to $\iota_*(Y)$, and obtain after evaluation at $\bx$ the following pullback square in $\Catinfone$:
\[
\begin{tikzcd}[row sep=10pt, column sep=120pt]
\cK^{n-1}_1(\iota_*(Y)_{\bx})\ar[r]\ar[d] \ar[dr,phantom,"\lrcorner"very near start]&\cL^{n-1}_1(Y_{\fa(\bx)})\times \cL^{n-1}_1(Y_{\fo(\bx)})\ar[d]\\
\cT^{n-1}_1(Y_{\tau(\bx)})\ar[r,"\cT^{n-1}_1(Y_{\epsilon_\fa(\bx)})\times\cT^{n-1}_1(Y_{\epsilon_\fo(\bx)})"]&\cT^{n-1}_1(Y_{\fa(\bx)})\times\cT^{n-1}_1(Y_{\fo(\bx)}).
\end{tikzcd}
\]
Here we have used that for any $\by\in\bDelta_{01}^\op$ and any $X\in s^n_{01}\cS$ we have an equivalence $\iota^*(X_{\by})\simeq \iota^*(X)_{\tau(\by)}$; and that if moreover $\by\in\bDelta^\op_0\simeq\bDelta^\op$ we also have an equivalence $X_{\by}\simeq\tau^*(\iota^*(X)_{\by})$. Specifically, we have set $X=\iota_*(Y)$ and $\by=\bx,\fa(\bx),\fo(\bx)$, and we also have used the equivalence $\iota^*\iota_*\overset{\simeq}{\to}\Id_{s^n\cS}$.

We conclude by recalling that the straightening of $\cT^n_1(X)\simeq \cK^n_1(\iota_*(X))\to\bDelta_1$ evaluates $\cK^{n-1}_1(\iota_*(X)_\bx)$, i.e. the top left corner, at $\bx$.
\end{proof}

\begin{defn}
Let $X\in s^n_\N\cS$ and $Y\in s^n\cS$. A morphism in $\cK^n_1(X)$ is \emph{vertically inert} if it is inert and its image along the composite $\cK^n_1(X)\to\bDelta^n_1\xrightarrow{\tau}\bDelta^n\xrightarrow{\pi_1}\bDelta$ is an identity in $\bDelta$. We use the index ``$(-)_{\vinert}$'' to denote wide subcategories spanned by vertically inert morphisms.
We define $\ddot\cK^n(X):=L(\cK^n_1(X),\cK^n_1(X)_{\vinert})$. We define $\ddot\cT^n(Y):=\ddot\cK^n(\iota_*(Y))$, and by abuse of notation we also define $\ddot\cT^n(X)$ as $\ddot\cT^n(\iota^*(X))$. Finally, we set $\ddot\cL^n(Y):=\ddot\cK^n(\tau^*(Y))$.
\end{defn}
\begin{lem}
\label{lem:straighteningddotcK}
For $X\in s^n_\N\cS$, the localised functor $\ddot\cK^n(X)\to\bDelta_1$ is a cartesian fibration corresponding to the functor $\bDelta^\op_1\to\Catinfone$ sending $\bx\mapsto \dot\cK^{n-1}(X_\bx)$.
\end{lem}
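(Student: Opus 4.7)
The plan is to apply Lemma \ref{lem:vertlocfibration} directly, with $\cD=\bDelta_1$ and with $q\colon\bDelta_1^\op\to\RelCatinfone$ the functor $\bx\mapsto(\cK^{n-1}_1(X_\bx),\cK^{n-1}_1(X_\bx)_\inert)$. First, I will recall the second bulleted observation made right before Lemma \ref{lem:Kpullback}: composing the right fibration $\cK^n_1(X)\to\bDelta^n_1$ with the projection $\pi_1\colon\bDelta^n_1\simeq(\bDelta_1)^n\to\bDelta_1$ yields a cartesian fibration whose straightening is precisely $d_0q\colon\bx\mapsto\cK^{n-1}_1(X_\bx)$. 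In particular the fiber of $\cK^n_1(X)\to\bDelta_1$ over $\bx$ is $\cK^{n-1}_1(X_\bx)$.

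Next, I will upgrade this straightening to a functor into $\RelCatinfone$ by attaching to each $\bx$ the wide subcategory $\cK^{n-1}_1(X_\bx)_\inert$. Functoriality of the pair is automatic: a morphism $\bx\to\bx'$ in $\bDelta_1$ induces a morphism of presheaves $X_{\bx'}\to X_\bx$ on $\bDelta^{n-1}_{01}$ and hence a functor $\cK^{n-1}_1(X_{\bx'})\to\cK^{n-1}_1(X_\bx)$ of $\infone$-categories over $\bDelta^{n-1}_1$, which therefore preserves the wide subcategory of morphisms projecting to inert morphisms in $\bDelta^{n-1}_1$.

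Now I will identify the wide subcategory $W=\cC_1\times_{\bDelta_1}\bDelta_1^\simeq$ of Lemma \ref{lem:vertlocfibration}, for the present choice of $q$, with the vertically inert morphisms of $\cK^n_1(X)$. Unwinding the definitions, $W$ consists of morphisms in $\cK^n_1(X)$ whose projection to $\bDelta_1$ via $\pi_1$ is an identity and which are inert when viewed in the fiber $\cK^{n-1}_1(X_\bx)$. Using the decomposition $\bDelta^n_1\simeq\bDelta_1\times\bDelta^{n-1}_1$, such morphisms are exactly the cartesian lifts of inert morphisms in $\bDelta^n_1$ whose first coordinate is an identity in $\bDelta_1$; equivalently (since an inert morphism in $\bDelta_1$ with underlying identity in $\bDelta$ is itself an identity), they are exactly the inert morphisms in $\cK^n_1(X)$ projecting to an identity in $\bDelta$ under $\tau\circ\pi_1$, which matches Definition of vertically inert. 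This identification is the main step I would check carefully, since it is the only place where one has to translate between the ``inert in the total category'' and ``inert in the fiber'' perspectives.

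Finally, applying Lemma \ref{lem:vertlocfibration} to $q$ gives that $\ddot\cK^n(X)=L(\cK^n_1(X),W)\to\bDelta_1$ is a cartesian fibration whose straightening is $Lq\colon\bx\mapsto L(\cK^{n-1}_1(X_\bx),\cK^{n-1}_1(X_\bx)_\inert)$, which by Definition \ref{defn:dotKappa} is $\dot\cK^{n-1}(X_\bx)$, as required.
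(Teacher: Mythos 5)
Your proposal is correct and follows the same route as the paper, namely applying Lemma \ref{lem:vertlocfibration} to the functor $q\colon\bDelta_1^\op\to\RelCatinfone$, $\bx\mapsto(\cK^{n-1}_1(X_\bx),\cK^{n-1}_1(X_\bx)_\inert)$, and identifying $(\cC_0,W)$ with $(\cK^n_1(X),\cK^n_1(X)_{\vinert})$. The paper's proof compresses this into a single sentence; you additionally spell out the translation between the two equivalent descriptions of the wide subcategory $W$ — as fiberwise inert morphisms and as vertically inert morphisms — which is the content the paper leaves implicit.
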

\begin{proof}
We apply Lemma \ref{lem:vertlocfibration}
to the
relative category $(\cK^n_1(X),\cK^n_1(X)_{\vinert})$, which is obtained by unstraightening the functor $\bDelta_1^\op\to\RelCatinfone$ sending 
\[
\bx\mapsto(\cK^{n-1}_1(X_\bx),\cK^{n-1}_1(X_\bx)_{\inert}).
\]
\end{proof}

\begin{cor}
\label{cor:straighteningddotcT}
Let $\cC\in\Catinfn$; then the localised functor $\ddot\cT^n(\cC)\to\bDelta_1$ is again a cartesian fibration, and it corresponds to the functor $\bDelta^\op_1\to\Catinfone$ sending $\bx\in\bDelta^\op_1$ to the
pullback of the following cospan
\[
\begin{tikzcd}[row sep=10pt, column sep=140pt]
&\dot\cL^{n-1}(\cC_{\fa(\bx)})\times\dot\cL^{n-1}(\cC_{\fo(\bx)})\ar[d]\\
\dot\cT^{n-1}(\cC_{\tau(\bx)})\ar[r,"\cT^{n-1}(\cC_{\epsilon_\fa(\bx)})\times\dot\cT^{n-1}(\cC_{\epsilon_\fo(\bx)})"] &\dot\cT^{n-1}(\cC_{\fa(\bx)})\times\dot\cT^{n-1}(\cC_{\fo(\bx)}),
\end{tikzcd}
\]
i.e. sending $\bx$ to the
source of the cartesian fibration over $\dot\cT^{n-1}(\cC_{\tau(\bx)})$ corresponding to the pullback of the presheaf $\dot\lambda^{n-1}(\cC_{\fa(\bx)})\boxtimes\dot\lambda^{n-1}(\cC_{\fo(\bx)})$ along the functor $\dot\cT^{n-1}(\cC_{\epsilon_\fa(\bx)})\times \dot\cT^{n-1}(\cC_{\epsilon_\fo(\bx)})$.
\end{cor}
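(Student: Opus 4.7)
The plan is to apply Lemma \ref{lem:straighteningddotcK} to $X = \iota_*(\cC)$: this directly yields that $\ddot\cT^n(\cC) = \ddot\cK^n(\iota_*(\cC)) \to \bDelta_1$ is a cartesian fibration whose straightening sends $\bx \mapsto \dot\cK^{n-1}(\iota_*(\cC)_\bx)$. It then remains to identify the $\infone$-category $\dot\cK^{n-1}(\iota_*(\cC)_\bx)$, together with its projection to $\dot\cT^{n-1}(\cC_{\tau(\bx)})$, with the pullback in the statement.

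To do so, I would apply Lemma \ref{lem:Kpullback} to the $0$-truncated $n$-fold simplicial space $\iota_*(\cC)$ (note that $\iota^*\iota_* \simeq \Id$), obtaining a pullback square in $\Fun(\bDelta^\op_1, \Catinfone)$. Evaluating at $\bx$, and simplifying using $\iota^*(\iota_*(\cC)_\by) \simeq \cC_{\tau(\by)}$, $\iota_*\iota^*(\iota_*(\cC)_\by) \simeq \iota_*(\cC_{\tau(\by)})$, and — crucially — the fact that for $\by \in \bDelta^\op_0$ one has $\iota_*(\cC)_\by \simeq \tau^*(\cC_{\tau(\by)})$ (since tensoring with a grading-$0$ multisimplex in $\bDelta^n_1$ produces only grading-$0$ multisimplices in $\bDelta^{\op,n}_1$), one obtains the pullback square
\begin{equation*}
\begin{tikzcd}[row sep=8pt, column sep=25pt]
\cK^{n-1}_1(\iota_*(\cC)_\bx) \ar[r]\ar[d]\ar[dr,phantom,"\lrcorner" very near start] & \cL^{n-1}_1(\cC_{\fa(\bx)}) \times \cL^{n-1}_1(\cC_{\fo(\bx)}) \ar[d]\\
\cT^{n-1}_1(\cC_{\tau(\bx)}) \ar[r] & \cT^{n-1}_1(\cC_{\fa(\bx)}) \times \cT^{n-1}_1(\cC_{\fo(\bx)})
\end{tikzcd}
\end{equation*}
in $\Catinfone$, whose bottom arrow is $\cT^{n-1}_1(\cC_{\epsilon_\fa(\bx)}) \times \cT^{n-1}_1(\cC_{\epsilon_\fo(\bx)})$; this is precisely the pullback square of Corollary \ref{cor:straighteningLambda1} at the non-dotted level.

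The next step is to localize this square, in a compatible manner, at inert morphisms on both base and total categories of the two vertical right fibrations. By Lemma \ref{lem:kappaXproperties}(2), the presheaf $\kappa^{n-1}_1(\iota_*(\cC)_\bx)$ corresponding to the left vertical right fibration is inert-local, since $\iota_*(\cC)_\bx$ satisfies the Segal condition (being a pullback of $\iota_*(\cC)$, which does). Similarly $\lambda^{n-1}_1(\cC_{\fa(\bx)})$ and $\lambda^{n-1}_1(\cC_{\fo(\bx)})$ are inert-local. Hence Lemma \ref{lem:horlocfibration} applies to both vertical right fibrations, yielding after localization right fibrations whose straightenings are precisely $\dot\kappa^{n-1}(\iota_*(\cC)_\bx)$ and $\dot\lambda^{n-1}(\cC_{\fa(\bx)}) \boxtimes \dot\lambda^{n-1}(\cC_{\fo(\bx)})$, defined over $\dot\cT^{n-1}(\cC_{\tau(\bx)})$ and $\dot\cT^{n-1}(\cC_{\fa(\bx)}) \times \dot\cT^{n-1}(\cC_{\fo(\bx)})$, respectively. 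The bottom horizontal functor sends inert morphisms to inert morphisms (by construction of $\epsilon_\fa, \epsilon_\fo$), so it descends to the localized categories as $\dot\cT^{n-1}(\cC_{\epsilon_\fa(\bx)}) \times \dot\cT^{n-1}(\cC_{\epsilon_\fo(\bx)})$.

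The main obstacle is verifying that the pullback square descends along these localizations, since localization is a left adjoint and in general does not preserve pullbacks. The right-fibration perspective is what allows one to bypass this: Lemma \ref{lem:horlocfibration} tells us that the localized right fibrations still unstraighten the ``same'' presheaves, but now viewed over the localized base. Consequently, the pullback along the localized bottom functor of the right fibration on the right is computed by restricting the presheaf $\dot\lambda^{n-1}(\cC_{\fa(\bx)}) \boxtimes \dot\lambda^{n-1}(\cC_{\fo(\bx)})$; the identification in the non-localized square shows that this restriction agrees with $\dot\kappa^{n-1}(\iota_*(\cC)_\bx)$. Unstraightening this identification exhibits $\dot\cK^{n-1}(\iota_*(\cC)_\bx)$ as the pullback claimed in the corollary, and combining this with the cartesian fibration description from Lemma \ref{lem:straighteningddotcK} concludes the proof.
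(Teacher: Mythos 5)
Your proof is correct and follows the same route as the paper: derive the non-localised pullback square from Lemma~\ref{lem:Kpullback} as in Corollary~\ref{cor:straighteningLambda1}, descend it along localisation at inert morphisms using Lemmas~\ref{lem:kappaXproperties} and~\ref{lem:horlocfibration}, and identify the top-left corner as the straightening of $\ddot\cT^n(\cC)\to\bDelta_1$ via Lemma~\ref{lem:straighteningddotcK}. The one place where the paper proceeds slightly differently is in establishing inert-locality of $\kappa^{n-1}_1(\iota_*(\cC)_\bx)$: rather than invoking Lemma~\ref{lem:kappaXproperties}(2) directly for the left vertical arrow --- which would require verifying the $01$-Segal condition for $\iota_*(\cC)_\bx$, a point you assert but do not actually check --- the paper deduces inert-locality from the pullback square itself, noting that the right-hand presheaf is inert-local (by Lemma~\ref{lem:kappaXproperties}(2), using $\cC\in\Catinfn$) and the bottom horizontal functor preserves inert morphisms.
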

\begin{proof}
The pullback square from the proof of Corollary \ref{cor:straighteningLambda1} can be enhanced, naturally in $\bx\in\bDelta^\op_1$, to a pullback square in $\RelCatinfone$ by endowing each category with its wide subcategory of inert morphisms; the vertical arrows are then right fibrations fitting into the hypotheses of Lemma \ref{lem:horlocfibration}: for the right vertical arrow we invoke Lemma \ref{lem:kappaXproperties} (here we use that $\cC\in\Catinfn$), and for the left vertical arrow we use that the square is a pullback. Lemma \ref{lem:horlocfibration} allows us to localise the square pointwise at inert morphisms, obtaining again a pullback square whose vertical arrows are right fibrations. The localised top left corner is then by Lemma \ref{lem:straighteningddotcK} the functor $\bDelta^\op_1\to\Catinfone$ straightening the cartesian fibration $\ddot\cT^n(\cC)\to\bDelta_1$. 
\end{proof}

We can make the statement of Corollary \ref{cor:straighteningddotcT} more explicit as follows.
\begin{defn}
\label{defn:GammaC}
Let $\del(\bDelta_{01}^\op\times[1])\subset\bDelta^\op_{01}\times[1]$ denote the full subcategory 
\[
\del(\bDelta^\op_{01}\times[1]):=(\bDelta^\op_0\times[1])\cup(\bDelta^\op_{01}\times\set{1}).
\]
For $\cC\in\Catinfn$ we denote by $\hat\Gamma^n(\cC)\colon\bDelta^\op\times[1]\to\Catinfone$ the adjoint of the composite functor
\[
\begin{tikzcd}[column sep=30pt]
\bDelta^\op_{01}\ar[r,"\tau"]&\bDelta^\op\ar[r,"\cC_{(-)}"]&\Catinfnm\ar[rr,"(\dot\cL^{n-1}\to\dot\cT^{n-1})"]& &\PSh\subset\Fun([1],\Catinfone),
\end{tikzcd}
\]
and we denote by $\Gamma^n(\cC)\colon\bDelta^\op\times[1]\to\Catinfone$ the right Kan extension to $\bDelta^\op_{01}\times[1]$ of the restriction to $\del(\bDelta^\op_{01}\times[1])$ of $\hat\Gamma^n(\cC)$. For $j=0,1$ we let $\hat\Gamma^n(\cC)_j,\Gamma^n(\cC)_j\colon\bDelta^\op_{01}\to\Catinfone$ denote the restriction of $\hat\Gamma^n(\cC),\Gamma^n(\cC)$ to $\bDelta^\op_{01}\times\set{j}\simeq\bDelta^\op_{01}$.
\end{defn}
Corollary \ref{cor:straighteningddotcT} then identifies the straightening of $\ddot\cT^n(\cC)\to\bDelta_1$ with the restriction $\Gamma^n(\cC)_0|_{\bDelta^\op_1}\colon\bDelta^\op_1\to\Catinfone$. By Lemma \ref{lem:straighteningddotcK} we can similarly identify the straightening of $\ddot\cL^n(\cC)\to\bDelta_1$ with the restriction $\hat\Gamma^n(\cC)_0|_{\bDelta^\op_1}$; the left fibration $\ddot\cL^n(\cC)\to\ddot\cT^n(\cC)$ corresponds to the unit of the adjunction $\hat\Gamma(\cC)_0|_{\bDelta^\op_1}\to\Gamma(\cC)_0|_{\bDelta^\op_1}$.

\begin{lem}
\label{lem:GammacC01Segal}
Assume that Proposition \ref{prop:dotcTcLsm} holds for $n-1$, and let $\cC\in\Catinfn$. Then the functors $\hat\Gamma^n(\cC)_0,\Gamma^n(\cC)_0\colon\bDelta^\op_{01}\to\Catinfone$ satisfy the $01$-Segal condition, i.e. their restriction to $\bDelta^\op_{01,\inert}$ are right Kan extended from $\bDelta^\op_{01,\inert,\le1}$.
\end{lem}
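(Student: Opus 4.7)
The plan is to reduce the $01$-Segal condition at each $\bx\in\bDelta^\op_{01}$ to a spine pullback, which we establish from the ordinary Segal condition for $\cC\in\Catinfn$ combined with the inductive hypothesis on $\dot\cL^{n-1}$ and $\dot\cT^{n-1}$.

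First we treat $\hat\Gamma^n(\cC)_0$. By construction this functor factors as $F\circ\tau$, where $\tau\colon\bDelta^\op_{01}\to\bDelta^\op$ is the forgetful projection and $F\colon\bDelta^\op\to\Catinfone$ sends $[k]\mapsto\dot\cL^{n-1}(\cC_{[k]})$. The Segal condition for $\cC$ decomposes $\cC_{[k]}\simeq\cC_{[1]}\times_{\cC_{[0]}}\cdots\times_{\cC_{[0]}}\cC_{[1]}$ in $\Catinfnm$, with apex $\cC_{[0]}\in\cS$; applying $\dot\cL^{n-1}$ and invoking the inductive hypothesis shows that $F$ satisfies the ordinary Segal condition on $\bDelta^\op$. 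A direct inspection of inert morphisms further shows that, for every $\bx\in\bDelta^\op_{01}$, the projection $\tau$ induces an equivalence of overcategories $(\bDelta^\op_{01,\inert,\leq 1})_{\bx/}\xrightarrow{\simeq}(\bDelta^\op_{\inert,\leq 1})_{\tau(\bx)/}$: every inert $[k']\to[k_\bx]$ in $\bDelta$ has a unique lift in $\bDelta^\op_{01}$ starting at $\bx$, landing in $([0],0)$, $([1],0)$, or $([1],1)$ according to whether the codomain has dimension $0$ or $1$ and, in the latter case, whether the selected subsegment carries the unique $1$ of $\nu_\bx$. The $01$-Segal condition for $\hat\Gamma^n(\cC)_0$ at $\bx$ now transports across this equivalence to the Segal condition for $F$ at $\tau(\bx)$ that we have just established.

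For $\Gamma^n(\cC)_0$, we split the argument by the grading of $\bx$. When $\bx\in\bDelta^\op_0$ we have $\Gamma^n(\cC)_0|_{\bDelta^\op_0}=\hat\Gamma^n(\cC)_0|_{\bDelta^\op_0}$ by construction, and every inert morphism out of such $\bx$ lands again in grading $0$, so the $01$-Segal condition reduces to the case already handled. When $\bx\in\bDelta^\op_1$ we use the explicit description from Corollary \ref{cor:straighteningddotcT}, which writes $\Gamma^n(\cC)_0(\bx)$ as the pullback of $\dot\cT^{n-1}(\cC_{\tau\bx})$ with $\dot\cL^{n-1}(\cC_{\fa\bx})\times\dot\cL^{n-1}(\cC_{\fo\bx})$ over $\dot\cT^{n-1}(\cC_{\fa\bx})\times\dot\cT^{n-1}(\cC_{\fo\bx})$. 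Decomposing $\cC_{\tau\bx}\simeq\cC_{\fa\bx}\times_{\cC_{[0]}}\cC_{[1]}\times_{\cC_{[0]}}\cC_{\fo\bx}$ via Segal for $\cC$ and applying $\dot\cT^{n-1}$ (which preserves such pullbacks by the inductive hypothesis), a standard manipulation of nested pullbacks yields
\[
\Gamma^n(\cC)_0(\bx)\simeq\dot\cT^{n-1}(\cC_{[1]})\times_{\dot\cT^{n-1}(\cC_{[0]})^2}\bigl(\dot\cL^{n-1}(\cC_{\fa\bx})\times\dot\cL^{n-1}(\cC_{\fo\bx})\bigr).
\]
On the other side, the candidate spine limit at $\bx$ combines $\dot\cL^{n-1}(\cC_{[0]})$ at the $k+1$ vertices, $\dot\cL^{n-1}(\cC_{[1]})$ at the $k-1$ grading-$0$ edges, and the Corollary \ref{cor:straighteningddotcT} formula for $\Gamma^n(\cC)_0([1],1)$ at the unique grading-$1$ edge. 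Using the Segal condition for $\hat\Gamma^n(\cC)_0$ proved above, the vertex and grading-$0$-edge contributions reassemble into $\dot\cL^{n-1}(\cC_{\fa\bx})\times\dot\cL^{n-1}(\cC_{\fo\bx})$ glued along the relevant copies of $\dot\cL^{n-1}(\cC_{[0]})$, and substituting the formula at the grading-$1$ edge rearranges the spine limit into precisely the same pullback expression.

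The main obstacle will be the combinatorial bookkeeping in this last step: one must carefully track which source/target leg of each iterated pullback is being used, so that the spine reassembly is matched with the direct formula for $\Gamma^n(\cC)_0(\bx)$. All individual steps are standard manipulations of finite limits in $\Catinfone$, but several nested pullback squares are in play and the identification requires some care.
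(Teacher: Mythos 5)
Your proof is correct and follows essentially the same strategy as the paper: prove the Segal condition for $\hat\Gamma^n(\cC)_0$ via the factorisation through $\tau$ and the inductive preservation of fibre products over $\cC^\simeq$, handle grading-$0$ objects of $\bDelta^\op_{01}$ by reduction to that case, and handle grading-$1$ objects by a pullback-square argument hinging again on the inductive hypothesis for $\dot\cT^{n-1}$. The one presentational difference is in the grading-$1$ step: you take the pullback formula for $\Gamma^n(\cC)_0|_{\bDelta^\op_1}$ supplied by Corollary \ref{cor:straighteningddotcT} as your starting point and explicitly match it against the spine limit, whereas the paper re-derives that formula directly from the right-Kan-extension definition of $\Gamma^n(\cC)$ and reads off the Segal comparison as the top row of a $2\times 2$ pullback square whose bottom row is an equivalence; since the identification of the Corollary \ref{cor:straighteningddotcT} formula with $\Gamma^n(\cC)_0|_{\bDelta^\op_1}$ is itself established via the same right-Kan-extension computation, the two routes are the same argument in slightly different clothing.
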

\begin{proof}
Since $\cC\in\Catinfn$, we have that $\cC_{[0]}\simeq\cC^\simeq\in\Catinfnm$ is a space. The restrictions of both $\hat\Gamma^n(\cC)$ and $\Gamma^n(\cC)$ to $\set{([0],0)}\times[1]\simeq[1]$ give the arrow $\dot\cL^{n-1}(\cC_{[0]})\to\dot\cT^{n-1}(\cC_{[0]})$ in $\Fun([1],\Catinfone)$, which by Lemma \ref{lem:dotcTcLspace} can be identified with the diagonal arrow of spaces $\cC^{\simeq}\to(\cC^{\simeq})^{S^{n-2}}$.

We next observe that both restrictions of $\hat\Gamma^n(\cC)_0$ and $\Gamma^n(\cC)_0$ to $\bDelta^\op_0\simeq\bDelta^\op$ are equivalent to $\dot\cL^{n-1}(\cC_{(-)})$; and given $[k]\in\bDelta$, we may identify
\[
\dot\cL^{n-1}(\cC_{[k]})\xrightarrow{\simeq}\dot\cL^{n-1}\pa{\cC_{[1]}\underset{\cC^\simeq}\times\dots\underset{\cC^\simeq}\times\cC_{[1]}}\xrightarrow{\simeq}\dot\cL^{n-1}(\cC_{[1]})\underset{\cC^\simeq}\times\dots\underset{\cC^\simeq}\times\dot\cL^{n-1}(\cC_{[1]}),
\]
using the Segal condition for $\cC_{(-)}$ and the hypothesis that $\dot\cL^{n-1}$ preserves pullbacks over spaces. Recalling that $\hat\Gamma(\cC)_0$ is obtained from $\dot\cL^{n-1}(\cC_{(-)})$ by precomposing with $\tau$, we immediately get the $01$-Segal condition for $\hat\Gamma(\cC)_0$.

The formula for right Kan extension allows us to compute $\Gamma^n(\cC)_0([1],1)$ as the following pullback in $\Catinfone$:
\[
\begin{tikzcd}[row sep=10pt, column sep=8pt]
\Gamma^n(\cC)_0([1],1)\ar[d,"0\to1"]\ar[rrr,"{(d_1,d_0)}"]\ar[drrr,phantom,"\lrcorner"very near start] &&&\Gamma^n(\cC)_0([0],0)\times\Gamma^n(\cC)_0([0],0)\ar[d,"0\to1"]\ar[r,phantom,"\simeq"] &\cC^\simeq\times\cC^\simeq\\
\Gamma^n(\cC)_1([1],1)\ar[rrr,"{(d_1,d_0)}"]&&&\Gamma^n(\cC)_1([0],0)\times\Gamma^n(\cC)_1([0],0)\ar[r,phantom,"\simeq"]&  (\cC^\simeq)^{S^{n-2}}\times(\cC^\simeq)^{S^{n-2}}.
\end{tikzcd}
\]
For generic $\bx\in\bDelta^\op_1$, we can then compute $\Gamma^n(\cC)_0(\bx)$ as the following pullback square, in terms of $\Gamma^n(\cC)|_{\del(\bDelta^\op_{01}\times[1])}\simeq \hat\Gamma^n(\cC)|_{\del(\bDelta^\op_{01}\times[1])}$ and of $\Gamma^n(\cC)_0([1],1)$:
\[
\begin{tikzcd}[row sep=10pt, column sep=20pt]
\Gamma^n(\cC)_0(\bx)\ar[r]\ar[d,"0\to1"]\ar[dr,phantom,"\lrcorner"very near start]&\Gamma^n(\cC)_0(\fa(\bx))\underset{\cC^\simeq}{\times}\Gamma^n(\cC)_0([1],1)\underset{\cC^\simeq}{\times}\Gamma^n(\cC)_0(\fo(\bx))\ar[d,"0\to1"]\\
\Gamma^n(\cC)_1(\bx)\ar[r]&\Gamma^n(\cC)_1(\fa(\bx))\underset{(\cC^\simeq)^{S^{n-2}}}{\times}\Gamma^n(\cC)_1([1],1)\underset{(\cC^\simeq)^{S^{n-2}}}{\times}\Gamma^n(\cC)_1(\fo(\bx)).
\end{tikzcd}
\]
We now observe that the bottom row in the last square is an equivalence, since $\Gamma^n(\cC)_1$ agrees with $\hat\Gamma^n(\cC)_1$, and hence with $\dot\cT^{n-1}(\cC_{\tau(-)})$: here we use that $\dot\cT^{n-1}$ preserves pullbacks over spaces. Hence the top row in the last square is also an equivalence, and this readily implies the $01$-Segal condition for $\Gamma^n(\cC)_0$ at $\bx\in\bDelta^\op_1$.
\end{proof}

\subsection{Mapping spaces in \texorpdfstring{$\dot{\mathcal{T}}^n(\cC)$}{dotcTn(cC)} for \texorpdfstring{$n\ge2$}{n>=2}}
\begin{defn}
Let $\cD\to\bDelta_1$ be a cartesian fibration. We say that a morphism in $\cD$ is \emph{horizontally inert} if it is a cartesian lift of an inert morphism in $\bDelta_1$. We let $\cD_{\hinert}$ denote the wide subcategory of $\cD$ spanned by horizontally inert morphisms.
\end{defn}
We remark that in the cases $\cD=\ddot\cT^n(Y)$ and $\cD=\ddot\cL^n(Y)$ for some $Y\in s^n\cS$, the localisation $L(\cD,\cD_{\hinert})$ recovers $\dot\cT^n(Y)$ and $\dot\cL^n(Y)$, respectively. To save notation, we fix in the rest of the subsection a cartesian fibration $\cE\to\bDelta_{01}$, whose associated functor $\gamma\colon\bDelta_{01}^\op\to\Catinfone$ satisfies the $01$-Segal condition, and whose value $\gamma_0:=\gamma([0],0)$ is a space. The examples to keep in mind are of course $\gamma=\hat\Gamma(\cC)_0$ and $\gamma=\Gamma^n(\cC)_0$ for some $\cC\in\Catinfn$. We denote henceforth by $\cD:=\cE\times_{\bDelta_{01}}\bDelta_1\to\bDelta_1$ the restricted cartesian fibration. Our goal is to compute mapping spaces in $L(\cD,\cD_{\hinert})$, and we start with the observation that, by same argument used in the proof of Lemma \ref{lem:essentiallysurjective}, for every object $\bx\in\cD$ we can find an object $\by\in\cD$ lying over $([1],1)\in\bDelta_1$ such that $\bx$ and $\by$ become equivalent inside $L(\cD,\cD_{\hinert})$.

We fix therefore $\bx\in\cD$ of the form $([1],1,x)$, i.e. $x$ is an object in $\gamma([1],1)$, and we aim at computing mapping spaces into $\bx$ in $L(\cD,\cD_{\hinert})$.

\begin{defn}
We define $W(\bx)$ as the full subcategory of $\cD_{\bx/}$ spanned by horizontally inert morphisms, and let $\pi(\bx)\colon W(\bx)\hto\cD_{/\bx}$ denote the natural inclusion.
\end{defn}

We fix $\by=([k],\nu,y)\in\cD$, i.e., $y\in\gamma([k],\nu)$, and aim at computing the space $(d_0\pi(\bx))_!(*)(\by)\simeq|\cD_{\by/,\bx/\hinert}|$, where $\cD_{\by/,\bx/\hinert}$ denotes the category of cospans $\by\to\bz\ot\bx$ in $\cD$ with $\bx\to\bz$ horizontally inert.
\begin{nota}
We denote by $\by_\fa=(\fa([k],\nu),y_\fa)\to\by$, $\by_\fo=(\fo([k],\nu),y_\fo)\to\by$ and $\by'=([1],1,y')\to\by$ cartesian lifts in $\cE$ with target $\by$ of the morphisms $\epsilon_\fa([k],\nu)$, $\epsilon_\fo([k],\nu)$ and of the inert morphism $([1],1)\to([k],\nu))$ in $\bDelta_{01}$, respectively. We further denote by $\by'_\fa=([0],0,y'_\fa)\to\by'$ and $\by'_\fo=([0],0,y'_\fo)\to\by'$ cartesian lifts at $\by'$ of $d_1=\epsilon_\fa([1],1)$ and $d_0=\epsilon_\fo([1],1)$, respectively.
\end{nota}

\begin{nota}
For $([k'],\nu')\in\bDelta_1$ we abbreviate the object $([k'+2],(0,\nu',0))\in\bDelta_1$ by $([k'+2],0\nu'0)$, and we denote by
\[
\begin{tikzcd}
([k'],\nu')\ar[r,"\epsilon_{\fa\fo}(\nu')"]&([k+2],0\nu'0)&([1],1)\ar[l,"i(\nu')"']
\end{tikzcd}
\]
the unique cospan in $\bDelta_1$ with $i(\nu')$ inert, and whose projection to $\bDelta$ along $\tau$ exhibits $[k+2]$ as the disjoint union of the sets $[k]$ and $[1]$.
\end{nota}
Note that $\epsilon_{\fa}([k'],\nu')$ and $\epsilon_{\fo}([k'],\nu')$ exhibit $[k']$ as the concatenation of $\fa([k'],\nu')$ and $\fo([k'],\nu')$ under projection to $\bDelta$. Correspondingly, $\epsilon_{\fa\fo}(\nu')$ and $i(\nu')$ exhibit $[k'+2]$ as the concatenation of $\fa([k'],\nu')$, $[1]$ and $\fo([k'],\nu')$ under projection to $\bDelta$; this justifies the notation ``$\epsilon_{\fa\fo}$''.
\begin{defn}
A cospan $(\by\to\bz\ot\bx)\in\cD_{\by/,\bx/\hinert}$ is \emph{special} if:
\begin{enumerate} [label=(\roman*)]
\item the image of $\by\to\bz$ along $\cD\to\bDelta_1$ is $\epsilon_{\fa\fo}(\nu)$;
\item the composite morphisms $\by_\fa\to\by\to\bz$ and $\by_\fo\to\by\to\bz$ are cartesian lifts of their projections $\fa([k],\nu)\to[k+2]$ and $\fo([k],\nu)\to[k+2]$ to $\bDelta_{01}$.
\end{enumerate}
We denote by $\cD_{\by/,\bx/\hinert}^\special\subset\cD_{\by/,\bx/\hinert}$ the full subcategory of special objects, and by $\cD_{\by/,\bx/\hinert}^{\special'}\subset\cD_{\by/,\bx/\hinert}$ the full subcategory spanned by objects that satisfy (i).

\end{defn}

\begin{lem}
\label{lem:Dspecialinitial}
The following hold:
\begin{enumerate}
\item $\cD_{\by/,\bx/\hinert}^\special$ is initial in $\cD_{\by/,\bx/\hinert}$;
\item a horizontally inert morphism $\bt\to\by$ in $\cD$ induces an equivalence
\[
\cD_{\by/,\bx/\hinert}^\special\overset{\simeq}{\to}\cD_{\bt/,\bx/\hinert}^\special.
\]
\end{enumerate}
\end{lem}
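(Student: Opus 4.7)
The plan is to reduce both statements to combinatorial facts in $\bDelta_{01}$ and then lift to $\cD$ using cartesianness of $\cE \to \bDelta_{01}$, following the blueprint of the analogous argument in the proof of Proposition \ref{prop:rinertloc}.

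For (1), I apply Quillen's Theorem A: it suffices to exhibit a terminal object in the overcategory of special cospans over any given object $(\by \to \bz' \ot \bx) \in \cD_{\by/,\bx/\hinert}$. Its projection to $\bDelta_1$ is a cospan $([k],\nu) \xrightarrow{f} ([k''], \nu'') \xleftarrow{j} ([1],1)$ with $j$ inert, and the inert-active factorization in $\bDelta$ produces an essentially unique active morphism $h \colon ([k+2], 0\nu 0) \to ([k''], \nu'')$ with $h \circ \epsilon_{\fa\fo}(\nu) = f$ and $h \circ i(\nu) = j$: the position where the inserted $[1]$ sits in $[k+2]$ is dictated by the position of $j$ in $[k'']$, and the remaining active data is determined by $f$ after removing the image of $j$. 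Taking a cartesian lift of $h$ over $\bz'$ yields a special cospan $(\by \to \bz \ot \bx)$ with a natural map to $(\by \to \bz' \ot \bx)$, and terminality in the overcategory follows from the uniqueness of the factorization in $\bDelta_{01}$ combined with the universal property of cartesian morphisms in $\cE$.

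For (2), I first observe that morphisms in $\cD^\special_{\by/,\bx/\hinert}$ project to the identity of $([k+2], 0\nu 0)$ in $\bDelta_1$ (since both legs of a special cospan are pinned down by $\epsilon_{\fa\fo}(\nu)$ and $i(\nu)$), so $\cD^\special_{\by/,\bx/\hinert}$ is an $\infty$-groupoid. An inert $\beta \colon ([k_t], \nu_t) \to ([k], \nu)$ underlying $\bt \to \by$ canonically extends to an inert $\bar\beta \colon ([k_t+2], 0\nu_t 0) \to ([k+2], 0\nu 0)$ with $\bar\beta \circ \epsilon_{\fa\fo}(\nu_t) = \epsilon_{\fa\fo}(\nu) \circ \beta$, $\bar\beta \circ i(\nu_t) = i(\nu)$, and whose restrictions to the $\fa$- and $\fo$-pieces are again inert; taking cartesian lifts along $\bar\beta$ of a special cospan from $\by$ then defines the comparison functor $\cD^\special_{\by/,\bx/\hinert} \to \cD^\special_{\bt/,\bx/\hinert}$. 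The $01$-Segal condition on $\gamma$ identifies each of these spaces as an iterated fiber of the restriction $\gamma([k+2], 0\nu 0) \to \gamma(\fa([k],\nu)) \times \gamma([1],1) \times \gamma(\fo([k],\nu))$ (respectively its $\bt$-analogue), and the map induced by $\bar\beta$ is an equivalence because $\bar\beta$ restricts to equivalences on the spine pieces $\gamma([1],1)$ and $\gamma([0],0)$ (using that $\gamma_0$ is a space) that parametrize those fibers.

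The main obstacle I expect is verifying the combinatorial inert-active factorization underlying (1) and the compatibility of $\bar\beta$ with all the relevant inert structure in (2); once the $\bDelta_{01}$-level statements are in place, the passage to $\cD$ is routine by cartesianness of $\cE \to \bDelta_{01}$, and the $01$-Segal identification in (2) closely parallels the pattern of Corollary \ref{cor:straighteningLambda1}.
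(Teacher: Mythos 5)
Your argument for part (1) has a genuine gap: the cospan $(\by\to\bz\ot\bx)$ that you build — by lifting the inert–active factorisation in $\bDelta_{01}$ and taking a cartesian lift of $h$ over $\bz'$ — satisfies condition (i) of the definition of special (its leg $\by\to\bz$ projects to $\epsilon_{\fa\fo}(\nu)$), but you give no reason why it satisfies condition (ii), namely that the composites $\by_\fa\to\by\to\bz$ and $\by_\fo\to\by\to\bz$ are cartesian lifts. The factorisation $\by\to\bz$ through the cartesian $\bz\to\bz'$ need not itself be cartesian, so composing it with the cartesian $\by_\fa\to\by$ does not in general produce a cartesian morphism. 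In other words, your construction produces a terminal object of the overcategory of $\cD^{\special'}_{\by/,\bx/\hinert}$ (objects satisfying (i) only), not of $\cD^\special_{\by/,\bx/\hinert}$, and the latter overcategory may well lack a terminal object altogether. This is precisely why the paper splits (1) into two steps: a terminal-object argument showing $\cD^{\special'}$ is initial in $\cD$, followed by a separate argument, using the $01$-Segal decomposition of $\gamma([k+2],0\nu0)$ and the fact that the identities $\Id_{y_\fa},\Id_{y_\fo}$ are initial in their respective overcategories, to show $\cD^\special$ is initial in $\cD^{\special'}$. Your proof is missing this second step entirely.

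For (2), your route differs from the paper's. The paper first records, as a by-product of the explicit fibre-product description obtained in step (1), that the canonical horizontally inert morphism $\by'\to\by$ over $([1],1)\to([k],\nu)$ already induces an equivalence of the special subcategories; then for arbitrary horizontally inert $\bt\to\by$, it factors $\by'\to\bt\to\by$ and concludes by two-out-of-three. You instead try to build a comparison map directly by extending the inert $\beta$ to an inert $\bar\beta$ on the $[k+2]$-level and invoking $01$-Segal on both sides. This is plausible in spirit and is close to how the paper computes the special subcategories, but it requires rechecking compatibilities (that $\bar\beta$ restricts correctly on the $\fa$- and $\fo$-pieces, that the induced map of fibres is the one you think it is) which the paper's two-out-of-three trick sidesteps entirely. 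More to the point, your appeal to the Segal identification presupposes exactly the fibre-product description of $\cD^\special$ that the missing second step of (1) is supposed to deliver, so the gap in (1) propagates into (2) as well.
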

\begin{proof}
For (1), we first prove that $\cD_{\by/,\bx/\hinert}^{\special'}$ is initial in $\cD_{\by/,\bx/\hinert}$: to see this, we observe that for an object $(\by\to\bz'\ot\bx)\in\cD_{\by/,\bx/\hinert}$ lying over a cospan in $\bDelta_1$ of the form $([k],\nu)\to([k'],\nu')\ot([1],1)$, the overcategory $(\cD_{\by/,\bx/\hinert}^{\special'})_{/(\by\to\bz'\ot\bx)}$ admits a terminal object $\by\to\bz\ot\bx$, obtained by letting $\bz\to\bz'$ be a cartesian lift of the unique morphism $([k+2],0\nu0)\to([k'],\nu')$ in $\bDelta_1$ making the following diagram in $\bDelta_1$ commute:
\[
\begin{tikzcd}[row sep=15pt]
([k],\nu)\ar[r,"\epsilon_{\fa\fo}(\nu)"]\ar[dr] &([k+2],0\nu0)\ar[d,"\exists!"]&([1],1)\ar[l,"i(\nu)"']\ar[dl,"\text{inert}"]\\
&([k'],\nu').
\end{tikzcd}
\]
We next argue that $\cD_{\by/,\bx/\hinert}^\special$ is initial in $\cD_{\by/,\bx/\hinert}^{\special'}$.
The $01$-Segal condition for $\gamma$ allows us to write a commutative diagram as follows:
\[
\begin{tikzcd}[row sep=15pt]
\gamma([1],1)\ar[r,equal]&\gamma([1],1)\\
\gamma([k+2],0\nu0)\ar[r,phantom,"\simeq"]\ar[d,"\gamma(\epsilon_{\fa\fo}(\nu))"]\ar[u,"\gamma(i(\nu))"']&\gamma(\fa([k],\nu))\times_{\gamma_0}\gamma([3],010)\times_{\gamma_0}\gamma(\fo([k],\nu))\ar[d,"\Id\times\gamma(\epsilon_{\fa\fo}(1))\times\Id"]\ar[u,"\gamma(i(1))"']
\\
\gamma([k],\nu)\ar[r,phantom,"\simeq"]&\gamma(\fa([k],\nu))\times_{\gamma_0}\gamma([1],1)\times_{\gamma_0}\gamma(\fo([k],\nu))\\
\gamma([k],\nu)_{y/}\ar[r,phantom,"\simeq"]\ar[u,"d_0"] & \gamma(\fa([k],\nu))_{y_\fa/}\times_{(\gamma_0)_{y'_\fa/}}\gamma([1],1)_{y'/}\times_{(\gamma_0)_{y'_\fo/}}\gamma(\fo([k],\nu))_{y_\fo/}\ar[u,"d_0\times d_0\times d_0"']
\end{tikzcd}
\]
Note that since $\gamma_0$ is a space, overcategories of $\gamma_0$ are contractible and hence the bottom right term in the previous diagram is a plain product of $\infone$-categories.

Using the left column in the previous diagram we can identify $\cD_{\by/,\bx/\hinert}^{\special'}$ with the limit of the following W-shaped diagram in $\Catinfone$
\[
\begin{tikzcd}[row sep=2pt]
*\ar[dr,"x"]& &\gamma([k+2],0\nu0)\ar[dl,"\gamma(i(\nu))"']\ar[dr,"\gamma(\epsilon_{\fa\fo}(\nu))"]& &\gamma([k],\nu)_{y/}\ar[dl,"d_0"']\\
&\gamma([1],1)& &\gamma([k],\nu),
\end{tikzcd}
\]
whereas $\cD_{\by/,\bx/\hinert}^{\special}$ is equivalent to the fibre at $(\Id_{y_\fa},\Id_{y_\fo})$ of the composite functor
\[
\begin{tikzcd}[column sep=20pt]
\cD_{\by/,\bx/\hinert}^{\special'}\ar[r]&\gamma([k],\nu)_{y/}\ar[rrr,"{\gamma(\epsilon_\fa([k],\nu))\times}","{\gamma(\epsilon_\fo([k],\nu))}"']& & &\gamma(\fa([k],\nu))_{y_{\fa}/}\times\gamma(\fo([k],\nu))_{y_{\fo}/}.
\end{tikzcd}
\]
Using the right column, we can then give fibre product decompositions:
\[
\begin{tikzcd}[row sep=10pt, column sep=-8pt]
\cD_{\by/,\bx/\hinert}^{\special}\simeq\ar[d] & *\quad\quad\times\!\pa{\!*\!\!\underset{\gamma([1],1)}\times\!\gamma([3],010)\underset{\gamma([1],1)}\times\!\gamma([1],1)_{y'/}\!}\!\times\quad\quad *\ar[d,"\Id_{y_\fa}\times(\Id)\times\Id_{y_\fo}"]\\
\cD_{\by/,\bx/\hinert}^{\special'}\simeq & \gamma(\fa([k],\nu))_{y_\fa/}\!\times\!\! \pa{\!*\!\!\underset{\gamma([1],1)}\times\!\gamma([3],010)\underset{\gamma([1],1)}\times\!\gamma([1],1)_{y'/}\!}\!\!\times\! \gamma(\fo([k],\nu))_{y_\fo/},
\end{tikzcd}
\]
where the fibre products in the parentheses are taken once along $\gamma(i(1))$ and once along $\gamma(\epsilon_{\fa\fo}(1))$; since a product of initial functors is initial, and $\Id_{y_\fa}$ and $\Id_{y_\fo}$ are initial objects in their respective overcategories,
$\cD_{\by/,\bx/\hinert}^{\special}$ is initial in $\cD_{\by/,\bx/\hinert}^{\special'}$.

For (2), the top equivalence in the previous diagram is nothing but the functor $\cD^\special_{\by/,\bx/\hinert}\xrightarrow{\simeq}\cD^\special_{\by'/,\bx/\hinert}$ induced by the horizontally inert morphism $\by'\to\by$. If $\bt\to\by$ is horizontally inert, then we have a composition of horizontally inert morphisms $\by'\to\bt\to\by$ and we conclude by ``two out of three''.
\end{proof}
\begin{cor}
\label{cor:LDDhintmappingspaces}
For objects $\bx=([1],1,x)$ and $\by=([1],1,y)$ in $\cD$, the morphism space $L(\cD,\cD_{\hinert})(\by,\bx)$ is equivalent to the classifying space $|\Xi(\gamma,\by,\bx)|$ of the limit $\Xi(\gamma,\by,\bx)$ in $\Catinfone$ of the following W-shaped diagram:
\[
\begin{tikzcd}[row sep=2pt]
*\ar[dr,"x"]& &\gamma([3],010)\ar[dl,"\gamma(i(1))"']\ar[dr,"\gamma(\epsilon_{\fa\fo}(1))"]& &\gamma([1],1)_{y/}\ar[dl,"d_0"']\\
&\gamma([1],1)& &\gamma([1],1).
\end{tikzcd}
\]
\end{cor}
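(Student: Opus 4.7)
The plan is to deduce the corollary from Theorem \ref{thm:Cisinski} (Cisinski's left calculus of fractions) applied to the relative $\infone$-category $(\cD, \cD_\hinert)$ at the object $\bx$, and then to specialize the resulting identification using Lemma \ref{lem:Dspecialinitial} together with the fact that $\fa([1],1) = \fo([1],1) = [0]$.

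First, I would verify that the inclusion $\pi(\bx) \colon W(\bx) \hto \cD_{\bx/}$ defines a left calculus of fractions at $\bx$. The initial object of $W(\bx)$ is $\Id_\bx$, which is horizontally inert since $\cD_\hinert$ contains all identities; and by construction $\pi(\bx)$ lands in arrows out of $\bx$ that are horizontally inert. The nontrivial condition is the third one, namely that $(d_0\pi(\bx))_!(*) \in \PSh(\cD)$ factors through $L(\cD,\cD_\hinert)$. Since $(d_0\pi(\bx))_!(*)(\by) \simeq |\cD_{\by/,\bx/\hinert}|$, this condition asks that for every horizontally inert morphism $\bt \to \by$ in $\cD$ the induced map $|\cD_{\by/,\bx/\hinert}| \to |\cD_{\bt/,\bx/\hinert}|$ be an equivalence: combining Lemma \ref{lem:Dspecialinitial}(1), which replaces each of these classifying spaces by $|\cD_{\by/,\bx/\hinert}^\special|$ and $|\cD_{\bt/,\bx/\hinert}^\special|$ respectively, with Lemma \ref{lem:Dspecialinitial}(2), which provides the equivalence between these, yields the claim.

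Applying Theorem \ref{thm:Cisinski} then gives $L(\cD,\cD_\hinert)(\by,\bx) \simeq (d_0\pi(\bx))_!(*)(\by) \simeq |\cD_{\by/,\bx/\hinert}|$, and by Lemma \ref{lem:Dspecialinitial}(1) again this is equivalent to $|\cD_{\by/,\bx/\hinert}^\special|$. It remains to identify $\cD_{\by/,\bx/\hinert}^\special$ with $\Xi(\gamma,\by,\bx)$ in the particular case $\by = ([1],1,y)$. Here I would invoke the explicit formula for $\cD_{\by/,\bx/\hinert}^\special$ derived in the proof of Lemma \ref{lem:Dspecialinitial}, namely the fibre at $(\Id_{y_\fa}, \Id_{y_\fo})$ of the projection
\[
\gamma(\fa([k],\nu))_{y_\fa/} \times \Bigl(\{x\} \underset{\gamma([1],1)}{\times} \gamma([3],010) \underset{\gamma([1],1)}{\times} \gamma([1],1)_{y'/}\Bigr) \times \gamma(\fo([k],\nu))_{y_\fo/}
\longrightarrow \gamma(\fa([k],\nu))_{y_\fa/} \times \gamma(\fo([k],\nu))_{y_\fo/}.
\]
When $\by = ([1],1,y)$ we may take $\by' = \by$ (so $y' = y$), and since $\fa([1],1) = \fo([1],1) = [0]$, the outer factors become overcategories of the space $\gamma_0$ at points $y_\fa, y_\fo$; these overcategories are contractible, so the above fibre simplifies to the double fibre product $\{x\} \times_{\gamma([1],1)} \gamma([3],010) \times_{\gamma([1],1)} \gamma([1],1)_{y/}$, which is precisely $\Xi(\gamma,\by,\bx)$.

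The main obstacle is mostly bookkeeping: carefully matching the two structure maps of $\gamma([3],010)$ used in the fibre product (namely $\gamma(i(1))$ on one side and $\gamma(\epsilon_{\fa\fo}(1))$ on the other) with those appearing in the W-diagram of the corollary, and checking that the contraction of the outer factors $\gamma([0])_{y_\fa/}$ and $\gamma([0])_{y_\fo/}$ is compatible with the pinning at $(\Id_{y_\fa}, \Id_{y_\fo})$. Both checks are immediate from the fact that $\gamma_0$ is a space, so all of the data reduces to the W-shaped diagram displayed in the statement.
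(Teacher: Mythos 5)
Your proposal is correct and takes essentially the same approach as the paper's (very terse) proof: apply Theorem \ref{thm:Cisinski} using the two parts of Lemma \ref{lem:Dspecialinitial} to justify the left calculus of fractions and to pass to the subcategory of special objects, then identify $\cD_{\by/,\bx/\hinert}^\special$ with $\Xi(\gamma,\by,\bx)$ by specializing the explicit fibre-product decomposition from the proof of Lemma \ref{lem:Dspecialinitial}. You usefully spell out the verification of the Cisinski conditions that the paper leaves implicit; the only minor inaccuracy is in your last step — the fibre at $(\Id_{y_\fa},\Id_{y_\fo})$ of the outer projection is the middle double fibre product essentially by definition, so the contractibility of the outer overcategories is not actually what drives the simplification (the paper's final display already exhibits $\cD^\special$ as $*\times(\cdots)\times*$ for general $\by$).
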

\begin{proof}
By Lemma \ref{lem:Dspecialinitial}, Theorem \ref{thm:Cisinski} is applicable and we can identify the spaces 
\[
L(\cD,\cD_{\hinert})(\by,\bx)\simeq|\cD_{\by/,\bx/\hinert}|\simeq|\cD_{\by/,\bx/\hinert}^\special|,
\]
and the proof of \ref{lem:Dspecialinitial} identifies $\cD_{\by/,\bx/\hinert}^\special\simeq \Xi(\gamma,\by,\bx)$ in $\Catinfone$.
\end{proof}
\begin{proof}[Proof of Proposition \ref{prop:dotcTcLsm}]
We assume that the statement holds for $n-1$ and aim at proving it for a fixed $n\ge2$. We first prove that $\dot\cL^n$ preserves pullbacks over spaces. Let therefore $\cC\to Y\ot \cC'$ be a cospan in $\Catinfn$ with $Y\in\cS$. By Lemma \ref{lem:essentiallysurjective}, the rows in the commutative square
\[
\begin{tikzcd}[row sep=10pt]
\tau^*\Big(\cC\underset{Y}\times\cC'\Big)([1^n],1)\ar[r,]\ar[d,"\simeq"]&\dot\cL^n\Big(\cC\underset{Y}\times\cC'\Big)^\simeq\ar[d]\\
\tau^*(\cC)([1^n],1)\underset{Y}\times\tau^*(\cC')([1^n],1)\ar[r] &\pa{\dot\cL^n(\cC)\underset{Y}\times\dot\cL^n(\cC')}^\simeq
\end{tikzcd}
\]
are surjective on $\pi_0$, implying that the right vertical arrow is also surjective on $\pi_0$; in other words, the canonical functor $\dot\cL^n(\cC\times_Y\cC')\to\dot\cL^n(\cC)\times_{\dot\cL^n(Y)}\dot\cL^n(\cC')$ is essentially surjective. 
To prove full faithfulness, we first use the inductive hypothesis to obtain that the canonical map $\ddot\cL^n(\cC\times_Y\cC')\to\ddot\cL^n(\cC)\times_{\ddot\cL^n(Y)}\ddot\cL^n(\cC')$ is an equivalence of cartesian fibrations over $\bDelta_1$.
Now let $\bx=([1],1,x)$ and $\by=([1],1,y)$ be objects in $\ddot\cL^n(\cC\times_Y\cC')$, with $x,y\in\hat\Gamma^n(\cC\times_Y\cC')_0([1],1)$. Denote by $\bx_\cC,\bx_Y,\bx_{\cC'}$ the images of $\bx$ in $\hat\Gamma(\cC)_0([1],1)$, $\hat\Gamma(Y)_0([1],1)$ and $\hat\Gamma(\cC')_0([1],1)$, and define similarly $\by_\cC,\by_Y,\by_{\cC'}$.  We may then write a sequence of equivalences of spaces
\[
\begin{split}
\dot\cL^n(\cC\times_Y\cC')(\by,\bx)& \simeq|\Xi(\hat\Gamma(\cC\times_Y\cC')_0,\by,\bx)|\\
&\simeq |\Xi(\hat\Gamma(\cC)_0,\by_\cC,\bx_\cC)\times_{\Xi(\hat\Gamma(Y)_0,\by_Y,\bx_Y)}\Xi(\hat\Gamma(\cC')_0,\by_{\cC'},\bx_{\cC'}|\\
&\simeq |\Xi(\hat\Gamma(\cC)_0,\by_\cC,\bx_\cC)|\times_{|\Xi(\hat\Gamma(Y)_0,\by_Y,\bx_Y)|}|\Xi(\hat\Gamma(\cC')_0,\by_{\cC'},\bx_{\cC'})|\\
&\simeq\Big(\dot\cL^n(\cC)\times_{\dot\cL^n(Y)}\dot\cL^n(\cC')\Big)(\by,\bx);
\end{split}
\]
the first and fourth equivalences are given by Corollary \ref{cor:LDDhintmappingspaces};
the second equivalence comes from the formula for $\Xi$ as an iterated limit, and the fact that limits commute with limits; the third equivalence comes from the fact that $\Xi(\hat\Gamma(Y)_0,\by_Y,\bx_Y)$ is a space, and taking classifying spaces commutes with fibre products over a space. This proves that the canonical functor $\dot\cL^n(\cC\times_Y\cC')\to\dot\cL^n(\cC)\times_{\dot\cL^n(Y)}\dot\cL^n(\cC')$ is fully faithful, and therefore it is and equivalence of $\infone$-categories.

The proof that $\dot\cT^n$ preserves pullbacks over spaces is completely analogous: we just replace ``$\tau^*$'' by ``$\iota_*$'' and ``$Y$'' by ``$Y^{S^{n-1}}$'' in the first diagram above, and we replace ``$\hat\Gamma$'' by ``$\Gamma$'' in the rest of the argument.
\end{proof}

\subsection{A symmetric monoidal square}
The pullback square from Proposition \ref{prop:Catinfn01pullback} is not a square in $\CAlg(\Catinfone^{\fU_2})$, for instance because the left vertical functor $(\dot\cT^n,\dot\kappa^n)$ is not endowed with a symmetric monoidal structure. In this subsection we introduce an equivalent pullback square admitting a lift to $\CAlg(\Catinfone^{\fU_2})$.

In the following, let $M$ be an ungroup-like abelian monoid. Consider the functor $(\dot\cL^n\to\dot\cT^n)\colon\Catinfn\to\PSh$, which is symmetric monoidal (it preserves finite products) by Proposition \ref{prop:dotcTcLsm}. The terminal algebra $*\in\CAlg(\Catinfn)$ is sent to the terminal algebra $(*\to*)\in\CAlg(\PSh)$, and $\bB^n M\in\CAlg(\Catinfn)$ is sent along $(\dot\cL^n\to\dot\cT^n)$ to some commutative algebra $(\dot\cL^n\to\dot\cT^n)(\bB^n M)\in\CAlg(\PSh)$, about which we make the following conjecture.
\begin{conj}
For $X\in\Top$, denote by $\mathrm{SP}(X,M)$ the free abelian (strictly commutative) topological monoid generated by $X$ many copies of $M$, i.e. with the following universal property: for any abelian topological monoid $Y$ we have a bijection of sets, naturally in $Y$:
\[
\map_{\mathrm{TopAbMon}}(\mathrm{SP}(X,M),Y)\cong\map_\Top(X,\map_{\mathrm{AbMon}}(M,Y)).
\]
Then we have an equivalence $\dot\cT^n(\bB^nM)\simeq\bB(\mathrm{SP}(S^{n-1},M))$, and the right fibration $\dot\cL^n(\bB^nM)\to\dot\cT^n(\bB^nM)$ corresponds to the action of $\mathrm{SP}(S^{n-1},M)$ on $M$ induced by the topological monoid homomorphism $\mathrm{SP}(S^{n-1},M)\to\mathrm{SP}(*,M)\cong M$.
\end{conj}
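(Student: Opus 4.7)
The plan is to proceed by induction on $n \ge 1$, building on Corollary \ref{cor:dotcTCCop}, Corollary \ref{cor:straighteningddotcT}, Corollary \ref{cor:LDDhintmappingspaces}, and the Segal decomposition $(\bB^n M)_{[k]} \simeq \bB^{n-1}(M^k)$ (which follows from $\bB^n M([k]\times[\bk']) = M^{\ul k\times\ul\bk'} = (M^k)^{\ul\bk'}$).

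For the base case $n=1$, apply Corollary \ref{cor:dotcTCCop} to $\cC = \bB M$. Since $M$ is abelian, $(\bB M)^\op \simeq \bB M$, so $\dot\cT^1(\bB M) \simeq \bB M \times \bB M \simeq \bB(M\times M)$. The universal property of $\mathrm{SP}$ applied to the pointed discrete space $S^0 = \ast \sqcup \ast$ (with basepoint collapsing one copy) yields $\mathrm{SP}(S^0, M) \cong M \times M$, which proves the first identification. The right fibration $\dot\cL^1(\bB M) \to \dot\cT^1(\bB M)$ corresponds by Corollary \ref{cor:dotcTCCop} to the presheaf $\bB M(-,-)$, which evaluates to $M$ at the unique object with left- and right-multiplication; this is precisely the action of $\mathrm{SP}(S^0, M) = M\times M$ on $M$ via the fold/addition map $M\times M \to M$.

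For the inductive step, assume the conjecture for $n-1$ and apply Corollary \ref{cor:straighteningddotcT} to $\cC = \bB^n M$. The cartesian fibration $\ddot\cT^n(\bB^n M) \to \bDelta_1$ is classified by $\Gamma^n(\bB^n M)_0 \colon \bDelta_1^\op \to \Catinfone$, whose value at $\bx = ([k],\nu) \in \bDelta_1^\op$ is the pullback of $\dot\cL^{n-1}((\bB^n M)_{\fa(\bx)}) \times \dot\cL^{n-1}((\bB^n M)_{\fo(\bx)})$ over $\dot\cT^{n-1}((\bB^n M)_{\fa(\bx)}) \times \dot\cT^{n-1}((\bB^n M)_{\fo(\bx)})$ with $\dot\cT^{n-1}((\bB^n M)_{\tau(\bx)})$. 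Using the Segal decomposition and the naturality identity $\mathrm{SP}(X, M^k) \cong \mathrm{SP}(X, M)^k$ (immediate from the universal property), the inductive hypothesis identifies each such pullback with a piece of the simplicial bar construction of $\mathrm{SP}(S^{n-2}, M)$ acting on itself. Assembling these values, $\Gamma^n(\bB^n M)_0$ becomes the $\infone$-categorical bar construction of $\mathrm{SP}(S^{n-2}, M)$, and analogously $\hat\Gamma^n(\bB^n M)_0$ gives the corresponding action bar construction.

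Localizing at horizontal inert morphisms via Corollary \ref{cor:LDDhintmappingspaces} then computes $\dot\cT^n(\bB^n M) \simeq \bB\bigl(\bB\,\mathrm{SP}(S^{n-2}, M)\bigr)$, where the outer $\bB$ is geometric realization of the bar construction. The final identification $\bB\,\mathrm{SP}(S^{n-2}, M) \simeq \mathrm{SP}(S^{n-1}, M)$ follows because $\mathrm{SP}(-, M)$ is a left adjoint $\Top_\ast \to \mathrm{TopAbMon}$ and therefore preserves the pushout $\Sigma S^{n-2} \simeq \ast \sqcup_{S^{n-2}} \ast$ in pointed spaces, giving a pushout $\ast \sqcup_{\mathrm{SP}(S^{n-2}, M)} \ast$ in $\mathrm{TopAbMon}$ whose underlying space is the classifying space $\bB\,\mathrm{SP}(S^{n-2}, M)$. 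This yields $\dot\cT^n(\bB^n M) \simeq \bB\,\mathrm{SP}(S^{n-1}, M)$; the parallel bookkeeping of $\dot\cL^n(\bB^n M)$ together with the fact that the right fibration is classified by $\dot\lambda^n$ identifies it as the action category of $\mathrm{SP}(S^{n-1}, M)$ acting on $M$ via the map $\mathrm{SP}(S^{n-1}, M) \to \mathrm{SP}(\ast, M) = M$.

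The main obstacle will be carefully matching the $\infone$-categorical localization produced by the machinery of Section \ref{sec:truncatedviapsh} with the classical bar construction on $\mathrm{TopAbMon}$, especially verifying that the completeness/Segal conditions inherited through $\dot\cT^n$ reproduce the correct group-completion-free (i.e.\ ungroup-like) version of the delooping; this requires checking that no inversion of the symmetric products occurs at any stage, which uses in an essential way the ungroup-like hypothesis on $M$. A secondary technical issue is verifying naturality of the identifications in $M$, so that the multiplicative structure on the abelian monoid $\mathrm{SP}(S^{n-1}, M)$ and its action on $M$ are correctly reconstructed from the inductive data rather than only its underlying space.
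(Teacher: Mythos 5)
The statement you are attempting to prove is labelled as a \emph{conjecture} in the paper, and it is genuinely left open: immediately after stating it the paper says ``We will content ourselves with a milder analysis of $\dot\cT^n(\bB^nM)$ and $\dot\cL^n(\bB^nM)$,'' and only establishes the $M$-grading $\fd_\cT$ and Lemma \ref{lem:iota*dotTnBnN}. So there is no paper proof to compare against; I am assessing your argument on its own.

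Your base case $n=1$ is fine (though the parenthetical about ``collapsing one copy'' is confusing: the universal property in the conjecture is for \emph{unpointed} $X$, and no basepoint is collapsed; $\mathrm{SP}(S^0,M)\cong M\times M$ is the unreduced computation $\map_\Top(S^0,\map_{\mathrm{AbMon}}(M,Y))\cong\map_{\mathrm{AbMon}}(M\times M,Y)$). The inductive step, however, has a genuine gap that cannot be repaired as written. The final identification $\bB\,\mathrm{SP}(S^{n-2},M)\simeq\mathrm{SP}(S^{n-1},M)$ is \emph{false} for the unreduced $\mathrm{SP}$ that the conjecture uses. You invoke that $\mathrm{SP}(-,M)$ preserves the pushout $S^{n-1}\simeq\ast\sqcup_{S^{n-2}}\ast$, which is correct since $\mathrm{SP}(-,M)\colon\Top\to\mathrm{TopAbMon}$ is a left adjoint; but $\mathrm{SP}(\ast,M)=M$, not the terminal monoid. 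So the resulting pushout in $\mathrm{TopAbMon}$ is $M\sqcup_{\mathrm{SP}(S^{n-2},M)}M$ with both legs the fold map $\mathrm{SP}(S^{n-2},M)\to M$, and \emph{not} $\ast\sqcup_{\mathrm{SP}(S^{n-2},M)}\ast$. The latter is just $\ast$ (the terminal object), and the ``bar construction'' reading $\ast\sqcup_N\ast\simeq\bB N$ is the \emph{reduced} suspension identity, which is what Dold--Thom would give you for the reduced $\widetilde{\mathrm{SP}}$; it does not apply here. One can see the identity fails already on $\pi_0$: for $n\ge2$ the sphere $S^{n-1}$ is connected, so $\pi_0\bigl(\mathrm{SP}(S^{n-1},M)\bigr)=\coprod_{\pi_0(S^{n-1})}M=M$, whereas $\bB\,\mathrm{SP}(S^{n-2},M)$ is a classifying space and hence connected. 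Concretely, for $M=\Z$ and $n=2$ one has $\mathrm{SP}(S^1,\Z)\simeq\Z\times K(\Z,1)$ by Dold--Thom, while $\bB\,\mathrm{SP}(S^0,\Z)=\bB(\Z^2)\simeq T^2$; these are not equivalent. This also casts doubt on your intermediate claim $\dot\cT^n(\bB^nM)\simeq\bB\bigl(\bB\,\mathrm{SP}(S^{n-2},M)\bigr)$, since the grading $\fd_\cT$ of Definition \ref{defn:fdcT} shows the morphism monoid of $\dot\cT^n(\bB^nM)$ surjects onto $M$ on $\pi_0$, so it cannot be a connected bar construction. The correct version of the glueing step has to track the extension $M\xrightarrow{\simeq}\pi_0\to\mathrm{SP}(S^{n-1},M)\to\text{(reduced part)}$; your argument collapses $M$ to a point and loses this.
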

We will content ourselves with a milder analysis of $\dot\cT^n(\bB^nM)$ and $\dot\cL^n(\bB^nM)$.
We observe that the fibre of the right fibration $(\cL^n_1\to\cT^n_1)(\bB^nM)$ at any object $\bx\in\cT^n_1(\bB^nM)$ is $\lambda^n_1(\bB^nM)(\bx)\simeq M$; the same holds for the localised right fibration $(\dot\cL^n\to\dot\cT^n)(\bB^nM)$; in particular, taking the fibre at the monoidal unit of $\dot\cT^n(\bB^nM)$, we obtain the following commutative diagram in $\CAlg(\Catinfone)$ with right fibrations as vertical arrows, and featuring a pullback square on right:
\[
\begin{tikzcd}[row sep=10pt, column sep=50pt]
*\ar[r,"\iota"]\ar[d,equal]&M\ar[r]\ar[d]\ar[dr,phantom,"\lrcorner"very near start]& \dot\cL^n(\bB^nM)\ar[d]\\
*\ar[r,equal]&*\ar[r,"{([1^n],1,*)}"] &\dot\cT^n(\bB^nM).
\end{tikzcd}
\]
Now let $(\bE M\to\bB M)\in\CAlg(\PSh)$ denote the right fibration corresponding to the action of $M$ on itself, and note that both $\bE M$ and $\bB M$ are plain 1-categories.

\begin{defn}
\label{defn:fdcT}
Let $M$ be an ungroup-like abelian monoid.
For an object $\bx=([\bk],\nu,x)\in\cT^n_1(\bB^nM)$, we consider the set of $(\prod_{i=1}^nk_i)-1$ inert morphisms $([1^n],0)\to([\bk],\nu)$ in $\bDelta^n_{01}$, take for each of these a cartesian lift $([1^n],0,y)$ with $y\in M$, and define the total grading of $\bx$ as the sum of the $(\prod_{i=1}^nk_i)-1$ elements $y$ thus obtained. We denote by
\[
\fd_\cT\colon\cT^n_1(\bB^nM)\to\bB M
\]
the functor sending each object to $*\in\bB M$, and sending a morphism $f\colon\bx\to\by$ in $\cT^n_1(\bB^nM)$ to the total grading of the essentially unique object $\bz\in\cT^n_1(\bB^nM)$ for which there is a commutative diagram in $\cT^n_1(\bB^nM)$ as follows
\[
\begin{tikzcd}[column sep=50pt, row sep=10pt]
\bx\ar[r,"f"] &\by\\
([1^n],1,*)\ar[u,"\text{inert}"']\ar[r,"\text{active}","f'"']&\bz.\ar[u,"\text{inert}"']
\end{tikzcd}
\]
We similarly define a functor $\fd_\cL\colon\cL^n_1(\bB^nM)\to\bE M$, sending an object $([\bk],\nu,x)$ to the element $y\in M$ such that $([1^n],1,y)\to([\bk],\nu,x)$ is an inert morphism, and whose behaviour on morphisms is dictated by the existence of the 
pullback square on left in the following:
\[
\begin{tikzcd}[row sep=10pt]
\cL^n_1(\bB^nM)\ar[r,"\fd_\cL"]\ar[d]\ar[dr,phantom,"\lrcorner"very near start] &\bE M\ar[d]\\
\cT^n_1(\bB^nM)\ar[r,"\fd_\cT"]&\bB M;
\end{tikzcd}
\hspace{2cm}
\begin{tikzcd}[row sep=10pt]
\dot\cL^n(\bB^nM)\ar[r,"\fd"]\ar[d] \ar[dr,phantom,"\lrcorner"very near start]&\bE M\ar[d]\\
\dot\cT^n(\bB^nM)\ar[r,"\fd"]&\bB M.
\end{tikzcd}
\]
We denote by $\fd_\cL$ and $\fd_\cT$ also the induced functors on localisations, as in the square on right, which by virtue of Lemma \ref{lem:horlocfibration} is again a pullback.
\end{defn}
A direct inspection shows that both localised functors $\fd_\cL,\fd_\cT$ are symmetric monoidal: since $\bE M$ and $\bB M$ are plain categories, this is a property. In fact the second pullback square in Definition \ref{defn:fdcT} is one in $\CAlg(\Catinfone)$.
\begin{rem}
\label{rem:twistedgrading}
Given an $M$-graded $\infn$-category $\cC$, the map $\fd\colon\cC\to\bB^nM$ in $\Catinfn$ gives rise to a map $\dot\cT^n(\fd)\colon\dot\cT^n(\cC)\to\dot\cT^n(\bB^nM)$, which composed with $\fd_\cT$ gives an $M$-grading on the $\infone$-category $\dot\cT^n(\cC)$.

Now suppose that we have a presheaf $A\in\PSh(\dot\cT^n(\cC))$ and a map $(\cC,A)\to(\dot\cT^n,\dot\lambda^n)(\bB^nM)=(\dot\cL^n\to\dot\cT^n)(\bB^nM)$ in $\PSh$. Then composing with $(\fd_\cL,\fd_\cT)$ we obtain a \emph{twisted $M$-grading} on $A$ in the following sense: for each $\bx\in\cC$ we have the datum of a map of spaces $A(\bx)\to M$, which we call an \emph{$M$-grading} on the space $A(\bx)$; these $M$-gradings satisfy that for each $f\colon\bx\to\by$ in $\cC$, $x\in A(\bx)$ and $y\in A(\by)$ with $A(f)(y)\simeq x$ we have that the $M$-grading of $x$ is the sum of the $M$-grading of $y$ and that of $f$. Note that this structure on $A$ is different than that of $M$-graded presheaf, i.e. a lift of $A$ to a functor $\cC^\op\to\cS_{/M}$: in the latter case we would just have that the $M$-gradings of $x$ and $y$ are equal.
\end{rem}

\begin{nota}
For $M=\N$, we let $\hat\iota$ and $\varphi$ denote the two morphisms fitting in the following composition in $\CAlg(\PSh)$, which is the result of the previous discussion:
\[
(*\to*)\xrightarrow{\hat\iota}(\N\to*)\xrightarrow{\varphi}(\dot\cL^n\to\dot\cT^n)(\bB^n\N)\xrightarrow{(\fd_\cL,\fd_\cT)}(\bE\N\to\bB\N).
\]
\end{nota}
We observe that $\varphi$ and $\fd$ are cartesian lifts of their projections along $d_0\colon\PSh\to\Catinfone$, and that $\varphi\hat\iota$ recovers the morphism $(*\to*)\to(\dot\cL^n\to\dot\cT^n)(\bB^n\N)$ in $\CAlg(\PSh)$ obtained by applying the symmetric monoidal functor 
$(\dot\cL^n\to\dot\cT^n)$ to the morphism $\bB^n\iota\colon \bB^n0\simeq*\to\bB^n\N$ in $\CAlg(\Catinfn)$.
Passing to overcategories, we obtain a sequence of lax symmetric monoidal functors:
\[
\begin{tikzcd}[column sep=20pt]
\Catinfn^\N\simeq(\Catinfn)_{/\bB^n\N}\ar[rr,"\dot\cL\dot\cT_\N"]& &\PSh_{/(\dot\cL^n\to\dot\cT^n)(\bB^n\N)}\ar[r,"\varphi^*"]&\PSh_{/(\N\to*)}\ar[r,"\hat\iota^*"]&\PSh.
\end{tikzcd}
\]
The first functor $\dot\cL\dot\cT_\N$ is induced on overcategories over commutative algebras by the symmetric monoidal functor $(\dot\cL^n\to\dot\cT^n)$, and it is also symmetric monoidal. The other two functors
$\varphi^*$ and $\hat\iota^*$, which take pullbacks in $\PSh$ along $\varphi$ and $\hat\iota$, respectively, are a priori only lax symmetric monoidal, as their left adjoints $\varphi_!$ and $\hat\iota_!$, given by postcomposition with $\varphi$ and $\hat\iota$, are symmetric monoidal.
\begin{lem}
\label{lem:iota*dotTnBnN}
Consider $\dot\cT^n(\bB^n\N)$ as an $\N$-graded $\infone$-category via the $\N$-gra\-ding $\fd$ constructed above. Then the functor $*\simeq\dot\cT^n(*)\to\dot\cT^n(\bB^n\N)$ restricts to an equivalence of $\infone$-categories $*\xrightarrow{\simeq}\iota^*(\dot\cT^n(\bB^n\N))$.
\end{lem}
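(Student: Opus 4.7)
The plan is to argue by induction on $n$. For the base case $n = 1$, Corollary \ref{cor:dotcTCCop} identifies $\dot\cT^1(\bB\N) \simeq \bB\N \times \bB\N^\op$. Unwinding the pullback square of Definition \ref{defn:fdcT} via $\dot\cL^1(\bB\N) \simeq \Tw(\bB\N)$ projecting to $\bE\N$ shows that the $\N$-grading $\fd$ on the endomorphism monoid $\N \times \N$ of the unique object is the addition map $(a, b) \mapsto a + b$, whose fiber over $0$ is the singleton $\{(0, 0)\}$. Hence $\iota^*\dot\cT^1(\bB\N) \simeq *$.

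For the inductive step $n \geq 2$, I would first verify that $\iota^*\dot\cT^n(\bB^n\N)$ has a single object up to equivalence. By Lemma \ref{lem:essentiallysurjective} every object of $\dot\cT^n(\bB^n\N)$ is equivalent to one of the form $([1^n], 1, x)$, and the triviality of $\bB^n\N$ in low degrees (since $\bB^n\N([1^k, 0^{n-k}]) = \N^\emptyset = *$ for $k < n$) makes any two such $x$ equivalent via an inert chain. Because $0$ is the only invertible element of $\N$, every equivalence in $\dot\cT^n(\bB^n\N)$ automatically has grading $0$; hence the single-object statement descends to $\iota^*\dot\cT^n(\bB^n\N)$, with distinguished object $\bx_0$ the image of $* \simeq \dot\cT^n(*)$ under $\dot\cT^n$ applied to the unit $* \to \bB^n\N$. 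The claim thus reduces to showing that the fiber of $\fd \colon \dot\cT^n(\bB^n\N)(\bx_0,\bx_0) \to \N$ over $0$ is contractible.

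To compute this fiber, I would apply Corollary \ref{cor:LDDhintmappingspaces} with $\gamma = \Gamma^n(\bB^n\N)_0$. Using the identification $\bB^n\N_{[k]} \simeq (\bB^{n-1}\N)^{\ul k}$, Proposition \ref{prop:dotcTcLsm} and Lemma \ref{lem:GammacC01Segal}, each value $\gamma([k], \nu)$ entering the $\Xi$-diagram decomposes as a finite limit of products of copies of $A := \dot\cT^{n-1}(\bB^{n-1}\N)$ and $B := \dot\cL^{n-1}(\bB^{n-1}\N)$, the remaining factors (which come from $\bB^n\N^\simeq = *$) trivializing. The grading $\fd$, factored through the pullback square of Definition \ref{defn:fdcT}, restricts on each $B$-factor to the composite $B \to \bE\N \to \bB\N$, and the total grading of a point in $|\Xi|$ equals the sum of the gradings coming from its $B$-factors. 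By the inductive hypothesis $\iota^* A \simeq *$; combining this with the pullback square of Definition \ref{defn:fdcT} together with $\iota^*\bE\N = \N$ and $\iota^*\bB\N = *$ yields $\iota^* B \simeq \N$. Demanding that the sum of $B$-gradings vanish forces each $B$-factor to sit at $0 \in \N$, whereupon the $\iota^*$-replaced $\Xi$-limit collapses to a product of points and hence to a point.

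The main obstacle will be the careful bookkeeping of how $\fd$ decomposes across the $\Xi$-description, namely verifying the additivity of $\fd$ over the $B$-factors and the compatibility of the grading-$0$ fiber with the W-shaped limit defining $\Xi$. Once this additivity is in place, the inductive contractibility follows by an elementary diagram chase.
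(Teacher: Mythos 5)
Your proof takes a genuinely different route from the paper's. The paper's argument is direct and non-inductive: it applies Lemma \ref{lem:Mgradedlocalisation} to swap the order of the two operations $\iota^*$ and $L(-, \text{inert})$, reducing the claim to computing $L(\iota^*(\cT^n_1(\bB^n\N)),\cT^n_1(\bB^n\N)_{\inert})$; it then shows, via an active-inert factorisation and a retraction argument, that \emph{every} morphism of $\iota^*(\cT^n_1(\bB^n\N))$ is inverted in this localisation, so that the localisation is the classifying space $|\iota^*(\cT^n_1(\bB^n\N))|$; and finally it exhibits $([1^n],1,*)$ as a terminal object in an initial full subcategory, whence the classifying space is contractible. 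Your proposal instead inducts on $n$ and pushes the computation through the recursive $\Xi$-machinery of Corollary \ref{cor:LDDhintmappingspaces}.

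Your base case and single-object reduction are fine. But the inductive step has a genuine gap exactly where you flag it, and I don't think it's as elementary as you suggest. First, your claim that the total grading of a point in $|\Xi|$ "equals the sum of the gradings coming from its $B$-factors" is incomplete: the $\gamma([1],1)_{y/}$-factor is an overcategory of $A=\dot\cT^{n-1}(\bB^{n-1}\N)$, and $A$ carries its own $\N$-grading by the $(n-1)$-case of Definition \ref{defn:fdcT}, so its morphisms should also contribute to $\fd$. You don't explain how this contribution is absorbed. Second, the grading $\fd$ on $\dot\cT^n(\bB^n\N)$ is defined via an active-inert factorisation in the \emph{pre-localised} category $\cT^n_1(\bB^n\N)$ (Definition \ref{defn:fdcT}), while $\Xi$ lives at the end of a chain of two localisations (vertically inert, then horizontally inert). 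Relating $\fd$ to the $B$- and $A$-coordinates of a point in $\Xi$ therefore requires unwinding how the active-inert factorisation decomposes across the pullback square of Corollary \ref{cor:straighteningddotcT} and the W-limit; this is precisely the kind of bookkeeping the paper's argument is designed to avoid by staying in $\cT^n_1$. Third, even granting the additivity, "the $\iota^*$-replaced $\Xi$-limit collapses to a product of points" does not follow immediately: $\Xi$ is a W-shaped limit, and you must check that taking the grading-$0$ fibre commutes with the iterated pullback; since $\iota^*$ is a right adjoint this is not unreasonable, but it needs to be said explicitly. Until these three points are addressed the inductive step is incomplete, whereas the paper's proof closes without them.
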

\begin{proof}
Consider also $\fd\colon\cT^n_1(\bB^n\N)\to\bB\N$ as an $\N$-graded $\infone$-category.
Recall Definition \ref{defn:RelCatinfoneM}, and let $I=\set{0}\subset\N$. The counit of the adjunction $\iota_!\colon\Catinfone\leftrightarrows\Catinfone^\N\colon\iota^*$ gives rise to the following morphism in $\RelCatinfone^\N$:
\[
(\iota_!\iota^*(\cT^n_1(\bB^n\N)),\cT^n_1(\bB^n\N)_{\inert})\to(\cT^n_1(\bB^n\N),\cT^n_1(\bB^n\N)_{\inert}).
\]
This morphism satisfies the hypotheses of Lemma \ref{lem:Mgradedlocalisation}, so  we have an equivalence
\[
T_0 (L(\iota_!\iota^*(\cT^n_1(\bB^n\N)),\cT^n_1(\bB^n\N)_{\inert}))\xrightarrow{\simeq} T_0(L(\cT^n_1(\bB^n\N),\cT^n_1(\bB^n\N)_{\inert})).
\]

The counit of the adjunction $\iota^*\colon\Catinfone^\N\leftrightarrows\Catinfone\colon\iota_*$ induces an equivalence $\iota^*T_0\simeq\iota^*\iota_*\iota^*\xrightarrow{\simeq}\iota^*$, using that the unit of the same adjunction is an equivalence. Using this, and applying $\iota^*$ to the previous equivalence, we obtain a new equivalence involving $\iota^*(\dot\cT^n(\bB^n\N))$ on the right hand side; therefore we reduce to the computation of the new left hand side, namely
\[
\iota^*(L(\iota_!\iota^*(\cT^n_1(\bB^n\N)),\cT^n_1(\bB^n\N)_{\inert}))\simeq L(\iota^*(\cT^n_1(\bB^n\N)),\cT^n_1(\bB^n\N)_{\inert}).
\]
We next argue that every morphism $f$ in $\iota^*(\cT^n_1(\bB^n\N))$ is inverted if we localise at inert morphisms. The commutative square used to define the $\N$-grading $\fd$ on $\cT^n_1(\bB^n\N)$, together with the assumption that $f$ has vanishing $\N$-grading, allows us to only check that the corresponding active morphism of the form $f'\colon([1^n],1,*)\to\bz$ in $\iota^*(\cT^n_1(\bB^n\N))$ is inverted; note that the $\N$-grading of $f'$ is also zero. We observe that $\bz$ itself must have vanishing total $\N$-grading, i.e. $\bz\in\cT^n_1(\bB^n0)\subset\cT^n_1(\bB^n\N)$, so that there exists a retraction $g\colon\bz\to([1^n],1,*)$ of $f'$ in $\cT^n_1(\bB^n0)$, and hence also in $\cT^n_1(\bB^n\N)$. The composition of $g$ with the inert morphism $([1^n],1,*)\hto\bz$ is forced to be the identity of $([1^n],1,*)$, so that inverting inert morphisms forces us to invert $g$, which forces us to invert $f'$, which forces us to invert $f$.

We have therefore $L(\iota^*(\cT^n_1(\bB^n\N)),\cT^n_1(\bB^n\N)_{\inert})\simeq |\iota^*(\cT^n_1(\bB^n\N)))|\in\cS$; to prove that the latter space is contractible, we observe that the full subcategory of $\iota^*(\cT^n_1(\bB^n\N)))$ spanned by objects of vanishing total $\N$-grading is initial, and that
$([1^n],1,*)$ is terminal in this full subcategory.
\end{proof}

\begin{lem}
The functors $\varphi^*$ and $\hat\iota^*$ are symmetric monoidal.
\end{lem}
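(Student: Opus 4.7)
The plan is to show that, for both $\hat\iota$ and $\varphi$, the lax structure transformations attached to the pullback functors are in fact equivalences. Unpacking the algebra-induced monoidal structure on the overcategories, an object of $\PSh_{/(\N\to*)}$ consists of a presheaf $A$ on some $\infone$-category $\cC$ together with an $\N$-grading $A\to\underline{\N}$, and the tensor product is given by exterior product of presheaves with gradings added via $+\colon\N\times\N\to\N$. Similarly, an object of $\PSh_{/(\dot\cL^n\to\dot\cT^n)(\bB^n\N)}$ consists of a presheaf equipped with a twisted $\N$-grading in the sense of Remark \ref{rem:twistedgrading}, and the tensor product multiplies gradings in the algebra $(\dot\cL^n\to\dot\cT^n)(\bB^n\N)$.

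For $\hat\iota^*$: the pullback selects pointwise the degree-$0$ part of the presheaf. Since $\N$ is ungroup-like (Definition \ref{defn:ungrouplike}), the only solution to $k+\ell=0$ in $\N$ is $k=\ell=0$, so the degree-$0$ summand of the exterior product equals the exterior product of the degree-$0$ summands; this is precisely the lax structure equivalence $\hat\iota^*(X)\otimes\hat\iota^*(Y)\xrightarrow{\simeq}\hat\iota^*(X\otimes Y)$.

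For $\varphi^*$ the template is identical once we verify that the analogue of ``degree $0$'' on the base $\dot\cT^n(\bB^n\N)$ and on the presheaf $\dot\lambda^n(\bB^n\N)$ behaves like the $\N$-case. This is provided by Lemma \ref{lem:iota*dotTnBnN}, which identifies $\iota^*(\dot\cT^n(\bB^n\N))$ with $*$, together with the pullback squares of Definition \ref{defn:fdcT} which transfer the statement to $\dot\cL^n(\bB^n\N)$, whose fibre at the unit is $\N$ by construction. Composing with the symmetric monoidal map $(\fd_\cL,\fd_\cT)$ reduces additivity of the grading on a tensor product to the already-treated $\N$-case, and then the same ungroup-like argument yields the equivalence $\varphi^*(X)\otimes\varphi^*(Y)\xrightarrow{\simeq}\varphi^*(X\otimes Y)$. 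The main obstacle will be handling rigorously the right fibration level in $\PSh$: one must verify that the multiplication on $\dot\cL^n(\bB^n\N)$ covering the one on $\dot\cT^n(\bB^n\N)$ is controlled by the multiplication on $\bE\N\to\bB\N$ via $(\fd_\cL,\fd_\cT)$ in a manner strong enough to transport the ungroup-like argument from the base category to the presheaf.
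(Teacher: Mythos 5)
Your approach is correct in outline and identifies the two key inputs the paper also uses: Lemma \ref{lem:iota*dotTnBnN} (which pins down the fibre of the grading over $\dot\cT^n(\bB^n\N)$) and the ungroup-like property of $\N$. For $\hat\iota^*$ your argument is essentially the paper's: $d_0$ is tautologically monoidal, and the $d_1$-coordinate is monoidal because $*\simeq*\times_{\N}(\N\times\N)$, which is exactly the ``degree-$0$ of a product is a product of degree-$0$'s'' observation you make.

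For $\varphi^*$ the concern you flag at the end --- whether the right-fibration level can be transported --- dissolves, and you already have the relevant pieces in hand. The point is that the equivalence at the $d_1$-coordinate is a routine pasting of pullbacks and does not require checking any additional compatibility of the multiplication on $\dot\cL^n(\bB^n\N)$ with $\bE\N$. Concretely: once the $d_0$-pullback square (with $*\times*\to*$ over $\dot\cT^n(\bB^n\N)^2\to\dot\cT^n(\bB^n\N)$) is established via Lemma \ref{lem:iota*dotTnBnN} and the ungroup-like hypothesis, you paste it with the cartesian-lift squares for $\varphi$ --- namely $\N\simeq*\times_{\dot\cT^n(\bB^n\N)}\dot\cL^n(\bB^n\N)$ and the same squared --- to conclude that $\N\times\N\to\N$ over $\dot\cL^n(\bB^n\N)^2\to\dot\cL^n(\bB^n\N)$ is a pullback. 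The paper does exactly this, citing a ``standard argument combining the previous pullback square with the pullback square defining $\varphi$'', i.e.\ that $\varphi$ is a $d_0$-cartesian morphism. Your detour through $(\fd_\cL,\fd_\cT)$ and $\bE\N\to\bB\N$ is not wrong --- using the localized pullback square of Definition \ref{defn:fdcT} one can write $\dot\cL^n(\bB^n\N)\simeq\dot\cT^n(\bB^n\N)\times_{\bB\N}\bE\N$ and run the same pasting --- but it is an extra step; the paper skips it and works with the $\varphi$-square directly. In short, you found the right tools and the right reduction; the last-mile concern you express is a standard pullback-pasting argument, not a genuine obstacle, so you should simply carry it out rather than flag it as open.
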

\begin{proof}
Let $(\cC\to\cC'),(\cD\to\cD')\in\PSh_{/(\dot\cL^n\to\dot\cT^n)(\bB^n\N)}$, and consider a commutative diagram as the following, in which $\cE\in\Catinfone$ and the right square is a pullback square by Lemma \ref{lem:iota*dotTnBnN} and by the fact that $\N$ is ungroup-like:
\[
\begin{tikzcd}[row sep=10pt]
\cE\ar[d]\ar[r]&*\times*\ar[r,"\simeq"']\ar[d]\ar[dr,phantom,"\lrcorner"very near start] & *\ar[d]\\
\cC'\times\cD'\ar[r]&\dot\cT^n(\bB^n\N)\times \dot\cT^n(\bB^n\N)\ar[r]&\dot\cT^n(\bB^n\N).
\end{tikzcd}
\]
Then $\cE$ is a pullback of the left square if and only if it is a pullback of the composite square; a priori, the first pullback computes $d_0(\varphi^*(\cC\to\cC')\otimes \varphi^*(\cD\to\cD'))$, whereas the second computes $d_0(\varphi^*((\cC\to\cC')\otimes(\cD\to\cD')))$. Similarly, we may consider a commutative diagram as the following, in which the right square is a pullback square by a standard argument combining the previous pullback square on the right with the pullback square defining $\varphi$:
\[
\begin{tikzcd}[row sep=10pt]
\cE\ar[d]\ar[r]&\N\times\N\ar[r]\ar[d]\ar[dr,phantom,"\lrcorner"very near start] & \N\ar[d]\\
\cC\times\cD\ar[r]&\dot\cL^n(\bB^n\N)\times \dot\cL^n(\bB^n\N)\ar[r]&\dot\cL^n(\bB^n\N).
\end{tikzcd}
\]
Again we observe that $\cE$ is a pullback of the left square if and only if it is a pullback of the composite square; by identifying the two pullbacks we obtain an equivalence $d_1(\varphi^*(\cC\to\cC')\otimes \varphi^*(\cD\to\cD'))\simeq d_1(\varphi^*((\cC\to\cC')\otimes(\cD\to\cD')))$. This proves that $\varphi^*$ is symmetric monoidal.
Similarly, $d_0(\hat\iota^*)$ is symmetric monoidal because it agrees with $d_0\colon\PSh_{/(\N\to*)}\to\Catinfone$, and the equivalence $*\simeq *\times_\N(\N\times\N)$ ensures that $d_1(\hat\iota^*)$ is symmetric monoidal.
\end{proof}
\begin{cor}
\label{cor:commsquareCatinfnN}
We have a commutative square in $\CAlg(\Catinfone^{\fU_2})$:
\[
\begin{tikzcd}[row sep=10pt]
\Catinfn^\N\ar[r,"\iota^*"]\ar[d,"\varphi^*\dot\cL\dot\cT_\N"']&\Catinfn\ar[d,"(\dot\cL^n\to\dot\cT^n)"]\\
\PSh_{/(\N\to*)}\ar[r,"\hat\iota^*"]&\PSh.
\end{tikzcd}
\]
\end{cor}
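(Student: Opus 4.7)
The plan is to unwind both composites in the square on an object $\cC\in\Catinfn^\N$ with grading $\fd\colon\cC\to\bB^n\N$, identify them via Lemma~\ref{lem:iota*dotTnBnN}, and then upgrade the resulting pointwise equivalence to an equivalence of symmetric monoidal functors.

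First I would compute $\hat\iota^*\varphi^*\dot\cL\dot\cT_\N(\cC)$. Applying $\dot\cL\dot\cT_\N$ produces $(\dot\cL^n(\cC)\to\dot\cT^n(\cC))$ living over $(\dot\cL^n\to\dot\cT^n)(\bB^n\N)$ via the image of $\fd$. The successive pullbacks $\varphi^*$ and $\hat\iota^*$, applied using the pullback squares in Definition~\ref{defn:fdcT}, yield the object $(\dot\cL^n(\cC)\times_{\dot\cL^n(\bB^n\N)} *\to \dot\cT^n(\cC)\times_{\dot\cT^n(\bB^n\N)} *)$ in $\PSh$. The central observation is that by Lemma~\ref{lem:iota*dotTnBnN}, the map $*\to\dot\cT^n(\bB^n\N)$ factors through the contractible subcategory $\iota^*(\dot\cT^n(\bB^n\N))\simeq *$; hence the base pullback simplifies to $\dot\cT^n(\cC)\times_{\bB\N} *=\iota^*(\dot\cT^n(\cC))$ for the induced $\N$-grading from Remark~\ref{rem:twistedgrading}. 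The right-hand pullback square in Definition~\ref{defn:fdcT} relating $\dot\cL^n(\bB^n\N)$ and $\dot\cT^n(\bB^n\N)$ gives in parallel fashion $\dot\cL^n(\cC)\times_{\dot\cL^n(\bB^n\N)} *\simeq\iota^*(\dot\cL^n(\cC))$.

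Next I would compare $(\iota^*(\dot\cL^n(\cC))\to\iota^*(\dot\cT^n(\cC)))$ with the right-hand side $(\dot\cL^n(\iota^*(\cC))\to\dot\cT^n(\iota^*(\cC)))$. For $n=1$, Corollary~\ref{cor:dotcTCCop} gives $\dot\cT^1(\cC)\simeq\cC\times\cC^\op$ and $\dot\cL^1(\cC)\simeq\Tw(\cC)$; the induced grading on $\cC\times\cC^\op$ from $\fd_\cT$ is the sum of the gradings in the two factors, so the grading-$0$ subcategory coincides with $\iota^*(\cC)\times\iota^*(\cC)^\op=\dot\cT^1(\iota^*(\cC))$, and likewise $\iota^*(\Tw(\cC))\simeq\Tw(\iota^*(\cC))$. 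For $n\ge 2$, I would argue by induction using Corollary~\ref{cor:straighteningddotcT}: the slices $\cC_\bx$ for $\bx\in\bDelta^\op_1$ inherit $\N$-gradings compatible with the splitting of $\nu$ along $\epsilon_\fa,\epsilon_\fo$, and the recursive description of the fibres of $\ddot\cT^n(\cC)\to\bDelta_1$ in terms of $\dot\cL^{n-1}$ and $\dot\cT^{n-1}$ of the slices $\cC_{\fa(\bx)},\cC_{\fo(\bx)},\cC_{\tau(\bx)}$ is compatible with forming $\iota^*$ by the inductive hypothesis, since the pullbacks involved are over spaces (namely the $(n-1)$-morphism spaces $\cC^\simeq_{[0]}$).

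Finally, the identifications above are natural in $\cC$ and respect the symmetric monoidal structures, since all the involved functors $\iota^*,\dot\cL\dot\cT_\N,\varphi^*,\hat\iota^*,(\dot\cL^n\to\dot\cT^n)$ are symmetric monoidal by the preceding discussion (Proposition~\ref{prop:dotcTcLsm} and the last lemma). Assembling these natural equivalences yields the desired commutativity of the square in $\CAlg(\Catinfone^{\fU_2})$. The main obstacle will be the inductive step for $n\ge 2$: one must carefully verify that the right-Kan-extension construction $\Gamma^n(\cC)$ of Definition~\ref{defn:GammaC} and its subsequent localisation at horizontally inert morphisms are compatible with forming $\iota^*$. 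This requires checking that the pullback defining the fibres of $\Gamma^n(\cC)_0$ in Lemma~\ref{lem:GammacC01Segal} is preserved under restriction to grading-$0$ data, which in turn follows from the fact that these pullbacks are over the space $\cC^\simeq$ together with the inductive hypothesis for $\dot\cT^{n-1}$ and $\dot\cL^{n-1}$.
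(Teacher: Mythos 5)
Your proposal is a substantially different argument from the paper's. The paper's proof is a single sentence: it asserts that the two composites have already been identified ``as lax symmetric monoidal functors'' (via the observation, just before Lemma~\ref{lem:iota*dotTnBnN}, that $\varphi\hat\iota$ recovers $(\dot\cL^n\to\dot\cT^n)(\bB^n\iota)$, together with the overcategory formalism), so that once both are upgraded from lax to strong symmetric monoidal the square commutes in $\CAlg$. You instead carry out a pointwise computation of both composites and then match them up; this is more explicit, and it has the virtue of making visible the real content (namely that applying $(\dot\cL^n\to\dot\cT^n)$ commutes with restricting to grading $0$), which the paper's one-liner leaves implicit.

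That said, there are two places where your argument is thinner than it needs to be. First, the crux of your proposal is the identification of $\iota^*(\dot\cT^n(\cC))$ (the grading-$0$ subcategory for the twisted grading of Remark~\ref{rem:twistedgrading}) with $\dot\cT^n(\iota^*(\cC))$, and likewise for $\dot\cL^n$. You verify this cleanly for $n=1$ via Corollary~\ref{cor:dotcTCCop}, but for $n\geq 2$ your inductive step has a gap: the slices $\cC_{[k]}$ of an $\N$-graded $\cC$ carry $\N^{\ul k}$-gradings (since $(\bB^n\N)_{[k]}\simeq\bB^{n-1}(\N^{\ul k})$), not $\N$-gradings, so your inductive hypothesis must be stated and proved for an arbitrary ungroup-like abelian monoid $M$ rather than for $\N$ alone. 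This is a benign generalisation (the base case and the ungroup-like trick $a+b=0\Rightarrow a=b=0$ go through unchanged), but as written the induction does not close. Second, the closing paragraph --- that the pointwise identifications assemble into a natural equivalence of \emph{symmetric monoidal} functors --- is asserted rather than argued; the paper's abstract overcategory/Beck--Chevalley setup supplies this coherence automatically, whereas a pointwise argument must construct it, and this would require at least a sentence invoking that your identifications arise from a natural transformation of the defining pullback data rather than being produced object by object. If you patch both of these, your route is a valid and arguably more transparent alternative to the paper's.
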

\begin{proof}
Both composite functors $\hat\iota^*\varphi^*\dot\cL\dot\cT_\N$ and $(\dot\cL^n\to\dot\cT^n)\iota^*$ are symmetric monoidal, and we have already identified them as lax symmetric monoidal functors.
\end{proof}
\begin{nota}
\label{nota:PShN}
We let $\cS^\N:=s^0_\N\cS\simeq\Fun(\N;\cS)\simeq\cS_{/\N}$, and refer to an object in $\cS^\N$ as an \emph{$\N$-graded space}. We let $\PSh^\N\to\Catinfone$ denote a cartesian fibration corresponding to the functor $\PSh(-;\cS^\N)\colon(\Catinfone)^\op\to\Catinfone^{\fU_2}$; the latter functor carries a lax symmetric monoidal structure given by exterior product of presheaves (see Subsection \ref{subsec:presheaves}), so that $\PSh^\N$ as well as the projection $\PSh^\N\to\Catinfone$ carry symmetric monoidal structures. We refer to an object in $\PSh(\cC;\cS^\N)$ as an \emph{$\N$-graded presheaf} over $\cC\in\Catinfone$. Equivalently, we may define $\PSh^\N$ as $\PSh_{/(\N\to*)}$.

Similarly, we let $\cS^{01}:=s^0_{01}\cS\simeq\Fun(\set{0,1},\cS)\simeq\cS^2$. The functor $(\iota^{01,\N})_*$ exhibits $\cS^{01}$ as the full subcategory of $\cS^\N$ spanned by $01$-truncated functors, i.e. functors $\N\to\cS$ that restrict to the terminal functor on $\N\setminus\set{0,1}$. We denote by $\PSh^{01}\to\Catinfone$ a cartesian fibration corresponding to $\PSh(-;\cS^{01})\colon(\Catinfone)^\op\to\Catinfone^{\fU_2}$. The adjunction $(\iota^{01,\N})^*\colon\cS^\N\rightleftarrows\cS^{01}\colon(\iota^{01,\N})_*$ gives rise to an analogous adjunction $(\iota^{01,\N})^*\colon\PSh^\N\rightleftarrows\PSh^{01}\colon(\iota^{01,\N})_*$, with $(\iota^{01,\N})_*$ exhibiting $\PSh^{01}\to\Catinfone$ as the full sub-cartesian fibration of $\PSh^\N\to\Catinfone$ spanned by objects $(\cC,A)$ with $A$ taking values in $01$-truncated $\N$-graded spaces.

Moreover the functor $(\iota^{01,\N})^*$ endows $\PSh^{01}$ with a symmetric monoidal structure for which it is a symmetric monoidal functor, and such that the projection $\PSh^{01}\to\Catinfone$ is symmetric monoidal as well.
\end{nota}
\begin{prop}
There is a pullback square in $\CAlg(\Catinfone^{\fU_2})$
\[
\begin{tikzcd}[row sep=10pt]
\Catinfn^{01}\ar[r,"\iota^*"]\ar[d,"\varphi^*\dot\cL\dot\cT_\N"']\ar[dr,phantom,"\lrcorner"very near start]&\Catinfn\ar[d,"(\dot\cL^n\to\dot\cT^n)"]\\
\PSh^{01}\ar[r,"\hat\iota^*"]&\PSh.
\end{tikzcd}
\]
\end{prop}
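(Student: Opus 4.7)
The plan is to deduce this from Corollary \ref{cor:commsquareCatinfnN} and Proposition \ref{prop:Catinfn01pullback}.

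First, I would restrict the commutative square of Corollary \ref{cor:commsquareCatinfnN} along the full inclusion $(\iota^{01,\N})_*\colon\Catinfn^{01}\hookrightarrow\Catinfn^\N$. This yields a commutative square in $\CAlg(\Catinfone^{\fU_2})$ whose left vertical functor $\varphi^*\dot\cL\dot\cT_\N$ a priori lands in $\PSh^\N$, and I would check that for $(\cC,\fd)\in\Catinfn^{01}$ this object in fact lies in $\PSh^{01}\subseteq\PSh^\N$: the total space $\dot\cL^n(\cC)\times_{\dot\cL^n(\bB^n\N)}\N$ takes values in $\cS^{01}\subseteq\cS^\N$ because every $n$-morphism of $\cC$ has $\N$-grading in $\{0,1\}$ by assumption. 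This produces the commutative square of the proposition in $\CAlg(\Catinfone^{\fU_2})$.

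Next, since limits in $\CAlg(\Catinfone^{\fU_2})$ are computed in $\Catinfone^{\fU_2}$, to establish the pullback property it suffices to verify it for the underlying square of plain $\infty$-categories. I would describe the pullback $\cX$ of the right and bottom arrows of the target square as the $\infty$-category of triples $(\cD_0,(\cE,A_0,A_1),\alpha)$ where $\cD_0\in\Catinfn$, $(\cE,A_0,A_1)\in\PSh^{01}$, and $\alpha\colon(\dot\cL^n\to\dot\cT^n)(\cD_0)\simeq(\cE,A_0)$ is an equivalence in $\PSh$. The equivalence $\alpha$ identifies $\cE\simeq\dot\cT^n(\cD_0)$ and $A_0\simeq\dot\lambda^n(\cD_0)$, so the independent data reduces to a pair $(\cD_0,A_1)$ with $\cD_0\in\Catinfn$ and $A_1\in\PSh(\dot\cT^n(\cD_0))$, which by Proposition \ref{prop:Catinfn01pullback} is equivalent to $\Catinfn^{01}$.

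The final and most delicate step is to verify that the canonical functor $\Catinfn^{01}\to\cX$ induced by the target square coincides with this equivalence. The functor sends $\cC\in\Catinfn^{01}$ to the pair $(\iota^*\cC,A_1(\cC))$, where $A_1(\cC)$ denotes the grading-$1$ component of $\varphi^*\dot\cL\dot\cT_\N(\cC)$; the matching comes down to identifying $A_1(\cC)\simeq\dot\kappa^n(\cC)$ as presheaves on $\dot\cT^n(\iota^*\cC)$. This is the main obstacle: one must trace through the definitions of $\dot\cL^n$ and $\dot\cK^n$ as localizations, and use the factorization of the map $\N\to\dot\cL^n(\bB^n\N)$ through $\fd_\cL\colon\dot\cL^n(\bB^n\N)\to\bE\N$, to identify the fiber of $\dot\cL^n(\cC)\to\dot\cL^n(\bB^n\N)$ over the object of grading $1$ with the localization of $\cK^n_1(\cC)\to\cT^n_1(\cC)$ at inert morphisms, which corresponds to $\dot\kappa^n(\cC)$ by Definition \ref{defn:dotKappa}.
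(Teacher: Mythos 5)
Your proposal follows essentially the same route as the paper: construct the localized commutative square from Corollary \ref{cor:commsquareCatinfnN} and then invoke Proposition \ref{prop:Catinfn01pullback} to verify the pullback. The differences are stylistic rather than structural, but two points deserve comment.

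First, your justification for the factorability claim in step one is not quite right. You write that the presheaf takes values in $\cS^{01}\subseteq\cS^\N$ ``because every $n$-morphism of $\cC$ has $\N$-grading in $\{0,1\}$ by assumption.'' But for $\cC\in\Catinfn^{01}$ viewed inside $\Catinfn^\N$ via the right Kan extension $(\iota^{01,\N})_*$, the $n$-morphisms of grading $\ge2$ are not absent: by Definition \ref{defn:Itruncated} and Lemma \ref{lem:Itruncateddescription}, the spaces $\Fun(\Theta_n,\cC)^\simeq_a\to\Fun(\mO_{n-1},\cC)^\simeq$ are equivalences for $a\ge2$, so these $n$-morphisms exist but the relevant fibers are contractible. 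The correct reason the $\N$-graded presheaf lands in $\PSh^{01}$ (i.e., has terminal parts at gradings $\ge2$) is precisely this contractibility, not an absence of such morphisms. As it happens, the paper sidesteps this verification altogether: rather than checking factorability, it defines the left vertical arrow as the composite $(\iota^{01,\N})^*\circ\varphi^*\dot\cL\dot\cT_\N\circ(\iota^{01,\N})_*$, which lands in $\PSh^{01}$ by construction and yields the same functor once factorability is known.

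Second, for the pullback verification, the paper observes the equivalence $\PSh^{01}\simeq\PSh\times_{\Catinfone}\PSh$ (separating grading-$0$ and grading-$1$ parts), under which the target square of the statement becomes a horizontal pasting whose left square is exactly that of Proposition \ref{prop:Catinfn01pullback}; this makes the pullback conclusion essentially immediate. Your unwinding of the pullback $\cX$ arrives at the same conclusion with more bookkeeping. You are right to flag the identification $A_1(\cC)\simeq\dot\kappa^n(\cC)$ as the point where the definitions must be unwound carefully; the paper treats this identification as essentially built into Definition \ref{defn:dotKappa} and the construction of $\varphi$, and does not spell it out, so your proposed strategy there is a reasonable one.
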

\begin{proof}
Recall the symmetric monoidal equivalence $\PSh^\N\simeq\PSh_{/(\N\to*)}$. Since the horizontal functors $\iota^*$ and $\hat\iota^*$ in the square from Corollary \ref{cor:commsquareCatinfnN} factor through the localisations $(\iota^{01,\N})^*$, we obtain a localised commutative square in $\CAlg(\Catinfone^{\fU_2})$ as in the statement.
Now observe that we have an equivalence $\PSh^{01}\simeq\PSh\times_{\Catinfone}\PSh$, by taking the parts of a presheaf of grading $0$ and $1$: under this equivalence, $\hat\iota^*$ corresponds to taking the part of a presheaf of degree 0. By Proposition \ref{prop:Catinfn01pullback} we then obtain that the square in the statement is a pullback square.
\end{proof}
Informally, the functor $\varphi^*\dot\cL\dot\cT_\N\colon\Catinfn^{01}\to\PSh^{01}$ sends $\cC\in\Catinfn^{01}$ to the triple $(\dot\cT^n(\iota^*(\cC)),\dot\lambda^n(\iota^*(\cC)),\dot\kappa^n(\cC))$, in which $\dot\lambda^n(\iota^*(\cC)),\dot\kappa^n(\cC)\in\PSh(\dot\cT^n(\iota^*(\cC)))$ account for gradings 0 and 1, respectively, of an $\N$-graded $01$-truncated presheaf over $\dot\cT^n(\iota^*(\cC))$.
\begin{cor}
\label{cor:CAlgCatinfn01pullback}
There is a pullback square in $\Catinfone^{\fU_2}$
\[
\begin{tikzcd}[row sep=10pt]
\CAlg(\Catinfn^{01})\ar[r,"\iota^*"]\ar[d,"\varphi^*\dot\cL\dot\cT_\N"']\ar[dr,phantom,"\lrcorner"very near start]&\CAlg(\Catinfn)\ar[d,"(\dot\cL^n\to\dot\cT^n)"]\\
\CAlg(\PSh^{01})\ar[r,"\hat\iota^*"]&\CAlg(\PSh).
\end{tikzcd}
\]
\end{cor}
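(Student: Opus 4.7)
The plan is to derive the corollary directly from the pullback square in $\CAlg(\Catinfone^{\fU_2})$ established in the preceding proposition by applying the functor
\[
\CAlg\colon\CAlg(\Catinfone^{\fU_2})\to\Catinfone^{\fU_2},
\]
which sends a (possibly large) symmetric monoidal $\infty$-category to its $\infty$-category of commutative algebra objects. The critical ingredient I would invoke is that this functor preserves all limits: this is a standard fact from the theory of $\infty$-operads (see e.g.\ \cite[Corollary 3.2.2.5]{HA}), which follows from the description of $\CAlg(\cD)$ as a full subcategory of sections of the cocartesian fibration $\cD^\otimes\to\Fin_*$ cut out by a Segal-type condition that is manifestly stable under limits, combined with the fact that limits in $\CAlg(\Catinfone^{\fU_2})$ are detected by the forgetful functor to $\Catinfone^{\fU_2}$ (so that the pullback in the preceding proposition is, underlying, a genuine pullback of $\infty$-categories).

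Concretely, I would apply $\CAlg$ to each of the four corners of the pullback square and to each of the four structure morphisms from the preceding proposition. The only step left to verify is that applying $\CAlg$ to the four named symmetric monoidal functors $\iota^*$, $\hat\iota^*$, $\varphi^*\dot\cL\dot\cT_\N$, and $(\dot\cL^n\to\dot\cT^n)$ recovers the functors with the same names appearing in the corollary; this is essentially a tautology, since each of these is already a symmetric monoidal functor between symmetric monoidal $\infty$-categories, so its image under $\CAlg$ is the induced functor on commutative algebra objects, for which we reuse the same notation. I do not anticipate any significant obstacle; the corollary really is just a formal consequence of the preceding pullback square together with limit-preservation of $\CAlg$.
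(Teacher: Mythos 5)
Your proposal is correct and matches the paper's (implicit) approach: the corollary is stated without proof precisely because it follows by applying the limit-preserving functor $\CAlg(-)\colon\CAlg(\Catinfone^{\fU_2})\to\Catinfone^{\fU_2}$ to the pullback square of the preceding proposition. One small caveat on the reference: \cite[Corollary 3.2.2.5]{HA} asserts that the \emph{forgetful} functor $\CAlg(\cD)\to\cD$ preserves limits, which is not quite the statement that $\cD\mapsto\CAlg(\cD)$ preserves limits in the variable $\cD$; but the mechanism you sketch afterwards (sections of the cocartesian fibration over $\Fin_*$, with the Segal condition cut out pointwise and hence stable under limits) is the correct justification and suffices.
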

\begin{rem}
\label{rem:dotkfreemodule}
Let $\cD=\cC[\ul X,\ul{\del X}]^\sm$ be an $\N$-admissible extension of symmetric monoidal $\infn$-categories as in Definition \ref{defn:Nadmissible}. Then
Corollary \ref{cor:CAlgCatinfn01pullback} allows us to compute $T_{01}(\cD)\in\CAlg(\Catinfn^{01})$ by computing its three coordinates in $\CAlg(\Catinfone)$, $\CAlg(\PSh)$ and $\CAlg(\PSh^{01})$.
The first is $\iota^*(\cD)\simeq\cC$ by Corollary \ref{cor:nsimeqinvariance}. The second is therefore $(\dot\cL^n(\cC)\to\dot\cT^n(\cC))\simeq(\dot\cT^n(\cC),\dot\lambda^n(\cC))\in\CAlg(\PSh)$.

Consider now $\PSh(\dot\cT^n(\cC),\cS^\N)$ as a symmetric monoidal category; by identifying $\dot\lambda^n(\cC)$ with $\iota_!(\dot\lambda^n(\cC))$, we may regard $\dot\lambda^n(\cC)\in\CAlg(\PSh(\dot\cT^n(\cC);\cS^\N))$. We can then define $\dot\lambda^n(\cC)[\ul X,\ul{\del X}]\in\CAlg(\PSh(\dot\cT^n(\cC),\cS^\N))$ as the free commutative $\dot\lambda^n(\cC)$-algebra generated by $\ul X$, which we put in grading 1, relative to $\ul{\del X}$. Then the third coordinate of $\cD$ is $(\dot\cT^n(\cC),(\iota^{01,\N})^*(\dot\lambda^n(\cC)[\ul X,\ul{\del X}]))\in\CAlg(\PSh^{01})$.

The part of $(\iota^{01,\N})^*(\dot\lambda^n(\cC)[\ul X,\ul{\del X}])$ of grading $0$ is $\dot\lambda^n(\cC)$; the part of grading 1 is $\dot\kappa^n(\cD)$, which agrees with the free $\dot\lambda^n(\cC)$-module in $\PSh(\dot\cT^n(\cC);\cS)$ generated by $\ul X$ relative to $\ul{\del X}$. In the basic case in which $\cD=\cC[X]^\sm$ we can explicitly identify:
\[
\dot\kappa^n(\cD)\simeq \Big(X\to\Fun(\mO_{n-1},\cC)^\simeq\to\dot\cT^n(\cC)\Big)_!(*)\otimes_{\Day}\dot\lambda^n(\cC)\in\PSh(\dot\cT^n(\cC)).
\]
In the general case, using Notation \ref{nota:multiextension}, we may iteratively identify $\cC_i$ as the pushout in $\CAlg(\Catinfn^\N)$ of the span $\cC_{i-1}\leftarrow\cC_{i-1}[\partial X_i]\to\cC_{i-1}[X_i]$; correspondingly, we may compute $\dot\kappa(\cD)$ as an iterated pushout in $\mathrm{Mod}_{\dot\lambda^n(\cC)}(\PSh(\dot\cT^n(\cC),\cS))$.
\end{rem}

\subsection{Mapping spaces in \texorpdfstring{$\dot{\mathcal{T}}^2(\cC)$}{dotcT2(cC)}}
We conclude the section with an explicit description of $\dot\cT^2(\cC)(\by,\bx)$ for $\cC\in\Catinftwo$ and for objects $\bx=([1^2],1,x),\by=([1^2],1,y)\in\dot\cT^2(\cC)$, starting from Corollary \ref{cor:LDDhintmappingspaces} for $\gamma=\Gamma^2(\cC)_0$. We expand $x,y\in\Fun(\mO_1,\cC)^\simeq$ as pairs of parallel 1-morphisms $x_u,x_d\colon x_0\to x_1$ and $y_u,y_d\colon y_0\to y_1$ in $\cC$.

The top row of the second pullback square in the proof of Lemma \ref{lem:GammacC01Segal}, which is an equivalence, together with Corollary \ref{cor:dotcTCCop}, allows us to compute $\Gamma^2(\cC)_0([3],010)$ as the iterated fibre product
$\Tw(\cC_{[1]})\times_{\cC^\simeq}\Gamma^2(\cC)_0([1],1)\times_{\cC^\simeq}\Tw(\cC_{[1]})$. It follows that the pullback of the left cospan in the W-shaped diagram from Corollary \ref{cor:LDDhintmappingspaces} is equivalent to the fibre at $(x_0,x_1)$ of the functor
\[
(\Tw(\cC_{d_0}),\Tw(\cC_{d_1}))\colon\Tw(\cC_{[1]})\times\Tw(\cC_{[1]})\to\Tw(\cC_{[0]})\times\Tw(\cC_{[0]})\simeq\cC^\simeq\times\cC^\simeq.
\]
We abbreviate this fibre by $\Tw(\cC_{[1]})_{d_0x_0}\times\Tw(\cC_{[1]})_{d_1x_1}$. We can now compute $\Xi(\Gamma^2(\cC)_0,\by,\bx)$ as the following pullback
\[
\begin{tikzcd}[row sep=10pt]
\Xi(\Gamma^2(\cC)_0,\by,\bx)\ar[r]\ar[d]\ar[dr,phantom,"\lrcorner"very near start]& \Tw(\cC_{[1]})_{d_0x_0}\times\Tw(\cC_{[1]})_{d_1x_1}\ar[d]\\
\big(\Gamma^2(\cC)_0([1],1)\big)_{y/}\ar[r]&\Gamma^2(\cC)_0([1],1).
\end{tikzcd}
\]
The first pullback square in the proof of Lemma \ref{lem:GammacC01Segal} computes $\Gamma^2(\cC)_0([1],1)$ as
\[
\begin{tikzcd}[column sep=50pt,row sep=10pt]
\Gamma^2(\cC)_0([1],1)\ar[r]\ar[d]\ar[dr,phantom,"\lrcorner"very near start] &\cC^\simeq\times\cC^\simeq\ar[d,"\text{diagonal}\times\text{diagonal}"]\\
\cC_{[1]}\times\cC_{[1]}^\op\ar[r,"{(d_1,d_0,d_1,d_0)}"]&(\cC^\simeq)^2\times(\cC^\simeq)^2,
\end{tikzcd}
\]
allowing us to identify $\Gamma^2(\cC)_0([1],1)_{y/}$ and $\cC(y_0,y_1)_{y_u/}\times(\cC(y_0,y_1)_{/y_d})^\op$.

The functor $\Tw(\cC_{[1]})_{d_0x_0}\times\Tw(\cC_{[1]})_{d_1x_1}\to\Gamma^2(\cC)_0([1],1)$ occurring in the pullback computing $\Xi(\Gamma^2(\cC)_0,\by,\bx)$ has top right coordinate given by the functor 
\[
\Tw(\cC_{d_1})\times \Tw(\cC_{d_0})\colon\Tw(\cC_{[1]})_{d_0x_0}\times\Tw(\cC_{[1]})_{d_1x_1}\to\cC^\simeq\times\cC^\simeq,
\]
and it has bottom left coordinate given by the following composite, where we abbreviate by $(\cC_{[1]})_{d_jx_j}$ the fibre at $x_j$ of $d_j\colon\cC_{[1]}\to\cC_{[0]}\simeq\cC^\simeq$:
\[
\begin{tikzcd}[row sep=15pt]
\Tw(\cC_{[1]})_{d_0x_0}\times\Tw(\cC_{[1]})_{d_1x_1}\ar[d]\\
\big((\cC_{[1]})_{d_0x_0}\times(\cC_{[1]}^\op)_{d_0x_0}\big)\times\big((\cC_{[1]})_{d_1x_1}\times(\cC_{[1]}^\op)_{d_1x_1}\big)\ar[d,"\simeq"]\\
\big((\cC_{[1]})_{d_0x_0}\times (\cC_{[1]})_{d_1x_1}\big)\times\big((\cC_{[1]}^\op)_{d_0x_0}\times(\cC_{[1]}^\op)_{d_1x_1}\big)\ar[d,"{(-\circhor x_u\circhor-)\times(-\circhor x_d\circhor-)^\op}"]\\
\cC_{[1]}\times\cC_{[1]}^\op.
\end{tikzcd}
\]
Combining all of the previous remarks, we obtain the following simplified formula for $\Xi(\Gamma^2(\cC)_0,\by,\bx)$, which we state as a proposition for future reference.
\begin{prop}
\label{prop:mappingspacesdotcT2}
Let $\cC\in\Catinftwo$ and let $\bx,\by\in\Fun(\mO_1,\cC)^\simeq$; using the notation above, we have a pullback square in $\Catinfone$ whose horizontal arrows are left fibrations:
\[
\begin{tikzcd}[column sep=0pt, row sep=13pt]
\Xi(\Gamma^2(\cC)_0,\by,\bx)\ar[r]\ar[dd]\ar[ddr,,phantom,"\lrcorner"very near start] &\Tw(\cC(y_0,x_0)\times\cC(x_1,y_1))\ar[d]\\
&(\cC(y_0,x_0)\times\cC(x_1,y_1))\times(\cC(y_0,x_0)\times\cC(x_1,y_1))^\op\ar[d,"{(-\circhor x_u\circhor-)\times(-\circhor x_d\circhor-)^\op}"]\\
\cC(y_0,y_1)_{y_u/}\times(\cC(y_0,y_1)_{/y_d})^\op
\ar[r,"d_0\times d_1"] & \cC(y_0,y_1)\times \cC(y_0,y_1)^\op.
\end{tikzcd}
\]
\end{prop}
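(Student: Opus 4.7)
The plan is to read off the proposition from the chain of computations already carried out in the paragraphs immediately preceding it. The starting point is Corollary \ref{cor:LDDhintmappingspaces} applied to the functor $\gamma=\Gamma^2(\cC)_0\colon\bDelta^{\op}_{01}\to\Catinfone$, which by Lemma \ref{lem:GammacC01Segal} satisfies the $01$-Segal condition; this expresses $\Xi(\Gamma^2(\cC)_0,\by,\bx)$ as the limit in $\Catinfone$ of the explicit W-shaped diagram involving $\gamma([3],010)$, $\gamma([1],1)$, the source map $d_0\colon\gamma([1],1)_{y/}\to\gamma([1],1)$, and the object $x$.

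Next I would unwind the two ``legs'' of this W-shape separately. For the middle term $\gamma([3],010)$, the $01$-Segal factorisation gives a natural decomposition $\gamma([3],010)\simeq\gamma([1],1)\times_{\cC^{\simeq}}\gamma([3],010)\times_{\cC^{\simeq}}\gamma([1],1)$ in which, by Corollary \ref{cor:dotcTCCop} (the $n=1$ case of Proposition \ref{prop:dotcTcLsm} which is already in place), the outer factors are twisted arrow categories $\Tw(\cC_{[1]})$. Pulling back the left cospan of the W-diagram along $x\colon*\to\gamma([1],1)$ then extracts exactly the fibre $\Tw(\cC_{[1]})_{d_0x_0}\times\Tw(\cC_{[1]})_{d_1x_1}$ together with the residual middle factor $\gamma([1],1)$.

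For the right leg I would feed in the pullback description of $\gamma([1],1)$ coming from the first pullback square in the proof of Lemma \ref{lem:GammacC01Segal}: this identifies $\gamma([1],1)$ with $\cC_{[1]}\times\cC_{[1]}^{\op}$ pulled back over $(\cC^{\simeq})^2\times(\cC^{\simeq})^2$, and consequently $\gamma([1],1)_{y/}$ with $\cC(y_0,y_1)_{y_u/}\times(\cC(y_0,y_1)_{/y_d})^{\op}$. Under this identification, the twisted arrow spaces $\Tw(\cC_{[1]})_{d_0x_0}$ and $\Tw(\cC_{[1]})_{d_1x_1}$ collapse to $\cC(y_0,x_0)\times\cC(x_1,y_1)$-style factors (using the pullback formula for $\Tw$ over a space and the splitting into source and target coordinates), and the map into $\gamma([1],1)$ becomes precisely the horizontal composition $-\circhor x_u\circhor-$ on the ``up'' factor and $-\circhor x_d\circhor-$ on the ``down'' factor, the latter naturally appearing in $\cC(y_0,y_1)^{\op}$.

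Assembling these two identifications yields the pullback square in the statement; the horizontal arrows are left fibrations because the map $(d_0,d_1)\colon\Tw(-)\to(-)\times(-)^{\op}$ is always a left fibration and left fibrations are stable under pullback. The main obstacle, already surmounted in the text, is the careful bookkeeping of which factors pick up an op, and correctly matching the coordinates of the map from $\Tw(\cC_{[1]})_{d_0x_0}\times\Tw(\cC_{[1]})_{d_1x_1}$ into $\gamma([1],1)$ with the whiskering operations by $x_u$ and $x_d$; once this matching is fixed, the proposition is a direct combination of Corollaries \ref{cor:LDDhintmappingspaces} and \ref{cor:dotcTCCop} with the two pullback squares from the proof of Lemma \ref{lem:GammacC01Segal}.
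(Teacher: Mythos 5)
Your route matches the paper's: start from Corollary~\ref{cor:LDDhintmappingspaces} for $\gamma=\Gamma^2(\cC)_0$ (legitimised by Lemma~\ref{lem:GammacC01Segal}), decompose $\gamma([3],010)$ via the Segal condition and Corollary~\ref{cor:dotcTCCop}, identify $\gamma([1],1)_{y/}$ via the first pullback square in the proof of Lemma~\ref{lem:GammacC01Segal}, and assemble. One slip: your displayed decomposition $\gamma([3],010)\simeq\gamma([1],1)\times_{\cC^{\simeq}}\gamma([3],010)\times_{\cC^{\simeq}}\gamma([1],1)$ is circular and has the indices in the wrong slots; the outer factors are $\gamma([1],(0,0))\simeq\dot\cL^1(\cC_{[1]})\simeq\Tw(\cC_{[1]})$ and the middle factor is $\gamma([1],1)$, so the formula should read $\gamma([3],010)\simeq\Tw(\cC_{[1]})\times_{\cC^\simeq}\gamma([1],1)\times_{\cC^\simeq}\Tw(\cC_{[1]})$ — your next sentence (``the outer factors are twisted arrow categories'') already agrees with this, so it reads as a typo rather than a conceptual gap, but it should be corrected.
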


\section{The universal property of \texorpdfstring{$\bGr^\twosimeq$}{bGr2simeq}}
\label{sec:proofthmAbis}
In this section we prove Theorem \ref{thm:Abis}; meanwhile, we start the proof of \ref{thm:A} under the assumption of the validity of Theorem \ref{thm:Abis}: some of the steps of the proofs of the two theorems are parallel, and we treat them simultaneously.
\subsection{Coloured graphs and coloured graph collapses}
\label{subsec:colouredreduction}
We start by introducing $S$-coloured versions of $\bGr^\twosimeq$ and of $\bGr$, for a finite set $S$.
\begin{defn}
Recall Definition \ref{defn:gaf} and Notation \ref{nota:gaf}.
We let $\VE\colon\bGr^\twosimeq\to\bB(\Fin^\simeq)$ denote the symmetric monoidal $\infone$-functor associating with a gaf $G$ the disjoint union $V\sqcup E$ of its sets of inner vertices and of edges. Recall also Definition \ref{defn:morgaf}. We let $\CE\colon\bGr\to\bB^2(\Fin^\simeq)$ denote the symmetric monoidal $\inftwo$-functor associating with a morphism of gaf's $f\colon G\to G'$ the subset of $E$ comprising edges that are collapsed along $f$.

For $S\in\Fin$ we identify $(\Fin_{/S})^\simeq$ as the $S$-fold coproduct of $\Fin^\simeq$ in $\CAlg(\cS)$, so that the forgetful map $(\Fin_{/S})^\simeq\to\Fin^\simeq$ agrees with the fold map. We define $\bGr^\twosimeq(S)\in\CAlg(\Catinfone)$ and $\bGr(S)\in\CAlg(\Catinftwo)$ by the following pullbacks:
\[
\begin{tikzcd}[row sep=10pt]
\bGr^\twosimeq(S)\ar[r,"\VE(S)"]\ar[d]\ar[dr,phantom,"\lrcorner"very near start] &\bB((\Fin_{/S})^\simeq)\ar[d,"\text{fold}"]\\
\bGr^\twosimeq\ar[r,"\VE"] &\bB(\Fin^\simeq);
\end{tikzcd}
\hspace{1cm}
\begin{tikzcd}[row sep=10pt]
\bGr(S)\ar[r,"\CE(S)"]\ar[d]\ar[dr,phantom,"\lrcorner"very near start] &\bB^2((\Fin_{/S})^\simeq)\ar[d,"\text{fold}"]\\
\bGr^\twosimeq\ar[r,"\CE"] &\bB^2(\Fin^\simeq).
\end{tikzcd}
\]
\end{defn}
\begin{rem}
The fact that the functors $\VE$ and $\CE$ are well defined on higher morphisms and carry a symmetric monoidal structure can in principle be checked ``manually'' by using that $\Fin^\simeq$ is an aspherical space, so that $\bB\Fin^\simeq$ and $\bB^2\Fin^\simeq$ have discrete spaces of 2-morphisms and 3-morphisms, respectively; hence all coheren\-ces of dimension at least 2 and 3, respectively, are properties and not structure. 
Alternatively, we could directly define $\bGr^\twosimeq(S)$ and $\bGr(S)$ by replacing Definition \ref{defn:gaf}, respectively Definition \ref{defn:morgaf}, with their counterparts in which we adjoin the datum of a suitable map of finite sets to $S$, from $V\sqcup E$ or from $E\setminus f^{-1}(E')$, respectively, and by then repeating verbatim the constructions from Section \ref{sec:graphcobsets}.
\end{rem}
\begin{rem}
We remark that $\bGr^\twosimeq(S)$ is a plain (2,1)-category, and $\bGr(S)$ is a plain 2-category: that is, if we consider both $\bGr^\twosimeq(S),\bGr(S)$ as objects in $\Catinftwo$, then all spaces of 2-morphisms are discrete. We also remark that $\bGr^\twosimeq(S)$ is in general \emph{not} equivalent to $\bGr(S)^\twosimeq$: the latter is instead equivalent to $\bGr^\twosimeq$.
\end{rem}
The assignments $S\mapsto\bGr^\twosimeq(S),\bGr(S)$ give rise for $n=1,2$ to functors $\Fin\to\CAlg(\Catinfn)$. Moreover, we have symmetric monoidal functors
\[
\bB^n(\#_{/S})\colon\bB^n((\Fin_{/S})^\simeq)\to\bB^n\N^S
\]
induced by taking cardinalities of finite sets, fibrewise over $S$; these give $\N^S$-gradings on $\bGr^\twosimeq(S)$ and $\bGr(S)$, respectively.
\begin{nota}
Recall Notation \ref{nota:cUcV}, Example \ref{ex:cUcVasextensions} and Definition \ref{defn:Sextension}. For a finite set $S$ we let $\cU(S):=\Fin[S|*\sqcup\bB C_2]^\sm$, i.e. the $S$-fold coproduct of $\cU$ in $\CAlg(\Catinfone)_{\Fin/}$.
We similarly let $\cV(S):=\cU[S| *\ ;\ \bB C_2,*]^\sm$, i.e. the $S$-fold coproduct of $\cV$ in $\CAlg(\Catinftwo)_{\cU/}$. We let $\fF(S)\colon\cU(S)\to\bGr^\twosimeq(S)$ and $\fG(S)\colon\cV(S)\to\bGr(S)$ denote the symmetric monoidal $\infn$-functors induced by the following commutative squares in $\CAlg(\Catinfone)$ and $\CAlg(\Catinftwo)$:
\[
\begin{tikzcd}[row sep=15pt, column sep=40pt]
\coprod^{\CAlg_{\Fin/}}_S\cU\ar[d,"{\fF\circ\text{fold}}"]\ar[r,"\coprod_S(\VE\circ\fF)"] &\coprod^{\CAlg}_S\bB\Fin^\simeq\ar[d,"\text{fold}"]\\
\bGr^\twosimeq\ar[r,"\VE"] &\bB(\Fin^\simeq);
\end{tikzcd}
\hspace{.5cm}
\begin{tikzcd}[row sep=15pt, column sep=40pt]
\coprod^{\CAlg_{\cU/}}_S\cV\ar[d,"{\fG\circ\text{fold}}"]\ar[r,"\coprod_S(\CE\circ\fG)"] &\coprod^{\CAlg}_S\bB^2\Fin^\simeq\ar[d,"\text{fold}"]\\
\bGr\ar[r,"\CE"] &\bB^2(\Fin^\simeq).
\end{tikzcd}
\]

\end{nota}
By Corollary \ref{cor:nsimeqinvariance}, for $S\in\Fin$ we have an equivalence $\fF(S)^\simeq\colon\cU(S)^\simeq\xrightarrow{\simeq}\bGr^\twosimeq(S)^\simeq\simeq\Fin^\simeq$ of spaces, and assumming Theorem \ref{thm:Abis}, an equivalence of $\infone$-catego\-ries $\fG(S)^\twosimeq\colon\cV(S)^\twosimeq\xrightarrow{\simeq}\bGr(S)^\twosimeq\simeq\bGr^\twosimeq$.

For $S\in\Fin$ we have the following commutative diagrams in $\CAlg(\Catinfone)$ and $\CAlg(\Catinftwo)$, in which the bottom and top composites recover the $\N$-admissible gradings on $\cU,\cV$ and the $\N^S$-gradings on $\cU(S),\cV(S)$ from Definition \ref{defn:fdS}:
\[
\begin{tikzcd}[column sep=50pt,row sep=12pt]
\cU(S)\ar[d,"\text{fold}"]\ar[r,"\fF(S)"]&\bGr^\twosimeq(S)\ar[d]\ar[r,"\VE(S)"]\ar[dr,phantom,"\lrcorner"very near start]&\bB((\Fin_{/S})^\simeq)\ar[d,"\text{fold}"]\ar[r,"\bB(\#_{/S})"]&\bB\N^S\ar[d,"\text{fold}"]\\
\cU\ar[r,"\fF"] &\bGr^\twosimeq\ar[r,"\VE"] &\bB\Fin^\simeq\ar[r,"\bB\#"]&\bB\N;
\end{tikzcd}
\]
\[
\begin{tikzcd}[column sep=50pt,row sep=12pt]
\cV(S)\ar[d,"\text{fold}"]\ar[r,"\fG(S)"]&\bGr(S)\ar[d]\ar[r,"\CE(S)"]\ar[dr,phantom,"\lrcorner"very near start]&\bB^2((\Fin_{/S})^\simeq)\ar[d,"\text{fold}"]\ar[r,"\bB^2(\#_{/S})"]&\bB^2\N^S\ar[d,"\text{fold}"]\\
\cV\ar[r,"\fG"] &\bGr\ar[r,"\CE"] &\bB^2\Fin^\simeq\ar[r,"\bB^2\#"]&\bB^2\N.
\end{tikzcd}
\]
\begin{nota}
Recall Notation \ref{nota:1S}. For finite sets $T\subseteq S$ we consider $\N^T$ as a submonoid of $\N^S$ and regard $1^T$ also as an element of $\N^S$.
\end{nota}

\begin{lem}
\label{lem:1Sreduction}
The following implications hold.
\begin{enumerate}
\item Assume that for all $S\in\Fin$ the $\infone$-functor $\fF(S)$ induces an equivalence on spaces of 1-morphisms of $\N^S$-grading $1^S$. Then  Theorem \ref{thm:Abis} holds.
\item Assume Theorem \ref{thm:Abis}, and assume that for all $S\in\Fin$ the $\inftwo$-functor $\fG(S)$ induces an equivalence on spaces of 2-morphisms of $\N^S$-grading $1^S$. Then  Theorem \ref{thm:A} holds.
\end{enumerate}
\end{lem}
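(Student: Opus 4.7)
The plan is to prove, in part (1), that $\fF\colon\cU\to\bGr^\twosimeq$ is an equivalence of $\infone$-categories and, in part (2), that $\fG\colon\cV\to\bGr$ is an equivalence of $\inftwo$-categories. The equivalence on low-dimensional data is automatic: Corollary \ref{cor:nsimeqinvariance} gives $\cU^\simeq\simeq\Fin^\simeq\simeq(\bGr^\twosimeq)^\simeq$ in (1), and in (2) the hypothesis Theorem \ref{thm:Abis}, combined with Example \ref{ex:cUcVcores} and the relation $\fG\circ\fJ\simeq\fF$, yields that $\fG^\twosimeq\colon\cV^\twosimeq\to\bGr^\twosimeq$ is an equivalence. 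It therefore suffices, in each part, to verify that $\fF$ (respectively $\fG$) induces an equivalence on spaces of morphisms of $\N$-grading $n$ for every $n\ge 1$; here ``morphisms'' means $1$-morphisms in (1) and $2$-morphisms in (2).

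Fixing a pair of sources/targets and $n\ge 1$, I would form the commutative square
\[
\begin{tikzcd}[row sep=14pt, column sep=35pt]
\cU(\ul n)(-,-)_{1^{\ul n}}/\fS_{\ul n}\ar[r,"\fF(\ul n)/\fS_{\ul n}"]\ar[d,"r_\cU"'] &\bGr^\twosimeq(\ul n)(-,-)_{1^{\ul n}}/\fS_{\ul n}\ar[d,"r_\bGr"]\\
\cU(-,-)_n\ar[r,"\fF"] &\bGr^\twosimeq(-,-)_n
\end{tikzcd}
\]
(with $\cU,\fF,\bGr^\twosimeq$ replaced by $\cV,\fG,\bGr$ in part (2)). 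The top horizontal is an equivalence by the hypothesis of the lemma, while the left vertical $r_\cU$ admits a homotopy section, obtained by restricting the symmetric monoidal section $\fs$ of Proposition \ref{prop:retraction} to the grading-$n$ piece of the morphism space and invoking the pointwise description of $\cU_{\Fininj}$ from the proof of Lemma \ref{lem:ccuXsmFininjcatinfn}. Granting for the moment that the right vertical $r_\bGr$ is also an equivalence, the composite $\fF\circ r_\cU$ is an equivalence, so $(\fF\circ r_\cU)^{-1}\circ\fF$ is a homotopy left inverse of $r_\cU$. Any map in an $\infty$-category that admits both a left and a right inverse is an equivalence, since the chain $g\simeq g(fh)\simeq(gf)h\simeq h$ forces the two one-sided inverses to agree up to homotopy; hence $r_\cU$ is an equivalence and therefore so is $\fF=(\fF\circ r_\cU)\circ r_\cU^{-1}$.

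The key remaining input, which I expect to be the main obstacle, is the combinatorial claim that $r_\bGr$ is an equivalence. I plan to establish this by unpacking $\bGr^\twosimeq(\ul n)(-,-)_{1^{\ul n}}$ as the classifying space of the groupoid whose objects are gafs $G$ with $|V|+|E|=n$ equipped with a bijection $V\sqcup E\simeq\ul n$, and whose morphisms are isomorphisms of gafs preserving this labeling. The group $\fS_{\ul n}$ acts freely on this groupoid by relabeling, its orbits are precisely the unlabeled gafs with arbitrary isomorphisms, and a free action of a discrete group on an $\infty$-groupoid descends to the homotopy quotient on classifying spaces, giving $r_\bGr$ as an equivalence. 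For part (2) the argument is identical, with $2$-morphisms of grading $n$ in $\bGr$ being graph collapses contracting exactly $n$ edges, and $\fS_{\ul n}$ acting freely on the labelings of the contracted edges.
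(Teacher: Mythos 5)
Your argument follows the paper's proof closely: the same commutative square, the same appeal to Proposition \ref{prop:retraction} to produce a homotopy section of $r_\cU$, and the same two-sided--inverse argument to conclude that $r_\cU$, and then $\fF$ (resp.\ $\fG$), is an equivalence in each grading $r$. So the structure is correct.

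The one real issue is your justification that $r_\bGr$ is an equivalence, which the paper attributes to ``direct inspection.'' You assert that $\fS_{\ul n}$ acts \emph{freely} on the groupoid of bijectively labeled gafs and deduce the identification of the homotopy quotient. That claim is false: the action is free on the strict set of pairs $(G,\ell)$, but not on the groupoid, because a gaf $G$ can have nontrivial automorphisms (over $A$ and $B$) acting nontrivially on $V\sqcup E$. Concretely, take $A=B=\emptyset$ and $G$ the graph with two vertices joined by two edges. Then $\Aut(G)\simeq(\Z/2)^2$ acts faithfully on $V\sqcup E$, the groupoid of $\ul4$-labelings of $G$ is a discrete set of $4!/4=6$ isomorphism classes, and $\fS_4$ acts on these six classes transitively with point stabilizers of order $4$ --- not freely. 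Were the action free, the homotopy quotient of this component would be contractible, whereas the corresponding component of $\Fun(\Theta_1,\bGr^\twosimeq)^\simeq_4$ is $B\Aut(G)\simeq B(\Z/2)^2$.

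What is true, and suffices, is that the forgetful map $\Fun(\Theta_1,\bGr^\twosimeq(\ul n))^\simeq_{1^{\ul n}}\to\Fun(\Theta_1,\bGr^\twosimeq)^\simeq_n$ is a \emph{principal $\fS_{\ul n}$-bundle}: its homotopy fiber over a gaf $G$ is the discrete set of bijections $V\sqcup E\simeq\ul n$, on which $\fS_{\ul n}$ acts freely and transitively (the label-preserving automorphisms of a bijectively labeled gaf fixing $A$, $B$, and $V\sqcup E$ pointwise collapse to the identity after transport, so the fiber carries no further automorphisms). The homotopy quotient of a principal $K$-bundle recovers its base, which is exactly the statement that $r_\bGr$ is an equivalence. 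You may also want to note that the action of $\fS_S$ on the source of $r_\cU$ similarly identifies it, after the fact, with a principal bundle; but this is a consequence rather than an input of the argument.
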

\begin{proof}
For (1), by Example \ref{ex:cUcVcores}, it suffices to check that $\fF$ induces an equivalence on spaces of 1-morphisms. Since $\fF$ is an $\N$-graded functor, we can check the last statement in each grading $r\in\N$ separately, i.e. prove that $\fF$ induces an equivalence of spaces $
\Fun(\Theta_1,\cU)^\simeq_r\xrightarrow{\simeq}\Fun(\Theta_1,\bGr^\twosimeq)^\simeq_r$.
Let $S$ be a set of cardinality $r\ge0$; we may write the following commutative diagram of spaces, in which the top arrow is an equivalence by assumption, and the right arrow by direct inspection:
\[
\begin{tikzcd}[row sep=10pt]
\Fun(\Theta_1,\cU(S))^\simeq_{1^S}/\fS_S\ar[r,"\simeq"]\ar[d] &\Fun(\Theta_1,\bGr^\twosimeq(S))^\simeq_{1^S}/\fS_S\ar[d,"\simeq"]\\
\Fun(\Theta_1,\cU)^\simeq_r\ar[r]&\Fun(\Theta_1,\bGr^\twosimeq)^\simeq_r.
\end{tikzcd}
\]
It follows that the left arrow admits a retraction; by Proposition \ref{prop:retraction} it also admits a section, so that it has to be an equivalence of spaces. Hence also the bottom arrow is an equivalence.
The argument for (2) is analogous. By the assumption that Theorem \ref{thm:Abis} holds and by Example \ref{ex:cUcVcores}, it suffices to check, for each finite set $S$ of cardinality $r\ge0$, that the bottom arrow in the following diagram is an equivalence:
\[
\begin{tikzcd}[row sep=10pt]
\Fun(\Theta_2,\cV(S))^\simeq_{1^S}/\fS_S\ar[r,"\simeq"]\ar[d] &\Fun(\Theta_2,\bGr(S))^\simeq_{1^S}/\fS_S\ar[d,"\simeq"]\\
\Fun(\Theta_2,\cV)^\simeq_r\ar[r]&\Fun(\Theta_2,\bGr)^\simeq_r.
\end{tikzcd}
\]
We invoke Proposition \ref{prop:retraction} and conclude as before.
\end{proof}
\begin{nota}
For a finite set $S$ we denote by $S_+$ the finite set $S\sqcup\set{\bullet}$.
\end{nota}

\begin{defn}
For a finite set $S$ we denote by $\bGr^\twosimeq(S)^+$ the pushout of the span $\cU(S_+)\ot\cU(S)\xrightarrow{\fF(S)}\bGr^\twosimeq(S)$ in $\CAlg(\Catinfone)$, in which the left pointing arrow is induced by the inclusion $S\hto S_+$. Note that $\bGr^\twosimeq(S)^+$ is an extension of the form $\bGr^\twosimeq(S)[\bB C_2\sqcup *]^\sm$ as in Definition \ref{defn:extension}.
Similarly, we denote by $\bGr(S)^+$ the pushout of $\cV(S_+)\ot\cV(S)\xrightarrow{\fG(S)}\bGr(S)$ in $\CAlg(\Catinftwo)$; it is an extension of the form $\bGr(S)[S| *\ ;\ \bB C_2,*]^\sm$ as in Notation \ref{nota:multiextension}.

We denote by $\fj^\twosimeq(S)\colon\bGr^\twosimeq(S)^+\to\bGr^\twosimeq(S_+)$ and $\fj(S)\colon\bGr(S)^+\to\bGr(S_+)$ the symmetric monoidal functors induced on pushouts by the following commutative squares in $\CAlg(\Catinfone)$ and $\CAlg(\Catinftwo)$:
\[
\begin{tikzcd}[column sep=50pt,row sep=12pt]
\cU(S)\ar[r,"S\hto S_+"]\ar[d,"\fF(S)"] &\cU(S_+)\ar[d,"\fF(S_+)"]\\
\bGr^\twosimeq(S)\ar[r,"S\hto S_+"] &\bGr^\twosimeq(S_+);
\end{tikzcd}
\hspace{1cm}
\begin{tikzcd}[column sep=50pt,row sep=12pt]
\cV(S)\ar[r,"S\hto S_+"]\ar[d,"\fG(S)"] &\cV(S_+)\ar[d,"\fG(S_+)"]\\
\bGr(S)\ar[r,"S\hto S_+"] &\bGr(S_+).
\end{tikzcd}
\]
We restrict the $\N^{S_+}$-gradings on $\bGr^\twosimeq(S_+)$ and $\bGr(S_+)$ along $\fj^\twosimeq(S)$ and $\fj(S)$.
\end{defn}

\begin{lem}
\label{lem:plusreduction}
The following implications hold.
\begin{enumerate}
\item Let $r\ge0$ and assume that $\fj^\twosimeq(T)$ induces an equivalence
on spaces of 1-morphisms of $\N^{T_+}$-grading $1^{T_+}$
for all $T\in\Fin$ with $\#T<r$. Then for all $T\subseteq S\in\Fin$ with $\# S\le r$, $\fF(S)$ induces an equivalence on spaces of 1-morphisms of $\N^S$-grading $1^T$ for all $T\subseteq S$; in particular the hypothesis of Lemma \ref{lem:1Sreduction}(1) holds for $S$.
\item Assume Theorem \ref{thm:Abis}.  Let $r\ge0$ and assume that $\fj(T)$ induces an equivalence
on spaces of 2-morphisms of $\N^{T_+}$-grading $1^{T_+}$
for all $T\in\Fin$ with $\#T<r$. Then for all $T\subseteq S\in\Fin$ with $\# S\le r$, $\fG(S)$ induces an equivalence on spaces of 2-morphisms of $\N^S$-grading $1^T$; in particular the hypothesis of Lemma \ref{lem:1Sreduction}(2) holds for $S$.
\end{enumerate}
\end{lem}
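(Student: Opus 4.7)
The plan is to prove both parts by a simultaneous induction on $k:=\#T$, under the standing constraint $\#S\le r$. A preliminary reduction reduces both statements to the case $T=S$: the inclusion $T\hookrightarrow S$ gives rise to symmetric monoidal inclusions $\cU(T)\hookrightarrow\cU(S)$ and $\bGr^\twosimeq(T)\hookrightarrow\bGr^\twosimeq(S)$ (and similarly for $\cV,\bGr$) whose images comprise precisely those 1-morphisms (respectively 2-morphisms) of $\N^S$-grading supported in $\N^T\subset\N^S$; accordingly $\fF(S)$ restricts on the grading-$1^T$ component to the corresponding restriction of $\fF(T)$, and analogously for $\fG$. It therefore suffices to show that $\fF(T)$ (respectively $\fG(T)$) is an equivalence on (1- or 2-)morphisms of $\N^T$-grading $1^T$ for all $\#T\le r$.

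For the base case $k=0$, part (1) is immediate from Corollary \ref{cor:nsimeqinvariance} applied to the $\N$-admissible extension $\Fin\to\cU$ (identifying grading-$0$ 1-morphisms on both sides with $\Fin$-morphisms). For part (2), one has $\cV(\emptyset)\simeq\cU$ and $\bGr(\emptyset)\simeq\bGr^\twosimeq$ as $\infone$-categories (the latter because $\bGr(\emptyset)$ already has only invertible 2-morphisms, i.e. isomorphisms of gaf's), and the induced functor $\fG(\emptyset)\simeq\fF$ is an equivalence by the standing Theorem \ref{thm:Abis}; grading-$0$ 2-morphisms are identities on 1-morphisms, so the claim follows. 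For the inductive step at $k\ge 1$, fix $s\in T$, set $T':=T\setminus\{s\}$, and identify $T$ with $T'_+$ via $\bullet\leftrightarrow s$. This yields the factorizations $\fF(T)=\fj^\twosimeq(T')\circ\tilde\fF$ and $\fG(T)=\fj(T')\circ\tilde\fG$, where $\tilde\fF\colon\cU(T'_+)\to\bGr^\twosimeq(T')^+$ and $\tilde\fG\colon\cV(T'_+)\to\bGr(T')^+$ are induced on pushouts by $\fF(T')$ and $\fG(T')$. Since $\#T'=k-1<r$, the standing hypothesis of the lemma provides that $\fj^\twosimeq(T')$ (respectively $\fj(T')$) is an equivalence on morphisms of grading $1^{T'_+}=1^T$, so it remains to analyze $\tilde\fF$ (respectively $\tilde\fG$).

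For part (1), both $\cU(T'_+)$ and $\bGr^\twosimeq(T')^+$ are $\N$-admissible extensions (in the new $\bullet$-direction) of $\cU(T')$ and $\bGr^\twosimeq(T')$ by the same data $X=*\sqcup\bB C_2$, with boundary maps landing in the common $\infty$-groupoid $\Fin^\simeq\simeq\cU(T')^\simeq\simeq\bGr^\twosimeq(T')^\simeq$ (by Corollary \ref{cor:nsimeqinvariance}). By Remark \ref{rem:dotkfreemodule} applied to this single extension, the presheaves $\dot\kappa^1$ of the two extensions are free $\dot\lambda^1$-modules generated by $X$, and $\dot\kappa^1(\tilde\fF)$ arises by extension of scalars along $\dot\lambda^1(\fF(T'))$. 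Restricted to the $\N^{T'_+}$-grading $1^{T'_+}=(1^{T'},1)$, the underlying Day convolution decomposes as a colimit over partitions $T'=U\sqcup V$ of terms involving the grading-$1^U$ and grading-$1^V$ pieces of $\dot\lambda^1(\cU(T'))$ and $\dot\lambda^1(\bGr^\twosimeq(T'))$. Each of these pieces corresponds to 1-morphism spaces in $\cU(T')$ or $\bGr^\twosimeq(T')$ of $\N^{T'}$-grading $1^U$ or $1^V$ with $\#U,\#V<k$, and is therefore an equivalence by the inductive hypothesis applied to $\fF(T')$. This proves that $\tilde\fF$ is an equivalence on the grading-$1^{T'_+}$ component, completing part (1).

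Part (2) follows the same scheme with $n=2$, using $\cV,\fG,\fj,\dot\lambda^2$ in place of their $n=1$ analogues. The main obstacle is the significantly more intricate combinatorics controlling the Day convolution for $\dot\kappa^2$: by Proposition \ref{prop:mappingspacesdotcT2}, mapping spaces in $\dot\cT^2$ are classifying spaces of a pullback involving $\Tw(-)$ and horizontal-composition functors of the form $-\circhor x_u\circhor-$, so the decomposition of a grading-$1^{T'_+}$ 2-morphism of $\bGr(T')^+$ unfolds into a sum over richer combinatorial data (partitions of $T'$ associated to the multiple composition slots of the pullback). Each summand requires comparisons of 1-morphisms of $\cV(T')$ versus $\bGr(T')$, handled after passing to $\twosimeq$ via Theorem \ref{thm:Abis} together with the identification $\cV(T')^\twosimeq\simeq\cU(T')$, and comparisons of 2-morphisms of $\cV(T')$ versus $\bGr(T')$ at $\N^{T'}$-gradings strictly smaller than $1^{T'}$, handled by the inductive hypothesis on $\fG(T')$; combining these inputs yields the required equivalence.
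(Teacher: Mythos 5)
Your high-level architecture parallels the paper's: reduce to the diagonal case $T=S$, factor $\fF(T)$ through $\bGr^\twosimeq(T')^+$ (writing $T=T'_+$), and dispatch the $\fj^\twosimeq(T')$-leg via the standing hypothesis. The divergence---and the trouble---is in how you treat the other leg $\tilde\fF$ (resp.\ $\tilde\fG$).

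The paper's route is cleaner and works uniformly in $n$: since $(\iota^{I,M})^*$ with $I=\set{0,1}^{S_+}$, $M=\N^{S_+}$ is a colimit-preserving left adjoint, it suffices to show that the map of span diagrams defining $\cU(S_+)\to\bGr^\twosimeq(S)^+$ is a pointwise $T_I$-equivalence. This follows from the inductive hypothesis \emph{together with} the observation that for any $T\subseteq S_+$ containing $\bullet$, the space of $1$-morphisms of $\N^{S_+}$-grading $1^T$ in $\cU(S)$ and in $\bGr^\twosimeq(S)$ is \emph{empty} (the grading on these factors through $\N^S$). This emptiness observation is what bridges the gap between the IH (which covers only gradings $1^T$ with $T\subseteq S$) and the full coideal $I$; your proposal omits it. Since this argument is model-independent of the categorical dimension, it also disposes of part (2), which is why the paper just says the $n=2$ case is analogous.

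Your route via Remark \ref{rem:dotkfreemodule} and Day convolution has two genuine problems. For part (1), the claim that the Day convolution ``decomposes as a colimit over partitions $T'=U\sqcup V$'' is incorrect: the generator $\theta$ is concentrated in $\N^{T'}$-grading $0$ (and $\bullet$-grading $1$), so the grading-$(1^{T'},1)$ component of $\theta\otimes_\Day\dot\lambda$ sees only the $\N^{T'}$-grading-$1^{T'}$ component of $\dot\lambda$---there is no partition decomposition of $T'$. More seriously, $\dot\kappa^1(\cU(T'_+))$ and $\dot\kappa^1(\bGr^\twosimeq(T')^+)$ are free modules over \emph{different} base $\infone$-categories $\dot\cT^1(\cU(T'))$ and $\dot\cT^1(\bGr^\twosimeq(T'))$; the extension-of-scalars comparison involves a base change along $\dot\cT^1(\fF(T'))$ that you do not control. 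For part (2), you explicitly do not carry out the analysis: ``combining these inputs yields the required equivalence'' is a placeholder, not an argument, and the combinatorics of $\dot\cT^2$ (Proposition \ref{prop:mappingspacesdotcT2}) make this the bulk of the work. Replacing the Day-convolution analysis by the paper's $(\iota^{I,M})^*$-colimit-preservation argument would close both gaps simultaneously.

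A minor point: your $k=0$ base case for part (2) is correct ($\cV(\emptyset)\simeq\cU$, $\bGr(\emptyset)\simeq\bGr^\twosimeq$, and $\fG(\emptyset)\simeq\fF$, which is an equivalence by Theorem \ref{thm:Abis}), but the phrase ``grading-$0$ $2$-morphisms are identities on $1$-morphisms'' is imprecise: they are the invertible $2$-morphisms of $\bGr$, i.e., isomorphisms of gaf's, not just identities.
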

\begin{proof}
For (1), we proceed inductively on $\# S\le r$. For $S=\emptyset$ we have to set $T=\emptyset$ and we may identify both spaces $\Fun(\Theta_1,\cU(S))^\simeq_{1^\emptyset}$ and $\Fun(\Theta_1,\bGr^\twosimeq(S))^\simeq_{1^\emptyset}$ with $\Fun(\Theta_1,\Fin^\simeq)$ by Corollary \ref{cor:nsimeqinvariance}. Suppose that the statement holds for a certain $S$ with $\# S<r$, let $T\subseteq S_+$, and define the coideal 
\[
I:=\set{0,1}^T\times\set{0}^{S_+\setminus T}\subset M:=\N^{S_+}.
\]

If $T\neq S_+$, we write a commutative diagram as follows, in which we embed $\CAlg(\Catinfone^{\N^T})\hto\CAlg(\Catinfone^M)$ via the functor $(\N^T\hto M)_!$:
\[
\begin{tikzcd}[row sep=12pt, column sep=11pt]
\Fun(\Theta_1,\bGr^\twosimeq(T))^\simeq_{1^T}\ar[d,"\simeq"]& &\Fun(\Theta_1,\cU(T))^\simeq_{1^T}\ar[ll,"\fF(T)"',"\simeq"]\ar[r,"\simeq"]\ar[d]&\Fun(\Theta_1,T_I(\cU(T)))^\simeq_{1^T}\ar[d,"\simeq"]\\
\Fun(\Theta_1,\bGr^\twosimeq(S_+))^\simeq_{1^T}& &\Fun(\Theta_1,\cU(S_+))^\simeq_{1^T}\ar[ll,"\fF(S_+)"']\ar[r,"\simeq"]&\Fun(\Theta_1,T_I(\cU(S_+)))^\simeq_{1^T}.
\end{tikzcd}
\]
The left vertical arrow is an equivalence by inspection, the top left horizontal arrow is an equivalence by inductive hypothesis, the two right horizontal arrows are equivalences by Corollary \ref{cor:Itruncateddescription}, and the right vertical arrow is an equivalence by Corollary \ref{cor:nsimeqinvariance}:
It follows that the bottom left horizontal arrow is also an equivalence.

Assume now $T=S_+$, and
factor the map $\Fun(\Theta_1,\fF(S_+))^\simeq_{1^{S_+}}$ as a composite
\[
\begin{tikzcd}[column sep=15pt]
\Fun(\Theta_1,\cU(S_+))^\simeq_{1^{S_+}}\ar[r]&\Fun(\Theta_1,\bGr^\twosimeq(S)^+)^\simeq_{1^{S_+}}\ar[rr,"\fj^\twosimeq(S)"]&&\Fun(\Theta_1,\bGr^\twosimeq(S_+))^\simeq_{1^{S_+}},
\end{tikzcd}
\]
where the second map is an equivalence by assumption. The first map is induced by the $M$-graded symmetric monoidal $\infone$-functor $\cU(S_+)\to\bGr^\twosimeq(S)^+$ induced on pushouts by the following map of spans in $\CAlg(\Catinfone^M)$, where again we use the functor $(\N^S\hto M)_!$ to put all objects in the same category:
\[
\begin{tikzcd}[row sep=12pt,column sep=50pt]
\cU(S_+)\ar[d,equal]&\cU(S)\ar[d,equal]\ar[r,equal]\ar[l,"S\hto S_+"'] &\cU(S)\ar[d,"\fF(S)"]\\
\cU(S_+)&\cU(S)\ar[r,"\fF(S)"]\ar[l,"S\hto S_+"']&\bGr^\twosimeq(S).
\end{tikzcd}
\]
By inductive hypothesis and by Corollary \ref{cor:Itruncateddescription}, the right vertical map is a $T_I$-equivalence: in particular, we observe that for all $\bullet\in T\subseteq S_+$ the spaces of 1-morphisms of $\N^{S_+}$-grading $1^T$ in both $\cU(S)$ and $\bGr^\twosimeq(S)$  are empty. By applying the functor $(\iota^{I,M})^*\colon\CAlg(\Catinfone^M)\to\CAlg(\Catinfone^I)$, which preserves pushouts, we obtain an equivalence $(\iota^{I,M})^*(\cU(S_+))\xrightarrow{\simeq}(\iota^{I,M})^*(\bGr^\twosimeq(S)^+)$, yielding the desired equivalence on spaces of 1-morphiss of $M$-grading $1^T$ for all $T\subseteq S_+$.

The analogous argument for (2) is omitted.
\end{proof}
For $S\in\Fin$, the projections $\N^{S_+}\to\N^{\set{\bullet}}\simeq\N$ and $\N^{S_+}\to\N^S$ allow us to split an $\N^{S_+}$-grading
as a pair of an $\N$-grading and an $\N^S$-grading. The first grading allows us to consider $\fj^\twosimeq(S)$ and $\fj(S)$ as morphisms in $\CAlg(\Catinfn^\N)$ for $n=1,2$. By Lemma \ref{lem:plusreduction}, it suffices for us to focus on the maps induced by $\fj^\twosimeq(S)$ and $\fj(S)$ on spaces of $n$-morphisms of $\N$-grading 1 (and $\N^S$-grading $1^S$), for $n=1,2$.
We can neglect the $\N^S$-grading and detect all $n$-morphisms of $\N$-grading 1 after having applied the functor $(\iota^{01,\N})^*\colon\CAlg(\Catinfn^\N)\to\CAlg(\Catinfn^{01})$,
i.e. we can focus on the following morphisms in $\CAlg(\Catinfn^{01})$:
\[
\begin{split}
(\iota^{01,\N})^*(\fj^\twosimeq(S))\colon & (\iota^{01,\N})^*(\bGr^\twosimeq(S)^+)\to (\iota^{01,\N})^*(\bGr^\twosimeq(S_+));\\
(\iota^{01,\N})^*(\fj(S))\colon & (\iota^{01,\N})^*(\bGr(S)^+)\to (\iota^{01,\N})^*(\bGr(S_+)).
\end{split}
\]
For $n=1$ we shall prove that $(\iota^{01,\N})^*(\fj^\twosimeq(S))$ is an equivalence by analysing its three coordinates with respect to the pullback square from Corollary \ref{cor:CAlgCatinfn01pullback}. The top right coordinate $\iota^*(\fj^\twosimeq(S))$ is an equivalence in the category $\CAlg(\Catinfone)$: by Corollary \ref{cor:nsimeqinvariance} and by a direct inspection, respectively, we have that both the source $\iota^*(\bGr^\twosimeq(S)^+)$ and the target $\iota^*(\bGr^\twosimeq(S_+))$ can be identified with $\bGr^\twosimeq(S)$. It follows that also the bottom right coordinate is an equivalence in $\CAlg(\PSh)$. The proof of Theorem \ref{thm:Abis} is complete once we prove that the bottom left coordinate $\varphi^*\dot\cL\dot\cT_\N((\iota^{01,\N})^*(\fj^\twosimeq(S)))$ is an equivalence in $\CAlg(\PSh^{01})$ as well; this boils down to proving that the morphism $\dot\kappa^1(\fj^\twosimeq(S))\colon\dot\kappa^1(\bGr^\twosimeq(S)^+)\to\dot\kappa^1(\bGr^\twosimeq(S_+))$ is an equivalence in $\PSh(\dot\cT^1(\bGr^\twosimeq(S)))$, and this is what we shall prove in Subsection \ref{subsec:proofAbis}.

For $n=2$ we will first make a reduction: we will introduce the property of being \emph{leaf-like} for 2-morphisms of $\N$-grading 1 in $\bGr(S)^+$ and $\bGr(S_+)$; it will be apparent that $\fj(S)$ preserves leaf-like morphisms. We will then reduce the proof of Theorem \ref{thm:A} to checking that $\fj(S)$ induces equivalences on spaces of leaf-like 2-morphisms of $\N^S$-grading $1^S$. After this we will again neglect the $\N^S$-grading and proceed in a similar manner as in the case $n=1$, showing that $\dot\kappa^2(\fj(S))\colon\dot\kappa^2(\bGr(S)^+)\to\dot\kappa^2(\bGr(S_+))$ induces objectwise an equivalence on spaces of leaf-like 2-morphisms.

\subsection{Proof of Theorem \ref{thm:Abis}}
\label{subsec:proofAbis}
We fix $S\in\Fin$ and aim at proving that the morphism $\dot\kappa^1(\fj^\twosimeq(S))\colon\dot\kappa^1(\bGr^\twosimeq(S)^+)\to\dot\kappa^1(\bGr^\twosimeq(S_+))$ in $\PSh(\dot\cT^1(\bGr^\twosimeq(S)))$ is an equivalence. 
We start by analysing the target presheaf. Let $(B,A)\in\dot\cT^1(\bGr^\twosimeq(S))\simeq \bGr^\twosimeq(S)\times \bGr^\twosimeq(S)^\op$, i.e. $A,B$ are finite sets. We have $\dot\kappa^1(\bGr^\twosimeq(S_+))(B,A)\simeq\bGr^\twosimeq(S_+)(B,A)_1$; the latter space can be identified with the moduli space of graph cobordisms $G$ from $B$ to $A$ endowed with a map of finite sets $\VE(G)\to S_+$ whose fibre over $\bullet$ is a singleton, i.e. the pullback of the cospan of spaces 
\[
\begin{tikzcd}
G^\twosimeq(B,A)\ar[r,"\VE"]&\Fin^\simeq&(\Fin_{/S})^\simeq\ar[l,"d_0(-)_+"'].
\end{tikzcd}
\]

\begin{nota}
We decompose $\bGr^\twosimeq(S_+)(B,A)_1$ as a disjoint union of spaces $X(B,A)_\ft\sqcup X(B,A)_\fe$, according to whether the unique element in the fibre over $\bullet$ of the map $\VE(G)\to S_+$ is an inner vertex or an edge.
\end{nota}

We next pass to the analysis of $\dot\kappa^1(\bGr^\twosimeq(S)^+)$.
\begin{nota}
Recall Remark \ref{rem:dotkfreemodule}. We denote by $\theta\in\PSh(\dot\cT^1(\bGr^\twosimeq(S)))$ the left Kan extension of the terminal presheaf $*\in\PSh(*\sqcup\bB C_2)$ along the composite
\[
\bB C_2\sqcup *\xrightarrow{F^\vee}\Fin^\simeq\times\Fin^\simeq\simeq(\bGr^\twosimeq(S)\times \bGr^\twosimeq(S)^\op)^\simeq\hto \bGr^\twosimeq(S)\times \bGr^\twosimeq(S)^\op,
\]
where the map $F^\vee$ is adjoint to the map $F$ from Example \ref{ex:cUcVasextensions}.
We similarly let $\theta_\ft,\theta_\fe\in\PSh(\dot\cT^1(\bGr^\twosimeq(S)))$ denote the left Kan extensions of $*\in\PSh(*)\simeq\cS$ and $*\in\PSh(\bB C_2)$ along the restrictions of the above composite to $*$ and $\bB C_2$, respectively.
\end{nota}
We abbreviate $\cC=\bGr^\twosimeq(S)$
and $\dot\lambda=\dot\lambda^1(\cC)\in\PSh(\dot\cT^1(\cC))\simeq\PSh(\cC\times\cC^\op)$ for the rest of the subsection.
As observed in Remark \ref{rem:dotkfreemodule}, we can identify $\dot\kappa^1(\bGr^\twosimeq(S)^+)$ with the tensor product $\theta\otimes_\Day\dot\lambda$ in $\PSh(\cC\times\cC^\op)$.
Moreover $\theta$ splits as a coproduct $\theta_\ft\sqcup\theta_\fe$ in $\PSh(\cC\times\cC^\op)$, so that also $\theta\otimes_\Day\dot\lambda$ also splits as $\theta_\ft\otimes_\Day\dot\lambda\sqcup\theta_\fe\otimes_\Day\dot\lambda$.

We start by analysing the second presheaf $\theta_\fe\otimes_\Day\dot\lambda$.
The restriction of $F^\vee$ to $\bB C_2$ is the product of the constant map at $\ul0\in\Fin^\simeq\simeq\cC^\simeq$ and the inclusion $j\colon\bB C_2\hto\Fin^\simeq\simeq(\cC^\op)\simeq$, so that $\theta_\fe\otimes_\Day\dot\lambda$ can be computed as the left Kan extension of $ *\boxtimes\dot\lambda\in\PSh(\bB C_2\times\cC\times\cC^\op;\cS^{\N^S})$ along the composite functor
\[
\begin{tikzcd}[column sep=30pt]
\bB C_2\times\cC\times\cC^\op\simeq*\times\cC\times\bB C_2\times\cC^\op\ar[r,"{\ul0,\Id,j,\Id}"]&\cC\times\cC\times\cC^\op\times\cC^\op\ar[r,"{\otimes,\otimes}"]&\cC\times\cC^\op.
\end{tikzcd}
\]
Since $\ul0$ is the monoidal unit in $\cC$, the previous composite functor agrees with
\[
\begin{tikzcd}[column sep=30pt]
\bB C_2\times\cC\times\cC^\op\simeq\cC\times\bB C_2\times\cC^\op\ar[r,"{\Id,j,\Id}"]&\cC\times\cC^\op\times\cC^\op\ar[r,"{\Id,\otimes}"]&\cC\times\cC^\op.
\end{tikzcd}
\]
Note that the last composition features the product of the identity of $\cC$ (appearing as middle factor in the source) and the functor $j\otimes\Id_{\cC^\op}\colon\bB C_2\times \cC^\op\to\cC^\op$, so that left Kan extension along the last composite fits as top row in the following commutative square in $\Catinfone^{\fU_2}$:
\[
\begin{tikzcd}[column sep=70pt,row sep=12pt]
\PSh(\bB C_2\times\cC\times\cC^\op)\ar[r]\ar[d,"\simeq"]&\PSh(\cC\times\cC^\op)\ar[d,"\simeq"]\\
\PSh(\cC;\PSh(\bB C_2\times\cC^\op))\ar[r,"\PSh(\cC;(j\otimes\Id_{\cC^\op})_!)"]&\PSh(\cC;\PSh(\cC^\op)).
\end{tikzcd}
\]
By Corollary \ref{cor:dotcTCCop}, the presheaf $*\boxtimes\dot\lambda$ in the top left category corresponds to the functor $*\boxtimes\yo_{\cC^\op}(-)\colon\cC^\op\to\PSh(\bB C_2\times\cC^\op)$ in the bottom left category.
We may moreover identify $*\simeq\colim_{\bB C_2}\yo_{\bB C_2}\in\PSh(\bB C_2)$, and write a chain of equivalences in the bottom right category $\PSh(\cC;\PSh(\cC^\op))\simeq\Fun(\cC^\op,\PSh(\cC^\op))$:
\[
\begin{split}
(j\otimes\Id_{\cC^\op})_!\circ\big(*\boxtimes\yo_{\cC^\op}(-)\big)&\simeq j_!(*)\otimes_\Day\yo_{\cC^\op}(-)\\
&\simeq j_!(\colim_{\bB C_2}\yo_{\bB C_2})\otimes_\Day\yo_{\cC^\op}(-)\\
&\simeq \colim_{\bB C_2}\yo_{\cC^\op}(j\otimes-).
\end{split}
\]

Hence for $B,A\in\Fin$ we may identify $(\theta_\fe\otimes_\Day\dot\lambda)(B,A)\simeq\bGr^\twosimeq(S)(B\sqcup\ul2,A)/C_2$, where $C_2$ acts trivially on $B,A$ and by swap on $\ul2$. 
An analogous, but slightly simpler computation allows us to identify
\[
(\theta_\ft\otimes_\Day\dot\lambda)(B,A)\simeq\bGr^\twosimeq(S)(B,A\sqcup\ul1).
\]
The map of presheaves $\dot\kappa^1(\fj^\twosimeq(S))\colon\dot\kappa^1(\bGr^\twosimeq(S)^+)\to\dot\kappa^1(\bGr^\twosimeq(S_+))$ gives, after evaluating at $(B,A)$, a map of spaces $(\theta\otimes_\Day\dot\lambda)(B,A)\to\bGr^\twosimeq(S_+)(B,A)_1$ which restricts to maps
$(\theta_\ft\otimes_\Day\dot\lambda)(B,A)\to X(B,A)_\ft$ and $(\theta_\fe\otimes_\Day\dot\lambda)(B,A)\to X(B,A)_\fe$.

Under the above identifications, the second map can be identified with the map induced on the $C_2$-quotient by the precomposition map
\[
-\circ(\Id_B\sqcup\fe_\bullet)\colon\bGr^\twosimeq(S)(B\sqcup\ul2,A)\to X(B,A)_\fe\subset\bGr^\twosimeq(S_+)(B,A)_1,
\]
where $\fe_\bullet\in\bGr^\twosimeq(S_+)(\ul0,\ul2)_1$ is the morphism $\fe$ with the unique edge labeled $\bullet$; and where we also identify $\bGr^\twosimeq(S)(B\sqcup\ul2,A)\simeq \bGr^\twosimeq(S_+)(B\sqcup\ul2,A)_0$.
A direct inspection shows that $\bGr^\twosimeq(S)(B\sqcup\ul2,A)/C_2\xrightarrow{\simeq} X(B,A)_\fe$ is indeed an equivalence of spaces. By a similar, slightly simpler argument, the first map $(\theta_\ft\otimes_\Day\dot\lambda)(B,A)\to X(B,A)_\ft$ can be identified with the postcomposition map
\[
(\Id_A\sqcup\ft_\bullet)\circ-\colon\bGr^\twosimeq(S)(B,A\sqcup\ul1)\xrightarrow{\simeq}X(B,A)_\ft\subset\bGr^\twosimeq(S_+)(B,A)_1,
\]
which is an equivalence. This concludes the proof of Theorem \ref{thm:Abis}.

\section{The universal property of \texorpdfstring{$\bGr$}{bGr}}
\label{sec:proofthmA}
In this section we conclude the proof of Theorem \ref{thm:A}.
We introduce some notations and conventions that we will use in the rest of the section.
\begin{itemize}
\item We fix $S\in\Fin$; by virtue of Lemmas \ref{lem:1Sreduction} and \ref{lem:plusreduction}, our goal 
is to prove that $\fj(S)$ induces an equivalence on spaces of 2-morphisms of $\N^{S_+}$ grading $1^{S_+}$.
\item We abbreviate $\cC=\bGr(S)$ and $\cC_+=\bGr(S_+)$.
\item Recall Definition \ref{defn:dotKappa}. We abbreviate
$\dot\cT:=\dot\cT^2(\cC)\simeq\dot\cT^2(\cC_+)$ (using the equivalence $\cC\simeq\iota^*\cC_+$), $\dot\cK:=\dot\cK^2(\cC_+)$, $\dot\cL:=\dot\cL^2(\cC)$, and we write $\dot\lambda:=\dot\lambda^2(\cC)$ and $\dot\kappa:=\dot\kappa^2(\cC_+)$, which are objects in $\PSh(\dot\cT)$. We further write $p\colon\dot\cL\to\dot\cT$ and $p_+\colon\dot\cK\to\dot\cT$ for the projections.
\item We suppress the first entry ``$\gamma$'' in a category of the form $\Xi(\gamma,-,-)$ as in Corollary \ref{cor:LDDhintmappingspaces}, leaving it understood to be equal to $\Gamma^2(\cC)_0=\Gamma^2(\bGr(S))_0$.
\end{itemize}

\begin{nota}
\label{nota:cOfm}
Given $B,A\in\Fin$ and $G_u,G_d\in\bGr(B,A)^\simeq$, we usually denote by $\cO=(B,A,G_u,G_d)\in\Fun(\mO_1,\cC)^\simeq\simeq\Fun(\mO_1,\bGr)^\simeq$ the corresponding $\mO_1$-shaped diagram. We consider $(B,A,G_u,G_d)$ also as an object of $\dot\cT$: by Lemma \ref{lem:essentiallysurjective}, up to equivalence we can represent any object in $\dot\cT$ in this way. We similarly expand $\cO'=(B',A',G'_u,G'_d)$, and so on.

A 2-morphism in $\bGr(S_+)$ from $G_u$ to $G_d$ is usually represented as a pair $(f,m)$ of a 2-morphism $f\colon G_u\to G_d$ in $\bGr$  and a map $m\colon\CE(f)\to S_+$. Given $\cD\in (\Catinftwo)_{/\cC_+}$ with $\cD^\twosimeq\xrightarrow{\simeq}\bGr^\twosimeq\simeq\cC_+^\twosimeq$, we denote by $\cD(f,m)\subseteq\cD(B,A)(G_u,G_d)$ the subspace of 2-morphisms mapping to $(f,m)$.

If $(f,m)$ has $\N$-grading 0, we consider $(f,m)$ also as an object in $p^{-1}(\cO)\subseteq\dot\cL$; and if $(f,m)$ has $\N$-grading 1, we consider it also as an object in $p_+^{-1}(\cO)\subseteq\dot\cK$.
\end{nota}
\subsection{The leaf-like reduction}
\label{subsec:leaflikereduction}
\begin{defn}
\label{defn:leaf}
A \emph{leaf} in a gaf $G$ is an edge $e\in E$ at least one of whose half-edges $h$ is attached, along $\sigma$, to an inner vertex $v\in V$ of valence 1, i.e. $\sigma^{-1}(v)=\set{h,v}$. An orientation on a leaf is a choice of a half-edge $h$ with the above properties (it is unique unless the component of $e$ in $G$ is isomorphic to the gaf $G_{\tbeta_1}$ from Subsection \ref{subsec:propertiesGr}, after disregarding the markings by $B$ and by $\ul2$, respectively).

We say that $(f,m)$ as in Notation \ref{nota:cOfm} is \emph{leaf-like} if the following hold:
\begin{itemize}
\item the $\N=\N^{\set{\bullet}}$-grading of $(f,m)$ is 1; that is, the fibre $m^{-1}(\bullet)$ is a singleton;
\item the unique element in $m^{-1}(\bullet)$ is a leaf in the based or unbased sub-trees of $G_u$ containing it, among those sub-trees that are collapsed along $f$.
\end{itemize}
In this case, we usually fix an orientation $h_\ell\in e_\ell$ of the special leaf $e_\ell=m^{-1}(\bullet)\in E_u$, and we denote $v_\ell=\sigma(h_\ell)\in V_u$ the corresponding vertex of valence 1.

Given $\cD\in (\Catinftwo)_{/\bGr(S_+)}$, we say that a 2-morphism in $\cD$ is \emph{leaf-like} if its image in $\bGr(S_+)$ is leaf-like. 
\end{defn}

\begin{lem}
\label{lem:leaflikereduction}
Assume the following:
\begin{itemize}
\item for all $T\in\Fin$ with $\#T<\#S$, the map $\fj(T)\colon\bGr(T)^+\to\bGr(T_+)$ induces an equivalence on spaces of 2-morphisms of $\N^{T_+}$-grading $1^{T_+}$.
\item for all $(f,m)$ as in Notation \ref{nota:cOfm} such that $(f,m)$ is leaf-like and has $\N^{S_+}$-grading $1^{S_+}$, the map of spaces $\fj(S)(f,m)\colon\bGr(S)^+(f,m)\to\bGr(S_+)(f,m)$ is an equivalence (equivalently, $\bGr(S)^+(f,m)$ is contractible).
\end{itemize}
Then $\fj(S)$ induces an equivalence on spaces of 2-morphisms of $\N^{S_+}$-grading $1^{S_+}$.
\end{lem}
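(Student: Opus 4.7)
Since $\bGr(S_+)$ is a plain $2$-category, the target $\bGr(S_+)(f,m)$ of $\fj(S)(f,m)$ is either empty or a point, so the task reduces to showing that $\bGr(S)^+(f,m)$ is contractible whenever it is nonempty. If $(f,m)$ is leaf-like, this is exactly the second hypothesis; otherwise I would reduce to the first hypothesis by peeling off a non-distinguished leaf of the forest of collapsed edges.

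Concretely, let $F \subset G_u$ denote the forest whose edges are $\CE(f)$ together with all vertices being collapsed by $f$; non-leaf-likeness of $(f,m)$ says exactly that the distinguished edge $e_\bullet := m^{-1}(\bullet)$ is not a leaf of $F$. Since a nonempty forest containing a non-leaf edge must contain at least two leaves, one can pick a leaf $e' \neq e_\bullet$ of $F$; set $s := m(e') \in S$. The leaf $e'$ together with its valence-$1$ endpoint encodes a leaf-like 2-morphism $(f_s,m_s)$ of $\bGr(S)$ obtained by whiskering an instance of the generator $\beta_s$ or $\tbeta_s$ (according to whether the endpoint of $e'$ is once- or twice-marked) with identity 1-morphisms. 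I would then claim that $(f,m)$ factors, uniquely up to contractible coherence, as a vertical pasting of $(f_s,m_s)$ with a residual 2-morphism $(f'',m'')$ in $\bGr(S_+ \setminus \{s\})$ of grading $1^{(S \setminus \{s\})_+}$ that collapses $\CE(f) \setminus \{e'\}$ with the restricted labelling. The crucial point is that whiskering by the fixed datum $(f_s,m_s) \in \bGr(S) \subset \bGr(S)^+$ identifies $\bGr(S)^+(f,m)$ with $\bGr(S \setminus \{s\})^+(f'',m'')$, since the extension $\bGr(S)^+$ of $\bGr(S)$ adjoins generators only for the index $\bullet$ and does not further affect the $s$-slot, whose generators already lie in $\bGr(S)$.

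Because $\#(S \setminus \{s\}) < \#S$, the first hypothesis applied to $T := S \setminus \{s\}$ gives that $\fj(T)$ induces an equivalence on grading-$1^{T_+}$ spaces; since $\bGr(T_+)(f'',m'')$ is a singleton, so is $\bGr(T)^+(f'',m'')$, and the reduction is complete.

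\textbf{Main obstacle.} The delicate step is making the heuristic ``peel off one leaf at a time'' into a precise 2-categorical factorization: one must show that for a fixed choice of leaf $e'$, the space of factorizations of $(f,m)$ as $(f_s,m_s)$ followed by $(f'',m'')$ is contractible in $\bGr(S)^+$, and that the whiskering identification with $\bGr(S \setminus \{s\})^+(f'',m'')$ is coherent and independent of auxiliary choices. At heart this rests on the combinatorial fact that the various orders in which one can collapse a forest edge-by-edge form a contractible poset, combined with the $C_2$-equivariance built into $\tbeta$ to treat non-based leaves correctly; organizing these pastings into a clean equivalence of 2-morphism spaces requires a careful use of the universal property of $\bGr(S)^+$ as a free extension of $\bGr(S)$.
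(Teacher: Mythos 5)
Your approach differs from the paper's at its core. You propose to \emph{peel off} a non-distinguished leaf of the collapsed forest, labelled by some $s\in S$, factoring $(f,m)$ as a vertical pasting of a leaf-like $(f_s,m_s)\in\bGr(S)$ with a residual $(f'',m'')$ labelled by $(S\setminus\{s\})_+$, and then invoke the first hypothesis for $T=S\setminus\{s\}$. The paper instead \emph{relabels}: it finds, exactly as you do, a leaf of the collapsed forest with label $s\in S$, but sets $\bar m$ to be $m$ post-composed with the transposition of $S_+$ swapping $s$ and $\bullet$, so that $(f,\bar m)$ is leaf-like and the second hypothesis applies. The comparison between $(f,m)$ and $(f,\bar m)$ is then carried out not inside $\bGr(S)^+$ directly, but in $\cU(S_+)$, where the $\fS_{S_+}$-equivariant structure of the free extension gives an equivalence $\cU(S_+)(f,\bar m)\simeq\cU(S_+)(f,m)$ essentially for free; the first hypothesis, routed through Lemma \ref{lem:plusreduction}, then gives that $\cU(S_+)\to\bGr(S)^+$ is a $T_{01^{S_+}}$-equivalence, which transports the relabelling to $\bGr(S)^+$ and closes the diagram against the second hypothesis. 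The paper's route therefore never needs to decompose a 2-morphism.

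Your route has a genuine gap exactly where you flag it. The assertion that whiskering by the fixed datum $(f_s,m_s)$ identifies $\bGr(S)^+(f,m)$ with $\bGr(S\setminus\{s\})^+(f'',m'')$, justified by the remark that $\bGr(S)\to\bGr(S)^+$ only adjoins generators at the $\bullet$ slot, is not an a priori consequence of the free-extension structure. The universal property of $\bGr(S)^+$ controls maps out of it, not its own mapping spaces, so it does not by itself produce the claimed identification; establishing that whiskering by a given 2-morphism induces an equivalence on fibres over $\bGr(S_+)$ would demand an apparatus comparable in difficulty to the entire leaf-like analysis of Subsections 6.2--6.7, which this lemma is precisely meant to defer until after the reduction to the leaf-like case. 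Your ``contractible poset of collapse orders'' heuristic is essentially a restatement of the conclusion rather than a proof of it. The relabelling argument avoids all of this because the swap already lives in the explicitly $\fS_{S_+}$-equivariant category $\cU(S_+)$, where it is an equivalence by construction, and only needs the already-available Lemma \ref{lem:plusreduction} to descend to $\bGr(S)^+$.
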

\begin{proof}
We have to prove that $\fj(S)(f,m)$ is an equivalence also when $(f,m)$ has $\N^{S_+}$-grading $1^{S_+}$ but is not leaf-like. Since $m\colon\CE(f)\xrightarrow{\simeq} S_+$ is non-empty, we may find $s\in S$ that corresponds along $m$ to a leaf in some tree collapsed by $f$. Let $\bar m$ denote the composite of $m\colon\CE(f)\xrightarrow{\simeq} S_+$ with the transposition of $S_+$ swapping $s$ and $\bullet$. We may then write a commutative diagram of spaces as follows:
\[
\begin{tikzcd}[row sep=12pt]
\cU(S_+)(f,\bar m)\ar[r,phantom,"\simeq"]\ar[d,"\simeq"]&\cU(S_+)(f,m)\ar[r,"\simeq"]&\bGr(S)^+(f,m)\ar[d]\\
\bGr(S)^+(f,\bar m)\ar[r,"\simeq"]& \bGr(S_+)(f,\bar m)\ar[r,phantom,"\simeq"] &\bGr(S_+)(f,m).
\end{tikzcd}
\]
The first hypothesis implies by Lemma \ref{lem:plusreduction} that $\fG(S)\colon\cU(S)\to\bGr(S)$ is a $T_{01^S}$-equivalence, so that the map between extensions $\fG(S)[*\ ;\ *\sqcup\bB C_2]\colon\cU(S_+)\to\bGr(S)^+$ is a $T_{01^{S_+}}$-equivalence; in particular the top right horizontal map is an equivalence. An analogous argument applies to the left vertical map, which is therefore an equivalence. By the second hypothesis the bottom left horizontal map is an equivalence; we conclude that the right vertical map is also an equivalence.
\end{proof}
\begin{cor}
\label{cor:leaflikereduction}
Assume $\bGr(S)^+(f,m)\simeq*$ for all $S\in\Fin$ and all leaf-like 2-morphisms $(f,m)$ in $\bGr(S_+)$ as in Notation \ref{nota:cOfm}. Then Theorem \ref{thm:A} holds.
\end{cor}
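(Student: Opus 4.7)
The plan is to chain together the three reductions already established in this section and perform a straightforward induction on $\#S$. First I would invoke Lemma \ref{lem:1Sreduction}(2) which, using Theorem \ref{thm:Abis} from Section \ref{sec:proofthmAbis}, reduces Theorem \ref{thm:A} to proving that for every $S\in\Fin$ the symmetric monoidal $\inftwo$-functor $\fG(S)$ induces an equivalence on spaces of 2-morphisms of $\N^S$-grading $1^S$. Then Lemma \ref{lem:plusreduction}(2) in turn reduces that statement to proving, for every $S\in\Fin$, that $\fj(S)\colon\bGr(S)^+\to\bGr(S_+)$ induces an equivalence on spaces of 2-morphisms of $\N^{S_+}$-grading $1^{S_+}$.

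The second step is an induction on $\#S$ using Lemma \ref{lem:leaflikereduction}. Assume inductively that $\fj(T)$ has this property for every $T\in\Fin$ with $\#T<\#S$; the base case $S=\emptyset$ makes this hypothesis vacuous. Then the first hypothesis of Lemma \ref{lem:leaflikereduction} is satisfied, and it remains only to verify the second one: that for every leaf-like 2-morphism $(f,m)$ in $\bGr(S_+)$ of $\N^{S_+}$-grading $1^{S_+}$, the map of spaces $\bGr(S)^+(f,m)\to\bGr(S_+)(f,m)$ induced by $\fj(S)$ is an equivalence.

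The key observation that makes the second hypothesis easy is that $\bGr(S_+)$ is a plain 2-category in the sense of Subsection \ref{subsec:catofcat}, so every space of 2-morphisms between fixed 1-morphisms is discrete; consequently, for any specific 2-morphism $(f,m)$, the fibre $\bGr(S_+)(f,m)$ is contractible. Combined with the standing assumption $\bGr(S)^+(f,m)\simeq *$, this immediately yields the desired equivalence $\bGr(S)^+(f,m)\simeq *\simeq\bGr(S_+)(f,m)$. Feeding this back into Lemma \ref{lem:leaflikereduction} closes the inductive step, and, together with the two preceding reductions, finishes the proof of Theorem \ref{thm:A}. No real obstacle lies in this deduction itself; the genuine difficulty is of course the standing hypothesis $\bGr(S)^+(f,m)\simeq *$ for leaf-like $(f,m)$, whose justification is the content of the subsections that follow.
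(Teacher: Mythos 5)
Your proposal is correct and follows essentially the same route the paper intends: chain Lemma \ref{lem:1Sreduction}(2) and Lemma \ref{lem:plusreduction}(2) to reduce Theorem \ref{thm:A} to the statement that $\fj(S)$ induces equivalences on 2-morphism spaces of grading $1^{S_+}$, then induct on $\#S$ using Lemma \ref{lem:leaflikereduction}. Your observation that $\bGr(S_+)(f,m)\simeq*$ because $\bGr(S_+)$ is a plain 2-category with discrete spaces of 2-morphisms is exactly the justification for the parenthetical equivalence already stated in Lemma \ref{lem:leaflikereduction} (``is an equivalence (equivalently, $\bGr(S)^+(f,m)$ is contractible)''), so you are supplying a detail the paper leaves implicit rather than taking a different path.
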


\begin{rem}
Recall from Definition \ref{defn:leaf} that a leaf-like 2-morphism $(f,m)$ is in particular required to have $\N$-grading 1. In principle we could weaken the assumption in Corollary \ref{cor:leaflikereduction} by only requiring $\bGr(S)^+(f,m)$ to be contractible when $(f,m)$ has $\N^S$-grading $1^S$. In practice it will be easier to just neglect the $\N^S$-grading from now on and focus on the $\N=\N^{\set{\bullet}}$-grading. This will also spare us the difficulty to formalise twisted $\N^S$-gradings as introduced in Remark \ref{rem:twistedgrading}.
\end{rem}

\subsection{A core groupoid computation}
\begin{defn}
Given a full $\infone$-subcategory inclusion $\cD_1\subseteq\cD_2$ in $\Catinfone$, we say that $\cD_1$ is a \emph{split $\infone$-subcategory} of $\cD_2$ if there exists a full $\infone$-subcategory $\cD_3\subseteq\cD_2$ such that the two inclusions exhibit $\cD_2\simeq\cD_1\sqcup\cD_3$.
\end{defn}
\begin{defn}
For $\cO,\cO'\in\dot\cT$, $(f,m)\in\dot\kappa(\cO)$ and $(f',m')\in\dot\kappa(\cO')$ we denote by $\Xi((f,m),(f',m'))$ the split $\infone$-subcategory of $\Xi(\cO,\cO')=\Xi(\Gamma^2(\cC)_0,\cO,\cO')$ corresponding to the subspace $\dot\cK((f,m),(f',m'))\subseteq\dot\cT(\cO,\cO')\simeq|\Xi(\cO,\cO')|$. We define similarly $\Xi((f,m),(f',m'))$ for $(f,m)\in\dot\lambda(\cO)$ and $(f',m')\in\dot\lambda(\cO')$.
\end{defn}

\begin{nota}
\label{nota:morphismdotcT}
We usually represent a morphism $\zeta\colon\cO\to\cO'$ in $\dot\cT$ by an object in $\Xi(\cO,\cO')$, i.e. by an annular diagram as follows, using Notation \ref{nota:cOfm} and relying on Proposition \ref{prop:mappingspacesdotcT2}:
\[
\begin{tikzcd}[column sep=90pt, row sep=5pt]
B  
\ar[r,bend left=30,"G_{u,l}"'{name=UL}]\ar[r,bend right=30,"G_{d,l}"{name=DL}]
\ar[rrr,bend left=30,"G_u"'{name=U}]\ar[rrr,bend right=30,"G_d"{name=D}]
&B'\ar[r,bend left=5,"G'_u"{name=Up}]\ar[r,bend right=5,"G'_d"'{name=Dp}] &A'\ar[r,bend left=30,"G_{u,r}"'{name=UR}]\ar[r,bend right=30,"G_{d,r}"{name=DR}]
&A.
\ar[from=U, to=Up, Rightarrow,"{(f_u,m_u)}"]
\ar[from=Dp, to=D, Rightarrow,"{(f_d,m_d)}"]
\ar[from=UL, to=DL, Rightarrow,"{(f_l,m_l)}"]
\ar[from=UR, to=DR, Rightarrow,"{(f_r,m_r)}"]
\end{tikzcd}
\]
Here for $\star=l,r,u,d$, the pair $(f_\star,m_\star)$ is a 2-morphism in $\cC$, in particular it has $\N$-grading 0.
We say that a diagram as the one above is ``special'' if $f_l$ and $f_r$ are equivalences; up to equivalence, in a special diagram we have $G_{u,l}=G_{d,l}$ and $G_{u,r}=G_{d,r}$: we then denote the first gaf by $G_l$ and the second by $G_r$.

We represent a morphism $\zeta\colon(f,m)\to(f',m')$ in $\dot\cK$ or in $\dot\cL$ using the same notation, i.e. by an object in $\Xi((f,m),(f',m'))$.
\end{nota}
\begin{rem}
\label{rem:specialsuffices}
We observe each object in $\Xi(\cO,\cO')$ is connected by some morphism to an object corresponding to a special diagram; hence any point in the space $|\Xi(\cO,\cO')|$ is equivalent to one represented by a special diagram as in Notation \ref{nota:morphismdotcT}. In other words, every morphism in $\dot\cT$ can be represented up to (non-canonical) equivalence by a special diagram. A similar remark holds for $\dot\cL$ and $\dot\cK$.
\end{rem}

\begin{lem}
\label{lem:dotcTGrcore}
The map $\Fun(\mO_1,\cC)^\simeq\to\dot\cT^\simeq$ from Lemma \ref{lem:essentiallysurjective} is an equivalence.
\end{lem}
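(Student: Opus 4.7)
The plan is to combine the $\pi_0$-surjectivity already furnished by Lemma \ref{lem:essentiallysurjective} with a direct identification of the hom-spaces between two chosen objects $\cO,\cO'\in\Fun(\mO_1,\cC)^\simeq$, matching those inside $\dot\cT^\simeq$ with the spaces of natural equivalences $\cO\simeq\cO'$ in $\Fun(\mO_1,\cC)$. Since both $\Fun(\mO_1,\cC)^\simeq$ and $\dot\cT^\simeq$ are $\infty$-groupoids, once $\pi_0$-surjectivity is in place such an identification on all hom-spaces is sufficient to conclude.

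First I would unpack the full mapping space $\dot\cT(\cO,\cO')$ via Proposition \ref{prop:mappingspacesdotcT2} together with Remark \ref{rem:specialsuffices}, so that every morphism can be represented, up to an inert zigzag, by a \emph{special} annular diagram as in Notation \ref{nota:morphismdotcT}, with horizontal pieces $G_l,G_r$ and vertical 2-morphisms $(f_u,m_u),(f_d,m_d)$ in $\cC=\bGr(S)$.

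Second, I would establish the following characterization: a morphism $\zeta\colon\cO\to\cO'$ represented by a special diagram is invertible in $\dot\cT$ if and only if $G_l,G_r$ lie in $\cC^\simeq$ (equivalently, their underlying gaf's are edge-free and hence induced by bijections of finite sets) and $(f_u,m_u),(f_d,m_d)$ are invertible 2-morphisms of $\cC$ (equivalently, they collapse no edges). The ``if'' direction is straightforward: reflecting the annulus and using the inverses of the pieces produces a two-sided inverse for $\zeta$ up to canonical 2-cells. Granting the characterization, the subspace of invertible morphisms in $\dot\cT(\cO,\cO')$ is the classifying space of the full subcategory of $\Xi(\Gamma^2(\cC)_0,\cO,\cO')$ spanned by diagrams with all pieces invertible; this is, by direct inspection, equivalent to the space of natural equivalences $\cO\simeq\cO'$ in $\Fun(\mO_1,\cC)$, completing the proof.

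The hard part will be the ``only if'' direction of the characterization. My strategy is to exploit that $\cC=\bGr(S)$ is a plain 2-category in which the number of inner vertices and edges of a gaf, and the set of edges collapsed by a morphism of gaf's, are finite-set-valued invariants that are additive under horizontal composition of 1-morphisms and under both vertical and horizontal composition of 2-morphisms. Composing $\zeta$ with a candidate inverse $\zeta'$ and equating the composite, in $\pi_0\dot\cT(\cO,\cO)$, with the identity special diagram (in which $G_l=\Id_B$, $G_r=\Id_A$ and both 2-morphisms are identities) will force these additive invariants to vanish on each constituent piece of $\zeta$ and of $\zeta'$, hence each piece must be invertible. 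The subtlety is to formalise ``equating in $\pi_0\dot\cT$'' given that composition in $\Xi$ is only realised after passing to the classifying space, which requires a careful combinatorial analysis of composable special diagrams in $\cC$.
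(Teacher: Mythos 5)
Your overall strategy---combining the $\pi_0$-surjectivity of Lemma \ref{lem:essentiallysurjective} with a characterisation of which special diagrams in $\Xi(\cO,\cO')$ are invertible in $\dot\cT$, via additivity of graph-theoretic invariants under composition---is exactly the one the paper takes, and your claimed characterisation of invertible morphisms matches the paper's. However, you correctly flag the ``only if'' direction as the difficult step and then leave its crux unresolved, and this is precisely where the key idea is needed.

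The gap is concrete. You propose to compose $\zeta$ with a candidate inverse $\zeta'$, equate the result with the identity in $\pi_0\dot\cT(\cO,\cO)$, and deduce vanishing of the additive invariants. But in the localised category $\dot\cT$ a morphism class is not a single special diagram, and it is not a priori clear that an invariant defined on special diagrams is constant on components of $|\Xi(\cO,\cO')|$, let alone additive under the pushout-style composition that realises $\dot\cT$ only after passage to classifying spaces. The paper resolves both issues at once by promoting the invariants to \emph{structure on $\dot\cT$ itself}, not merely on the hom-spaces: the assignment $\zeta\mapsto\#\CE(f_l)+\#\CE(f_r)+\#\CE(f_u)+\#\CE(f_d)$ is shown to be constant along morphisms in $\Xi(\cO,\cO')$ (so it descends to $\dot\cT(\cO,\cO')$) and assembles into an $\N$-grading $\fd$ on the $\infone$-category $\dot\cT$. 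Since $\N$ is ungroup-like, any equivalence in $\dot\cT$ must have $\fd$-grading $0$, which is exactly the conclusion you wanted from ``additivity''; no manual matching of composite special diagrams is required. The paper then iterates: on the split subcategory $\Xi(\cO,\cO')_{\fd=0}$ a second assignment $\fd'=\#\VE(G_l)+\#\VE(G_r)$ gives an $\N$-grading on a wide subcategory $\dot\cT'\subseteq\dot\cT$, forcing equivalences into $\Xi(\cO,\cO')_{\fd=0,\fd'=0}$; and a functor $\omega\colon\dot\cT'\to\Fin^\op\times\Fin$, sending a special diagram to the underlying maps of finite sets of $G_l,G_r$, forces these to be bijections. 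This last invariant is not an $\N$-grading but a functor, and the relevant principle is that functors send equivalences to equivalences. Your invariants are the right ones; what is missing is the observation that they should be organised as gradings and functors on the localised category, so that their compatibility with composition and invertibility comes for free from the definitions, rather than from an ad hoc combinatorial analysis of composable special diagrams, which indeed would be painful to carry out directly.
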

\begin{proof}
Let $\cO,\cO'$ be as in Notation \ref{nota:cOfm}. We have a functor $\fd\colon\Xi(\cO,\cO')\to\N$, sending an object as in Notation \ref{nota:morphismdotcT} to the sum of the cardinalities of the finite sets $\CE(f_l),\CE(f_r),\CE(f_u),\CE(f_d)$; this assignment on objects is constant along morphisms, so it descends indeed to a functor $\fd\colon\Xi(\cO,\cO')\to\N$ and hence to a map of spaces $\fd\colon\dot\cT(\cO,\cO')\to\N$, using Proposition \ref{prop:mappingspacesdotcT2}. 
The maps of spaces $\fd\colon\dot\cT(\cO,\cO')\to\N$ assemble into an $\N$-grading on $\dot\cT$. If we let $\Xi(\cO,\cO')_{\fd=0}$ denote the split $\infone$-subcategory $\fd^{-1}(0)\subseteq\Xi(\cO,\cO')$,
it follows that the subspace of $\dot\cT(\cO,\cO')$ comprising equivalences is contained in the subspace $|\Xi(\cO,\cO')_{\fd=0}|\subset|\Xi(\cO,\cO')|$. Note also that $\Xi(\cO,\cO')_{\fd=0}$ is already an $\infty$-groupoid.

We can now define another functor $\fd'\colon\Xi(\cO,\cO')_{\fd=0}\to\N$: this time we send an object, which is in particular represented by a special diagram as in Notation \ref{nota:morphismdotcT}, to the sum $\#\VE(G_l)+\#\VE(G_r)$; again this assignment is constant on (iso)morphisms in $\Xi(\cO,\cO')_{\fd=0}$, so it descends indeed to a map of spaces $\fd'\colon\Xi(\cO,\cO')_{\fd=0}\to\N$; the subspaces of morphism spaces $\Xi(\cO,\cO')_{\fd=0}\subset|\Xi(\cO,\cO')|\simeq\dot\cT(\cO,\cO')$ assemble into a wide subcategory $\dot\cT'\subseteq\dot\cT$, and the maps $\fd'$ give rise to an $\N$-grading on $\dot\cT'$. It follows that the subspace of $\dot\cT(\cO,\cO')$ comprising equivalences is contained in the subspace $\Xi(\cO,\cO')_{\fd=0,\fd'=0}=(\fd')^{-1}(0)\subseteq\Xi(\cO,\cO')_{\fd=0}$.

Finally, we have a functor $\omega\colon\dot\cT'\to\Fin^\op\times\Fin$, sending $\cO\mapsto(B,A)$ and sending a morphism $\zeta\colon\cO\to\cO'$ represented by a special diagram as in Notation \ref{nota:morphismdotcT} to the pair of \emph{maps of finite sets} $G_l\colon B'\to B$ and $G_r\colon A\to A'$. It follows that the subspace of $\dot\cT(\cO,\cO')$ comprising equivalences is contained in the subspace $\Xi(\cO,\cO')_{\fd=0,\fd'=0,\omega\simeq}\subseteq\Xi(\cO,\cO')_{\fd=0,\fd'=0}$ of morphisms of $\dot\cT'$ that are sent to equivalences along $\omega$.

We now observe that $\Xi(\cO,\cO')_{\fd=0,\fd'=0,\omega\simeq}$ is equivalent to the space of natural isomorphisms between $\cO,\cO'\in\Fun(\mO_1,\cC)^\simeq$. This proves that the map of spaces $\Fun(\mO_1,\cC)^\simeq\to\dot\cT^\simeq$ from Lemma \ref{lem:essentiallysurjective} is fully faithful; and by the same lemma, it is essentially surjective.
\end{proof}
\begin{cor}
\label{cor:dotcKGrcore}
We have $\Fun(\Theta_2,\cC)^\simeq\simeq\dot\cL^\simeq$ and $\Fun(\Theta_2,\cC_+)_1^\simeq\simeq\dot\cK^\simeq$.
\end{cor}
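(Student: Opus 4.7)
The plan is to leverage Lemma \ref{lem:dotcTGrcore} and reduce to an identification of fibres. Recall from Definition \ref{defn:dotKappa} that $p\colon\dot\cL\to\dot\cT$ and $p_+\colon\dot\cK\to\dot\cT$ are right fibrations, corresponding to the presheaves $\dot\lambda=\dot\lambda^2(\cC)$ and $\dot\kappa=\dot\kappa^2(\cC_+)$. Passing to $\infty$-groupoid cores, the induced maps $p^\simeq\colon\dot\cL^\simeq\to\dot\cT^\simeq$ and $p_+^\simeq\colon\dot\cK^\simeq\to\dot\cT^\simeq$ are Kan fibrations of spaces, whose fibre over an object $\cO=([1^2],1,(B,A,G_u,G_d))\in\dot\cT^\simeq$ is, respectively, $\dot\lambda(\cO)$ and $\dot\kappa(\cO)$.

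On the other hand, the restriction map $\Fun(\Theta_2,\cC)^\simeq\to\Fun(\mO_1,\cC)^\simeq$ is a fibration of spaces, with fibre at $(B,A,G_u,G_d)$ equal to the space $\cC(B,A)(G_u,G_d)$ of 2-morphisms from $G_u$ to $G_d$ in $\cC$; likewise the restriction map $\Fun(\Theta_2,\cC_+)^\simeq_1\to\Fun(\mO_1,\cC_+)^\simeq\simeq\Fun(\mO_1,\cC)^\simeq$, obtained by selecting 2-morphisms of $\N$-grading $1$, is a fibration of spaces with fibre at $(B,A,G_u,G_d)$ equal to $\cC_+(B,A)(G_u,G_d)_1$.

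The maps of Lemma \ref{lem:essentiallysurjective} yield commutative squares
\[
\begin{tikzcd}[row sep=10pt]
\Fun(\Theta_2,\cC)^\simeq\ar[r]\ar[d] & \dot\cL^\simeq\ar[d,"p^\simeq"]\\
\Fun(\mO_1,\cC)^\simeq\ar[r,"\simeq"]&\dot\cT^\simeq,
\end{tikzcd}
\qquad
\begin{tikzcd}[row sep=10pt]
\Fun(\Theta_2,\cC_+)^\simeq_1\ar[r]\ar[d] & \dot\cK^\simeq\ar[d,"p_+^\simeq"]\\
\Fun(\mO_1,\cC)^\simeq\ar[r,"\simeq"]&\dot\cT^\simeq,
\end{tikzcd}
\]
whose bottom arrows are equivalences by Lemma \ref{lem:dotcTGrcore}. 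A direct unwinding of Definition \ref{defn:dotKappa}, together with the fact that $([1^2],1)\in\bDelta^2_1$ is terminal and hence admits no non-identity inert morphisms out of it, identifies $\dot\lambda(\cO)\simeq\cC(B,A)(G_u,G_d)$ and $\dot\kappa(\cO)\simeq\cC_+(B,A)(G_u,G_d)_1$; under these identifications, the maps induced on fibres by the top horizontal arrows in the above squares are the respective identity maps.

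Since both squares are maps of fibrations inducing equivalences on bases and on fibres, the top horizontal arrows are themselves equivalences, which proves the corollary.
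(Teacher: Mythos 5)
Your approach is the same as the paper's (Lemma \ref{lem:dotcTGrcore} for the cores, plus the right fibrations $p,p_+$ and their fibres), so the overall strategy and conclusion are sound. However, your intermediate justification of the fibre identification is wrong in two ways. First, ``terminal, hence admits no non-identity inert morphisms out of it'' is a non-sequitur: a terminal object has a unique map \emph{into} it from every object, which says nothing about morphisms out. Second, the claim itself is false: $([1^n],1)$ \emph{does} admit non-identity inert morphisms to every other object of $\bDelta^n_1$; this is used explicitly in the proofs of Lemma \ref{lem:essentiallysurjective} and Lemma \ref{lem:dotcTcLspace}. The correct reason the fibres of the localized right fibrations $\dot\cL\to\dot\cT$ and $\dot\cK\to\dot\cT$ at $\cO=([1^2],1,x)$ agree with those of $\cL^2_1(\cC)\to\cT^2_1(\cC)$ and $\cK^2_1(\cC_+)\to\cT^2_1(\cC)$ is that the presheaves $\lambda^2_1(\cC)$ and $\kappa^2_1(\cC_+)$ are local with respect to inert morphisms (Lemma \ref{lem:kappaXproperties}(2)); hence $\dot\lambda$ and $\dot\kappa$ restrict along the localization functor $\cT^2_1(\cC)\to\dot\cT$ back to $\lambda^2_1(\cC)$ and $\kappa^2_1(\cC_+)$, whose value at $([1^2],1,x)$ is by definition the fibre at $x$ of $\Fun(\Theta_2,\cC_+)^\simeq_j\to\Fun(\mO_1,\cC)^\simeq$ (with $j=0,1$ respectively). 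Replacing your terminality clause with this observation makes the proof correct.
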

\begin{proof}
We use that $p\colon\dot\cL\to\dot\cT$ and $p_+\colon\dot\cK\to\dot\cT$ are right fibrations corresponding to the presheaves $\dot\lambda,$ and $\dot\kappa$, whose restrictions to the space $\Fun(\mO_1,\cC)^\simeq$ are the presheaves sending $\cO$ to the space of 2-morphisms in $\cC_+$ from $G_u$ to $G_d$ having $\N$-grading 0 and 1, respectively.
\end{proof}
We conclude the subsection with a construction of functors between categories of the form $\Xi(\cO,\cO')$, and by restriction of the form $\Xi((f,m),(f',m'))$.
\begin{defn}
\label{defn:zetacircblank}
Let $\zeta\colon\cO\to\cO'$ be a morphism in $\dot\cT$ represented by a special diagram as in Notation \ref{nota:morphismdotcT}, and let $\cO''\in\dot\cT$. We define a functor $\zeta\circ-\colon\Xi(\cO'',\cO)\to\Xi(\cO'',\cO')$. 
Consider the following lax commutative square, whose vertical functors are given by the right vertical functors in the pullback decompositions for $\Xi(\cO'',\cO)$ and $\Xi(\cO'',\cO')$ as in Proposition \ref{prop:mappingspacesdotcT2}, respectively:
\[
\begin{tikzcd}[column sep=50pt, row sep=40pt]
\Tw(\cC(B'',B))\times\Tw(\cC(A,A''))\ar[r,"\Tw(G_l\circhor-)","\Tw(-\circhor G_r)"']\ar[d,"d_1(-)\circhor G_u\circhor d_1(-)"',"d_0(-)\circhor G_d\circhor d_0(-)"]&
\Tw(\cC(B'',B'))\times\Tw(\cC(A',A''))\ar[d,"d_1(-)\circhor G'_u\circhor d_1(-)"',"d_0(-)\circhor G'_d\circhor d_0(-)"]\\
\cC(B'',A'')\times\cC(B'',A'')^\op\ar[r,equal]\ar[ur,Rightarrow,"f_u","f_d"']&\cC(B'',A'')\times\cC(B'',A'')^\op.
\end{tikzcd}
\]
We consider the pullback decompositions of $\Xi(\cO'',\cO)$ and $\Xi(\cO'',\cO')$ from \ref{prop:mappingspacesdotcT2}.
We define the top right coordinate of $\zeta\circ-$ as the composite of the top right projection $\Xi(\cO'',\cO)\to\Tw(\cC(B'',B))\times\Tw(\cC(A,A''))$ with the top horizontal functor above; we define the bottom right coordinate of $\zeta\circ-$ as the further composite of the previous with the right vertical functor above; and we define the bottom left coordinate of $\zeta\circ-$ as the concatenation of the bottom left projection $\Xi(\cO'',\cO)\to\cC(B'',A'')_{G''_u/}\times\cC(B'',A'')^\op_{G''_d/}$ with the composite functor 
\[
\Xi(\cO'',\cO)\to \Tw(\cC(B'',B))\times\Tw(\cC(A,A''))\to\Fun([1],\cC(B'',A'')\times\cC(B'',A'')^\op),
\]
where the second arrow is given by the above lax square.

We define similarly a functor $-\circ\zeta\colon\Xi(\cO',\cO'')\to\Xi(\cO,\cO'')$. Whenever we have objects $(f,m),(f',m'),(f'',m'')$ in $\dot\cL$ or $\dot\cK$ as in Notation \ref{nota:cOfm} and we have that $\dot\lambda(\zeta)$ or $\dot\kappa(\zeta)$ sends $(f',m')\mapsto(f,m)$, we also write $\zeta\circ-\colon\Xi((f'',m''),(f,m))\to\Xi(\Gamma(f'',m''),(f,m))$ and $-\circ\zeta\colon\Xi((f',m'),(f'',m''))\to\Xi(\Gamma(f,m),(f',m'))$ for the restricted functors.
\end{defn}
In the setting of Definition \ref{defn:zetacircblank}, the induced maps between classifying spaces $\zeta\circ-\colon|\Xi(\cO'',\cO)|\to|\Xi(\cO'',\cO')|$ and $-\circ\zeta\colon|\Xi(\cO',\cO'')|\to|\Xi(\cO,\cO'')|$ are equivalent to the maps between morphism spaces $\dot\cT(\cO'',\cO)\to\dot\cT(\cO'',\cO')$ and $\dot\cT(\cO',\cO'')\to\dot\cT(\cO,\cO'')$ induced by precomposition and postcomposition with $\zeta\in\dot\cT(\cO,\cO')$, respectively. A similar remarks holds for the restricted functors, compared to morphism spaces in $\dot\cL$ and $\dot\cK$.

\subsection{A pushout of Day convolutions}
\begin{nota}
\label{nota:Afm}
For a presheaf $A\in\PSh(\dot\cT)_{/\dot\kappa}$ and for $(f,m)$ as in Notation \ref{nota:cOfm} with $(f,m)$ of $\N$-grading 1, we denote by $A(f,m)\subseteq A(\cO)$ the preimage of $(f,m)\in\dot\kappa(\cO)$, using that the latter space is homotopy discrete.
\end{nota}
Using Notations \ref{nota:cOfm} and \ref{nota:Afm}, we have an identification of spaces $\bGr(S)^+(f,m)\simeq \dot\kappa^2(\bGr(S)^+)(f,m)$.
By virtue of Corollary \ref{cor:leaflikereduction}, our next goal becomes to compute the presheaf $\dot\kappa^2(\bGr(S)^+)\in\PSh(\dot\cT)$.
Recall from Example \ref{ex:cUcVasextensions} that $\bGr(S)^+\simeq\cC[*\ ;\ \bB C_2,*]^\sm$ is a two-step symmetric monoidal extension of $\inftwo$-categories. This can be rephrased as follows in terms of elementary and non-relative extensions, i.e. as in Definition \ref{defn:extension} with $\del X=\emptyset$.
\begin{nota}
\label{nota:phipsi}
Recall Lemma \ref{lem:dotcTGrcore} and Corollary \ref{cor:dotcKGrcore}.
We let $\phi_1\colon *\to\dot\cT$ and $\phi_2\colon \bB C_2\to\dot\cT$ denote the functors classifying the following $\mO_1$-shaped diagrams in $\cC$, with the second diagram carrying a $C_2$-equivariant structure; see Subsection \ref{subsec:propertiesGr} for the notation and for more details:
\[
\begin{tikzcd}[column sep=70pt]
\ul1\ar[r,bend left=10,"G_{\beta,1}"]\ar[r,bend right=10,"G_{\beta,2}"'] &\ul1;
\end{tikzcd}
\hspace{1cm}
\begin{tikzcd}[column sep=70pt]
\ul2\ar[r,bend left=10,"G_{\tbeta,1}"]\ar[r,bend right=10,"G_{\tbeta,2}"'] &\ul0.
\end{tikzcd}
\]
We let $\phi_3$ denote $\phi_2|_*$ for the inclusion $*\hto\bB C_2$. For uniform notation, we let $X_1=X_3=*\in\cS$ and $X_2=\bB C_2\in\cS$.

We define similarly the functors $\psi_i\colon X_i\to\dot\cK\subseteq\dot\cL^2(\cC_+)$ by using the 2-morphism $\beta_\bGr$ (for $i=1$) and the $C_2$-equivariant 2-morphism $\tbeta_\bGr$ (for $i=2,3$), together with the maps $\CE(\beta_\bGr)\simeq\CE(\tbeta_\bGr)\simeq\ul1\to S_+$ with value $\bullet$.
\end{nota}
We may identify $\bGr(S)^+$ as the pushout in $\CAlg(\Catinftwo)_{\cC/}$, or equivalently in $\CAlg(\Catinftwo)$, of the following span diagram of extensions of $\cC$, where we make explicit the structure maps left implicit in Definition \ref{defn:extension}:
\[
\begin{tikzcd}[column sep=70pt]
\cC[X_1\ \phi_1]^\sm &\cC[X_3\ \phi_3]^\sm\ar[l,"\beta_3\mapsto\ft\mu(\beta_1\otimes\Id_{\ul1})"']\ar[r,"*\hto\bB C_2"]&\cC[X_2\ \phi_2]^\sm.
\end{tikzcd}
\]
Here $\beta_i$ denotes the 2-morphism which we adjoin in the extensions $\cC[X_i\ \phi_i]$ for $i=1,2,3$, respectively, with $\beta_2$ being a $C_2$-equivariant 2-morphism. 
The left pointing map sends $\beta_3\mapsto\ft\mu(\beta_1\otimes\Id_{\ul 1})$: this assignment is compatible with the two diagrams classified by $\phi_1$ and $\phi_3$. Since all extensions in the previous span diagram are $\N$-admissible,
we may also regard the previous span diagram as a span diagram in  $\CAlg(\Catinftwo^\N)$. 
We may then apply the colimit preserving functor $(\iota^{01,\N})^*$ and obtain an analogous span diagram in $\CAlg(\Catinftwo^{01})$, 
whose pushout is $(\iota^{01,\N})^*(\bGr(S)^+)$:
\[
\begin{tikzcd}[row sep=10pt,column sep=10pt]
(\iota^{01,\N})^*(\cC[X_3\ \phi_3]^\sm)\ar[d]\ar[r]\ar[dr,phantom,"\ulcorner"very near end] &
(\iota^{01,\N})^*(\cC[X_1\ \phi_1]^\sm)\ar[d]\\
(\iota^{01,\N})^*(\cC[X_2\ \phi_2]^\sm)\ar[r]&(\iota^{01,\N})^*(\bGr(S)^+).
\end{tikzcd}
\]

Recall the pullback square from Corollary \ref{cor:CAlgCatinfn01pullback}; by Corollary \ref{cor:nsimeqinvariance}, the image of the last pushout diagram along the top functor $\iota^*$ in the said square is the constant square diagram in 
$\CAlg(\Catinftwo)$ with value $\cC$, 
so that the further image along the right functor $(\dot\cL^2\to\dot\cT^2)$ is the constant square diagram with value $(\dot\cT,\dot\lambda)$ in $\CAlg(\PSh)$. 
The constancy of the last two diagrams allows us to conclude that the left vertical functor $\varphi^*\dot\cL\dot\cT_\N$ in the said square sends the above pushout diagram in  $\CAlg(\Catinftwo^{01})$ 
to a pushout square in $\CAlg(\PSh^{01})$, and in fact even in the fibre
$(\hat\iota^*)^{-1}(\dot\cT,\dot\lambda)\subseteq\CAlg(\PSh^{01})$.
The fibre $(\hat\iota^*)^{-1}(\dot\cT,\dot\lambda)$ may be identified with the module category 
$\Mod_{\dot\lambda}(\PSh(\dot\cT))$.
We thus obtain the following pushout square in $\Mod_{\dot\lambda}(\PSh(\dot\cT))$, computing the presheaf $\dot\kappa^2(\bGr(S)^+)$:
\[
\begin{tikzcd}[row sep=10pt,column sep=10pt]
\dot\kappa^2(\cC[X_3\ \phi_3]^\sm)\ar[r,phantom,"\simeq"]&\theta_3\otimes_\Day\dot\lambda\ar[r]\ar[d]\ar[dr,phantom,"\ulcorner"very near end] &\theta_1\otimes_\Day\dot\lambda\ar[d]\ar[r,phantom,"\simeq"]&\dot\kappa^2(\cC[X_1\ \phi_1]^\sm)\\
\dot\kappa^2(\cC[X_2\ \phi_2]^\sm)\ar[r,phantom,"\simeq"]&\theta_2\otimes_\Day\dot\lambda\ar[r]&\dot\kappa^2(\bGr(S)^+).
\end{tikzcd}
\]
Here $\theta_{i}:=(\phi_i)_!(*)$ is the left Kan extension of the terminal presheaf; the left vertical arrow is induced by the inclusion $*\hto\bB C_2$; and the top horizontal arrow is induced by the point $\ft\mu(\beta\otimes\Id_{\ul1})\in \phi_3^*(\theta_1\otimes_\Day\dot\lambda)(*)$. The identifications in the above diagram are obtained as in Remark \ref{rem:dotkfreemodule}. We obtain the following, which we state as a lemma for future reference.
\begin{lem}
\label{lem:dotkappafmpushout}
For $(f,m)$ as in Notation \ref{nota:cOfm} of $\N$-grading 1, and in particular for $(f,m)$ leaf-like, the space $\dot\kappa^2(\bGr(S)^+)(f,m)$ is equivalent to the pushout of the span of spaces $(\theta_1\otimes_\Day\dot\lambda)(f,m)\ot(\theta_3\otimes_\Day\dot\lambda)(f,m)\to(\theta_2\otimes_\Day\dot\lambda)(f,m)$.
\end{lem}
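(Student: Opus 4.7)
The plan is to derive the lemma as a direct consequence of the pushout square in $\Mod_{\dot\lambda}(\PSh(\dot\cT))$ established in the paragraphs immediately preceding the statement. First, I would evaluate that pushout square pointwise at the object $\cO \in \dot\cT$, obtaining a commutative square in $\cS$ equipped with a canonical cocone to the space $\dot\kappa^2(\bGr(S)^+)(\cO)$ (the cocone vertex); by construction each of the three outer corners $(\theta_i \otimes_\Day \dot\lambda)(\cO)$ carries a map to the pushout vertex, and these maps are compatible with the span maps.

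Next I would exploit the fact, already recorded in Notation \ref{nota:Afm}, that $\dot\kappa^2(\bGr(S)^+)(\cO)$ is homotopy discrete. For any homotopy discrete space $Y$, the slice $\infty$-category $\cS_{/Y}$ decomposes as the product $\prod_{\alpha \in \pi_0 Y}\cS$, in which colimits are computed componentwise. Thus, restricting the cocone of spaces over $\dot\kappa^2(\bGr(S)^+)(\cO)$ to the component indexed by $(f,m)$ yields a square of fiber spaces in which the pushout computed componentwise agrees with the fiber of the pushout over $(f,m)$. Combined with the identification $\dot\kappa^2(\bGr(S)^+)(f,m)=\bGr(S)^+(f,m)$, this delivers the required formula.

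The main delicate point is justifying that evaluation at $\cO$ of the pushout in $\Mod_{\dot\lambda}(\PSh(\dot\cT))$ interacts correctly with the discrete base $\dot\kappa^2(\bGr(S)^+)(\cO)$: colimits in $\PSh(\dot\cT)$ are pointwise, and each map in the module span gives rise to a map of $\dot\lambda(\cO)$-indexed objects over the discrete space $\dot\kappa^2(\bGr(S)^+)(\cO)$, so the relevant pushout in $\cS_{/\dot\kappa^2(\bGr(S)^+)(\cO)}$ is genuinely componentwise. Once this is set up, the argument is essentially formal: pushouts of discrete-base-sliced spaces are componentwise, and the component over $(f,m)$ is by definition the pushout of the fibers $(\theta_i \otimes_\Day \dot\lambda)(f,m)$ for $i=1,2,3$.
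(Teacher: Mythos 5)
Your overall strategy—evaluate the pushout square of $\dot\lambda$-modules pointwise at $\cO$, then exploit a homotopy-discrete base to pass to the component indexed by $(f,m)$—is the paper's intended reading. But you have conflated two presheaves that are not the same, and the conflation lands precisely on the nontrivial point.

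You claim that ``$\dot\kappa^2(\bGr(S)^+)(\cO)$ is homotopy discrete'' is recorded in Notation~\ref{nota:Afm}. It is not. What Notation~\ref{nota:Afm} records is that $\dot\kappa(\cO)$ is homotopy discrete, and per the abbreviations fixed at the start of Section~\ref{sec:proofthmA} we have $\dot\kappa = \dot\kappa^2(\cC_+) = \dot\kappa^2(\bGr(S_+))$. The presheaf $\dot\kappa^2(\bGr(S_+))$ is genuinely discrete because $\bGr(S_+)$ is a plain 2-category. The presheaf $\dot\kappa^2(\bGr(S)^+)$ is the one attached to the free extension $\bGr(S)^+$, whose $2$-morphism spaces are \emph{not} known to be discrete at this stage—their identification with those of $\bGr(S_+)$ (hence their discreteness) is the content of Theorem~\ref{thm:A}, which this lemma is a step toward proving. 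So slicing over the pushout vertex, as you propose, is not available.

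The fix is small: the point $(f,m)$ lives in $\dot\kappa(\cO)$, and Notation~\ref{nota:Afm} defines $A(f,m)$ as the fibre of $A(\cO)\to\dot\kappa(\cO)$ for $A\in\PSh(\dot\cT)_{/\dot\kappa}$. All four corners of the pushout square receive compatible maps to $\dot\kappa$, induced by the symmetric monoidal $\inftwo$-functors $\cC[X_i\ \phi_i]^\sm\to\bGr(S)^+\xrightarrow{\fj(S)}\bGr(S_+)=\cC_+$, so the square lives in $\PSh(\dot\cT)_{/\dot\kappa}$. You should therefore slice over $\dot\kappa(\cO)$, not over the cocone vertex; then the discreteness you need is precisely the one that has been verified. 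One further point worth making explicit: the pushout is computed in $\Mod_{\dot\lambda}(\PSh(\dot\cT))$, so to evaluate pointwise you also need the forgetful functor to $\PSh(\dot\cT)$ to preserve pushouts. This holds because $\PSh(\dot\cT)$ is presentably symmetric monoidal (Day convolution distributes over colimits), so the forgetful functor from modules over the commutative algebra $\dot\lambda$ preserves all small colimits; you gesture at the module structure but do not close this gap.
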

\subsection{A categorical interpretation}

We next introduce a categorical interpretation of the spaces $(\theta_i\otimes_\Day\dot\lambda)(f,m)$. 
\begin{nota}
\label{nota:cKmoduleovercL}
We will use in the following that $\dot\cK$ has a structure of module over $\dot\cL\in\CAlg(\Catinfone)$; the module structure is obtained by restricting the symmetric monoidal structure on $\dot\cL^2(\cC_+)$ to its full $\infone$-subcategories $\dot\cL$ and $\dot\cK$. We shall use the notation ``$-\sqcup-$'' for this module structure, i.e. the same notation used for the symmetric monoidal structure on $\dot\cL$.
\end{nota}

\begin{lem}
\label{lem:defQi}
Let $\cO\in\dot\cT$
and $(f,m)$ 
be as in Notation \ref{nota:cOfm}.
with $(f,m)$ of $\N$-grading 1. 
Then for $i=1,2,3$ the spaces $(\theta_i\otimes_\Day\dot\lambda)(\cO)$ and $(\theta_i\otimes_\Day\dot\lambda)(f,m)$ are equivalent to the classifying spaces of the categories $Q_i(\cO)$ and $Q_i(f,m)$ defined by the following pullback squares in $\Catinfone$, respectively:
\[
\begin{tikzcd}[row sep=10pt, column sep=60pt]
Q_i(\cO)\ar[r,"\pi_{\cO}^i"]\ar[d,"\pi_{X_i}\times\pi^i_{\dot\cL}"']\ar[dr,phantom,"\lrcorner"very near start]&\dot\cT_{\cO/}\ar[d,"d_0"]\\
X_i\times\dot\cL\ar[r,"\phi_i(-)\sqcup p(-)"]&\dot\cT.
\end{tikzcd}
\hspace{1cm}
\begin{tikzcd}[row sep=10pt,column sep=60pt]
Q_i(f,m)\ar[r,"\pi_{(f,m)}^i"]\ar[d,"\pi_{X_i}\times\pi^i_{\dot\cL}"']\ar[dr,phantom,"\lrcorner"very near start]&\dot\cK_{(f,m)/}\ar[d,"d_0"]\\
X_i\times\dot\cL\ar[r,"\psi_i(-)\sqcup -"]&\dot\cK.
\end{tikzcd}
\]
\end{lem}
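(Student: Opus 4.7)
The plan is to realise each space $(\theta_i\otimes_\Day\dot\lambda)(\cO)$, respectively $(\theta_i\otimes_\Day\dot\lambda)(f,m)$, as a colimit that can be re-read as the classifying space of a left fibration, and then match this left fibration with $Q_i(\cO)$, respectively $Q_i(f,m)$, by construction.

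First I would unpack the Day convolution. Since $X_i\in\cS$, the terminal presheaf on $X_i$ decomposes as $*\simeq\colim_{x\in X_i}\yo(x)\in\PSh(X_i)$, so $\theta_i=(\phi_i)_!(*)\simeq\colim_{x\in X_i}\yo(\phi_i(x))$; and the right fibration $p\colon\dot\cL\to\dot\cT$ classifying $\dot\lambda$ gives $\dot\lambda\simeq\colim_{(f',m')\in\dot\cL}\yo(p(f',m'))$. The Day convolution formula recalled in Subsection~\ref{subsec:presheaves} then yields, after evaluation at $\cO$,
\[
(\theta_i\otimes_\Day\dot\lambda)(\cO)\simeq\colim_{(x,(f',m'))\in X_i\times\dot\cL}\dot\cT\bigl(\cO,\phi_i(x)\sqcup p(f',m')\bigr).
\]
Next I would observe that the right-hand side is precisely the classifying space of $Q_i(\cO)$. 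By construction $Q_i(\cO)$ is the pullback of the left fibration $d_0\colon\dot\cT_{\cO/}\to\dot\cT$ along $\phi_i(-)\sqcup p(-)\colon X_i\times\dot\cL\to\dot\cT$, so the projection $\pi_{X_i}\times\pi^i_{\dot\cL}\colon Q_i(\cO)\to X_i\times\dot\cL$ is itself a left fibration whose fibre at $(x,(f',m'))$ is the space $\dot\cT(\cO,\phi_i(x)\sqcup p(f',m'))$. By Grothendieck's straightening--unstraightening, the classifying space of the total space of a left fibration equals the colimit of the associated diagram of fibres, yielding the desired equivalence $|Q_i(\cO)|\simeq(\theta_i\otimes_\Day\dot\lambda)(\cO)$.

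For the $(f,m)$-version, my strategy would be to pass from the slice $\PSh(\dot\cT)_{/\dot\kappa}$ to $\PSh(\dot\cK)$ via the standard equivalence induced by the right fibration $p_+\colon\dot\cK\to\dot\cT$. Under this equivalence, the canonical map $\theta_i\otimes_\Day\dot\lambda\to\dot\kappa$ of Lemma~\ref{lem:dotkappafmpushout} (coming from the symmetric monoidal functor $\cC[X_i\ \phi_i]^\sm\to\bGr(S)^+$) corresponds to a presheaf on $\dot\cK$ whose value at $(f,m)$ is by definition the space $(\theta_i\otimes_\Day\dot\lambda)(f,m)$. The key geometric fact, to be extracted from the construction of the extension and the free-module description of Remark~\ref{rem:dotkfreemodule}, is that this map implements the composition ``attach the new grading-$1$ 2-morphism $\psi_i(x)$ to a grading-$0$ 2-morphism $(f',m')$''; that is, it factors through the monoidal composition $\psi_i\sqcup(-)\colon X_i\times\dot\cL\to\dot\cK$ of Notation~\ref{nota:cKmoduleovercL}. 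Granting this identification, the same Grothendieck argument applied to the pullback $Q_i(f,m)\to X_i\times\dot\cL$, which is a left fibration with fibre $\dot\cK((f,m),\psi_i(x)\sqcup(f',m'))$ at $(x,(f',m'))$, delivers the equivalence $|Q_i(f,m)|\simeq(\theta_i\otimes_\Day\dot\lambda)(f,m)$.

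The main obstacle will be making the identification of the composition map precise: the map $\theta_i\otimes_\Day\dot\lambda\to\dot\kappa$ comes from the universal property of the extension $\cC[X_i\ \phi_i]^\sm$, while the functor $\psi_i\sqcup(-)$ is defined directly via the 2-morphism $\psi_i(x)\in\dot\cK$ and the $\dot\cL$-action on $\dot\cK$, and one must verify that these two descriptions of ``attach $\psi_i(x)$'' agree as morphisms of right fibrations over $\dot\cT$. This is essentially the content of the free-module description of $\dot\kappa^2$ for extensions already exploited in Remark~\ref{rem:dotkfreemodule}, specialised to the simple one-generator extensions $\cC[X_i\ \phi_i]^\sm$.
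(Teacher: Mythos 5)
Your computation of $(\theta_i\otimes_\Day\dot\lambda)(\cO)$ is essentially identical to the paper's: you expand the Day convolution via colimits of representables, observe that $(\theta_i\otimes_\Day\dot\lambda)\simeq F_!(*)$ with $F=\phi_i(-)\sqcup p(-)\colon X_i\times\dot\cL\to\dot\cT$, and read off $(\theta_i\otimes_\Day\dot\lambda)(\cO)\simeq|Q_i(\cO)|$ from the pointwise left Kan extension formula (equivalently, by unstraightening the left fibration $Q_i(\cO)\to X_i\times\dot\cL$). This part is correct and matches the paper.

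For the $(f,m)$-version your route diverges from the paper's. You propose to pass through the equivalence $\PSh(\dot\cT)_{/\dot\kappa}\simeq\PSh(\dot\cK)$ and identify the presheaf on $\dot\cK$ corresponding to $\theta_i\otimes_\Day\dot\lambda\to\dot\kappa$ with $(\psi_i\sqcup -)_!(*)$, then apply the same left-fibration argument for $Q_i(f,m)\to X_i\times\dot\cL$. The paper instead observes that the factorisation $F=p_+\circ(\psi_i\sqcup-)$ of $F$ through the right fibration $p_+\colon\dot\cK\to\dot\cT$ yields the splitting $\dot\cK\times_{\dot\cT}\dot\cT_{\cO/}\simeq\coprod_{(f,m)\in\dot\kappa(\cO)}\dot\cK_{(f,m)/}$ and hence $Q_i(\cO)\simeq\coprod_{(f,m)}Q_i(f,m)$, and then reduces the second statement to the first by matching this against the tautological decomposition $(\theta_i\otimes_\Day\dot\lambda)(\cO)\simeq\coprod_{(f,m)}(\theta_i\otimes_\Day\dot\lambda)(f,m)$ coming from the map to $\dot\kappa$. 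The two routes are equivalent in substance, but the paper's repackaging avoids working explicitly in $\PSh(\dot\cK)$ and makes the deduction from the first part a one-line observation. The point you correctly flag as "the main obstacle" --- that the map $\theta_i\otimes_\Day\dot\lambda\to\dot\kappa$ produced by the universal property of the extension $\cC[X_i\ \phi_i]^\sm$ corresponds, under the colimit identification, to the lift $\psi_i\sqcup(-)\colon X_i\times\dot\cL\to\dot\cK$ of $\phi_i\sqcup p$ --- is precisely what the paper packages as ``compatibility'' of the two splittings and asserts without further argument. So both routes rely on the same identification; you are more careful to name it as a gap, while the paper leaves it to the reader as an exercise in chasing the construction of Remark \ref{rem:dotkfreemodule}.
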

\begin{proof}
Using the equivalences $\theta_i\simeq(\phi_i)_!(*)$ and $\dot\lambda\simeq p_!(*)$, the Day convolution $\theta_i\otimes_\Day\dot\lambda$ can be computed as the left Kan extension of $*$ along the composite functor $X_i\times\dot\cL\xrightarrow{\phi_i\times p}\dot\cT\times\dot\cT\xrightarrow{-\sqcup-}\dot\cT$. The formula for left Kan extension of presheaves then readily implies the statement for $Q_i(\cO)$. The second statement follows from the splitting $\dot\cK\times_{\dot\cT}\dot\cT_{\cO/}\simeq\coprod_{(f,m)\in\dot\kappa(\cO)}\dot\cK_{(f,m)/}$, implying a splitting of $\infone$-categories $Q_i(\cO)\simeq\coprod_{(f,m)\in\dot\kappa(\cO)}Q_i(f,m)$ which is compatible with the splitting of spaces 
$(\theta_i\otimes_\Day\dot\lambda)(\cO)\simeq\coprod_{(f,m)\in\dot\kappa(\cO)}(\theta_i\otimes_\Day\dot\lambda)(f,m)$.
\end{proof}

\begin{nota}
\label{nota:Qiobject}
We usually represent an object in $Q_1(\cO)$ or in any of its split $\infone$-subcategories $Q_1(f,m)$ as a diagram $\bar D$ as follows, in which $\bar\cO\in\dot\cT$ is as in Notation \ref{nota:cOfm}, $(\bar f,\bar m)\in\dot\lambda(\bar\cO)$ (in particular $(\bar f,\bar m)$ has $\N$-grading 0), and the data $\bar G_l$, $\bar G_r$, $(\bar f_u,\bar m_u)$, $(\bar f_d,\bar m_d)$ give a special diagram as in Notation \ref{nota:morphismdotcT}, corresponding to a morphism $\bar\zeta\colon\cO\to\phi_i\sqcup\bar\cO$ in $\dot\cT$ as well as a morphism $\bar\zeta\colon(f,m)\to\psi_i\sqcup(\bar f,\bar m)$ in $\dot\cK$; we rely here on Remark \ref{rem:specialsuffices}:
\[
\begin{tikzcd}[column sep=80pt]
B \ar[r,"\bar G_l"']\ar[rrr,bend left=27,"G_u"'{name=U}]\ar[rrr,bend right=27,"G_d"{name=D}]&\ul1\sqcup \bar B\ar[r,bend left,"G_{\beta,1}\sqcup \bar G_u"'{name=Up}]\ar[r,bend right,"G_{\beta,2}\sqcup \bar G_d"{name=Dp}] &\ul1\sqcup \bar A\ar[r,"\bar G_r"'] &A.
\ar[from=Up, to=Dp, Rightarrow,"{\psi_1\sqcup (\bar f,\bar m)}"]
\ar[from=U, to=Up, Rightarrow,"{(\bar f_u,\bar m_u)}"]
\ar[from=Dp, to=D, Rightarrow,"{(\bar f_d,\bar m_d)}"]
\end{tikzcd}
\]
We similarly represent by a diagram $\bar D$ as follows an object in $Q_3(\cO)$ or $Q_3(f,m)$:
\[
\begin{tikzcd}[column sep=80pt]
B \ar[r,"\bar G_l"']\ar[rrr,bend left=27,"G_u"'{name=U}]\ar[rrr,bend right=27,"G_d"{name=D}]&\ul2\sqcup \bar B\ar[r,bend left,"G_{\tbeta,1}\sqcup \bar G_u"'{name=Up}]\ar[r,bend right,"G_{\tbeta,2}\sqcup \bar G_d"{name=Dp}] &\ul0\sqcup \bar A\ar[r,"\bar G_r"'] &A.
\ar[from=Up, to=Dp, Rightarrow,"{\psi_3\sqcup (\bar f,\bar m)}"]
\ar[from=U, to=Up, Rightarrow,"{(\bar f_u,\bar m_u)}"]
\ar[from=Dp, to=D, Rightarrow,"{(\bar f_d,\bar m_d)}"]
\end{tikzcd}
\]
We use different decorations such as $\check D,\tilde D,\dots$ for other objects in $Q_i(\cO)$ or $Q_i(f,m)$, and repeat the same decorations on the consituent parts of such objects.
\end{nota}
\begin{nota}
Recall Definition \ref{defn:leaf}.
We consider the unique edges $e_\beta$ and $e_{\tbeta}$ of the graphs $G_{\beta,1}$ and $G_{\tbeta,1}$ as leaves; we equip the first leaf with the unique possible orientation, which we denote $h_\beta\in H_{\beta,1}$, and the second with the orientation selecting the first half-edge $1\in\ul2\cong H_{\tbeta,1}$, which we denote $h_{\tbeta}\in H_{\tbeta,1}$. The second orientation is of course not invariant under the $C_2$-action on $G_{\tbeta,1}$.
\end{nota}

\begin{nota}
\label{nota:orientationpreserving}
Let $(f,m)$ be leaf-like as Definition \ref{defn:leaf}. 
We say that an object $\bar D\in Q_1(f,m)$ (respectively, $\bar D\in Q_3(f,m)$) as in Notation \ref{nota:Qiobject} is \emph{orientation-preserving} if $\bar f_u$ sends $h_\ell\in H_u$ to $h_\beta$  (respectively, to $h_{\tbeta}$); otherwise, if $\bar f_u(h_\ell)$ is the other half-edge, we say that $\bar D$ is \emph{orientation-reversing}. We observe that a morphism in $Q_1(f,m)$ or $Q_3(f,m)$ is always between a pair of orientation-preserving objects, or between a pair of orientation-reversing objects. For $i=1,3$ we denote by $Q_i(f,m)^\to$ and $Q_i(f,m)^\ot$ the split $\infone$-subcategories of $Q_i(f,m)$ spanned by orientation-preserving and orientation-reversing objects, respectively; we thus have a splitting $Q_i(f,m)\simeq Q_i(f,m)^\to\sqcup Q_i(f,m)^\ot$.

For $(\bar f,\bar m)\in\dot\cL$ we similarly consider the decomposition
\[
\Xi((f,m),\psi_i\sqcup(\bar f,\bar m))\simeq \Xi((f,m),\psi_i\sqcup(\bar f,\bar m))^\to\sqcup\Xi((f,m),\psi_i\sqcup(\bar f,\bar m))^\ot.
\]
\end{nota}

\begin{lem}
\label{lem:trivialcovering}
For $(f,m)$ leaf-like as in Notation \ref{nota:cOfm}, the map $(\theta_3\otimes_\Day\dot\lambda)(f,m)\to(\theta_2\otimes_\Day\dot\lambda)(f,m)$ is a trivial 2-fold covering of spaces.
\end{lem}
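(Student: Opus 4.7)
The plan is to argue via a pullback-and-splitting strategy. First, by Lemma \ref{lem:defQi}, I would identify the source and target with the classifying spaces $|Q_3(f,m)|$ and $|Q_2(f,m)|$ respectively; under this identification, the map of the lemma corresponds to the classifying-space image of the functor $\iota\colon Q_3(f,m) \to Q_2(f,m)$ induced on pullbacks by the basepoint inclusion $X_3 = * \hookrightarrow \bB C_2 = X_2$.

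The second step is to exhibit $\iota$ as a principal $C_2$-bundle in $\Catinfone$. Unwinding the pullback squares from Lemma \ref{lem:defQi}, the outer rectangle
\[
\begin{tikzcd}[row sep=12pt, column sep=30pt]
Q_3(f,m) \ar[r,"\iota"] \ar[d,"\pi_{X_3}"'] & Q_2(f,m) \ar[d,"\pi_{X_2}"] \\
* \ar[r,hook] & \bB C_2
\end{tikzcd}
\]
is cartesian, with the lower row being the universal $C_2$-torsor (i.e.\ the map classifying the free $C_2$-action on a point). Hence $\iota$ is a principal $C_2$-bundle in $\Catinfone$. Taking classifying spaces preserves colimits, so the equivalence $\bB C_2\simeq */C_2$ in $\cS$ yields that $|\iota|\colon|Q_3(f,m)|\to|Q_2(f,m)|$ is a principal $C_2$-bundle of spaces, that is, a $2$-fold covering map.

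The third and conclusive step is to trivialise this covering using the decomposition $Q_3(f,m)\simeq Q_3(f,m)^\to\sqcup Q_3(f,m)^\ot$ of Notation \ref{nota:orientationpreserving}. The $C_2$-action on $Q_3(f,m)$ associated with the above torsor structure is implemented categorically by composition with the $C_2$-equivariant automorphism of $\psi_3\sqcup(\bar f,\bar m)$, i.e.\ with the automorphism of $G_{\tbeta,1}$ that swaps its two half-edges; since an object $\bar D$ is orientation-preserving precisely when $\bar f_u(h_\ell)=h_{\tbeta}$, this swap exchanges orientation-preserving objects with orientation-reversing ones. Consequently the $C_2$-action swaps the two summands of $Q_3(f,m)$ freely. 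I would then verify that the composite $Q_3(f,m)^\to\hookrightarrow Q_3(f,m)\xrightarrow{\iota} Q_2(f,m)$ is an equivalence of $\infone$-categories: essential surjectivity follows because any object of $Q_2(f,m)$ admits an (essentially unique) lift once we prescribe $\bar f_u(h_\ell)=h_{\tbeta}$, and fully faithful\-ness follows from the cartesianness of the defining square combined with the fact that the full subcategory of orientation-preserving lifts is closed under morphisms. This identifies $|\iota|$ with the fold map $|Q_2(f,m)|\sqcup|Q_2(f,m)|\to|Q_2(f,m)|$.

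The main obstacle I anticipate is the bookkeeping in the last step: rigorously matching the $C_2$-action on $Q_3(f,m)$ coming from the principal-bundle structure with the one induced by precomposition with the $C_2$-equivariant structure carried by $\psi_2$, and in particular checking that this action interchanges the two components of $Q_3(f,m)$ as $\infone$-categories (not just on $\pi_0$). Everything else reduces to a direct manipulation of the pullback diagrams of Lemma \ref{lem:defQi}.
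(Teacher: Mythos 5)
Your proposal is correct and follows essentially the same two-step strategy as the paper's proof: first exhibit $Q_3(f,m)\simeq Q_2(f,m)\times_{\bB C_2}*$ and use the behaviour of geometric realisation (the paper phrases this as ``geometric realisations preserve pullbacks over spaces'' rather than your colimit-preservation argument, but the conclusion is the same) to obtain a 2-fold cover, then observe that the deck transformation swaps $Q_3(f,m)^\to$ and $Q_3(f,m)^\ot$. The extra detail you supply about $Q_3(f,m)^\to\to Q_2(f,m)$ being an equivalence of $\infone$-categories is implicit in the paper's last sentence and is a reasonable elaboration rather than a different approach.
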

\begin{proof}
The definition of $Q_i(f,m)$ as a pullback implies that we have an equivalence of categories $Q_3(f,m)\simeq Q_2(f,m)\times_{\bB C_2}*$; since geometric realisations preserve pullbacks over spaces, the map $(\theta_3\otimes_\Day\dot\lambda)(f,m)\to(\theta_2\otimes_\Day\dot\lambda)(f,m)$ is a 2-fold covering of spaces. 
To see that it is trivial, we consider the $C_2$-action on $Q_3(f,m)$ whose quotient is $Q_2(f,m)\simeq Q_3(f,m)/C_2$: this $C_2$-action is induced by the $C_2$-action on $G_{\tbeta,1}$, which inverts the orientation of the unique leaf of $G_{\beta,1}$; consequently, this $C_2$-action swaps the subcategories $Q_3(f,m)^\to$ and $Q_3(f,m)^\ot$. Hence the space $(\theta_3\otimes_\Day\dot\lambda)(f,m)\simeq |Q_3(f,m)^\to|\sqcup|Q_3(f,m)^\ot|$ is the disjoint union of two spaces, each of which is equivalent to $(\theta_2\otimes_\Day\dot\lambda)(f,m)$ along the covering map.
\end{proof}

Combining Lemmas \ref{lem:dotkappafmpushout} and \ref{lem:trivialcovering}, for $(f,m)$ leaf-like as in Notation \ref{nota:cOfm} we may express $\dot\kappa^2(\bGr(S)^+)(f,m)$ as the colimit of the following W-shaped diagram in $\cS$, whose middle arrows are equivalences:
\[
\begin{tikzcd}[column sep=0pt,row sep=7pt]
{|Q_1(f,m)^\to|}& &{|Q_2(f,m)|}& &{|Q_1(f,m)^\ot|}\\
&{|Q_3(f,m)^\to|}\ar[ul]\ar[ur,"\simeq"]& &{|Q_3(f,m)^\ot|}.\ar[ul,"\simeq"']\ar[ur]
\end{tikzcd}
\]
In order to prove that $\dot\kappa^2(\bGr(S)^+)(f,m)$ is contractible, and thus complete the proof of Theorem \ref{thm:A}, we shall
prove the following proposition.
\begin{prop}
\label{prop:finalthmA}
Let $(f,m)$ be leaf-like as in Notation \ref{nota:cOfm}.
With reference to the above diagram, the following hold:
\begin{enumerate}
\item the space $|Q_1(f,m)^\to|$ is contractible;
\item the map $|Q_3(f,m)^\ot|\to|Q_1(f,m)^\ot|$ is an equivalence of spaces.
\end{enumerate}
\end{prop}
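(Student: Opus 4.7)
My strategy is to exploit the rigidity the leaf-like hypothesis imposes on how the new 2-morphism $\psi_i$ can appear inside a factorisation of $(f,m)$, producing a canonical ``minimal'' representative in each case. The oriented leaf data $(e_\ell, h_\ell, v_\ell)$ inside some collapsed subtree of $G_u$ will be recognised inside every orientation-preserving object of $Q_1(f,m)$ and every orientation-reversing object of $Q_3(f,m)$, pinning down the ``$\psi$-part'' up to a contractible space of choices.

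For part (1), I aim to exhibit a terminal object $D_0 \in Q_1(f,m)^\to$. The oriented leaf isolates a distinguished local collapse inside $f$ -- a copy of $\beta_\bGr$ swallowing exactly $\{e_\ell, v_\ell\}$; taking $(\bar f_0, \bar m_0)$ to be the residual 2-morphism of $\N$-grading $0$ obtained from $(f,m)$ by excising this piece, and extending the left/right sides $\bar B_0, \bar A_0, \bar G_{l,0}, \bar G_{r,0}$ accordingly, yields an orientation-preserving $D_0$. To check terminality, given any $\bar D \in Q_1(f,m)^\to$, the orientation-preserving condition forces $\bar f_u$ to send $h_\ell$ to $h_\beta$, which together with the leaf-like hypothesis pins down the $\beta$-piece of $\bar D$ to coincide with the distinguished one. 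The remaining freedom in $\bar D$ sits entirely in the residual part $(\bar f, \bar m)$, which is a 2-morphism of $\cC$; a morphism $\bar D \to D_0$ in $Q_1(f,m)^\to$ then amounts to a contractible space of tree-collapse data in $\cC$ between $(\bar f, \bar m)$ and $(\bar f_0, \bar m_0)$, as read off from the pullback description of $\Xi((f,m), \psi_1 \sqcup (\bar f, \bar m))$ in Proposition \ref{prop:mappingspacesdotcT2}.

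For part (2), I will construct a functor $\Phi \colon Q_3(f,m)^\ot \to Q_1(f,m)^\ot$ using the identity $\tbeta_\bGr = \ft_\bGr \circhor \fI_\bGr(\mu) \circhor (\beta_\bGr \otimes \Id_{\fI_\bGr(\ul 1)})$, and show that it induces an equivalence of classifying spaces. Given a $Q_3$-object
\[
B \xrightarrow{\bar G_l} \ul 2 \sqcup \bar B \xRightarrow{\psi_3 \sqcup (\bar f, \bar m)} \ul 0 \sqcup \bar A \xrightarrow{\bar G_r} A,
\]
$\Phi$ splits $\ul 2 = \ul 1 \sqcup \ul 1$, uses $\beta$ on the first summand and the identity 2-morphism on the second, and absorbs the postcomposition $\ft_\bGr \circhor \fI_\bGr(\mu)$ into a redefined right piece $\bar G_r'$; the intermediate source/target become $\bar B' = \ul 1 \sqcup \bar B$ and $\bar A' = \ul 1 \sqcup \bar A$. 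Orientation-reversal is preserved because $h_\tbeta$ corresponds to $h_\beta$ under the decomposition. To promote $\Phi$ to a weak equivalence, I will construct a quasi-inverse $\Psi$ which performs the reverse re-association -- fusing an extra output $\ul 1$ of $\bar A'$ back into the $\psi_3$-piece via $\ft_\bGr \circhor \fI_\bGr(\mu)$ -- and exhibit natural transformations $\Id \Rightarrow \Psi\Phi$ and $\Phi\Psi \Rightarrow \Id$ coming from the same decomposition of $\tbeta_\bGr$ applied inside the $\Xi$-diagrams.

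The main obstacle is the terminality argument in part (1): while the graph-combinatorial intuition is transparent, rigorously showing that the orientation-preserving and leaf-like conditions together pin down every $\bar D$ to the unique $D_0$ up to contractible choice requires careful bookkeeping inside the pullback description of the mapping category $\Xi$ from Proposition \ref{prop:mappingspacesdotcT2}, tracking how the half-edges of $G_u$ are distributed between the ``ambient'' collapse $\bar f_u, \bar f_d$ and the $\psi_1$-piece along a given morphism $\bar D' \to \bar D$ in $Q_1(f,m)^\to$. Part (2) should be more mechanical, since $\Phi$ and $\Psi$ are essentially dictated by the decomposition formula for $\tbeta_\bGr$, and the required natural transformations live inside $\cC = \bGr(S)$ where 2-morphism spaces are already discrete.
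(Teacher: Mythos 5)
Your plan for part (1) has a structural flaw: the category $Q_1(f,m)^\to$ does not have a terminal object, so no amount of careful bookkeeping inside $\Xi$ will establish terminality of the candidate $D_0$. The paper proves contractibility of $|Q_1(f,m)^\to|$ only after a long chain of reductions — first to the case where $G_d$ is a single vertex (Corollary~\ref{cor:singlevertexGd}, via Lemmas~\ref{lem:zetaXfullyfaithful} and~\ref{lem:edgelessreduction}), then to $Q_1^{\dagger\dagger}$ (Lemma~\ref{lem:Q1daggerequivalence}), then to the spine of $G_u$ (Lemma~\ref{lem:linearreduction}), then to $Q_1^{\flat\flat}$ (Lemma~\ref{lem:Q1flatequivalence}) — and even the final finite poset $Q_1^{\flat\flat}(f,m)^\to$ lacks a terminal object. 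In its classification (Definition~\ref{defn:classificationQ1dagger}, Remark~\ref{rem:morphismQ1flat}), objects of type II have \emph{no} morphism at all to the type I object you have in mind; one must pass to a further full subposet $Q$ via a right adjoint before the type I object becomes terminal. In other words, objects where the $\psi_1$-box captures more than the single leaf, but where that extra material sits on the ``$\bar G_l$-side'' rather than inside $(\bar f,\bar m)$, genuinely fail to map to $D_0$, so your reduction ``to the residual part $(\bar f,\bar m)$'' does not account for all of $Q_1(f,m)^\to$.

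For part (2), your functor $\Phi$ is essentially the paper's $a^\ot$, built from the natural transformation induced by $\zeta^+\colon\psi_3\to\psi_1\sqcup\ul1$, so that construction is sound. But the proposed quasi-inverse $\Psi$ is not globally defined: a generic object of $Q_1(f,m)^\ot$ does not come with a distinguished $\ul1$-summand of $\bar A$ to fuse back in. The paper instead establishes that $a^\ot$ is a fully faithful inclusion onto a full subcategory $Q_1^{\vee\vee}(f,m)^\ot$ (Lemma~\ref{lem:aotfullyfaithful}), and then collapses $Q_1(f,m)^\ot$ onto that image via a two-step chain of adjoints (Lemma~\ref{lem:Q1veeequivalences}); crucially, identifying the isolated inner vertex $\bar\rho_r(1)$ that makes those adjoints well-defined requires the prior reduction to $E_d=\emptyset$, which your proposal omits. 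So your $\Psi$ can be salvaged, but only as the composite of those retractions with $(a^\ot)^{-1}$, and only after the single-vertex reduction is in place — it is not simply the ``reverse re-association'' of the $\tbeta_\bGr$-decomposition applied on the nose.
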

\subsection{Morphism spaces in \texorpdfstring{$Q_i(f,m)$}{Qi(f,m)}}
In the rest of the section we let the index $i$ be equal to 1 or 3, in particular $X_i\simeq *$ and we may think of $\phi_i$ and $\psi_i$ as objects in $\dot\cT$ and $\dot\cK$, respectively.
Let $\cO$ and $(f,m)$ be as in Notation \ref{nota:cOfm} with $(f,m)$ of $\N$-grading 1, and let $\bar D,\check D$ be objects in $Q_i(f,m)$ as in Notation \ref{nota:Qiobject}. By the definition of $Q_i(f,m)$ as a pullback from Lemma \ref{lem:defQi}, the morphism space $Q_i(f,m)(\bar D,\check D)$ is the pullback of the cospan of spaces
\[
\begin{tikzcd}[column sep=-10pt, row sep=5pt]
\dot\cK_{(f,m)/}(\psi_i\sqcup(\bar f,\bar m),\psi_i\sqcup(\check f,\check m))\ar[dr,"d_0"near end]& &\dot\cL((\bar f,\bar m),(\check f,\check m))\ar[dl,"\psi_i\sqcup-"']\\
&\dot\cK(\psi_i\sqcup(\bar f,\bar m),\psi_i\sqcup(\check f,\check m))
\end{tikzcd}
\]
\begin{lem}
\label{lem:QitodotcKfullyfaithful}
The above right diagonal map $\psi_i\sqcup-$ is an inclusion of spaces. 
\end{lem}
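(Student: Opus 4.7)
The plan is to exhibit $\psi_i\sqcup-$ as a homotopy monomorphism of spaces by unpacking both mapping spaces via the explicit presentation given by Corollary \ref{cor:LDDhintmappingspaces} together with Proposition \ref{prop:mappingspacesdotcT2}, and then showing pointwise that the induced map between the relevant $\Xi$-categories is a split full subcategory inclusion. Concretely, using Notation \ref{nota:morphismdotcT} and Remark \ref{rem:specialsuffices}, every morphism is represented by a special diagram, and $\psi_i\sqcup-$ acts on such representatives by taking the disjoint union of each piece of the diagram with the corresponding rigid piece from $\psi_i$: on the attaching gafs $\bar G_l,\bar G_r$ it adjoins $\Id_{\ul{n_i}}$ and $\Id_{\ul{n'_i}}$ (with $(n_i,n'_i)=(1,1)$ for $i=1$ and $(2,0)$ for $i=3$), while on the 2-morphisms $(\bar f_u,\bar m_u),(\bar f_d,\bar m_d)$ it adjoins the identity 2-morphisms of $G_{\bullet,1}$ and $G_{\bullet,2}$ (with $\bullet=\beta$ or $\tbeta$).

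First I would prove that at the level of each piece of the diagram, the corresponding map is a split inclusion. For the attaching data this is the statement that the map of spaces
\[
\bGr^\simeq(\bar B,\check B)\to\bGr^\simeq(\ul{n_i}\sqcup\bar B,\ul{n_i}\sqcup\check B),\qquad \bar G_l\mapsto\Id_{\ul{n_i}}\sqcup\bar G_l,
\]
is an inclusion of connected components. This follows from the observation that a gaf $G_l$ in the target is equivalent to one of the form $\Id_{\ul{n_i}}\sqcup \bar G_l'$ precisely when it splits off a distinguished sub-gaf isomorphic to the identity gaf on $\ul{n_i}$ attached to the selected $\ul{n_i}$-summands of its source and target — a property that is stable under isomorphisms of gafs and, when satisfied, determines $\bar G_l'$ up to contractible choice. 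An analogous argument applies to the 2-morphism data, using that the gafs $G_{\beta,j}$ and $G_{\tbeta,j}$ are rigid combinatorial objects, so that taking disjoint union with their identity 2-morphisms picks out precisely those components of the target that admit a compatible $G_{\bullet,j}$-summand splitting.

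Next I would combine these pointwise inclusions through the pullback formula of Proposition \ref{prop:mappingspacesdotcT2}, which presents $\Xi((\bar f,\bar m),(\check f,\check m))$ and $\Xi(\psi_i\sqcup(\bar f,\bar m),\psi_i\sqcup(\check f,\check m))$ as pullbacks in $\Catinfone$ of diagrams whose vertices are twisted arrow categories and mapping categories built out of $\bGr^\simeq$. The natural transformation from the first diagram to the second induced by $\psi_i\sqcup-$ is vertex-wise a split full subcategory inclusion by the previous paragraph. Since pullbacks in $\Catinfone$ of split full subcategory inclusions are again split full subcategory inclusions, and taking classifying spaces preserves such inclusions (they are in particular monomorphisms in $\cS$), we conclude that $|\Xi((\bar f,\bar m),(\check f,\check m))|\to |\Xi(\psi_i\sqcup(\bar f,\bar m),\psi_i\sqcup(\check f,\check m))|$ is an inclusion of spaces, as required.

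The main obstacle is the combinatorial bookkeeping in the first step: namely, formulating precisely the property ``admits a distinguished splitting off of an $\Id_{\ul{n_i}}$-summand (or $G_{\bullet,j}$-summand) attached to specified attaching vertices/markings'' for gafs, and verifying that this property is both preserved and reflected by morphisms of gafs compatibly with the horizontal and vertical composition structures appearing in Proposition \ref{prop:mappingspacesdotcT2}. Once this is settled, the rigidity of the gafs $G_{\beta,j}$ and $G_{\tbeta,j}$ (each having exactly one inner vertex and at most two half-edges) makes the argument essentially mechanical.
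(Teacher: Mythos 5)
Your strategy agrees with the paper's at the outset: both show that the functor $\phi_i\sqcup-\colon\Xi((\bar f,\bar m),(\check f,\check m))\to\Xi(\psi_i\sqcup(\bar f,\bar m),\psi_i\sqcup(\check f,\check m))$ is a split full $\infone$-subcategory inclusion, so that geometric realisation gives the desired inclusion of morphism spaces. The paper then bypasses the explicit pullback of Proposition~\ref{prop:mappingspacesdotcT2} entirely and instead invokes a global disjoint-union decomposition of $\Xi(\cO_1\sqcup\cO_2,\cO')$ indexed by splittings $\cO'\simeq\cO'_1\sqcup\cO'_2$, together with the near-triviality of $\Xi(\phi_i,-)$. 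You instead propose to check the split-inclusion property vertex-by-vertex in the pullback.

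That vertex-by-vertex plan has a genuine gap. Leaving aside that the vertices in Proposition~\ref{prop:mappingspacesdotcT2} are $\infone$-categories such as $\Tw(\cC(y_0,x_0)\times\cC(x_1,y_1))$ rather than the spaces of gafs your notation $\bGr^\simeq(-,-)$ suggests, the induced map at the bottom-left vertex $\cC(y_0,y_1)_{y_u/}\times(\cC(y_0,y_1)_{/y_d})^\op$ is \emph{not} a split full subcategory inclusion. For $i=1$, the object $(\phi_{1,u}\sqcup G,\Id_{\phi_{1,u}}\sqcup f)$ lies in the image of the induced functor on the undercategory, but the morphism $\beta_\bGr\sqcup\Id_G\colon G_{\beta,1}\sqcup G\to G_{\beta,2}\sqcup G$ collapses the edge of $\phi_{1,u}=G_{\beta,1}$, and its target does not split off a $G_{\beta,1}$-summand and hence leaves the image. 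So the image at that vertex is left-closed but not a union of connected components, and the claim that ``pullbacks in $\Catinfone$ of split full subcategory inclusions are again split'' cannot be applied as you state it. What actually rescues the conclusion is the structure of the square: $\Xi\to\Tw(\cC(y_0,x_0)\times\cC(x_1,y_1))$ is a left fibration, the $\Tw$-vertex \emph{is} hit by a genuine split full subcategory inclusion (identity gaf summands have no inner vertices or edges, so tree-collapse maps cannot touch them), and the fibration structure then forces morphisms of $\Xi$ out of objects in the image to respect the splitting, because the $\Tw$-component controls the $Z$-component to which the slice-component is bound. Carrying this out is in effect a hands-on re-derivation of the global monoidal decomposition the paper uses directly, and the paper's route is both shorter and less prone to exactly the kind of slip your proposal makes at the slice vertex.
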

\begin{proof}
We may regard $\dot\cL((\bar f,\bar m),(\check f,\check m))$ and $\dot\cK(\psi_i\sqcup(\bar f,\bar m),\psi_i\sqcup(\check f,\check m))$ as subspaces of $\dot\cT(\bar\cO,\check\cO)$ and $\dot\cT(\phi_i\sqcup\bar\cO,\phi_i\sqcup\check\cO)$, respectively. We may then identify the map $\psi_i\sqcup-$ as the restriction of the map $\phi_i\sqcup-$ between the last two morphism spaces in $\dot\cT$. The last map agrees with the map induced on geometric realisations by the functor $\phi_i\sqcup-\colon\Xi(\bar\cO,\check\cO)\to\Xi(\phi_i\sqcup\bar\cO,\phi_i\sqcup\check\cO)$. This last functor exhibits the source category as a split $\infone$-subcategory of the target, as a consequence of the following observations:
\begin{itemize}
\item for all $\cO_1,\cO_2,\cO'\in\dot\cT$, the category $\Xi(\cO_1\sqcup\cO_2,\cO')$ splits as the disjoint union of the categories $\Xi(\cO_1,\cO'_1)\times\Xi(\cO_2,\cO'_2)$, indexed by the set of all splittings of $\cO'$ as a disjoint union $\cO'_1\sqcup\cO'_2$;
\item in the case $\cO_1=\phi_i$, we have $\Xi(\cO_1,\cO_2)\simeq\emptyset$ unless $\cO_2\simeq\phi_i$, in which case we have that $\Xi(\cO_1,\cO_2)$ is equivalent to $*$ ($i=1$) or to $*\sqcup *$ ($i=3$).
\end{itemize}
The functor $\phi_i\sqcup-$ exhibits $\Xi(\bar\cO,\check\cO)$ as (one of) the split $\infone$-subcategories of $\Xi(\phi_i\sqcup\bar\cO,\phi_i\sqcup\check\cO)$ corresponding to the choice $\cO'_1=\phi_i$ and $\cO'_2=\check\cO$.
\end{proof}
It follows that we may also regard $Q_i(\bar D,\check D)$ as a subspace of the morphism space $\dot\cK_{(f,m)/}(\psi_i\sqcup(\bar f,\bar m),\psi_i\sqcup(\check f,\check m))$. The latter space can be identified with the fibre at $\check\zeta\in\dot\cK((f,m),\psi_i\sqcup(\check f,\check m))$ of the map of spaces
\[
-\circ\bar\zeta\colon\dot\cK(\psi_i\sqcup(\bar f,\bar m),(\check f,\check m))\to \dot\cK((f,m),(\check f,\check m)).
\]
\begin{lem}
\label{lem:QitodotcKessentiallysurjective}
The map of spaces $Q_i(\bar D,\check D)\to \dot\cK_{(f,m)/}(\psi_i\sqcup(\bar f,\bar m),\psi_i\sqcup(\check f,\check m))$ induced by the functor $\pi^i_{(f,m)}$ is essentially surjective.
\end{lem}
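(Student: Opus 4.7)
The strategy relies on the pullback description of $Q_i(f,m)$ and the fully faithful inclusion from Lemma \ref{lem:QitodotcKfullyfaithful}: a homotopy-lift of a given $\eta\in\dot\cK_{(f,m)/}(\psi_i\sqcup(\bar f,\bar m),\psi_i\sqcup(\check f,\check m))$ to an object of $Q_i(\bar D,\check D)$ exists precisely when the underlying morphism of $\eta$ in $\dot\cK(\psi_i\sqcup(\bar f,\bar m),\psi_i\sqcup(\check f,\check m))$ lies in the union of components forming the image of $\psi_i\sqcup-\colon\dot\cL((\bar f,\bar m),(\check f,\check m))\hookrightarrow\dot\cK(\psi_i\sqcup(\bar f,\bar m),\psi_i\sqcup(\check f,\check m))$. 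The task therefore reduces to identifying the component containing $\eta$ inside the decomposition of $\Xi(\psi_i\sqcup\bar\cO,\psi_i\sqcup\check\cO)$ appearing in the proof of Lemma \ref{lem:QitodotcKfullyfaithful}.

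The plan is to represent $\eta$ by a special diagram (Notation \ref{nota:morphismdotcT}) and track the unique $\bullet$-labeled edge $e_\ell$ of $(f,m)$ guaranteed by the leaf-like hypothesis. Since $(\bar f,\bar m)$ and $(\check f,\check m)$ have $\N$-grading zero and therefore contain no $\bullet$-labels, both $\bar\zeta$ and $\check\zeta$ must send $e_\ell$ to the $\bullet$-labeled element of the respective $\psi_i$-factor. The compatibility equivalence $\eta\circ\bar\zeta\simeq\check\zeta$ in $\dot\cK((f,m),\psi_i\sqcup(\check f,\check m))$ then forces $\eta$ to match the $\psi_i$-factor of the source with the first $\psi_i$-factor of the target; combined with the rigidity of the leaf-like structure (the $\psi_i$-factor is pinned down by its $\bullet$-label together with the structure morphism from $(f,m)$), this places $\eta$ in the single component indexed by $\cO'_1=\psi_i$, $\cO'_2=\check\cO$ with the identity self-equivalence of $\psi_i$, which is precisely the image of $\psi_i\sqcup-$. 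The complementary data of $\eta$ is then the desired $\gamma\in\dot\cL((\bar f,\bar m),(\check f,\check m))$.

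The main obstacle lies in the case $i=3$, in which $\psi_3$ inherits the non-trivial $C_2$-equivariant structure of $\tbeta_\bGr$: a priori, this could allow $\eta$ to ``twist'' the $\psi_3$-factors by the $C_2$-involution, producing an extra component in $\Xi(\psi_3\sqcup\bar\cO,\psi_3\sqcup\check\cO)$ that is not in the image of $\psi_3\sqcup-$. This twist is however excluded by the orientation of $e_\ell$ recorded by $\bar\zeta$ and $\check\zeta$ (compare Notation \ref{nota:orientationpreserving}): any $\eta$ compatible with both $\bar\zeta$ and $\check\zeta$ preserves the orientation matching, so the $C_2$-swap is not allowed and the component is again uniquely determined, reducing the $i=3$ case to the $i=1$ argument.
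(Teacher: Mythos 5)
Your strategy mirrors the paper's: trace the $\bullet$-labeled edge through the compatibility $\eta\circ\bar\zeta\simeq\check\zeta$, and conclude that $\eta$ lies in the summand of $\Xi(\phi_i\sqcup\bar\cO,\phi_i\sqcup\check\cO)$ that is the image of $\psi_i\sqcup-$; the paper carries out the equivalent check directly at the level of half-edges, showing that $f_u(h_\beta)=h_\beta$ and then deducing that $\zeta$ has the form $\Id_{\psi_i}\sqcup\zeta'$.

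Your concern about the $C_2$-twist when $i=3$ is well-founded---$\Xi(\phi_3,\phi_3)\simeq*\sqcup*$ and only one summand is hit by $\psi_3\sqcup-$, something the paper's bare ``analogous'' glosses over---but the way you dismiss it is not airtight. The assertion that compatibility with both $\bar\zeta$ and $\check\zeta$ excludes the $C_2$-swap only holds if $\bar D$ and $\check D$ lie in the same orientation class of Notation \ref{nota:orientationpreserving}. If one is orientation-preserving and the other orientation-reversing, the unique compatible $\eta$ \emph{does} carry the $C_2$-swap on the $\psi_3$-factor and is therefore not in the essential image; for instance take $(f,m)=\psi_3$, $\bar\zeta=\Id_{\psi_3}$, and $\check\zeta$ the nontrivial automorphism of $\psi_3$ lying over the swap in $\mathrm{Aut}_{\dot\cT}(\phi_3)\simeq C_2$, for which $Q_3(\bar D,\check D)=\emptyset$ while $\dot\cK_{(f,m)/}(\bar\zeta,\check\zeta)\simeq*$. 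The $i=1$ case is genuinely different: there the attaching vertex of $G_{\beta,1}$ forces $f_u^{-1}(h_\beta)\neq h_1$ via the non-based-subtree condition, with no orientation hypothesis needed, whereas $G_{\tbeta,1}$ has no attaching vertex and is symmetric in its two half-edges. Downstream the lemma is only invoked for $\bar D,\check D$ in the same orientation class (cf.\ Notation \ref{nota:orientationpreserving} and Lemma \ref{lem:trivialcovering}), where both your argument and the paper's close; the statement as recorded, however, is more general than either proof establishes.
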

\begin{proof}
Let $\zeta\colon\psi_i\sqcup(\bar f,\bar m)\to\psi_i\sqcup(\check f,\check m)$ be a morphism in $\dot\cK_{(f,m)/}$, i.e. we are equipped with an equivalence $\zeta\circ\bar\zeta\simeq\check\zeta$ of morphisms in $\dot\cK$, and use Notation \ref{nota:morphismdotcT} for $\zeta$. Consider first the case $i=1$; 
then $f_u\colon G_{\beta,1}\sqcup\bar G_u\to G_r\circhor(G_{\beta,1}\sqcup\check G_u)\circhor G_l$ must send $h_\beta\mapsto h_\beta$, since both half-edges denoted $h_\beta$ are the image of $h_\ell\in H_u$ along $\bar f_u$ and $\check f_u$, respectively. It follows that $\zeta$ has the form $\Id_{\psi_1}\sqcup\zeta'$ for some $\zeta'\colon(\bar f,\bar m)\to(\check f,\check m)$ in $\dot\cL$. The case $i=3$ is analogous, replacing $G_{\beta,1}$ by $G_{\tbeta,1}$.
\end{proof}
Lemmas \ref{lem:QitodotcKfullyfaithful} and \ref{lem:QitodotcKessentiallysurjective} prove that the functor $\pi^i_{(f,m)}$ is fully faithful, allowing us to identify morphism spaces in $Q_i(f,m)$ with corresponding morphism spaces in $\dot\cK_{(f,m)/}$. We conclude the subsection with three lemmas comparing morphism spaces of categories of the form $Q_i(f,m)$.
\begin{lem}
\label{lem:Qispacesequivalence1}
Let $\tilde D$ be an object in  $Q_i(f,m)$ as in Notation \ref{nota:Qiobject}, let $\zeta\colon(\bar f,\bar m)\to(\check f,\check m)$ be a morphism in $\dot\cL$ represented by a special diagram as in Notation \ref{nota:morphismdotcT}, and fix an equivalence $(\Id_{\psi_i}\sqcup\zeta)\circ\bar\zeta\simeq\check\zeta$, upgrading $(\Id_{\psi_i}\sqcup\zeta)$ to a morphism in $Q_i(\bar D,\check D)$. Consider the following commutative diagram of categories:
\[
\begin{tikzcd}[row sep=10pt]
{\Xi((\check f,\check m),(\tilde f,\tilde m))}\ar[r,"-\circ\zeta"]\ar[d,"\psi_i\sqcup-"]&{\Xi((\bar f,\bar m),(\tilde f,\tilde m))}\ar[d,"\psi_i\sqcup-"]\\
{\Xi(\psi_i\sqcup(\check f,\check m),\psi_i\sqcup(\tilde f,\tilde m))}\ar[d,"-\circ\check\zeta"] & {\Xi(\psi_i\sqcup(\bar f,\bar m),\psi_i\sqcup(\tilde f,\tilde m))}\ar[d,"-\circ\bar\zeta"]\\
{\Xi((f,m),\psi_i\sqcup(\tilde f,\tilde m))}\ar[r,equal]&{\Xi((f,m),\psi_i\sqcup(\tilde f,\tilde m))}.
\end{tikzcd}
\]
Suppose that there are split subcategories
\[
\cD\subseteq\Xi((f,m),\psi_i\sqcup(\tilde f,\tilde m)),\quad \cE_1\subseteq\Xi((\check f,\check m),(\tilde f,\tilde m)),\quad\ \cE_2\subseteq\Xi((\bar f,\bar m),(\tilde f,\tilde m))
\]
such that
$\tilde\zeta\in\cD$, the preimage of $\cD$ along the left vertical composite $(\psi_i\sqcup-)\circ\check\zeta$ is contained in $\cE_1$, and similarly $((\psi_i\sqcup-)\circ\bar\zeta)^{-1}(\cD)\subseteq\cE_2$. Assume also that the top horizontal functor $-\circ\zeta$ restricts to a functor $\cE_1\to\cE_2$ inducing an equivalence $|\cE_1|\xrightarrow{\simeq}|\cE_2|$ on classifying spaces.
Then we have an equivalence of spaces
\[
-\circ\zeta\colon Q_i(f,m)(\check D,\tilde D)\xrightarrow{\simeq} Q_i(f,m)(\bar D,\tilde D).
\]
\end{lem}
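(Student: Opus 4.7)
The strategy is to reduce the claim to the purely categorical equivalence $|\cE_1|\xrightarrow{\simeq}|\cE_2|$ by identifying both morphism spaces $Q_i(f,m)(\check D,\tilde D)$ and $Q_i(f,m)(\bar D,\tilde D)$ as fibres, at the point $\tilde\zeta$, of maps of spaces that, after restriction to the preimages of $\cD$, are induced by the functors $|\cE_1|\to|\cD|$ and $|\cE_2|\to|\cD|$ coming from the given commutative diagram.

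First I would combine Lemmas \ref{lem:QitodotcKfullyfaithful} and \ref{lem:QitodotcKessentiallysurjective}, noting that the functor $\pi^i_{(f,m)}$ is fully faithful. Hence $Q_i(f,m)(\bar D,\tilde D)$ agrees with $\dot\cK_{(f,m)/}(\psi_i\sqcup(\bar f,\bar m),\psi_i\sqcup(\tilde f,\tilde m))$, and the latter is the fibre at $\tilde\zeta$ of the map
\[
(-\circ\bar\zeta)\circ(\psi_i\sqcup-)\colon\dot\cL((\bar f,\bar m),(\tilde f,\tilde m))\to\dot\cK((f,m),\psi_i\sqcup(\tilde f,\tilde m)).
\]
By Corollary \ref{cor:LDDhintmappingspaces}, this map is induced on classifying spaces by the composite functor $(-\circ\bar\zeta)\circ(\psi_i\sqcup-)\colon \Xi((\bar f,\bar m),(\tilde f,\tilde m))\to\Xi((f,m),\psi_i\sqcup(\tilde f,\tilde m))$ from the left column of the diagram in the statement. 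An analogous identification holds for $Q_i(f,m)(\check D,\tilde D)$, using the right column.

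Next, I would exploit the split subcategory hypotheses. Since $\tilde\zeta\in\cD$ and the preimage of $\cD$ along $(\psi_i\sqcup-)\circ\check\zeta$ is contained in $\cE_1$, the fibre at $\tilde\zeta$ of $|(\psi_i\sqcup-)\circ\check\zeta|\colon|\Xi((\check f,\check m),(\tilde f,\tilde m))|\to|\Xi((f,m),\psi_i\sqcup(\tilde f,\tilde m))|$ coincides with the fibre at $\tilde\zeta$ of the restricted map $|\cE_1|\to|\cD|$ (since pulling back along the inclusion of a union of connected components is the same as restricting to it). Likewise, $Q_i(f,m)(\bar D,\tilde D)$ is equivalent to the fibre at $\tilde\zeta$ of the restricted map $|\cE_2|\to|\cD|$. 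The commutativity of the given diagram, restricted to $\cE_1\to\cE_2\to\cD$ and $\cE_1\to\cD$, yields a commutative square
\[
\begin{tikzcd}[row sep=10pt,column sep=40pt]
|\cE_1|\ar[r,"|-\circ\zeta|","\simeq"']\ar[d]&|\cE_2|\ar[d]\\
|\cD|\ar[r,equal]&|\cD|
\end{tikzcd}
\]
in which the top arrow is an equivalence by assumption and the bottom is the identity. Taking fibres at $\tilde\zeta$ gives the desired equivalence $Q_i(f,m)(\check D,\tilde D)\xrightarrow{\simeq}Q_i(f,m)(\bar D,\tilde D)$, and by construction this equivalence is induced by precomposition with $\zeta$, so it is exactly the map $-\circ\zeta$.

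The only point that requires a little care is verifying that the identifications in step one really arise from restricting the $\Xi$ categories to the split subcategories $\cE_1,\cE_2,\cD$; this follows from the fact that, for split inclusions of $\infone$-subcategories, taking classifying spaces commutes with taking preimages of unions of components, since both operations coincide with summing over the relevant connected components. There is no substantial obstacle beyond this bookkeeping.
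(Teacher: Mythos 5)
Your proof is correct and follows essentially the same route as the paper: identify the two morphism spaces in $Q_i(f,m)$ as the vertical fibres at $\tilde\zeta$ in the given commutative diagram of classifying spaces (using the full faithfulness of $\pi^i_{(f,m)}$ from Lemmas \ref{lem:QitodotcKfullyfaithful} and \ref{lem:QitodotcKessentiallysurjective}, together with Corollary \ref{cor:LDDhintmappingspaces}), observe that these fibres only depend on the preimage of the split subcategory $\cD$ (hence are captured by $\cE_1$ and $\cE_2$), and conclude from the assumed equivalence $|\cE_1|\xrightarrow{\simeq}|\cE_2|$. The paper's version is more compact, but your additional spelling out of the preceding-lemma identification and of the ``split inclusions commute with fibres'' point is exactly the content left implicit there.
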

\begin{proof}
We can take classifying spaces in the above diagram of categories, and identify the map between vertical fibres at $\tilde\zeta\in|\Xi((f,m),\psi_i\sqcup(\tilde f,\tilde m))|$ with the map $-\circ\zeta\colon Q_i(f,m)(\check D,\tilde D)\xrightarrow{\simeq} Q_i(f,m)(\bar D,\tilde D)$. We can compute these vertical fibres after passing to the following commutative square of subspaces:
\[
\begin{tikzcd}[row sep=10pt]
{|\cE_1|}\ar[r,"-\circ\zeta","\simeq"']\ar[d,"(\psi_i\sqcup-)\circ\check\zeta"']&{|\cE_2|}\ar[d,"(\psi_i\sqcup-)\circ\bar\zeta"]\\
\tilde\zeta\in{|\cD|}\quad\quad\ar[r,equal]&\quad\quad{|\cD|}\ni\tilde\zeta.
\end{tikzcd}
\]
As the horizontal arrows are equivalences, so is the induced map between fibres.
\end{proof}
\begin{lem}
\label{lem:Qispacesequivalence2}
In the setting of Lemma \ref{lem:Qispacesequivalence1}, consider the following commutative diagram of categories:
\[
\begin{tikzcd}[row sep=10pt,column sep=40pt]
{\Xi((\tilde f,\tilde m),(\bar f,\bar m))}\ar[r,"\zeta\circ-"]\ar[d,"\psi_i\sqcup-"]&{\Xi((\tilde f,\tilde m),(\check f,\check m))}\ar[d,"\psi_i\sqcup-"]\\
{\Xi(\psi_i\sqcup(\tilde f,\tilde m),\psi_i\sqcup(\bar f,\bar m))}\ar[d,"-\circ\tilde\zeta"] & {\Xi(\psi_i\sqcup(\tilde f,\tilde m)\psi_i\sqcup(\check f,\check m))}\ar[d,"-\circ\tilde\zeta"]\\
{\Xi((f,m),\psi_i\sqcup(\bar f,\bar m))}\ar[r,"(\Id_{\psi_i}\sqcup\zeta)\circ-"]&{\Xi((f,m),\psi_i\sqcup(\check f,\check m))}.
\end{tikzcd}
\]
Assume that there are split subcategories 
\[
\bar\zeta\in\cD_1\subseteq\Xi((f,m),\psi_i\sqcup(\bar f,\bar m)),\quad\check\zeta\in\cD_2\subseteq\Xi((f,m),\psi_i\sqcup(\check f,\check m))
\]
such that the bottom functor $(\Id_{\psi_i}\sqcup\zeta)\circ-$ restricts to a functor $\cD_1\to\cD_2$ inducing an equivalence $|\cD_1|\xrightarrow{\simeq}|\cD_2|$. Assume also that there are split subcategories 
\[
\begin{split}
((\psi_i\sqcup-)\circ\tilde\zeta)^{-1}(\cD_1)&\subseteq\cE_1\subseteq\Xi((\tilde f,\tilde m),(\bar f,\bar m)),\\
((\psi_i\sqcup-)\circ\tilde\zeta)^{-1}(\cD_2)&\subseteq\cE_2\subseteq\Xi((\tilde f,\tilde m),(\check f,\check m))
\end{split}
\]
such that the top functor $\zeta\circ-$ restricts to a functor $\cE_1\to\cE_2$ inducing an equivalence $|\cE_1|\xrightarrow{\simeq}|\cE_2|$.
Then we have an equivalence of spaces 
\[
\zeta\circ-\colon Q_i(f,m)(\tilde D,\bar D)\xrightarrow{\simeq}Q_i(f,m)(\tilde D,\check D).
\]
\end{lem}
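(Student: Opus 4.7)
The plan is to run the same fibre-computation argument used for Lemma \ref{lem:Qispacesequivalence1}, taking care that here the two vertical composites land in \emph{different} target categories, so that we must compare fibres over the basepoints $\bar\zeta$ and $\check\zeta$ which are identified via the bottom horizontal equivalence.

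First I would identify the morphism spaces we are comparing as vertical fibres of the given diagram. By Lemmas \ref{lem:QitodotcKfullyfaithful} and \ref{lem:QitodotcKessentiallysurjective}, $Q_i(f,m)(\tilde D,\bar D)$ sits inside $\dot\cK_{(f,m)/}(\psi_i\sqcup(\tilde f,\tilde m),\psi_i\sqcup(\bar f,\bar m))$ and can be computed as the fibre at $\bar\zeta\in|\Xi((f,m),\psi_i\sqcup(\bar f,\bar m))|$ of the left vertical composite $((\psi_i\sqcup-)\circ\tilde\zeta)\colon |\Xi((\tilde f,\tilde m),(\bar f,\bar m))|\to|\Xi((f,m),\psi_i\sqcup(\bar f,\bar m))|$; symmetrically for $Q_i(f,m)(\tilde D,\check D)$. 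Under this identification, the map $\zeta\circ-$ corresponds to the map between these fibres induced by the top horizontal functor $\zeta\circ-$, where the shift of basepoints from $\bar\zeta$ to $\check\zeta$ is exactly the effect of the bottom horizontal functor $(\Id_{\psi_i}\sqcup\zeta)\circ-$, via the fixed equivalence $(\Id_{\psi_i}\sqcup\zeta)\circ\bar\zeta\simeq\check\zeta$.

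Next I would use the two hypotheses about split subcategories to restrict the fibre computation. Since $\bar\zeta\in\cD_1$ and $\check\zeta\in\cD_2$, and since $((\psi_i\sqcup-)\circ\tilde\zeta)^{-1}(\cD_1)\subseteq\cE_1$ and $((\psi_i\sqcup-)\circ\tilde\zeta)^{-1}(\cD_2)\subseteq\cE_2$ by assumption, the fibre of the left vertical composite at $\bar\zeta$ coincides with the fibre at $\bar\zeta$ of the restricted map $|\cE_1|\to|\cD_1|$, and likewise on the right. Taking classifying spaces of the outer rectangle of the original diagram and then passing to the restricted subcategories yields a commutative square of spaces
\[
\begin{tikzcd}[row sep=12pt]
|\cE_1|\ar[r,"\zeta\circ-"]\ar[d,"(\psi_i\sqcup-)\circ\tilde\zeta"']&|\cE_2|\ar[d,"(\psi_i\sqcup-)\circ\tilde\zeta"]\\
\bar\zeta\in|\cD_1|\ar[r,"(\Id_{\psi_i}\sqcup\zeta)\circ-","\simeq"']& |\cD_2|\ni\check\zeta,
\end{tikzcd}
\]
whose horizontal arrows are equivalences by hypothesis.

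Finally, since both horizontal arrows are equivalences and the bottom one matches $\bar\zeta$ with $\check\zeta$ (via the chosen equivalence upgrading $\Id_{\psi_i}\sqcup\zeta$ to a morphism $\bar D\to\check D$ in $Q_i(f,m)$), the induced map between vertical fibres is an equivalence, which under the identifications of the previous paragraph is precisely the map $\zeta\circ-\colon Q_i(f,m)(\tilde D,\bar D)\to Q_i(f,m)(\tilde D,\check D)$. Since the argument is essentially a mechanical adaptation of the proof of Lemma \ref{lem:Qispacesequivalence1}, there is no real obstacle; the only point to be careful about is the bookkeeping of basepoints, which is handled by the explicit choice of the equivalence $(\Id_{\psi_i}\sqcup\zeta)\circ\bar\zeta\simeq\check\zeta$ that promotes $\Id_{\psi_i}\sqcup\zeta$ to a morphism in $Q_i(f,m)$.
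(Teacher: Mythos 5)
Your proof is correct and follows precisely the argument the paper compresses into one sentence (``analogous to Lemma \ref{lem:Qispacesequivalence1}''): pass to classifying spaces, restrict to the split subcategories, and compare vertical fibres over $\bar\zeta$ and $\check\zeta$, with the basepoint bookkeeping handled by the chosen equivalence $(\Id_{\psi_i}\sqcup\zeta)\circ\bar\zeta\simeq\check\zeta$. Your expanded treatment of the basepoint shift, which is the one genuine added wrinkle relative to Lemma \ref{lem:Qispacesequivalence1}, is exactly the right point to make explicit.
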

\begin{proof}
The argument is analogous to the one for Lemma \ref{lem:Qispacesequivalence1}: we take classifying spaces of the categories in the above diagram and we observe that the hypotheses imply that the induced map between vertical fibres at $\bar\zeta$ and $\check\zeta$ is an equivalence.
\end{proof}

\begin{lem}
\label{lem:Qispacesequivalence3}
Let $\zeta\colon(f',m')\to(f,m)$ be a morphism in $\dot\cK$ represented by a special diagram as in Notation \ref{nota:morphismdotcT}, and denote by $-\circ\zeta\colon Q_i(f,m)\to Q_i(f',m')$ the functor induced by $\zeta$ in the light of the pullback definition of $Q_i(f,m)$ and $Q_i(f',m')$ from Lemma \ref{lem:defQi}. Let $\bar D,\check D\in Q_i(f,m)$ as in Notation \ref{nota:Qiobject}; assume that there are split subcategories $\check\zeta\in\cD_1\subseteq\Xi((f,m),\psi_i\sqcup(\check f,\check m))$ and $\check\zeta\circ\zeta\in\cD_2\subseteq\Xi((f',m'),\psi_i\sqcup(\check f,\check m))$ such that $-\circ\zeta$ restricts to a functor $\cD_1\to\cD_2$ inducing an equivalence $|\cD_1|\xrightarrow{\simeq}|\cD_2|$.
Then the map $Q_i(f,m)(\bar D,\check D)\to Q_i(f',m')(\bar D\circ\zeta,\check D\circ \zeta)$ induced by $-\circ\zeta$ on morphism spaces is an equivalence.
\end{lem}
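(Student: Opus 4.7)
The plan is to adapt the approach used in Lemmas \ref{lem:Qispacesequivalence1} and \ref{lem:Qispacesequivalence2}, realising the desired map between mapping spaces as the induced map between vertical fibres of a commutative square of categories, and then reducing to the hypothesised equivalence $|\cD_1|\xrightarrow{\simeq}|\cD_2|$.

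First, I will use Lemmas \ref{lem:QitodotcKfullyfaithful} and \ref{lem:QitodotcKessentiallysurjective} together with Corollary \ref{cor:LDDhintmappingspaces} to identify $Q_i(f,m)(\bar D,\check D)$ as the fibre at $\check\zeta$, upon passing to classifying spaces, of the composite functor
\[
\mathbf{g}_1\colon\Xi((\bar f,\bar m),(\check f,\check m))\xrightarrow{\psi_i\sqcup -}\Xi(\psi_i\sqcup(\bar f,\bar m),\psi_i\sqcup(\check f,\check m))\xrightarrow{-\circ\bar\zeta}\Xi((f,m),\psi_i\sqcup(\check f,\check m)),
\]
and analogously $Q_i(f',m')(\bar D\circ\zeta,\check D\circ\zeta)$ as the fibre at $\check\zeta\circ\zeta$ of the composite $\mathbf{g}_2$ obtained by replacing $\bar\zeta$ with $\bar\zeta\circ\zeta$ in the second factor.

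Next, I will assemble these identifications into the commutative square of categories with top row the identity of $\Xi((\bar f,\bar m),(\check f,\check m))$, left and right vertical composites equal to $\mathbf{g}_1$ and $\mathbf{g}_2$ respectively, and bottom row given by $-\circ\zeta\colon\Xi((f,m),\psi_i\sqcup(\check f,\check m))\to\Xi((f',m'),\psi_i\sqcup(\check f,\check m))$. Commutativity of the outer square follows from associativity of composition in $\dot\cK$, namely from the identification $(-\circ\zeta)\circ(-\circ\bar\zeta)\simeq(-\circ(\bar\zeta\circ\zeta))$ after $\psi_i\sqcup -$. Passing to classifying spaces, the induced map between vertical fibres at $\check\zeta$ and $\check\zeta\circ\zeta$ recovers the map $Q_i(f,m)(\bar D,\check D)\to Q_i(f',m')(\bar D\circ\zeta,\check D\circ\zeta)$ that we wish to show is an equivalence, where the horizontal identifications between fibres are induced by the identity on the top row.

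Finally, I will restrict the bottom row of this square to $|\cD_1|\xrightarrow{\simeq}|\cD_2|$, which is an equivalence by hypothesis, and correspondingly restrict the middle and top rows to the preimages of $\cD_1,\cD_2$ under $-\circ\bar\zeta$ and $-\circ(\bar\zeta\circ\zeta)$. Since $\check\zeta\in|\cD_1|$ and $\check\zeta\circ\zeta\in|\cD_2|$, the fibres of the restricted vertical composites at these points coincide with the fibres of the original $\mathbf{g}_1,\mathbf{g}_2$. The restricted square has bottom row an equivalence, so by the standard argument comparing fibres over equivalent bases, an equivalence on classifying spaces of the top row restricted to those preimages yields the desired equivalence on fibres. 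The principal obstacle will be precisely this last point: verifying that the identity top row of the restricted square indeed identifies the fibre of $\mathbf{g}_1$ over $\check\zeta$ with the fibre of $\mathbf{g}_2$ over $\check\zeta\circ\zeta$; this follows from the observation that, since $-\circ\zeta$ restricts to an equivalence on $|\cD_1|\simeq|\cD_2|$ sending $\check\zeta\mapsto\check\zeta\circ\zeta$, a point $\gamma$ in the top space satisfies $\mathbf{g}_2(\gamma)\simeq\check\zeta\circ\zeta$ within $|\cD_2|$ if and only if $\mathbf{g}_1(\gamma)\simeq\check\zeta$ within $|\cD_1|$, in direct parallel with the concluding steps of Lemmas \ref{lem:Qispacesequivalence1} and \ref{lem:Qispacesequivalence2}.
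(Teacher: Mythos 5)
Your proposal takes essentially the same approach as the paper's proof, which itself consists of the same commutative square (identity top row, $-\circ\zeta$ on the bottom) together with the phrase ``the argument is analogous to the one for Lemma~\ref{lem:Qispacesequivalence1}''; you are fleshing out exactly the argument the paper leaves implicit, with the correct reduction: restrict the bottom row to $\cD_1 \to \cD_2$ and compare vertical fibres over $\check\zeta$ and $\check\zeta\circ\zeta$.

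There is one small wrinkle worth flagging in your concluding sentence. The ``only if'' direction of your claim --- namely, that $\mathbf{g}_2(\gamma)$ lying in the component of $\check\zeta\circ\zeta$ inside $|\cD_2|$ forces $\mathbf{g}_1(\gamma)$ into $|\cD_1|$ --- requires knowing that $(-\circ\zeta)^{-1}(|\cD_2|)$ equals $|\cD_1|$, not merely that $-\circ\zeta$ maps $\cD_1$ into $\cD_2$. From the stated hypotheses (an equivalence $|\cD_1|\xrightarrow{\simeq}|\cD_2|$ of split subcategories) this is not automatic, since $-\circ\zeta$ could a priori carry part of the complement of $\cD_1$ into $\cD_2$. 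The paper addresses this silently by marking the bottom arrow $-\circ\zeta$ as a monomorphism (the hook in its diagram); monicity together with the restricted equivalence does yield $(-\circ\zeta)^{-1}(\cD_2)=\cD_1$, and this is indeed the situation in all of the paper's applications of the lemma (Lemmas~\ref{lem:zetaXfullyfaithful} and~\ref{lem:linearreduction}), where $\cD_1$ is visibly the full preimage of $\cD_2$. So your argument matches the paper's in substance, but the ``if and only if'' would be cleaner if you invoked the split-inclusion property of $-\circ\zeta$ explicitly, rather than deducing it from the restricted equivalence alone.
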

\begin{proof}
The argument is analogous to the one for Lemma \ref{lem:Qispacesequivalence1}, and is based on taking vertical fibres at $\check\zeta$ and $\check\zeta\circ\zeta$ in the following square of spaces:
\[
\begin{tikzcd}[row sep=10pt]
{|\Xi((\bar f, \bar m),(\check f,\check m))|}\ar[r,equal]\ar[d,"(\psi_i\sqcup-)\circ\bar\zeta"]&{|\Xi((\bar f, \bar m),(\check f,\check m))|}\ar[d,"(\psi_i\sqcup-)\circ\bar\zeta\circ\zeta"]\\
{|\Xi((f,m),\psi_i\sqcup(\check f,\check m))|}\ar[r,"-\circ\zeta",hook]&{|\Xi((f',m'),\psi_i\sqcup(\check f,\check m))|}
\end{tikzcd}
\]

\end{proof}

\subsection{The one-vertex reduction}
Our next step is to reduce the proof of Proposition \ref{prop:finalthmA} to the case in which the target $G_d$ of $f$ is a gaf consisting of a single vertex and having no edges.
\begin{defn}
\label{defn:Qipfm}
Let $\cO$ and $(f,m)$ be as in Notation \ref{nota:cOfm}, with $(f,m)$ leaf-like.
For $i=1,3$ we denote by $Q'_i(f,m)\subseteq Q_i(f,m)$ the full $\infone$-subcategory spanned by special diagrams $\bar D$ as in Notation \ref{nota:Qiobject} satisfying the following property: \begin{itemize}
\item[(i)] the set $\bar f_d^{-1}(E_d)$ is contained in $\bar E_l\sqcup\bar E_d$. 
\end{itemize}
We further denote by $Q''_i(f,m)\subseteq Q'_i(f,m)$ the full $\infone$-subcategory spanned by objects $\bar D$ satisfying the following property:
\begin{itemize}
\item[(ii)] the set $\bar f_d^{-1}(E_d)$ is contained in $\bar E_l$. 
\end{itemize}
\end{defn}
In Definition \ref{defn:Qipfm}, note that a priori $\bar f_d^{-1}(E_d)$ is a subset of the set $\bar E_l\sqcup \bar E_d\sqcup\bar E_r$.
\begin{defn}
\label{defn:fXmX}
Let $\cO$ and $(f,m)$ be as in Notation \ref{nota:cOfm},
with $(f,m)$ of $\N$-grading $\le1$, 
and let $X\subseteq E_d$ be a subset. For $x\in X$, regard $x$ as a subset of $H_d$ with two elements and fix an identification $\ul2\simeq x$ (an edge orientation). We let $\cO^X$ and $(f^X,m^X)$ be the essentially unique data fitting in a diagram in $\cC_+$ as follows:
\[
\begin{tikzcd}[column sep=60pt]
B\ar[r,"\Id_B\sqcup\coprod_XG_\fe"]&B\sqcup X\times\ul2=B^X\ar[r,bend left,"G^X_u"'{name=U}]\ar[r,bend right,"G^X_d"{name=D}]&A^X=A,
\ar[from=U, to=D, Rightarrow,"{(f^X,m^X)}"]
\end{tikzcd}
\]
with horizontal composition identified with $(f,m)\colon G_u\Rightarrow G_d$ in such a way that for all $x\in X$, the inclusion of finite sets $\ul2\simeq x\hto H_d$ coincides with the inclusion of $\ul2=H_\fe$ into the set of half-edges of $G^X_d\circhor \Id_B\sqcup\coprod_XG_\fe$ coming from the $x$\textsuperscript{th} copy of $G_\fe$.

We let $\zeta^X\colon(f,m)\to(f^X,m^X)$ be the morphism in $\dot\cL$ or $\dot\cK$ given by the above factorisation: in the light of Notation \ref{nota:morphismdotcT}, we have 
$G^X_l=\Id_B\sqcup\coprod_XG_\fe$ and all other data $G^X_r,f^X_u,f^X_d$ are invertible 1-morphisms and 2-morphisms.
\end{defn}
In the setting of Definition \ref{defn:fXmX} we have a canonical identification $\CE(f)\simeq\CE(f^X)$; in particular $(f,m)$ and $(f^X,m^X)$ have the same $\N$-grading; it is also immediate that if $(f,m)$ is leaf-like, then so is $(f^X,m^X)$. We also observe that in the case $X=E_d$ we have that $G_d^X$ is a gaf without edges.

\begin{lem}
\label{lem:zetaXfullyfaithful}
Let $(f,m)$ be leaf-like as in Notation \ref{nota:cOfm}, and let $i=1,3$.
Consider the functor $-\circ\zeta^{E_d}\colon Q_i(f^{E_d},m^{E_d})\to Q_i(f,m)$ induced by the morphism $\zeta^{E_d}$ in the light of the pullback definition of $Q_i(f,m)$ and $Q_i(f^{E_d},m^{E_d})$ from Lemma \ref{lem:defQi}. Then $-\circ\zeta^{E_d}$ is fully faithful with essential image $Q''_i(f,m)$.
\end{lem}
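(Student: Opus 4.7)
The plan is to construct an explicit surgery-based inverse functor $\Sigma\colon Q''_i(f,m) \to Q_i(f^{E_d}, m^{E_d})$, which simultaneously yields the identification of the essential image as $Q''_i(f,m)$ and the full faithfulness of $-\circ\zeta^{E_d}$.

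First I would verify that the image of $-\circ\zeta^{E_d}$ lies in $Q''_i(f,m)$: given $\bar D \in Q_i(f^{E_d},m^{E_d})$ represented by a special diagram as in Notation \ref{nota:Qiobject}, the composite $\bar D \circ \zeta^{E_d}$ is represented by a special diagram whose left $1$-morphism is $\bar G_l \circhor G_l^{E_d} = \bar G_l \circhor (\Id_B \sqcup \coprod_{E_d} G_\fe)$, with the remaining constituents essentially unchanged. Since $G_d^{E_d}$ has no edges by Definition \ref{defn:fXmX}, and the edges of $G_d$ arise precisely by glueing $G_d^{E_d} \circhor G_l^{E_d} \simeq G_d$ with the $G_\fe$-pieces, tracking $\bar f_d$ through the composition shows that the preimage of $E_d$ lies entirely in the newly-added $G_\fe$-edges, which all sit inside the new $\bar E_l$; this yields condition (ii) of Definition \ref{defn:Qipfm}.

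Next I would define $\Sigma$: given $\bar D \in Q''_i(f,m)$, the hypothesis $\bar f_d^{-1}(E_d) \subseteq \bar E_l$ together with condition (5) of Definition \ref{defn:morgaf} determines, for each $e \in E_d$, a unique edge $\tilde e \in \bar E_l$ with $\bar f_d(\tilde e) = e$. Define $\Sigma(\bar D)$ by removing the edges $\tilde e$ from $\bar G_l$ and introducing in their place two new markings indexed by $E_d \times \ul 2$, attached via the modified $\bar \rho$ to the vertices where the half-edges of $\tilde e$ formerly sat; the other data $(\bar G_r, \bar f_u, \bar f_d)$ transport along the resulting canonical factorization $\Sigma(\bar G_l) \circhor G_l^{E_d} \simeq \bar G_l$. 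To extend $\Sigma$ to morphisms, note that by Lemmas \ref{lem:QitodotcKfullyfaithful}--\ref{lem:QitodotcKessentiallysurjective} a morphism $\bar D \to \check D$ in $Q_i$ amounts to a $\zeta\colon (\bar f, \bar m) \to (\check f, \check m)$ in $\dot\cL$ together with a discrete compatibility datum in $\dot\cK$; the $\zeta$-datum is unaffected by the surgery (as $\dot\cL$ involves only $(\bar f, \bar m)$ and $(\check f, \check m)$), while the $\dot\cK$-datum lifts uniquely along $\Sigma$ thanks to the right-fibration structure of $p_+\colon \dot\cK \to \dot\cT$ combined with the equality $\dot\kappa(\zeta^{E_d})(f^{E_d},m^{E_d})=(f,m)$, provided the underlying morphism in $\dot\cT$ admits the required factorization through $\zeta^{E_d}$ --- which it does, by applying the same surgery construction to the underlying special diagram of the compatibility.

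The canonical equivalences $\Sigma(\bar D) \circ \zeta^{E_d} \simeq \bar D$ (for $\bar D \in Q''_i(f,m)$) and $\Sigma(\bar D \circ \zeta^{E_d}) \simeq \bar D$ (for $\bar D \in Q_i(f^{E_d}, m^{E_d})$, where $\Sigma$ merely undoes the added $G_\fe$-pieces) then exhibit $\Sigma$ and $-\circ\zeta^{E_d}$ as mutually inverse equivalences between $Q_i(f^{E_d}, m^{E_d})$ and $Q''_i(f,m)$, proving the lemma. The main technical obstacle will be verifying at the $\inftwo$-categorical level that the surgery construction respects the $2$-morphism data $(\bar f_u, \bar f_d)$ coherently --- in particular that the descent of $\bar f_d$ along the factorization $\bar G_l \simeq \Sigma(\bar G_l) \circhor G_l^{E_d}$ really produces a valid morphism of gafs compatible with the edgeless $G_d^{E_d}$ --- and that the lift of $\dot\cK$-compatibilities along $\Sigma$ encompasses the full higher-homotopical structure rather than merely acting on $\pi_0$.
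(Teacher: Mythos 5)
Your approach — constructing an explicit surgery-based inverse $\Sigma$ rather than verifying full faithfulness via the already-developed reduction apparatus — is genuinely different from the paper's. The paper instead identifies the essential image (declared immediate) and invokes Lemma \ref{lem:Qispacesequivalence3}, which reduces full faithfulness to checking that the 1-functor $-\circ\zeta^{E_d}\colon\Xi((f^{E_d},m^{E_d}),\psi_i\sqcup(\check f,\check m))\to\Xi((f,m),\psi_i\sqcup(\check f,\check m))$ is a split inclusion of ordinary categories of gafs. That reduction is precisely what sidesteps the coherence issues you flag, because it puts everything at the 1-categorical level where the factorization-through-$\zeta^{E_d}$ argument is elementary combinatorics.

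The gap in your proposal lies in the ``extend $\Sigma$ to morphisms'' step. You assert that the $\dot\cK$-compatibility ``lifts uniquely along $\Sigma$ thanks to the right-fibration structure of $p_+\colon\dot\cK\to\dot\cT$,'' but this does not work as stated: the relevant lifting is along the precomposition functor $-\circ\zeta^{E_d}\colon\dot\cK_{(f^{E_d},m^{E_d})/}\to\dot\cK_{(f,m)/}$, which is a functor between coslice categories over $\dot\cK$, not a cartesian lift along $p_+$; since $\zeta^{E_d}$ is not an equivalence in $\dot\cK$, there is no a priori unique lift. Moreover, describing the datum as ``discrete'' misreads what Lemmas \ref{lem:QitodotcKfullyfaithful}--\ref{lem:QitodotcKessentiallysurjective} actually give you: they identify $Q_i(f,m)(\bar D,\check D)$ with a \emph{fibre of a map of classifying spaces of $\Xi$-categories}, which is a genuine space and not a set of compatibility choices. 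To make $\Sigma$ into an $\infone$-functor and verify the two triangle identities up to coherent homotopy, you would in practice have to pass to the $\Xi$-categories anyway — which is exactly what Lemma \ref{lem:Qispacesequivalence3} packages once and for all. Your object-level surgery and identification of the essential image $Q_i''(f,m)$ are correct and more explicit than the paper's one-line assertion, but without repairing the morphism-level argument (e.g.\ by running it through Lemma \ref{lem:Qispacesequivalence3} or an equivalent $\Xi$-level factorization), the proof is incomplete.
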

\begin{proof}
The fact that an object is in the essential image of $-\circ\zeta^{E_d}$ if and only if it lies in $Q_i''(f,m)$ is immediate. To prove that $-\circ\zeta^{E_d}$ is fully faithful, fix $\bar D,\check D\in Q_i(f^{E_d},m^{E_d})$; we want to prove that $-\circ\zeta^{E_d}$ induces an equivalence of spaces $Q_i(f^{E_d},m^{E_d})(\bar D,\check D)\xrightarrow{\simeq} Q_i(f,m)(\bar D\circ\zeta^{E_d},\check D\circ\zeta^{E_d})$. By Lemma \ref{lem:Qispacesequivalence3}, it suffices to prove that the functor
\[
-\circ\zeta^{E_d}\colon\Xi((f^{E_d},m^{E_d}),\psi_i\sqcup(\check f,\check m))\to \Xi((f,m),\psi_i\sqcup(\check f,\check m))
\]
is a split inclusion of categories. This is evidently the case: in fact we may characterise the essential image of the last functor as the split subcategory of $\Xi((f,m),\psi_i\sqcup(\check f,\check m))$ spanned by objects $\zeta$ as in Notation \ref{nota:morphismdotcT} such that $f_d^{-1}(E_d)$ is contained in $E_{d,l}$ (a priori, it is contained in $E_{d,l}\sqcup\check E_d\sqcup E_{d,r}$).
\end{proof}

\begin{lem}
\label{lem:edgelessreduction}
In the setting of Lemma \ref{lem:zetaXfullyfaithful}, the inclusions of $\infone$-categories $Q''_i(f,m)\subseteq Q'_i(f,m)\subseteq Q_i(f,m)$
induce equivalences on classifying spaces.
\end{lem}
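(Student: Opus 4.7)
The plan is to show that each inclusion $Q''_i(f,m)\subseteq Q'_i(f,m)\subseteq Q_i(f,m)$ admits a left adjoint; since an adjunction induces inverse equivalences on classifying spaces, this will suffice. The left adjoints will be constructed explicitly by ``migrating'' the edges in $\bar f_d^{-1}(E_d)$ from the wrong side (right, respectively middle) of $\bar D$ to the allowed location, using a variation of the construction $\zeta^X$ of Definition~\ref{defn:fXmX}.

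First I would treat the inclusion $Q''_i(f,m)\subseteq Q'_i(f,m)$. Given $\bar D\in Q'_i(f,m)$, set $X=\bar f_d^{-1}(E_d)\cap\bar E_d\subseteq\bar E_d$ and apply Definition~\ref{defn:fXmX} to the 2-morphism $(\bar f,\bar m)$ with this subset, obtaining $\zeta^X\colon(\bar f,\bar m)\to(\bar f^X,\bar m^X)$ in $\dot\cL$. By construction the new middle 2-morphism $(\bar f^X,\bar m^X)$ is pre-composed with a factor $\Id_{\bar B}\sqcup\coprod_X G_\fe$. Absorbing this factor into $\bar G_l$ yields an object $L_2(\bar D)\in Q''_i(f,m)$, and the morphism $\zeta^X$ lifts to a canonical morphism $\bar D\to\iota L_2(\bar D)$ in $Q'_i(f,m)$. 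The key point is that any morphism in $Q'_i(f,m)$ from $\bar D$ to an object of $Q''_i(f,m)$ must, by the universal property of the pullback defining $Q'_i(f,m)$ and the edge-counting constraint imposed by condition (ii), factor uniquely through this morphism; this exhibits the morphism as initial in $(\iota\downarrow\bar D)$ and hence $L_2$ as left adjoint to $\iota$.

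Second, the inclusion $Q'_i(f,m)\subseteq Q_i(f,m)$ is handled by the analogous construction on the right side. Given $\bar D\in Q_i(f,m)$, set $Y=\bar f_d^{-1}(E_d)\cap\bar E_r$; because each such edge is physically an edge of $\bar G_r$ whose endpoints are half-edges of $\bar G_r$, one obtains a canonical factorisation $\bar G_r\simeq \bar G_r'\circhor \bar G_r''$ in $\bGr(S)$ in which $\bar G_r''$ is the disjoint union of the identity on the relevant attaching vertices with one copy of $G_\fe$ for each element of $Y$. Horizontally composing $(\bar f,\bar m)$ on the right with $\bar G_r''$ absorbs these edges into the middle 2-cell, producing an object $L_1(\bar D)\in Q'_i(f,m)$ together with a canonical morphism $\bar D\to\iota L_1(\bar D)$; this is again initial among morphisms from $\bar D$ into $Q'_i(f,m)$.

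The main technical obstacle is the functoriality and naturality of these constructions, which must be checked by unwinding the pullback description of morphisms in $Q_i(f,m)$ via Lemmas~\ref{lem:QitodotcKfullyfaithful} and~\ref{lem:QitodotcKessentiallysurjective}: a morphism $\bar D\to\check D$ corresponds, up to coherent equivalence, to a morphism $(\bar f,\bar m)\to(\check f,\check m)$ in $\dot\cL$ satisfying explicit compatibilities, and one must show that such morphisms transport along $\zeta^X$ and the right-factorisation in a natural way. A subsidiary point, for $i=3$, is that the orientation splitting $Q_3(f,m)\simeq Q_3(f,m)^\to\sqcup Q_3(f,m)^\ot$ of Notation~\ref{nota:orientationpreserving} is preserved, but this is automatic because the orientation of the distinguished leaf is determined by $\bar f_u(h_\ell)$, which is entirely untouched by modifications on the target side of $\bar D$. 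With these verifications, $L_1$ and $L_2$ exhibit reflective subcategory structures, and the induced maps on classifying spaces are equivalences.
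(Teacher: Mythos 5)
Your overall strategy (construct adjoints, then invoke the fact that adjoints induce inverse equivalences on classifying spaces) matches the paper's, and your treatment of the first inclusion $Q''_i(f,m)\subseteq Q'_i(f,m)$ via $\zeta^X$ is essentially correct. However, the second adjunction has a directional error. You claim that absorbing the edges of $Y=\bar f_d^{-1}(E_d)\cap\bar E_r$ into the middle $2$-cell produces $L_1(\bar D)\in Q'_i(f,m)$ with a morphism $\bar D\to\iota L_1(\bar D)$, exhibiting a left adjoint. But look at the direction of morphisms in $\dot\cL$: in Notation \ref{nota:morphismdotcT} a morphism $(\bar f,\bar m)\to(\check f,\check m)$ is witnessed by a special diagram that exhibits $(\check f,\check m)$ as a factor \emph{inside} $(\bar f,\bar m)$, so the target has no more edges than the source. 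Absorbing $\bar G_r''$ into the middle $2$-cell \emph{enlarges} it — the new middle is $(\Id\sqcup\coprod_Y G_\fe)\circhor(\bar f,\bar m)$ — so the resulting morphism in $\dot\cL$ goes from the new, bigger object to the old, smaller one, not the other way. Consequently the canonical morphism points $L_1(\bar D)\to\bar D$, which is a \emph{counit}, and the construction gives a \emph{right} adjoint to $Q'_i(f,m)\hookrightarrow Q_i(f,m)$, as the paper does. This error is fixable (a right adjoint serves the purpose equally well), but your asserted initiality in $(\bar D\downarrow\iota)$ is simply false.

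A second, less localised gap: you flag the verification of adjointness as ``the main technical obstacle'' and defer it to ``unwinding the pullback description,'' but you do not carry it out. Because the categories $Q_i(f,m)$ are genuine $\infty$-categories with nondiscrete mapping spaces, the adjunction unit/counit must induce equivalences of mapping \emph{spaces}, not merely bijections of morphism sets. The paper handles this by reducing each required equivalence of mapping spaces to an equivalence $|\cE_1|\xrightarrow{\simeq}|\cE_2|$ between classifying spaces of explicitly identified split subcategories of $\Xi(\ldots)$, via Lemmas~\ref{lem:Qispacesequivalence1} and~\ref{lem:Qispacesequivalence2}. Without this step your argument does not close; the identification of a morphism in $Q_i(f,m)$ with a compatible morphism in $\dot\cL$ (your appeal to Lemmas~\ref{lem:QitodotcKfullyfaithful}/\ref{lem:QitodotcKessentiallysurjective}) is only the starting point, and the needed naturality and universality have real content that must be checked at the level of the $\Xi$-categories.
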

\begin{proof}
We will prove that the inclusion $Q''_i(f,m)\hto Q'_i(f,m)$ admits a left adjoint, and that the inclusion $Q'_i(f,m)\hto Q_i(f,m)$ admits a right adjoint.

For the first claim, given an object $\bar D\in Q'_i(f,m)$ as in Notation \ref{nota:Qiobject}, we denote $X=\bar f_d^{-1}(E_d)\cap\bar E_d$ and we define a candidate left adjoint object $\check D\in Q''_i(f,m)$ together with a morphism $\eta\colon\bar D\to\check D$ in $Q'_i(f,m)$ as follows: 
\begin{itemize}
\item  we let 
$(\check f,\check m):=(\bar f^X,\bar m^X)\in\dot\cL$, we denote by $\bar\zeta^X\colon(\bar f,\bar m)\to(\bar f^X,\bar m^X)$ the morphism from Definition \ref{defn:fXmX}, and we let $\check\zeta$ be the composite in $\dot\cK$
\[
(f,m)\xrightarrow{\bar\zeta}\psi_i\sqcup(\bar f,\bar m)\xrightarrow{\Id_{\psi_i}\sqcup\bar\zeta^X}\psi_i\sqcup(\bar f^X,\bar m^X);
\]

\item the definition of $\check\zeta$ allows us to upgrade $\eta:=\Id_{\psi_i}\sqcup\bar\zeta^X$ to a morphism $\bar D\to\check D$ in $Q_i(f,m)$.
\end{itemize}
It is left to prove that for all objects $\tilde D\in Q''_i(f,m)$, precomposition with $\eta$ induces an equivalence of spaces
$Q_i(f,m)(\check D,\tilde D)\xrightarrow{\simeq}Q_i(f,m)(\bar D,\tilde D)$. By Lemma \ref{lem:Qispacesequivalence1}, it suffices to find a split subcategory $\tilde\zeta\in\cD\subseteq\Xi((f,m),\psi_i\sqcup(\tilde f,\tilde m))$ such that $-\circ\eta$ restricts to an equivalence $\cE_1:=((\psi_i\sqcup-)\circ\check\zeta)^{-1}(\cD)\xrightarrow{\simeq} \cE_2:=((\psi_i\sqcup-)\circ\bar\zeta)^{-1}(\cD)$.
This is achieved by letting $\tilde\zeta\in\cD$ be the split subcategory of $\Xi((f,m),\psi_i\sqcup(\tilde f,\tilde m))$ spanned by objects $\zeta$ as in Notation \ref{nota:morphismdotcT} such that $f_d^{-1}(E_d)$ is contained in $E_{d,l}$: note that a priori we have $f_d^{-1}(E_d)\subseteq E_{d,l}\sqcup\tilde E_d\sqcup E_{d,r}$. We may then identify $\cE_2$ with the split subcategory of $\Xi((\bar f,\bar m),(\tilde f,\tilde m))$ spanned by objects $\zeta$ such that $f_d^{-1}(X)\subseteq E_{d,l}$, whereas $\cE_1=\Xi((\check f,\check m),(\tilde f,\tilde m))$; the functor $-\circ\eta$ restricts to an equivalences $\cE_1\xrightarrow{\simeq}\cE_2$, which a fortiori induces an equivalence of spaces $|\cE_1|\xrightarrow{\simeq}|\cE_2|$.
This completes the proof that $Q''_i(f,m)\hto Q'_i(f,m)$ admits a left adjoint.

We similarly show that the inclusion $Q'_i(f,m)\hto Q_i(f,m)$ admits a right adjoint. Given an object $\check D\in Q_i(f,m)$ as in Notation \ref{nota:Qiobject} we define a candidate right adjoint object $\bar D\in Q'_i(f,m)$ and a morphism $\varepsilon\colon\bar D\to\check D$ in $Q_i(f,m)$ as follows:
\begin{itemize}
\item we let $X:=\check E_r\cap \check f_d^{-1}(E_d)\subseteq \check E_r$; we regard each $x\in X$ as a subset of two elements of $\check H_r$ and fix an identification $x\simeq\ul2$; by abuse of notation we regard $X$ also as a subset of $E_d$;
\item we let $k=1$ if $i=1$, and $k=0$ if $i=3$;
\item we factor $\check G_r=\bar G_r\circhor(\Id_{\ul k\sqcup\check A}\sqcup\coprod_XG_\fe)$ in the essentially unique way such that for all $x\in X$, the inclusion $\ul2\simeq x\hto\check H_r$ agrees with the inclusion of $\ul2=H_\fe$ in the set of half-edges of $\check G_r\circhor(\Id_{\ul k\sqcup\check A}\sqcup\coprod_XG_\fe)$ coming from the
$x$\textsuperscript{th} copy of $G_\fe$.
\item we let $(\bar f,\bar m):=(\Id_{\check A}\sqcup\coprod_XG_\fe)\circhor(\check f,\check m)$, and we let $\bar\zeta\colon(f,m)\to\psi_i\sqcup(\bar f,\bar m)$  be the morphism in $\dot\cK$ given by a special diagram as in Notation \ref{nota:morphismdotcT} by setting $\bar G_l=\check G_l$ and $(\bar f_\star,\bar m_\star)=(\check f_\star,\check m_\star)$ for $\star=u,d$, as well as by considering the gaf $\bar G_r$ constructed above;
\item we let $\zeta_X\colon(\bar f,\bar m)\to(\check f,\check m)$ be the morphism in $\dot\cL$ given by a special diagram as in Notation \ref{nota:morphismdotcT} by setting $G_{X,r}=(\Id_{\check A}\sqcup\coprod_XG_\fe)$ and letting all other data $G_{X,l}$ and $(f_{X,\star},m_{X,\star})$ for $\star=u,d$ be the obvious equivalences;
\item the definition of $\bar\zeta$ allows us to upgrade $\varepsilon:=\Id_{\psi_i}\sqcup\zeta_X$ to a morphism $\bar D\to\check D$ in $Q_i(f,m)$, since we have an evident equivalence $\varepsilon\circ\bar\zeta\simeq\check\zeta$.
\end{itemize}
It is left to prove that for all $\tilde D\in Q'_i(f,m)$, postcomposition with $\varepsilon$ induces an equivalence $Q_i(f,m)(\tilde D,\bar D)\xrightarrow{\simeq} Q_i(f,m)(\tilde D,\check D)$; for this we shall apply Lemma \ref{lem:Qispacesequivalence2}. By further abuse of notation, let $X\subseteq \tilde E_l\sqcup\tilde E_d$ denote also the subset $\tilde f_d^{-1}(X)$. 
We let $\cD_1\subseteq\Xi((f,m),\psi_i\sqcup(\bar f,\bar m))$ denote the split subcategory spanned by objects $\zeta$ as in Notation \ref{nota:morphismdotcT} such that $f_d$ restricts to the canonical, edgewise oriented identification $X\cong X$ between the two subsets of $\bar E_d\subseteq E_{d,l}\sqcup\bar E_d\sqcup E_{d,r}$ and of $E_d$ that we have denoted $X$; and we let $\cD_2\subseteq\Xi((f,m),\psi_i\sqcup(\bar f,\bar m))$ denote the split $\infone$-subcategory spanned by objects $\zeta$ such that $f_d^{-1}(X)\subseteq E_{d,r}$. Then
\[
\varepsilon\circ-\colon\Xi((f,m),\psi_i\sqcup(\bar f,\bar m))\to\Xi((f,m),\psi_i\sqcup(\check f,\check m))
\]
restricts to an equivalence $\cD_1\xrightarrow{\simeq}\cD_2$, inducing an equivalence on classifying spaces. 

Moreover, the preimage $\cE_1$ of $\cD_1$ in $\Xi((\tilde f,\tilde m),(\bar f,\bar m))$ is the split subcategory spanned by objects $\zeta$ as in Notation \ref{nota:morphismdotcT} such that $f_d$ restricts to the canonical, edgewise oriented identification $X\cong X$ between the two subsets of $\bar E_d\subseteq E_{d,l}\sqcup\bar E_d\sqcup E_{d,r}$ and of $\tilde E_d$ denoted $X$; and the preimage $\cE_2$ of $\cD_2$ in $\Xi((\tilde f,\tilde m),(\check f,\check m))$ is the split subcategory spanned by objects $\zeta$ such that $f_d^{-1}(X)\subseteq E_{d,r}$. Again, $\zeta_X\circ-\colon\Xi((\tilde f,\tilde m),(\bar f,\bar m))\to\Xi((\tilde f,\tilde m),(\check f,\check m))$ restricts to an equivalence $\cE_1\xrightarrow{\simeq}\cE_2$, inducing an equivalence $|\cE_1|\xrightarrow{\simeq}|\cE_2|$. This completes the proof that $Q'_i(f,m)\hto Q_i(f,m)$ admits a right adjoint.
\end{proof}
The combination of Lemmas \ref{lem:zetaXfullyfaithful} and \ref{lem:edgelessreduction} allows to reduce the proof of Proposition \ref{prop:finalthmA} to the special case of a leaf-like $(f,m)\in\dot\cK$ as in Notation \ref{nota:cOfm} whose target $G_d$ is a gaf without edges.
We conclude the subsection by further reducing to the case in which $G_d$ consists of a single (attaching or inner) vertex.

Let $(f,m)\in\dot\cK$ be leaf-like as in Notation \ref{nota:cOfm} with $E_d=\emptyset$, and recall Notation \ref{nota:cKmoduleovercL}. Then there is a canonical splitting $(f,m)\simeq (f_1,m_1)\sqcup(f_2,m_2)$ such that $(f_2,m_2)\in\dot\cL$ and such that $G_{1,d}$ consists of a single vertex, which is the image of the special leaf in $G_d$ along $f$. For $i=1,3$, we have a commutative diagram in $\Catinfone$ as follows, which we regard as a morphism between cospans:
\[
\begin{tikzcd}[column sep=60pt,row sep=10pt]
\dot\cL\times\dot\cL\ar[r,"(\psi_i\sqcup-)\times-"]\ar[d,"-\sqcup-"]&\dot\cK\times\dot\cL\ar[d,"-\sqcup-"]&\dot\cK_{(f_1,m_1)/}\times\dot\cL_{(f_2,m_2)/}\ar[d,"-\sqcup-"]\ar[l,"d_0\times d_0"']\\
\dot\cL\ar[r,"\psi_i\sqcup-"]&\dot\cK&\dot\cK_{(f,m)/}\ar[l,"d_0"'].
\end{tikzcd}
\]
The induced functor between pullbacks $Q_i(f_1,m_1)\times\dot\cL_{(f_2,m_2)/}\to Q_i(f,m)$ is an equivalence; taking classifying spaces, we obtain an equivalence $|Q_i(f_1,m_1)|\xrightarrow{\simeq}|Q_i(f,m)|$, restricting to equivalences between subsapces corresponding to split $\infone$-subcategories with decorations ``$\to$'' and ``$\ot$''. The following corollary summarizes the reductions achieved in this subsection.
\begin{cor}
\label{cor:singlevertexGd}
Assume that Proposition \ref{prop:finalthmA} holds when $(f,m)$ is such that $G_d$ consists of a single vertex. Then Theorem \ref{thm:A} holds.
\end{cor}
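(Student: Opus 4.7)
The plan is essentially to chain together the reductions that have already been established. By Corollary \ref{cor:leaflikereduction}, proving Theorem \ref{thm:A} is equivalent to showing $\bGr(S)^+(f,m) \simeq *$ for every finite set $S$ and every leaf-like 2-morphism $(f,m)$ in $\bGr(S_+)$ as in Notation \ref{nota:cOfm}. By Lemma \ref{lem:dotkappafmpushout} and Lemma \ref{lem:trivialcovering}, this space is the colimit of the W-shaped diagram whose terms are the classifying spaces $|Q_i(f,m)^{\to}|$, $|Q_i(f,m)^{\ot}|$ and their $i=2,3$ analogues; hence Theorem \ref{thm:A} is implied by Proposition \ref{prop:finalthmA} for all leaf-like $(f,m)$.

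Next, I would invoke Lemma \ref{lem:zetaXfullyfaithful} and Lemma \ref{lem:edgelessreduction}: the composite of adjoint inclusions $Q''_i(f,m)\subseteq Q'_i(f,m)\subseteq Q_i(f,m)$ induces an equivalence on classifying spaces, and by Lemma \ref{lem:zetaXfullyfaithful} the functor $-\circ\zeta^{E_d}\colon Q_i(f^{E_d},m^{E_d})\to Q_i(f,m)$ is fully faithful with essential image $Q''_i(f,m)$. Therefore the induced map $|Q_i(f^{E_d},m^{E_d})|\to |Q_i(f,m)|$ is an equivalence. The key point to check here is that the splittings into orientation-preserving and orientation-reversing subcategories (Notation \ref{nota:orientationpreserving}) are respected: this holds because the condition depends only on how $\bar f_u$ treats the distinguished half-edge $h_\ell$, and none of the bookkeeping in Definition \ref{defn:fXmX} or in the adjoint constructions affects this datum. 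Consequently Proposition \ref{prop:finalthmA} for $(f,m)$ is equivalent to the same statement for $(f^{E_d},m^{E_d})$, whose target gaf has no edges.

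Having reduced to $E_d=\emptyset$, I would apply the splitting $(f,m)\simeq (f_1,m_1)\sqcup(f_2,m_2)$ discussed immediately after Lemma \ref{lem:edgelessreduction}, where $G_{1,d}$ is a single vertex (the image of the special leaf under $f$) and $(f_2,m_2)$ lies in $\dot\cL$. The commutative diagram of cospans displayed there produces an equivalence of $\infone$-categories $Q_i(f_1,m_1)\times\dot\cL_{(f_2,m_2)/}\to Q_i(f,m)$, and since $\dot\cL_{(f_2,m_2)/}$ has a terminal object (namely $\mathrm{Id}_{(f_2,m_2)}$), its classifying space is contractible. Thus the projection induces an equivalence $|Q_i(f_1,m_1)|\xrightarrow{\simeq} |Q_i(f,m)|$, and again this equivalence respects the orientation-preserving/orientation-reversing splittings because the distinguished leaf lies entirely in $(f_1,m_1)$ and the subcategory decorations depend only on the action on $h_\ell$. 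Therefore Proposition \ref{prop:finalthmA} for $(f,m)$ reduces to Proposition \ref{prop:finalthmA} for $(f_1,m_1)$, where the target consists of a single vertex.

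The main obstacle, as in the rest of this subsection, is bookkeeping: at every reduction step one must verify that the equivalence of classifying spaces $|Q_i(-,-)|$ is compatible with the splitting into ``$\to$'' and ``$\ot$'' pieces, so that both assertions of Proposition \ref{prop:finalthmA} are inherited along the reductions. Since each of the reduction functors considered here is either induced by a morphism $\zeta$ of $\dot\cK$ whose action on the special leaf is the identity, or is a decomposition of $(f,m)$ that leaves the leaf untouched, this compatibility is automatic; no further content beyond the already established lemmas is needed to assemble the corollary.
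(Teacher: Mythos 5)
Your proof is correct and takes essentially the same approach as the paper: it chains Corollary \ref{cor:leaflikereduction}, Lemmas \ref{lem:dotkappafmpushout}, \ref{lem:trivialcovering}, \ref{lem:zetaXfullyfaithful}, and \ref{lem:edgelessreduction}, and the splitting $(f,m)\simeq(f_1,m_1)\sqcup(f_2,m_2)$ to reduce Theorem \ref{thm:A} to the single-vertex case of Proposition \ref{prop:finalthmA}, including the needed check that each reduction respects the orientation-preserving/orientation-reversing decompositions. One harmless slip: $\dot\cL_{(f_2,m_2)/}$ has an \emph{initial} object (namely $\Id_{(f_2,m_2)}$), not a terminal one, though either would give contractibility of its classifying space.
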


\subsection{The equivalence between \texorpdfstring{$|Q_3(f,m)^\ot|$}{|Q3(f,m)ot|} and \texorpdfstring{$|Q_3(f,m)^\ot|$}{|Q1(f,m)ot|}}
The goal of this subsection is to prove Proposition \ref{prop:finalthmA}(2). We fix $\cO$ and $(f,m)$ as in Notation \ref{nota:cOfm} throughout the subsection, with $(f,m)$ leaf-like and with $G_d$ consisting of a single vertex. For the arguments in this subsection, it suffices in fact to assume $E_d=\emptyset$.

\begin{nota}
\label{nota:zetaplus}
For $T\in\Fin$ we abuse notation and denote by $T$ all of the following:
\begin{itemize}
\item the identity gaf of $T$;
\item the object in $\dot\cT$ represented by the $\mO_1$-shaped diagram comprising twice the identity gaf of $T$;
\item the object in $\dot\cL$ represented by the identity of the identity gaf of $T$.
\end{itemize}
We denote by $\zeta^+\colon\psi_3\to(\psi_1\sqcup\ul1)$ the morphism in $\dot\cK$ given, in the light of Notation \ref{nota:morphismdotcT}, by letting $G^+_l:=\Id_{\ul2}$ and $G^+_r:=G_\ft\circhor\mu$, and by letting $f^+_u$ and $f^+_d$ be the evident identifications.
\end{nota}
The map of spaces $|Q_3(f,m)^\ot|\to|Q_1(f,m)^\ot|$ is induced on geometric realisations by the functor $a^\ot\colon Q_3(f,m)^\ot\to Q_1(f,m)^\ot$ which we describe in the following.
The morphism $\zeta^+$ from Notation \ref{nota:zetaplus} induces a natural transformation 
$z\colon(\psi_3\sqcup-)\Rightarrow(\psi_1\sqcup\ul1\sqcup-)\simeq(\psi_1\sqcup-)\circ(\ul1\sqcup-)$
of functors $\dot\cL\to\dot\cK$.
We may now define a functor $a\colon Q_3(f,m)\to Q_1(f,m)$ using the pullback definition of the latter category from Lemma \ref{lem:defQi}: we let the $\dot\cL$-coordinate of $a$ be $\ul1\sqcup\pi^3_{\dot\cL}$; we are then forced to let the $\dot\cK$-coordinate of $a$ be $(\psi_1\sqcup\ul1\sqcup p\pi^3_{\dot\cL})$; and we let the $\dot\cK_{(f,m)/}$-coordinate of $a$ be the concatenation of $\pi_{(f,m)}^3$ and the functor $Q_3(f,m)\to\Fun([1],\dot\cK)$ corresponding to the natural transformation $zp\pi_{\dot\cL}^3$.
The functor $a$ restricts to a functor $a^\ot\colon Q_3(f,m)^\ot\to Q_1(f,m)^\ot$ between split $\infone$-subcategories, and our goal is to show that $a^\ot$ induces an equivalence on classifying spaces.

To make sense of the next definition, we observe that if $\bar D\in Q_1(f,m)^\ot$ is an object represented by a diagram as in Notation \ref{nota:Qiobject}, then the vertex $\bar\rho_r(1)\in\bar V_r\sqcup A$ is in fact an inner vertex in $\bar V_r$, and it actually constitutes an isolated component of the gaf $\bar G_r$: this follows from the fact that $\bar f_u^{-1}(\bar v)$ must be a subtree of $G_u$ to which the special leaf $e_\ell$ is attached along $h_\ell$; the maximal such subtree in $G_u$ consists of the sole vertex $v_\ell$: here we are using in particular that $E_d=\emptyset$, so all edges of $G_u$ are collapsed along $f$ and hence $e_\ell$ is the unique edge attached to $v_\ell$ in $G_u$ (and not only the unique edge attached to $v_\ell$ that is collapsed along $f$). Hence $\bar f_u^{-1}(\bar v)=\set{v_\ell}$, and it follows that $\bar v$ has valence $0$ in $\bar G_r$. For the same reason, no edge in either $\bar G_u$ or $\bar G_d$ can be attached to any point in $\bar\rho_r^{-1}(\bar\rho_r(1))\cap\bar A\subseteq\bar A$. 
\begin{defn}
\label{defn:Q1vee}
We denote by $Q_1^{\vee}(f,m)^\ot\subseteq Q_1(f,m)^\ot$ the full $\infone$-subcate\-gory spanned by objects $\bar D$ as in Notation \ref{nota:Qiobject} satisfying the following property:
\begin{itemize}
\item[(iii)] let $1\in\ul1\subseteq\ul1\sqcup\bar A$, and let $\bar v:=\bar\rho_r(1)$; then the subset $\bar\rho_r^{-1}(\bar v)\subseteq\ul1\sqcup\bar A$ comprises precisely two elements. The first element is $1\in\ul1$; we denote in this case by $\bar 1\in\bar A$ the second element, and we split $\bar A=\bar{\ul1}\sqcup\bar A^\vee$.
\end{itemize}
We further denote by $Q_1^{\vee\vee}(f,m)^\ot\subseteq Q_1^\vee(f,m)^\ot$ the full $\infone$-subcategory span\-ned by objects $\bar D$ satisfying the following additional property:
\begin{itemize}
\item[(iv)] there is a (necessarily unique) splitting $(\bar f,\bar m)\simeq\bar{\ul1}\sqcup(\bar f^\vee,\bar m^\vee)$ restricting on target objects to the above splitting $\bar A=\bar{\ul1}\sqcup\bar A^\vee$.
\end{itemize}
\end{defn}
\begin{lem}
\label{lem:aotfullyfaithful}
The functor $a^\ot$ is fully faithful with essential image $Q_1^{\vee\vee}(f,m)^\ot$.
\end{lem}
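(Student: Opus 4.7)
The plan is to produce an explicit candidate inverse $b\colon Q_1^{\vee\vee}(f,m)^\ot \to Q_3(f,m)^\ot$ to $a^\ot$ and verify that $b$ is a two-sided quasi-inverse; both full faithfulness and the identification of the essential image then follow at once.

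First I would check that $a^\ot$ lands in $Q_1^{\vee\vee}(f,m)^\ot$. Unpacking the definition, for $\bar D\in Q_3(f,m)^\ot$ as in Notation \ref{nota:Qiobject} the image $a^\ot(\bar D)$ has intermediate $\dot\cL$-object $\ul 1\sqcup(\bar f,\bar m)$ and structure morphism $(\zeta^+\sqcup\Id_{(\bar f,\bar m)})\circ\bar\zeta$. The tautological splitting $\ul 1\sqcup(\bar f,\bar m)=\bar{\ul1}\sqcup(\bar f^\vee,\bar m^\vee)$ with $\bar{\ul1}=\ul 1$ and $(\bar f^\vee,\bar m^\vee)=(\bar f,\bar m)$ supplies condition (iv) of Definition \ref{defn:Q1vee}. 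Since $G^+_r=\ft\circhor\mu$ contributes a single isolated inner vertex $\hat v$ of valence zero to which both elements of $\ul 2=\ul1\sqcup\ul1$ attach, the right gaf of $a^\ot(\bar D)$ equals $\bar G_r\circhor((\ft\circhor\mu)\sqcup\Id_{\bar A})$, and $\hat v$ plays the role of $\bar v$ with preimage $\set{1,\bar 1}$, giving condition (iii). Orientation-reversal transfers because $G^+_l=\Id_{\ul 2}$: the 2-cell $f^+_u$ identifies the two half-edges of $G_{\tbeta,1}$ with the two half-edges of $(\ft\circhor\mu)\circhor(G_{\beta,1}\sqcup\ul 1)$ in the order compatible with the chosen orientations $h_{\tbeta}$ and $h_\beta$.

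Next I would construct $b$ on objects. Given $\check D\in Q_1^{\vee\vee}(f,m)^\ot$, condition (iv) provides the splitting $(\check f,\check m)\simeq\check{\ul 1}\sqcup(\check f^\vee,\check m^\vee)$, while condition (iii), orientation-reversal, and the isolated-component observation preceding Definition \ref{defn:Q1vee} yield an essentially unique factorisation $\check G_r=\tilde G_r\circhor((\ft\circhor\mu)\sqcup\Id_{\check A^\vee})$ obtained by deleting the isolated component of $\check v$ from $\check G_r$. I would set $b(\check D)\in Q_3(f,m)^\ot$ to be the object with intermediate $\dot\cL$-object $(\check f^\vee,\check m^\vee)$ and structure morphism given by the special diagram with left gaf $\check G_l$, right gaf $\tilde G_r$, and 2-cells restricted from $\check f_u,\check f_d$. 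The inverse relations $a^\ot\circ b\simeq\Id$ and $b\circ a^\ot\simeq\Id$ on objects follow directly from the uniqueness of this factorisation together with the tautological form of $a^\ot$.

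To extend $b$ to morphisms and conclude full faithfulness, I would combine Lemmas \ref{lem:QitodotcKfullyfaithful} and \ref{lem:QitodotcKessentiallysurjective} to reduce the morphism space comparison to a question about $\dot\cL$-morphisms, and then apply Lemma \ref{lem:Qispacesequivalence3} after establishing the key rigidity claim: any morphism $\check\eta\colon\ul 1\sqcup(\bar f,\bar m)\to\ul 1\sqcup(\check f,\check m)$ in $\dot\cL$ that underlies a morphism in $Q_1^{\vee\vee}(f,m)^\ot$ necessarily respects the $\check{\ul 1}$-splitting, hence has the form $\Id_{\ul 1}\sqcup\check\eta^\vee$ for a unique $\check\eta^\vee$ in $\dot\cL$. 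Verifying this rigidity will be the main obstacle: it should follow from a combinatorial analysis of the right-gaf part of the special diagrams witnessing $\check\eta$, exploiting that the distinguished single-vertex components identified by condition (iii) in source and target of $\check\eta$ cannot be connected to anything else in either gaf and must therefore be matched to each other under any compatible gaf morphism, but this requires tracking how gaf morphisms can permute or split isolated components.
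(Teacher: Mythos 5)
Your strategy of producing an explicit quasi-inverse $b$ is a sensible alternative to the paper's argument, and you correctly single out the crux of the problem. But the proposal leaves the central step unverified, and there are a few places where the plan would need repair.

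The object-level analysis is sound: you identify the essential image of $a^\ot$ with $Q_1^{\vee\vee}(f,m)^\ot$ exactly as the paper does (the paper treats this as immediate), and your candidate inverse $b$, built from the unique factorisation of $\check G_r$ through $(\ft\circhor\mu)\sqcup\Id_{\check A^\vee}$, is the right construction on objects. The difficulty is in the passage to morphisms, and this is where the proposal falls short in two ways. First, your ``rigidity claim'' is formulated about $\dot\cL$-morphisms, i.e. points of the quotient spaces $\dot\cL(\ul1\sqcup(\bar f,\bar m),\ul1\sqcup(\check f,\check m))$; saying such a morphism ``has the form $\Id_{\ul1}\sqcup\check\eta^\vee$'' is not quite the right formulation, because $\dot\cL$ is a localisation and the relevant information lives one level up, at the level of the $\Xi$-categories whose classifying spaces compute these morphism spaces (via Corollary \ref{cor:LDDhintmappingspaces}). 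The paper accordingly works not with $\dot\cL$-morphisms but with split subcategories $\cD_1\subseteq\Xi((f,m),\psi_3\sqcup(\check f,\check m))^\ot$ and $\cD_2\subseteq\Xi((f,m),\psi_1\sqcup\ul1\sqcup(\check f,\check m))^\ot$, the latter carved out by the condition $\rho_{d,r}^{-1}(\rho_{d,r}(1))=\set{1,\bar1}$, and shows that $(\zeta^+\sqcup\Id)\circ-$ restricts to an equivalence $\cD_1\xrightarrow{\simeq}\cD_2$. This is exactly the combinatorial fact you flag as needing verification, but it is not a statement that a $\dot\cL$-morphism decomposes as $\Id_{\ul1}\sqcup\check\eta^\vee$; in particular the automorphism space of $\ul1$ in $\dot\cL$ need not be a point.

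Second, your plan to ``extend $b$ to morphisms'' understates the difficulty: in the $\infone$-categorical setting, defining a candidate inverse on objects and then coherently on all higher cells is itself nontrivial work. The paper sidesteps this entirely by never constructing $b$: it proves the equivalence of morphism spaces directly, adapting the technique of Lemma \ref{lem:Qispacesequivalence3} (but note that Lemma \ref{lem:Qispacesequivalence3} itself cannot be invoked as stated, since it is about a functor $-\circ\zeta\colon Q_i(f,m)\to Q_i(f',m')$ with the same index $i$ on both sides, whereas $a^\ot$ goes from $Q_3$ to $Q_1$; the paper explicitly says only ``similarly as in Lemma \ref{lem:Qispacesequivalence3}'' and re-derives the needed commutative square). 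So while your proposal identifies the correct bottleneck, it does not resolve it, and the machinery you invoke to resolve it does not directly apply.
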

\begin{proof}
It is immediate to identify the essential image of $a^\ot$ with $Q_1^{\vee\vee}(f,m)^\ot$. To prove full faithfulness, let $\bar D,\check D\in Q_3(f,m)^\ot$; we want to prove that $a^\ot$ induces an equivalence of spaces $Q_3(f,m)(\bar D,\check D)\xrightarrow{\simeq} Q_1(f,m)(a^\ot(\bar D),a^\ot(\check D))$. Similarly as in Lemma \ref{lem:Qispacesequivalence3}, we identify this map of morphism spaces with the map between vertical fibres at $\check\zeta$ and $(\zeta^+\sqcup\Id_{(\check f,\check m)})\circ\check\zeta$ in the following commutative square: 
\[
\begin{tikzcd}[column sep=70pt]
{|\Xi((\bar f,\bar m),(\check f,\check m))|}\ar[r,equal]\ar[d,"(\psi_3\sqcup-)\circ\bar\zeta"] & {|\Xi((\bar f,\bar m),(\check f,\check m))|}\ar[d,"(\zeta^+\sqcup\Id_{(\check f,\check m)})\circ(\psi_1\sqcup\ul1\sqcup-)\circ\bar\zeta"]\\
{|\Xi((f,m),\psi_3\sqcup(\check f,\check m))|}\ar[r,"(\zeta^+\sqcup\Id_{(\check f,\check m)})\circ-"] & {|\Xi((f,m),\psi_1\sqcup\ul1\sqcup(\check f,\check m))|}.
\end{tikzcd}
\]
We conclude by observing that the functor
\[
(\zeta^+\sqcup\Id)\circ-\colon\Xi((f,m),\psi_3\sqcup(\check f,\check m))\to \Xi((f,m),\psi_1\sqcup\ul1\sqcup(\check f,\check m))
\]
restricts to an equivalence $\cD_1\xrightarrow{\simeq}\cD_2$, where $\check\zeta\in\cD_1:=\Xi((f,m),\psi_3\sqcup(\check f,\check m))^\ot$,
and where $(\zeta^+\sqcup\Id_{(\check f,\check m)})\circ\check\zeta\in\cD_2\subseteq\Xi((f,m),\psi_1\sqcup\ul1\sqcup(\check f,\check m))^\ot$ is the split subcategory spanned by objects $\zeta$ as in Notation \ref{nota:morphismdotcT} such that 
$\rho_{d,r}^{-1}(\rho_{d,r}(1))=\set{1,\bar1}$.
\end{proof}
\begin{lem}
\label{lem:Q1veeequivalences}
The inclusions of $\infone$-categories $Q_1^{\vee\vee}(f,m)^\ot\subseteq Q_1^\vee(f,m)^\ot\subseteq Q_i(f,m)^\ot$ induce equivalences on classifying spaces.
\end{lem}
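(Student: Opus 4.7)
The plan is to show that each of the two inclusions admits an adjoint functor, following closely the template of the proof of Lemma~\ref{lem:edgelessreduction}: since any adjunction of $\infone$-categories induces a homotopy equivalence on classifying spaces, this will suffice. The specific adjoints will be described by certain natural modifications of the special diagrams representing objects of $Q_1(f,m)^\ot$, analogous to the modifications $\bar\zeta^X$ and $\zeta_X$ employed in Lemma~\ref{lem:edgelessreduction}.

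For the outer inclusion $Q_1^{\vee}(f,m)^\ot \subseteq Q_1(f,m)^\ot$: given $\bar D \in Q_1(f,m)^\ot$, let $X := (\bar\rho_r^{-1}(\bar v) \cap \bar A) \subseteq \bar A$; a priori $X$ may have any cardinality. I would construct a candidate adjoint $\check D \in Q_1^\vee(f,m)^\ot$ by modifying $\bar G_r$ so that exactly one attaching vertex from the new $\bar A$ maps to $\bar v$. When $|X| \ge 2$, the natural move is to fold the extra attaching vertices into a single one by precomposing $\bar G_r$ with an appropriate number of copies of $\fI_\bGr(\mu)$ attached at $\bar v$; when $|X|=0$, one enlarges $\bar A$ by a fresh copy of $\ul1$ whose sole element maps to $\bar v$ under the modified $\bar\rho_r$, using again $\fI_\bGr(\mu)$ (now on the side of the extended $\bar A$) to realize this in a way compatible with composition. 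Either case yields a canonical morphism between $\bar D$ and $\check D$ in $Q_1(f,m)^\ot$, and these morphisms assemble into the unit (or counit) of the desired adjunction.

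For the inner inclusion $Q_1^{\vee\vee}(f,m)^\ot \subseteq Q_1^{\vee}(f,m)^\ot$: given $\bar D \in Q_1^{\vee}(f,m)^\ot$, I would produce $\check D$ by detaching the attaching vertex $\bar 1 \in \bar A$ from the rest of $(\bar f, \bar m)$. Concretely, one collapses the connected component of $\bar G_d$ containing $\bar 1$ onto the one-vertex gaf $\bar{\ul1}$; the leaf-like structure forces the induced modification of $\bar G_u$ to collapse the corresponding component likewise, so that the new $(\check f, \check m)$ splits as $\bar{\ul1} \sqcup (\check f^{\vee}, \check m^\vee)$ as required by (iv). The compensating modifications on $\bar G_l$ are obtained by postcomposing the collapsed components with the canonical maps to $\bar{\ul1}$ in $\Gaf$.

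The main obstacle lies in verifying the adjunction property, i.e.\ that for every $\tilde D$ in the codomain the natural map $-\circ\eta$ (resp.\ $\varepsilon\circ-$) on morphism spaces is an equivalence. This will be carried out by invoking Lemmas~\ref{lem:Qispacesequivalence1}, \ref{lem:Qispacesequivalence2} and \ref{lem:Qispacesequivalence3} with carefully chosen split subcategories $\cD,\cE_1,\cE_2$ of the various $\Xi$-categories, characterized by prescribing the location of preimages under $f_d$ of the attaching vertices involved in the modification (analogously to the subcategories ``$f_d^{-1}(X)\subseteq E_{d,l}$'' appearing in Lemma~\ref{lem:edgelessreduction}). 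The restricted functors $-\circ\zeta$ and $\zeta\circ-$ will be honest equivalences (often isomorphisms) of those split subcategories, hence induce equivalences on classifying spaces. The trickiest subcase will be the $|X|=0$ branch of the first adjoint construction, where genuinely new combinatorial data must be produced and the compatibility with the orientation-reversing constraint verified by hand.
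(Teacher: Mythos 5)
Your overall strategy coincides with the paper's: show that each inclusion admits an adjoint, building the adjoint candidates by modifying the special diagrams exactly as in Lemma~\ref{lem:edgelessreduction}, and verify the adjunction property via Lemmas~\ref{lem:Qispacesequivalence1}--\ref{lem:Qispacesequivalence3}. Two small miscalibrations in the details are worth flagging. For the outer inclusion, the paper does not split into cases on $|X|$: the single map of finite sets $C\to\check{\ul1}$ handles $|C|=0$, $|C|=1$ and $|C|\ge2$ uniformly (in particular, the $|X|=0$ branch requires no insertion of $\fI_\bGr(\mu)$; the relevant gaf is just $\fI_\bGr$ of the inclusion $\check A^\vee\hookrightarrow\check{\ul1}\sqcup\check A^\vee$). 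For the inner inclusion, by the observation preceding Definition~\ref{defn:Q1vee} the components of $\bar G_u$ and $\bar G_d$ containing $\bar 1$ already consist of a single isolated vertex, so there is no graph to collapse; the only modification is to the markings, which get absorbed into $\bar G_l$ via the fold $C\to\bar{\ul1}$ where $C=\bar\rho_d^{-1}(\bar 1)\subseteq\bar B$. With these corrections, the rest of your plan (picking split subcategories of the $\Xi$-categories by constraining where preimages of the relevant attaching vertices land, and invoking the $\Xi$-lemmas) is exactly what the paper does.
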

\begin{proof}
We prove that the inclusion $Q_1^{\vee\vee}(f,m)^\ot\subseteq Q_1^\vee(f,m)^\ot$ admits a left adjoint, and the inclusion $Q_1^\vee(f,m)^\ot\subseteq Q_1(f,m)^\ot$ admits a right adjoint. For the first claim, given $\bar D\in Q_1^\vee(f,m)^\ot$, we define a candidate left adjoint object $\check D\in Q_1^{\vee\vee}(f,m)^\ot$ with a morphism $\eta\colon\bar D\to\check D$ in $Q_1^\vee(f,m)^\ot$ as follows:
\begin{itemize}
\item we let $\bar1$ be the unique element in $\bar\rho_r^{-1}(\bar\rho_r(1))\cap\bar A$; we observe that the leaf-like assumption on $(f,m)$ implies that no edge in either $\bar G_u,\bar G_d$ can be attached to $\bar 1$; we may therefore split $(\bar f,\bar m)=\Id_{C\to\bar{\ul1}}\sqcup(\bar f^\vee,\bar m^\vee)$, where $C=\bar\rho_d^{-1}(\bar 1)\subseteq\bar B$; we have in fact $\bar B=C\sqcup\bar B^\vee$;
\item we let $(\check f,\check m)=\bar{\ul1}\sqcup(\bar f^\vee,\bar m^\vee)$ and we let $\zeta^C\colon(\bar f,\bar m)\to(\check f,\check m)$ be the morphism  in $\dot\cL$ given by a special diagram as in Notation \ref{nota:morphismdotcT} by letting $G^C_l$ be the map of finite sets $(C\to\bar{\ul1})\sqcup\Id_{\bar B^\vee}$, and by letting $G^C_r$, $f^C_u$ and $f^C_d$ be the evident identifications;
\item we let $\check\zeta\colon(f,m)\to(\check f,\check m)$ be the following composite in $\dot\cK$:
\[
(f,m)\xrightarrow{\bar\zeta}\psi_1\sqcup(\bar f,\bar m)\xrightarrow{\Id_{\psi_1}\sqcup\zeta^C}\psi_1\sqcup(\check f,\check m);
\]
\item by definition of $\check\zeta$ we have that $\eta:=\Id_{\psi_1}\sqcup\zeta^C$ upgrades to a morphism $\bar D\to\check D$ in $Q_1(f,m)$.
\end{itemize}
We need to check that for all $\tilde D\in Q_1^\vee(f,m)^\ot$, precomposition with $\eta$ induces an equivalence of spaces $Q_1(f,m)(\check D,\tilde D)\xrightarrow{\simeq} Q_1(f,m)(\bar D,\tilde D)$.
In order to apply Lemma \ref{lem:Qispacesequivalence1}, we let $\tilde\zeta\in\cD:=\Xi((f,m),\psi_1\sqcup(\tilde f,\tilde m))^\ot$.
The preimages of $\cD$ in
$\Xi((\check f,\check m),(\tilde f,\tilde m))$ and $\Xi((\bar f,\bar m),(\tilde f,\tilde m))$ are the split subcategories $\cE_1$ and $\cE_2$ spanned by objects $\zeta$ such that $\rho_{d,r}^{-1}(\rho_{d,r}(1))$ is the set $\set{1,\check 1}\subseteq\ul1\sqcup\check A$, respectively the set $\set{1,\bar 1}\subseteq\ul1\sqcup\bar A$; the functor $-\circ\zeta^C\colon\Xi((\check f,\check m),(\tilde f,\tilde m))\to\Xi((\bar f,\bar m),(\tilde f,\tilde m))$ restricts to an equivalence $\cE_1\xrightarrow{\simeq}\cE_2$, inducing an equivalence $|\cE_1|\xrightarrow{\simeq}|\cE_2|$.

For the second claim, given $\check D\in Q_1(f,m)^\ot$, we define a candidate right adjoint objects $\bar D\in Q_1^\vee(f,m)^\ot$ with a morphism $\varepsilon\colon\bar D\to\check D$ in $Q_1(f,m)^\ot$ as follows:
\begin{itemize}
\item we let $C=\check\rho_r^{-1}(\check\rho_r(1))\cap\check A\subseteq\check A$, and we split $\check A=C\sqcup\check A^\vee$; 
\item we factor $\check G_r=\bar G_r\circhor(\Id_{\ul1}\sqcup (C\to\check{\ul1})\sqcup \Id_{\check A^\vee})$ in the essentially unique way, where the second factor is a map of finite sets;
\item we let $(\bar f,\bar m)=((C\to\check{\ul1})\sqcup\Id_{\check A^\vee})\circhor(\check f,\check m)$, and we define $\bar\zeta\colon(f,m)\to\psi_1\sqcup(\bar f,\bar m)$ to be the morphism in $\dot\cK$ given by a special diagram as in Notation \ref{nota:morphismdotcT} by setting $\bar G_l=\check G_l$ and $(\bar f_\star,\bar m_\star)=(\check f_\star,\check m_\star)$ for $\star=u,d$, as well as by considering the gaf $\bar G_r$ constructed above;
\item we let $\zeta_C\colon(\bar f,\bar m)\to(\check f,\check m)$ be the morphism in $\dot\cL$ given by a special diagram as in Notation \ref{nota:morphismdotcT} by letting $G_{C,r}$ be the map of finite sets $(C\to\check{\ul1})\sqcup\Id_{\check A^\vee}$ and by letting $G_{C,l},f_{C,u},f_{C,d}$ be the evident identifications;
\item the definition of $\bar\zeta$ allows us to upgrade $\varepsilon:=\Id_{\psi_i}\sqcup\zeta_C$ to a morphism $\bar D\to\check D$ in $Q_i(f,m)$, since we have an evident equivalence $\varepsilon\circ\bar\zeta\simeq\check\zeta$.
\end{itemize}
We need to check that for all $\tilde D\in Q_1^\vee(f,m)^\ot$, postcomposition with $\varepsilon$ induces an equivalence $Q_1(f,m)(\tilde D,\bar D)\xrightarrow{\simeq} Q_1(f,m)(\tilde D,\check D)$. 
In order to apply Lemma \ref{lem:Qispacesequivalence2}
we let $\check\zeta\in\cD_2\subseteq\Xi((f,m),\psi_1\sqcup(\check f,\check m))^\ot$ denote the split subcategory spanned by objects $\zeta$ as in Notation \ref{nota:morphismdotcT} such that 
$\rho_{d,r}^{-1}(\rho_{d,r}(1))=\ul1\sqcup C\subseteq\ul1\sqcup\check A$;
and we let $\bar\zeta\in\cD_1\subseteq \Xi((f,m),\psi_1\sqcup(\bar f,\bar m))^\ot$ denote the split subcategory spanned by objects $\zeta$ such that 
$\rho_{d,r}^{-1}(\rho_{d,r}(1))=\set{1,\bar 1}\subseteq\ul1\sqcup\bar A$. Then $\varepsilon\circ-$ restricts to an equivalence $\cD_1\xrightarrow{\simeq}\cD_2$, inducing and equivalence $|\cD_1|\xrightarrow{\simeq}|\cD_2|$.

Moreover, the preimage $\cE_1$ of $\cD_1$ in $\Xi((\tilde f,\tilde m),(\bar f,\bar m))$ is the split subcategory spanned by objects $\zeta$ such that $G_r$ splits as a disjoint union of the map of finite sets $(\bar{\ul1}\xrightarrow{\simeq}\tilde{\ul1})$ and another gaf; and the preimage $\cE_2$ of $\cD_2$ in $\Xi((\tilde f,\tilde m),(\check f,\check m))$ is the split subcategory spanned by objects $\zeta$ such that $G_r$ splits as a disjoint union of the map of finite sets $(C\to\tilde{\ul1})$ and another gaf; we have that $\zeta_C\circ-$ restricts to an equivalence $\cE_1\xrightarrow{\simeq}\cE_2$, inducing and equivalence $|\cE_1|\xrightarrow{\simeq}|\cE_2|$.
\end{proof}
\begin{proof}[Proof of  Proposition \ref{prop:finalthmA}(2)]
We reduce to the setting in which $G_d$ consists of a single vertex using Corollary \ref{cor:singlevertexGd}; we then apply
Lemmas \ref{lem:aotfullyfaithful} and \ref{lem:Q1veeequivalences}.
\end{proof}

\subsection{The spine reduction}
The goal of this subsection and the next is to prove Proposition \ref{prop:finalthmA}(1). We fix $\cO$ and $(f,m)$ as in Notation \ref{nota:cOfm} throughout the next two subsections, with $(f,m)$ leaf-like and with $G_d$ consisting of a single vertex $x$. We distinguish two cases: the ``attaching case'' is the one in which $A=\set{x}$ and $V_d=\emptyset$; the ``inner case'' is the one in which $V_d=\set{x}$ and $A=\emptyset$. We let $v_\ell\in V_u$, $h_\ell\in H_u$ and $e_\ell\in E_u$ be as in Definition \ref{defn:leaf}. The key result of this subsection is that we can reduce to the case in which $G_u$ is a linear graph, i.e. a connected graph having precisely two vertices of valence 1 (the ``endpoints'') and all other vertices of valence 2.
More precisely, in the inner case we will reduce to the case in which $G_u$ is linear with unique edge $e_\ell$; in the attaching case we will reduce to the case in which $G_u$ is linear and its endpoints are $x\in A$ and $v_\ell$. Along the way we will replace $Q_1(f,m)^\to$ by a sequence of $\infone$-subcategories having equivalent classifying spaces.

The first step is 
to replace the categories $\Xi((\bar f,\bar m),(\check f,\check m))$, for varying $(\bar f,\bar m)$ and $(\check f,\check m)$ in $\dot\cK$, with smaller subcategories having equivalent classifying space.

\begin{rem}
\label{rem:Gronlytrees}
Let $\bar D\in Q_1(f,m)^\to$ as in Notation \ref{nota:Qiobject}; then each edge in the gaf $\bar G_l\circhor\bar G_d\circhor\bar G_r$ is contracted along $\bar f_d$, since $E_d=\emptyset$. Hence $\bar G_l\circhor\bar G_d\circhor\bar G_r$, and a fortiori its subgaf $\bar G_r$, are disjoint unions of (based and unbased) trees.
\end{rem}

\begin{defn}
\label{defn:Xi**}
Let $\bar\cO,\check\cO\in\dot\cT$ be as in Notation \ref{nota:cOfm}. We denote by $\Xi^*(\bar\cO,\check\cO)$ the full subcategory of $\Xi(\bar\cO,\check\cO)$ spanned by objects $\zeta$ as in Notation \ref{nota:morphismdotcT} such that every edge of $G_{d,r}\circhor\check G_d\circhor G_{d,l}$ coming from $G_{d,r}$, as well as every leaf coming from $G_{d,l}$, is \emph{not} contracted along $f_d$.

We further denote by $\Xi^{**}(\bar\cO,\check\cO)\subseteq\Xi^*(\bar\cO,\check\cO)$ the full subcategory spanned by objects $\zeta$ such that, in addition, $(f_r,m_r)$ is an equivalence, and no leaf in $G_{u,l}$ is contracted along $f_{d,l}$. We will in this case simplify the notation of the data associated with $\zeta$ and abbreviate $G_r:=G_{u,r}\simeq G_{d,r}$.

For $(\bar f,\bar m),(\check f,\check m)$ in $\dot\cL$ or in $\dot\cK$ we define similarly the full subcategories
\[
\Xi^{**}((\bar f,\bar m),(\check f,\check m))\subseteq\Xi^*((\bar f,\bar m),(\check f,\check m))\subseteq\Xi((\bar f,\bar m),(\check f,\check m)).
\]
Whenever we have a split subcategory inclusion $\cD\subseteq\Xi((\bar f,\bar m),(\check f,\check m))$, we denote by $\cD^{**}$ the intersection $\cD\cap\Xi^{**}((\bar f,\bar m),(\check f,\check m))$.
\end{defn}
\begin{lem}
\label{lem:Xi*reduction}
Let $(\bar f,\bar m),(\check f,\check m)$ be objects in $\dot\cL$ or in $\dot\cK$. Then the inclusion $\Xi^{**}((\bar f,\bar m),(\check f,\check m))\subseteq\Xi^*((\bar f,\bar m),(\check f,\check m))$ admits a left adjoint, and the inclusion $\Xi^*((\bar f,\bar m),(\check f,\check m))\subseteq\Xi((\bar f,\bar m),(\check f,\check m))$ admits a right adjoint.
\end{lem}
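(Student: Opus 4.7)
The plan is to construct both adjoints by explicitly moving the ``bad'' portions of the special diagram representing $\zeta\in\Xi((\bar f,\bar m),(\check f,\check m))$ between the columns, with the unit/counit given by the resulting structural 2-morphisms in the relevant $\Tw$-coordinates. For $\zeta$ presented as in Notation~\ref{nota:morphismdotcT}, I identify three potentially obstructing subsets: $X_r\subseteq E_{d,r}$, the edges of $G_{d,r}$ contracted along $f_d$; $L_l\subseteq E_{d,l}$, the leaves of $G_{d,l}$ contracted along $f_d$; and $L_u\subseteq E_{u,l}$, the leaves of $G_{u,l}$ contracted along $f_l$. The obstructions to $\zeta\in\Xi^*$ are exactly $X_r$ and $L_l$; if $\zeta\in\Xi^*$, the obstructions to $\zeta\in\Xi^{**}$ are the non-triviality of $f_r$ and the non-emptiness of $L_u$.

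For the right adjoint $r\colon\Xi\to\Xi^*$ to the inclusion, I would define $r(\zeta)$ by contracting the edges $X_r$ inside $G_{d,r}$ to obtain a gaf $G_{d,r}^*$, and by contracting the length-one sub-trees (edge plus degree-one endpoint) associated with $L_l$ inside $G_{d,l}$ to obtain $G_{d,l}^*$. The data $G_{u,r}$, $G_{u,l}$, $f_u$ are kept unchanged; $f_r$ and $f_l$ are replaced by their post-compositions with the respective contractions; and $f_d$ factors uniquely through the composite contraction to give $f_d^*$, by the very definition of $X_r$ and $L_l$. The counit $r(\zeta)\to\zeta$ in $\Xi$ is $(\Id,\text{contraction})$ in each of the two $\Tw$-coordinates, equipped with the evident compatibility with $f_u$ and $f_d$. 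Universality of the counit is the main content: given $\zeta'\in\Xi^*$ together with a morphism $\zeta'\to\zeta$ in $\Xi$, whose right $\Tw$-coordinate is a pair $(u_r\colon G'_{u,r}\to G_{u,r},\, v_r\colon G_{d,r}\to G'_{d,r})$ satisfying $f_d = f'_d\circ(v_r\circhor\Id\circhor v_l)$, every edge of $X_r$ is contracted by $f_d$; since $\zeta'\in\Xi^*$ forbids any edge of $G'_{d,r}$ being contracted by $f'_d$, the set $X_r$ must be contracted by $v_r$, so $v_r$ factors uniquely through $G_{d,r}\to G_{d,r}^*$. The symmetric argument handles $L_l$ on the left, giving the required unique factorization through the counit.

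For the left adjoint $\ell\colon\Xi^*\to\Xi^{**}$ to the inclusion, I would define $\ell(\zeta)$ by setting $G_r^{**}:=G_{d,r}$ so that the new $f_r^{**}$ is an identity, and by contracting the leaves $L_u$ inside $G_{u,l}$ to obtain $G_{u,l}^{**}$, while keeping $G_{d,l}$; the upper 2-morphism is replaced by $f_u^{**}:=(f_r\circhor\Id_{\check G_u}\circhor\pi_{L_u})\circ f_u$, where $\pi_{L_u}\colon G_{u,l}\to G_{u,l}^{**}$ is the leaf-contraction, and $f_l^{**}$ is obtained by factoring $f_l$ through $\pi_{L_u}$. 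The unit $\zeta\to\ell(\zeta)$ is $(f_r,\Id)$ in the right $\Tw$-coordinate and $(\pi_{L_u},\Id)$ in the left. Universality is symmetric to the right-adjoint case: for any morphism $\zeta\to\zeta''$ with $\zeta''\in\Xi^{**}$, the fact that $f''_r$ is an equivalence forces its $u_r$-component to absorb $f_r$, and the absence of $f''_l$-contracted leaves in $G''_{u,l}$ forces the $u_l$-component to factor through $\pi_{L_u}$. The combinatorial crux throughout is the unique factorization of a 2-morphism in $\bGr$ that contracts a prescribed set of edges (or leaves) through the intermediate contraction of exactly those edges; this follows from the requirement in Definition~\ref{defn:morgaf} that preimages of half-edges under gaf morphisms be singletons. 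I expect the main obstacle to be the bookkeeping of the many compatibility conditions among $f_u,f_d,f_l,f_r$ in $\Xi$ and the verification that the proposed constructions respect these throughout both the unit/counit and the factorizations.
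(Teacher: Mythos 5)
Your plan is the same as the paper's: both adjoints are constructed by redistributing the offending portions of the special diagram between the $u$- and $d$-columns of the annulus, with the unit/counit living in the appropriate $\Tw$-coordinate and universality secured by the unique factorization of a gaf morphism through a prescribed edge-collapse.

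The one concrete problem is that you collapse the obstruction sets $L_l$ and $L_u$ in a single pass. That is not sufficient: contracting a leaf at a degree-one inner vertex $v_0$ lowers by one the valence of its other endpoint $v_1$; if $v_1$ had valence two, the surviving edge at $v_1$ becomes a new leaf after the collapse, and if that edge was also contracted along $f_d$ (resp.\ $f_l$) then after your one-step contraction there is still a leaf violating the $\Xi^*$- (resp.\ $\Xi^{**}$-) condition, so $r(\zeta)$ (resp.\ $\ell(\zeta)$) does not lie in the intended subcategory. The construction must instead \emph{iterate} the leaf-collapse until no leaf of the intermediate gaf is contracted by the factored $2$-morphism; this terminates since the number of edges strictly decreases. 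The paper's proof does exactly this, defining the new column gaf as the one obtained ``by iteratively collapsing all leaves that are collapsed along $f_l$'' (and similarly along $f'_d$ in the other adjoint). With that modification, the rest of your argument --- the identification of the unit/counit with the resulting $\Tw$-coordinate morphism and the universality checks via unique factorization --- goes through as you sketch it.
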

\begin{proof}
We omit ``$((\bar f,\bar m),(\check f,\check m))$'' from the notation throughout the proof.
For the first claim, given an object $\zeta\in\Xi^*$, we define a candidate left adjoint object $\zeta'\in\Xi^{**}$ together with a morphism $\eta\colon\zeta\to\zeta'$ as follows. We define both $G'_{u,r}$ and $G'_{d,r}$ as $G_{d,r}\in\cC(\check A,\bar A)$, we let 
$G'_{d,l}:= G_{d,l}\in\cC(\bar B,\check B)$, and we let
$G'_{u,l}\in\cC(\bar B,\check B)$
be the gaf obtained from $G_{u,l}$ by iteratively collapsing all leaves that are collapsed along $f_l$. We have essentially unique factorisations
\[
(f_\star,m_\star)\colon G_{u,\star}\xrightarrow{(f^*_\star,m^*_\star)}G'_{u,\star}\xrightarrow{(f'_\star,m'_\star)} G'_{d,\star}=G_{d,\star}
\]
for $\star=l,r$, such that $f_r^*$ is the identity of $G_{d,r}$, and $f_l^*$ is the aforementioned leaf-collapsing map of gaf's. We further set $(f'_d,m'_d):=(f_d, m_d)$ and $(f'_u,m'_u):=((f^*_r,m^*_r)\circhor \check G_u\circhor(f^*_l,m^*_l))\circ(f_u,m_u)$. We let $\eta$ be the morphism induced by the morphisms $(f^*_\star,m^*_\star)$ for $\star=l,r$, in the light of the above factorisations of $(f_\star,m_\star)$ and $(f'_u,m'_u)$. It is immediate that for any $\zeta''\in\Xi^{**}$ we have a bijection of sets
$-\circ\eta\colon \Xi(\zeta',\zeta'')\xrightarrow{\simeq}\Xi(\zeta,\zeta'')$.

For the second claim, given $\zeta'\in\Xi$, we define $\zeta\in\Xi^*$ and $\varepsilon\colon\zeta\to\zeta'$ as follows: we let $G_{d,r}$ be the gaf obtained from $G'_{d,r}$ by collapsing all edges that are collapsed along $f'_d$, and we let $G_{d,l}$ be the gaf obtained from $G'_{d,l}$ by iteratively collapsing all leaves that are collapsed along $f'_d$; there are then essentially unique 2-morphisms $(f^*_\star,m^*_\star)\colon G'_{d,\star}\to G_{d,\star}$ and $(f_d,m_d)\colon G_{d,r}\circhor\check G_d\circhor G_{d,l}\to\bar G_d$ in $\cC$ fitting into a factorisation of 2-morphisms $(f'_d,m'_d)=(f_d,m_d)\circ((f^*_r,m^*_r)\circhor G'_d\circhor(f^*_l,m^*_l))$, such that $f^*_\star$ is the edge-collapsing or iterated leaf-collapsing map described above.
We further set $G_{u,\star}:= G'_{u,\star}$, $(f_\star,m_\star):=(f^*_\star,m^*_\star)\circ(f'_\star,m'_\star)$ for $\star=l,r$, and $(f_u,m_u)=(f'_u,m'_u)$. We let $\varepsilon$ be the morphism induced by $(f^*_\star,m^*_\star)$ in the light of the above factorisations. It is immediate that for any $\zeta''\in\Xi^*$ we have a bijection of sets $\varepsilon\circ-\colon \Xi(\zeta'',\zeta)\xrightarrow{\simeq}\Xi(\zeta'',\zeta')$.
\end{proof}
\begin{nota}
\label{nota:Q1object*}
Thanks to Lemma \ref{lem:Xi*reduction}, we will be able henceforth to represent any object in $Q_1(f,m)^\to$, up to equivalence, by a special diagram $\bar D$ as in Notation \ref{nota:Qiobject}, such that $\bar\zeta\in\Xi^{**}((f,m),(\bar f,\bar m))$. We denote by $\bar x:=\bar\rho_r(1)\in A\sqcup\bar V_r$ the image of the unique element $1\in\ul1\subseteq\ul1\sqcup\bar A$.
\end{nota}
\begin{rem}
\label{rem:uniquebijection}
Let $\bar D\in Q_1(f,m)^\to$ as in Notation \ref{nota:Q1object*}; then Remark \ref{rem:Gronlytrees} implies that $\bar G_r$ is a gaf without edges. We further observe that the image of $\bar\rho_r\colon\ul1\sqcup\bar A\to A\sqcup\bar V_r$ contains all of $\bar V_r$: if we had an element $v\in \bar V_r$ which is not in the image of $\bar\rho_r$, then $v\neq\bar x$ and $v$ would constitute on its own a component in $\bar G_r\circhor\bar G_u\circhor\bar G_l$; however the latter gaf must be connected, since it is a quotient of the tree $G_u$. 
\end{rem}
\begin{defn}
We denote by $Q_1^\dagger(f,m)^\to\subseteq Q_1(f,m)^\to$ the full $\infone$-subcatego\-ry spanned by objects $\bar D$ as in Notation \ref{nota:Q1object*} satisfying the following properties:
\begin{itemize}
\item[(v)] $\bar G_d$ has no leaves; moreover $A\sqcup \bar V_r=A\cup\set{\bar x}$, i.e., each vertex in $\bar G_r$ is an attaching vertex or is equal to $\bar x$ (or both).
\end{itemize}
We denote by $Q_1^{\dagger\dagger}(f,m)^\to\subseteq Q_1^\dagger(f,m)^\to$ the full $\infone$-subcategory spanned by objects $\bar D$ satisfying the following additional requirement:
\begin{itemize}
\item[(vi)] $\bar G_u$ has no leaves; moreover $\bar G_u$ and $\bar G_d$ have no component consisting of a single inner vertex.
\end{itemize}
\end{defn}

\begin{lem}
\label{lem:Q1daggerequivalence}
The inclusions $Q_1^{\dagger\dagger}(f,m)^\to\subseteq Q_1^\dagger(f,m)^\to\subseteq Q_1(f,m)^\to$ induce equivalences on classifying spaces.
\end{lem}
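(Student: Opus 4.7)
The plan is to mimic the adjoint-construction strategy used in Lemmas \ref{lem:edgelessreduction} and \ref{lem:Q1veeequivalences}: I would show that the inclusion $Q_1^{\dagger\dagger}(f,m)^\to \subseteq Q_1^\dagger(f,m)^\to$ admits a left adjoint and that the inclusion $Q_1^\dagger(f,m)^\to \subseteq Q_1(f,m)^\to$ admits a right adjoint; this immediately implies that both inclusions induce equivalences on classifying spaces.

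For the left adjoint, given $\bar D \in Q_1^\dagger(f,m)^\to$ as in Notation \ref{nota:Q1object*}, I would construct $\check D \in Q_1^{\dagger\dagger}$ together with a unit morphism $\eta \colon \bar D \to \check D$ by letting $(\check f, \check m)$ be obtained from $(\bar f, \bar m)$ via the iterative collapse of all leaves of $\bar G_u$ (those violating the first clause of (vi)) and the deletion of the isolated-inner-vertex components of $\bar G_u$ and of $\bar G_d$ (violating the second clause of (vi)). These are tree-collapses and component-deletions of gaf's, so they assemble into a morphism $\zeta \colon (\bar f, \bar m) \to (\check f, \check m)$ in $\dot\cL$, which determines $\eta$. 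Universality is then verified via Lemma \ref{lem:Qispacesequivalence1}, taking $\cD \subseteq \Xi((f,m), \psi_1 \sqcup (\tilde f, \tilde m))$ to be the split subcategory of those $\zeta$ whose $f_d$ does not collapse any leaf coming from $G_{u,l}$ and whose $f_u$ does not land on an isolated-inner-vertex component of $G_{u,l}$ or $G_{u,r}$.

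For the right adjoint, given $\check D \in Q_1(f,m)^\to$, I would extract the set $X \subseteq \check E_d$ of leaves of $\check G_d$ together with the set $Y \subseteq \check V_r$ of inner vertices of $\check G_r$ distinct from $\check x$ that are not attaching; by Remark \ref{rem:uniquebijection}, each $y \in Y$ lies in the image of $\check\rho_r|_{\check A}$. I would factor $\check G_r = \bar G_r \circhor G_Y$, where $G_Y$ detaches from $Y$ the half-edges of $\check G_r$ adjacent to $Y$ and turns the relevant fibres of $\check\rho_r$ into new attaching vertices of $\bar A$; analogously, I would factor $\check G_d = \bar G_d \circhor G_X^\ft$ by pushing each leaf of $X$ into a new ``cap'' attached to additional markings. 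This produces $\bar D \in Q_1^\dagger$ together with an $\dot\cL$-morphism $\zeta_{X,Y} \colon (\bar f, \bar m) \to (\check f, \check m)$ underlying the counit $\varepsilon$, and universality is verified via Lemma \ref{lem:Qispacesequivalence2} after taking the split subcategories of $\Xi((f,m), \psi_1 \sqcup (\bar f, \bar m))$ and of $\Xi((\tilde f, \tilde m), (\bar f, \bar m))$ whose underlying special diagrams admit matching factorisations through $G_Y$ and $G_X^\ft$.

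The main obstacle will be the bookkeeping of orientation-preservation and the combinatorics of the factorisations needed for the right adjoint: one must verify that the modified $\bar G_r$, $\bar G_d$ are genuinely compatible with the structural morphism $\bar \zeta \in \dot\cK_{(f,m)/}$ (in particular that $\bar D$ still lies in the $\to$-subcategory, since $\bar x$ is preserved by construction), and that in applying Lemmas \ref{lem:Qispacesequivalence1} and \ref{lem:Qispacesequivalence2} the required restrictions $-\circ \zeta \colon \cE_1 \to \cE_2$ and $(\mathrm{Id}_{\psi_1} \sqcup \zeta) \circ - \colon \cD_1 \to \cD_2$ are not merely functors but equivalences of categories, which should follow by direct inspection as in the analogous arguments of Lemmas \ref{lem:edgelessreduction} and \ref{lem:Q1veeequivalences}.
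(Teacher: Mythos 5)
Your high‑level strategy (show the first inclusion has a left adjoint and the second a right adjoint) matches the paper, and identifying which inclusion gets which adjoint is correct. However, the actual adjoint constructions as you describe them would not go through, and the verification step is genuinely non‑trivial and not worked out.

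For the left adjoint $Q_1^\dagger \to Q_1^{\dagger\dagger}$, you propose ``deletion of the isolated‑inner‑vertex components'' of $\bar G_u$ and $\bar G_d$. Deletion of components is not an allowable operation in $\Gaf$: morphisms of gaf's collapse trees but never delete them. What the paper does is first collapse all leaves of $\bar G_u$ (obtaining a 2-morphism $G^{\dagger\dagger} \to \bar G_d$, using that $\bar G_d$ has no leaves by (v)), and \emph{then} horizontally factorise through a gaf $G^\dagger_l$ without edges on the $B$-side, so that isolated inner vertices of both $G^{\dagger\dagger}$ and $\bar G_d$ become markings/attaching vertices of a new intermediate finite set $\check B$. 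This two-step factorisation is essential; your ``deletion'' is not a substitute. Moreover, the split subcategory $\cD$ you propose (conditions on $G_{u,l}$, $G_{u,r}$) does not match what the paper needs; the paper's $\cD$ is a condition on $G_{d,r}$ (at most one unbased-tree component, containing $\rho_{d,r}(1)$), precisely calibrated so that the preimages $\cE_1, \cE_2$ have an identifiable $\Xi^{**}$-intersection, and the equivalence $|\cE_1| \simeq |\cE_2|$ is deduced via Lemma \ref{lem:Xi*reduction} rather than by direct inspection.

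For the right adjoint $Q_1 \to Q_1^\dagger$, the factorisation $\check G_d = \bar G_d \circhor G_X^\ft$ cannot work. The leaves of $\check G_d$ in condition (v) are edges attached to a valence-1 \emph{inner} vertex of $\check G_d$; they are not located on the $\check B$-side, so there is no gaf $G_X^\ft\colon\check B\to\check B'$ through which they factor, and a horizontal factorisation of $\check G_d$ would in any case change the intermediate finite set in a way that $\check G_u$ does not track. Also note that in the relevant regime (Notation \ref{nota:Q1object*} plus Remarks \ref{rem:Gronlytrees} and \ref{rem:uniquebijection}) $\check G_r$ has no edges, so there are no ``half-edges adjacent to $Y$'' to detach. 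The correct construction factors $\check G_r = \bar G_r \circhor (\Id_{\ul1}\sqcup G_r^\dagger)$ with $G_r^\dagger$ a map of finite sets (so $Y$ becomes attaching, as you guessed), but then obtains $\bar G_d$ by \emph{iteratively collapsing leaves} of $G_r^\dagger \circhor \check G_d$ — a vertical operation, not a horizontal factorisation — together with the accompanying factorisation of $(\check f_d,\check m_d)$ through this leaf-collapse. The verification of universality again hinges on carefully chosen split subcategories $\cD_1,\cD_2,\cE_1,\cE_2$ characterised by conditions on unbased-tree components of $G_{d,r}$, intersected with $\Xi^{**}$ and compared via Lemma \ref{lem:Xi*reduction}; these are not ``matching factorisations'' in the loose sense you describe.
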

\begin{proof}
We will prove that the first inclusion admits a left adjoint, and the second a right adjoint. 
For the first claim, given $\bar D\in Q^\dagger_1(f,m)^\to$ as in Notation \ref{nota:Q1object*}, we define $\check D\in Q_1^{\dagger\dagger}(f,m)^\to$ together with a morphism $\eta\colon\bar D\to\check D$ as follows:
\begin{itemize}
\item we let $G^{\dagger\dagger}\in\cC(\bar B,\bar A)$ denote the gaf obtained from $\bar G_u$ by iteratively collapsing leaves; since $\bar G_d$ has no leaves, we have a factorisation in $\cC(\bar B,\bar A)$
\[
(\bar f,\bar m)\colon \bar G_u\xrightarrow{(f^\dagger_u,m^\dagger_u)}G^{\dagger\dagger}\xrightarrow{(f^{\dagger\dagger},m^{\dagger\dagger})}\bar G_d;
\]
\item we have an essentially unique horizontal factorisation of $(f^{\dagger\dagger},m^{\dagger\dagger})$ as
\[
\begin{tikzcd}[column sep=90pt]
\bar B\ar[r,"G^\dagger_l"]&\check B\ar[r,bend left=25,"\check G_u"'{name=U}]\ar[r,bend right=25,"\check G_d"{name=D}]&\check A:=\bar A,
\ar[from=U, to=D, Rightarrow,"{(\check f,\check m)}"near start]
\end{tikzcd}
\]
where $\check G_u,\check G_d$ have no components consisting of a single inner vertex, $G^\dagger_l$ has no edge, and the restricted map $\rho_l^\dagger\colon(\rho_l^\dagger)^{-1}(\check B)\xrightarrow{\simeq}\check B$ is bijective; this restricts to horizontal factorisations $G^{\dagger\dagger}= \check G_u\circhor G^\dagger_l$ and $\bar G_d=\check G_d\circhor G^\dagger_l$;
\item we let $G^\dagger_r$ be the identity gaf of $\check A:=\bar A$, and we let
$(f^\dagger_d,m^\dagger_d)\colon \bar G_d\xrightarrow{\simeq}G^\dagger_r\circhor\check G_d\circhor G^\dagger_l $ be the above identification;
\item the above data define a morphism $\zeta^\dagger\colon(\bar f,\bar m)\to(\check f,\check m)$ in $\dot\cL$; we let $\check\zeta:=(\Id_{\psi_1}\sqcup\zeta^\dagger)\circ\bar\zeta$;
\item the above data define $\check D$, represented by a diagram as in Notation \ref{nota:Q1object*}; by definition of $\check\zeta$, the morphism $\eta:=\Id_{\psi_1}\sqcup\zeta^\dagger$ upgrades to a morphism $\bar D\to\check D$ in $Q_1(f,m)$.
\end{itemize}
Given $\tilde D\in Q^{\dagger\dagger}_1(f,m)^\to$, we argue that $-\circ\eta\colon Q_1(f,m)(\check D,\tilde D)\xrightarrow{\simeq}Q_1(f,m)(\bar D,\tilde D)$ is an equivalence by checking that the hypotheses of 
Lemma \ref{lem:Qispacesequivalence1} are fulfilled.
Let $\tilde\zeta\in\cD\subseteq\Xi((f,m),\psi_1\sqcup(\tilde f,\tilde m))$ denote the split subcategory comprising objects $\zeta$ as in Notation \ref{nota:morphismdotcT} such that the following holds: at most one component of $G_{d,r}$ is an unbased tree, and if such component exists, then it contains the element $\rho_{d,r}(1)$, where $1\in\ul1\subseteq\ul1\sqcup\tilde A$.
Let moreover $\cE_1\subseteq\Xi((\check f,\check m),(\tilde f,\tilde m))$ and $\cE_2\subseteq\Xi((\bar f,\bar m),(\tilde f,\tilde m))$ denote the split subcategories spanned by objects $\zeta$ such that, in both cases, the following holds: no component of $G_{d,r}$ is an unbased tree. Then we have $((\psi_1\sqcup-)\circ\check\zeta)^{-1}(\cD)\subseteq\cE_1$: indeed, if $\zeta\in\Xi((\check f,\check m),(\tilde f,\tilde m))$ and if we know that $\check G_r\circhor(\ul1\sqcup G_{d,r})$ has at most one unbased tree marked by $\ul1$, then $G_{d,r}$ cannot contain any unbased trees. Similarly, $((\psi_1\sqcup-)\circ\bar\zeta)^{-1}(\cD)\subseteq\cE_2$.

We observe that the functor 
$-\circ\zeta^\dagger \colon\Xi((\check f,\check m),(\tilde f,\tilde m))\to\Xi((\bar f,\bar m),(\tilde f,\tilde m))$
restricts to a functor $\Xi^{**}((\check f,\check m),(\tilde f,\tilde m))\to\Xi^{**}((\bar f,\bar m),(\tilde f,\tilde m))$, as a consequence of the fact that $(f_d^\dagger,m_d^\dagger)$ is an equivalence.
Moreover $\cE^{**}_2\subseteq\Xi^{**}((\bar f,\bar m),(\tilde f,\tilde m))$ may be characterised as the split subcategory spanned by objects $\zeta$ as in Notation \ref{nota:morphismdotcT} such that $G_r$ contains no isolated inner vertices. We observe the following for $\zeta\in\cE^{**}_2$.
\begin{itemize}
\item Since $\tilde G_u$ contains no isolated inner vertex and no leaf, we have that any leaf in the gaf $G_r\circhor\tilde G_u\circhor G_{u,l}$ must come from a leaf in $G_{u,l}$; however, since $\zeta\in \Xi^{**}((\check f,\check m),(\tilde f,\tilde m))$, we have that also $G_{u,l}$ has no leaves and we conclude that $G_r\circhor\tilde G_u\circhor G_{u,l}$ has no leaves as well. It follows that for $\zeta\in \cE^{**}_2$ we have that $f_u\colon \bar G_u\to G_r\circhor\tilde G_u\circhor G_{u,l}$ must iteratively collapse all leaves of $\bar G_u$, i.e. $f_u$ factors through $f^\dagger_u$.
\item Recall that the set of isolated vertices of $\bar G_d$ is in bijection with $V_l^\dagger$. For an isolated inner vertex $v$ of $\bar G_d$, the unbased tree $f_d^{-1}(v)$, which is a subtree of $G_r\circhor\tilde G_d\circhor G_{d,l}$, cannot contain any inner vertices coming from $G_r$, because the latter has no leaves and no isolated inner vertices. It follows that $f_d^{-1}(v)$ can be regarded as an unbased subtree of $\tilde G_d\circhor G_{d,l}$; again, however, no inner vertices can come from $\tilde G_d$. We conclude that $f_d^{-1}(v)$ is an unbased subtree of $G_{d,l}$, and since the latter has no leaves, we have that $f_d^{-1}(v)$ consists of a single inner vertex coming from $G_{d,l}$. Moreover, every isolated inner vertex in $G_{d,l}$ has preimage along $f_l$ consisting again of an isolated inner vertex, as $G_{u,l}$ has no leaves. It follows that the 2-morphism $(f_l,m_l)$ canonically factors as the horizontal composition of $G_l^\dagger$ and another 2-morphism. 
\end{itemize}
The previous remarks show that the restricted functor $-\circ\zeta^\dagger\colon\cE^{**}_1\to\cE^{**}_2$ is an equivalence of categories; we have therefore $-\circ\zeta^\dagger\colon|\cE^{**}_1|\xrightarrow{\simeq}|\cE^{**}_2|$ and hence, by Lemma \ref{lem:Xi*reduction}, we also have $-\circ\zeta^\dagger\colon|\cE_1|\xrightarrow{\simeq}|\cE_2|$.

For the second claim, given $\check D\in Q_1(f,m)^\to$ as in Notation \ref{nota:Q1object*}, we define $\bar D\in Q_1^{\dagger}(f,m)^\to$ together with a morphism $\varepsilon\colon\bar D\to\check D$ as follows:
\begin{itemize}
\item we factor $\check G_r$ in the essentially unique way as a horizontal composition of gaf's without edges $\ul1\sqcup\check A\xrightarrow{\ul1\sqcup G_r^\dagger}\ul1\sqcup\bar A\xrightarrow{\bar G_r}A$, where $\bar V_r=\bar V_r\cap\set{\bar\rho_r(1)}$, and the restriction $\rho^\dagger_r\colon(\rho^\dagger_r)^{-1}(\bar A)\xrightarrow{\simeq}\bar A$ is a bijection;
\item we let $G^\dagger_l$ be the identity gaf of $\bar B:=\check B$, and we let $\bar G_l:=\check G_l$;
\item we let $\bar G_d\in\cC(\bar B,\bar A)$ denote the gaf obtained from $G_r^\dagger\circhor\check G_d$ by iteratively collapsing all leaves, and we let $\bar G_u= G_r^\dagger\circhor \check G_u$;
\item we factor $(\check f_d,\check m_d)=(\bar f_d,\bar m_d)\circ(\bar G_r\circhor(\ul1\sqcup (f^\dagger_d,m^\dagger_d))\circhor\bar G_l)$ in the essentially unique way for which $f^\dagger_d\colon G_r^\dagger\circhor \check G_d\circhor G^\dagger_l\simeq  G_r^\dagger\circhor \check G_d\to\bar G_d$ is the above leaf-collapse map;
\item we let $(f^\dagger_u,m^\dagger_u)\colon \bar G_u\xrightarrow{\simeq}G_r^\dagger\circhor \check G_u\circhor G^\dagger_l$ be the obvious identification, and we let $(\bar f_u,\bar m_u):=(\check f_u,\check m_u)$;
\item the above data define $\bar D$ and a morphism $\zeta^\dagger\colon(\bar f,\bar m)\to(\check f,\check m)$ in $\dot\cL$; the morphism $\varepsilon:=\Id_{\psi_1}\sqcup\zeta^\dagger$ in $\dot\cK$ upgrades to a morphism $\bar D\to\check D$ in $Q_1(f,m)$.
\end{itemize}
Given $\tilde D\in Q_1^\dagger(f,m)^\to$, we argue that $\varepsilon\circ-\colon Q_1(f,m)(\tilde D,\bar D)\xrightarrow{\simeq} Q_1(f,m)(\tilde D,\check D)$ is an equivalence by checking that the hypotheses of Lemma \ref{lem:Qispacesequivalence2} are fulfilled.

We let $\cD_1\subseteq\Xi((f,m),\psi_1\sqcup(\bar f,\bar m))$ denote the split subcategory spanned by objects $\zeta$ as in Notation \ref{nota:morphismdotcT} such that $G_{d,r}$ contains at most one unbased tree, and if it does, this unbased tree contains the vertex $\rho_{d,r}(1)$; we similarly let $\cD_2\subseteq\Xi((f,m),\psi_1\sqcup(\check f,\check m))$ denote the split subcategory spanned by objects $\zeta$ such that the following holds: there is a bijection between the set of components $\pi_0(G_{d,r})$ of $G_{d,r}$ and the set $A\sqcup \check V_r$, such that the inclusion $A\hto A\sqcup\check V_r$ agrees with the composite $A\hto\pi_0(G_{d,r})\simeq A\sqcup\check V_r$,
and such that the composite map of finite sets $\ul1\sqcup\check A\xrightarrow{\rho_{d,r}} A\sqcup V_{d,r}\twoheadrightarrow \pi_0(G_{d,r})\simeq A\sqcup\check V_r$ coincides with $\check\rho_r$. Note that by Remark \ref{rem:uniquebijection}, such a bijection $\pi_0(G_{d,r})\simeq A\sqcup\check V_r$, if it exists, is unique.

We claim that $\varepsilon\circ-$ restricts to an equivalence $|\cD_1|\xrightarrow{\simeq}|\cD_2|$. For this, observe that $\varepsilon\circ-$ restricts to a functor $\Xi^{**}((f,m),\psi_1\sqcup(\bar f,\bar m))\to\Xi^{**}((f,m),\psi_1\sqcup(\check f,\check m))$, as a consequence of the fact that $G^\dagger_r$ has no edges and $G^\dagger_l$ is an equivalence.
By Lemma \ref{lem:Xi*reduction}, we reduce to proving that the restricted functor $\varepsilon\circ-\colon\cD_1^{**}\to\cD_2^{**}$ induces an equivalence on classifying spaces.
In fact $\cD_1^{**}$ is characterised as the split subcategory of $\Xi^{**}((f,m),\psi_1\sqcup(\bar f,\bar m))$ spanned by objects $\zeta$ such that $V_r=V_r\cap\set{\rho_r(1)}$; similarly, $\cD^{**}_2\subseteq\Xi^{**}((f,m),\psi_1\sqcup(\check f,\check m))$ is the split subcategory spanned by objects $\zeta$ such that there is a (unique) bijection $V_r\simeq \check V_r$ such that $\check\rho_r$ factors as $\ul1\sqcup\check A\xrightarrow{\rho_r} A\sqcup V_r\xrightarrow{\simeq}A\sqcup\check V_r$. The restricted functor
$\varepsilon\circ-\colon\cD_1^{**}\to\cD_2^{**}$ is already an equivalence of categories.

We further let $\cE_1\subseteq\Xi((\tilde f,\tilde m),(\bar f,\bar m))$ denote the split subcategory spanned by objects $\zeta$ as in Notation \ref{nota:morphismdotcT} such that no component of $G_{d,r}$ is an unbased tree; and we let $\cE_2\subseteq\Xi((\tilde f,\tilde m),(\check f,\check m))$ denote the split subcategory spanned by objects $\zeta$ such that the following holds: let $\pi_0(G_{d,r}^{\mathrm{ut}})$ denote the set of components of $G_{d,r}$ that are unbased trees, and consider the subset $A^\dagger:=(\rho_r^\dagger)^{-1}(V^\dagger_r)\subseteq\check A$; then there exists a (necessarily unique) bijection $V^\dagger_r\simeq \pi_0(G_{d,r}^{\mathrm{ut}})$, such that the following composite maps of finite sets agree:
\[
A^\dagger\xrightarrow{\rho_r^\dagger} V^\dagger_r\simeq \pi_0(G_{d,r}^{\mathrm{ut}})\hto \pi_0(G_{d,r});\quad A^\dagger\xrightarrow{\rho_{d,r}}\tilde A\sqcup V_{d,r}\twoheadrightarrow\pi_0(G_{d,r}). 
\]
We have that $\cE_1$ and $\cE_2$ contain the preimages of $\cD_1$ and $\cD_2$ along the two functors denoted $(\psi_1\sqcup-)\circ\tilde\zeta$, respectively.
Moreover $\zeta^\dagger\circ-$ restricts to a functor $\cE_1\to\cE_2$ and to a functor $\Xi^{**}((\tilde f,\tilde m),(\bar f,\bar m))\to\Xi^{**}((\tilde f,\tilde m),(\check f,\check m))$. Finally, $\cE_1^{**}$ can be characterised as the full subcategory of $\Xi^{**}((\tilde f,\tilde m),(\bar f,\bar m))$ spanned by objects $\zeta$ as in Notation \ref{nota:morphismdotcT} such that $G_r$ has no isolated inner vertices; 
and $\cE_2^{**}$ as the full subcategory of $\Xi^{**}((\tilde f,\tilde m),(\check f,\check m))$ spanned by objects $\zeta$ for which the following holds: let $V_r^{\mathrm{iiv}}\subseteq V_r$ denote the set of isolated inner vertices of $G_r$; then there is a (necessarily unique) bijection $V_r^\dagger\simeq V_r^{\mathrm{iiv}}$ such that the composite
\[
A^\dagger\xrightarrow{\rho_r^\dagger}V_r^\dagger\simeq V_r^{\mathrm{iv}}\hto \tilde A\sqcup V_r
\]
agrees with $\rho_r|_{A^\dagger}$. The restricted functor $\zeta^\dagger\circ-\colon\cE_1^{**}\to\cE_2^{**}$ is an equivalence, inducing an equivalence $\zeta^\dagger\circ-\colon|\cE_1^{**}|\xrightarrow{\simeq}|\cE_2^{**}|$; it follows from Lemma \ref{lem:Xi*reduction} that $\zeta^\dagger\circ-\colon|\cE_1|\xrightarrow{\simeq}|\cE_2|$ is also an equivalence.
\end{proof}
\begin{nota}
\label{nota:spine}
We denote by $G^s_u$ the linear subgraph of $G_u$ defined as follows:
\begin{itemize}
\item in the inner case, $G^s_u$ consists of the unique edge $e_\ell$, together with the two inner vertices giving the endpoints of $e_\ell$;
\item in the attaching case, $G^s_u$ is the linear subgraph 
joining $x\in A$ with $v_\ell$.
\end{itemize}
We refer to $G^s_u$ as the ``spine'' of $G_u$, and denote by $E^s_u\subseteq E_u$ the set of edges of $G^s_u$. We have an essentially unique factorisation in $\cC$ of the form
\[
(f,m)\colon G_u\xrightarrow{(f^b_u,m^b_u)}G^s_u\xrightarrow{(f^s,m^s)}G_d,
\]
in which $f^s$ collapses all edges of $G_u^s$, whereas $f^b_u$ collapses all edges in $E_u\setminus E^s_u$.
Letting $G^b_l=\Id_B$, $G^b_r=\Id_A$ and $(f^b_d,m^b_d)=\Id_{G_d}$, we obtain a special object $\zeta^b\in\Xi((f,m),(f^s,m^s))$, giving a morphism $\zeta^b\colon(f,m)\to(f^s,m^s)$ in $\dot\cK$.
\end{nota}
\begin{defn}
\label{defn:bridge}
Let $\bar A,\bar B\in\Fin$ with $\#\bar A\ge2$. We say that a gaf $\bar G\in\cC(\bar B,\bar A)$ is a \emph{bridge} if there is a 2-element subset $\del\bar G\subseteq \bar A$ such that, when the marking by $\bar B$ is disregarded, $\bar G$ is the disjoint union of the attaching subset $\bar A\setminus\del\bar G$ and of a linear graph intersecting $\bar A$ at its set of endpoints $\del\bar G$.
For uniform notation, whenever a gaf $\bar G\in\cC(\bar B,\bar A)$ is \emph{not} a bridge, we declare $\del\bar G:=\emptyset\subseteq\bar A$.
\end{defn}

\begin{defn}
\label{defn:classificationQ1dagger}
We give a classification of objects in $Q_1^{\dagger\dagger}(f,m)^\to$, represented by diagrams as in Notation \ref{nota:Q1object*}, according to types.
\begin{itemize}
\item[(I)] An object $\bar D$ is of type I if $A\sqcup\bar V_r=\set{\bar x}$ is a singleton, where we use Notation \ref{nota:Q1object*}; in this case the following hold:
\begin{itemize}
\item $(\bar f,\bar m)$ is an equivalence, and the gaf's $\bar G_u\simeq\bar G_d$ are given by the same map of finite sets $\bar\rho_u=\bar\rho_d\colon\bar B\to\bar A$;
\item $\bar G_l$ is also given by a map of finite sets $\bar\rho_l\colon B\to \ul1\sqcup\bar B$, sending $\rho_u^{-1}(v_\ell)$ to $1\in\ul1$ and restricting to a map $B\setminus \rho_u^{-1}(v_\ell)\to\bar B$;
\item $\bar f_d\colon \bar G_r\circhor\bar G_d\circhor\bar G_l\to G_d$ is the unique equivalence, with $\bar f_d\colon\bar x\mapsto x$;
\item $\bar f_u$ collapses all edges of $G_u$ except $e_\ell$, and $\bar m_u$ is the restriction of $m$.
\end{itemize}
\end{itemize}

Note that if $\bar D$ is not of type I, then $A\sqcup\bar V_r$ has two elements $x\in A$ and $\bar x\in\bar V_r$. 
\begin{itemize}
\item[(II)] An object $\bar D$ not of type I is of type II if $(\bar f,\bar m)$ is an equivalence, and the gaf's $\bar G_u\simeq\bar G_d$ are given by the same map of finite sets $\bar \rho_u=\bar\rho_d\colon\bar B\to\bar A$; in this case the following hold:
\begin{itemize}
\item $\bar G_l$ is a bridge, and the set $\bar E_l$ corresponds along $\bar f_u$ to a nonempty subset $\bar X_l\subseteq E^s_u\setminus\set{e_\ell}$;
\item $\bar\rho_r\circ\bar\rho_u$ restricts to a bijection $\del\bar G_l\xrightarrow{\simeq}\set{x,\bar x}$; in particular $\bar\rho_u$ is injective on $\del\bar G_l$;
\item $\bar f_u$ collapses all edges in $E_u\setminus(\bar X_l\sqcup\set{e_\ell})$, and $\bar m_u$ 
is restricted from $m$;
\item $\bar f_d$ collapses all edges in $\bar X_l$, and $\bar m_d$ is the restriction of $m$;
\item $\bar\rho_l\colon B\to \ul1\sqcup\bar B\sqcup\bar V_l$ is a map of finite sets, sending $\rho_u^{-1}(v_\ell)$ to $1$ and the rest of $B$ to $\bar B\sqcup\bar V_l$.
\end{itemize}
\item[(III)] An object $\bar D$ is of type III if it is neither of type I nor of type II; in this case the following hold:
\begin{itemize}
\item $\bar G_u$ and $\bar G_d$ are bridges with $\del\bar G_u=\del\bar G_d\subseteq\bar A$, and the sets of edges $\bar E_u$ and $\bar E_d$ correspond, along $\bar f_u$ and $(\bar G_r\circhor\bar f\circhor\bar G_l)\circ\bar f_u$, to two nonempty, nested subsets $\bar X_d\subseteq\bar X_u\subseteq E^s_u\setminus\set{e_\ell}$, respectively;
\item $\bar\rho_r$ restricts to a bijection $\del\bar G_u\xrightarrow{\simeq}\set{x,\bar x}$;
\item $\bar f_u$, $\bar f$ and $\bar f_d$ collapse the sets of edges $E_u\setminus(\bar X_u\sqcup\set{e_\ell})$, $\bar X_u\setminus\bar X_d$ and $\bar X_d$, respectively, and $\bar m_u$, $\bar m$ and $\bar m_d$ are restricted from $m$;
\item $\bar G_l$ is given by a map of finite sets sending $\rho_u^{-1}(v_\ell)$ to $1\in\ul1$ and the rest of $B$ to $\bar B$.
\end{itemize}
\end{itemize}
\end{defn}

\begin{rem}
We observe that only objects of type I may occure in the inner case $A=\emptyset$; the same holds in the attaching case if the edge $e_\ell$ of $G_u$ is attached directly to $x\in A$, i.e., if $E^s_u=\set{e_\ell}$.
\end{rem}
We are ready for the main result of the subsection.
\begin{lem}
\label{lem:linearreduction}
Recall Notation \ref{nota:spine}, and
consider the functor $-\circ\zeta^b\colon Q_1(f^s,m^s)\to Q_i(f,m)$ induced by the morphism $\zeta^b$ in the light of the pullback definition of $Q_1(f,m)$ and $Q_1(f^s,m^s)$ from Lemma \ref{lem:defQi}. Then $-\circ\zeta^b$ restricts to an equivalence $Q_1^{\dagger\dagger}(f^s,m^s)^\to\xrightarrow{\simeq}Q_1^{\dagger\dagger}(f,m)^\to$. 
\end{lem}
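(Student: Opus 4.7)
The plan is to establish the claimed equivalence in three steps: first verify that $-\circ\zeta^b$ actually lands in $Q_1^{\dagger\dagger}(f,m)^\to$, then prove essential surjectivity via the type classification in Definition \ref{defn:classificationQ1dagger}, and finally prove full faithfulness via Lemma \ref{lem:Qispacesequivalence3}. The structural observation driving everything is that $\zeta^b$ is a special morphism with $G^b_l$, $G^b_r$, and $(f^b_d,m^b_d)$ all equivalences, so precomposing a representing diagram for $\check D \in Q_1(f^s,m^s)^\to$ with $\zeta^b$ leaves the data $\check G_l$, $\check G_r$, $(\check f_d,\check m_d)$, $(\check f,\check m)$, $\check G_u$, and $\check G_d$ untouched and only replaces the upper 2-morphism $\check f_u$ by its composition with $f^b_u\colon G_u\to G^s_u$ at the source.

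Well-definedness is then immediate: properties (v) and (vi) concern only $\bar G_u$, $\bar G_d$, $\bar G_r$, so they are inherited; and the orientation-preserving property is preserved because $f^b_u$ fixes the oriented half-edge $h_\ell$, since $e_\ell\in E^s_u$. For essential surjectivity, fix $\bar D \in Q_1^{\dagger\dagger}(f,m)^\to$ represented as in Notation \ref{nota:Q1object*}. The classification in Definition \ref{defn:classificationQ1dagger} shows that in each of the types I, II, III, the upper 2-morphism $\bar f_u$ collapses a set of edges of $G_u$ that contains $E_u\setminus E^s_u$. Consequently $\bar f_u$ factorises, uniquely up to canonical equivalence, as $\bar f_u = \bar f^s_u \circ f^b_u$ for some $\bar f^s_u\colon G^s_u \to \bar G_r \circhor \bar G_u \circhor \bar G_l$. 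Substituting $\bar f^s_u$ for $\bar f_u$ in the representing diagram produces a preimage $\check D \in Q_1^{\dagger\dagger}(f^s,m^s)^\to$ together with a canonical equivalence $\check D \circ \zeta^b \simeq \bar D$.

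For full faithfulness, I apply Lemma \ref{lem:Qispacesequivalence3} to the morphism $\zeta^b$ and a pair of objects $\bar D, \check D \in Q_1^{\dagger\dagger}(f^s,m^s)^\to$: take $\cD_1 := \Xi((f^s,m^s),\psi_1\sqcup(\check f,\check m))$ in full, and let $\cD_2 \subseteq \Xi((f,m),\psi_1\sqcup(\check f,\check m))$ be the split subcategory spanned by those objects whose upper 2-morphism collapses every edge in $E_u \setminus E^s_u$. The functor $-\circ\zeta^b$ restricts to an isomorphism of categories $\cD_1 \xrightarrow{\simeq} \cD_2$, with inverse given by the unique factorisation through $G^s_u$, hence induces an equivalence $|\cD_1| \xrightarrow{\simeq} |\cD_2|$ on classifying spaces; the morphism $\check\zeta \circ \zeta^b$ lies in $\cD_2$ precisely because its upper 2-morphism arose by precomposition with $f^b_u$. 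Lemma \ref{lem:Qispacesequivalence3} therefore yields the desired equivalence on morphism spaces.

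The main obstacle is the essential surjectivity step: it depends crucially on the classification in Definition \ref{defn:classificationQ1dagger} (which itself rests on the spine reductions of Lemmas \ref{lem:Xi*reduction} and \ref{lem:Q1daggerequivalence}) to guarantee that $\bar f_u$ always collapses every non-spine edge, and one must verify this case by case across types I, II, and III. Once this combinatorial constraint is secured, the remaining verifications are routine manipulations of special diagrams together with an application of the general tools already developed.
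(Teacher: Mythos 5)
Your structural analysis is right through the first two steps: the observation that $-\circ\zeta^b$ preserves properties (v), (vi), and orientation because $G^b_l,G^b_r$ are identities and $f^b_u(h_\ell)=h_\ell$, and the essential surjectivity argument via Definition \ref{defn:classificationQ1dagger} (each of types I, II, III forces $\bar f_u$ to collapse $E_u\setminus E^s_u$, hence to factor uniquely through $f^b_u$) match the paper exactly.

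The gap is in the full faithfulness step. You model it on Lemma \ref{lem:zetaXfullyfaithful}, taking $\cD_1$ to be the full category and asserting that the image of $-\circ\zeta^b$ is the ``split subcategory'' spanned by objects $\zeta$ whose upper 2-morphism $f_u$ collapses every edge in $E_u\setminus E^s_u$. But this subcategory is not split. Morphisms in $\Xi((f,m),\psi_1\sqcup(\check f,\check m))$ project, via the pullback decomposition of Proposition \ref{prop:mappingspacesdotcT2}, into the under-$\infone$-category $\cC(y_0,y_1)_{y_u/}$ of $G_u$. A morphism $\zeta\to\zeta'$ therefore carries a factorisation $f'_u=g\circ f_u$: the upper 2-morphism can only become \emph{more} collapsed along morphisms of $\Xi$. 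So the condition ``$f_u$ collapses $E_u\setminus E^s_u$'' is closed under outgoing morphisms but not incoming ones (an object $\zeta$ whose $f_u$ misses an edge of $E_u\setminus E^s_u$ can still map to an object $\zeta'$ whose $f'_u=g\circ f_u$ collapses that edge). It is a sieve, not a union of connected components, and Lemma \ref{lem:Qispacesequivalence3} cannot be invoked with this $\cD_2$. This asymmetry between $f_u$ and $f_d$ is exactly why the argument of Lemma \ref{lem:zetaXfullyfaithful}, where the image is characterised by a condition on $f_d^{-1}(E_d)\subseteq E_{d,l}$ (a condition on $f_d$, which does propagate both ways along morphisms of $\Xi$ because $f_d$ lives in the \emph{opposite} over-category), does not transport to $\zeta^b$.

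The paper handles this differently: it chooses $\cD_1,\cD_2$ to be the split subcategories of $\Xi((f^s,m^s),\psi_1\sqcup(\check f,\check m))^\to$ and $\Xi((f,m),\psi_1\sqcup(\check f,\check m))^\to$ defined by a condition on $G_{d,r}$ alone (``at most one component of $G_{d,r}$ is an unbased tree, and if it exists it contains $\rho_{d,r}(1)$''), which genuinely is split and has the same preimage under both vertical maps since $\zeta^b$ leaves $G_{d,r}$ untouched; it then invokes Lemma \ref{lem:Xi*reduction} to pass to $\cD_1^{**}\to\cD_2^{**}$, where the additional rigidity of $\Xi^{**}$ makes the restricted functor a genuine equivalence of categories. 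A correction to your argument would have to replace the $f_u$-condition by a split one and go through the $\Xi^{**}$ reduction, as the paper does.
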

\begin{proof}
First, we observe that $-\circ\zeta^b$ indeed restricts to a functor $Q_1^{\dagger\dagger}(f^s,m^s)^\to\to Q_1^{\dagger\dagger}(f,m)^\to$, since $f^b_u\colon h_\ell\mapsto h_\ell$ and since $G^b_\star$ is an identity gaf for $\star=l,r$. The restricted functor is moreover essentially surjective: given $\bar D\in Q_1^{\dagger\dagger}(f,m)^\to$ as in Definition \ref{defn:classificationQ1dagger}, for all types I, II and III we have that $\bar f_u$ must collapse all edges in $E_u\setminus E^s_u$, so $(\bar f_u,\bar m_u)$ factors through $(f^b_u,m^b_u)$.

Finally, we apply Lemma \ref{lem:Qispacesequivalence3} to show that the restricted functor is fully faithful: given $\bar D,\check D\in Q_1^{\dagger\dagger}(f^s,m^s)^\to$,
we let $\check\zeta\in\cD_1\subseteq\Xi((f^s,m^s),\psi_1\sqcup(\check f,\check m))^\to$ and $\check\zeta\circ\zeta^b\in\cD_2\subseteq\Xi((f,m),\psi_1\sqcup(\check f,\check m))^\to$ denote the split subcategories spanned in both cases by objects $\zeta$ as in Notation \ref{nota:morphismdotcT} such that
at most one component of $G_{d,r}$ is an unbased tree, and if it exists it contains $\rho_{d,r}(1)$.
We have that $-\circ\zeta^b$ restricts to an equivalence $\cD_1^{**}\xrightarrow{\simeq}\cD_2^{**}$, inducing an equivalence $|\cD_1^{**}|\xrightarrow{\simeq}|\cD_2^{**}|$; by Lemma \ref{lem:Xi*reduction} we conclude that $-\circ\zeta^b\colon|\cD_1|\xrightarrow{\simeq}|\cD_2|$ is also an equivalence.
\end{proof}
Lemma \ref{lem:linearreduction} allows us to reduce the proof of Proposition \ref{prop:finalthmA}(1) to the case in which $G_u$ is linear and coincides with its own spine. We conclude the subsection with a further mild reduction, which we operate mostly to save notation later. The proof of the following lemma is left to the reader.
\begin{lem}
\label{lem:Balmostempty}
Let $B':=B\setminus\rho_u^{-1}(v_\ell)\subseteq B$, let $G_l$ be the gaf corresponding to the inclusion of finite sets $B'\hto B$, let $(f',m'):=(f,m)\circhor G_r$, and let $\zeta\in\Xi((f',m'),(f,m))$ denote the special object defined by the previous factorisation, by letting $G_l$, $f_u$ and $f_d$ be equivalences. Then the functor $-\circ\zeta\colon Q_1(f,m)\to Q_1(f',m')$ induced by $\zeta$ in the light of the pullback definition of $Q_1(f,m)$ and $Q_1(f',m')$ restricts to an equivalence $-\circ\zeta\colon Q_1(f,m)^\to\xrightarrow{\simeq} Q_1(f',m')^\to$.
\end{lem}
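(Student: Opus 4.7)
The plan is to prove that $-\circ\zeta$ is essentially surjective and fully faithful on the $\to$-decorated subcategories, following the template of Lemmas \ref{lem:aotfullyfaithful} and \ref{lem:Q1daggerequivalence}. The key combinatorial input, which will occur in both steps, is the following claim: for every $\bar D \in Q_1(f,m)^\to$ represented as in Notation \ref{nota:Qiobject}, the left factor $\bar\rho_l\colon B\to\ul1\sqcup\bar B\sqcup\bar V_l$ sends every element of $B\setminus B'=\rho_u^{-1}(v_\ell)$ to the distinguished element $1\in\ul1$. Granting this claim, essential surjectivity is immediate: given $\bar D'\in Q_1(f',m')^\to$, one extends $\bar\rho'_l$ by sending $\rho_u^{-1}(v_\ell)$ constantly to $1\in\ul1$ and leaves all other data of $\bar D'$ unchanged, producing $\bar D\in Q_1(f,m)^\to$ with $\bar D\circ\zeta\simeq \bar D'$.

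The combinatorial claim is verified as follows. Since $\bar D$ is orientation-preserving, $\bar f_u(h_\ell)=h_\beta$, so $\bar f_u(v_\ell)$ is the attaching vertex of $G_{\beta,1}$, which corresponds precisely to $1\in\ul1\subseteq\ul1\sqcup\bar B$ in the target of $\bar f_u$. Because $v_\ell$ has valence $1$ in $G_u$ (its unique adjacent half-edge being $h_\ell$), and because the leaf-like condition together with the hypothesis that $G_d$ is a single vertex force $\bar f_u^{-1}(A_{\beta,1})$ to be a based subtree of $G_u$ containing $v_\ell$ but no other vertex marked by $B$ that goes elsewhere, the compatibility condition $\bar f_u\circ \bar\rho_l|_{\rho_u^{-1}(v_\ell)}$ with $\rho_u|_{\rho_u^{-1}(v_\ell)}$ uniquely determines $\bar\rho_l(b)=1\in\ul1$ for all $b\in\rho_u^{-1}(v_\ell)$.

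For fully faithfulness I would apply Lemma \ref{lem:Qispacesequivalence3} to the morphism $\zeta\colon(f',m')\to(f,m)$. Fix $\bar D,\check D\in Q_1(f,m)^\to$ and take $\cD_1:=\Xi((f,m),\psi_1\sqcup(\check f,\check m))^\to$ and $\cD_2\subseteq\Xi((f',m'),\psi_1\sqcup(\check f,\check m))^\to$ to be the split subcategory spanned by objects whose left factor sends $B\setminus B'$ constantly to $1\in\ul1$. By the combinatorial claim applied to $\check D$, the morphism $\check\zeta\circ\zeta$ lies in $\cD_2$. The functor $-\circ\zeta\colon\cD_1\to\cD_2$ is an equivalence of categories, since precomposition with $\zeta$ simply restricts the left factor from $B$ to $B'$ and every element of $\cD_2$ is obtained from a unique element of $\cD_1$ by this restriction. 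A fortiori, this induces an equivalence on classifying spaces, so Lemma \ref{lem:Qispacesequivalence3} yields the desired equivalence on morphism spaces.

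The main obstacle is the combinatorial verification in the key claim, specifically the uniqueness of the factorization $\bar\rho_l(b)=1$ for $b\in\rho_u^{-1}(v_\ell)$: one must carefully unpack the special diagram representing $\bar\zeta$ and use both the hypothesis that $G_d$ consists of a single vertex (with $E_d=\emptyset$) and the orientation-preserving condition together with the valence-$1$ nature of $v_\ell$ to exclude any other possibility. Once this obstruction is overcome, the rest of the proof proceeds mechanically along the established template.
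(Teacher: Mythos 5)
The paper does not give a proof of Lemma~\ref{lem:Balmostempty}: it explicitly writes ``The proof of the following lemma is left to the reader,'' so there is no reference argument to compare against, and your task was to supply a proof from scratch. Your overall strategy — prove essential surjectivity and full faithfulness of $-\circ\zeta$ restricted to the orientation-preserving parts, with the pivotal observation that orientation-preservingness forces the left factor to mark $\rho_u^{-1}(v_\ell)$ in the unique determined way — is sound and matches the templates of Lemmas~\ref{lem:Q1daggerequivalence}, \ref{lem:linearreduction}. However, there are two exposition errors worth flagging. First, in the verification of the key claim you write that $\bar f_u(v_\ell)$ ``is the attaching vertex of $G_{\beta,1}$, which corresponds precisely to $1\in\ul1\subseteq\ul1\sqcup\bar B$.'' This is misstated: $h_\beta$ is the half-edge incident to the \emph{inner} vertex $v\in V_{\beta,1}$ (that is the orientation of the leaf), so $\bar f_u(v_\ell)=\sigma(h_\beta)=v$ is the inner vertex, and $1\in\ul1\subseteq\ul1\sqcup\bar B$ is an element of the \emph{marking} set, not a vertex. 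The correct chain is: $\bar f_u(v_\ell)=v\in V_{\beta,1}$, hence for $b\in\rho_u^{-1}(v_\ell)$ the composite marking must satisfy $\rho_{\mathrm{composite}}(b)=v$, and tracing through the formula for the marking of a horizontal composite forces $\bar\rho_l(b)=1\in\ul1$ since that is the unique element whose image under $\rho_{\beta,1}$ is $v$. Your conclusion is right; the intermediate identification is not.

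Second, in the fully faithfulness step, your description of $\cD_2$ as ``the split subcategory spanned by objects whose left factor sends $B\setminus B'$ constantly to $1\in\ul1$'' does not parse: objects of $\Xi((f',m'),\psi_1\sqcup(\check f,\check m))$ have left factor a gaf in $\cC(B',\ul1\sqcup\check B)$, whose marking set is $B'$, so $B\setminus B'$ is not in its domain. The intended fix is simply to take $\cD_2=\Xi((f',m'),\psi_1\sqcup(\check f,\check m))^\to$ (the whole orientation-preserving part), and correspondingly $\cD_1=\Xi((f,m),\psi_1\sqcup(\check f,\check m))^\to$: your key claim shows that every (not just some) object of $\cD_1$ has its left factor constrained on $\rho_u^{-1}(v_\ell)$, so $-\circ\zeta\colon\cD_1\to\cD_2$ is already an equivalence of categories. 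With those two corrections the proof goes through. You may also wish to note, as the paper does in the analogous lemmas, that one could alternatively first pass to the $\Xi^{**}$ subcategories via Lemma~\ref{lem:Xi*reduction} to make the combinatorics of morphisms in $\Xi$ explicit; here it is not strictly necessary since the required equivalence holds on the nose.
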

Lemma \ref{lem:Balmostempty} allows us to further reduce to the setting in which $\rho_u^{-1}(v_\ell)=\emptyset$.

\subsection{Contractibility of \texorpdfstring{$|Q_1(f,m)^\to|$}{|Q1(f,m)to|}
}
In this subsection we conclude the proof of Proposition \ref{prop:finalthmA}(1), under the assumption that $G_d$ consists of a single vertex, the tree $G_u$ is linear and coincides with its own spine, and $\rho_u^{-1}(v_\ell)=\emptyset$.

\begin{defn}
\label{defn:Q1flat}
Recall Definition \ref{defn:bridge}.
We denote by $Q_1^\flat(f,m)^\to\subseteq Q_1^{\dagger\dagger}(f,m)^\to$ the full $\infone$-subcategory spanned by objects $\bar D$ as in Definition \ref{defn:classificationQ1dagger} satisfying the following property:
\begin{itemize}
\item[(vii)] $\bar\rho_l$ restricts to a bijection $\bar\rho_l^{-1}(\bar B\setminus\del\bar G_l)\xrightarrow{\simeq}\bar B\setminus\del\bar G_l$.
\end{itemize}
We denote by $Q_1^{\flat\flat}(f,m)^\to\subseteq Q_1^\flat(f,m)^\to$ the full $\infone$-subcategory spanned by objects $\bar D$ satisfying
the following additional property:
\begin{itemize}
\item[(viii)] $\bar\rho_u$ restricts to a bijection $\bar\rho_u^{-1}(\bar A\setminus(\del\bar G_u\cup\bar\rho_u(\del\bar G_l)))\xrightarrow{\simeq}\bar A\setminus(\del\bar G_u\cup\bar\rho_u(\del\bar G_l))$.
\end{itemize}
\end{defn}
\begin{lem}
\label{lem:Q1flatequivalence}
The inclusions $Q_1^{\flat\flat}(f,m)^\to\subseteq Q_1^\flat(f,m)^\to\subseteq Q_1^{\dagger\dagger}(f,m)^\to$ induce equivalences on classifying spaces.
\end{lem}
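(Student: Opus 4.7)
The plan is to follow the strategy used in Lemmas \ref{lem:edgelessreduction}, \ref{lem:Q1veeequivalences}, and \ref{lem:Q1daggerequivalence}: we show that each of the two inclusions admits a one-sided adjoint, which will imply the desired equivalences on classifying spaces. Properties (vii) and (viii) are both ``redundancy-elimination'' conditions asserting that certain restrictions of the attaching maps $\bar\rho_l$ and $\bar\rho_u$ are bijections. Geometrically, failure of these conditions corresponds to having non-endpoint attaching vertices of $\bar G_l$ (respectively of $\bar G_u$) whose preimage under $\bar\rho_l$ (respectively $\bar\rho_u$) has cardinality different from $1$.

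For the inclusion $Q_1^\flat(f,m)^\to\subseteq Q_1^{\dagger\dagger}(f,m)^\to$, I would construct a left adjoint sending $\bar D\mapsto\check D$ as follows: define $\check B$ as the pushout $B\sqcup_{B^*}(\bar B\setminus\del\bar G_l)$, where $B^*:=\bar\rho_l^{-1}(\bar B\setminus\del\bar G_l)$ and $B^*\to B$ is the inclusion while $B^*\to\bar B\setminus\del\bar G_l$ is $\bar\rho_l|_{B^*}$. Then replace $(\bar f,\bar m)$ by the 2-morphism $(\check f,\check m)$ whose source and target are obtained by horizontally precomposing $\bar G_u$ and $\bar G_d$ with the evident map $\check B\to\bar B$ of finite sets; the new bridge $\check G_l$ has the same underlying linear graph as $\bar G_l$ but its non-endpoint attaching vertices are indexed by the image of $B^*$ in $\bar B$, and its marking map is tautologically a bijection on the non-endpoint part. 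The morphism $\eta:\bar D\to\check D$ is then determined by a morphism $\zeta^\flat\colon(\bar f,\bar m)\to(\check f,\check m)$ in $\dot\cL$ whose $G_l$-component is the fold map of finite sets $\bar B\to\check B$ absorbing the redundancies.

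For the inclusion $Q_1^{\flat\flat}(f,m)^\to\subseteq Q_1^\flat(f,m)^\to$, I would mirror this construction on the ``right'' side: given $\bar D\in Q_1^\flat(f,m)^\to$, form a bridge $\check G_u$ (and correspondingly $\check G_d$) whose non-endpoint attaching vertices precisely match the non-redundant image of $\bar\rho_u$ on $\bar A\setminus(\del\bar G_u\cup\bar\rho_u(\del\bar G_l))$, inserting the collapse map as an auxiliary 2-morphism. The universal map $\eta:\bar D\to\check D$ is then encoded by a morphism $\zeta^{\flat\flat}\colon(\bar f,\bar m)\to(\check f,\check m)$ in $\dot\cL$.

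In both cases, the verification of the adjoint property is carried out by applying Lemma \ref{lem:Qispacesequivalence1}: for a fixed $\tilde D$ in the smaller subcategory, one identifies a split subcategory $\tilde\zeta\in\cD\subseteq\Xi((f,m),\psi_1\sqcup(\tilde f,\tilde m))^\to$ by encoding property (vii), respectively (viii), as a condition on objects $\zeta$ of the $\Xi$-category; the preimages $\cE_1\subseteq\Xi((\check f,\check m),(\tilde f,\tilde m))$ and $\cE_2\subseteq\Xi((\bar f,\bar m),(\tilde f,\tilde m))$ along $(\psi_1\sqcup-)\circ\check\zeta$ and $(\psi_1\sqcup-)\circ\bar\zeta$ can then be reduced, via Lemma \ref{lem:Xi*reduction}, to their $\Xi^{**}$-intersections $\cE_1^{**}$ and $\cE_2^{**}$, on which the functor $-\circ\zeta^\flat$ (respectively $-\circ\zeta^{\flat\flat}$) restricts to an equivalence of $\infone$-categories because the additional non-bijectivity is precisely what $\zeta^\flat$ (respectively $\zeta^{\flat\flat}$) absorbs.

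The main obstacle will be the bookkeeping required to specify the split subcategories $\cD$, $\cE_1^{**}$, $\cE_2^{**}$ in terms that simultaneously encode the ``endpoint/non-endpoint'' partition of $\bar B$ (or $\bar A$) and the compatibility with $\bar\rho_{d,r}$ and $\bar\rho_{u,l}$, in a manner compatible with the further decomposition by ``orientation-preserving/reversing'' data that the $(-)^\to$ decoration imposes. This is analogous to the case analysis at the end of the proof of Lemma \ref{lem:Q1daggerequivalence}, and I expect no novel technical difficulty beyond careful notation.
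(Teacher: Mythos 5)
Your overall strategy — one-sided adjoints verified via Lemmas \ref{lem:Qispacesequivalence1}/\ref{lem:Qispacesequivalence2} and Lemma \ref{lem:Xi*reduction} — is the right framework, but there is a genuine gap in the choice of adjoint for the outer inclusion $Q_1^\flat(f,m)^\to\subseteq Q_1^{\dagger\dagger}(f,m)^\to$. You propose a \emph{left} adjoint, i.e.\ a reflection sending a general $\bar D\in Q_1^{\dagger\dagger}$ to some $\check D\in Q_1^\flat$ together with a unit $\eta\colon\bar D\to\check D$. Unwinding Notation \ref{nota:morphismdotcT}, any such morphism has a framing gaf $G'_l\in\cC(\bar B,\check B)$ with marking map $\rho_{G'_l}\colon\bar B\to\check B\sqcup V'_l$, and the resulting marking of $\check G_l=(\ul1\sqcup G'_l)\circhor\bar G_l$ is (away from $\ul1$ and $\bar V_l$) the composite $\check\rho_l=\rho_{G'_l}\circ\bar\rho_l$. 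Such a composite can never separate two elements $b_1,b_2\in B$ that $\bar\rho_l$ has already merged into the same $\bar b\in\bar B\setminus\del\bar G_l$; so if $\bar\rho_l$ fails injectivity (one way that condition (vii) can fail), there is no way to manufacture a bijective $\check\rho_l$ in the target without expelling $\rho_{G'_l}(\bar b)$ from $\check B\setminus\del\check G_l$, which is incompatible with the type constraints on $\check G_l$ (in the type III case $\check G_l$ must be a map of finite sets with $\del\check G_l=\emptyset$, leaving no place to hide $\rho_{G'_l}(\bar b)$). Consequently the reflection you describe does not exist in general. The paper instead constructs a \emph{right} adjoint (coreflection): given general $\check D$, it replaces $\check B$ by $\bar B:=\del\check G_l\sqcup\check\rho_l^{-1}(\check B\setminus\del\check G_l)$ and factors $\check G_l=(\ul1\sqcup G_l^\flat)\circhor\bar G_l$; because morphisms $\bar D\to\check D$ carry $G_l^\flat\colon\bar B\to\check B$ in that direction, this refinement of the \emph{middle} marking set is structurally available, and the lifted $\bar\rho_l$ is tautologically the inclusion of a preimage and hence bijective.

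Independently of the adjoint-direction issue, the concrete formula you write is ill-formed: the pushout $B\sqcup_{B^*}(\bar B\setminus\del\bar G_l)$ omits the endpoint set $\del\bar G_l$ from the new marking set and introduces an illegitimate dependence on $B$ (which is fixed data of $(f,m)$ and cannot appear as a summand of $\check B$); moreover the ``evident map $\check B\to\bar B$'' you invoke does not exist on the $B\setminus B^*$ part, since those elements need not map into $\bar B$ under $\bar\rho_l$, and your ``fold map $\bar B\to\check B$'' is a different map going the other way. Your second claim, that the inner inclusion $Q_1^{\flat\flat}\subseteq Q_1^\flat$ admits a left adjoint via a construction on the ``right'' ($G_r$/$\bar\rho_u$) side, is aligned with the paper in both direction and side, and is consistent with the structural asymmetry explained above (morphisms carry $G_r\colon\check A\to\bar A$, so the $\bar A$-side refinement sits naturally as a reflection); but you leave its details and the verification entirely implicit, so only the general shape, not the argument, is present.
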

\begin{proof}
We prove that the first inclusion admits a left adjoint, and the second a right adjoint. For the first claim, given $\bar D\in Q_1^\flat(f,m)^\to$, we define a candidate left adjoint object $\check D\in Q_1^{\flat\flat}(f,m)^\to$ together with a morphism $\eta\colon\bar D\to\check D$ as follows:
\begin{itemize}
\item we let $\check B$ be the following disjoint union of two finite sets:
\[
\check B:=(\del\bar G_u\cup\bar\rho_u(\del\bar G_l))\sqcup \bar\rho_u^{-1}(\bar A\setminus(\del\bar G_u\cup\bar\rho_u(\del\bar G_l)));
\]
we let $G_r^\flat$ be the gaf corresponding to the map of finite sets $\check B\to\bar B$ restricting to the inclusion, respectively to $\bar\rho_u$, on the two parts;
\item we let $(\bar f,\bar m)= G_r^\flat\circhor(\check f,\check m)$ be the essentially unique horizontal factorisation; in particular we have $\bar G_\star=G_r^\flat\circhor\check G_\star$ for $\star=u,d$;
\item we let the $G_l^\flat$ be the identity gaf of $\check A:=\bar A$;
we let $(f_u^\flat,m_u^\flat)$ be the identification $\bar G_u\xrightarrow{\simeq} G_r^\flat\circhor\check G_u\circhor G_l^\flat$ and $(f_d^\flat,m_u^\flat)$ be the identification $G_r^\flat\circhor\check G_d\circhor G_l^\flat\xrightarrow{\simeq}\bar G_d$;
\item we let $\check G_r:=\bar G_r\circhor (\ul1\sqcup G_r^\flat)$; and we let $\check G_l:=\bar G_l$ and $(\check f_\star,\check m_\star):=(\bar f_\star,\bar m_\star)$ for $\star=u,d$;
\item we upgrade $\eta:=\Id_{\psi_1}\sqcup\zeta^\flat$ to a morphism in $Q_1(f,m)$.
\end{itemize}
For $\tilde D\in Q_1^{\flat\flat}(f,m)^\to$, we show that $-\circ\eta\colon Q_1(f,m)(\check D,\tilde D)\xrightarrow{\simeq}Q_1(f,m)(\bar D,\tilde D)$ is an equivalence by checking that the hypotheses of Lemma \ref{lem:Qispacesequivalence1} are fulfilled. We let $\tilde\zeta\in\cD\subseteq\Xi((f,m),\psi_1\sqcup(\tilde f,\tilde m))$ denote the split subcategory spanned by objects $\zeta$ as in Notation \ref{nota:morphismdotcT} such that the following holds: for each $\tilde b\in(\tilde B\setminus\del\tilde G_l)$, the component of $G_{d,l}$ containing $\tilde b$ is a tree intersecting $\tilde B$ only in $\tilde b$, and whose preimage along $\rho_{d,l}$ consists only of the element $\tilde\rho_l^{-1}(\tilde b)\in B$. We let $\cE_1$ and $\cE_2$ be the preimages of $\cD$ along the functors $(\psi_1\sqcup-)\circ\check\zeta$ and $(\psi_1\sqcup-)\circ\bar\zeta$.

Since the data giving $\zeta^\flat$ are all equivalences or maps of finite sets, we have that $-\circ\zeta^\flat$
restricts to a functor $\Xi^{**}((\check f,\check m),(\tilde f,\tilde m))\to\Xi^{**}((\bar f,\bar m),(\tilde f,\tilde m))$. 
We claim that the restricted functor
$-\circ\zeta^\flat\colon\cE_1^{**}\to\cE_2^{**}$ is an equivalence. For this, let $\zeta\in\cE_2^{**}$; denote by $\bar B':=\bar\rho_u^{-1}(\bar A\setminus(\del \bar G_u\cup\rho_u(\del\bar G_l))\subseteq\bar B$, and let $B':=\bar \rho_l^{-1}(\bar B')\subseteq B$; then $\bar\rho_l\colon B'\xrightarrow{\simeq}\bar B'$ is a bijection. Let moreover $\tilde A':=\rho_{u,r}^{-1}(\bar A\setminus(\del \bar G_u\cup\rho_u(\del\bar G_l))\subseteq\tilde A$. Then the composition of gaf's
\[
B\xrightarrow{\bar G_l} \ul1\sqcup\bar B\xrightarrow{G_{u,l}}\ul1\sqcup\tilde B\xrightarrow{\tilde G_u}\ul1\sqcup\tilde A\xrightarrow{G_{u,r}}\ul1\sqcup\bar A
\]
induces a composite map of finite sets as follows, using also that all based trees in $G_{u,\star}$ are isolated attaching vertices for $\star=l,r$:
\[
B'\xrightarrow{\bar\rho_l}\bar B'\xrightarrow{\rho_{u,l}}\tilde \rho_l(B')\xrightarrow{\tilde\rho_u}\tilde\rho_u\tilde\rho_l(B')\xrightarrow{\rho_{u,r}}\bar A\setminus(\del \bar G_u\cup\rho_u(\del\bar G_l)).
\]
All arrows but the last in the previous composite are bijections. This shows that $G_{u,r}$ factors through $G^\flat_r$ in a canonical way. The same argument shows that $G_{d,r}$ factors canonically through $G^\flat_r$; extending the argument to analyse morphisms in $\cE_1^{**},\cE_2^{**}$, we obtain that $-\circ\zeta^\beta\colon\cE_1^{**}\to\cE_2^{**}$ is an equivalence. Hence we have equivalences $-\circ\zeta^\beta\colon|\cE_1^{**}|\to|\cE_2^{**}|$ and, by Lemma \ref{lem:Xi*reduction}, $-\circ\zeta^\beta\colon|\cE_1|\to|\cE_2|$.

For the second claim, given $\check D\in Q_1^{\dagger\dagger}(f,m)^\to$, we define $\bar D\in Q_1^\flat(f,m)^\to$ together with a morphism $\varepsilon\colon\bar D\to\check D$ as follows:
\begin{itemize}
\item we let $\bar B$ denote the set $\del\check G_l\sqcup\check\rho_l^{-1}(\check B\setminus\del\check G_l)$, and let $G^\flat_l$ be the gaf corresponding to the map of finite sets $\bar B\to\check B$ restricting to the inclusion and to $\check\rho_l$, respectively, on the two parts;
\item we factor $\check G_l$  in the essentially unique way as a horizontal composition
\[
B\xrightarrow{\bar G_l}\ul1\sqcup\bar B\xrightarrow{\ul1\sqcup G_l^\flat}\ul1\sqcup\check B;
\]
\item 
we let $(\bar f,\bar m):=(\check f,\check m)\circhor G_l^\flat$, in particular $\bar G_\star:=\check G_\star\circhor G_l^\flat$ for $\star=u,d$;
\item we let $G_r^\flat$ be the identity gaf of $\bar A:=\check A$; we let $(f_u^\flat,m_u^\flat)$ be the identification $\bar G_u\xrightarrow{\simeq}G_r^\flat\circhor\check G_u\circhor G_l^\flat$, and $(f_d^\flat,m_d^\flat)$ be the identification $G_r^\flat\circhor\check G_d\circhor G_l^\flat\xrightarrow{\simeq} \bar G_d$;
\item we let $\bar G_r:=\check G_r$, $(\bar f_u,\bar m_u):=(\check f_u,\check m_u)$, and $(\bar f_d,\bar m_d):=(\check f_d,\check m_d)$; 
\item we upgrade $\varepsilon:=\Id_{\psi_1}\sqcup\zeta^\flat$ to a morphism in $Q_1(f,m)$.
\end{itemize}
For $\tilde D\in Q_1^\flat(f,m)^\to$, we show that $\varepsilon\circ-\colon Q_1(f,m)(\tilde D,\bar D)\xrightarrow{\simeq} Q_1(f,m)(\tilde D,\check D)$ by checking tha the hypotheses of Lemma \ref{lem:Qispacesequivalence2} are fulfilled. Let $\bar\zeta\in\cD_1\subseteq\Xi((f,m),\psi_1\sqcup(\bar f,\bar m))$ denote the split subcategory spanned by objects $\zeta$ as in Notation \ref{nota:morphismdotcT} such that the following holds: for each $\bar b\in \bar B\setminus\del\bar G_l$, the component of $G_{d,l}$ containing $\bar b$ is a tree intersecting $\bar B$ only in $\bar b$, and whose preimage along $\rho_{d,l}$ consists only of the element $\bar\rho_l^{-1}(\bar b)\in B$. Define similarly $\check\zeta\in\cD_2\subseteq\Xi((f,m),\psi_1\sqcup(\check f,\check m))$ as the split subcategory spanned by objects $\zeta$ such that the following holds: for each $\check b\in \check B\setminus\del\check G_l$, the component of $G_{d,l}$ containing $\check b$ is a tree intersecting $\check B$ only in $\check b$, and whose preimage along $\rho_{d,l}$ consists precisely of the set $\check\rho_l^{-1}(\check b)\subseteq B$.
Since the data giving $\zeta^\flat$ are all equivalences or maps of finite sets, we have that $\varepsilon\circ-$ and $\zeta^\flat\circ-$ restrict to functors $\Xi^{**}((f,m),\psi_1\sqcup(\bar f,\bar m))\to \Xi^{**}((f,m),\psi_1\sqcup(\check f,\check m))$ and $\Xi^{**}((\tilde f,\tilde m),(\bar f,\bar m))\to\Xi^{**}((\tilde f,\tilde m),(\check f,\check m))$, respectively.
The restricted functor $\varepsilon\circ-\colon\cD_1^{**}\xrightarrow{\simeq}\cD_2^{**}$ is an equivalence. By Lemma \ref{lem:Xi*reduction} we obtain an equivalence $\varepsilon\circ-\colon|\cD_1|\xrightarrow{\simeq}\cD_2$.
Similarly as above, we let $\cE_1$ and $\cE_2$ denote the preimages of $\cD_1$ and $\cD_2$ along the two functors denoted $((\psi_1\sqcup-)\circ\tilde\zeta)$, respectively.
An analogous but simpler argument as the one above shows that $\zeta^\flat\circ-$ restricts to an equivalence $\cE_1^{**}\xrightarrow{\simeq}\cE_2^{**}$; by Lemma \ref{lem:Xi*reduction} we obtain an equivalence $\zeta^\flat\circ-\colon|\cE_1|\xrightarrow{\simeq}|\cE_2|$.
\end{proof}

The $\infone$-category $Q_1^{\flat\flat}(f,m)^\to$ is small enough to allow a direct analysis. In fact we will show that it is a (plain) poset, with contractible classifying space.

\begin{defn}
Given a linear gaf $\bar G\in\cC(\bar B,\bar A)$ and a subset $T$ of its set of all edges and vertices, we say that $T$ is \emph{convex} if whenever $y,y'\in T$, then all edges and vertices in $\bar G$ lying between $y$ and $y'$ also belong to $T$. For a subset $T'\subset \bar A\sqcup\bar V\sqcup\bar E$ we abbreviate by $\bar G\setminus T'$ the set $(\bar A\sqcup\bar V\sqcup\bar E)\setminus T'$. A \emph{segment} in $\bar G\setminus T'$ is a maximal convex subset for $\bar G$ that is contained in $\bar G\setminus T'$ and that contains at least one edge.
\end{defn}

\begin{defn}
We associate with an object $\bar D\in Q_1^{\flat\flat}(f,m)^\to$ a \emph{profile} $\pi(\bar D)$, i.e. a list of invariants of the object; this is defined according to the type of $\bar D$.
\begin{itemize}
\item[(I)] We associate the empty profile, denoted $\pi(\bar D)=()$.
\item[(II)] We associate the profile $\pi(\bar D)=(\bar B^\circ,\bar A^\circ,\bar X)$ consisting of two nested subsets $\emptyset\subseteq\bar A^\circ\subseteq\bar B^\circ\subseteq B$, and a \emph{nonempty} subset $\bar X\subseteq E_u\setminus\set{e_\ell}$ consisting of all edges of a segment of $G_u\setminus(\set{e_\ell}\cup\rho_u(\bar B^\circ))$. We define $\bar B^\circ=\bar B\setminus\del\bar G_l$, considered as a subset of $B$ by taking the preimage along $\bar\rho_l$; we similarly define $\bar A^\circ:=\bar A\setminus\bar\rho_u(\del\bar G_l)$; and we define $\bar X$ as the set of edges of the segment of $G_u\setminus(\set{e_\ell}\cup\rho_u(\bar B^\circ))$ containing $\bar X_l=\bar f_u^{-1}(\bar E_l)$.
\item[(III)] We associate the profile $\pi(\bar D)=(\bar A^\circ,\bar X_u,\bar X_d)$ consisting of a subset $\emptyset\subseteq\bar A^\circ\subseteq B$ and two \emph{nonempty}, nested subsets $\bar X_d\subseteq X_u\subseteq E_u\setminus\set{e_\ell}$, such that $\bar X_d$ and $\bar X_u$ are contained in a segment of $G_u\setminus(\set{e_\ell}\cup\rho_u(\bar A^\circ))$. We define $\bar A^\circ:=\bar A\setminus\del\bar G_u$; and we define $\bar X_d\subseteq \bar X_u$ as the preimages of $\bar E_d$ and $\bar E_u$ along $(\bar G_r\circhor\bar f\circhor\bar G_l)\circ\bar f_u$ and $\bar f_u$, respectively.
\end{itemize}
We denote by $\cD_{\pi(\bar D)}\subseteq\Xi^{**}((f,m),\psi_1\sqcup(\bar f,\bar m))$ the split subcategory  spanned by those objects $\zeta$ such that the following holds: if $\zeta'$ is a special object connected to $\zeta$ by some morphism in $\Xi^{**}((f,m),\psi_1\sqcup(\bar f,\bar m))$, we have that $\zeta'$ and $(\bar f,\bar m)$ combine in a new object $\bar D'\in Q_1^{\flat\flat}(f,m)^\to$ with $\pi(\bar D')=\pi(\bar D)$.
\end{defn}

\begin{lem}
\label{lem:discretecore}
The core groupoid of $Q_1^{\flat\flat}(f,m)^\to$ is homotopy discrete, and its equivalence classes of objects are in bijection with all possible profiles.
\end{lem}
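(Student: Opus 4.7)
The plan is to show that every object $\bar D \in Q_1^{\flat\flat}(f,m)^\to$ is determined, up to canonical isomorphism, by its profile $\pi(\bar D)$, and that the space of self-isomorphisms of any such object is contractible; by combining these two facts with the observation that the profile is an equivalence-invariant, one obtains both homotopy discreteness of the core and the desired bijection.

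First I would check that the profile is invariant under isomorphisms in $Q_1^{\flat\flat}(f,m)^\to$. The type of $\bar D$ is detected by whether $A \sqcup \bar V_r$ is a singleton (type I), whether $(\bar f,\bar m)$ is invertible and non-type-I (type II), or neither (type III); all three conditions are stable under isomorphisms of the underlying data. In types II and III, the sets $\bar X_l, \bar X_u, \bar X_d \subseteq E_u \setminus \{e_\ell\}$ are recovered, via $\bar f_u$ and $(\bar G_r \circhor \bar f \circhor \bar G_l) \circ \bar f_u$, as the preimages of the (isomorphism-invariant) sets of edges of $\bar G_l$, $\bar G_u$, $\bar G_d$; similarly $\bar B^\circ$ and $\bar A^\circ$ are the (canonically identified) subsets of $B$ and $A$ cut out by condition (vii), resp.\ by $\del \bar G_u$. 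Thus the profile factors through the set of connected components of the core.

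Next I would reconstruct a representative object from each profile. In each type, conditions (vii) and (viii) force the ``extra'' vertices of $\bar B$ and $\bar A$ to exactly match the boundary data forced by the bridge structure: for type II, $\bar B = \bar B^\circ \sqcup \del \bar G_l$ and $\bar A = \bar A^\circ \sqcup \bar\rho_u(\del \bar G_l) \sqcup \{x,\bar x\}$, while the linear graph $\bar G_l$ is determined (up to unique iso) as the linear graph whose edge set is $\bar X_l$, the unique nonempty subset of $\bar X$ mapping onto the edges of a segment of $G_u\setminus(\{e_\ell\}\cup\rho_u(\bar B^\circ))$ after collapse along $\bar f_u$. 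Once this is fixed, the factorisations $(\bar f,\bar m) = \text{id}$, $\bar f_u$ collapsing $E_u\setminus(\bar X_l\sqcup\{e_\ell\})$, and $\bar f_d$ collapsing $\bar X_l$, together with the specified markings, determine all the remaining data. Type III is analogous, with $\bar G_u$ and $\bar G_d$ the bridges corresponding to $\bar X_u$ and $\bar X_d$; type I is the trivial case.

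Finally I would argue that the resulting representative has no non-trivial automorphisms in $Q_1^{\flat\flat}(f,m)^\to$. An automorphism is a self-morphism $\bar D \to \bar D$ lying over $\Id_{(f,m)}$, and so consists of the data of Notation~\ref{nota:Qiobject} with $\bar G_l, \bar G_r$ and all $\bar f_\star$ being equivalences and satisfying the compatibility with $\bar\zeta$. Since every combinatorial datum (maps of finite sets, linear gafs with prescribed boundary) is rigid (no non-trivial automorphisms fixing the specified inclusions), the space of such self-morphisms is contractible. The only subtle point, which I expect to be the main obstacle, is ruling out the potential swap of the two endpoints $\del\bar G_u = \{x,\bar x\}$ in type III (and similarly of $\del\bar G_l$ in type II): here one uses that the orientation-preserving condition together with $\bar\rho_r|_{\del\bar G_u} \simeq \{x,\bar x\}$ pins down which endpoint maps to $x$ versus $\bar x$, and the markings from $B$ via $\bar G_l$ break any remaining symmetry of the underlying linear graph. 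This completes both the rigidity of objects and the injectivity of the profile on isomorphism classes.
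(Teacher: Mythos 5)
Your strategy—show the profile is an invariant, reconstruct a canonical representative, and show automorphisms are trivial—is a reasonable first instinct, but it breaks down at the reconstruction step for type II objects, and this is precisely where the real content of the lemma lies.

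Your claim that the linear gaf $\bar G_l$ is ``determined (up to unique iso)'' by the profile, via a ``unique nonempty subset $\bar X_l$ of $\bar X$'', is incorrect. Looking at the definition of the type II profile: $\bar X$ is declared to be the full set of edges of the segment of $G_u\setminus(\set{e_\ell}\cup\rho_u(\bar B^\circ))$ \emph{containing} $\bar X_l$. So the profile only remembers the segment, not $\bar X_l$ itself; any nonempty subset $\bar X_l\subseteq\bar X$ gives an object of type II with the same profile. Consequently the moduli space of objects with a fixed type II profile is not a single rigid combinatorial configuration but genuinely parametrizes a whole family of choices, and your ``rigidity of combinatorial data'' argument cannot see why this family collapses to a point. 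In fact, the additional hidden degree of freedom is even larger: because $\bar\zeta$ is a point in the classifying space $|\Xi^{**}((f,m),\psi_1\sqcup(\bar f,\bar m))|$ rather than a literal combinatorial object, a path component there records a pair of nested nonempty subsets $X_{d,l}\subseteq X_{u,l}\subseteq\bar X$ (the preimages of $E_{d,l}$ and $E_{u,l}$), and the moduli space of objects with a fixed type II profile is the classifying space of the category $\Tw(\cP_{\neq\emptyset}(\bar X))$—the twisted arrow category of the poset of nonempty subsets of $\bar X$. This has contractible classifying space (since $\cP_{\neq\emptyset}(\bar X)$ has a terminal object and $|\Tw(\cC)|\simeq|\cC|$), but it is not the terminal category. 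Your argument treats it as if it were.

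The underlying conceptual slip is conflating ``automorphisms of a combinatorial presentation'' with ``loops in the core groupoid''. An object of $Q_1^{\flat\flat}(f,m)^\to$ is a pair $((\bar f,\bar m),\bar\zeta)$ where $\bar\zeta$ lives in a morphism space of $\dot\cK$, which is itself a classifying space $|\Xi^{**}(\cdots)|$ of an $\infone$-category; the path components and loops in this classifying space are not governed purely by automorphisms of gaf's. The correct approach, as the paper takes, is to cut out the split subcategory $\cD_{\pi(\bar D)}\subseteq\Xi^{**}((f,m),\psi_1\sqcup(\bar f,\bar m))$ of objects that combine with $(\bar f,\bar m)$ to a fixed profile, identify the moduli space as $|\cD_{\pi(\bar D)}|$, and then compute: terminal category for types I and III, and $\Tw(\cP_{\neq\emptyset}(\bar X))$ for type II. Your argument as written does not notice the type II subtlety and would produce a false uniqueness claim for $\bar X_l$ on the way.
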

\begin{proof}
It is clear that every profile can be achieved by some object. Given an object $\bar D$, the moduli space of objects exhibiting the profile $\pi(\bar D)$ can be computed as the classifying space of $\cD_{\pi(\bar D)}$.
We have that $\cD_{\pi(\bar D)}$
is the terminal category whenever $\bar D$ is of type I or III. If instead $\bar D$ is of type II, then $\cD_{\pi(\bar D)}$ is equivalent to $\Tw(\cP_{\neq\emptyset}(\bar X))$, where $\cP_{\neq\emptyset}$ denotes the poset of nonempty subsets. Concretely, we associate with $\zeta\in\cD$ the pair of nonempty subsets $X_{d,l}\subseteq X_{u,l}\subseteq \bar X$ obtained as preimages of $E_{d,l}$ and $E_{u,l}$ along $(G_{u,r}\circhor \bar G_u\circhor f_l)\circ f_u$ and $f_u$, respectively. We conclude by observing that $|\Tw(\cP_{\neq\emptyset}(\bar X))|\simeq*$.
\end{proof}

\begin{rem}
\label{rem:morphismQ1flat}
Let $\bar D,\check D\in Q_1^{\flat\flat}(f,m)^\to$; using that $|\cD_{\pi(\check D)}|\simeq*$, we obtain that the morphism space $Q_1(f,m)(\bar D,\check D)$ is equivalent to the classifying space of the split subcategory $\cE_{\bar D,\check D}:=((\psi_1\sqcup-)\circ\bar\zeta)^{-1}(\cD_{\pi(\check D)})\subseteq\Xi((\bar f,\bar m),(\check f,\check m))$, or by Lemma \ref{lem:Xi*reduction}, to the classifying space of $\cE^{**}_{\bar D,\check D}$. A routine analysis shows the following:
\begin{itemize}
\item $\cE^{**}_{\bar D,\check D}$ is the terminal category whenever one of the following occurs:  
\begin{itemize}
\item $\bar D$ and $\check D$ are of type I;
\item $\bar D$ and $\check D$ are of type II, and we have inclusions $\bar A^\circ\subseteq\check A^\circ\subseteq\check B^\circ\subseteq\bar B^\circ$, and correspondingly $\bar X\subseteq\check X$;
\item $\bar D$ and $\check D$ are of type III, and we have inclusions $\bar X_d\subseteq\check X_d\subseteq\check X_u\subseteq\bar X_u$ and $\bar A^\circ\subseteq\check A^\circ$;
\end{itemize}
\item $\cE^{**}_{\bar D,\check D}\simeq\Tw(\cP(T))$ for some finite set $T$, where $\cP$ denotes the poset of all subsets, whenever $\bar D$ is of type III and one of the following occurs:
\begin{itemize}
\item $\check D$ is of type I; in this case we let $T=\bar X_u\setminus\bar X_d$;
\item $\check D$ is of type II, and we have inclusions $\bar A^\circ\subseteq\check A^\circ$ and $\bar X_d\subseteq\check X$; in this case we let $T=(\check X\cap\bar X_u)\setminus\bar X_d$;
\end{itemize}
\item in all other cases $\cE^{**}_{\bar D,\check D}$ is empty.
\end{itemize}
The previous analysis show in particular that $Q_1^{\flat\flat}(f,m)^\to$ is equivalent to a (finite) poset, whose underlying set of objects is the set of all possible profiles.
\end{rem}
\begin{proof}[Proof of Proposition \ref{prop:finalthmA}(1)]
By Corollary \ref{cor:singlevertexGd} and
Lemmas \ref{lem:linearreduction} and
\ref{lem:Balmostempty}, it suffices to consider the case of an object $(f,m)\in\dot\cK$ as in Notation \ref{nota:cOfm} such that $G_d$ consists of a single vertex, $G_u$ is a linear graph and coincides with its own spine, and $\rho_u^{-1}(v_\ell)=\emptyset\subseteq B$. By Lemmas 
\ref{lem:Q1daggerequivalence} and 
\ref{lem:Q1flatequivalence} it suffices to prove that $|Q_1^{\flat\flat}(f,m)^\to|$ is contractible. By Lemma \ref{lem:discretecore} and Remark \ref{rem:morphismQ1flat}, we have that $Q_1^{\flat\flat}(f,m)^\to$ is a finite poset.
We denote by $Q\subseteq Q_1^{\flat\flat}(f,m)^\to$ the full subposet spanned by objects of type I, and by objects $\bar D$ of type III whose profile satisfies the following property: $\bar X_u$ is maximal, i.e. it consists of all edges of the segment of $G_u\setminus(\set{e_\ell}\cup\rho_u(\bar A^\circ))$ containing it. The inclusion $Q\subseteq Q_1^{\flat\flat}(f,m)^\to$ admits a right adjoint: it fixes all objects of type I (indeed, it has to fix all objects of $Q$); it sends the object $\bar D$ of type III with profile $(\bar A^\circ,\bar X_u,\bar X_d)$ to the object of type III with profile $(\bar A^\circ,\bar X,\bar X_d)$, where $\bar X$ is the set of all edges of the segment of $G_u\setminus(\set{e_\ell}\cup\rho_u(\bar A^\circ))$ containing $\bar X_u$; and it sends $\bar D$ of type II with profile $(\bar B^\circ,\bar A^\circ,\bar X)$ to the object of type III with profile $(\bar A^\circ,\bar X,\bar X)$. It follows that the inclusion $Q\subseteq Q_1^{\flat\flat}(f,m)^\to$ induces an equivalence on classifying spaces. Finally, we have $|Q|\simeq*$ because $Q$ has a terminal object, namely the essentially unique object of type I.
\end{proof}

\section{The universal property of \texorpdfstring{$\Gr$}{Gr}}
\label{sec:proofA1A2}
In this final section we prove Corollaries \ref{cor:A1} and \ref{cor:A2}, and we give an application exhibiting $\Gr$ as an $\infone$-subcategory of the (large) $\infone$-category $\Cospan(\cS)$ of cospans between spaces.

\subsection{Proof of the corollaries of Theorem \ref{thm:A}}

\begin{proof}[Proof of Corollary \ref{cor:A1}]
The object $\Gr=|\bGr|_2\in\CAlg(\Catinfone)$ represents the restriction of the functor $\GL\colon\CAlg(\Catinftwo)\to\cS$ to the full $\infone$-subcategory $\CAlg(\Catinfone)$ of $\CAlg(\Catinftwo)$, so our goal is to provide an equivalence 
\[
\GL|_{\CAlg(\Catinfone)}\simeq\CFrob.
\]
We start by providing a natural transformation between the two functors: we send a graph-like structure $(\fI,\ft,\fe,\beta)$ on $\cC\in\CAlg(\Catinfone)$ to the pair $(\fI,\ft)$. We need to check that this pair is indeed a commutative Frobenius algebra in $\cC$. Let $a:= \fI(\ul1)$ and let $c:=\ft\circ\fI(\mu)$. To prove that $(a,c)$ is a duality datum for $a$, we need to prove that there exists $u$ and homotopies as in Definition \ref{defn:duality}. We let $u:=\fe\colon\one\simeq\fI(\ul0)\to a\otimes a \simeq\fI(\ul2)$; then the 2-morphism $\beta$, which is invertible as $\cC\in\Catinfone$, provides an equivalence $(c\otimes\Id_a)\circ(\Id_a\otimes u)\simeq\Id_a$. Now let $\tau\colon a^{\otimes2}\to a^{\otimes2}$ denote the map swapping the factors, and let $\tau':=(\tau\otimes\Id_a)(\Id_a\otimes\tau)\colon a^{\otimes 3}\to a^{\otimes3}$ denote the map permuting cyclically the factors. Since both $c$ and $u$ are $C_2$-equivariant morphisms, we have equivalences $c\simeq c\tau$ and $u\simeq\tau u$; combining these we get a chain of equivalences 
\[
\begin{split}
(\Id_a\otimes c)(u\otimes\Id_a)&\simeq
(\Id_a\otimes c\tau)(\tau u\otimes\Id_a)\simeq
(\Id_a\otimes c)(\Id_a\otimes\tau)(\tau\otimes\Id_a)(u\otimes\Id_a)\\
&\simeq(\Id_a\otimes c)\tau'\tau'(u\otimes\Id_a)\simeq(c\otimes\Id_a)\circ(\Id_a\otimes u)\simeq\Id_a.
\end{split}
\]

We next prove that the constructed natural transformation $\GL|_{\CAlg(\Catinfone)}\Rightarrow\CFrob$ is an equivalence. For this, we will show that for $\cC\in\CAlg(\Catinfone)$ and for $(\fI,\ft)\in\CFrob(\cC)$, the fibre at $(\fI,\ft)$ of the map $\GL(\cC)\to\CFrob(\cC)$ is contractible. The mentioned fibre is the space of ways to upgrade $(\fI,\ft)$ to a graph-like structure. For simplicity we set again $a:=\fI(\ul1)$ and denote by $\mu$ also the morphism $\fI(\mu)\colon a^{\otimes2}\to a$. We start by showing that the space of choices of the following two data is contractible:
\begin{itemize}
\item a \emph{non-$C_2$-equivariant} morphism $\fe\colon\one\to a^{\otimes2}$;
\item a homotopy $\beta$ connecting $(\ft\mu\otimes\Id_a)(\Id_a\otimes\fe)$ to $\Id_a$.
\end{itemize}
Setting $x=z=a$ and $y=\one$ in Lemma \ref{lem:dualityequivalence} we obtain an equivalence of spaces $\cC(\one, a^{\otimes 2})\overset{\simeq}{\to}\cC(a,a)$, which makes the choice of a point $\fe$ in the first space equivalent to the choice of its image $(\ft\mu\otimes\Id_a)(\Id_a\otimes\fe)$ in the second space; the joint choice of the latter point together with a path $\beta$ joining it to the given point $\Id_x$ is evidently contractible.

We next prove that, given a completely non-$C_2$-equivariant datum $(\fI,\fe,\ft,\beta)$, the space of choices of the following two data is also contractible:
\begin{itemize}
\item a $C_2$-equivariant structure on $\fe$;
\item a $C_2$-equivariant structure on the homotopy $\tbeta:=\ft\mu(\beta\otimes\Id_a)$ connecting the $C_2$-equivariant morphisms $\ft\mu((\ft\mu\otimes\Id_a)(\Id_a\otimes\fe)\otimes\Id_a)$ and $\ft\mu$. Here the first morphism carries a $C_2$-equivariant structure by being equivalent to $(\ft\mu)^{\otimes\ul2}(\Id_a\otimes\fe\otimes\Id_a)$, where the $C_2$-equivariant structure on $(\ft\mu)^{\otimes\ul2}$ swaps the two tensor factors, and the $C_2$-equivariant structure on $(\Id_a\otimes\fe\otimes\Id_a)$ swaps the first and last tensor factors, and uses the $C_2$-equivariant structure on $\fe$ chosen above.
\end{itemize}
Setting $x=a^{\otimes\ul2}$ and $y=z=\one$ in Lemma \ref{lem:dualityequivalence}, we obtain a $C_2$-equivariant equivalence $\cC(\one,a^{\otimes\ul2})\overset{\simeq}{\to}\cC(a^{\otimes\ul2},\one)$; here we put the trivial $C_2$-action on $\one$ and the one swapping the two tensor factors on $a^{\otimes\ul2}$; both maps in the composite of Lemma \ref{lem:dualityequivalence} have an evident $C_2$-equivariant structure. Therefore a choice of a $C_2$-equivariant structure on $\fe\in\cC(\one,a^{\otimes\ul2})$ is equivalent to a choice of a $C_2$-equivariant structure on its image $(\ft\mu)^{\otimes\ul2}(\Id_a\otimes\fe\otimes\Id_a)$; the joint choice of the latter $C_2$-equivariant structure and of a $C_2$-equivariant structure on the path $\tbeta$ joining the point $(\ft\mu)^{\otimes\ul2}(\Id_a\otimes\fe\otimes\Id_a)$ to the given $C_2$-equivariant point $\ft\mu$ is evidently contractible.
\end{proof}
The argument of proof of Corollary \ref{cor:A2} as a consequence of Corollary \ref{cor:A1} is due to Barkan--Steinebrunner; it is included here for the sole purpose of completeness.
\begin{proof}[Proof of Corollary \ref{cor:A2}]
By Corollary \ref{cor:A1}, it suffices to construct a natural equivalence $\CFrob|_{\CAlg(\cS)}\overset{\simeq}{\to}\Omega$ of functors $\CAlg(\cS)\to\cS$.
As a natural transformation, this is given by sending $(\fI,\ft)\in\CFrob(M)$ to the composite path
\[
\ft\circ\fI(\ul0\to\ul1)\colon\fI(\ul0)\to\fI(\ul0).
\]

We next prove that the constructed natural transformation is an equivalence. We first observe that, since $|\Fin|\simeq*$, the space $\Fun^\otimes(\Fin,M)\simeq\Fun^\otimes(|\Fin|,M)$ is contractible. Consequently, $\CFrob(M)$ is equivalent to the space of commutative Frobenius algebra structures on the constant symmetric monoidal functor $\Fin\to M$ at $\one\in M$, i.e. of morphism $c\colon \one\simeq \one\otimes \one\to \one$ such that $(\one,c)$ is a duality datum for $c$. The fact that $M$ is a space ensures that every endomorphism of $\one\in M$ makes $(\one,c)$ into a duality datum. This concludes the proof that $\CFrob(M)\simeq\Omega M$.
\end{proof}

\subsection{Geometric realisation of graphs}
\label{subsec:geomrealgraphs}
\begin{defn}
We denote by $\Cospan(\cS)\in\Catinfone^{\fU_2}$ the large $\infone$-category of cospans between spaces. Its objects are spaces; for $X,Y\in\cS$, the morphism space from $X$ to $Y$ is the (large) core groupoid $(\cS_{X\sqcup Y/})$, i.e. the moduli space of cospans of spaces of the form $X\to W\ot Y$. Composition in $\Cospan(\cS)$ is given by taking pushouts. We endow $\Cospan(\cS)$ with the symmetric monoidal structure given by disjoint unions of spaces and (pointwise) of cospans. 
\end{defn}
We have a symmetric monoidal functor $\cS\to\Cospan(\cS)$, sending $X\mapsto X$ and sending a map of spaces $X\to Y$ to the cospand $X\to Y=Y$.

\begin{defn}
\label{defn:Re}

By virtue of Corollary \ref{cor:A1}, we define a symmetric monoidal $\infone$-functor
$\Re\colon\Gr\to\Cospan(\cS)$ by providing a commutative Frobenius algebra $(\fI_\Re,\ft_\Re)$
in $\Cospan(\cS)$:
\begin{itemize}
 \item we let $\fI_\Re\colon\Fin\to\cD$ be the composition of the symmetric monoidal functors $\Fin\hto\cS\to\Cospan(\cS)$.
 \item we let $\ft_\Re\colon \fI_\Re(\ul 1)\to \fI_\Re(\ul 0)$ be the cospan $\ul 1\to\ul 1\ot\ul 0$.
 \end{itemize}
\end{defn}
 The fact that $(\fI_\Re,\ft_\Re)$ is a commutative Frobenius algebra is witnessed by the following data
 \begin{itemize}
 \item we let $u\colon \fI_\Re(\ul 0)\to \fI_\Re(\ul 2)$ be the cospan $\ul 0\to\ul 1\ot\ul 2$;
 \item letting $c=\ft_\Re\circ\fI_\Re(\mu)$, we have (essentially unique) equivalences of morphisms $ (c\sqcup\Id_{\ul 1})\circ(\Id_{\ul 1}\sqcup u)\simeq\Id_{\ul 1}$ and $ (\Id_{\ul 1}\sqcup c)\circ(u\sqcup\Id_{\ul 1})\simeq\Id_{\ul 1}$ in $\Cospan(\cS)$: we use here that in both cases the source and the target of the equivalence are equivalent to the cospan $\ul 1\to\ul 1\ot\ul 1$, which is a terminal object in $\cS_{\ul 1\sqcup\ul1/}$.
\end{itemize}
\begin{rem}
The notation ``$\Re$'' is similar to the one used in Definition \ref{defn:ReG}
to denote the cell complex $\Re(G)$ constructed from a gaf $G$: in fact, up to replacing ``cell complexes'' with the $\infty$-category $\cS$, we have that $\Re$ sends the 1-morphism $G\colon B\to A$ in $\Gr$ to the cospan of spaces $A\to\Re(G)\ot B$.

To see this formally, let $\bCospan(\Top)$ denote the (large) symmetric monoidal 2-category of cospans of topological spaces: its objects are topological spaces; for $X,Y\in\Top$, the (large) 1-category $\bCospan(\Top)(X,Y)$ is the 1-category $\Top_{X\sqcup Y/}$; horizontal composition is given by taking pushouts in $\Top$; the monoidal product is given by taking disjoint unions. We may similarly introduce a symmetric monoidal $\inftwo$-category $\bCospan(\cS)$; the (product preserving, hence symmetric monoidal) localisation at weak equivalences $\Top\to\cS$ induces a symmetric monoidal $\inftwo$-functor $\bCospan(\Top)\to\bCospan(\cS)$.

We may now define a symmetric monoidal 2-functor $\tilde\Re\colon\bGr\to\Cospan(\Top)$ as follows: at the level of objects, we send each finite set to itself, considered as a topological space; for $B,A\in\Fin$ the functor $\tilde\Re\colon\bGr(B,A)\to\Cospan(\Top)(B,A)$ sends $G$ to the cospan of topological spaces $B\to\Re(G)\ot A$, and sends $f\colon G\to G'$ to the evident weak equivalence of topological spaces $\Re(G)\to\Re(G')$ collapsing 1-cells corresponding to edges collapsed along $f$, and restricting to a linear identification on 1-cells that are not collapsed. The composite symmetric monoidal $\inftwo$-functor
\[
\bGr\xrightarrow{\tilde\Re}\bCospan(\Top)\to\bCospan(\cS)
\]
factors through $\Cospan(\cS)\simeq(\bCospan(\cS))^\twosimeq$, and it agrees with $\Re$, as can be checked by comparing the associated graph-like structures, or even more directly, the associated commutative Frobenius algebras.
\end{rem}

We conclude the article with the following theorem.
\begin{thm}
\label{thm:Reinclusion}
The functor $\Re\colon\Gr\to\Cospan(\cS)$ is faithful, in the sense that it induces an inclusion of spaces at the level of core groupoids and at the level of morphism spaces. More precisely, $\Re$ exhibits $\Gr$ as the (small) $\infone$-subcategory of $\Cospan(\cS)$ whose objects are spaces homotopy equivalent to finite sets, and whose morphisms from $B$ to $A$ are those cospans of spaces $B\ot W\to A$ such that $W$ is homotopy equivalent to a finite cell complex of dimension at most 1. 
\end{thm}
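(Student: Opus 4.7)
The plan is to fix finite sets $B,A$ and analyse the comparison map $\Gr(B,A) \simeq |\bGr(B,A)| \xrightarrow{\Re} \Cospan(\cS)(B,A)$ componentwise. The identification on core groupoids is immediate: both $\bGr^\simeq \simeq \Fin^\simeq$ and the described subcategory of $\Cospan(\cS)$ have $\Fin^\simeq$ as space of objects, and $\Re$ is the identity there. The substance lies in identifying morphism spaces.

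The category $\bGr(B,A)$ decomposes as a disjoint union of full subcategories indexed by the equivalence class of the cospan $B \to \Re(G) \ot A$ in $\Cospan(\cS)(B,A)$, since morphisms in $\bGr(B,A)$ are tree collapses and these preserve the cospan type up to a canonical homotopy equivalence. Accordingly $|\bGr(B,A)|$ splits into components, and $\Re$ respects this splitting. For each cospan $\xi \colon B \to W \ot A$ in $\Cospan(\cS)(B,A)$, I would show that the homotopy fibre of $\Re$ over $\xi$ is contractible when $W$ has the homotopy type of a finite 1-dimensional CW complex, and empty otherwise. Emptiness in the irrelevant case is clear, since $\Re(G)$ is always a finite 1-complex.

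Contractibility in the relevant case is the main content, and is where the work of Culler--Vogtmann and Hatcher--Vogtmann enters. The fibre can be identified with the classifying space of the 1-category $\bGr(B,A;\xi)$ of pairs $(G,\alpha)$ consisting of a gaf $G \in \bGr(B,A)$ together with an equivalence of cospans $\alpha\colon (B \to \Re(G) \ot A) \simeq \xi$, with morphisms given by tree collapses compatible with $\alpha$. This is precisely a version of \emph{Outer space with attaching and marking data}. In the special case $B = A = \emptyset$ with $W$ connected of rank $n$, the automorphism group of $\xi$ in $\Cospan(\cS)$ is $\Out(F_n)$, and contractibility of the fibre is equivalent to $|\bGr(\emptyset,\emptyset)^{\conn}_n| \simeq B\Out(F_n)$ from Example \ref{ex:GrconnBoutFn} --- which is the original Culler--Vogtmann theorem on contractibility of Outer space. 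The general case would follow from the Hatcher--Vogtmann construction of Auter space, applied to $W$ with the finite set of attaching and marking points treated as distinguished vertices, using a spine retraction by iterated collapses of non-special subtrees.

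The main obstacle will be translating between the combinatorial moduli $\bGr(B,A;\xi)$ and the simplicial/topological model of Outer space on which the classical contractibility results are phrased, in such a way that the classical theorems can be applied as a black box. Technically this amounts to verifying that tree collapses generate the full homotopy equivalence relation among gaf-realisations of a fixed cospan $\xi$, and that the natural $\pi_0$-identifications induced by chains of tree collapses exhaust the automorphisms of $\xi$ in $\Cospan(\cS)(B,A)$; once this bookkeeping is in place, the theorem follows.
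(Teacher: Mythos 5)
Your overall strategy matches the paper's: split $|\bGr(B,A)|$ by image component in $\Cospan(\cS)(B,A)$ and show each fibre is contractible by invoking Culler--Vogtmann and Hatcher--Vogtmann for the spine of (auter) outer space, after reducing to a spine-like subcategory of gaf's. However there is a genuine gap in your plan, and it is precisely the part you wave off as bookkeeping.

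The identification of the fibre over a cospan $\xi\colon B\to W\ot A$ with the classifying space of a \emph{plain} 1-category of pairs $(G,\alpha)$ is only valid when the component of $\xi$ in $(\cS_{B\sqcup A/})^\simeq$ is aspherical. In the paper this is exactly what the hypothesis $\chi(W,B\sqcup A)<0$ secures: only then is $\bB\hAut_{B\sqcup A/}(W)$ a $K(\pi,1)$, so that the fibre category has no higher homotopy hidden in the marking data $\alpha$, and only then do Culler--Vogtmann and Hatcher--Vogtmann apply. Your proposal silently assumes both. The remaining cases — $W\simeq *$ (a star graph, handled by exhibiting a terminal gaf) and $W\simeq S^1$ with $A=B=\emptyset$ (handled via the cyclic category $\Lambda$ and the equivalence $|\Lambda|\simeq\mathbb{C}P^\infty$, because $\hAut(S^1)$ is not discrete) — require separate arguments that are structurally quite different from the $\chi<0$ case and cannot be folded into a single appeal to outer space contractibility. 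In particular, quoting ``$|\bGr(\emptyset,\emptyset)^{\conn}_n|\simeq B\Out(F_n)$'' as equivalent to Culler--Vogtmann only works for $n\ge 2$; for $n=0,1$ the relevant mapping space in $\Cospan(\cS)$ is not a $K(\Out(F_n),1)$.

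Additionally, the paper does real work to reduce $\bGr(B,A)$ to a spine-like model \emph{before} the fibre computation: it passes through $\bGr(B,A)'$ (using a natural transformation from the endofunctor of glueing $G_{\beta,1}$'s to the identity), then $\bGr(B,A)''$ (left adjoint given by iterated collapse of unmarked leaves), and finally $\bGr(B,A)'''$ (left adjoint given by collapsing bridges). Your phrase ``spine retraction by iterated collapses of non-special subtrees'' gestures at this but the order matters: the $G_{\beta,1}$-glueing step is needed first so that attaching vertices and markings become leaves, which is what makes the subsequent leaf- and bridge-collapses well defined and adjoint. Without making this retraction explicit you cannot identify the fibre category with the spine of Auter space in the sense of Hatcher--Vogtmann. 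So the plan is on the right track for the main case, but as written it fails for the non-hyperbolic cospans and leaves the core reduction unproved.
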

\begin{proof}
Given finite sets $B,A$, representing objects in $\Gr$, our goal is to compute the morphism space $\Gr(B,A)$; by definition of $\Gr$, this is the classifying space $|\bGr(B,A)|$ of the morphism category $\bGr(B,A)$ from Definition \ref{defn:GrAB}. More precisely, our goal is to prove that the map $|\bGr(B,A)|\to(\cS_{A\sqcup B/})^\simeq$ induced by the functor $\Re$ is an inclusion of spaces.

Let $\bGr(B,A)'$ denote the full subcategory of $\bGr(B,A)$ spanned by gaf's $G$ satisfying the following properties:
\begin{itemize}
\item each element of $A$ has valence 1 in $G$ (it is an ``attaching leaf'');
\item the map $\rho\colon B\to A\sqcup V$ is injective, has image inside $V$, and all elements of its image are leaves.
\end{itemize}
Now recall the gaf $G_{\beta,1}$ from Subsection \ref{subsec:propertiesGr}, and consider the endofunctor
\[
\xi:=\pa{\coprod_BG_{\beta,1}}\circhor-\circhor\pa{\coprod_AG_{\beta,1}}\colon\bGr(B,A)\to\bGr(B,A).
\]
Then $\xi$ has image inside $\bGr(B,A)'$; moreover we have a natural transformation $\pa{\coprod_B\beta_\bGr}\circhor-\circhor\pa{\coprod_A\beta_\bGr}\colon\xi\to\Id_{\bGr(B,A)}$. It follows that $|\bGr(B,A)|\simeq|\bGr(B,A)'|$.

We next consider the full subcategory $\bGr(B,A)''$ of $\bGr(B,A)'$ spanned by gaf's $G$ whose only vertices of valence 1 are those contained in $A$ or in the image of $\rho$. The inclusion $\bGr(B,A)''\hto\bGr(B,A)'$ admits a left adjoint $\bGr(B,A)'\to\bGr(B,A)''$, sending a gaf $G$ to the gaf $G'$ obtained by repeatedly collapsing all non-marked leaves of $G$ and of the gaf's that one obtains througout the procedure. It follows that $|\bGr(B,A)'|\simeq|\bGr(B,A)''|$.

Let now $(B\to W\ot A)\in(\cS_{B\sqcup A/})^\simeq$ be a cospan of spaces with $W$ equivalent to a finite cell complex of dimension $\le1$, and let $\bGr(B,A)''_W$ denote the fibre of the $\infone$-functor $\Re\colon\bGr(B,A)''\to(\cS_{B\sqcup A/})^\simeq$ at $W$. Our goal is to prove that $|\bGr(B,A)''_W|$ is contractible.
If $W$ splits as a disjoint union of connected spaces $\coprod_i W_i$, letting $A_i:=A\cap W_i$ and $B_i:=B\cap W_i$ we have a product decomposition $\bGr(B,A)''_W\simeq\prod_i\bGr(B_i,A_i)''_{W_i}$, inducing an analogous product decomposition on classifying space. We may therefore assume that $W$ is connected.

We first consider the special case $W\simeq *$. Let $*\hto(\cS^\simeq_{B\sqcup A/})^\simeq$ denote the component of $B\to*\ot A$. Then the fibre product $\bGr(B,A)''\times_{\cS_{B\sqcup A/}}*$ is the subcategory of $\bGr(B,A)''$ spanned by the single gaf $G$ having precisely one inner vertex not marked by $B$; the classifying space of this category is contractible and hence also $|\bGr(B,A)''_*|\simeq*$.

We next consider the special case $W\simeq S^1$ and $A=B=\emptyset$. Let $\bB\hAut(S^1)\subset\cS^\simeq$ denote the component of $S^1$, and let $\bB\hAut^+(S^1)\simeq\mathbb{C}P^\infty$ denote the double (and universal) cover of the latter.
Then the fibre product $\bGr(\emptyset,\emptyset)''\times_{\cS^{\simeq}}\bB\hAut(S^1)$ is equivalent to the unstraightening of the cocartesian fibration over $\bB C_2$ corresponding to the $C_2$-action on the cyclic category $\Lambda$ by reflections. Since $|\Lambda|\simeq\bB\hAut^+(S^1)$, we obtain that $|\bGr(\emptyset,\emptyset)''_{S^1}|$ is indeed contractible.

From now on we assume $\chi(W,B\sqcup A)<0$. This implies in particular that the component $\bB\hAut_{B\sqcup A/}(W)\subset(\cS_{B\sqcup A/})^\simeq$ containing $B\to W\ot A$ is aspherical, and hence that $\subset\bGr(B,A)''_W$ is a plain category. It is instructive to think of objects in $\bGr(B,A)''_W$ as gaf's with a marking by $W$, compatible with the marking by $B$ and the inclusion of $A$.

Let $\bGr(B,A)'''\subset\bGr(B,A)''_W$ denote the full subcategory spanned by objects whose image in $\bGr(B,A)''$ are gaf's $G$ satisfying the following property: there is no ``bridge'' in $G$, where a bridge is an edge of $G$ that is not a leaf (i.e., neither of its endpoints is in $A$ or in the image of $\rho$), and it separates $G$ into two non-empty gaf's. The inclusion $\bGr(B,A)'''\subset\bGr(B,A)''_W$ admits a left adjoint $\bGr(B,A)''_W\to\bGr(B,A)'''$, sending a gaf $G$ to the gaf obtained by collapsing all bridges of $G$. It follows that $|\bGr(B,A)'''|\simeq|\bGr(B,A)''_W|$.

Finally, we invoke the work of
Culler--Vogtmann \cite{CullerVogtmann}, in the case $A=B=\emptyset$ and $\chi(W)<0$, and the generalisation of
Hatcher--Vogtmann
\cite[Section 3]{HatcherVogtmann} in the remaining cases. If $n$ denotes the rank of $W$ and if $s=\#A+\#B$, then the (topological) geometric realisation of the simplicial set given by the nerve of $\bGr(B,A)'''$ is homeomorphic to the spine of outer space, denoted $K$ in \cite{CullerVogtmann}, or to the spine of the space $A_{n,s}$ from \cite{HatcherVogtmann}; in both cases it is a contractible space.
\end{proof}

\bibliography{Bibliographygraphs.bib,Bibliographyother.bib}{}
\bibliographystyle{plain}

\end{document}